\newtheorem{proposition}{Proposition}[section]
\newtheorem{theorem}[proposition]{Theorem}
\newtheorem{definition}[proposition]{Definition}
\newtheorem{corollary}[proposition]{Corollary}
\newtheorem{lemma}[proposition]{Lemma}
\newtheorem{remark}[proposition]{Remark}
\newtheorem{example}[proposition]{Example}
\numberwithin{equation}{section}
\newcommand*\samethanks[1][\value{footnote}]{\footnotemark[#1]}
\title{A unified approach of obstructions to small-time \\ local
controllability for scalar-input systems}
\author{Karine Beauchard\texorpdfstring{\thanks{Univ Rennes, CNRS, IRMAR - UMR 6625, F-35000 Rennes, France}}{},
Fr\'ed\'eric Marbach\texorpdfstring{\samethanks}{}}
\newcommand{\N}{\mathbb{N}}
\newcommand{\Z}{\mathbb{Z}}
\newcommand{\R}{\mathbb{R}}
\newcommand{\CC}{\mathcal{C}}
\newcommand{\dd}{\,\mathrm{d}}
\newcommand{\Bs}{\mathcal{B}^\star}
\newcommand{\bb}{\mathfrak{b}}
\newcommand{\lone}{L^1((0,t);\R)}
\newcommand{\lloc}{L^1_{\mathrm{loc}}(\R_+)}
\newcommand{\opnorm}[1]{\left|\mkern-1.5mu\left|\mkern-1.5mu\left| #1 \right|\mkern-1.5mu\right|\mkern-1.5mu\right|}
\newcommand{\eval}{\textup{\textsc{e}}}
\DeclareMathOperator{\Br}{Br}
\DeclareMathOperator{\ad}{ad}
\DeclareMathOperator{\vect}{span}
\newcommand{\supp}{\operatorname{supp}}
\newcommand{\codim}{\operatorname{codim}}
\newcommand{\intset}[1]{\llbracket #1 \rrbracket}
\newcommand{\cZ}[1]{\mathcal{Z}_{\intset{1,#1}}}
\newcommand{\ZInf}{\mathcal{Z}_{\llbracket 1, \infty\llbracket}}
\newcommand{\vocab}[1]{``\underline{\emph{#1}}''}
\newcommand{\bref}[1]{(\hyperref[#1]{#1})}
\newcounter{proofcounter}
\newcounter{stepcounter}[proofcounter]
\newcommand{\step}[1]{%
  \ifnum\value{stepcounter}>0 \medskip\fi
  \refstepcounter{stepcounter}\noindent\emph{Step \thestepcounter: #1.}}
\newcommand{\eqstep}[1]{%
  \ifnum\value{stepcounter}>0 \medskip\fi
  \refstepcounter{stepcounter}\noindent\emph{Step \thestepcounter: #1}}
\begin{document}

\maketitle

\begin{abstract}
    We present a unified approach for determining and proving obstructions to small-time local controllability of scalar-input control systems.
    Our approach views obstructions to controllability as resulting from interpolation inequalities between the functionals associated with the formal Lie brackets of the system.

    Using this approach, we give compact unified proofs of all known necessary conditions, we prove a conjecture of 1986 due to Kawski, and we derive entirely new obstructions.
    Our work doubles the number of previously-known necessary conditions, all established in the 1980s.
    In particular, for the third quadratic bracket, we derive a new necessary condition which is complementary to the Agrachev--Gamkrelidze sufficient ones.
    
    We rely on a recent Magnus-type representation formula for the state, a new Hall basis of the free Lie algebra over two generators, an appropriate use of Sussmann's infinite product to compute the Magnus expansion, and Gagliardo--Nirenberg interpolation inequalities.
\end{abstract}

\setcounter{tocdepth}{1}
\tableofcontents

\section{Introduction}

\subsection{Scalar-input control-affine systems}

In this article, we consider an affine control system
\begin{equation} \label{syst}
\dot{x}(t)=f_0(x(t))+u(t) f_1(x(t)) 
\end{equation}
where the state $x(t)$ lives in $\R^d$ ($d \geq 1$), the control is a scalar input $u(t) \in \R$, $f_0$ and $f_1$ are vector fields on $\R^d$, analytic on a neighborhood of $0$, such that $f_0(0)=0$. 
These assumptions are valid for the whole article and will not be recalled in the statements.
Nevertheless, the analyticity assumption can be removed, and all our results hold assuming only finite regularity on the vector fields, as we prove in \cref{s:C^k}.

For each $t > 0$ and $u \in \lone$, there exists a unique maximal mild solution to \eqref{syst} with initial data $0$, which we will denote by $x(\cdot;u)$.
We will consider small enough controls and small enough times so that this solution is defined up to time $t$.

\subsection{Definitions of small-time local controllability}
\label{s:intro-def}

In this article, we study the small-time local controllability of system \eqref{syst} in the sense of \cref{Def:WmSTLC} below, which requires the following notions.

For $t>0$ and $m \in \N$, we consider the usual Sobolev space $W^{m,\infty}(0,t)$ equipped with the usual norm
$\|u\|_{W^{m,\infty}}:=\|u\|_{L^\infty} + \dotsb + \|u^{(m)}\|_{L^\infty}$. 
For $j \in \N$, we define by induction the iterated primitives of $u$, denoted $u_j : (0,t) \to \R$ and defined by: $u_0 := u$ and $u_{j+1}(t) = \int_0^t u_j$.
For $p \in [1,\infty]$, we let
\begin{equation} \label{def:norm_W-1}
    \|u\|_{W^{-1,p}} := \|u_1\|_{L^p}.
\end{equation}
For scalar-input systems such as \eqref{syst}, the $W^{-1,\infty}$ norm of the control is important because it is an accurate measure of the size of the state (see \cref{p:small-state} and \cite[Lemma~20]{JDE}). 

For $m\in\mathbb{Z}$, we use the notation $\llbracket m , \infty \llbracket$ for $[m,\infty) \cap \mathbb{Z}$.

\begin{definition}[$W^{m,\infty}$-STLC] \label{Def:WmSTLC}
    Let $m \in \llbracket -1 , \infty \llbracket$.
    We say that system \eqref{syst} is \emph{$W^{m,\infty}$-STLC} when, for every $t,\rho>0$, there exists $\delta=\delta(t,\rho)>0$ such that, for every $x^\star \in B(0,\delta)$, there exists $u\in W^{m,\infty}((0,t);\R) \cap \lone$ with $\|u\|_{W^{m,\infty}} \leq \rho$, such that $x(t;u)=x^\star$.
\end{definition}

Any positive answer to the STLC problem may be thought of as a nonlinear local open mapping theorem, which underlines the deepness and intricacy of this problem, when the inverse mapping theorem (or linear test, see \cite[Section 3.1]{zbMATH05150528}) cannot be used.

STLC in the literature usually corresponds to what we refer to as $L^\infty$-STLC (i.e.\ $m=0$ in \cref{Def:WmSTLC} above), where controls have to be arbitrarily small in $L^\infty$ norm (see e.g.~\cite[Definition 3.2]{zbMATH05150528} or STLC$_\varepsilon$ in \cite{zbMATH04154295}). 
Sometimes (see \cite{MR935375,MR710995}) authors investigate the $\rho$-bounded-STLC: $\rho>0$ is fixed and system \eqref{syst} is $\rho$-bounded-STLC if, for every $t>0$, there exists $\delta>0$ such that, for every $x^\star \in B(0,\delta)$, there exists $u\in L^\infty(0,t)$ with $\|u\|_{L^\infty} \leq \rho$ such that $x(t;u)=x^\star$.

For any $m\in\N^*$, $\rho > 0$ and $t \in (0,1)$, $\|u\|_{W^{-1,\infty}} \leq t \|u\|_{L^\infty} \leq \|u\|_{W^{m,\infty}}$ thus
\begin{equation} \label{Wm-STLC_implications}
    (W^{m,\infty}\text{-STLC}) \Rightarrow 
    (L^\infty\text{-STLC}) \Rightarrow 
    (\rho\text{-bounded-STLC}) \Rightarrow (W^{-1,\infty}\text{-STLC}),
\end{equation}
where any reciprocal implication is false (see \cref{s:chain}).
See also \cite{boscain2021local} for a recent comparison of various controllability definitions.
The interest of the $W^{-1,\infty}$-STLC is that it is equivalent to the small-state small-time local controllability for scalar-input systems (see \cite[Section 8.2]{JDE}). 

\bigskip

In the excellent survey \cite{zbMATH04154295}, Kawski recalls the known necessary conditions (see Theorems 3.1, 3.4 and 3.5 therein) and sufficient conditions (see Theorems 3.6 ad 3.7 therein) for $L^\infty$-STLC. 
Then he explains, on clever examples, the obstacles that a more complete theory has to overcome. 
Kawski's survey is at the root of the present article: our main results are generalizations to any systems, of its observations on particular examples which will be recalled and discussed later in the present article (see \cref{Subsec:Ex_Wk,Subsec:Ex_W2,Subsec:Ex_W3,s:D-examples}).

\subsection{Algebraic notations and Lie brackets}
\label{s:notations-algebra}

The STLC is closely related to the evaluations at $0$ of the iterated Lie brackets of the vector fields $f_0$ and $f_1$. 
We, therefore, introduce the following definitions and notations.

\bigskip

Let $X := \{X_0,X_1\}$ be a set of two \emph{non-commutative indeterminates}.

\begin{definition}[Free algebra]
    \label{def:free-algebra}
    We consider $\mathcal{A}(X)$ the \emph{free algebra} generated by $X$ over the field $\R$, i.e.\ the unital associative algebra of polynomials of the indeterminates $X_0$ and $X_1$.
\end{definition}

\begin{definition}[Free Lie algebra]
    Within $\mathcal{A}(X)$ one can define the Lie bracket of two elements as $[a,b] := ab - ba$. 
    This operation is anti-symmetric and satisfies the Jacobi identity.
    Let $\mathcal{L}(X)$ be the \emph{free Lie algebra} generated by $X$ over the field $\R$, i.e.\ the smallest linear subspace of $\mathcal{A}(X)$ containing $X$ and stable by the Lie bracket $[\cdot,\cdot]$.
\end{definition}

\begin{definition}[Iterated brackets] \label{def:BR-Eval}
    Let $\Br(X)$ be the \emph{free magma over $X$}, or, more visually, the set of \emph{iterated brackets} of elements of $X$, defined by induction: $X_0, X_1 \in \Br(X)$ and if $a, b \in \Br(X)$, then the ordered pair $(a,b)$ belongs to $\Br(X)$. 

    There is a natural \emph{evaluation} mapping $\eval$ from $\Br(X)$ to $\mathcal{L}(X)$ defined by induction by $\eval(X_i) := X_i$ for $i=0,1$ and $\eval((a, b)) := [\eval(a), \eval(b)]$.  Through this mapping, $\Br(X)$ spans $\mathcal{L}(X)$.
\end{definition}
    
\begin{definition}[Homogeneous layers within $\mathcal{L}(X)$] 
    For $b \in \Br(X)$, $n_0(b)$ (respectively $n_1(b)$) denotes the number of occurrences of the indeterminate $X_0$ (resp.\ $X_1$) in $b$.
    For $A_1, A_0 \subset \N$, $S_{A_1}(X)$ and $S_{A_1,A_0}(X)$ are the vector subspaces of $\mathcal{L}(X)$ defined by
    \begin{align}
        \label{eq:S_A1}
        S_{A_1}(X) & := \vect\{ \eval(b) ; b \in \Br(X), n_1(b) \in A_1 \}, \\
        \label{eq:S_A1A0}
        S_{A_1,A_0}(X) & := \vect\{\eval(b);b\in\Br(X), n_1(b)\in A_1, n_0(b) \in A_0\}.
    \end{align}
    For $i,j \in \N$, we write\footnote{Some authors use the notation $S_i(X)$ for what is referred to here as $S_{\intset{1,i}}(X)$.} $S_i(X)$ and $S_{i,j}(X)$ instead of $S_{\{i\}}(X)$ and $S_{\{i\},\{j\}}(X)$.
\end{definition}

\begin{definition}[Bracket integration $b0^\nu$] \label{def:0nu}
    For $b \in \Br(X)$ and $\nu \in \N$, we use the unconventional short-hand $b 0^\nu$ to denote the right-iterated bracket $(\dotsb(b, X_0), \dotsc, X_0)$, where $X_0$ appears $\nu$ times.
\end{definition}

\begin{definition}[Lie bracket of vector fields]
    For smooth vector fields $f$ and $g$, we define
    \begin{equation}
        [f,g] := (Dg) f - (Df) g.
    \end{equation}
\end{definition}

\begin{definition}[Evaluated Lie bracket]
    \label{Def:evaluated_Lie_bracket}
    Let $f_0, f_1$ be $\mathcal{C}^\infty$ vector fields on an open subset $\Omega$ of $\R^d$ and $f=\{f_0,f_1\}$.
    For $B \in \mathcal{L}(X)$, we define $f_B:=\Lambda(B)$, where $\Lambda:\mathcal{L}(X) \to \CC^\infty(\Omega;\R^d)$ is the unique homomorphism of Lie algebras such that $\Lambda(X_0)=f_0$ and $\Lambda(X_1) = f_1$.

    To simplify the notation, we will write $f_b$ instead of $f_{\eval(b)}$ when $b \in \Br(X)$. 
    The vector field $f_b$ is obtained by replacing the indeterminates $X_i$ with the corresponding vector field $f_i$ in the formal bracket $b$. 
    For instance if $b=(X_1,(X_0,X_1))$ then $f_{b}=[f_1,[f_0,f_1]]$ and if $B=\alpha_1 \eval(b_1) + \dots + \alpha_n \eval(b_n) \in \mathcal{L}(X)$ where $b_1,\dots,b_n \in \Br(X)$ and $\alpha_1,\dots,\alpha_n \in \R$ then $f_B=\alpha_1 f_{b_1}+\dots+\alpha_n f_{b_n}$.

    Eventually, for a subset $\mathcal{N}$ of $\Br(X)$ we use the notation
    \begin{equation}
        \mathcal{N}(f)(0) := \vect \{ f_b(0) ; b \in \mathcal{N} \} \subset \R^d.
    \end{equation}
\end{definition}

All the known necessary conditions for STLC are stated in the following way. 
One focuses on a ``bad'' bracket $\bb \in \Br(X)$ and one identifies a subset $\mathcal{N}$ of $\Br(X)$ containing all the brackets susceptible to neutralize $\bb$.
Then the necessary condition for STLC is $f_{\bb}(0) \in \mathcal{N}(f)(0)$.

This is linked with Krener's fundamental result \cite[Theorem 1]{zbMATH03385496}, which states that, if two control systems of the form \eqref{syst} have linearly isomorphic brackets evaluated at $0$, then they are diffeomorphic.
Thus the entire information about STLC is contained in the subset of $\R^d$ made of the evaluations at $0$ of the Lie brackets of the vector fields $f_0$ and $f_1$.

\subsection{A new basis of the free Lie algebra}

In this article, we construct a new basis of the free Lie algebra $\mathcal{L}(X)$, which is of the form $\eval(\Bs)$, where $\Bs$ is a Hall set of $\Br(X)$ (see \cref{def:Hall}). 
All our results are expressed using this basis.

The main interest of~$\Bs$ is the particular form of the associated coordinates of the second kind, which appear to be very well suited for control results and functional analysis (see \cref{s:xi-B*-123445}).
In particular, a key point is that $\Bs$ seems to allow to immediately guess from the structure of a Lie bracket and/or from its associated coordinate of the second kind if this bracket will lead to an obstruction or not.
As noted by Kawski in \cite[Section 4]{zbMATH04154295}, such a feature (``splitting'' bad and good Lie brackets) is not satisfied by the usual length-compatible or Chen--Fox--Lyndon bases of the free Lie algebra. 
We plan to investigate further this property of $\Bs$ in a forthcoming paper.

The first elements of $\Bs$ are given explicitly in the following statement. 

\begin{proposition} \label{p:B*-12345}
    The first $X_1$-homogeneous layers 
    $\Bs_k:=\{b\in\Bs;n_1(b)=k\}$
    of our basis $\Bs$ are 
    \begin{align}
        \label{Bstar_S1}
        \Bs_1 & = \{ M_\nu \}, \\
        \label{Bstar_S2}
        \Bs_2 & = \{ W_{j,\nu} \}, \\
        \label{Bstar_S3}
        \Bs_3 & = \{ P_{j,k,\nu} ;  j\leq k \}, \\
        \label{Bstar_S4}
        \Bs_4 & = \{ Q_{j,k,l,\nu} ; j \leq k \leq l \} \cup \{ Q^\sharp_{j,\mu,k,\nu}; j < k \} \cup \{Q^\flat_{j,\mu,\nu} \}, \\
        \label{Bstar_S5}
        \Bs_5 & = \{ R_{j,k,l,m,\nu} ; j\leq k \leq l \leq m \} \cup \{R^\sharp_{j,k,l,\mu,\nu}; j \leq k \},
    \end{align}
    where, implicitly, $j, k, l, m \in \N^*$, $\mu,\nu\in\N$ and we define (using the notation $0^\nu$ of \cref{def:0nu}),
    \begin{align}
        \label{def:Mj}
        & M_\nu := X_1 0^\nu, \\
        \label{def:Wjnu}
        & W_{j,\nu} := (M_{j-1},M_j) 0^\nu, \\
        \label{def:Pljnu}
        & P_{j,k,\nu} := (M_{k-1},W_{j,0}) 0^\nu, \\
        \label{def:Q}
        & Q_{j,k,l,\nu} := (M_{l-1},P_{j,k,0}) 0^\nu,
        \quad
        Q^\sharp_{j,\mu,k,\nu} := (W_{j,\mu},W_{k}) 0^\nu,
        \quad
        Q^\flat_{j,\mu,\nu} := (W_{j,\mu},W_{j,\mu+1}) 0^\nu, \\
        \label{def:R}
        & R_{j,k,l,m,\nu} :=  (M_{m-1},Q_{j,k,l,0}) 0^\nu,
        \quad
        R^\sharp_{j,k,l,\mu,\nu} := (W_{l,\mu},P_{j,k,0}) 0^\nu.
    \end{align}
    To lighten the notations, $W_j$, $P_{j,k}$ and $Q_{j,k,l}$ will denote $W_{j,0}$, $P_{j,k,0}$ and $Q_{j,k,l,0}$.

    Moreover, to avoid cluttering the formulas, all these symbols will indifferently denote either the elements of $\Br(X)$ themselves or their evaluation by $\eval$ in $\mathcal{L}(X)$ (recall \cref{def:BR-Eval}).
\end{proposition}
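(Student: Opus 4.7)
The plan is to work directly from the specific Hall ordering on $\Br(X)$ that defines $\Bs$ (introduced in Definition~\ref{def:Hall}) and to enumerate, layer by layer, all Hall brackets with a prescribed value of $n_1$. The proof is then a combinatorial verification: for each $k \in \intset{1,5}$, I first list the Hall brackets $b$ with $n_1(b)=k$ whose outer shape is \emph{not} of the form $(\cdot, X_0)$, and then recover $\Bs_k$ by right-iterating each of these with $X_0$ (the $0^\nu$ tail), using the standard fact that $(c,X_0)$ is Hall whenever $c$ is a Hall bracket different from $X_0$.

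For $\Bs_1$, the only Hall brackets with $n_1 = 1$ are obtained by right-bracketing $X_1$ with $X_0$ repeatedly, which produces exactly $M_\nu$. For $\Bs_2$, any non-trailing-$X_0$ Hall bracket $(a,b)$ with $n_1 = 2$ satisfies $n_1(a)=n_1(b)=1$, so $a$ and $b$ are of type $M_\cdot$; the Hall comparison then pins the pair uniquely down to $(M_{j-1}, M_j)$, yielding $W_{j,\nu}$. For $\Bs_3$, the $X_1$-weight split is $1+2$; the two factors are some $M_{k-1}$ and some $W_j$, and the Hall ordering forces $W_j$ to appear as the second factor together with the admissibility constraint $j \leq k$, producing $P_{j,k,\nu}$.

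For $\Bs_4$, the admissible splits are $1+3$ and $2+2$. The $1+3$ case yields $(M_{l-1}, P_{j,k})$, i.e.\ $Q_{j,k,l,\nu}$, where the inequality $j \leq k \leq l$ follows by combining Hall admissibility of $P_{j,k}$ with the comparison between $M_{l-1}$ and $P_{j,k}$. The $2+2$ case involves pairs of $W$-type brackets, and the Hall ordering produces two subfamilies: $Q^\sharp_{j,\mu,k,\nu} = (W_{j,\mu}, W_{k}) 0^\nu$ when $j<k$, and the ``diagonal'' family $Q^\flat_{j,\mu,\nu} = (W_{j,\mu}, W_{j,\mu+1}) 0^\nu$ which arises when the two $W$-factors share the same $j$. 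For $\Bs_5$, the analogous analysis on the splits $1+4$ and $2+3$ yields $R_{j,k,l,m,\nu}$ (from $1+4$, with $j\leq k\leq l\leq m$) and $R^\sharp_{j,k,l,\mu,\nu}$ (from $2+3$, with $j\leq k$).

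The main obstacle is the $2+2$ and $2+3$ cases: here one must carefully compare two $W$-type brackets of possibly different $j$-indices and trailing $X_0$-multiplicities, and verify that the defining ordering of $\Bs$ (i) rules out mirrored brackets such as $(W_{k,\mu}, W_{j,\nu})$ with $k>j$, (ii) correctly singles out the diagonal situation $j=k$ as producing only $Q^\flat$ and excludes $(W_{j,\mu},W_{j,\mu})$, and (iii) in the $2+3$ split, forces the $P$-factor to be placed on the right with the inequality $j\leq k$ inherited from $P_{j,k}$ being Hall. Once these comparisons are pinned down by the ordering, the remaining verifications that every bracket listed in \eqref{Bstar_S1}--\eqref{Bstar_S5} is Hall, that no two are equal, and that no other Hall bracket of the relevant $n_1$ exists, reduce to a mechanical, if lengthy, application of the recursive Hall-membership test.
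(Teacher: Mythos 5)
Your proposal is correct and follows essentially the same route as the paper's own proof: both proceed by enumerating the germs of $\Bs_k$ via the admissible splits $n_1(a)+n_1(b)=k$ of the top bracket, using the observation that germs $b$ of trailing-$X_0$-free form generate the layer by iterated right-bracketing with $X_0$, and that Hall admissibility of $(a,b)$ forces $b$ to be a germ (otherwise $\lambda(b)$ has strictly larger $n_1$ than $a$) except in the diagonal case $a^*=b^*$ which yields $\ad_a^2(X_0)$. You correctly single out the $2+2$ and $2+3$ cases as the delicate ones; the resolution in each case is exactly the comparison of $\lambda(b)$ with $a$ via the order (B1)--(B2), which the paper carries out explicitly (e.g.\ producing $Q^\flat_{j,\mu,\nu}$ from the diagonal $2+2$ split and leaving $l$ unconstrained in $R^\sharp_{j,k,l,\mu,\nu}$). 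The only caveat is that your outline defers the mechanical Hall-admissibility checks, which the paper writes out in full, but the structure is identical.
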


We only write explicitly the elements $\Bs_{\intset{1, 5}}$, because notations for the elements of $\Bs_{\llbracket 6, \infty \llbracket}$ will not be required in the sequel. 
Of course, the list could go further, albeit with increasing complexity.

One could also probably extend our construction of $\Bs$ to the case of control systems with multiple inputs. 
For such systems, one needs a basis of the free Lie algebra over $\{ X_0, X_1, \dotsc, X_q \}$.
Many such extended constructions could be proposed and the ``correct'' one might depend on the intended applications.
We discuss some key structural features of $\Bs$ (which could be preserved with multiple inputs) in \cref{rk:Bs-vs-CFL}.

\subsection{Main results: old and new necessary conditions}
\label{s:main-results}

First, we recover (slightly improved versions of) the necessary conditions for STLC, due to Sussmann \cite[Proposition 6.3]{MR710995} (for $k = 1$) and Stefani \cite[Theorem 1]{MR935375} (for $k > 1$), concerning the strongest obstruction at each even order of the control, which were historically derived for the stronger $\rho$-bounded-STLC notion (recall the implications \eqref{Wm-STLC_implications}).

\begin{theorem} \label{Thm_Stefani}
    If system \eqref{syst} is $W^{-1,\infty}$-STLC (or, equivalently, small-state-STLC), then
    \begin{equation} \label{Stefani}
    \forall k \in \N^*,\quad \ad_{f_1}^{2k}(f_0)(0) \in S_{\llbracket 1 , 2k-1\rrbracket}(f)(0).
    \end{equation}
\end{theorem}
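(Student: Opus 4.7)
The plan is to proceed by contraposition. Assume $\ad_{f_1}^{2k}(f_0)(0) \notin S_{\intset{1, 2k-1}}(f)(0)$ and construct a definite-sign obstruction to controllability. By finite-dimensional Hahn--Banach, pick a linear form $\ell : \R^d \to \R$ vanishing on $S_{\intset{1, 2k-1}}(f)(0)$ and satisfying $\ell\!\left(\ad_{f_1}^{2k}(f_0)(0)\right) = 1$. The goal is then to show that $\ell(x(t; u)) > 0$ for every nontrivial control $u$ sufficiently small in the $W^{-1,\infty}$ norm, which forbids reaching any target $x^\star$ with $\ell(x^\star) < 0$ and thus contradicts $W^{-1,\infty}$-STLC.

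The main tool is the Magnus-type representation formula written on the Hall basis $\eval(\Bs)$ of $\mathcal{L}(X)$,
\begin{equation*}
x(t; u) = \sum_{b \in \Bs} \xi_b(t, u)\, f_b(0) + r(t, u),
\end{equation*}
where the $\xi_b$ are the coordinates of the second kind (explicit iterated integrals of $u$ and its primitives $u_j$) and $r$ is a truncation remainder. The ``bad'' bracket $\bb := \ad_{X_1}^{2k}(X_0)$ is the iterated right-combed element $(X_1, (X_1, \dots, (X_1, X_0)))$ of $\Bs_{2k}$ (obtained inductively as a $Q_{1,\dots,1,0}$-type member of the basis). Its coordinate of the second kind takes the coercive form
\begin{equation*}
\xi_\bb(t, u) = c_k \int_0^t u_1(\tau)^{2k}\, \dd\tau
\end{equation*}
for some constant $c_k > 0$, and is thus strictly positive whenever $u \not\equiv 0$.

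Applying $\ell$ annihilates every bracket $b$ with $n_1(b) \leq 2k - 1$, leaving
\begin{equation*}
\ell(x(t;u)) = c_k \int_0^t u_1^{2k}\, \dd\tau + \sum_{\substack{b \in \Bs,\, b \neq \bb \\ n_1(b) \geq 2k}} \xi_b(t, u)\,\ell(f_b(0)) + \ell(r(t, u)).
\end{equation*}
The heart of the proof is to absorb every term on the right-hand side into the leading coercive term. By the structural ``splitting'' property announced for the basis $\Bs$, any coordinate $\xi_b$ with $b \neq \bb$ and $n_1(b) \geq 2k$ is an iterated integral involving either a power of $u_1$ strictly greater than $2k$, or primitives $u_j$ ($j \geq 2$) that can be traded against $\|u_1\|_{L^\infty}$ via integration by parts. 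Combining the elementary estimate $\int_0^t u_1^p \leq \|u_1\|_{L^\infty}^{p - 2k} \int_0^t u_1^{2k}$ (for $p > 2k$) with Gagliardo--Nirenberg interpolation between $L^{2k}$ and $L^\infty$ to convert extra derivatives into $u_1^{2k}$-weight, and recalling that $\|u_1\|_{L^\infty} = \|u\|_{W^{-1,\infty}}$ can be taken arbitrarily small, the whole sum is dominated by $o(1) \cdot \int_0^t u_1^{2k}$ as $\|u\|_{W^{-1,\infty}} \to 0$. The Magnus remainder $r$ is handled analogously, its order in $u$ being strictly greater than $2k$.

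Putting the pieces together yields $\ell(x(t; u)) \geq \tfrac{c_k}{2} \int_0^t u_1^{2k} > 0$ for every nonzero $u$ sufficiently small in $W^{-1,\infty}$, the sought obstruction. The main difficulty is the interpolation step: verifying that no competitor coordinate $\xi_b$ with $b \neq \bb$ and $n_1(b) \geq 2k$ matches the homogeneity profile of $\xi_\bb$ in a way that prevents the $o(1)$ domination. This is precisely the feature for which the basis $\Bs$ is engineered, and where its advantage over classical length-compatible or Chen--Fox--Lyndon bases becomes crucial.
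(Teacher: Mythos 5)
Your high-level plan — contraposition, choose a linear form $\ell$ killing $S_{\intset{1,2k-1}}(f)(0)$ with $\ell\bigl(\ad_{f_1}^{2k}(f_0)(0)\bigr)=1$, then isolate the positive-definite coordinate $\xi_{\ad_{X_1}^{2k}(X_0)}(t,u)=\tfrac{1}{(2k)!}\int_0^t u_1^{2k}$ and absorb everything else by interpolation — is the right shape, and it does match the paper's strategy. But there is a genuine gap right at the centre: you write the Magnus representation as $x(t;u)=\sum_{b\in\Bs}\xi_b(t,u)\,f_b(0)+r(t,u)$ with $\xi_b$ the coordinates of the \emph{second kind}. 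That formula is false. The Magnus-type expansion of \cref{thm:Key_1} is indexed by the coordinates of the \emph{pseudo-first kind} $\eta_b$, which are $\xi_b$ plus ``cross terms'' built from products $\xi_{b_1}\dotsb\xi_{b_q}$ via the CBHD formula; and \cref{s:pollutions} gives an explicit system showing that the ``pure $\xi$'' variant does \emph{not} satisfy the error estimate you would need to close your argument. Your interpolation step can absorb $\|u_1\|_{L^p}^p$ with $p>2k$ and terms with higher primitives $u_j$, but it cannot absorb the cross terms — in particular the boundary term of size $|u_1(t)|^{2k}$ coming from $\eta_\bb-\xi_\bb$, which is not an integral and is not $o(1)\cdot\int u_1^{2k}$ merely because $\|u_1\|_{L^\infty}$ is small.

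To fill the gap one must use the geometry of the system. The hypothesis $\ad_{f_1}^{2k}(f_0)(0)\notin S_{\intset{1,2k-1}}(f)(0)$ implies the vectorial relation $f_1(0)\neq 0$, and from this one derives a closed-loop estimate $|u_1(t)|=O(|x(t;u)|+\|u_1\|_{L^1})$: the boundary value of $u_1$ is slaved to the state. Only then is $|u_1(t)|^{2k}$ dominated by $|x(t;u)|^{2k}+t^{2k-1}\|u_1\|_{L^{2k}}^{2k}$ and thus absorbable. Your proposal has no trace of this closed-loop step. Relatedly, your concluding inequality $\ell(x(t;u))\geq\tfrac{c_k}{2}\int_0^t u_1^{2k}>0$ for all nonzero small $u$ is too strong and cannot hold as stated (it would say the image of $\ell$ on small trajectories is always strictly positive, yet some admissible controls may steer the state into $\ker\ell$); the correct estimate is the drift form $\mathbb{P}x(t;u)\geq(1-\varepsilon)\xi_\bb(t,u)-C|x(t;u)|^\beta$ with $\beta>1$, after which one rules out reaching $-a\,f_\bb(0)$ by the elementary argument of \cref{lem:drift-stlc}.
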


Then we prove the following necessary conditions for controllability on the Lie brackets $W_{k}$ for $k \in \N^*$ (see \eqref{def:Wjnu}), which we call \vocab{quadratic Lie brackets}, as they involve $X_1$ twice.

\begin{theorem} \label{Thm:Kawski_Wm}
    Let $m \in \llbracket -1 , \infty \llbracket$.
    If system \eqref{syst} is $W^{m,\infty}$-STLC, then
    \begin{equation} \label{Kawski_Wm_conj}
        \forall k \in \N^*, \quad f_{W_k}(0)\in S_{\llbracket 1 , \pi(k,m) \rrbracket \setminus\{2\}}(f)(0)
    \end{equation}
    where
    \begin{equation} \label{eq:pikm}
        \pi(k,m) := 1 + \left\lceil \frac{2k-2}{m+1} \right\rceil
    \end{equation}
    with the convention $\pi(k,-1)=+\infty$ and $\pi(1,-1) = 1$.
\end{theorem}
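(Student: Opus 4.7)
The plan is to apply the standard obstruction strategy: supposing the conclusion fails, use Hahn--Banach to reduce to a scalar obstruction, expand via the Magnus-type representation in the Hall basis $\Bs$, and show that the expansion has a definite sign incompatible with $W^{m,\infty}$-STLC.

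First I would argue by contradiction. Assuming $f_{W_k}(0) \notin S_{\llbracket 1,\pi(k,m)\rrbracket\setminus\{2\}}(f)(0)$, I pick a linear form $\lambda \in (\R^d)^*$ vanishing on that subspace with $\lambda(f_{W_k}(0)) > 0$. Using the Magnus-type representation announced in the abstract and developed around \cref{s:xi-B*-123445}, I write, for $u \in W^{m,\infty}$ with $\|u\|_{W^{m,\infty}} \leq \rho$,
\[
    \lambda(x(t;u)) = \sum_{b \in \Bs} \lambda(f_b(0))\,\xi_b(t,u) + \lambda(R(t,u)).
\]
Terms with $n_1(b) = 0$ vanish because $f_0(0) = 0$, and terms with $n_1(b) \in \llbracket 1,\pi(k,m)\rrbracket\setminus\{2\}$ vanish by the choice of $\lambda$. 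Only the quadratic layer $\Bs_2 = \{W_{j,\nu}\}$ and the high-order contributions with $n_1(b) > \pi(k,m)$ (absorbed into $R$) remain.

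Then I would apply the integration-by-parts identity $u_{j-1}u_j = \tfrac12(u_j^2)'$ to the iterated-integral form of the coordinates $\xi_{W_{j,\nu}}$, obtaining (up to a universal constant fixed by inspection)
\[
    \xi_{W_{j,0}}(t,u) \propto u_j(t)^2, \qquad \xi_{W_{j,\nu}}(t,u) \propto \int_0^t (t-s)^{\nu-1}\, u_j(s)^2 \dd s \quad (\nu \geq 1),
\]
so each quadratic coordinate is sign-definite and the $W_k$-term alone yields a contribution of constant sign proportional to $\lambda(f_{W_k}(0))\, u_k(t)^2 \geq 0$. It then remains to dominate all other terms in the expansion by this main contribution for $u$ of small $W^{m,\infty}$-norm.

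The hard part will be this domination step, which I would carry out using Gagliardo--Nirenberg interpolation between the negative norm $\|u\|_{W^{-1,\infty}}$ and the positive norm $\|u\|_{W^{m,\infty}}$. The threshold $\pi(k,m) = 1 + \lceil (2k-2)/(m+1)\rceil$ must be calibrated exactly so that every bracket with $n_1(b) > \pi(k,m)$ yields, after interpolation, a contribution of size $u_k(t)^2$ times a positive power of $\rho$, and similarly for every $\xi_{W_{j,\nu}}$ with $(j,\nu) \neq (k,0)$ inside the quadratic layer. The main obstacle is the careful book-keeping of the powers of $t$, $\|u\|_{W^{-1,\infty}}$ and $\|u\|_{W^{m,\infty}}$ contributed by each factor of the iterated integral expression of $\xi_b$, so that the interpolation exponents match $\pi(k,m)$. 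Once achieved, the estimate gives $\lambda(x(t;u)) \geq -u_k(t)^2 \cdot o(1)$ as $\rho \to 0$, which prevents reachability of targets in the half-space $\{\lambda < 0\}$ arbitrarily close to $0$ and contradicts $W^{m,\infty}$-STLC.
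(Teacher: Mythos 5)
Your plan has the right shape at a high level (contraposition, Hahn--Banach to reduce to a scalar $\lambda$, Magnus expansion in $\Bs$, interpolation), but two steps are wrong in a way that the paper explicitly guards against, and a third necessary ingredient is missing entirely.

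\textbf{First gap.} You write the expansion as $\lambda(x(t;u)) = \sum_b \lambda(f_b(0))\,\xi_b(t,u) + \lambda(R(t,u))$ directly in terms of the coordinates of the \emph{second} kind $\xi_b$. The actual representation formula, \eqref{eq:x=ZM+O}--\eqref{eq:ZM-etab}, is in terms of the coordinates of the \emph{pseudo-first} kind $\eta_b$, and \cref{s:pollutions} proves (via the explicit counterexample \eqref{eq:syst-no-ZM-pure}) that the ``pure'' $\xi_b$-expansion does \emph{not} satisfy the needed remainder estimate. One must therefore estimate the cross terms $\eta_b - \xi_b$, which are given by products $\xi_{b_1}^{h_1}\dotsb\xi_{b_q}^{h_q}$ coming from the CBHD formula (\cref{Prop:ZM_Bstar}). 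Several of those cross terms — e.g.\ $u_i(t)u_j(t)$ type products from $q=2$ with $b_1, b_2 \in \Bs_1$ — are \emph{not} controlled by $\int_0^t u_k^2$ alone via interpolation. Controlling them is precisely what forces the vectorial relations of \cref{p:quad-vectors} (linear independence of $f_{M_0}(0),\dotsc,f_{M_{k-1}}(0)$, which uses the minimality of $k$) and the closed-loop estimate \eqref{eq:quad-loop}, i.e.\ $|(u_1,\dotsc,u_k)(t)| = O(|x(t;u)| + \|u_1\|_{L^{\vartheta(k)+1}}^{\vartheta(k)+1} + t^{1/2}\|u_k\|_{L^2})$. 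Your plan never accounts for these boundary-in-time quantities, so the domination step fails.

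\textbf{Second gap.} The formula $\xi_{W_{j,0}}(t,u) \propto u_j(t)^2$ is wrong. By \cref{Def:Coord2} and \eqref{xi_Wjnu}, $W_j = \ad^2_{M_{j-1}}(X_0)$, $\xi_{M_{j-1}} = u_j$, $\dot\xi_{X_0}=1$, hence $\xi_{W_{j,0}}(t,u) = \tfrac12\int_0^t u_j^2(s)\dd s$, already an integral; no integration by parts is required, and the coercive main term is $\tfrac12\int_0^t u_k^2$, not $u_k(t)^2$. Your IBP identity $u_{j-1}u_j = \tfrac12(u_j^2)'$ would produce boundary terms $u_j(t)^2$ if applied, and those are exactly the terms that need the closed-loop mechanism to absorb. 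With $u_k(t)^2$ as the main term the final inequality $\lambda(x(t;u)) \geq -u_k(t)^2\cdot o(1)$ would not even deny reachability of targets in the half-space $\{\lambda < 0\}$, since $u_k(t)$ can vanish at the endpoint for controls that still move the state.

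\textbf{Third gap.} The case $m = -1$ requires a genuinely different argument: the interpolation inequality \eqref{eq:coerc-interp} fails in $W^{-1,\infty}$, and the paper handles this with embedded semi-nilpotent systems and the alternative representation formula \eqref{eq:x=ZM+O+Nilp} in \cref{s:m=-1}. Your plan gives no mechanism for $m=-1$ and cannot be repaired there by choosing $M$ larger.

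So: the high-level contradiction structure and the role of the threshold $\pi(k,m)$ in calibrating the interpolation are aligned with the paper, but the expansion you work with is the wrong one, the coercivity expression is the wrong one, and the closed-loop / vectorial-relation machinery (plus the separate $m=-1$ argument) is absent.
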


As particular cases, this result contains necessary conditions on $W_k$ for
\begin{itemize}
    \item $W^{-1,\infty}$-STLC (small-state STLC), which is new and requires particular care (see \cref{s:m=-1}),
    \item $L^\infty$-STLC: $f_{W_k}(0)\in S_{\llbracket 1, 2k-1 \rrbracket \setminus\{2\}}(f)(0)$, which was conjectured in 1986 in \cite[p.\ 63]{MR2635388},
    \item $W^{m,\infty}$-STLC with $1 \leq m \leq 2k-4$, which is a new result,
    \item $W^{2k-3,\infty}$-STLC: $f_{W_k}(0)\in S_{1}(f)(0)$, which we had already proved in \cite[Theorem~3]{JDE}.
\end{itemize}

An interest of condition \eqref{Kawski_Wm_conj} is that it illustrates that some kind of compensation on $f_{W_k}(0)$ is necessary for controllability.
We say that this condition is \vocab{loose} because we only focused on obtaining the optimal threshold $\pi(k,m)$ (see \cref{Subsec_Optim_Wk}), but, within $S_{\intset{3,\pi(k,m)}}(X)$, we did not try to obtain the minimal list of brackets.
Depending on one's needs, our general approach can be used to shrink this list.
As an illustration, one has the following result.

\begin{theorem} \label{thm:S2-S3-intro}
    Let $m \in \llbracket -1 , \infty \llbracket$.
    If system \eqref{syst} is $W^{m,\infty}$-STLC, then, for all $k \in \N^*$ such that $\pi(k,m) \geq 3$ (defined in \eqref{eq:pikm}),
    \begin{equation} \label{eq:S2-S3-comp}
        \quad f_{W_k}(0) \in
        S_1(f)(0) + \mathcal{P}_k(f)(0) + 
        S_{\intset{4,\pi(k,m)}}(f)(0)
    \end{equation}
    where
    \begin{equation} \label{eq:P_k}
        \mathcal{P}_k := \{ P_{j,l,\nu} \in \Bs_3 ; j < k \}
        \subsetneq \Bs_3.
    \end{equation}
\end{theorem}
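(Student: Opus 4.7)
The plan is to rerun the Magnus-expansion / interpolation machinery already deployed for \cref{Thm:Kawski_Wm}, but with a more careful accounting at order three. The key observation is that, within $\Bs_3$, the brackets $P_{j,l,\nu}$ split into a ``useful'' family (those with $j<k$), which really can compensate $W_k$, and an ``inert'' family (those with $j\geq k$), whose coordinate of the second kind is strictly dominated by $|\xi_{W_k}|$ along small $W^{m,\infty}$ controls. Showing this splitting is the only novelty; everything else transfers from the proof of \cref{Thm:Kawski_Wm}.

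I would argue by contradiction. Assuming \eqref{eq:S2-S3-comp} fails, one can pick $\eta\in(\R^d)^*$ vanishing on $S_1(f)(0)+\mathcal{P}_k(f)(0)+S_{\intset{4,\pi(k,m)}}(f)(0)$ with $\eta\cdot f_{W_k}(0)\neq 0$. Expanding $x(t;u)$ in the Hall basis $\Bs$ via the Magnus formula and projecting onto $\eta$ kills every bracket in the compensating set and leaves, at the order relevant to the threshold $\pi(k,m)$, the distinguished term $\xi_{W_k}(t,u)\,\eta\cdot f_{W_k}(0)$, the coordinates $\xi_{W_{j,\nu}}$ for $(j,\nu)\neq(k,0)$, the coordinates $\xi_{P_{j,l,\nu}}$ for $j\geq k$, and a Magnus remainder of strictly higher order.

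The crux is then to establish a Gagliardo--Nirenberg-type estimate
\begin{equation*}
    |\xi_{P_{j,l,\nu}}(t,u)|\lesssim \varepsilon(t,u)\,|\xi_{W_k}(t,u)|,
\end{equation*}
holding uniformly for all $j\geq k$, $l\geq j$, $\nu\geq 0$ with $\eval(P_{j,l,\nu})\in S_{\intset{1,\pi(k,m)}}(X)$, and with $\varepsilon(t,u)\to 0$ as $\|u\|_{W^{m,\infty}}+t\to 0$. This rests on the explicit formulas for the coordinates of the second kind in $\Bs$ recalled in \cref{s:xi-B*-123445}: $\xi_{P_{j,l,\nu}}$ is trilinear in iterated primitives built from $u_{l-1+\cdot}$, $u_{j-1+\cdot}$, $u_{j+\cdot}$, whereas $\xi_{W_k}$ reduces, after integration by parts, to roughly $\tfrac{1}{2}\int_0^t u_k^2$. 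Since $l\geq j\geq k$, every factor of $\xi_{P_{j,l,\nu}}$ carries at least as many integrations as those entering $\xi_{W_k}$, and a H\"older plus Gagliardo--Nirenberg bound calibrated to the admissible exponents at the threshold $\pi(k,m)\geq 3$ should extract the required vanishing factor $\varepsilon(t,u)$. Combining this with the $\Bs_2$-domination and the higher-order estimates already proved for \cref{Thm:Kawski_Wm} yields $\eta\cdot x(t;u)\sim \xi_{W_k}(t,u)\,\eta\cdot f_{W_k}(0)$ of definite sign for sufficiently small controls, contradicting STLC.

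The main obstacle I anticipate is the borderline case $j=k$, where $P_{k,l,\nu}$ and $W_k$ have very close scaling: the strict inequality $j<k$ in the definition of $\mathcal{P}_k$ precisely reflects this threshold, and closing the estimate requires squeezing an extra vanishing factor out of the integration contributed by the factor $M_{l-1}$ with $l\geq k$ inside $P_{k,l,\nu}$. A secondary subtlety is checking that the Magnus remainder does not smuggle the excluded brackets back in via hidden cancellations, which I would control, as in \cref{Thm:Kawski_Wm}, by working with Sussmann's infinite product.
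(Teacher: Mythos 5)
Your high-level plan matches the paper's, and you have correctly identified the central analytic observation: because $\xi_{P_{j,l,\nu}}(t,u)=\alpha_{j,l}\int_0^t\frac{(t-s)^\nu}{\nu!}u_l u_j^2$ is trilinear in primitives of order $\geq k$ when $j,l\geq k$, H\"older extracts an extra factor (roughly $\|u_l\|_{L^\infty}\lesssim t$ for $k\geq 2$) making $\xi_{P_{j,l,\nu}}$ vanish relative to $\xi_{W_k}\approx\tfrac12\|u_k\|_{L^2}^2$. This is exactly what the paper does via \eqref{bound-xiPjknu/j0k0} with $(p_1,p_2,j_0,k_0)=(1,\infty,k,k)$; the borderline case $j=k$ you flag is, as you suspect, not truly borderline once the third factor $u_l$ with $l\geq k$ is isolated.

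However, your sketch has genuine gaps. First, you gloss over how to control the cross terms coming from the CBH rewriting of $\eta_b$ in terms of $\xi_{b_i}$: in the proof one must decide, for each monomial $\xi_{b_1}\dotsb\xi_{b_q}$ arising as the coefficient of a Lie bracket $F_{q,h}(b_1,\dotsc,b_q)$, whether its $\Bs$-expansion lands in the allowed set $\mathcal N$ (and is killed by $\eta$) or must be estimated. The paper needs the algebraic lemma (\cref{p:1+2}) showing $\supp[M_{k-1},W_{j,\nu}]\subset\{P_{j',k',\nu'};\,j'\leq j\}$ to conclude that a dangerous component along some $P_{j',l',\nu'}$ with $j'\geq k$ forces $j\geq k$, so that $\xi_{W_{j,\nu}}$ itself satisfies the good bound; without this you cannot rule out ``smuggled'' contributions as you worry. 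Second, you omit the closed-loop estimates and the associated vectorial relations (linear independence of $f_{M_0}(0),\dotsc,f_{M_{k-1}}(0)$ and transversality to $\mathcal W_k+S_{\intset{3,\vartheta(k)}}(f)(0)$) needed to control the boundary data $|(u_1,\dotsc,u_k)(t)|$ appearing in the remainder; these are not automatic and require their own argument adapted from \cref{p:quad-vectors}. Third, you do not address the case $m=-1$ at all: there the Magnus remainder $\|u_1\|_{L^{M+1}}^{M+1}$ cannot be absorbed by interpolation for any finite $M$, and the paper resorts to embedded semi-nilpotent systems (\cref{s:m=-1}), an entirely separate argument. Finally, your contradiction setup picks $\eta$ once and for all, but to make the projection kill all $W_{j,\nu}$ with $j<k$ and all $P_{j,l,\nu}$ with $j<k$ you must also work with the minimal such $k$ (as in the paper) and observe that the relevant subspaces are stable under right bracketing with $X_0$; otherwise the enlarged $\mathcal N$ of \eqref{eq:quad-N_ref} does not coincide with the space your $\eta$ annihilates.
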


For $k \in \{2,3\}$, a careful analysis allows to refine even more the necessary conditions of \cref{Thm:Kawski_Wm,thm:S2-S3-intro}. 
In particular, in the case $m=0$, we prove the following results.

\begin{theorem} \label{Thm:CN_W123}
    If system \eqref{syst} is $L^\infty$-STLC, then $f_{W_k}(0) \in \mathcal{N}_k(f)(0)$ for $k=1,2,3$, where
    \begin{align}
        \mathcal{N}_1 & := \Bs_1, \\
        \label{def:mathcalE2}
        \mathcal{N}_2 & := \mathcal{N}_1 \cup \{  P_{1,1,\nu} ; \nu \in \N \}, \\
        \label{def:mathcalE3}
        \mathcal{N}_3 & := \mathcal{N}_2 \cup \{ 
        P_{1,l,\nu}, 
        Q_{1,1,1}, 
        Q_{1,1,2,\nu},  
        Q^\flat_{1,0}, Q^\flat_{1,1}, Q^\flat_{1,2},
        R_{1,1,1,1,\nu}, 
        R^\sharp_{1,1,1,\mu,\nu} ; l \in \N^*, \mu,\nu \in \N \}.
    \end{align}
\end{theorem}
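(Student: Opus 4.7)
The case $k=1$ is immediate from \cref{Thm:Kawski_Wm} applied at $m=0$: since $\pi(1,0)=1$, one gets $f_{W_1}(0) \in S_1(f)(0) = \mathcal{N}_1(f)(0)$.

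For $k \in \{2,3\}$, the plan is to follow the interpolation paradigm announced in the abstract. I would argue by contradiction: assume $f_{W_k}(0) \notin \mathcal{N}_k(f)(0)$ and pick $\ell \in (\R^d)^*$ vanishing on $\mathcal{N}_k(f)(0)$ with $\ell(f_{W_k}(0)) \neq 0$. Inserting the Magnus-type representation of $x(t;u)$ along the Hall basis $\Bs$, every bracket $b \in \mathcal{N}_k \cap \Bs$ drops out of $\ell(x(t;u))$, and one is left with a scalar expansion
\[
    \ell(x(t;u)) = \xi_{W_k}(t,u)\,\ell(f_{W_k}(0)) + \sum_{b \in \Bs \setminus \mathcal{N}_k,\ b \neq W_k} \xi_b(t,u)\,\ell(f_b(0)) + \text{(Magnus remainder)},
\]
where the $\xi_b$ are the coordinates of the second kind associated with $\Bs$ (see \cref{s:xi-B*-123445}).

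The heart of the argument is then a bracket-by-bracket interpolation comparison between each $\xi_b$ with $b \in \Bs \setminus \mathcal{N}_k$ (only layers up to $\Bs_3$ must be examined for $k=2$, and up to $\Bs_5$ for $k=3$, higher layers being absorbed by the Magnus remainder in the $L^\infty$-scaling) and the leading functional $\xi_{W_k}$. For each such $b$ I expect a Gagliardo--Nirenberg inequality of the form $|\xi_b(t,u)| \lesssim \|u\|_{L^\infty}^{\alpha(b)}\,|\xi_{W_k}(t,u)|$ with $\alpha(b)>0$, which then forces $\ell(x(t;u)) \sim \xi_{W_k}(t,u)\,\ell(f_{W_k}(0))$ in the regime $\|u\|_{L^\infty}\to 0$; combined with the assumed controllability $\ell(x(t;u))=0$ for a well-chosen control, this gives the desired contradiction. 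The defining feature of $\mathcal{N}_k$ is that its elements are exactly the brackets for which such a strict interpolation bound fails. For example $\xi_{P_{1,1,\nu}}$ sits at the same critical scale as $\xi_{W_2}$, so $P_{1,1,\nu}$ must belong to $\mathcal{N}_2$, whereas $\xi_{P_{1,l,\nu}}$ for $l\geq 2$ carries an extra primitive of $u$ and is strictly smaller, which justifies its exclusion. Analogous scaling considerations dictate every entry of $\mathcal{N}_3$; the subtle point is the asymmetry between $P_{1,l,\nu}$ (admitted for every $l$) and $P_{2,l,\nu}$ (never admitted), reflecting the different quadratic structure of these two families of $\xi$-functionals.

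The main obstacle I anticipate is the combinatorial bookkeeping for $k=3$: one must scan all of $\Bs_2 \cup \Bs_3 \cup \Bs_4 \cup \Bs_5$ and, for every bracket $b$ outside $\mathcal{N}_3$, either establish the Gagliardo--Nirenberg bound that makes its contribution negligible, or reduce it modulo brackets already in $\mathcal{N}_3$ via algebraic manipulations internal to $\Bs$. The layers $\Bs_4$ and $\Bs_5$ are especially delicate because they contain several families ($Q, Q^\sharp, Q^\flat, R, R^\sharp$) and the threshold between ``dominated by $\xi_{W_3}$'' and ``critical'' must be located exactly; the explicit form of the second-kind coordinates for $\Bs$ is what makes this case-split tractable.
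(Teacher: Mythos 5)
You have the right high-level outline (contrapositive, drift along $f_{W_k}(0)$, the Hall basis $\Bs$, the Magnus-type representation, interpolation), but your proposal misses several technical ingredients that the paper's proof actually hinges on, and some of these are not merely bookkeeping but change the shape of the argument.

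First, the proof for $k=2,3$ is \emph{inductive}: the paper proves the contrapositive under the hypotheses $f_{W_1}(0)\in\mathcal{N}_1(f)(0)$ (for $k=2$) and additionally $f_{W_2}(0)\in\mathcal{N}_2(f)(0)$ (for $k=3$). These lower-order conditions are essential to establish the \emph{vectorial relations} (e.g.\ linear independence of $f_{M_0}(0),\dotsc,f_{M_{k-1}}(0)$, and for $k=3$ also the fact that $f_{W_1}(0)\notin\vect\{f_{M_i}(0)\}$ when $f_{W_1}(0)\neq 0$), which in turn feed into the \emph{closed-loop estimates}. You do not mention the induction, so you cannot obtain the vectorial relations.

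Second, you identify the coordinates of the Magnus expansion with the coordinates of the second kind $\xi_b$. They are not the same: the expansion $\cZ{M}$ carries the coordinates of the pseudo-first kind $\eta_b$, and the paper shows explicitly (\cref{s:pollutions}) that the difference $\eta_b-\xi_b$ is \emph{not} negligible in general. Bounding the cross terms $\eta_b-\xi_b$ is the main technical effort: some of them are controlled by Gagliardo--Nirenberg as you suggest, but others require the closed-loop estimates (boundary terms such as $|(u_1,\dotsc,u_k)(t)|$ cannot be bounded by interpolation and must be compared to $|x(t;u)|$ itself), and others still vanish because the bracket they multiply decomposes in $\Bs$ entirely inside $\mathcal{N}$ (the algebraic reductions you gesture at). Your bracket-by-bracket interpolation alone cannot handle the first two kinds of cross terms.

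Third, and most substantively, for $k=3$ the conclusion is \emph{not} a drift in the sense of \cref{Def:drift}: because $\mathcal{N}_3$ contains the sign-definite bracket $Q_{1,1,1}$, whose coordinate $\xi_{Q_{1,1,1}}$ is itself non-negligible, the paper has to track it through a separate closed-loop estimate and obtains the inequality \eqref{drift_Pt} with a time-dependent linear form $\mathbb{P}_{W_3}+t\mathbb{P}$. This is not a cosmetic complication; the example \eqref{ex:W3-time} shows that a genuine drift along $f_{W_3}(0)$ can actually fail while the system is still non-STLC. Your plan "combined with the assumed controllability $\ell(x(t;u))=0$ ... this gives the desired contradiction" does not account for this: the linear form along which you deduce unreachability cannot be chosen to be the one that annihilates exactly $\mathcal{N}_3(f)(0)$.

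Finally, a smaller point: your scaling heuristic for why $P_{1,l,\nu}$ with $l\geq 2$ drops out of $\mathcal{N}_2$ while $P_{1,1,\nu}$ stays is in the right spirit, but the actual mechanism is the bound \eqref{bound-xiPjknu/j0k0} with $\|u_2\|_{L^2}$ on the right followed by Gagliardo--Nirenberg, not a pointwise comparison of $\xi_b$ with $\xi_{W_k}$; and in $\mathcal{N}_3$ the $P_{1,l,\nu}$ are admitted for \emph{all} $l$, precisely because the threshold $M=5$ and the weaker $\|u_3\|$-based interpolation can no longer dominate them.
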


The statement concerning $W_2$ is proved by Kawski in \cite[Theorem~1]{zbMATH04031488}, using the Chen--Fliess expansion and technical results from Stefani \cite{MR935375}. 
We propose a different strategy, that allows us to obtain similarly the condition concerning $W_3$, which is new.
Moreover, the lists $\mathcal{N}_k$ are \vocab{minimal} in the sense that, for any bracket in $\mathcal{N}_k$, we exhibit a system where it restores STLC when in competition with $W_k$.
In the hardest case $k=3$, we prove these controllability results using the Agrachev--Gamkrelidze sufficient condition of \cite[Theorem~4]{AG93}, illustrating that, the necessary condition $f_{W_3}(0) \in \mathcal{N}_3(f)(0)$ is somehow complementary to their sufficient conditions theory.

\bigskip

To go beyond necessary conditions involving quadratic Lie brackets, we prove the following new necessary condition linked with a bracket of the sixth order with respect to the control.

\begin{theorem} \label{thm:sextic}
    If system \eqref{syst} is $L^\infty$-STLC, then
    \begin{equation}
        f_{\ad^2_{P_{1,1}}(X_0)}(0) \in 
        \vect \left\{ f_b(0) ;  b \in \Bs_{\intset{1,7}} , b \neq \ad^2_{P_{1,1}}(X_0) \right\}.
    \end{equation}
\end{theorem}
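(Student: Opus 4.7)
The plan is to follow the paper's unified contradiction scheme. Set $\bb := \ad^2_{P_{1,1}}(X_0)$ and $\mathcal{H} := \Bs_{\intset{1,7}} \setminus \{\bb\}$, and assume that system \eqref{syst} is $L^\infty$-STLC while $f_\bb(0) \notin \vect\{f_b(0); b \in \mathcal{H}\}$. Choose a linear form $\ell$ on $\R^d$ vanishing on $\mathcal{H}(f)(0)$ and normalized so that $\ell(f_\bb(0)) = 1$. The Magnus-type representation of the state, combined with Sussmann's infinite product indexed by the Hall basis $\Bs$, then yields an expansion of the form
\begin{equation*}
  \ell(x(t;u)) = \xi_\bb(t;u) + \sum_{b \in \Bs,\ n_1(b) \geq 8} \ell(f_b(0)) \, \xi_b(t;u) + \mathcal{R}(t;u),
\end{equation*}
where $\xi_b$ is the coordinate of the second kind associated with $b$, and $\mathcal{R}$ collects Magnus remainders controlled by a suitable power of the state.

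The heart of the argument is to extract a \emph{signed} principal part of $\xi_\bb$. Because $\bb = [P_{1,1}, [P_{1,1}, X_0]]$ is the second adjoint action of $P_{1,1}$ on $X_0$, the product-like structure of the coordinates of the second kind associated to $\Bs$ (see \cref{s:xi-B*-123445}) should provide, up to integrable corrections, a formula of the form
\begin{equation*}
  \xi_\bb(t;u) = \tfrac12 \int_0^t \xi_{P_{1,1}}(s;u)^2 \, \dd s + (\text{lower-order corrections}),
\end{equation*}
which is the analogue, at this sextic stage, of the signed functional $\int u_1^{2k}$ driving the Sussmann--Stefani obstruction of \cref{Thm_Stefani}. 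This positivity is the engine of the obstruction.

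The main analytical step is then to show that every $\xi_b$ with $n_1(b) \geq 8$, and the remainder $\mathcal{R}$, are of strictly lower order than this signed quantity. As in the proof of \cref{Thm:Kawski_Wm}, this is achieved by Gagliardo--Nirenberg interpolation between $\|u\|_{W^{-1,\infty}}$ (small because the state is small) and $\|u\|_{L^\infty}$ (arbitrarily small by STLC). Each of the two extra occurrences of $X_1$ beyond $n_1(\bb)=6$ must cost a factor that decays strictly faster than the scaling of $\xi_{P_{1,1}}^2$, so the tail sum is $o(\xi_\bb)$ along any controlling sequence. Hence $\ell(x(t;u))$ keeps a definite sign for small controls, contradicting $L^\infty$-STLC.

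The main obstacle is the uniform treatment of the infinitely many Hall brackets with $n_1 \geq 8$: any individual one is handled by a case-by-case interpolation, but summing over the entire family requires a systematic bound, of the type already encountered in the proof of \cref{thm:S2-S3-intro}. The basis $\Bs$ is specifically designed so that each such coordinate factorizes into an $X_1$-paired part (absorbable into some $\xi_{P_{1,1}}$-type quadratic factor) and isolated $X_1$'s (absorbable into $\|u\|_{W^{-1,\infty}}$), reducing the book-keeping to a polynomial count. A secondary difficulty is proving the signed formula for $\xi_\bb$ itself, which requires an attentive computation of Sussmann's infinite product applied to the nested bracket $[P_{1,1},[P_{1,1},X_0]]$ using the coproduct structure underlying $\Bs$.
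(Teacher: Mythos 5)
Your proposal gets the high-level contraposition scheme right (assume $f_\bb(0)\notin\mathcal N_D(f)(0)$, pick a component $\mathbb P$, show a drift), but it is missing or misstates the three ingredients that actually carry the proof, and the expansion you write down at the start is not the one the method produces.

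First, the representation formula \cref{thm:Key_1} is a \emph{truncation}: one takes $\cZ{7}$, which contains only brackets with $n_1(b)\le 7$, with a remainder $O(\|u_1\|_{L^8}^8+|x|^{8/7})$. There is no sum over $b$ with $n_1(b)\ge 8$; instead the entire problem is (a) to bound the single $\|u_1\|_{L^8}^8$ remainder by $\xi_D$, and (b) to control the difference $\eta_D-\xi_D$ inside $\cZ{7}$, since $\mathbb P\cZ{7}(t,f,u)(0)=\eta_D(t,u)$, not $\xi_D(t,u)$. Your proposal makes no mention of the cross terms $\eta_D-\xi_D$ at all, and this is the heart of the argument: one must classify, via CBHD, which products $\xi_{b_1}\dotsb\xi_{b_q}$ contribute to the $D$-coefficient. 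In the paper this is exactly \cref{s:sextic-algebra}: a sequence of algebraic lemmas showing that among all brackets of $b_1,\dotsc,b_q$ with $n_1(b_1)+\dotsb+n_1(b_q)=6$, the only ones whose $\Bs$-support hits $D$ involve $X_1$, $P_{1,1}$, $P_{1,1}0$ or $R^\sharp_{1,1,1,1}$. Without this classification, one cannot show that the cross terms are bounded by quantities such as $|u_1(t)|^4$, $|\xi_{P_{1,1}}|^2$, $|\xi_{P_{1,1}0}|^2$, $|u_1(t)\xi_{R^\sharp_{1,1,1,1}}|$, which are the actual form of the $O(\cdot)$ in \eqref{eq:D-Z7}.

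Second, the interpolation step is not the off-the-shelf Gagliardo--Nirenberg of \cref{Thm:Kawski_Wm}. The key estimate here is $\|u_1\|_{L^8}^8\le Ct|u_1(t)|^8+C\|u\|_{L^\infty}^2\xi_D(t,u)$, and it requires an integration by parts to create the factor $\int_0^t u_1^3$, plus Cauchy--Schwarz and Young; it carries a boundary term $|u_1(t)|^8$ that must itself be killed. Killing it (and the other boundary-type cross terms) requires \emph{closed-loop estimates}: $|u_1(t)|=O(|x|+\|u_1\|_{L^2}^2)$ and $|\xi_{P_{1,1}}(t,u)|=O(|x|+\|u_1\|_{L^4}^4)$, both of which rest on vectorial independence relations ($f_{X_1}(0)$ and $f_{P_{1,1}}(0)$ not lying in the spans of lower brackets) that have to be deduced from the hypothesis $f_D(0)\notin\mathcal N_D(f)(0)$. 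Your proposal does not mention these; without them the argument does not close.

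Finally, a minor inaccuracy: the identity $\xi_D(t,u)=\tfrac12\int_0^t\xi_{P_{1,1}}^2$ is exact and immediate from the recursive definition of the coordinates of the second kind (\cref{Def:Coord2}, since $D=\ad^2_{P_{1,1}}(X_0)$); there is no ``lower-order correction'' and no need to compute Sussmann's product for it.
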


\bigskip

Throughout the paper, we will discuss the optimality of all these necessary conditions by comparing them with the known sufficient conditions, including the ones due to Agrachev and Gamkrelidze \cite[Theorem~4]{AG93} or Krastanov \cite[Theorem 2.7]{K09}.
In particular, let us already point out that, due to their structure, all the brackets involved in the above-mentioned obstructions, namely $\ad^{2k}_{X_1}(X_0)$, $W_k = \ad^{2}_{M_{k-1}}(X_0)$ and $\ad^2_{P_{1,1}}(X_0)$ are indeed always seen as ``bad'' and required to be compensated by such sufficient conditions (see \cref{s:bad-structure}).

\bigskip

Eventually, we explain in \cref{s:C^k} why all these results, derived for analytic vector fields, remain valid without change for $\CC^\infty$ vector fields.
More precisely, we show that assuming only finite regularity on $f_0$ and $f_1$ is sufficient to preserve the conclusions, provided that one gives the appropriate meaning to the evaluations at $0$ of the considered brackets (the brackets themselves being undefined elsewhere).

\subsection{Our unified approach of obstructions}

We provide a general overview of the approach that we use in this paper to conjecture and prove necessary conditions of STLC.
We claim that the method is fairly general: it already allowed us to recover all known or conjectured obstructions, and prove multiple new ones.

\subsubsection{An interpretation of obstructions to STLC as drifts}
\label{s:drifts}

Our results are of the form: $W^{m,\infty}\text{-STLC} \Rightarrow f_{\bb}(0) \in \mathcal{N}(f)(0)$, where $m \in \llbracket -1, \infty \llbracket$, $\bb \in \Bs$ and $\mathcal{N}$ is a subset of $\Bs$.
We prove these results by contraposition, starting from the assumption
\begin{equation} \label{Heuristic:Hyp_non_STLC}
    f_{\bb}(0) \notin \mathcal{N}(f)(0).
\end{equation}
Our strategy consists in proving that, when \eqref{Heuristic:Hyp_non_STLC} holds, the state $x(t;u)$ \vocab{drifts} along $f_{\bb}(0)$, in the sense of \cref{Def:drift} below, which requires the following notion. 

\begin{definition}[Component]
    Let $N$ be a vector subspace of $\R^d$ and $e \in \R^d \setminus N$.
    We say that a linear form $\mathbb{P}:\R^d\to \R$ is \emph{a component along $e$ parallel to~$N$} when $\mathbb{P} e = 1$ and $N \subset \ker \mathbb{P}$.
\end{definition}

\begin{definition}[Drift] \label{Def:drift}
    Let $\bb \in \Bs$, $\mathcal{N} \subset \Br(X)$ and $m \in \llbracket -1 , \infty \llbracket$.
    We say that system~\eqref{syst} has a \emph{drift along $f_{\bb}(0)$, parallel to $\mathcal{N}(f)(0)$, as $(t,\|u\|_{W^{m,\infty}}) \to 0$} when there exists $C>0$ and $\beta>1$ such that, for every $\varepsilon>0$, there exists $\rho>0$ such that, for every $t \in (0,\rho)$ and every $u \in W^{m,\infty}((0,t);\R)\cap L^1((0,t);\R)$ with $\|u\|_{W^{m,\infty}} \leq \rho$,
    \begin{equation} \label{eq:def-drift}
        \mathbb{P} x(t;u) \geq (1-\varepsilon) \xi_{\bb}(t,u) - C |x(t;u)|^\beta,
    \end{equation}
    where $\mathbb{P}$ gives a component along $f_{\bb}(0)$ parallel to $\mathcal{N}(f)(0)$ and $(\xi_{b})_{b\in\Bs}$ are the coordinates of the second kind associated with $\Bs$ (see \cref{Def:Coord2} and \cref{Prop:Coord_Bstar}).
\end{definition}

\begin{lemma} \label{lem:drift-stlc}
    Let $\bb \in \Bs$, $\mathcal{N} \subset \Br(X)$ and $m \in \llbracket -1 , \infty \llbracket$.
    Assume that $\xi_\bb(t,u) \geq 0$ for all $u \in L^1((0,t);\R)$ and that system \eqref{syst} has a drift along $f_{\bb}(0)$, parallel to $\mathcal{N}(f)(0)$, as $(t,\|u\|_{W^{m,\infty}}) \to 0$.
    Then system \eqref{syst} is not $W^{m,\infty}$-STLC.
\end{lemma}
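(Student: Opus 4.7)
The plan is to argue by contraposition. Assuming the drift property together with $\xi_\bb(t,u) \ge 0$, I will produce, for a specific choice of $t$ and $\rho$, an arbitrarily small target $x^\star$ that no admissible control can reach, directly violating \cref{Def:WmSTLC}. The key simplification is that the sign condition kills the $(1-\varepsilon)\xi_\bb(t,u)$ term on the right of \eqref{eq:def-drift}, leaving the one-sided estimate $\mathbb{P} x(t;u) \geq -C|x(t;u)|^\beta$. In other words, the projection along $\mathbb{P}$ of the reachable set at time $t$ lies essentially on one side of the origin, modulo a super-linear error.

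Concretely, I would first pick any $\varepsilon \in (0,1)$, say $\varepsilon = 1/2$, and feed it into \cref{Def:drift} to obtain the threshold $\rho_0 > 0$ and the constants $C > 0$, $\beta > 1$. I then fix any $t \in (0,\rho_0)$ and take $\rho := \rho_0$ as the STLC size constraint. For targets, I consider $x^\star_\delta := -\delta\, f_\bb(0)$ for small $\delta > 0$. This point is nonzero and well-defined because the existence of the component $\mathbb{P}$ forces $f_\bb(0) \notin \mathcal{N}(f)(0)$ and in particular $f_\bb(0) \neq 0$; moreover the normalization $\mathbb{P} f_\bb(0) = 1$ gives $\mathbb{P} x^\star_\delta = -\delta$ and $|x^\star_\delta| = \delta\, |f_\bb(0)|$. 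If some admissible control $u$ with $\|u\|_{W^{m,\infty}} \leq \rho$ were to satisfy $x(t;u) = x^\star_\delta$, then the one-sided drift estimate above would force $-\delta \geq -C\, |f_\bb(0)|^\beta\, \delta^\beta$, i.e.\ $1 \leq C\, |f_\bb(0)|^\beta\, \delta^{\beta-1}$, which is absurd for $\delta$ below some explicit $\delta_0 > 0$ since $\beta > 1$.

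This contradicts $W^{m,\infty}$-STLC at the chosen pair $(t,\rho)$: for any candidate radius $\delta(t,\rho) > 0$, I can pick $\delta < \min(\delta_0, \delta(t,\rho)/|f_\bb(0)|)$, making $x^\star_\delta$ lie in $B(0,\delta(t,\rho))$ while remaining unreachable by controls of $W^{m,\infty}$-norm at most $\rho$. I do not anticipate any genuine obstacle; the argument is essentially a careful bookkeeping of the definitions. The only subtle point worth flagging is the quantifier order inside \cref{Def:drift}: the threshold $\rho$ there depends on $\varepsilon$, so one must commit to a value of $\varepsilon$ \emph{before} selecting the STLC parameters $t$ and $\rho$, and not interleave the two quantifications.
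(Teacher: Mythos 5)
Your proof is correct and follows the same route as the paper: both drop the nonnegative $(1-\varepsilon)\xi_\bb$ term to obtain a one-sided bound, test against targets $-\delta f_\bb(0)$, and derive a contradiction from $\beta > 1$ for $\delta$ small. Your explicit handling of the quantifier order (fixing $\varepsilon$ before extracting $\rho$) is a fair and correct elaboration of what the paper leaves implicit.
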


\begin{proof}
    For small enough times and controls, when $\xi_\bb \geq 0$, estimate \eqref{eq:def-drift} prevents $x(t;u)$ from reaching targets of the form $x^\star=-a f_{\bb}(0)$ with $0<a \ll 1$, because this would entail $-a = \mathbb{P} x^\star \geq - C |x^\star|^\beta = - C |f_{\bb}(0)|^\beta a^{\beta}$, which fails for $a$ small enough, since $\beta > 1$. 
    Thus, estimate~\eqref{eq:def-drift} comes into contradiction with $W^{m,\infty}$-STLC. 
    Indeed, \cref{Def:WmSTLC} requires that, even for arbitrarily small times and controls, one may reach a whole neighborhood of $0$.
\end{proof}

\begin{remark} \label{rk:beta>1}
    To deny STLC, it is sufficient (as is done in the previous proof), to negate the possibility of reaching locally a half line $\R_-^* f_\bb(0)$.
    Nevertheless, since $\beta>1$, when $\xi_\bb \geq 0$, estimate~\eqref{eq:def-drift} actually also implies that the unreachable set contains locally a whole half-space.
    
    This property is quite satisfactory as it somehow complements the fact that most known sufficient conditions for STLC yield a locally convex reachable set.
    More precisely, these conditions rely on ``variations'' or ``tangent vector'', and it is known that the set of such tangent vectors is almost convex \cite[Lemma 2.3]{zbMATH04154295}.

    This motivates our definition of drifts, which therefore entails not only a lack of STLC but also a description of the unreachable space.
    A weaker definition, such as \cref{Def:Weak-drift}, replacing $C|x|^\beta$ by $\varepsilon|x|$ would not yield such a precise conclusion.
\end{remark}

\begin{example} \label{ex:easy}
    To illustrate the definitions, consider the system
    \begin{equation}
        \begin{cases}
            \dot{x}_1 = u \\
            \dot{x}_2 = x_1 \\
            \dot{x}_3 = x_1^2 - x_2^2 - x_1^3 - 4 x_1 x_2.
        \end{cases}
    \end{equation}
    Written in the form \eqref{syst}, this system satisfies 
    \begin{equation}
        f_{X_1}(0) = e_1, \quad f_{M_1}(0) = e_2, \quad 
        f_{W_1}(0) = 2e_3, \quad f_{W_2}(0) = - 2 e_3, \quad f_{P_{1,1}}(0) = - 6 e_3
    \end{equation}
    and $f_b(0)=0$ for any other $b \in \Bs$.
    Therefore, it does not satisfy Sussmann's necessary condition (case $k = 1$ of \eqref{Stefani}) which requires that $f_{W_1}(0) \in S_1(f)(0)$ .
    
    Using the notations $u_1$ and $u_2$ of \cref{s:intro-def} for the first and second primitive of $u$, explicit integrations lead to $x_1(t) = u_1(t)$, $x_2(t) = u_2(t)$ and
    \begin{equation}
        x_3(t) = \int_0^t u_1^2 - \int_0^t u_2^2 - \int_0^t u_1^3 - 2 x_2^2(t).
    \end{equation}
    Here, $\mathbb{P} : x \mapsto \frac{1}{2} x_3$ is a component along $f_{W_1}(0)$, parallel to $\R e_1 + \R e_2 = S_1(f)(0)$.

    Moreover, since $\xi_{W_1}(t,u) = \frac 12 \int_0^t u_1^2$ (see \eqref{xi_Wjnu}), one has, by Poincaré's inequality
    \begin{equation}
        \mathbb{P} x(t) \geq \left(1- t^2 - \|u_1\|_{L^\infty}\right) \xi_{W_1}(t,u) - x_2^2(t).
    \end{equation}
    Therefore, this estimate indeed matches \eqref{eq:def-drift} provided that both the time and the control are small enough.
    One also checks that the $|x(t;u)|^\beta$ term is required, reflecting the fact that changes of coordinates in the state-space can locally bend the unreachable set. 
\end{example}

In \cref{ex:easy}, proving the presence of the drift is easy as the system is explicitly integrable.
To prove our results in a general setting, we will rely on the following more robust approach.

\subsubsection{A recent approximate representation formula for the state}
\label{s:heuristic-magnus}

Unlike historical proofs which all rely on Chen's expansion (see \cref{rk:Chen}), the starting point of our strategy is a recent approximate representation formula, introduced in~\cite{P1} and explained more precisely in \cref{s:Magnus-ODE}, which states that, as $(t,\|u\|_{W^{-1,\infty}})\to 0$,
\begin{equation} \label{repform1}
    x(t;u)=\cZ{M}(t,f,u)(0)+O\left( \|u\|_{W^{-1,M+1}}^{M+1} + |x(t;u)|^{1+\frac{1}{M}} \right),
\end{equation}
where, for $M\in\N^*$, $\cZ{M}(t,f,u)$ is an analytic vector field belonging to $S_{\intset{1,M}}(f)$ and given by
\begin{equation} \label{repform2}
    \cZ{M}(t,f,u) = \sum_{b \in \Bs_{\intset{1,M}}} \eta_b(t,u) f_b,
\end{equation}
where the $\eta_b$ are functionals of $t$ and $u$, which we call \vocab{coordinates of the pseudo-first kind} (see \cref{rk:pseudo}) and the infinite sum converges (in the sense of analytic functions).

A remarkable feature of \eqref{repform1}-\eqref{repform2} is that, when computing the state $x(t;u)$ as almost $\cZ{M}(t,f,u)(0)$, each term $\eta_b(t,u) f_b(0)$ of the series decouples:
\begin{itemize}
    \item on one side, a scalar $\eta_b(t,u) \in \R$, which carries the time and control dependency, but in a universal (i.e.\ system-independent) way,
    \item on the other side, a vector $f_b(0) \in \R^d$, which encodes the algebraic and geometric dependency on the system, in a coordinate-independent way, as only Lie brackets of $f_0$, $f_1$ are involved. 
\end{itemize}

A technical drawback of \eqref{repform2} is that, unlike the coordinates of the second kind $\xi_b$, associated with Sussmann's infinite product expansion (see \cref{s:prod-inf}), the functionals $\eta_b$ are not given by nice explicit expressions.
Our insight to deal with this difficulty is to rely on the heuristic that, somehow, $\eta_b \approx \xi_b$ (see \cref{p:etab-xib-XI} for a statement, and \cref{s:pollutions} for limits to this belief).
Since both sets of coordinates are linked by iterated applications of the usual Campbell--Baker--Hausdorff formula, for each $b \in \Bs$, the difference $\eta_b-\xi_b$ is given by a sum of products of the form $\xi_{b_1}(t,u) \dotsb \xi_{b_q}(t,u)$ (for some $q \geq 2$ and other lower-order $b_i$'s) which we call \vocab{cross terms}.

Hence, we work with the formula
\begin{equation}
    \cZ{M}(t,f,u)(0) = \sum_{b \in \Bs_{\intset{1,M}}} \xi_b(t,u) f_b(0) + \underbrace{\sum_{b \in \Bs_{\intset{1,M}}} (\eta_b(t,u)-\xi_b(t,u)) f_b(0)}_{\text{cross terms}}.
\end{equation}

\subsubsection{An heuristic to conjecture drifts}
\label{s:heuristic-conj-drift}

Our formula can be used to conjecture necessary conditions for STLC in the following way.
\begin{enumerate}
    \item 
    One starts by considering a bracket $\bb \in \Bs$ for which $\xi_{\bb}(t,\cdot)$ is positive-definite (i.e.\ for every $t>0$ and $u \in \lone \setminus\{0\}$, $\xi_{\bb}(t,u) > 0$).
    The identification of such candidate ``bad'' brackets is particularly easy within $\Bs$ (see \cref{s:xi-B*-123445}) and part of the reasons for which we believe that this basis is well adapted to control theory.

    \item 
    One also fixes a regularity index $m \in \llbracket -1, \infty \llbracket$ (one can think $m = 0$ if one is mostly interested in the usual notion of $L^\infty$-STLC).

    \item \label{it:strat-3}
    One then determines $M \in \N^*$ large enough such that the main remainder of \eqref{repform1} will satisfy
    \begin{equation} \label{eq:coerc-interp}
        \|u\|_{W^{-1,M+1}}^{M+1} \lesssim \|u\|_{W^{m,\infty}}^{M+1-n_1(b)} \xi_\bb(t,u).
    \end{equation}
    Heuristically, this is an interpolation inequality, bounding the $W^{-1,M+1}$ norm between the stronger norm $\|u\|_{W^{m,\infty}}$ and the term $\xi_\bb(t,u)$ which plays the role of the weaker norm.
    Choosing $M$ larger makes \eqref{eq:coerc-interp} easier to prove as it requires even less coercivity from $\xi_\bb$.
    See \eqref{eq:quad-interpol} or \eqref{eq:D-interpol-u1-L8} for examples of such interpolation inequalities.

    \item \label{it:strat-4}
    One then determines $\mathcal{N}$ as the set of $b \in \Bs_{\intset{1,M}} \setminus \{ \bb \}$ such that $\xi_b \neq o(\xi_\bb)$ as $(t,\|u\|_{W^{m,\infty}}) \rightarrow 0$.
    $\mathcal{N}$ can be interpreted as the set of \vocab{neutralizing} brackets whose coordinate would be strong enough (from a functional analysis point of view) to counterbalance the coercivity of $\xi_\bb$ and could lead to restoring STLC (see e.g.\ \cref{Subsec:Ex_W3} for detailed examples in the case $\bb = W_3$).
\end{enumerate}
Eventually, the heuristic is that $f_\bb(0) \in \mathcal{N}(f)(0)$ will be a necessary condition for $W^{m,\infty}$-STLC.
Indeed, by contraposition, if one assumes \eqref{Heuristic:Hyp_non_STLC}, this allows to consider $\mathbb{P}:\R^d\to \R$, a component along $f_{\bb}(0)$ parallel to $\mathcal{N}(f)(0)$. 
Then, using \cref{s:heuristic-magnus},
\begin{equation} \label{Heuristic:rep_form}
    \begin{split}
        \mathbb{P} x(t;u) = \xi_\bb(t,u) & + O \big( 
        \sum_{\substack{b \in \Bs_{\intset{1,M}} \\ b \notin \mathcal{N} \cup \{ \bb \}}} |\xi_b(t,u) \mathbb{P}f_b(0)| \big)
        + O \big( \sum_{\substack{b \in \Bs_{\intset{1,M}} \\ b \notin \mathcal{N}}} |(\eta_b-\xi_b)(t,u) \mathbb{P}f_b(0)| \big)
        \\
        & + O \big( \|u\|_{W^{-1,M+1}}^{M+1}\big) + O \big( |x(t;u)|^{1+\frac{1}{M}}
        \big)
    \end{split}
\end{equation}
In this formula, within the big $O$ remainders, the intuition is that
\begin{itemize}
    \item the first term should be bounded by $\varepsilon \xi_\bb(t,u)$ thanks to the choice of $\mathcal{N}$ in \cref{it:strat-4} above (such sums are estimated in \cref{p:sum-xi-XI}),
    \item the second term should be negligible if the intuition $\eta_b \approx \xi_b$ is valid (such sums are estimated in \cref{p:sum-cross-XI}),
    \item the third term is bounded by $\varepsilon \xi_\bb(t,u)$ thanks to the choice of $M$ in \cref{it:strat-3} above and \eqref{eq:coerc-interp},
    \item the fourth term is part of the definition of a drift in \eqref{eq:def-drift},
\end{itemize}
therefore establishing the presence of a drift as in \cref{Def:drift}.

\subsubsection{Arguments used in the proofs}
\label{s:method-args}

From a technical point of view, the most painful task is to estimate the cross terms involved in~\eqref{Heuristic:rep_form}, i.e.\ in the differences $|\eta_b-\xi_b|$ for $b \in \Bs_{\intset{1,M}} \setminus \mathcal{N}$ (including $|\eta_\bb - \xi_\bb|$).
To obtain these estimates, the proofs of this paper share a common structure and involve (to various degrees of complexity) the following ingredients of different natures.

\paragraph{Geometric arguments.}
To bound the cross terms, we prove that assumption \eqref{Heuristic:Hyp_non_STLC} implies what we call \vocab{vectorial relations} involving other elements $f_b(0)$ for $b \in \Bs$.
Then, we prove that these vectorial relations entail what we call \vocab{closed-loop estimates}, i.e.\ that some coordinates $\xi_{b_i}(t,u)$ for some particular $b_i \in \Bs$ involved in the cross terms can be estimated from $|x(t;u)|$ and higher-order terms involving the control.
This is a key argument of the method.
The essence of closed-loop estimates can be spotted implicitly in previous literature: for example \cite[Lemma 3.2]{MR935375} is a (slightly less general) version of \cref{lem:stefani-loop}, while \cite[p.\ 148]{zbMATH04031488} uses a static-state feedback to guarantee that $u_1(t) = 0$.

\paragraph{Analysis arguments.}
Almost all estimates involve interpolation inequalities.
As this paper mostly concerns quadratic obstructions to controllability, most of the proofs rely on the usual Gagliardo--Nirenberg interpolation inequalities of \cref{s:gagliardo-nirenberg}.
Nevertheless, in the sextic case, we need the new unusual interpolation inequalities of \cref{s:sextic-interpolation}.
When working on other obstructions, it is likely that many new interpolation inequalities will be required, such as the ones we derived (for this purpose) in \cite{FM-GNS}.

\paragraph{Algebraic arguments.}
The previous arguments might be insufficient to bound some cross terms $\xi_{b_1}(t,u) \dotsb \xi_{b_q}(t,u)$.
In the Baker--Campbell--Hausdorff formula, these products are coefficients of Lie brackets such as $[[b_1, b_4], [b_2, b_3]]$ involving each $b_i$ exactly once.
Luckily, in such cases, we are able to prove that the decomposition in $\Bs$ of these Lie brackets is contained in $\mathcal{N}$, so that these cross terms are \emph{not} involved in \eqref{Heuristic:rep_form}.
Such arguments are purely algebraic computations in $\Bs$, and totally independent from any system or functional analysis.
They are for example of paramount importance in \cref{s:S2-S3} or
\cref{s:sextic-algebra}.

\subsubsection{The low-regularity case $m=-1$}
\label{s:method-1}

For $m=-1$ and some choices of $\bb$, estimate \eqref{eq:coerc-interp} may fail (even for arbitrarily large $M$) and then the remainder $\|u\|_{W^{-1,M+1}}^{M+1}$ in the representation formula \eqref{repform1} cannot be absorbed.
We developed an extension of our method which encompasses this difficult low-regularity case (see \cref{s:m=-1}), and relies on \vocab{embedded semi-nilpotent systems}.

\subsection{On the invariance by change of coordinates and feedbacks}

As can be seen by inspection of \cref{Def:WmSTLC} (see also \cite[Lemma 16]{JDE} for the case $m > 0$), two important classes of transformations preserve small-time local controllability. 

First, analytic changes of coordinates in the state space.
As recalled in \cref{s:notations-algebra}, stating conditions involving only Lie brackets evaluated at $0$ automatically guarantees that our conditions are coordinate-invariant, since the Lie brackets evaluated at $0$ in the new coordinates are linearly isomorphic to the ones in the old coordinates (see \cite[Theorem~1]{zbMATH03385496} or also \cite[Section 8.2]{P1}).

Second, static-state feedbacks (so particular classes of changes of coordinates for the couple $(x,u)$ which preserve the control-affine nature of the system), or even changes of time-scale (see~\cite[Weak feedback equivalence]{zbMATH04172849}). 
As criticized by Lewis in \cite{lewis2012fundamental}, methods based on the identification of vector fields with a free Lie algebra generally don't embed the invariance under such feedback transformations.
To preserve such an invariance, other approaches are necessary, such as Agrachev and Gamkrelidze's ``control of diffeomorphisms'' \cite{AG93,AGdiffeo} or Lewis' ``tautological systems'' \cite{lewis2014tautological,lewis2016linearisation}.

While some of the above necessary conditions are definitely not feedback-invariant (and we plan to study this difficult and very interesting matter further in a forthcoming work), they still provide computationally checkable necessary conditions, and provide a nice counterpoint to sufficient conditions such as \cite[Theorem~4]{AG93} or \cite[Theorem 2.7]{K09}.
Moreover, some of them are feedback-invariant. 
Indeed, \cite[Theorem 1]{brockett1978feedback} implies that \cref{Thm_Stefani} and \cref{Thm:CN_W123} for $k = 1,2$ are feedback-invariant. 

\subsection{Structure of the article}

This paper is organized in three parts:
\begin{itemize}
    \item First, we introduce the tools required to use our unified approach.
    \begin{itemize}
        \item In \cref{s:formal}, we recall the notion of formal differential equations set in the algebra of formal series over $X$, which allows to model \eqref{syst} independently on $f_0$ and $f_1$.
        \item In \cref{s:B*}, we introduce a new Hall set $\Bs$ over $\{ X_0, X_1 \}$ which yields a Hall basis of $\mathcal{L}(X)$ particularly well adapted to control problems.
        \item In \cref{s:tools}, we explain how the formal results of \cref{s:formal} translate to system \eqref{syst} driven by analytic vector fields.
        We formulate useful black-box estimates.
    \end{itemize}
    \item Then, we implement the method to prove all results of \cref{s:main-results}.
    \begin{itemize}
        \item In \cref{s:Suss-Stef}, we prove \cref{Thm_Stefani}.
        \item In \cref{s:Quad}, we prove \cref{Thm:Kawski_Wm,thm:S2-S3-intro} for $m \geq 0$.
        \item In \cref{s:W2refined}, we prove \cref{Thm:CN_W123} for $W_2$.
        \item In \cref{s:W3refined}, we prove \cref{Thm:CN_W123} for $W_3$.
        \item In \cref{s:sextic}, we prove \cref{thm:sextic}.
    \end{itemize}
    \item Eventually, we describe two extensions of our method.
    \begin{itemize}
        \item In \cref{s:m=-1}, we study the notion of embedded semi-nilpotent systems, which we use to prove \cref{Thm:Kawski_Wm,thm:S2-S3-intro} in the low-regularity case $m = -1$.
        \item In \cref{s:C^k}, we remove the analyticity assumption used throughout the paper.
    \end{itemize}
\end{itemize}

\section{Tools from formal power series}
\label{s:formal}

In \cref{s:formal-DE}, we introduce the formal differential equation \eqref{eq:formal-DE} whose solution $x(t)$, is a formal power series.
In \cref{s:Hall}, we recall the well-known notions of Hall sets, which yield bases of~$\mathcal{L}(X)$, within which one can express the solutions to \eqref{eq:formal-DE}.
In \cref{s:prod-inf,s:Magnus,s:Magnus-Interaction,sec:eta}, we present formulas which allow to compute these solutions within such bases.


\subsection{The formal differential equation} 
\label{s:formal-DE}

Fundamental in this project is the use of the formal differential equation
\begin{equation} \label{eq:formal-DE}
    \begin{cases}
        \dot{x}(t) = x(t) (X_0+u(t)X_1), \\
        x(0) = 1.
    \end{cases}
\end{equation}
Although this equation is linear, a classical linearization principle (see \cite[Section 4.1]{P1}) allows to recast the study of nonlinear ODEs such as \eqref{syst} driven by vector fields to this setting.
A key benefit of this abstract formulation is that it is now independent on $f_0$ and $f_1$.

The goal of this section is to define the solutions to \eqref{eq:formal-DE}. 
This requires the following notions.

\begin{definition}[Graded algebra] \label{def:free.algebra}
    The \emph{free associative algebra} $\mathcal{A}(X)$ (see \cref{def:free-algebra}) can be seen as a graded algebra:
    \begin{equation}
        \mathcal{A}(X) = \underset{n \in \N}{\oplus} \mathcal{A}_n(X),
    \end{equation}
    where $\mathcal{A}_n(X)$ is the finite-dimensional $\R$-vector space spanned by monomials of degree $n$ over $X$. 
    In particular $\mathcal{A}_0(X) = \R$ and $\mathcal{A}_1(X) = \vect_{\R}(X)$.
\end{definition}

\begin{definition}[Formal series]\label{-def:formalseries}
    We consider the (unital associative) algebra $\widehat{\mathcal{A}}(X)$ of formal series generated by $\mathcal{A}(X)$.  An element $a \in \widehat{\mathcal{A}}(X)$ is a sequence $a = (a_n)_{n\in\N}$ written $a = \sum_{n \in \N} a_n$, where $a_n \in \mathcal{A}_n(X)$ with, in particular, $a_0 \in \R$ being its constant term.
    
    We also define the Lie algebra of formal Lie series $\widehat{\mathcal{L}}(X)$ as the Lie algebra of formal power series $a \in \widehat{\mathcal{A}}(X)$ for which $a_n \in \mathcal{L}(X)$ for each $n \in \N$.
\end{definition}

Within the realm of formal series, one can define the operators $\exp$ and $\log$.
For instance, for $a \in \widehat{\mathcal{A}}(X)$ with $a_0=0$, $\exp(a) := \sum_{k=0}^\infty \frac{a^k}{k!}$ is a well-defined formal series.

\bigskip


The solutions to \eqref{eq:formal-DE} are defined in the following way.

\begin{definition}[Solution to \eqref{eq:formal-DE}]
    \label{def:formalODE}
    Let $t > 0$ and $u \in \lone$.
    The solution to the formal differential equation \eqref{eq:formal-DE} is the formal-series valued function $x: [0,t] \to \widehat{\mathcal{A}}(X)$, whose homogeneous components $x_n : \R_+ \to \mathcal{A}_n(X)$ are the unique continuous functions that satisfy, for every $s \geq 0$, $x_0(s) = 1$ and, for every $n \in \N^*$,
    \begin{equation} \label{eq:xn.xn1}
        x_{n}(s) = \int_0^s x_{n-1}(s') (X_0+u(s')X_1) \dd s'.
    \end{equation}
\end{definition}

Iterating the integral formula \eqref{eq:xn.xn1} yields the following power series expansion, known as the \emph{Chen series} (introduced in \cite{MR0073174,zbMATH03126609}), which is the most direct way to compute the solution to \eqref{eq:formal-DE}:
\begin{equation} \label{eq:Chen}
    x(t) = \sum_{\omega \in X^*} \left(\int_0^t u_\omega \right) \omega,
\end{equation}
where the sum ranges over all elements $\omega$ of $X^*$, the free monoid over $X$ (i.e.\ non-commutative monomials built as products of $X_0$ and $X_1$), and $\int_0^t u_\omega$ is a notation for a coefficient which can be computed recursively (see \cite[Section~2.2]{P1} for a gentle introduction).

\begin{remark} \label{rk:Chen}
    Although this expansion is the one which was used to prove all known necessary conditions (in \cite{zbMATH04031488,MR935375,MR710995}), we will not use it in this paper.
    Indeed, although it enjoys nice convergence properties when substituting $X_0$ and $X_1$ with analytic vector fields $f_0$ and $f_1$ (see \cite[Section 5]{P1}), it is not expressed in the Lie algebra $\widehat{\mathcal{L}}(X)$ but in the whole algebra $\widehat{\mathcal{A}}(X)$.
    This makes it, in our opinion, difficult to use to conjecture and prove more complex obstructions such as our new results stated in \cref{s:main-results}.
\end{remark}

\subsection{Hall sets and bases} \label{s:Hall}

We recall the notion of Hall sets and Hall bases.
For more details on theses bases of $\mathcal{L}(X)$, we refer to \cite{CasselmanFree}, \cite[Chapter 4]{zbMATH00417855} or \cite[Chapter 1]{MR516004}.

\begin{definition}[Length, left and right factors]
    For $b \in \Br(X)$, $|b|$ denotes the length of $b$.
    If $|b| > 1$, $b$ can be written in a unique way as $b = (b_1, b_2)$, with $b_1, b_2 \in \Br(X)$. 
    We use the notations $\lambda(b) = b_1$ and $\mu(b) = b_2$, which define maps $\lambda,\mu: \Br(X)\setminus X \to \Br(X)$.
\end{definition}

\begin{definition}[Hall set] \label{def:Hall}
 A \emph{Hall set} is a subset $\mathcal{B}$ of $\Br(X)$, totally ordered by a relation $<$ and such that
\begin{itemize}
    \item $X \subset \mathcal{B}$,
    \item for $b = ( b_1, b_2 ) \in \Br(X)$, $b \in \mathcal{B}$ iff $b_1, b_2 \in \mathcal{B}$, $b_1 < b_2$ and either $b_2 \in X$ or $\lambda(b_2) \leq b_1$, 
    \item for every $b_1, b_2 \in \mathcal{B}$ such that $(b_1,b_2) \in \mathcal{B}$, one has $b_1 < (b_1,b_2)$.
\end{itemize}
\end{definition}

The main interest of Hall sets is that their images by $\eval$ (recall \cref{def:BR-Eval}) yield algebraic bases of $\mathcal{L}(X)$, called Hall bases, as proved in \cite[Corollary 1.1, Proposition 1.1 and Theorem 1.1]{MR516004}. 

\begin{theorem}[Viennot]
    \label{thm:viennot}
    Let $\mathcal{B} \subset \Br(X)$ be a Hall set. 
    Then $\eval(\mathcal{B})$ is a basis of $\mathcal{L}(X)$.
\end{theorem}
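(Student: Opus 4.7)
The plan is to establish the two defining properties of a basis: $\eval(\mathcal{B})$ spans $\mathcal{L}(X)$, and $\eval(\mathcal{B})$ is linearly independent in $\mathcal{L}(X)$.

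For the spanning property, I would argue by induction on the length $|b|$ of $b \in \Br(X)$ that $\eval(b) \in \vect\, \eval(\mathcal{B})$. The length-one case is immediate since $X \subset \mathcal{B}$. For the inductive step, writing $b = (b_1, b_2)$ and using bilinearity of the Lie bracket, it suffices to treat brackets of the form $[\eval(h), \eval(h')]$ with $h, h' \in \mathcal{B}$. If $h = h'$ the bracket vanishes; if $h > h'$, antisymmetry reduces to the case $h < h'$; and if $h < h'$ but $(h, h') \notin \mathcal{B}$ because $h' = (a, c)$ with $a > h$, the Jacobi identity
\begin{equation*}
    [\eval(h), [\eval(a), \eval(c)]] = [[\eval(h), \eval(a)], \eval(c)] + [\eval(a), [\eval(h), \eval(c)]]
\end{equation*}
rewrites it in terms of brackets that are structurally closer to Hall form. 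Iterating, one closes the argument via an inner induction on a well-founded measure (for instance, the multiset of right factors encountered under the Hall order).

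For linear independence, the cleanest route passes through the Poincar\'e--Birkhoff--Witt theorem, observing that $\mathcal{A}(X)$ is the universal enveloping algebra of $\mathcal{L}(X)$. Once $\eval(\mathcal{B})$ is endowed with the total order inherited from $\mathcal{B}$, PBW asserts that the ordered monomials $\eval(h_1)^{k_1} \dotsb \eval(h_n)^{k_n}$ with $h_1 < \dotsb < h_n$ form a basis of $\mathcal{A}(X)$; in particular, the single-factor elements $\eval(h)$ must themselves be linearly independent. A more self-contained alternative is to identify, for a well-chosen lexicographic order on the words of the free monoid $X^*$, the leading monomial of each $\eval(h)$ and check that these leading monomials (the \emph{foliage} of the Hall tree) are pairwise distinct across $h \in \mathcal{B}$; this ``triangular'' relationship between Hall brackets and Hall words then yields independence directly.

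The genuine obstacle in this plan is combinatorial rather than conceptual: making the well-founded induction for the Hall rewriting procedure fully rigorous, and pinpointing the leading monomial of each $\eval(h)$. Both tasks are handled carefully in \cite{MR516004} and in \cite[Chapter~4]{zbMATH00417855}; since the theorem is invoked here as a black box to support the construction of $\Bs$ in \cref{s:B*}, I would quote those references rather than reproduce the combinatorial bookkeeping.
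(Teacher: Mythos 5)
The paper itself treats this as a black box, citing Viennot's monograph rather than reproducing a proof, so your plan to ultimately quote \cite{MR516004} and \cite{zbMATH00417855} is the same move the authors make. Your proof sketch is largely sound, but the first independence route has a circularity worth flagging: the Poincar\'e--Birkhoff--Witt theorem, as normally stated, takes a basis of the Lie algebra as \emph{input} and outputs a basis of ordered monomials for the enveloping algebra. You cannot invoke it to conclude that the ordered monomials in $\eval(\mathcal{B})$ are a basis of $\mathcal{A}(X)$ without first knowing that $\eval(\mathcal{B})$ is a basis of $\mathcal{L}(X)$ --- which is the very thing to be proved. The standard way to close this loop is a dimension count in each graded piece: one shows (via your Hall-rewriting argument lifted to $\mathcal{A}(X)$) that the ordered monomials in $\eval(\mathcal{B})$ \emph{span} $\mathcal{A}_n(X)$ for each $n$, then compares with the Witt formula for $\dim \mathcal{L}_n(X)$ and the known count of Hall brackets of degree $n$ to force independence. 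Your second alternative --- the triangularity between Hall brackets and their leading words (foliage) --- is correct as stated, avoids the circularity entirely, and is closer to Viennot's own argument, so that is the route I would lean on if a self-contained proof were wanted.
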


\begin{remark}
    Historically, Hall sets were introduced by Marshall Hall in \cite{zbMATH03059664}, based on ideas of Philip Hall in \cite{zbMATH03010343}. 
    In his historical narrower definition, the third condition in \cref{def:Hall} was replaced by the stronger condition: for every $b_1, b_2 \in \mathcal{B}$, $b_1 < b_2 \Rightarrow |b_1| \leq |b_2|$.
    
    Two famous families of Hall sets are the Chen--Fox--Lyndon ones (see \cite[p.~15-16]{MR516004}) whose order stems from the lexicographic order on words and the historical length-compatible Hall sets, for which $b_1<b_2 \Rightarrow|b_1|\leq|b_2|$. 
    Other examples, such as the Spitzer--Foata basis are studied in \cite{A1} and \cite[Chapter~1]{MR516004}.

    We define in \cref{s:B*} below our new Hall set $\Bs$, which combines important features of the Chen--Fox--Lyndon and length-compatible ones (see \cref{rk:Bs-vs-CFL}) as well as new ones.
\end{remark}

\begin{definition}[Support] \label{def:support}
    Let $\mathcal{B}$ be a Hall set of $\Br(X)$ and $a \in \mathcal{L}(X)$. For $b \in \mathcal{B}$, we denote by $\langle a, b \rangle_{\mathcal{B}}$ the coefficient of $\eval(b)$ in the expansion of $a$ on the basis $\eval(\mathcal{B})$. 
    We define
    \begin{equation}
        \supp_\mathcal{B} (a) := \left\{ b \in \mathcal{B} ; \langle a, b \rangle_{\mathcal{B}} \neq 0 \right\}.
    \end{equation}
    For $a \in \Br(X)$, $\supp_{\mathcal{B}} (a) := \supp_{\mathcal{B}} (\eval(a))$.
    If $A \subset \Br(X)$, we let $\supp_\mathcal{B} (A) := \cup_{a \in A} \supp_{\mathcal{B}}(a)$.
    We drop the subscripts $\mathcal{B}$ when there is no possible confusion on which basis is used.
\end{definition}

\subsection{Sussmann's infinite product expansion} \label{s:prod-inf}

We present an expansion for the formal power series $x(t)$ solution to \eqref{eq:formal-DE} as a product of exponentials of the elements of a Hall set, multiplied by coefficients that have simple expressions.

This infinite product is an extension to all Hall sets of Sussmann's infinite product on length-compatible Hall sets \cite{MR935387}, suggested in \cite{edd33d15c4b947f39991805f8c1d726f} and proved in \cite[Section 2.5]{P1}.

\begin{definition}\label{Def:Coord2}
    Let $\mathcal{B} \subset \Br(X)$ be a Hall set.
    The \emph{coordinates of the second kind} associated with~$\mathcal{B}$ is the unique family $(\xi_b)_{b\in\mathcal{B}}$ of functionals $\R_+ \times L^1_{\mathrm{loc}}(\R_+;\R) \to \R$ defined by induction in the following way: for every $t>0$ and $u \in \lone$
    \begin{itemize}
    \item $\xi_{X_0}(t,u) := t$ and $\xi_{X_1}(t,u) := \int_0^t u = u_1(t)$,
    \item for $b \in \mathcal{B} \setminus X$, there exists  a unique couple $(b_1,b_2)$ of elements of $\mathcal{B}$ such that $b_1<b_2$ and a unique maximal integer $m \in\N^*$ such that $b=\ad_{b_1}^m (b_2)$ and then
    \begin{equation} \label{eq:def:coord2}
        \xi_{b}(t,u):=\frac{1}{m!} \int_0^t  \xi_{b_1}^m(s,u) \dot{\xi}_{b_2}(s,u) \dd s.
    \end{equation}
    \end{itemize}
\end{definition}

\begin{theorem} \label{thm:Inf_Prod}
    Let $\mathcal{B}\subset \Br(X)$ be a Hall set, $t > 0$ and $u \in \lone$. 
    The solution to the formal differential equation \eqref{eq:formal-DE} satisfies,
    \begin{equation} \label{eq:prod-inf}
        x(t)= \underset{b \in \mathcal{B}}{\overleftarrow{\prod}} e^{\xi_b(t,u) \eval(b)}. 
    \end{equation}
\end{theorem}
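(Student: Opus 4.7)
My plan is to show that $P(t) := \overleftarrow{\prod}_{b\in\mathcal{B}} e^{\xi_b(t,u)\eval(b)}$ solves the same formal Cauchy problem \eqref{eq:formal-DE} as $x(t)$ and then conclude by uniqueness. First, the infinite product makes sense in $\widehat{\mathcal{A}}(X)$ because each factor satisfies $e^{\xi_b\eval(b)} - 1 \in \widehat{\mathcal{A}}_{\ge |b|}(X)$, so the degree-$n$ truncation of $P$ only depends on the finitely many factors with $|b| \le n$. Moreover $P(0)=1$, since a straightforward induction on \cref{Def:Coord2} shows that every $\xi_b(0,u)=0$. Since uniqueness of solutions to \eqref{eq:formal-DE} is immediate from the triangular Volterra recurrence \eqref{eq:xn.xn1}, it suffices to verify that $P^{-1}\dot P = X_0 + u(t) X_1$.

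Second, I would compute the right logarithmic derivative of the product. Using that each $\eval(b)$ commutes with $e^{\xi_b\eval(b)}$, differentiating and telescoping yields
\begin{equation*}
    P^{-1}\dot P(t) \;=\; \sum_{b\in\mathcal{B}} \dot\xi_b(t,u)\, P_{<b}^{-1}\,\eval(b)\, P_{<b},
\end{equation*}
where $P_{<b}$ denotes the tail product over the factors of strict index below $b$. Iterating the identity $e^{A}Y e^{-A}=e^{\ad_A}(Y)$, each conjugation expands as a formal series of nested commutators of $\eval(b)$ against the $\eval(c)$'s with $c<b$. Hence $P^{-1}\dot P \in \widehat{\mathcal{L}}(X)$, and the claim reduces to matching coefficients of both sides on the Hall basis $\eval(\mathcal{B})$.

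Third, I match coefficients. For $b \in \{X_0, X_1\}$, one has $P_{<X_0}=1$ and $\dot\xi_{X_0}=1$, $\dot\xi_{X_1}=u(t)$, producing exactly $X_0+u(t)X_1$. Hence it remains to show that every other basis element $b^* \in \mathcal{B}\setminus X$ receives vanishing coefficient on the right. Writing $b^* = \ad_{b_1}^m(b_2)$ with $b_1 < b_2$ in $\mathcal{B}$ and $m$ maximal (as in \cref{Def:Coord2}), the Hall axioms \cref{def:Hall} ensure that, among all nested commutators $\ad_{\eval(c_1)}\dotsb\ad_{\eval(c_k)}(\eval(c))$ (with $c_i<c$ in $\mathcal{B}$) produced by the above expansion, only the one with $c=b_2$ and $c_1=\dotsb=c_m=b_1$ can contribute to $\eval(b^*)$ after Hall rewriting. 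Combining the factor $\tfrac{1}{m!}$ from the exponential series with the scalar $\xi_{b_1}^m\dot\xi_{b_2}$ then matches, up to sign, the leading term $\dot\xi_{b^*}\eval(b^*)$ from the $b=b^*$ summand by the defining relation $\dot\xi_{b^*} = \tfrac{1}{m!}\xi_{b_1}^m\dot\xi_{b_2}$, giving the required cancellation.

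The main obstacle is this third step: the combinatorial claim that no other expansion term contributes to $\eval(b^*)$ in the Hall basis. This is where the full strength of the Hall axioms---especially the condition $\lambda(b_2)\le b_1$ in \cref{def:Hall}---is used, through the uniqueness of Hall factorization (the combinatorial content of \cref{thm:viennot}). A clean way to discharge it is by induction on the Hall length of $b^*$, using that the Hall rewriting lemma sends every ``non-canonical'' nested bracket to a combination of strictly higher-length Hall brackets, so only the canonical expansion with $c=b_2$ and $c_1=\dotsb=c_m=b_1$ can yield $\eval(b^*)$ at the correct degree.
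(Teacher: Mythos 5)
Your overall strategy---verifying directly that $P(t)$ solves \eqref{eq:formal-DE} by computing $P^{-1}\dot P$ and matching coefficients on the Hall basis $\eval(\mathcal{B})$---is a legitimate alternative to the constructive ``peeling-off'' argument used in the reference the paper cites (\cite[Section 2.5]{P1}), and your first two steps are sound. However, the combinatorial Step~3, which you yourself flag as the main obstacle, contains two genuine gaps. First, the two-term cancellation you describe is wrong. The $m$-fold conjugation of $c=b_2$ by $\ad_{b_1}$ carries the coefficient $\dot\xi_{b_2}\,\tfrac{(-\xi_{b_1})^m}{m!} = (-1)^m\dot\xi_{b^*}$ (note each conjugation in $P_{<b}^{-1}\eval(b)P_{<b}$ brings a factor $-\xi$, not $+\xi$ as your identity $e^{A}Ye^{-A}=e^{\ad_A}(Y)$ would suggest), so it matches $-\dot\xi_{b^*}$ only for $m$ odd. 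The correct cancellation has to sum over the full chain $c_j:=\ad_{b_1}^j(b_2)$ for $j=0,\dotsc,m$ (each a Hall element, each conjugated $(m-j)$ times by $\ad_{b_1}$), with total coefficient $\sum_{j=0}^m \tfrac{\xi_{b_1}^j\dot\xi_{b_2}}{j!}\cdot\tfrac{(-\xi_{b_1})^{m-j}}{(m-j)!} = \tfrac{\xi_{b_1}^m\dot\xi_{b_2}}{m!}(1-1)^m=0$.

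Second, and more seriously, your claim that ``only the one with $c=b_2$ and $c_1=\dotsb=c_m=b_1$ can contribute to $\eval(b^*)$ after Hall rewriting'' is false, and the heuristic that Hall rewriting sends non-canonical nested brackets to ``strictly higher-length'' Hall brackets does not make sense (rewriting preserves length and multidegree). Concretely, take $b^* = Q_{1,1,2} = (M_1,P_{1,1})\in\Bs$, so $b_1=M_1$, $b_2=P_{1,1}$, $m=1$. Then the source $c'=W_1$ conjugated by $\ad_{X_1}\ad_{M_1}$ (both $X_1,M_1<W_1$) reaches $\eval(b^*)$, since by Jacobi $[X_1,[M_1,W_1]]=[W_1,W_1]+[M_1,[X_1,W_1]]=\eval(Q_{1,1,2})$, giving a contribution $\xi_{X_1}\xi_{M_1}\dot\xi_{W_1}$ that is absent from your chain; likewise $c'=M_2$ conjugated by $\ad^3_{X_1}$ contributes, since $\ad^3_{X_1}(\eval(M_2))=\eval(Q_{1,1,1,1})-2\eval(Q_{1,1,2})$. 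These sources must themselves be tracked and paired off so that the total coefficient of $\eval(b^*)$ vanishes; identifying exactly which nested commutators occur and exhibiting their cancellation is the actual content of the theorem, and your appeal to ``uniqueness of Hall factorization'' and an unspecified induction does not discharge it.
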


\begin{remark}
    In \eqref{eq:prod-inf}, the right-hand side is an infinite oriented product, indexed by elements of $\mathcal{B}$ which are increasing towards the left (see \cite[Section 2.5]{P1} for more precise definitions).

    Outside of the length-compatible case, Hall sets can have infinite segments, i.e.\ for some fixed $b_1 < b_2 \in \mathcal{B}$, there might exist an infinite number of $b \in \mathcal{B}$ such that $b_1 < b < b_2$.
    Hence, one must be careful when defining and interpreting the product \eqref{eq:prod-inf}.

    This situation occurs for our basis $\Bs$, in which $X_1 < M_\nu < W_1$ for all $\nu > 0$ (see \cref{s:B*-def}).
    Hence, in this basis
    \begin{equation} \label{eq:etx0-xtw1-ex1}
        x(t) = e^{t X_0} \dotsb e^{\xi_{W_1} W_1} \dotsb e^{\xi_{M_2} M_2} e^{\xi_{M_1} M_1} e^{\xi_{X_1} X_1}.
    \end{equation}
\end{remark}

\subsection{Magnus expansion in the usual setting}
\label{s:Magnus}

Let us mention the following classical expansion, due to Magnus \cite{MR0067873}, to contrast it with our variant of the next subsection.
Multiple proofs of the following result are known (see \cite[Section 2.3]{P1}).

\begin{theorem} \label{thm:Magnus1.0_formel}
    For $t > 0$ and $u \in \lone$, the solution $x$ to \eqref{eq:formal-DE} satisfies
    \begin{equation} \label{eq:10formal}
     x(t) = \exp \left( Z_{\llbracket 1, \infty \llbracket}(t,X,u) \right)
    \end{equation}
    where $Z_{\llbracket 1, \infty \llbracket}(t,X,u) \in \widehat{\mathcal{L}}(X)$.
    Moreover, if $\mathcal{B} \subset \Br(X)$ is a Hall set, there exists a unique family $(\zeta_b)_{b \in \mathcal{B}}$ of functionals $\R_+ \times \lloc \to \R$, \emph{called coordinates of the first kind} associated with $\mathcal{B}$, such that
    \begin{equation} \label{tilde_Z_infty_decomp_coord1.0}
        Z_{\llbracket 1, \infty \llbracket} (t,X,u) = \sum_{b \in \mathcal{B}} \zeta_b(t, u) \eval(b),
    \end{equation}
    with, in particular, $\zeta_{X_0}(t,u) = t$ and $\zeta_{X_1}(t,u) = u_1(t)$.
\end{theorem}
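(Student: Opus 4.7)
The plan is to define $Z_{\llbracket 1, \infty \llbracket}(t,X,u) := \log(x(t))$, to show it lies in $\widehat{\mathcal{L}}(X)$, and then to read its Hall-basis coordinates off \cref{thm:viennot}. For the first part, the quickest route is to exploit Sussmann's infinite product expansion of \cref{thm:Inf_Prod}: each factor $e^{\xi_b(t,u)\eval(b)}$ in \eqref{eq:prod-inf} is a group-like element of $\widehat{\mathcal{A}}(X)$ (since $\eval(b) \in \mathcal{L}(X)$), products of group-like elements are group-like, and this property passes to the limit in the topology of formal series because each homogeneous component only receives contributions from the finitely many factors of low enough degree. Hence $x(t)$ is group-like, and by the Friedrichs criterion its logarithm is primitive, i.e.\ belongs to $\widehat{\mathcal{L}}(X)$, which gives \eqref{eq:10formal}. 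A more constructive alternative, bypassing Friedrichs, is to solve the Magnus ODE $\dot Z = \sum_{n \geq 0} \frac{B_n}{n!} \ad_Z^n(X_0 + u X_1)$ (with $B_n$ Bernoulli numbers) with $Z(0)=0$, layer by layer; since the right-hand side is a Lie polynomial in $Z$ and $X_0 + u X_1$, an immediate induction yields $Z_n \in \mathcal{L}_n(X)$ for every $n$.

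For the second part, \cref{thm:viennot} states that $\eval(\mathcal{B})$ is a basis of $\mathcal{L}(X)$, and since $\mathcal{L}(X) = \oplus_{n \geq 1} \mathcal{L}_n(X)$ is graded, each layer $\mathcal{L}_n(X)$ is finite-dimensional with basis $\eval(\{b \in \mathcal{B}; |b| = n\})$. Writing the $n$-th homogeneous component of $Z_{\llbracket 1, \infty \llbracket}$ in this basis produces, layer by layer, unique scalar coefficients $\zeta_b(t,u)$ for $b \in \mathcal{B}$, which gives \eqref{tilde_Z_infty_decomp_coord1.0} together with uniqueness.

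For the explicit first-order values, I would note that since $x(t)$ has constant term $1$, the degree-$1$ part of $\log(x(t))$ coincides with the degree-$1$ part of $x(t)$, which by \eqref{eq:xn.xn1} equals $\int_0^t (X_0 + u(s) X_1) \dd s = t X_0 + u_1(t) X_1$; matching with \eqref{tilde_Z_infty_decomp_coord1.0} immediately yields $\zeta_{X_0}(t,u) = t$ and $\zeta_{X_1}(t,u) = u_1(t)$.

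The main point to make fully rigorous is the manipulation of $\exp$ and $\log$ on $\widehat{\mathcal{A}}(X)$: one needs that they are mutually inverse bijections between $\widehat{\mathcal{L}}(X)$ (as primitive elements) and the set of group-like elements. These facts are classical once one observes that each homogeneous component of the relevant series receives only finitely many contributions in $\widehat{\mathcal{A}}(X)$, making the whole manipulation well-defined. Measurability and integrability in $u$ of the functionals $\zeta_b$ follow from their inductive construction as iterated integrals of universal polynomial expressions in $u$.
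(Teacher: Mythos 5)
Your argument is correct, but note that the paper does not actually prove \cref{thm:Magnus1.0_formel} here: it cites this as a classical result, pointing to \cite{MR0067873} and to \cite[Section~2.3]{P1} for proofs, so there is no in-text proof to compare against. Of the two routes you sketch, both are standard and both go through. The first (group-like $\Rightarrow$ primitive $\log$ via Friedrichs) leverages \cref{thm:Inf_Prod}, which is logically available since the paper treats Sussmann's infinite product as established in \cite{P1}; the only thing you implicitly import is the Hopf-algebra machinery (coproduct, group-like/primitive elements, Friedrichs' criterion) that the paper never introduces, and which you rightly flag as the part needing a citation or a couple of extra lines. Also worth noting that, having \eqref{eq:prod-inf} in hand, you could skip Friedrichs entirely: iterate the CBH formula degree by degree on the oriented product to see directly that $\log x(t)$ has each homogeneous component in $\mathcal{L}(X)$ (the same finiteness-per-degree observation you already make for the product justifies the CBH computation). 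Your second route, the Magnus ODE $\dot Z = \sum_{n\geq 0} \frac{B_n}{n!}\ad_Z^n(X_0 + uX_1)$, is the more self-contained choice since it needs neither Hopf theory nor \cref{thm:Inf_Prod}, and is closer in spirit to the approach in \cite{P1}. The remaining pieces — unique Hall-basis decomposition layer by layer via \cref{thm:viennot} and the graded structure of $\mathcal{L}(X)$, and the degree-one match $\log x(t) \equiv x_1(t) = tX_0 + u_1(t)X_1 \pmod{\text{degree}\geq 2}$ — are exactly right.
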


This expansion is naturally well-suited for truncations with respect to the total length of the involved brackets.
When substituting the formal indeterminates $X_0$ and $X_1$ with vector fields $f_0$ and $f_1$, such truncations lead to error estimates of the form $O(t^{M+1})$ (see e.g.\ \cite[Proposition 93]{P1}).

As explained in \eqref{eq:coerc-interp}, our method to prove obstructions requires error estimates scaling like powers of the control, which can be absorbed by interpolation.
Hence, we will not use expansion \eqref{eq:10formal}, but rather the one described in the following paragraph, which naturally yields such error estimates.
Another approach could be to try to sum infinite subseries of \eqref{tilde_Z_infty_decomp_coord1.0} (say all terms with $n_1(b) \leq M$), but there are convergence issues associated with such an approach, even for analytic vector fields (see \cite[Proposition 138]{P1}).
These convergence issues do not occur for our expansion presented below (see \cref{thm:Key_1}).

\subsection{Our formal expansion}
\label{s:Magnus-Interaction}

We present another expansion for the formal power series $x(t)$ solution to \eqref{eq:formal-DE}, which we recently developed for control theory in \cite[Section 2.4]{P1}, for which we proved the following result.
This expansion is the one at the root of the current work.

\begin{theorem} \label{thm:Magnus1.1_formel}
    For $t > 0$ and $u \in \lone$, the solution $x$ to \eqref{eq:formal-DE} satisfies
    \begin{equation} \label{eq:11formal}
     x(t) = \exp (t X_0) \exp \left( \ZInf (t,X,u) \right),
    \end{equation}
    where $\ZInf(t,X,u) \in \widehat{\mathcal{L}}(X)$.
    Moreover, if $\mathcal{B} \subset \Br(X)$ is a Hall set, there exists a unique family $(\eta_b)_{b \in \mathcal{B}}$ of functionals $\R_+ \times \lloc \to \R$, \emph{called coordinates of the pseudo-first kind} associated with $\mathcal{B}$, such that
    \begin{equation} \label{tilde_Z_infty_decomp_coord1.1}
        \ZInf (t,X,u) = \sum_{b \in \mathcal{B}} \eta_b(t, u) \eval(b),
    \end{equation}
    with, in particular, $\eta_{X_0}(t,u) = 0$ and $\eta_{X_1}(t,u) = u_1(t)$.
\end{theorem}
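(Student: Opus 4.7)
The plan is to reduce \bref{eq:11formal} to the classical Magnus expansion of \cref{thm:Magnus1.0_formel} via the Baker--Campbell--Hausdorff (BCH) formula in the graded Lie completion $\widehat{\mathcal{L}}(X)$, and then to invoke \cref{thm:viennot} to decompose the resulting Lie series on the Hall basis.

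First, I would apply \cref{thm:Magnus1.0_formel} to obtain $x(t) = \exp(Z_{\llbracket 1,\infty\llbracket}(t,X,u))$ with $Z_{\llbracket 1,\infty\llbracket}(t,X,u) \in \widehat{\mathcal{L}}(X)$, and then set
\begin{equation*}
    \ZInf(t,X,u) := \mathrm{BCH}\bigl(-tX_0,\, Z_{\llbracket 1,\infty\llbracket}(t,X,u)\bigr).
\end{equation*}
Being a formal sum of iterated brackets, BCH preserves $\widehat{\mathcal{L}}(X)$, and convergence at each homogeneous degree $n$ is automatic, since both arguments have vanishing constant term so that only finitely many brackets of homogeneous components of degree $\leq n$ contribute. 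By construction, $\exp(tX_0)\exp(\ZInf) = \exp(tX_0)\exp(-tX_0)\exp(Z_{\llbracket 1,\infty\llbracket}) = \exp(Z_{\llbracket 1,\infty\llbracket}) = x(t)$, proving \bref{eq:11formal}.

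Second, for \bref{tilde_Z_infty_decomp_coord1.1}, \cref{thm:viennot} asserts that $\eval(\mathcal{B})$ is a basis of $\mathcal{L}(X)$. Extended homogeneously to the completion, it provides, for each $n \in \N$, a unique decomposition of the degree-$n$ component of $\ZInf$ on $\{\eval(b) : b \in \mathcal{B},\, |b| = n\}$. Assembling these layer-by-layer yields a unique family $(\eta_b)_{b\in\mathcal{B}}$ satisfying \bref{tilde_Z_infty_decomp_coord1.1}. To identify $\eta_{X_0}$ and $\eta_{X_1}$, I would match degree-one parts: from the Chen series \bref{eq:Chen}, $x(t) = 1 + tX_0 + u_1(t) X_1 + (\text{terms of degree}\geq 2)$, and $e^{-tX_0} = 1 - tX_0 + (\text{terms of degree}\geq 2)$, so that $e^{-tX_0}x(t) = 1 + u_1(t)X_1 + (\text{terms of degree}\geq 2)$. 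On the other hand, the degree-one part of $\exp(\ZInf)$ is $\eta_{X_0}(t,u) X_0 + \eta_{X_1}(t,u) X_1$, hence $\eta_{X_0}(t,u) = 0$ and $\eta_{X_1}(t,u) = u_1(t)$.

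The main obstacle is the careful handling of the graded completion: one must verify that BCH converges in $\widehat{\mathcal{L}}(X)$ and that $\exp$ and $\log$ are mutually inverse between the formal series with vanishing constant term and those with constant term $1$, so that $\exp(a)\exp(b) = \exp(\mathrm{BCH}(a,b))$ genuinely holds for $a,b \in \widehat{\mathcal{L}}(X)$. These are classical facts for the complete tensor Hopf algebra on non-commutative indeterminates, but deserve explicit verification, especially because Hall sets such as $\Bs$ may contain infinite segments (as illustrated after \bref{eq:etx0-xtw1-ex1}), so that assembling the homogeneous Hall expansions into \bref{tilde_Z_infty_decomp_coord1.1} already involves, within each fixed total length, a countable family of nonzero $\eta_b$'s whose summation must be interpreted in the graded topology of $\widehat{\mathcal{L}}(X)$.
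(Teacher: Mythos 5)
Your proof is correct, and it takes the natural route for this result. The paper itself does not give a self-contained proof here: it defers to Section~2.4 of~\cite{P1}, the terminology ``interaction picture'' and the identity $e^{\ZInf(t,X,u)} = \overleftarrow{\prod}_{b\in\mathcal{B}\setminus\{X_0\}}e^{\xi_b(t,u)\eval(b)}$ derived in \cref{Prop:ZM_Bstar} both indicate that the underlying argument is precisely what you propose: set $\ZInf := \mathrm{BCH}(-tX_0,\,Z_{\llbracket 1,\infty\llbracket})$, so that $\exp(\ZInf)=e^{-tX_0}x(t)$. Your verification that $\ZInf \in \widehat{\mathcal{L}}(X)$ via degreewise convergence of BCH, the layer-by-layer Hall decomposition via \cref{thm:viennot}, and the degree-one matching $\eta_{X_0}=0$, $\eta_{X_1}=u_1(t)$ are all sound.

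One small imprecision in your closing paragraph, which does not affect the argument: you write that ``within each fixed total length'' there is a countable family of nonzero $\eta_b$'s. In fact, for each total length $n$, the set $\{b\in\mathcal{B};\ |b|=n\}$ is finite — it is a basis of the finite-dimensional space $\mathcal{L}_n(X)$, whose dimension is the Witt number. The infinite segments of the order on $\Bs$ (such as $X_1<M_1<M_2<\dotsb<W_1$) are distributed across infinitely many \emph{distinct} lengths, not accumulated at a fixed length. Consequently the decomposition at each degree is a finite linear-algebra problem; only the assembly over all degrees $n\in\N^*$ requires the graded topology of $\widehat{\mathcal{L}}(X)$, as you correctly note for the BCH series itself. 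The caution about infinite segments becomes relevant only later, when interpreting the infinite \emph{product} \eqref{eq:prod-inf}, not for the Lie-series expansion \eqref{tilde_Z_infty_decomp_coord1.1}.
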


\begin{remark} \label{rk:pseudo}
    In \cite[Section 2.4]{P1}, we named this expansion ``Magnus expansion in the interaction picture''.
    Indeed, the interaction picture is a representation used in quantum mechanics when the dynamics can be written as the sum of a time-independent part, which can be solved exactly, and a time-dependent perturbation.
    It corresponds to factoring out the $e^{t X_0}$ part of the dynamics.

    As in \cite[Section 2.4.2]{P1}, we call the $\eta_b$ coordinates of the \emph{pseudo-first kind}.
    This is our own wording to denote that these coordinates are somewhere in between those of the first kind (used in \eqref{tilde_Z_infty_decomp_coord1.0}) and those of the second kind (used in \eqref{eq:prod-inf}), but much closer to the former.
    First kind and second kind are well established names \cite[III.4.3]{zbMATH03499968}.
\end{remark}

Since we will work with truncated version of this expansion, we also introduce, for $M \in \N^*$, the notation $\cZ{M}(t,X,u)$ to denote the canonical projection of $\ZInf(t,X,u)$ onto $\mathcal{A}_1(X) \oplus \dotsb \oplus \mathcal{A}_M(X)$, so that one has
\begin{equation}
    \cZ{M}(t,X,u) = \sum_{n_1(b) \leq M} \eta_b(t,u) \eval(b).
\end{equation}
These truncations still contain an infinite number of terms (since $n_0(b)$ is unbounded).
However, when substituting $X_0$ and $X_1$ with analytic vector fields $f_0$ and $f_1$, then enjoy nice convergence properties (see \cref{thm:Key_1}).

\subsection{Computing coordinates of the pseudo-first kind}
\label{sec:eta}

Unlike coordinates of the second kind, which enjoy the nice explicit recursive integral expression~\eqref{eq:def:coord2}, no such formula is known for the coordinates of the first kind or pseudo-first kind.
Our viewpoint in this work is to compute the latter from those of the second kind, by means of the Campbell--Baker--Hausdorff formula, leading to formula \eqref{eq:etab-xib-exact} and estimate \eqref{eq:etab-xib}.

\begin{proposition} \label{Prop:ZM_Bstar}
    There exists a family of elements $F_{q,h}(Y_1,\dotsc,Y_q) \in \mathcal{L}(\{Y_1,\dotsc,Y_q\})$ for $q \in \N^*$ and $h \in (\N^*)^q$, such that, $F_{q,h}(Y_1,\dotsc,Y_q)$ is of degree $h_i$ with respect to $Y_i$ for each $i \in \intset{1,q}$ and, for every Hall set $\mathcal{B} \subset \Br(X)$ with $X_0$ as maximal element, $t>0$ and $u \in \lone$,
    \begin{equation} \label{ZM_Bstar}
        \ZInf(t,X,u)=\sum_{\substack{q\in \N^*, h\in (\N^*)^q \\ b_1 > \dots > b_q \in \mathcal{B} \setminus\{ X_0\} }} \xi_{b_1}^{h_1}(t,u) \dots \xi^{h_q}_{b_q}(t,u) F_{q,h}(b_1,\dots,b_q).
    \end{equation}
    Equivalently, for every $b \in \mathcal{B}$, one has
    \begin{equation} \label{eq:etab-xib-exact}
        \eta_b(t,u) = \xi_b(t,u) + \sum_{\substack{q \geq2, h\in (\N^*)^q \\ b_1 > \dots > b_q \in \mathcal{B} \setminus\{ X_0\} }} \xi_{b_1}^{h_1}(t,u) \dots \xi_{b_q}^{h_q}(t,u) \langle F_{q,h}(b_1,\dots,b_q), b \rangle_{\mathcal{B}}.
    \end{equation}
\end{proposition}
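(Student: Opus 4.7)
The plan is to deduce the expansion by matching Sussmann's infinite product (\cref{thm:Inf_Prod}) with the Magnus factorization of \cref{thm:Magnus1.1_formel}, then extracting the logarithm via the Baker--Campbell--Hausdorff (BCH) formula. Since $X_0$ is maximal in $\mathcal{B}$ and $\xi_{X_0}(t,u)=t$, the factor $e^{tX_0}$ occupies the leftmost position in the oriented product \eqref{eq:prod-inf}; combining \eqref{eq:prod-inf} with \eqref{eq:11formal} and cancelling this common factor on the left yields
\begin{equation*}
  e^{\ZInf(t,X,u)} = \underset{b \in \mathcal{B}\setminus\{X_0\}}{\overleftarrow{\prod}} e^{\xi_b(t,u) \eval(b)}.
\end{equation*}

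Next I would iterate BCH on finite prefixes of the right-hand side. For formal Lie series $A$ and $B$, $\log(e^A e^B)$ is a universal element of the completed free Lie algebra over $\{A,B\}$; passing to the limit in $\widehat{\mathcal{L}}(X)$ is unproblematic because, for every fixed $n$, only finitely many $b \in \mathcal{B}$ satisfy $|b| \leq n$, so truncation at total length $n$ reduces the identity to a finite product and a finite BCH computation. The output expresses $\ZInf(t,X,u)$ as a universal Lie series in the scaled generators $\xi_b(t,u)\eval(b)$. Since the scalars $\xi_b(t,u)$ commute with everything, each BCH monomial factors as a product of powers of the $\xi_{b_i}$'s multiplied by a Lie word in the $\eval(b_i)$'s. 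Grouping contributions by the strictly decreasing tuple $b_1 > \dots > b_q$ of distinct generators that occur and by the multidegree $h = (h_1,\dots,h_q)$ of the Lie word in those generators yields exactly the form \eqref{ZM_Bstar}. Because BCH depends only on the ordered sequence of factors and not on what they are, the resulting Lie polynomial is the value at $(b_1,\dots,b_q)$ of a universal $F_{q,h}(Y_1,\dots,Y_q) \in \mathcal{L}(\{Y_1,\dots,Y_q\})$ independent of the Hall set~$\mathcal{B}$.

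To derive the equivalent formula \eqref{eq:etab-xib-exact}, I would project \eqref{ZM_Bstar} onto the Hall basis $\eval(\mathcal{B})$ and compare with \eqref{tilde_Z_infty_decomp_coord1.1}. The $q=1$ contributions reduce to $\sum_{b} \sum_{h_1 \geq 1} \xi_b^{h_1}(t,u)\, F_{1,(h_1)}(b)$; since Lie polynomials in a single variable vanish in degree $\geq 2$ (as $[Y_1,Y_1]=0$) and the degree-one part of BCH forces $F_{1,(1)}(Y_1) = Y_1$, only $h_1=1$ survives and contributes $\sum_b \xi_b(t,u)\eval(b)$. Consequently the coefficient of $\eval(b)$ in $\ZInf$ equals $\xi_b(t,u)$ plus the cross terms arising from $q \geq 2$, which is exactly \eqref{eq:etab-xib-exact}.

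The main obstacle I anticipate is bookkeeping: making rigorous the rearrangement that regroups every BCH monomial according to its unordered support $(b_1 > \dots > b_q)$ and multidegree $h$. Several distinct Lie words can a priori contribute to the same $(h,b_1,\dots,b_q)$ slot and must be summed to form a single $F_{q,h}$; the universality of BCH, applied abstractly in the free Lie algebra on symbols $Y_1,\dots,Y_q$ before specialising at $(b_1,\dots,b_q)$, is exactly what guarantees that $F_{q,h}$ is well-defined, has multidegree $h$, and is independent of $\mathcal{B}$.
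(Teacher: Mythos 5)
Your proof is correct and follows essentially the same route as the paper: cancel the leftmost factor $e^{tX_0}$ (using that $X_0$ is maximal in $\mathcal{B}$) to obtain $e^{\ZInf} = \overleftarrow{\prod}_{b \neq X_0} e^{\xi_b \eval(b)}$, then apply iterated Baker--Campbell--Hausdorff, reducing to finite products via truncation by total degree. The paper invokes this last step as a black box (a ``multivariate CBHD formula'' from \cite[Proposition 34]{P1}, with a footnote about reducing the infinite product to the finite case via the grading), whereas you sketch its derivation, but the underlying argument is identical.
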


\begin{proof}
    We deduce from \Cref{thm:Inf_Prod,thm:Magnus1.0_formel} and the maximality of $X_0$ (see also \eqref{eq:etx0-xtw1-ex1}) that
    \begin{equation}
        e^{\ZInf(t,X,u)} = \underset{b \in \mathcal{B} \setminus\{X_0\}}{\overleftarrow{\prod}} e^{\xi_b(t,u)\eval(b)}.
    \end{equation}
    Then \eqref{ZM_Bstar} and \eqref{eq:etab-xib-exact} follow from the multivariate CBHD formula \cite[Proposition 34]{P1}\footnote{Technically, this proposition is stated for finite products. 
    Nevertheless, one can use the graded structure of $\widehat{\mathcal{A}}(X)$ to reduce the proof to this finite setting.}.
\end{proof}

\begin{remark}
    The elements $F_{q,h}$ are deeply linked with the CBHD formula and can be iteratively computed from its usual two-variables coefficients.
    One has for example for $q =1$, $F_{1,(1)}(Y_1)=Y_1$, for $q = 2$, $F_{2,(1,1)}(Y_1,Y_2)=\frac{1}{2}[Y_1,Y_2]$, $F_{2,(2,1)}(Y_1,Y_2) = \frac 1 {12} [Y_1, [Y_1, Y_2]]$, and for $q = 3$, $F_{3,(1,1,1)}(Y_1,Y_2,Y_3) = \frac 1 4 [Y_1,[Y_2,Y_3]]$ (see \cite[Proposition 34]{P1} for more details). 
\end{remark}

Equality \eqref{eq:etab-xib-exact} leads to the idea that, in some sense, one has $\eta_b \approx \xi_b$, provided that one can estimate the appropriate cross terms of the right-hand side.

\begin{definition}[$\mathcal{F}$]
    Given $q \geq 2$ and $b_1, \dotsc, b_q \in \Br(X)$, we define $\mathcal{F}(b_1,\dotsc,b_q)$ as the subset of $\Br(X)$ of brackets of $b_1, \dotsc, b_q$ involving each of these elements exactly once.
    For example
    \begin{align}
        \mathcal{F}(b_1,b_2) & = \{ (b_1, b_2), (b_2, b_1) \}, \\
        \mathcal{F}(b_1,b_2,b_3) & = \{ (b_1, (b_2, b_3)),   ((b_1,b_2),b_3), \dotsc \text{and 10 others} \}.
    \end{align}
\end{definition}

\begin{proposition} \label{p:etab-xib-XI}
    Let $\mathcal{B}$ be a Hall set with $X_0$ maximal.
    Let $b \in \mathcal{B}$.
    There exists $C_b > 0$ such that the following property holds.
    Assume that there exists $\Xi : \R_+^* \times \lloc \to \R_+$ such that, for all $q \geq 2$, $b_1 \geq \dotsb \geq b_q \in \mathcal{B} \setminus \{ X_0 \}$ such that $b \in \supp \mathcal{F}(b_1, \dotsc, b_q)$, for every $u \in \lloc$ and $t > 0$,
    \begin{equation} \label{eq:prod-xibi-XI}
        |\xi_{b_1}(t,u) \dotsb \xi_{b_q}(t,u) | 
        \leq \Xi(t,u).
    \end{equation}
    Then, for every $u \in \lloc$ and $t > 0$,
    \begin{equation} \label{eq:etab-xib}
        |\eta_b(t,u) - \xi_b(t,u)| \leq C_b \Xi(t,u).
    \end{equation}
\end{proposition}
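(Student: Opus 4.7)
The starting point is the explicit formula \eqref{eq:etab-xib-exact}, which already expresses $\eta_b - \xi_b$ as an absolutely convergent (infinite) sum of ``cross terms'' of the form $\xi_{b_1}^{h_1}(t,u) \cdots \xi_{b_q}^{h_q}(t,u)$ weighted by scalar coefficients $\langle F_{q,h}(b_1,\dots,b_q), b \rangle_{\mathcal{B}}$. The plan is to bound each nonzero cross term by $\Xi(t,u)$ using hypothesis \eqref{eq:prod-xibi-XI}, and then show that only finitely many coefficients can be nonzero, so that summing gives a finite constant $C_b$.

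First, I would reduce the non-multilinear products to multilinear ones by a bookkeeping trick. Given strict $b_1 > \dots > b_q$ in $\mathcal{B}\setminus\{X_0\}$ and multiplicities $h = (h_1, \dots, h_q)$, define the non-increasing list $c_1 \geq c_2 \geq \dots \geq c_N$, where $N := h_1+\dots+h_q$, obtained by repeating each $b_i$ exactly $h_i$ times. Then $\xi_{b_1}^{h_1}\cdots\xi_{b_q}^{h_q} = \xi_{c_1}\cdots\xi_{c_N}$, and since $F_{q,h}(Y_1,\dots,Y_q)\in\mathcal{L}(\{Y_1,\dots,Y_q\})$ is of degree $h_i$ in $Y_i$, it expands as an $\R$-linear combination of iterated brackets of the $Y_i$'s in which each $Y_i$ appears exactly $h_i$ times. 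Thus $F_{q,h}(b_1,\dots,b_q)$ is a linear combination of elements of $\mathcal{F}(c_1,\dots,c_N)$. In particular, if $\langle F_{q,h}(b_1,\dots,b_q), b \rangle_{\mathcal{B}} \neq 0$ then $b \in \supp\mathcal{F}(c_1,\dots,c_N)$, so the hypothesis \eqref{eq:prod-xibi-XI} applies to the non-increasing tuple $(c_1, \dots, c_N)$ and yields
\begin{equation*}
    |\xi_{b_1}^{h_1}(t,u)\cdots\xi_{b_q}^{h_q}(t,u)| = |\xi_{c_1}(t,u)\cdots\xi_{c_N}(t,u)| \leq \Xi(t,u).
\end{equation*}

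It then remains to verify that only finitely many $(q, h, b_1, \dots, b_q)$ produce a nonzero coefficient. This is a finite-dimensional homogeneity argument: the element $F_{q,h}(b_1,\dots,b_q)$ lives in the bihomogeneous component of $\mathcal{L}(X)$ with bidegree $\bigl(\sum_i h_i n_1(b_i),\, \sum_i h_i n_0(b_i)\bigr)$, so for $b$ to appear in its support we need
\begin{equation*}
    \sum_{i=1}^q h_i\, n_1(b_i) = n_1(b), \qquad \sum_{i=1}^q h_i\, n_0(b_i) = n_0(b).
\end{equation*}
Because each $b_i \in \mathcal{B}\setminus\{X_0\}$ satisfies $n_1(b_i)\geq 1$ (the only element of $\mathcal{B}$ with $n_1=0$ is $X_0$), the first equality forces $q \leq \sum h_i \leq n_1(b)$ and $h_i \leq n_1(b)$ for each $i$. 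Moreover, each $b_i$ itself has $n_1(b_i)\leq n_1(b)$ and $n_0(b_i)\leq n_0(b)$, so $b_i$ ranges over a finite subset of $\mathcal{B}$ (a Hall set has only finitely many elements of prescribed bidegree). Hence there are only finitely many tuples producing nonzero coefficients.

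Combining the two ingredients, setting
\begin{equation*}
    C_b := \sum_{\substack{q \geq 2,\, h \in (\N^*)^q \\ b_1 > \dots > b_q \in \mathcal{B}\setminus\{X_0\}}} |\langle F_{q,h}(b_1,\dots,b_q), b\rangle_{\mathcal{B}}|,
\end{equation*}
which is a finite sum depending only on $b$ and $\mathcal{B}$, yields \eqref{eq:etab-xib}. The only mildly delicate point is the reduction to the multilinear case in the first step, since the hypothesis is formulated in terms of $\mathcal{F}$ (multilinear brackets) while \eqref{eq:etab-xib-exact} carries arbitrary multiplicities; the observation that $\mathcal{F}(c_1,\dots,c_N)$ with repeated entries captures exactly the bracket monomials appearing in $F_{q,h}(b_1,\dots,b_q)$ is what makes the argument go through.
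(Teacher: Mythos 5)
Your proof is correct and follows essentially the same route as the paper's, which also bounds each cross term of \eqref{eq:etab-xib-exact} by the hypothesis and concludes by finiteness of the sum via a homogeneity argument (the paper uses $h_1|b_1|+\dotsb+h_q|b_q|=|b|$ where you use bihomogeneity in $(n_1,n_0)$, but the conclusion is the same). The one place where you supply genuinely more detail is the bridge from the strictly decreasing tuple $(b_1,\dotsc,b_q)$ with multiplicities $h$ to the non-increasing tuple $(c_1,\dotsc,c_N)$ required to invoke the hypothesis on $\supp\mathcal{F}$; this is correct and worth spelling out, since the paper's two-line proof leaves it implicit.
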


\begin{proof}
    This is a straightforward consequence of \eqref{eq:etab-xib-exact} and the fact that the sum in the right-hand side of this equality is finite.
    Indeed, $\langle F_{q,h}(b_1,\dotsc,b_q), b \rangle_{\mathcal{B}} \neq 0$ implies in particular that $h_1 |b_1|+\dotsb+h_q|b_q| = |b|$, so there is a finite number of possibilities for $q$, $h$ and the $b_i$.
\end{proof}

\section{A new Hall basis of the free Lie algebra}
\label{s:B*}

In this section, we define our new basis of the free Lie algebra over two generators $\{ X_0, X_1 \}$, designed for applications to control theory, and compute some of its elements.

In \cref{s:B*-def}, we introduce our definition of a new Hall set, which we call $\Bs$ and motivate its interest for control problems.
In \cref{s:B*-12345}, we give an exhaustive description of the elements of $\Bs$ involving $X_1$ at most 5 times.
In \cref{s:xi-B*-123445}, we compute the associated coordinates of the second kind, while in \cref{s:xi-B*-estimates}, we provide estimates of these coordinates.

\subsection{Definition of \texorpdfstring{$\Bs$}{B*} and first properties}
\label{s:B*-def}

The main result of this paragraph is \cref{Prop:B*exists!} which states the existence and uniqueness of our basis $\Bs$.
We start by introducing some notations and definitions which will make the presentation more meaningful.

First, we define by induction a subset $G$ of $\Br(X)$ by requiring that, $X_0, X_1 \in G$ and, for every $a,b \in G$ with $a \neq X_0$, $(a,b) \in G$.
Heuristically, $G$ is the subset of $b \in \Br(X)$ for which $X_0$ is never the left factor of any sub-bracket within $b$.
This leads to the following definition.

\begin{definition}[Germ] \label{def:germ}
    For any $b \in G \setminus \{ X_0 \}$, there exists a unique couple $(b^*,\nu_b) \in G \times \N$ such that $b = b^* 0^{\nu_b}$, with $b^* = X_1$ or $b^* = (b_1,b_2)$ with $b_1 \neq X_0$ and $b_2 \neq X_0$, where the notation $0^\nu$ is introduced in \cref{def:0nu}.
    We call $b^*$ the \emph{germ} of $b$ and we say that $b$ is a germ when $b = b^*$ (i.e.\ $\nu_b = 0$).
    Let $G^*$ be the subset of $G$ made of germs.
\end{definition}

\begin{example}
    Let $b := ((X_1,X_0),X_0) = X_1 0^2$. 
    Then $b \in G$, the germ of $b$ is $X_1$ and $\nu_b = 2$. 
    Hence $b \notin G^*$.
    However $c := (X_1, (X_1, X_0)) \in G$ and $c$ is a germ.
\end{example}

\begin{definition}[Order for $\Bs$]
    \label{def:order}
    We endow $G$ with the following total order.
    \begin{enumerate}[(B1)] \addtocounter{enumi}{-1}
        \item \label{B0} $X_0$ is the maximal element.
        \item \label{B1} for $a, b \in G \setminus \{X_0\}$, $a < b$ if and only if $a^* < b^*$ or $a^* = b^*$ and $\nu_a < \nu_b$.
        \item \label{B2} for $a^*,b^* \in G^*$, $a^* < b^*$ if and only if
        \begin{itemize}
            \item either $n_1(a^*) < n_1(b^*)$,
            \item or $n_1(a^*) = n_1(b^*)$ and $\lambda(a^*) < \lambda(b^*)$,
            \item or $n_1(a^*) = n_1(b^*)$ and $\lambda(a^*) = \lambda(b^*)$ and $\mu(a^*) < \mu(b^*)$.
        \end{itemize}
    \end{enumerate}
\end{definition}

In other words, $X_1$ is minimal, $X_0$ is maximal and, on $G \setminus X$, the order is the lexicographic order on the quadruple $b \mapsto (n_1(b^*), \lambda(b^*), \mu(b^*), \nu_b)$.

\begin{theorem} \label{Prop:B*exists!}
    There exists a unique Hall set $\Bs \subset G \subset \Br(X)$ associated with \cref{def:order}.
\end{theorem}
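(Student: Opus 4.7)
The plan is to proceed in three stages: first, verify that \cref{def:order} does define a total order on $G$; then construct $\Bs$ inductively using the Hall axioms; and finally check the third Hall axiom, which is the only non-trivial step since the order on $\Bs$ is not length-compatible.

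I would first verify by strong induction on $|a|+|b|$ that rules (B0)--(B2) yield a well-defined, antisymmetric, transitive and total order on $G$. The induction is well-posed because (B1) reduces the comparison of two elements of $G\setminus\{X_0\}$ either to comparing their germs in $G^*$ or to comparing two integers $\nu_a,\nu_b\in\N$, while (B2) reduces the comparison of two germs in $G^*\setminus\{X_1\}$ to strictly shorter brackets via $\lambda$ and $\mu$. The base layer is clean because any germ in $G^*\setminus\{X_1\}$ can be written $(b_1,b_2)$ with $b_1,b_2\in G\setminus\{X_0\}$, so $n_1(b_1),n_1(b_2)\geq 1$ and thus $n_1(b^*)\geq 2$; consequently $X_1$ is the unique germ with $n_1=1$, and (B2) never has to compare two distinct germs on that bottom layer.

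Next, I would construct $\Bs$ by induction on length, starting from $X\subset\Bs$ and, for $n\geq 2$, declaring a length-$n$ bracket $b=(b_1,b_2)$ to belong to $\Bs$ if and only if $b_1,b_2\in\Bs$, $b_1<b_2$, and either $b_2\in X$ or $\lambda(b_2)\leq b_1$. The inclusion $\Bs\subset G$ is automatic from this construction: if $b_1=X_0$ then $b_1<b_2$ fails since $X_0$ is maximal by (B0), so no such $b$ can enter $\Bs$. The first two Hall axioms then hold by construction, and uniqueness of $\Bs$ given the order is immediate from this inductive description.

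The remaining task, and the main obstacle, is to prove the third Hall axiom: whenever $b=(b_1,b_2)\in\Bs$, one has $b_1<b$. I would split the argument into two cases according to the germ decomposition of $b$. If $\nu_b\geq 1$, then $b_2=X_0$ and $b_1=b^*\, 0^{\nu_b-1}$ shares the germ of $b$ while $\nu_{b_1}=\nu_b-1<\nu_b$, so $b_1<b$ is immediate from (B1). If $\nu_b=0$ and $b\neq X_1$, then $b\in G^*$ with $b_1,b_2\in G\setminus\{X_0\}$, hence $n_1(b_2)\geq 1$ and $n_1(b_1^*)=n_1(b_1)<n_1(b)=n_1(b^*)$; thus $b_1^*<b^*=b$ by (B2), and then $b_1<b$ by (B1). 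This dichotomy, which compensates for the lack of length-compatibility, is the only delicate point; the remainder of the argument is routine bookkeeping.
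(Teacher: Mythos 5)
Your proof is correct and follows essentially the same route as the paper's, with one structural difference worth noting. The paper invokes a general result of Viennot's theory, cited as [A1, Lemma~1.37], which states that a Hall set exists and is unique as soon as the order is a \emph{Hall order} (i.e., total, and $a < (a,b)$ for all $(a,b) \in G \setminus X$); it then checks only totality and the Hall-order inequality, and the latter step uses exactly your dichotomy on whether the right factor is $X_0$. You instead re-derive the easy direction of that lemma: you construct $\Bs$ directly by induction on length (which forces existence and uniqueness, and gives the first two Hall axioms for free) and then check the third axiom $b_1 < (b_1,b_2)$ by the same dichotomy $\nu_b \geq 1$ versus $\nu_b = 0$, using \bref{B1} in the first case and \bref{B2} then \bref{B1} in the second. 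The two approaches rest on the identical key observation; yours is a bit longer but self-contained and avoids the external citation, while the paper's is more compressed by deferring the set-theoretic bookkeeping to the quoted lemma.
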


\begin{proof}
    By \cite[Lemma 1.37]{A1}, it suffices to check that $<$ is a Hall order on $G$, i.e.\ a total order such that, for every $(a,b) \in G \setminus X$, $a \in G$ and $a < (a,b)$.

    \step{We prove that, for every $a, b \in G$, if neither $a < b$ nor $b < a$ holds, then $a = b$}
    By contradiction, let $a$ and $b$ be a pair, of minimal total length $|a|+|b|$, such that $a \neq b$, and neither $a < b$ nor $b < a$.
    By \bref{B0}, $a \neq X_0$ and $b \neq X_0$.
    By \bref{B1}, $a^* \neq b^*$ (otherwise $\nu_a = \nu_b$ so $a = b$). 
    By \bref{B2}, $n_1(a^*)=n_1(b^*)$ and,
    \begin{itemize}
        \item either $\lambda(a^*) \neq \lambda(b^*)$, and these two brackets are an incomparable pair of shorter total length,
        \item or $\lambda(a^*) = \lambda(b^*)$, but then $\mu(a^*) \neq \mu(b^*)$ is an incomparable pair of shorter total length.
    \end{itemize}
    
    \step{We prove that, for every $(a,b) \in G \setminus X$, $a \in G$ and $a < (a,b)$}
    Let $(a,b) \in G \setminus X$.
    Then $a \in G$ by construction of $G$ by induction.
    If $b = X_0$ then $a < (a,b)$ by \bref{B1}.
    If $b \neq X_0$, then $n_1(a) < n_1((a,b))$ so $a < (a,b)$ by \bref{B2}.
\end{proof}

\begin{remark} \label{rk:Bs-vs-CFL}
    In $\Bs$, $X_0$ is maximal.
    This is similar to the fact that $X_0$ would be maximal in the Chen--Fox--Lyndon basis associated with the order $X_1 < X_0$ on $X$.
    So $\Bs$ shares some properties of this basis (for example, the fact that $b \in \Bs \Rightarrow \forall \nu \in \N, b0^\nu \in \Bs$).
    
    If $b \in \Bs$ is a germ, then, by \bref{B2}, $\mu(b) < b$, because $n_1(\mu(b)) < n_1(b)$.
    This is similar to the situation in length-compatible Hall sets where one always has $\mu(b) < b$ because $|\mu(b)| < |b|$.
    In the Chen--Fox--Lyndon basis however, one has $b < \mu(b)$.
    So $\Bs$ shares some properties of length-compatible Hall sets.
\end{remark}

By analogy with \eqref{eq:S_A1} and \eqref{eq:S_A1A0}, for $A_1,A_0\subset \N$, we will also adopt the notations 
\begin{equation} \label{Bstar_A1_A0}
    \Bs_{A_1}:=\{b\in\Bs;n_1(b)\in A_1\} \quad \text{and} \quad  \Bs_{A_1,A_0}:=\{b\in\Bs;n_1(b)\in A_1, n_0(b)\in A_0\}.
\end{equation}

\subsection{Elements of \texorpdfstring{$\Bs$}{B*} up to the fifth order} 
\label{s:B*-12345}

The goal of this section is to prove \cref{p:B*-12345}, i.e.\
to determine the germs of $\Bs_{\intset{1,5}}$.

If $b^*$ is such a germ, then, by \Cref{def:Hall}, for every $\nu \in \N$, $b^* 0^\nu \in \Bs$ and, by \bref{B1}, for every $\nu_1<\nu_2 \in \N$ then $b^* 0^{\nu_1} < b^* 0^{\nu_2}$.

\begin{proof}[Proof of \cref{p:B*-12345}]
    $X_1$ is the only possible germ in $\Bs_1$, which proves \eqref{Bstar_S1}. Moreover, the sequence $(M_\nu)_{\nu \in \N}$ is increasing
    \begin{equation} \label{Mnu_croit}
        \forall \nu \leq \nu', \quad M_\nu \leq M_{\nu'}.
    \end{equation}
    By \cref{def:Hall}, any germ of $\Bs_{\intset{2,5}}$ is of the form $(a,b)$ where $a, b \in \Bs_{\intset{1,4}}$, and $\lambda(b) \leq a<b$.
    By \bref{B1}, this implies that either $a^* = b^*$ and then $b = (a,X_0)$ so $(a,b) = \ad^2_a(X_0)$, or $a^* < b^*$ and then $b = b^*$ and $n_1(a) \leq n_1(b)$.
    We proceed by increasing degree in $X_1$.
    \begin{itemize}
        \item \emph{Germs of $\Bs_2$:}
        By \Cref{def:Hall}, for every $j\in\N^*$, $W_{j,0}$ belongs to $\Bs$. 
        Indeed $W_{j,0}=\ad_{M_{j-1}^2}(X_0)$ and $M_{j-1}<X_0$ by \bref{B0}.
        These are the only elements of $\Bs_2$ that one may construct by bracketing two elements of $\Bs_1$. 
        Moreover, by \bref{B2}, $W_{j,0}<W_{k,0}$ when $j<k$, thus, by \bref{B1},
        \begin{equation} \label{Wk_croit}
            \forall j<k\in\N^*, \forall \mu\in\N, \quad W_{j,\mu} < W_{k,0}.
        \end{equation}

        \item \emph{Germs of $\Bs_3$:}
        By \Cref{def:Hall}, for $j \leq k \in\N^*$,
        \begin{equation} \label{def:Plj0}
        P_{j,k,0}=(M_{k-1},W_{j,0})=(M_{k-1},(M_{j-1},M_j))
        \end{equation}
        belongs to $\Bs$. 
        Indeed, $M_{j-1} \leq M_{k-1} < W_{j,0}$ by \eqref{Mnu_croit} and \bref{B2} because $n_1(M_{k-1}) < n_1(W_{j,0})$.
        These are the only elements of $\Bs_3$ that one may construct by bracketing an element of $\Bs_1$ with an element of $\Bs_2$.

        \item \emph{Germs of $\Bs_4$ in $(\Bs_1,\Bs_3)$:}
        By \Cref{def:Hall}, for $j \leq k \leq l \in\N^*$,
        \begin{equation} \label{def:Qljk0}
        Q_{j,k,l,0}=(M_{l-1},P_{j,k,0})= (M_{l-1},(M_{k-1},W_{j,0}))
        \end{equation}
        belongs to $\Bs$ because 
       $M_{k-1} \leq M_{l-1} < P_{j,k}$ 
        by \eqref{Mnu_croit} and \bref{B2}. These are the only elements of $\Bs_4$ that one may construct by bracketing an element of $\Bs_1$ with an element of $\Bs_3$.

        \item \emph{Germs of $\Bs_4$ in $(\Bs_2,\Bs_2)$:} 
        By \Cref{def:Hall}, for $j<k \in \N^*$ and $\mu \in \N$,
        \begin{equation}
            Q_{j,\mu,k,0}^\sharp = (W_{j,\mu},W_{k,0}) = (W_{j,\nu},(M_{k-1},M_k))
        \end{equation}
        belongs to $\Bs$. 
        Indeed, $M_{k-1} < W_{j,\mu} < W_{k,0}$ by \bref{B2} and \eqref{Wk_croit}. 
        These are the only elements of $\Bs_4$ that one may construct by bracketing two elements of $\Bs_2$ having different germs.
        
        For $j\in\N^*$ and $\mu\in\N$,
        \begin{equation}
            Q^\flat_{j,\mu,0}=(W_{j,\mu},W_{j,\mu+1})= \ad_{W_{j,\mu}}^2(X_0)
        \end{equation}
        belongs to $\Bs$.
        Indeed, by \bref{B0}, $W_{j,\mu}<X_0$.
        These are the only elements of $\Bs_4$  that one may construct by bracketing two elements of $\Bs_2$ having the same germ.

        \item \emph{Germs of $\Bs_5$ in $(\Bs_1,\Bs_4)$:} By \Cref{def:Hall}, for $j\leq k \leq l \leq m  \in\N^*$,
        \begin{equation}
        R_{j,k,l,m,0}=(M_{m-1},Q_{j,k,l,0})= (M_{m-1},(M_{l-1},P_{j,k,0}))
        \end{equation} 
        belongs to $\Bs$. 
        Indeed, $M_{l-1} \leq M_{m-1}<Q_{j,k,l,0}$ by \eqref{Mnu_croit} and \bref{B2}.
        These are the only elements of $\Bs_5$ that one may construct by bracketing an element of $\Bs_1$ with an element of~$\Bs_4$.

        \item \emph{Germs of $\Bs_5$ in $(\Bs_2,\Bs_3)$:}
        By \Cref{def:Hall}, for $j, k,l\in\N^*$ such that $j\leq k$ and $\mu\in\N$,
        \begin{equation}
            R^\sharp_{j,k,l,\mu,0} = (W_{l,\mu},P_{j,k,0})=(W_{l,\mu},(M_{k-1},W_{j}))
        \end{equation}
        belongs to $\Bs$.
        Indeed, $M_{k-1}<W_{l,\mu}<P_{j,k,0}$ by \bref{B2}.
        These are the only elements of $\Bs_5$ one may construct by bracketing an element of $\Bs_2$ with an element of $\Bs_3$.
    \end{itemize}
    This concludes the proof.
\end{proof}

\subsection{Expressions of coordinates of the second kind up to the fifth order} \label{s:xi-B*-123445}

In this paragraph, we give explicit expressions of the coordinates of the second kind, as defined in \cref{Def:Coord2} associated with the elements of $\Bs$ up to the fifth order in the control introduced in \cref{s:B*-12345}.
We start with the following lemma, which helps in visualizing the coordinates of the second kind associated with the elements of $\Bs_{\intset{1,5}}$ listed in \cref{p:B*-12345}. 

\begin{lemma} \label{Lem:adnuX0}
    For $b\in\Bs$ and $\nu\in\N$, 
    \begin{equation}
        \xi_{b 0^\nu}(t,u) = \int_0^t \frac{(t-s)^\nu}{\nu!} \dot{\xi}_{b}(s,u) \dd s.
    \end{equation}
\end{lemma}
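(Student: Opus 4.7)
The plan is a straightforward induction on $\nu \in \N$. For the base case $\nu = 0$, the claim reduces to $\xi_b(t,u) = \int_0^t \dot{\xi}_b(s,u)\dd s$, which holds because $\xi_b(0,u)=0$ for every $b \in \Bs$. Indeed, $\xi_{X_0}(0,u) = 0$ and $\xi_{X_1}(0,u) = u_1(0) = 0$, and the recursive formula \eqref{eq:def:coord2} preserves this vanishing property at $t = 0$.

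For the inductive step with $\nu \geq 1$, I would first identify, for $b \in \Bs$, the canonical $\ad$-decomposition of $b 0^\nu$ required by \cref{Def:Coord2}. Writing $b 0^\nu = (b 0^{\nu-1}, X_0)$, I need to check that (i) $b 0^\nu \in \Bs$ (immediate from an inner induction on $\nu$, using that $b 0^{\nu-1} < X_0$ by \bref{B0} and that $X_0 \in X$ makes the clause ``$\lambda(X_0) \leq b 0^{\nu-1}$'' in \cref{def:Hall} vacuous); (ii) the decomposition reads $b 0^\nu = \ad_{b 0^{\nu-1}}^{1}(X_0)$ with $m = 1$ maximal. The maximality in (ii) is the one nontrivial point: any representation $b 0^\nu = \ad_{b_1}^m(b_2)$ with $m \geq 2$ would force some sub-bracket of the form $(b_1, b_2') = X_0$, which is impossible since $X_0 \in X$ is atomic.

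With this decomposition in hand, \eqref{eq:def:coord2} immediately yields the recursion
\begin{equation*}
    \xi_{b 0^\nu}(t,u) = \int_0^t \xi_{b 0^{\nu-1}}(s,u) \dd s.
\end{equation*}
Substituting the inductive hypothesis and swapping the order of integration on the triangle $\{0 \leq \tau \leq s \leq t\}$ gives
\begin{equation*}
    \int_0^t \int_0^s \frac{(s-\tau)^{\nu-1}}{(\nu-1)!} \dot{\xi}_b(\tau,u) \dd \tau \dd s = \int_0^t \left( \int_\tau^t \frac{(s-\tau)^{\nu-1}}{(\nu-1)!} \dd s \right) \dot{\xi}_b(\tau,u) \dd \tau = \int_0^t \frac{(t-\tau)^\nu}{\nu!} \dot{\xi}_b(\tau,u) \dd \tau,
\end{equation*}
which closes the induction. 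The argument is purely routine; the only point worth any care is the $m = 1$ maximality check above, without which \eqref{eq:def:coord2} could not be invoked in the exact form needed. Everything else is Fubini.
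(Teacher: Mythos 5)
Your proof is correct and takes the same route as the paper, which simply says the result ``follows by induction on $\nu$ from \cref{Def:Coord2} and property~\bref{B0}'': you spell out the base case $\xi_b(0,u)=0$, the $m=1$ maximality of the $\ad$-decomposition $b0^\nu=\ad_{b0^{\nu-1}}^1(X_0)$, and the resulting recursion $\xi_{b0^\nu}(t,u)=\int_0^t\xi_{b0^{\nu-1}}$ closed by Fubini. The only implicit hypothesis worth flagging is $b\neq X_0$ (so that $b0^\nu\in\Bs$), but that is understood throughout the paper.
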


\begin{proof}
    By induction on $\nu$, this follows from \cref{Def:Coord2} and the fact that $\Bs$ satisfies \bref{B0}.
\end{proof}

\begin{proposition} \label{Prop:Coord_Bstar}
    For every $j \leq k \leq l \leq m \in\N^*$,  $\mu, \nu \in \N$, we have
    \begin{align} 
    \label{xi_S1}
    & \xi_{M_\nu}(t,u)=\int_0^t \frac{(t-s)^\nu}{\nu!} u(s) \dd s = u_{\nu+1}(t),
    \\ 
    \label{xi_Wjnu}
    & \xi_{W_{j,\nu}}(t,u)= \frac 1 2 \int_0^t \frac{(t-s)^\nu}{\nu!} u_j^2(s) \dd s,
    \\ 
    \label{xi_S3}
    & \xi_{P_{j,k,\nu}}(t,u)=
    \alpha_{j,k}
    \int_0^t \frac{(t-s)^\nu}{\nu!} u_k(s) u_j^2(s) \dd s,
    \\ 
    \label{xi_Qljknu}
    & \xi_{Q_{j,k,l,\nu}}(t,u)= \beta_{j,k,l}
    \int_0^t \frac{(t-s)^\nu}{\nu!} u_l(s) u_k(s) u_j^2(s) \dd s,
    \\ 
    \label{xi_Qbemolknu}
    & \xi_{Q^\flat_{j,\mu,\nu}}(t,u)=\frac{1}{8} \int_0^t \frac{(t-s)^\nu}{\nu!} 
    \left( \int_0^s  \frac{(s-s')^\mu}{\mu!} u_j^2(s') \dd s' \right)^2 \dd s,
    \\
    \label{xi_Qdieseknujmu}
    &\xi_{Q^\sharp_{j,\mu,k,\nu}}(t,u)= \frac 1 4
    \int_0^t \frac{(t-s)^\nu}{\nu!} 
   \left( \int_0^s \frac{(s-s')^\mu}{\mu!} u_j^2(s') \dd s' \right) u_k^2(s) \dd s,
    \\ 
    \label{xi_Rl3l2l1knu}
    & \xi_{R_{j,k,l,m,\nu}}(t,u)=\gamma_{j,k,l,m}\int_0^t \frac{(t-s)^\nu}{\nu!} u_{m}(s) u_{l}(s) u_{k}(s) u_j^2(s) \dd s,
    \\
    \label{xi_Rdiesel3l2l1knu}
    & \xi_{R^\sharp_{j,k,l,\mu,\nu}}(t,u)= \frac{\alpha_{j,k}}{2} \int_0^t \frac{(t-s)^\nu}{\nu!} \left( \int_0^s \frac{(s-s')^\mu}{\mu!} u_l^2(s') \dd s' \right) u_k(s) u_j(s)^2 \dd s,
    \end{align}
    where $j < k$ in \eqref{xi_Qdieseknujmu} (only), and the coefficients are given by
    \begin{align}
        \alpha_{j,k}&=\frac{1}{2!} \delta_{j<k} + \frac{1}{3!} \delta_{j=k}, \\
        \beta_{j,k,l}&=\alpha_{j,k} \delta_{k < l} + \frac{1}{(2!)^2} \delta_{j<k=l}+\frac{1}{4!} \delta_{j=k=l}, \\
        \gamma_{j,k,l,m} &= \beta_{j,k,l} \delta_{l < m} + \frac{1}{5!} \delta_{j=k=l=m} + \frac{1}{(2!)^2} \delta_{j<k<l=m} + \frac{1}{2! 3!} (\delta_{j<k=l=m}+\delta_{j=k<l=m}).
    \end{align}
\end{proposition}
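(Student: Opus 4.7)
The plan is to derive each identity by applying the recursive formula \eqref{eq:def:coord2} of \cref{Def:Coord2}, proceeding by increasing $X_1$-degree. Since \cref{Lem:adnuX0} already gives $\xi_{b^*0^\nu}(t,u)=\int_0^t \frac{(t-s)^\nu}{\nu!}\dot\xi_{b^*}(s,u)\,ds$, it suffices to treat the case $\nu=0$ for each germ $b^*$ listed in \cref{p:B*-12345}; the kernel $(t-s)^\nu/\nu!$ appearing in every displayed formula then emerges automatically. For each germ $b^*$, the key combinatorial input is the unique pair $(b_1,b_2)\in\Bs^2$ with $b_1<b_2$ together with the maximal integer $n\in\N^*$ such that $b^*=\ad_{b_1}^n(b_2)$; this triple dictates the factor $1/n!$ prescribed by \eqref{eq:def:coord2}. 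The Hall decompositions already displayed in the proof of \cref{p:B*-12345} in \cref{s:B*-12345} essentially provide these triples.

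The starting cases are immediate: $\xi_{X_1}=u_1$, and then $W_{j,0}=\ad_{M_{j-1}}^2(X_0)$ has $n=2$ since the letter $X_0$ cannot be further extracted, giving $\xi_{W_{j,0}}=\tfrac{1}{2!}\int_0^t u_j^2\,ds$. From here one proceeds inductively, the only subtlety being whether the outer left factor coincides with the inner one, which governs how many layers of $\ad$ can be stripped simultaneously. For instance, $P_{j,k,0}=(M_{k-1},W_{j,0})$ has $n=1$ when $j<k$ because $\lambda(W_{j,0})=M_{j-1}\neq M_{k-1}$, whereas $P_{j,j,0}=\ad_{M_{j-1}}^3(X_0)$ has $n=3$, which is the source of the dichotomy defining $\alpha_{j,k}$. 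Similar splits govern $Q_{j,k,l,0}$ (three patterns of equality among $j\leq k\leq l$ yielding $n\in\{1,2,4\}$ and the coefficients $\beta_{j,k,l}$) and $R_{j,k,l,m,0}$ (five patterns yielding $n\in\{1,2,3,5\}$ and the coefficients $\gamma_{j,k,l,m}$).

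For the remaining germs, $Q^\sharp_{j,\mu,k,0}=(W_{j,\mu},W_{k,0})$ and $R^\sharp_{j,k,l,\mu,0}=(W_{l,\mu},P_{j,k,0})$ always have $n=1$, since the outer left factor $W_{\bullet,\mu}$ has strictly greater $X_1$-degree than any candidate left factor of the inner bracket, precluding further extraction; the coefficients $\tfrac14$ in \eqref{xi_Qdieseknujmu} and $\alpha_{j,k}/2$ in \eqref{xi_Rdiesel3l2l1knu} then come directly from multiplying the constants already present in $\xi_{W_{\bullet,\mu}}$ and $\xi_{P_{j,k,0}}$. In contrast, $Q^\flat_{j,\mu,0}=\ad_{W_{j,\mu}}^2(X_0)$ has $n=2$, so \eqref{eq:def:coord2} inserts a factor $\tfrac{1}{2!}$ multiplying the square of $\xi_{W_{j,\mu}}$, which itself carries a factor $\tfrac{1}{2}$, producing the $\tfrac{1}{8}$ in \eqref{xi_Qbemolknu}.

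The only real obstacle is combinatorial bookkeeping: for each multi-indexed germ one must enumerate the possible equality patterns among the indices, and in each pattern determine how many layers of $\ad$ collapse with successive inner left factors. The compound coefficients $\alpha$, $\beta$, $\gamma$ then decompose as products of factorials $1/(n_1!n_2!\cdots)$ accumulated across these nested extractions, and the indicator functions in their definitions simply record which extraction pattern is maximal. Once this case-analysis is organized, each displayed identity follows by a direct application of \eqref{eq:def:coord2} combined with \cref{Lem:adnuX0}.
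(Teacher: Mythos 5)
Your proof is correct and takes the same approach as the paper's, which reads in its entirety ``It follows from the application of \cref{Def:Coord2} to the elements of \cref{p:B*-12345}.'' You correctly unfold this one-liner into a reduction to germs via \cref{Lem:adnuX0} together with a case analysis of the maximal exponent $m$ in the Hall factorization $b=\ad_{b_1}^m(b_2)$ of \cref{Def:Coord2} (determined by how many consecutive left factors coincide), which is precisely what produces the factorial coefficients $\alpha_{j,k}$, $\beta_{j,k,l}$, $\gamma_{j,k,l,m}$.
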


\begin{proof}
    It follows from the application of \cref{Def:Coord2} to the elements of \cref{p:B*-12345}.
\end{proof}
 
\begin{remark} \label{Rk_OP1_Kawski}
    In \cite{kawski-unsolved}, motivated by questions of control theory, Kawski formulated the following open problem: ``\emph{construct a basis for the free Lie algebra such that the corresponding coordinates of the first kind have simple formulas}''. 
    In this paper, we follow a slightly different approach: we construct the basis $\Bs$, whose coordinates of the second kind have particularly nice explicit expressions, and we use these to obtain controllability results by bounding the differences $\eta_b-\xi_b$ (the cross terms), even though formula \eqref{eq:etab-xib} for these differences is quite messy.

    A key strength of the basis $\Bs$ is that it is easy to visualize if a given coordinate of the second kind is positive-definite, and will lead to an obstruction to STLC. 
    We observe in particular that, for every $k \in \N^*$, the quadratic form $\xi_{W_k}$ is positive-definite: this is a key point for \cref{Thm:Kawski_Wm}. 
    The positivity of the values of $\xi_{Q_{j,k,k}}$ is a key point for the quartic necessary conditions which we intend to study in a forthcoming work.
    Finally, one may expect that for any germ $\bb \in \Bs$ such that $\xi_{\bb}$ is positive-definite, a necessary condition for $W^{m,\infty}$-STLC of the form $f_\bb(0) \in \mathcal{N}(f)(0)$ holds, at least for $m$ large enough.
    As an example, the lie bracket $D := \ad_{P_{1,1}}^2(X_0) \in \Bs$ studied in \cref{s:sextic} is associated with $\xi_D(t,u) := \frac{1}{72} \int_0^t (\int_0^s u_1^3)^2 \dd s \geq 0$ and indeed leads to an obstruction.
\end{remark}

\subsection{Estimates on the coordinates of the second kind up to the fifth order}
\label{s:xi-B*-estimates}

We start with a rough estimate valid for all brackets of $\Bs \setminus X$, which will be mainly used to prove convergence of the considered series.
This statement follows from \cite[Lemma 156]{P1} and is thus valid within any Hall set such that $X_1 < X_0$.
For self-containedness, we give a direct proof in the case of $\Bs$ in \cref{s:proof-bound-xi-univ}.

\begin{proposition} \label{p:xib-u1-Lk}
    For every $k \in \N^*$, there exists $c=c(k)>0$ such that, for every $b \in \Bs \setminus\{X_1\}$ with $n_1(b)=k$, $t>0$ and $u \in \lone$,
    \begin{equation} \label{eq:xib-u1-Lk}
        | \xi_b(t,u) | \leq \frac{(ct)^{|b|}}{|b|!} t^{-(1+k)} \|u_1\|_{L^k}^k.
    \end{equation}
\end{proposition}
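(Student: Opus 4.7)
The plan is to prove the bound by strong induction on $|b|$, after first reducing by scaling to the normalized time $t=1$. Writing $\tilde u(\sigma) := u(t\sigma)$, the $|b|$-fold time scaling of the iterated integrals defining $\xi_b$ gives $\xi_b(t,u) = t^{|b|}\xi_b(1,\tilde u)$, while a direct computation yields $\|\tilde u_1\|_{L^k(0,1)}^k = t^{-(k+1)}\|u_1\|_{L^k(0,t)}^k$; thus the inequality at $t=1$ with constant $c$ implies it at every $t>0$ with the same $c$.

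The base cases are $b = M_\nu$ for $\nu\geq 1$. Using $\xi_{M_\nu}(1,u) = u_{\nu+1}(1) = \int_0^1 \frac{(1-s)^\nu}{\nu!}u(s)\dd s$, one integration by parts (via $u_1(0)=0$) gives $u_{\nu+1}(1) = \int_0^1 \frac{(1-s)^{\nu-1}}{(\nu-1)!}u_1(s)\dd s$, bounded by $\frac{1}{(\nu-1)!}\|u_1\|_{L^1(0,1)}$. The desired inequality then holds provided $c(1)$ is chosen so that $c^{\nu+1}\geq \nu(\nu+1)$ for every $\nu\geq 1$ (e.g.\ $c(1)=3$).

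For the inductive step, consider $b\in\Bs\setminus X$ not of the form $M_\nu$, so $b = b^* 0^\nu$ with $b^*\in\Bs$ a germ distinct from $X_1$ (\cref{def:germ}). A structural observation is that any such germ satisfies $n_0(b^*)\geq 1$: otherwise $b^* = (b_1,b_2)$ with $n_0 = 0$ would force $b_1 = b_2 = X_1$, violating $b_1 < b_2$. Hence $|b^*|\geq k+1$ where $k=n_1(b)$, so the induction hypothesis on $\xi_{b^*}$ carries a non-negative power of $s$. When $\nu\geq 1$, \cref{Lem:adnuX0} and one integration by parts (using $\xi_{b^*}(0)=0$) yield $\xi_b(1) = \int_0^1 \frac{(1-s)^{\nu-1}}{(\nu-1)!}\xi_{b^*}(s)\dd s$; plugging in the induction hypothesis and the Beta integral $\int_0^1 (1-s)^{\nu-1}s^p\dd s = (\nu-1)!\,p!/(\nu+p)!$ reduces the verification to the combinatorial inequality $\binom{p+\nu+k+1}{k+1}/\binom{p+k+1}{k+1}\leq (k+2)^\nu$, which follows from $(p+\nu+i)/(p+i)\leq (\nu+i)/i$ and $\binom{\nu+k+1}{k+1}\leq (k+2)^\nu$, and is absorbed by $c^\nu$ provided $c(k)\geq k+2$. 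When $\nu=0$ (so $b$ is a germ with $k\geq 2$), use the recursive formula $\xi_b = \frac{1}{m!}\int_0^1 \xi_{c_1}^m\,\dot\xi_{c_2}\dd s$ from the maximal $\ad$-decomposition $b = \ad_{c_1}^m(c_2)$: apply the induction hypothesis to $\xi_{c_1}$ and use Hölder's inequality to convert $\|u_1\|_{L^{n_1(c_1)}}^{m\,n_1(c_1)}$ into $\|u_1\|_{L^k}^k$ times a power of $s$.

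The main obstacle is the germ case with $c_2\neq X_0$: the factor $\dot\xi_{c_2}$ is not directly controlled by the induction hypothesis on $\xi_{c_2}$. I would handle this either by strengthening the induction to also bound $\dot\xi_b$ pointwise (with the $t$-exponent decreased by $1$), or by iteratively unfolding the recursive definition of $\xi_{c_2}$ via its own maximal $\ad$-decomposition until the inner $\dot\xi$ reduces to $\dot\xi_{X_0}=1$ or $\dot\xi_{X_1}=u$. The resulting multinomial-type combinatorial factors from the $m$-fold power $\xi_{c_1}^m$ and the cascade of integrations by parts are bounded in terms of $k$ alone (since $m\cdot n_1(c_1)\leq k$ and all intermediate brackets have $n_1\leq k$), and are absorbed by choosing $c(k)$ large enough.
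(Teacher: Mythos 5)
Your proof takes a genuinely different route from the paper's. The paper inducts on $k = n_1(b)$ and resolves a germ $b^*$ in one shot: since $X_0$ is maximal in $\Bs$, every germ can be written as a nested tower $b^* = \ad_{b_1}^{m_1}\dotsb\ad_{b_j}^{m_j}\ad_{X_1}^m(X_0)$ with $n_1(b_i) < k$, and iterating \eqref{eq:def:coord2} produces the fully unfolded integral $\xi_{b^*}(t) = \int_0^t \frac{\xi_{b_1}^{m_1}}{m_1!}\dotsb\frac{\xi_{b_j}^{m_j}}{m_j!}\frac{u_1^m}{m!}\dd s$, to which the induction hypothesis and Hölder apply directly. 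You instead induct on $|b|$, normalize to $t=1$ by scaling, and handle the trailing $0^\nu$ separately via the Beta-integral computation. Your treatment of the $\nu\geq 1$ step (integration by parts, then the ratio-of-binomials estimate bounded by $(k+2)^\nu$) is correct, and is arguably more explicit than the paper's estimate via \eqref{factorielle}. The price is that your germ case ($\nu=0$) only opens one layer of the recursion, which is exactly where your gap sits.

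On that gap: you correctly identify that $\dot\xi_{c_2}$ is not controlled by the induction hypothesis for $\xi_{c_2}$, and you offer two remedies. The first (strengthening the induction to bound $\dot\xi_b$ pointwise) does \emph{not} work as stated. The recursion bottoms out at $\dot\xi_{X_1}(s,u) = u(s)$, and a pointwise value $|u(s)|$ cannot be controlled by $\|u_1\|_{L^k(0,s)}$. Already for $b = W_1$, one has $\dot\xi_{W_1}(s) = \tfrac12 u_1^2(s)$, and $|u_1(s)|^2$ is not bounded by any power of $s$ times $\|u_1\|_{L^2(0,s)}^2$. Your second remedy (iterative unfolding until the inner derivative is $\dot\xi_{X_0}=1$ or $\dot\xi_{X_1}=u$) is the right one and is essentially the paper's proof, but you should observe that the unfolding in fact \emph{always} terminates at $X_0$, never at $X_1$: the maximal $\ad$-decomposition $b = \ad_{b_1}^m(b_2)$ requires $b_1 < b_2$, and since $X_1$ is minimal in $\Bs$ there is no element strictly below it, so $b_2 = X_1$ is impossible. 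This is crucial, because if a naked $u(s')$ (rather than $u_1^m(s')$) could appear in the unfolded integrand, Hölder against $\|u_1\|_{L^k}$ would not close the argument. With that observation made, your second remedy reproduces the paper's identity and the proof completes; without it, the claim that the cascade ``reduces to $\dot\xi_{X_0}=1$ or $\dot\xi_{X_1}=u$'' leaves an unresolved case.
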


To prove our obstruction results, we need more accurate estimates on the coordinates of the second kind associated with $\Bs_{\llbracket 1 , 5 \rrbracket}$, in terms of Sobolev norms of primitives of the control.
This is the goal of the following statement, proved in \cref{s:proof-bound-xi-12345}

\begin{proposition} \label{p:xi-bounds}
    The following bounds hold.
    \begin{enumerate}
        \item Let $p \in [1,\infty]$ and $j_0 \in \N^*$.
        There exists $c > 0$ such that, for every $j \geq j_0$, $t > 0$ and $u \in \lone$, $\ell := |M_j| \geq j_0+1$ and
        \begin{equation}
            \label{bound-xiMj/J0}
            |\xi_{M_j}(t,u)| 
            \leq \frac{(ct)^{\ell}}{\ell!} 
            t^{-(j_0+1)} t^{1-\frac 1 p} \|u_{j_0}\|_{L^p}.
        \end{equation}
        
        \item Let $p \in [1,\infty]$ and $j_0 \in \N^*$.
        There exists $c > 0$ such that, for every $j \geq j_0$, $\nu \geq 0$, $t > 0$ and $u \in \lone$, $\ell:= |W_{j,\nu}| \geq 2j_0+1$ and
        \begin{equation}
            \label{bound-xiWjnu/j0}
            |\xi_{W_{j,\nu}}(t,u)| 
            \leq \frac{(ct)^{\ell}}{\ell!}
            t^{-(2j_0+1)} t^{1-\frac 1 p} \|u_{j_0}\|_{L^{2p}}^2.
        \end{equation}
        
        \item 
        Let $p_1,p_2 \in [1,\infty]$ such that $\frac{1}{p_1}+\frac{1}{p_2}\leq1$ and $j_0, k_0 \in \N^*$.
        There exists $c > 0$ such that, for every $j \geq j_0$, $k \geq k_0$ with $j \leq k$, $\nu \geq 0$, $t > 0$ and $u \in \lone$, $\ell := |P_{j,k,\nu}| \geq 2j_0+k_0+1$ and
        \begin{equation}
            \label{bound-xiPjknu/j0k0}
            |\xi_{P_{j,k,\nu}}(t,u)|
            \leq \frac{(ct)^{\ell}}{\ell!}
            t^{-(2j_0+k_0+1)}
            t^{1-\frac{1}{p_1}-\frac{1}{p_2}}
            \|u_{j_0}\|_{L^{2p_1}}^2 \|u_{k_0}\|_{L^{p_2}}.
        \end{equation}
        
        \item Let $p_1,p_2,p_3 \in [1,\infty]$ such that $\frac{1}{p_1}+\frac{1}{p_2}+\frac{1}{p_3}\leq1$ and $j_0,k_0,l_0 \in \N^*$.
        There exists $c > 0$ such that, for every $j \geq j_0$, $k \geq k_0$, $l \geq l_0$ with $j \leq k \leq l$, $\nu \geq 0$, $t > 0$ and $u \in \lone$, $\ell := |Q_{j,k,l,\nu}| \geq 2j_0+k_0+l_0+1$ and
        \begin{equation}
            \label{bound-xiQjklnu/j0k0l0}
            |\xi_{Q_{j,k,l,\nu}}(t,u)| 
            \leq \frac{(ct)^{\ell}}{\ell!} t^{-(2j_0+k_0+l_0+1)}
            t^{1-\frac{1}{p_1}-\frac{1}{p_2}-\frac{1}{p_3}}
            \|u_{j_0}\|_{L^{2p_1}}^2 \|u_{k_0}\|_{L^{p_2}} \|u_{l_0}\|_{L^{p_3}}.
        \end{equation}
        
        \item Let $p \in [1,\infty]$ and $j_0 \in \N^*$.
        There exists $c > 0$ such that, for every $j \geq j_0$, $\mu,\nu \in \N$, $t > 0$ and $u \in \lone$, $\ell := |Q^\flat_{j,\mu,\nu}| \geq 4j_0+3$ and
        \begin{equation}
            \label{bound-xiQflatjmunu/j0}
            |\xi_{Q^\flat_{j,\mu,\nu}}(t,u)|
            \leq \frac{(ct)^{\ell}}{\ell!} 
            t^{-(4j_0+3)}
            t^{3-\frac{2}{p}}
            \|u_{j_0}\|_{L^{2p}}^4.
        \end{equation}
        
        \item Let $p_1, p_2 \in [1,\infty]$ and $j_0,k_0 \in \N^*$.
        There exists $c > 0$ such that, for every $j \geq j_0$, $k \geq k_0$, with $j < k$, $\mu,\nu \geq 0$, $t > 0$ and $u \in \lone$, $\ell := |Q^\sharp_{j,\mu,k,\nu}| \geq 2j_0+2k_0+2$ and
        \begin{equation}
            \label{bound-xiQsharpjmuknu/j0k0}
            |\xi_{Q^\sharp_{j,\mu,k,\nu}}(t,u)|
            \leq \frac{(ct)^{\ell}}{\ell!} 
            t^{-(2j_0+2k_0+2)}
            t^{2-\frac{1}{p_1}-\frac{1}{p_2}}
            \|u_{j_0}\|_{L^{2p_1}}^2 \|u_{k_0}\|_{L^{2p_2}}^2.
        \end{equation}
        
        \item Let $p_1,p_2,p_3,p_4 \in [1,\infty]$ such that $\frac{1}{p_1}+\frac{1}{p_2}+\frac{1}{p_3}+\frac{1}{p_4} \leq 1$ and $j_0,k_0,l_0,m_0 \in \N^*$.
        There exists $c > 0$ such that, for every $j \geq j_0$, $k \geq k_0$, $l \geq l_0$, $m \geq m_0$ with $j \leq k \leq l \leq m$, $\nu \geq 0$, $t > 0$ and $u \in \lone$, $\ell := |R_{j,k,l,m,\nu}| \geq 2j_0+k_0+l_0+m_0+1$ and
        \begin{equation}
            \label{bound-xiRjklmnu/j0k0l0m0}
            \begin{split}
                |\xi_{R_{j,k,l,m,\nu}}(t,u)|
                \leq \frac{(ct)^{\ell}}{\ell!} 
                t^{-(2j_0+k_0+l_0+m_0+1)}
                & t^{1-\frac{1}{p_1}-\frac{1}{p_2}-\frac{1}{p_3}-\frac{1}{p_4}} \\
                & \times \|u_{j_0}\|_{L^{2p_1}}^2 \|u_{k_0}\|_{L^{p_2}} \|u_{l_0}\|_{L^{p_3}} \|u_{m_0}\|_{L^{p_4}}.
            \end{split}
        \end{equation}
        
        \item Let $p,p_1,p_2 \in [1,\infty]$ such that $\frac{1}{p_1}+\frac{1}{p_2} \leq 1$ and $j_0,k_0,l_0 \in \N^*$.
        There exists $c > 0$ such that, for every $j \geq j_0$, $k \geq k_0$, $l \geq l_0$, with $j \leq k$, $\mu,\nu \geq 0$, $t > 0$ and $u \in \lone$, $\ell := |R^\sharp_{j,k,l,\mu,\nu}| \geq 2j_0+k_0+2l_0+2$ and
        \begin{equation}
            \label{bound-xiRsharpjklmunu/j0k0l0}
            |\xi_{R^\sharp_{j,k,l,\mu,\nu}}(t,u)|
            \leq \frac{(ct)^{\ell}}{\ell!} 
            t^{-(2j_0+k_0+2l_0+2)}
            t^{2-\frac{1}{p}-\frac{1}{p_1}-\frac{1}{p_2}}
            \|u_{j_0}\|_{L^{2p_1}}^2 \|u_{k_0}\|_{L^{p_2}} \|u_{l_0}\|_{L^{2p}}^2.
        \end{equation}
    \end{enumerate}
\end{proposition}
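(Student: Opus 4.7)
The proof of all eight bounds follows a unified strategy: combine the explicit integral representations of \cref{Prop:Coord_Bstar} with Hölder's inequality and a Hardy-type estimate for iterated primitives. The key preliminary fact is that, for $k \geq 0$, $p,q \in [1,\infty]$ and $j_0 \in \N^*$, the identity $u_{j_0+k}(s) = \int_0^s \frac{(s-r)^{k-1}}{(k-1)!} u_{j_0}(r) \dd r$, combined with Hölder's inequality in $r$ followed by an $L^q$ norm in $s$, yields
\begin{equation*}
    \|u_{j_0+k}\|_{L^q(0,t)} \leq \frac{C}{k!}\, t^{k+\frac{1}{q}-\frac{1}{p}} \|u_{j_0}\|_{L^p(0,t)}
\end{equation*}
for some $C = C(k,p,q) > 0$ (the case $k=0$ being plain Hölder). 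This lemma is the engine that converts each norm $\|u_j\|_{L^\bullet}$ or $\|u_k\|_{L^\bullet}$ (with $j \geq j_0$, $k \geq k_0$, \ldots) appearing in \cref{Prop:Coord_Bstar} into the desired norm of $u_{j_0}$, $u_{k_0}$, etc.

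For each of the eight bounds the recipe is then: (i) start from the explicit formula of \cref{Prop:Coord_Bstar}; (ii) apply a multi-factor Hölder inequality to separate the product of iterated primitives from the polynomial kernel $(t-s)^\nu/\nu!$, paying the residual $L^{r'}$-norm $\sim t^{\nu+1/r'}/\nu!$ of the kernel; for the nested items $\xi_{Q^\flat_{j,\mu,\nu}}$, $\xi_{Q^\sharp_{j,\mu,k,\nu}}$ and $\xi_{R^\sharp_{j,k,l,\mu,\nu}}$, Hölder is applied first inside the inner integral (with kernel $(s-s')^\mu/\mu!$) and then once more outside; (iii) replace each $\|u_j\|_{L^{2p_i}}$ or $\|u_k\|_{L^{p_i}}$ by the corresponding norm of $u_{j_0}$ or $u_{k_0}$ via the preliminary lemma.

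The main technical obstacle is combinatorial: collecting the several factorials that appear ($(j-j_0)!$, $(k-k_0)!$, \ldots, $\mu!$, $\nu!$) into the single denominator $\ell!$ required by the claimed form $(ct)^\ell/\ell!$. This is handled by the crude inequality $a_1!\cdots a_r! \geq \ell!/r^\ell$ whenever $a_1+\dots+a_r \leq \ell$, the extra $r^\ell$ being absorbed into the constant $c^\ell$ of the prefactor. For germs, the length $\ell = |b|$ decomposes transparently, e.g.\ $|P_{j,k,\nu}| = 2j+k+\nu$ and $|R_{j,k,l,m,\nu}| = 2j+k+l+m+\nu$, so the exponents $(j-j_0), (k-k_0), \ldots, \nu$ of the factorials generated in steps (i)--(iii) sum to at most $\ell$, making the collapse into a single $\ell!$ legitimate with only a loss of the factor $c^\ell$.

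The remaining work is a systematic accounting of powers of $t$, which is mechanical but must be carried out case by case. Each application of the preliminary lemma with parameters $(k,p,q)$ contributes $t^{k+1/q-1/p}$, each Hölder residual of a polynomial kernel $(t-s)^\nu/\nu!$ in $L^{r'}$ contributes $t^{\nu+1/r'}$, and the outermost integration over $s\in(0,t)$ contributes its own power. Summing the resulting exponents reproduces the advertised $t^{-(2j_0+k_0+\cdots+1)}\, t^{1-\sum 1/p_i}$. The lower bounds $\ell \geq 2j_0 + k_0 + \cdots + 1$ stated in each item are precisely what guarantees that the positive exponent of $t$ in $(ct)^\ell$ dominates the negative exponent $t^{-(2j_0+k_0+\cdots+1)}$, so that the whole bound remains meaningful as $t \to 0$.
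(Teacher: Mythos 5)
Your proposal follows essentially the same strategy as the paper's proof: start from the explicit formulas of \cref{Prop:Coord_Bstar}, apply Hölder's inequality, convert $\|u_j\|_{L^\bullet}$ to $\|u_{j_0}\|_{L^\bullet}$ via a convolution estimate on iterated primitives (the paper's Lemma~A.5, which keeps the Lebesgue exponent fixed and invokes Young's convolution inequality, versus your combined Hölder step), and collapse the resulting product of factorials into the single denominator $\ell!$ by absorbing lower-order factorials into the constant $c^\ell$ (the paper does this via the inequality \eqref{factorielle}). One small caveat: the inequality you invoke, $a_1!\cdots a_r!\geq\ell!/r^\ell$ whenever $a_1+\cdots+a_r\leq\ell$, is false as literally stated (take $r=1$, $a_1=0$, $\ell$ large); what is actually used is the equality case $\sum a_i=\ell$ together with the observation that in each item the deficit $\ell-\sum a_i$ is a fixed constant (equal to $j_0+1$, $2j_0+1$, etc.) that can be filled by a virtual term whose factorial is absorbed into $c$ — precisely how the paper proceeds, keeping explicit constants like $(j_0+1)!$ or $(2j_0+1)!$ before the $(c t)^\ell/\ell!$ factor.
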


\section{Toolbox for our approach to obstructions}
\label{s:tools}

In this section, we gather results of various nature as a toolbox for the sequel.

First, we recall elementary definitions and estimates for analytic vector fields in \cref{s:analytic} and introduce in \cref{s:O} a notation $O(\cdot)$ which will be used heavily throughout the paper.

Then, we state in \cref{s:Magnus-ODE} the counterpart for system \eqref{syst} of the formal expansion \eqref{eq:11formal} and give in \cref{s:black-box} a sufficient condition to replace, in some sense, the coordinates of the pseudo-first kind by those of the second kind in \eqref{tilde_Z_infty_decomp_coord1.1}.
We show nevertheless in \cref{s:pollutions} that this simplification is not always valid.

Eventually, we recall in \cref{s:gagliardo-nirenberg} the Gagliardo--Nirenberg interpolation inequalities, and state straight-forward consequences of the Jacobi identity in \cref{s:jacobi}.

\subsection{Analytic estimates for vector fields}
\label{s:analytic}

For $a \in \N^*$ and a multi-index $\alpha=(\alpha^1,\dotsc,\alpha^a) \in \N^a$, we use the notations $|\alpha|:=\alpha^1+\dotsb+\alpha^a$,  $\partial^\alpha:=\partial_{x_1}^{\alpha^1}\dotsb\partial_{x_a}^{\alpha^a}$ and $\alpha!:=\alpha^1!\dotsb\alpha^a!$. Then, the following estimate can be proved by iterating $ 2^{-(p+q)} (p+q)! \leq p! q! \leq (p+q)! $ for every $p,q\in\N$,

\begin{align}
    \label{factorielle}
    \forall a \in \N^*, \forall \alpha=(\alpha^1,\dotsc,\alpha^a) \in \N^a, & \quad
2^{-(a-1)|\alpha|} |\alpha|! \leq  \alpha! \leq |\alpha|!
\end{align}

\begin{definition}[Analytic vector fields, analytic norms]
    Let $\delta > 0$ and $B_\delta$ be the closed ball of radius $\delta$, centered at $0 \in \R^d$.
    For $r > 0$, we define $\CC^{\omega,r}(B_\delta;\R^d)$ as the subspace of analytic vector fields on an open neighborhood of $B_\delta$, for which the following norm is finite
     \begin{equation} \label{eq:def.analytic.r}
        \opnorm{f}_r :=
        \sum_{i=1}^b \sum_{\alpha \in \N^d} \frac{r^{|\alpha|}}{\alpha!} \| \partial^\alpha f_i \|_{L^\infty(B_\delta)}.
    \end{equation}
    For each a analytic vector field $f$ on a neighborhood of $0$, there exist $r,\delta > 0$ such that $f \in \CC^{\omega,r}(B_\delta;\R^d)$ (see \cite[Proposition 2.2.10]{krantz2002primer}).
\end{definition}

The following classical result is proved, for instance in \cite[Lemma  80]{P1}.

\begin{lemma}[Analytic estimate] \label{thm:bracket.analytic}
    Let $r, \delta > 0$, $r':=r/e$, $f_0, f_1 \in \CC^{\omega,r}(B_\delta;\R^d)$ and $b \in \Br(X)$.
    Then, $f_b \in \CC^{\omega,r'}(B_\delta;\R^d)$ and
    \begin{equation} \label{eq:bracket.analytic}
    \opnorm{f_b}_{r'} 
            \leq \frac{r}{9} (|b|-1)! \left( \frac{9 \opnorm{f}_r}{r} \right)^{|b|} ,
    \end{equation}
    where $\opnorm{f}_r :=\max\{\opnorm{f_0}_r;\opnorm{f_1}_r \}$.
\end{lemma}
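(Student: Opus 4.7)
The plan is to prove the estimate by induction on the length $|b|$. The base case $|b|=1$ is immediate: $b\in\{X_0,X_1\}$, so $f_b\in\{f_0,f_1\}$, and since $r'=r/e<r$ one has $(r')^{|\alpha|}\leq r^{|\alpha|}$ term by term in the defining series \eqref{eq:def.analytic.r}, hence $\opnorm{f_b}_{r'}\leq\opnorm{f_b}_r\leq\opnorm{f}_r$. This matches the claimed bound since $(|b|-1)!=1$ and $\tfrac{r}{9}\cdot\tfrac{9\opnorm{f}_r}{r}=\opnorm{f}_r$.

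For the inductive step, decompose $b=(b_1,b_2)$ with $|b|=|b_1|+|b_2|$, so that $f_b=(Df_{b_2})f_{b_1}-(Df_{b_1})f_{b_2}$. Two elementary properties of the analytic norm are needed. First, sub-multiplicativity at any fixed radius: $\opnorm{gh}_s\leq \opnorm{g}_s\opnorm{h}_s$, which follows directly from the Leibniz rule. Second, a Cauchy-type derivative estimate with loss of radius: for $0<s<s_1\leq r$,
\begin{equation*}
    \opnorm{\partial_j g}_s\leq K(s,s_1)\,\opnorm{g}_{s_1},\qquad K(s,s_1)\lesssim\frac{1}{e(s_1-s)},
\end{equation*}
with a \emph{dimension-independent} constant. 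To see this, the coefficient of $\|\partial^\beta g_i\|_\infty$ in $\opnorm{\partial_jg}_s$ equals $\beta_j\,s^{|\beta|-1}/\beta!$, whereas it is $s_1^{|\beta|}/\beta!$ in $\opnorm{g}_{s_1}$; the ratio $\beta_j s^{|\beta|-1}/s_1^{|\beta|}\leq |\beta|(s/s_1)^{|\beta|-1}/s_1$ admits a supremum over $|\beta|\geq 1$ of order $1/(e(s_1-s))$ (precisely $1/(es\ln(s_1/s))$ by optimizing the real function $n\mapsto n(s/s_1)^{n-1}$). Combining the two ingredients with the commutator identity yields the key bracket estimate $\opnorm{[f_{b_1},f_{b_2}]}_s \leq \frac{C_0}{s_1-s}\opnorm{f_{b_1}}_{s_1}\opnorm{f_{b_2}}_{s_1}$ for an absolute constant $C_0$.

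The core of the argument is to strengthen the inductive hypothesis into a radius-dependent form such as $\opnorm{f_b}_s\leq M^{|b|-1}(|b|-1)!\,\opnorm{f}_r^{|b|}/(r-s)^{|b|-1}$ for all $s\in(0,r)$. This bound closes under the bracket estimate: writing $b=(b_1,b_2)$ with $|b_i|=n_i$ and optimizing the intermediate radius via the standard choice $s_1-s=(r-s)/(|b|-1)$, one obtains a telescoping loss of $(|b|-1)^{|b|-1}/(|b|-2)^{|b|-2}\sim e(|b|-1)$, which is absorbed by the combinatorial ratio $(|b|-1)!/((|b_1|-1)!(|b_2|-1)!)\geq|b|-1$, leading to the workable condition $M\geq 2C_0 e$ on the constant. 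Evaluating the resulting bound at $s=r/e$ yields $\opnorm{f_b}_{r/e}\leq (Me/(e-1))^{|b|-1}(|b|-1)!\,\opnorm{f}_r^{|b|}/r^{|b|-1}$, which matches the asserted estimate with $9$ playing the role of $Me/(e-1)$. The constant $9$ is thus chosen generously to simultaneously accommodate the Cauchy constant $1/e$, the factor $2$ from the two terms of the commutator, and the geometric factor $e/(e-1)$ associated with the radius spacing.

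The main obstacle is the careful bookkeeping of constants and the verification that the optimal radius allocation closes the induction along every binary tree (the worst case being a linear tree, for which the combinatorial slack $\binom{|b|-2}{|b_1|-1}$ is reduced to $1$). The factor $e$ in $r'=r/e$ is essentially optimal for this scheme: it emerges jointly from the limit $((|b|-1)/(|b|-2))^{|b|-2}\to e$ in the telescoping and from the factor $1/e$ in the Cauchy estimate. Pushing $r'$ below $r/e$ would require a finer derivative estimate or a different inductive architecture. Since these computations are routine but tedious, the paper simply defers to the proof given in \cite[Lemma~80]{P1}.
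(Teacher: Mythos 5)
The paper does not prove this lemma itself; it simply cites \cite[Lemma 80]{P1}. So there is no in-paper proof to compare against, and the evaluation can only bear on whether your reconstruction is internally correct.

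Your reconstruction follows the standard route for analytic (Cauchy--Kovalevskaya--type) estimates and the essential mechanism is sound: strengthen the inductive hypothesis to a radius-dependent bound $\opnorm{f_b}_s\leq M^{|b|-1}(|b|-1)!\,\opnorm{f}_r^{|b|}(r-s)^{-(|b|-1)}$, close it via a bracket estimate $\opnorm{[g,h]}_s\leq \frac{2}{s_1-s}\opnorm{g}_{s_1}\opnorm{h}_{s_1}$, and allocate the intermediate radius by $s_1-s=(r-s)/(|b|-1)$. Two points of the write-up are imprecise, though neither wrecks the argument. First, ``sub-multiplicativity'' $\opnorm{gh}_s\leq\opnorm{g}_s\opnorm{h}_s$ is not directly meaningful here, since $Dg$ is matrix-valued and $\opnorm{\cdot}_r$ is defined only on vector fields; the clean statement is obtained by expanding $((Dg)h)_i=\sum_j(\partial_jg_i)h_j$ and re-indexing via $\gamma=\beta+e_j$, in which case the sum $\sum_j\gamma_j\mathbf{1}_{\gamma_j\geq 1}=|\gamma|$ is what makes the constant dimension-independent, and one lands directly on $\opnorm{(Dg)h}_s\leq K(s,s_1)\opnorm{g}_{s_1}\opnorm{h}_s$ with $K(s,s_1)=\sup_{m\geq 1}ms^{m-1}/s_1^{m}$. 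Second, the bound $K(s,s_1)\lesssim 1/(e(s_1-s))$ is only an asymptotic as $s\to s_1$; the uniformly valid bound over all $0<s<s_1$ is $K(s,s_1)\leq 1/(s_1-s)$ (the regime $s\ll s_1$ saturates it). As a consequence your constant relation $M\geq 2C_0e$ double-counts the factor $2$ from the commutator: with $C_0\leq 2$ already absorbing both commutator terms, the correct closure condition is $M\geq C_0\sup_{n\geq 2}(1+\tfrac{1}{n-2})^{n-2}=2e\approx 5.44$, while evaluation at $s=r/e$ requires $M\leq 9(e-1)/e\approx 5.69$. So the induction does close and $9$ is adequate (the scheme actually yields the slightly better $2e^2/(e-1)\approx 8.6$), but the margin is narrower than your write-up suggests, and the breakdown of where the $e$'s come from is not quite as you describe (the $1/e$ in the Cauchy estimate is never used; the only $e$ that matters comes from $(1+1/(n-2))^{n-2}\to e$ in the radius allocation).
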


\subsection{Implicit limit for the big \texorpdfstring{$O$}{O} notation} 
\label{s:O}

Given two functions $A(x,u)$ and $B(x,u)$ of interest, we will write that $A(x,u) = O(B(x,u))$ when there exists $C, \rho>0$ such that, for every $t \in (0,\rho)$, $u \in \lone$ with $\|u\|_{W^{-1,\infty}}\leq \rho$ (recall definition \eqref{def:norm_W-1}), then
\begin{equation}
    |A( x(t;u), u )| \leq C B( x(t;u) , u ).
\end{equation}
Hence, throughout this paper, this notation refers to the implicit limit $(t,\|u\|_{W^{-1,\infty}}) \to 0$. 

As examples, one has $t = O(1)$ and $\|u_1\| = O(1)$.
A deeper result is the following estimate which states that, for scalar-input systems of the form \eqref{syst}, the $W^{-1,\infty}$ norm of the control is an upper bound for the size of the sate.

\begin{lemma} \label{p:small-state}
    Let $f_0$, $f_1$ be analytic vector fields on a neighborhood of $0$ with $f_0(0) = 0$.
    Then
    \begin{equation}
        x(t;u) = O(\|u_1\|_{L^\infty}).
    \end{equation}
\end{lemma}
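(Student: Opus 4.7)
The plan is to remove the control term from the dynamics by a time-dependent diffeomorphism built from the flow of $f_1$, reducing the problem to a perturbed autonomous ODE that can be controlled by a straightforward Gronwall argument.

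Concretely, let $\Phi^{f_1}_s$ denote the (local) flow of the analytic vector field $f_1$, which is well defined on a neighborhood of $0$ for small $|s|$, and introduce a new unknown $y$ by the relation $x(t) = \Phi^{f_1}_{u_1(t)}(y(t))$. Since $u_1(0)=0$ and $x(0)=0$, we have $y(0)=0$. Differentiating this relation and using $\dot{x} = f_0(x) + u\,f_1(x)$ together with $\partial_s \Phi^{f_1}_s = f_1 \circ \Phi^{f_1}_s$ and $\dot{u}_1 = u$, the two contributions proportional to $u$ cancel and we obtain
\begin{equation}
    \dot{y}(t) = \tilde{f}_0(t,y(t)), \qquad \tilde{f}_0(t,y) := \bigl(\Phi^{f_1}_{-u_1(t)}\bigr)_* f_0\,(y).
\end{equation}
The key observation is that $\tilde{f}_0(t,0) = \bigl[D\Phi^{f_1}_{u_1(t)}(0)\bigr]^{-1} f_0\bigl(\Phi^{f_1}_{u_1(t)}(0)\bigr)$, so since $f_0(0)=0$ and $f_0$ is locally Lipschitz, we get $|\tilde{f}_0(t,0)| \leq C |u_1(t)| \leq C \|u_1\|_{L^\infty}$ for a constant $C$ depending only on $f_0$ and $f_1$, uniformly for $\|u_1\|_{L^\infty}$ small enough. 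Similarly, $\tilde{f}_0(t,\cdot)$ is Lipschitz in $y$ on a fixed neighborhood of $0$, uniformly in $t$, because $\Phi^{f_1}_{\pm u_1(t)}$ has bounded analytic norm.

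A standard Gronwall estimate applied to the ODE above then yields $|y(t)| \leq C' \|u_1\|_{L^\infty}$ for $t$ and $\|u_1\|_{L^\infty}$ small enough. Finally, going back to $x$ via $x(t) = \Phi^{f_1}_{u_1(t)}(y(t))$ and writing $x(t) = \bigl(\Phi^{f_1}_{u_1(t)}(y(t)) - \Phi^{f_1}_{u_1(t)}(0)\bigr) + \Phi^{f_1}_{u_1(t)}(0)$, the first term is $O(|y(t)|) = O(\|u_1\|_{L^\infty})$ by the Lipschitz continuity of $\Phi^{f_1}_{u_1(t)}$, and the second term is $O(|u_1(t)|) = O(\|u_1\|_{L^\infty})$ since the flow starting from $0$ moves at bounded speed (the speed at $0$ being $f_1(0)$). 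Combining the two gives $x(t;u) = O(\|u_1\|_{L^\infty})$.

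I expect the main technical care, rather than genuine difficulty, to lie in justifying that the change of variable is well-defined on the right time/size window: one needs $\|u_1\|_{L^\infty}$ small enough so that $\Phi^{f_1}_{u_1(t)}$ is defined on a fixed neighborhood of $0$ on which $\tilde{f}_0$ is Lipschitz, and one needs to verify that $y$ stays in this neighborhood on $[0,t]$ — which is ensured by a standard bootstrap inside the Gronwall argument, using that $t = O(1)$ and $\|u_1\|_{L^\infty} = O(1)$ in the implicit limit fixed in \cref{s:O}. All other ingredients are direct consequences of the analyticity of $f_0,f_1$ and of $f_0(0)=0$.
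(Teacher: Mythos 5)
Your proof is correct. The paper's own proof is a one-line citation of [P1, Proposition~145], but the mechanism you reconstruct --- the auxiliary-system change of variables $y(t) = \Phi^{f_1}_{-u_1(t)}(x(t))$, which kills the control term and leaves a time-dependent vector field of size $O(\|u_1\|_{L^\infty})$ at the origin, followed by a Gronwall estimate with a bootstrap to keep $y$ in the chart --- is precisely the device underpinning the cited result, and it is the same one the paper reuses explicitly in the proof of \cref{p:error-xhat}.
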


\begin{proof}
    This follows from \cite[Proposition 145]{P1}.
\end{proof}

\subsection{A new representation formula for ODEs} \label{s:Magnus-ODE}

As stated in \cref{s:heuristic-magnus}, our proofs rely on the following recent representation formula, which is the counterpart of the formal expansion \eqref{eq:11formal} for solutions to nonlinear ODEs of the form \eqref{syst} involving analytic vector fields.

\begin{theorem} \label{thm:Key_1}
    Let $M\in\N^*$, $\delta,r>0$ and $f_0, f_1 \in \CC^{\omega,r}(B_{\delta};\R^d)$ with $f_0(0)=0$. 
    Then
    \begin{equation} \label{eq:x=ZM+O}
        x(t;u) = \cZ{M}(t,f,u)(0) + O \left(\|u_1\|_{L^{M+1}}^{M+1} + |x(t;u)|^{1+\frac{1}{M}} \right),
    \end{equation}
    where
    \begin{equation} \label{eq:ZM-etab}
        \cZ{M}(t,f,u) = \sum_{b \in \Bs_{\intset{1,M}}} \eta_b(t,u) f_b,
    \end{equation}
    where this infinite sum converges absolutely in $\CC^{\omega,r'}(B_{\delta};\R^d)$ for any $r' \in [r/e,r)$.
\end{theorem}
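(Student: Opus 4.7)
The plan is to lift the formal identity $x(t) = \exp(tX_0)\exp(\ZInf(t,X,u))$ of \cref{thm:Magnus1.1_formel} to analytic vector fields on $\R^d$, then truncate the Magnus-type exponent at $X_1$-homogeneity $M$. Three tasks arise: absolute convergence of the infinite sum defining $\cZ{M}(t,f,u)$, a flow-level interpretation of the formal exponential identity, and sharp control of the truncation remainder.

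For convergence, I would combine the analytic bracket estimate \eqref{eq:bracket.analytic}, which yields $\opnorm{f_b}_{r'} \lesssim (|b|-1)!\,(C/r)^{|b|}$, with bounds on the coordinates of the pseudo-first kind. By \cref{p:etab-xib-XI}, $\eta_b$ differs from $\xi_b$ by a finite sum of products of lower-order $\xi_{b'}$, and \cref{p:xib-u1-Lk} shows $|\xi_{b'}(t,u)| \leq (ct)^{|b'|}/|b'|!$ times a small factor depending only on $n_1(b')$. Cancelling factorials and noting that for fixed $M$ the number of $b \in \Bs$ with $n_1(b) \leq M$ and $|b| = \ell$ grows only polynomially in $\ell$ yields a geometric series in $ct$, giving absolute convergence of $\sum_{b \in \Bs_{\intset{1,M}}} \eta_b(t,u) f_b$ in $\CC^{\omega,r'}(B_\delta;\R^d)$ for $t$ sufficiently small relative to $r$.

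For the representation step, I would substitute $X_0, X_1$ by $f_0, f_1$ so that elements of $\widehat{\mathcal{L}}(X)$ become analytic vector fields and exponentials become time-one flows. The formal identity \eqref{eq:11formal} then reads $x(t;u) = \Phi^1_{tf_0}\bigl(\Phi^1_{\ZInf(t,f,u)}(0)\bigr)$ at the level of diffeomorphisms. Since $f_0(0) = 0$, the flow $\Phi^1_{tf_0}$ fixes the origin and is tangent to the identity there, so $x(t;u) = \Phi^1_{\ZInf(t,f,u)}(0) + O\bigl(t\,\bigl|\Phi^1_{\ZInf(t,f,u)}(0)\bigr|\bigr)$. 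Splitting $\ZInf = \cZ{M} + \mathcal{R}_M$ with $\mathcal{R}_M := \sum_{n_1(b) \geq M+1} \eta_b f_b$, a Taylor expansion of $\Phi^1_{\ZInf(t,f,u)}$ around the identity isolates the leading vector-field evaluation $\cZ{M}(t,f,u)(0) + \mathcal{R}_M(t,f,u)(0)$, with higher-order flow terms contributing at least quadratic remainders in the size of the driving vector field.

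The main obstacle is this last step: matching all errors into exactly the two-term form \eqref{eq:x=ZM+O}. The tail $\mathcal{R}_M(t,f,u)(0)$ is bounded by $\|u_1\|_{L^{M+1}}^{M+1}$ through \cref{p:xib-u1-Lk} applied to each homogeneity level $k \geq M+1$ and summed geometrically. The quadratic flow remainders and the $\Phi^1_{tf_0}$ correction involve products of $\ZInf(t,f,u)$ evaluated at intermediate non-zero points of size $|x(t;u)|$; these must be balanced by an interpolation using Young's inequality with conjugate exponents $M+1$ and $(M+1)/M$, which is precisely what produces the exponent $1+\frac{1}{M}$ in \eqref{eq:x=ZM+O}. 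Keeping careful track of the analytic norms involved and invoking \cref{p:small-state} to ensure $|x(t;u)|$ stays small under the working assumption $\|u\|_{W^{-1,\infty}} \to 0$ closes the argument.
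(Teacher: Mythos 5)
Your overall strategy---lift the formal factorization $x(t)=\exp(tX_0)\exp(\ZInf)$ of \cref{thm:Magnus1.1_formel} to flows of analytic vector fields, prove convergence via the analytic bracket estimate \eqref{eq:bracket.analytic} combined with bounds on the coordinates, then truncate at $X_1$-homogeneity $M$ and estimate the remainder---is indeed the strategy of \cite[Propositions 103 and 161]{P1}, to which the paper's own one-line proof delegates both claims. However, there is a concrete error in your flow-level interpretation. Under the paper's convention $\dot x = x(X_0+uX_1)$, the substitution $X_i\mapsto f_i$ realizes $x(t)$ as a \emph{pullback} operator, and the order of exponentials reverses when passing from operators on functions to flows of points: $e^{tX_0}e^{\ZInf}$ corresponds to $\Phi^1_{\ZInf(f)}\circ\Phi^t_{f_0}$, not to $\Phi^t_{f_0}\circ\Phi^1_{\ZInf(f)}$ as you wrote. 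Evaluating at the initial point $0$ and using $f_0(0)=0$, one gets $x(t;u)=\Phi^1_{\ZInf(t,f,u)}\bigl(\Phi^t_{f_0}(0)\bigr)=\Phi^1_{\ZInf(t,f,u)}(0)$: the $\exp(tX_0)$ factor is entirely harmless and there is \emph{no} ``$\Phi^1_{tf_0}$ correction'' of size $O(t|x|)$ to contend with. This matters because $t|x|$ genuinely cannot be absorbed into $\|u_1\|_{L^{M+1}}^{M+1}+|x|^{1+1/M}$---the Young split you propose leaves a $t^{M+1}$ residue which is not $O(\|u_1\|_{L^{M+1}}^{M+1})$ when $u$ is small relative to $t$---so your route would stall at precisely the step you call ``the main obstacle.'' A simple test exposes the ordering: for $d=1$, $f_0(x)=x$, $f_1\equiv1$, all $f_b$ with $n_1(b)\geq2$ vanish, and $\ZInf(f)=\bigl(\sum_{j\geq1}u_j(t)\bigr)\partial_x$ is constant, so $\Phi^1_{\ZInf(f)}(0)=\sum_{j\geq1}u_j(t)=\int_0^t e^{t-s}u(s)\,\mathrm{d}s=x(t;u)$ exactly, whereas your composition yields $\Phi^t_{f_0}\bigl(x(t;u)\bigr)=e^t x(t;u)\neq x(t;u)$.

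Two further points deserve more care than your sketch gives them. The convergence of $\sum_{b\in\Bs_{\intset{1,M}}}\eta_b f_b$ is not secured merely by counting basis elements of each length: the constants $C_b$ in \cref{p:etab-xib-XI}, i.e.\ the CBHD structure constants $\langle F_{q,h}(b_1,\dotsc,b_q),b\rangle_{\Bs}$ appearing in \eqref{eq:etab-xib-exact}, must themselves grow at most geometrically in $|b|$; this is the ``asymmetric geometric growth'' of Hall-basis structure constants from \cite[Theorem~1.9]{A1}, which the paper singles out as the essential ingredient, and which your argument does not invoke. And even with the correct composition, the flow error $\Phi^1_W(0)-W(0)=O\bigl(\|W\|_{\CC^1}\,|W(0)|\bigr)$ with $|W(0)|\asymp|x|$ requires a sharper splitting than a single Young inequality: the naive bound $\|W\|_{\CC^1}=O(\|u_1\|_{L^\infty})$ produces $\|u_1\|_{L^\infty}^{M+1}$, which dominates rather than is dominated by $\|u_1\|_{L^{M+1}}^{M+1}$. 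Closing this is the real technical content of \cite[Proposition 161]{P1}.
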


\begin{proof}
    Equality \eqref{eq:x=ZM+O} is the third item of \cite[Proposition 161]{P1}.
    The absolute convergence in \eqref{eq:ZM-etab} is proved in \cite[Proposition 103]{P1} and relies on the fundamental observation that the structure constants of Hall bases exhibit ``asymmetric geometric growth'' (see \cite[Theorem 1.9]{A1}).
\end{proof}

\subsection{Black-box estimates for infinite sums and cross terms}
\label{s:black-box}

In order to carry out the program sketched in \cref{s:heuristic-conj-drift}, we will need to estimate infinite sums of terms of the form $\eta_b f_b$ (say for $b$ ranging over $\mathcal{E} \subset \Bs$).
We state below two important black-box estimates to deal with such sums.
\cref{p:sum-xi-XI} deals with sums of the main terms $\sum_{b \in \mathcal{E}} \xi_b f_b$ and \cref{p:sum-cross-XI} deals with the associated cross terms $\sum_{b \in \mathcal{E}} (\eta_b-\xi_b) f_b$ (in particular, it can be seen as a uniformly summed version of \cref{p:etab-xib-XI}, which involved a constant depending on~$b$).
In the sequel, we will only rely on the ``packaged'' version given by \cref{p:PZM-XI}.

The statements are quite technical since bounding such infinite sums requires a lot of uniformity in the assumptions.
We postpone the proofs to \cref{s:proof-black-box}.

\begin{proposition}[Estimate of main terms] \label{p:sum-xi-XI}
    Let $M, L \in \N^*$. 
    Let $\mathcal{E} \subset \Bs_{\intset{1,M}}$.
    Assume that there exist $c > 0$, and a functional $\Xi : \R_+^* \times \lloc \to \R_+$ such that the following holds:
    \begin{itemize}
        \item for all $b \in \mathcal{E}$, there exists an exponent $\sigma \leq \min \{L,|b|\}$, \\
        such that, for all $t > 0$ and $u \in \lone$,
        \begin{equation} \label{eq:xib-XI-L}
            |\xi_b(t,u)| \leq \frac{(ct)^{|b|}}{|b|!} t^{-\sigma} \Xi(t,u).
        \end{equation}
    \end{itemize}
    Let $\delta, r > 0$ and $f_0,f_1 \in \CC^{\omega,r}(B_\delta;\R^d)$.
    Then, for any $r' \in [r/e,r)$,
    \begin{equation}
        \sum_{b \in \mathcal{E}} \opnorm{ \xi_b(t,u) f_b }_{r'} = O(\Xi(t,u)).
    \end{equation}
\end{proposition}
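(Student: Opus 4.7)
The plan is to estimate each term $\opnorm{\xi_b(t,u) f_b}_{r'}$ pointwise by combining the hypothesis \eqref{eq:xib-XI-L} with the analytic bracket bound of \cref{thm:bracket.analytic}, then to sum over $b \in \mathcal{E}$ after grouping by length, using a uniform bound on the number of elements of $\Bs$ of a given length.

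First, writing $n := |b|$ and $\alpha := 9c\opnorm{f}_r/r$, multiplying \eqref{eq:xib-XI-L} by \eqref{eq:bracket.analytic} gives, for every $b \in \mathcal{E}$ with associated exponent $\sigma \leq \min\{L,n\}$,
$$
\opnorm{\xi_b(t,u) f_b}_{r'} \leq \frac{(ct)^n}{n!}\, t^{-\sigma}\, \Xi(t,u) \cdot \frac{r}{9}(n-1)!\, \alpha^n = \frac{r\alpha^n}{9n}\, t^{n-\sigma}\, \Xi(t,u).
$$
Since $\sigma \leq L$ and $\sigma \leq n$, we have $n - \sigma \geq \max(n-L, 0)$, so $t^{n-\sigma} \leq t^{\max(n-L,0)}$ as soon as $t \leq 1$.

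Second, I would organize the sum over length layers. Since $\eval(\Bs)$ is a basis of $\mathcal{L}(X)$ by \cref{thm:viennot}, Witt's formula yields $\#\{b \in \Bs : |b| = n\} \leq 2^n/n$, and hence
$$
\sum_{b \in \mathcal{E}} \opnorm{\xi_b(t,u) f_b}_{r'} \leq \Xi(t,u) \sum_{n = 1}^\infty \frac{r(2\alpha)^n}{9n^2}\, t^{\max(n-L,0)}.
$$
The head ($n \leq L$) of this series is a finite sum of constants depending only on $L, c, r, \opnorm{f}_r$; the tail ($n > L$) equals $\frac{r(2\alpha)^L}{9}\sum_{n>L}(2\alpha t)^{n-L}/n^2$, which is uniformly bounded in $t$ as soon as $2\alpha t \leq 1/2$. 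Choosing $\rho := \min\{1, 1/(4\alpha)\}$ in the implicit limit of \cref{s:O} then delivers the claimed $O(\Xi(t,u))$ bound.

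The argument is essentially routine and I do not expect any serious obstacle: the apparent difficulty that $\sigma$ depends on $b$ is absorbed entirely by the uniform constraint $\sigma \leq \min\{L,n\}$, which is exactly what makes the series converge uniformly in $t$ for $t$ small. The only point requiring slight additional care is that \cref{thm:bracket.analytic} as stated covers $r' = r/e$, whereas the proposition allows any $r' \in [r/e, r)$; this is handled by a standard rescaling of $r$, which only modifies the constant $\alpha$ and leaves the structure of the above estimate unchanged.
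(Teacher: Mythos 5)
Your proof follows essentially the same line as the paper's: bound $|\xi_b|$ by the hypothesis \eqref{eq:xib-XI-L}, bound $\opnorm{f_b}_{r'}$ by the analytic estimate \eqref{eq:bracket.analytic}, bound the cardinality of each length layer of $\Bs$, and observe that $\sigma\leq\min\{L,|b|\}$ makes the resulting series converge uniformly for small $t$. The paper sums over $\sigma$ first with the crude cardinality bound $\#\{b\in\Bs:|b|=\ell\}\leq 2^\ell$, whereas you group by length $n$ and invoke Witt's bound $2^n/n$; these are cosmetic variants of the same estimate, and your head/tail split at $n=L$ is equivalent to their double sum. One small caveat: your claim that the case $r'\in(r/e,r)$ is handled ``by a standard rescaling of $r$'' is not quite right as stated --- applying \cref{thm:bracket.analytic} at $R:=er'>r$ would require $f_0,f_1\in\CC^{\omega,R}(B_\delta)$, which is a strictly stronger hypothesis than the one given, since $\CC^{\omega,R}\subsetneq\CC^{\omega,r}$ when $R>r$. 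In fairness, the paper's proof is no more explicit on this point and simply cites \eqref{eq:bracket.analytic}; the genuine resolution lies in a sharper version of the analytic estimate in the underlying reference that trades a larger constant for a radius $r'$ closer to $r$, so this is a shared, not introduced, imprecision.
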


\begin{proposition}[Estimate of cross terms] \label{p:sum-cross-XI}
    Let $M, L \in \N^*$. 
    Let $\mathcal{E} \subset \Bs_{\intset{1,M}}$.
    Assume that there exist $c > 0$, and $\Xi : \R_+^* \times \lloc \to \R_+$ with $\Xi(t,u) = O(1)$ such that the following holds:
    \begin{itemize}
        \item for all $q \geq 2$, $b_1 \geq \dotsb \geq b_q \in \Bs \setminus \{ X_0 \}$ such that $\supp \mathcal{F}(b_1, \dotsc, b_q) \cap \mathcal{E} \neq \emptyset$, there exist $\sigma_1,\dotsc,\sigma_q \leq L$ with $\sigma_i \leq |b_i|$ and $(\alpha_1, \dotsc, \alpha_q) \in [0,1]^q$ with $\alpha := \alpha_1 + \dotsb + \alpha_q \geq 1$, \\
        such that, for all $t>0$ and $u \in \lone$,
        \begin{equation} \label{eq:xib-othercross-XI-L}
            |\xi_{b_i}(t,u)| 
            \leq \frac{(ct)^{|b_i|}}{|b_i|!} t^{-\sigma_i} (\Xi(t,u))^{\alpha_i}.
        \end{equation}
    \end{itemize}
    Let $\delta, r > 0$ and $f_0,f_1 \in \CC^{\omega,r}(B_\delta;\R^d)$.
    Then, for any $r' \in [r/e,r)$,
    \begin{equation}
        \sum_{b \in \mathcal{E}} \opnorm{ (\eta_b-\xi_b)(t,u) f_b }_{r'} = O(\Xi(t,u)).
    \end{equation}
\end{proposition}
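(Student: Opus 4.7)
The plan is to expand each $\eta_b - \xi_b$ using the exact formula \eqref{eq:etab-xib-exact} of \cref{Prop:ZM_Bstar} and then reduce every term to the hypothesis \eqref{eq:xib-othercross-XI-L}. Exchanging the order of summation (justified a posteriori by absolute convergence), I would write
\begin{equation*}
\sum_{b \in \mathcal{E}} \opnorm{(\eta_b - \xi_b) f_b}_{r'}
\leq
\sum_{\substack{q \geq 2, h \in (\N^*)^q \\ b_1 > \dots > b_q \in \Bs \setminus \{X_0\}}}
\Big(\prod_{i=1}^q |\xi_{b_i}|^{h_i}\Big)
\sum_{b \in \mathcal{E}} |\langle F_{q,h}(b_1,\dots,b_q), b\rangle_{\Bs}| \opnorm{f_b}_{r'}.
\end{equation*}

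The key reduction step is to pass from the strictly decreasing tuple $(b_1 > \dots > b_q)$ with multiplicities $h_i$ to the weakly decreasing tuple $(\tilde b_1 \geq \dots \geq \tilde b_N)$ obtained by repeating each $b_i$ exactly $h_i$ times (so $N = \sum_i h_i$). Since $F_{q,h}$ is of degree $h_i$ in $Y_i$, every summand of $F_{q,h}(b_1,\dots,b_q)$ is an iterated bracket involving each $\tilde b_j$ exactly once, whence $\supp F_{q,h}(b_1,\dots,b_q) \subset \supp \mathcal{F}(\tilde b_1,\dots,\tilde b_N)$. Consequently, non-zero contributions come only from tuples for which $\supp \mathcal{F}(\tilde b_1,\dots,\tilde b_N) \cap \mathcal{E} \neq \emptyset$, so the hypothesis \eqref{eq:xib-othercross-XI-L} applies to the extended tuple, yielding exponents $\tilde \sigma_j \leq \min(L, |\tilde b_j|)$ and $\tilde \alpha_j \in [0,1]$ with $\sum_j \tilde\alpha_j \geq 1$ such that
\begin{equation*}
\prod_i |\xi_{b_i}|^{h_i} = \prod_j |\xi_{\tilde b_j}|
\leq \frac{(ct)^{|b|}}{\prod_j |\tilde b_j|!} \, t^{-\sum_j \tilde\sigma_j} \, \Xi(t,u)^{\sum_j \tilde\alpha_j},
\end{equation*}
where $|b| = \sum_j |\tilde b_j|$ is the common length of brackets in $\supp F_{q,h}(b_1,\dots,b_q)$. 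The bound $\tilde\sigma_j \leq |\tilde b_j|$ gives $(ct)^{|b|} t^{-\sum_j \tilde\sigma_j} \leq c^{|b|}$ for $t \leq 1$, while $\Xi = O(1)$ together with $\sum_j \tilde\alpha_j \geq 1$ yields $\Xi^{\sum_j \tilde\alpha_j} \leq C \, \Xi$, extracting the desired single factor of $\Xi(t,u)$.

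It remains to show that the residual numerical sum, stripped of the $\Xi$ factor, converges to a constant independent of $t$ and $u$. For this I would combine the analytic estimate \cref{thm:bracket.analytic} for $\opnorm{f_b}_{r'}$ with a universal bound on the BCH-type coefficients $\langle F_{q,h}(b_1,\dots,b_q), \cdot \rangle_{\Bs}$, and exploit the asymmetric geometric growth of Hall structure constants from \cite[Theorem 1.9]{A1}, which is exactly the mechanism ensuring absolute convergence of the series \eqref{eq:ZM-etab} in \cref{thm:Key_1}. The factorial denominators $\prod_j |\tilde b_j|!$ together with the small factor $c^{|b|}$ are expected to dominate both the $(|b|-1)!$ growth of $\opnorm{f_b}_{r'}$ and the combinatorial proliferation of BCH coefficients.

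The hard part will be precisely this last convergence step, which closely parallels the proof of \cref{p:sum-xi-XI} (the ``main terms'' analogue without cross-term bookkeeping). Once the numerical sum is controlled uniformly, multiplying by $C \Xi(t,u)$ from the previous step yields the claimed $O(\Xi(t,u))$ bound.
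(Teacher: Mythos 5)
Your overall strategy is the same as the paper's: expand $\eta_b - \xi_b$ via \cref{Prop:ZM_Bstar}, pass from the strictly-decreasing list $(b_1 > \dots > b_q)$ with multiplicities $h_i$ to a weakly-decreasing list $(\tilde b_1 \geq \dots \geq \tilde b_N)$ so that the hypothesis \eqref{eq:xib-othercross-XI-L} applies directly, extract a single factor of $\Xi$ from $\alpha \geq 1$ and $\Xi = O(1)$, and control the remaining numerical series through a geometric bound on the CBH coefficients together with the analytic estimate for $\opnorm{f_b}_{r'}$. (The paper organizes this the other way around --- it fixes $b$ first, bounds $|\eta_b - \xi_b|$ by $(D_Mt)^{|b|} t^{-\sigma_b} \Xi / |b|!$ via an auxiliary lemma showing $|\langle F_{q,h}(b_1,\dots,b_q),b\rangle_{\Bs}| \leq C_M^{|b|}$, and then reuses the summation mechanism of \cref{p:sum-xi-XI} --- but these are two orderings of the same calculation.)

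There is, however, one genuine gap in the estimate you wrote down. You bound
\begin{equation*}
    (ct)^{|b|} \, t^{-\sum_j \tilde\sigma_j} = c^{|b|}\, t^{\,|b| - \sum_j \tilde\sigma_j} \leq c^{|b|} \qquad (t \leq 1),
\end{equation*}
which is valid but throws away the smallness in $t$. That smallness is essential: after you multiply by $\opnorm{f_b}_{r'} \lesssim (|b|-1)!\,(9\opnorm{f}_r/r)^{|b|}$ from \cref{thm:bracket.analytic}, divide by $\prod_j |\tilde b_j|! \gtrsim |b|!\,/\,2^{(N-1)|b|}$ (via \eqref{factorielle}), and sum over the $\lesssim 2^{|b|}$ brackets of each length, the residual sum over $|b|$ is a geometric series whose ratio is a fixed constant times $c$, so it will not converge in general. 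The fix is to observe that $\sum_j \tilde\sigma_j \leq N L \leq M L$ is bounded uniformly in $b$ (since each $\tilde b_j$ contributes $n_1(\tilde b_j) \geq 1$, so $N \leq n_1(b) \leq M$), and therefore write
\begin{equation*}
    (ct)^{|b|}\, t^{-\sum_j \tilde\sigma_j} \leq (ct)^{|b|}\, t^{-\min\{ML,\,|b|\}},
\end{equation*}
retaining the full $(ct)^{|b|}$. Then the series becomes $\sum_\ell \frac{1}{\ell}\bigl(\text{const}\cdot t\bigr)^\ell t^{-\min\{ML,\ell\}}$, which converges for $t$ small, exactly as in the conclusion of \cref{p:sum-xi-XI}. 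With that correction (and citing the geometric CBH coefficient bound, which you gesture at but must actually invoke), your argument closes.
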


\begin{corollary} \label{p:PZM-XI}
    Let $M, L \in \N^*$.
    Let $\bb \in \Bs_{\intset{1,M}}$ and $\mathcal{N} \subset \Bs_{\intset{1,M}}$ with $\bb \notin \mathcal{N}$.
    Assume that there exist $c > 0$ and a functional $\Xi : \R_+^* \times \lloc \to \R_+$ with $\Xi(t,u) = O(1)$ such that 
    \begin{itemize}
        \item the assumption of \cref{p:sum-xi-XI} holds for $\mathcal{E} = \Bs_{\intset{1,M}} \setminus (\mathcal{N} \cup \{\bb\})$,
        \item the assumption of \cref{p:sum-cross-XI} holds for $\mathcal{E} = \Bs_{\intset{1,M}} \setminus \mathcal{N}$.
    \end{itemize}
    Let $f_0$, $f_1$ be analytic vector fields on a neighborhood of $0$.
    If $f_{\bb}(0) \notin \mathcal{N}(f)(0)$ and $\mathbb{P}$ is a component along $f_{\bb}(0)$ parallel to $\mathcal{N}(f)(0)$, then
    \begin{equation} \label{eq:PZM-xibb-OXi}
        \mathbb{P} \cZ{M}(t,f,u)(0) 
        = \xi_{\bb}(t,u)
        + O\left(\Xi(t,u)\right).
    \end{equation}
\end{corollary}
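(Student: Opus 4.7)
The plan is to combine the two black-box estimates with the representation formula of \cref{thm:Key_1}, after carefully separating the contribution of $\bb$ from the rest and isolating main terms from cross terms.

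First, I would fix $r, \delta > 0$ such that $f_0, f_1 \in \CC^{\omega,r}(B_\delta;\R^d)$ and pick any $r' \in [r/e, r)$, so that \cref{thm:Key_1} applies: evaluating at $0$ and pairing with $\mathbb{P}$ yields the absolutely convergent expansion
\begin{equation*}
    \mathbb{P} \cZ{M}(t,f,u)(0) = \sum_{b \in \Bs_{\intset{1,M}}} \eta_b(t,u) \, \mathbb{P} f_b(0).
\end{equation*}
Since $\mathbb{P}$ annihilates $\mathcal{N}(f)(0)$ by assumption and $\mathbb{P} f_{\bb}(0) = 1$, only indices $b \in \Bs_{\intset{1,M}} \setminus \mathcal{N}$ contribute, so the expansion collapses to
\begin{equation*}
    \mathbb{P} \cZ{M}(t,f,u)(0) = \eta_{\bb}(t,u) + \sum_{b \in \Bs_{\intset{1,M}} \setminus (\mathcal{N} \cup \{\bb\})} \eta_b(t,u) \, \mathbb{P} f_b(0).
\end{equation*}

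Next, I would write $\eta_b = \xi_b + (\eta_b - \xi_b)$ for every term (including $b = \bb$) and regroup:
\begin{equation*}
    \mathbb{P} \cZ{M}(t,f,u)(0) = \xi_{\bb}(t,u) + S_1(t,u) + S_2(t,u),
\end{equation*}
where
\begin{equation*}
    S_1(t,u) := \sum_{b \in \Bs_{\intset{1,M}} \setminus (\mathcal{N} \cup \{\bb\})} \xi_b(t,u) \, \mathbb{P} f_b(0),
    \qquad
    S_2(t,u) := \sum_{b \in \Bs_{\intset{1,M}} \setminus \mathcal{N}} (\eta_b - \xi_b)(t,u) \, \mathbb{P} f_b(0).
\end{equation*}
Note that $S_2$ absorbs the stray $(\eta_{\bb}-\xi_{\bb})$ coming from $b=\bb$, which is precisely why the hypothesis of \cref{p:sum-cross-XI} is imposed on $\Bs_{\intset{1,M}} \setminus \mathcal{N}$ rather than on $\Bs_{\intset{1,M}} \setminus (\mathcal{N} \cup \{\bb\})$.

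The conclusion then reduces to showing $S_1, S_2 = O(\Xi(t,u))$. Since $\mathbb{P}$ is a fixed linear form on $\R^d$, there is a constant $\|\mathbb{P}\|$ such that $|\mathbb{P} f_b(0)| \leq \|\mathbb{P}\| \cdot |f_b(0)| \leq \|\mathbb{P}\| \cdot \opnorm{f_b}_{r'}$ for every $b$, by the definition of the analytic norm in \eqref{eq:def.analytic.r}. Hence
\begin{equation*}
    |S_1(t,u)| \leq \|\mathbb{P}\| \sum_{b \in \Bs_{\intset{1,M}} \setminus (\mathcal{N} \cup \{\bb\})} \opnorm{\xi_b(t,u) f_b}_{r'},
    \quad
    |S_2(t,u)| \leq \|\mathbb{P}\| \sum_{b \in \Bs_{\intset{1,M}} \setminus \mathcal{N}} \opnorm{(\eta_b - \xi_b)(t,u) f_b}_{r'}.
\end{equation*}
The first sum is $O(\Xi(t,u))$ by the first assumption and \cref{p:sum-xi-XI} applied to $\mathcal{E} = \Bs_{\intset{1,M}} \setminus (\mathcal{N} \cup \{\bb\})$; the second is $O(\Xi(t,u))$ by the second assumption and \cref{p:sum-cross-XI} applied to $\mathcal{E} = \Bs_{\intset{1,M}} \setminus \mathcal{N}$. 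Adding the two bounds yields \eqref{eq:PZM-xibb-OXi}.

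There is no real obstacle: the whole argument is essentially a bookkeeping exercise once the two black-box estimates are in place. The only point that requires any care is tracking which index set each sum ranges over, to make sure that the single cross term $(\eta_{\bb}-\xi_{\bb})$ -- which cannot be estimated via a main-term bound -- is bundled into $S_2$, and that the unique ``distinguished'' contribution $\xi_{\bb}(t,u)$ is not swallowed into a remainder. The real work lies earlier, in the proofs of \cref{p:sum-xi-XI,p:sum-cross-XI}.
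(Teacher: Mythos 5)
Your proof is correct and follows exactly the intended argument: the paper's own proof is the one-line remark that the corollary is ``a direct consequence of the definition \eqref{eq:ZM-etab} of $\cZ{M}$ and the above propositions,'' and your write-up simply fills in the obvious bookkeeping — expand $\mathbb{P}\cZ{M}(t,f,u)(0)$ via \eqref{eq:ZM-etab}, drop the terms killed by $\mathbb{P}$, split $\eta_b=\xi_b+(\eta_b-\xi_b)$, peel off $\xi_\bb$, bound $|\mathbb{P}f_b(0)|$ by $\|\mathbb{P}\|\opnorm{f_b}_{r'}$, and apply \cref{p:sum-xi-XI} and \cref{p:sum-cross-XI} to the two remaining sums. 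Your observation that the lone cross term $(\eta_\bb-\xi_\bb)$ is what forces the cross-term hypothesis to be taken over $\Bs_{\intset{1,M}}\setminus\mathcal{N}$ rather than $\Bs_{\intset{1,M}}\setminus(\mathcal{N}\cup\{\bb\})$ is exactly the right point of care and explains the asymmetry between the two index sets in the statement.
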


\begin{proof}
    This is a direct consequence of the definition \eqref{eq:ZM-etab} of $\mathcal{Z}_{\intset{1,M}}$ and the above propositions.
\end{proof}

\subsection{Cross terms are not negligible in general}
\label{s:pollutions}

The expression \eqref{eq:etab-xib-exact} of $\eta_b$ as $\xi_b$ plus a finite sum of cross terms leads to the idea that one could maybe replace the coordinates of the pseudo-first kind by those of the second kind in \eqref{eq:x=ZM+O}, by absorbing the difference in the remainder terms which already appear in the right-hand side.
One could define
\begin{equation}
    \cZ{M}^{\text{pure}}(t,X,u) := \sum_{b \in \Bs_{\intset{1,M}}} \xi_b(t,u) \eval(b)
\end{equation}
and ask whether the estimate \eqref{eq:x=ZM+O} holds when
$\cZ{M}(t,f,u)(0)$ is replaced by  $\cZ{M}^{\text{pure}}(t,f,u)(0)$. The following example gives a negative answer to this question, which motivates the introduction of appropriate techniques (sketched in \cref{s:method-args}) to deal with the cross terms.

\begin{proposition}
    The estimate
    \begin{equation} \label{ZMpropre_estim_fausse}
        x(t;u)=\cZ{M}^{\text{pure}}(t,f,u)(0) + O \left(\|u_1\|_{L^{M+1}}^{M+1} + |x(t;u)|^{1+\frac{1}{M}} \right) 
    \end{equation}
    is false with $M=4$ for the system
    \begin{equation} \label{eq:syst-no-ZM-pure}
        \begin{cases}
            \dot{x}_1 = u \\ 
            \dot{x}_2 = x_1 + \frac 12 x_1^2 \\
            \dot{x}_3 = - x_1 x_2.
        \end{cases}
    \end{equation}
\end{proposition}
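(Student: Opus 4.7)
The plan is to integrate \eqref{eq:syst-no-ZM-pure} in closed form, determine the single bracket in $\Bs_{\intset{1,4}}$ feeding the $e_3$-component of $\cZ{4}^{\mathrm{pure}}(t,f,u)(0)$, and exhibit a three-scale family of controls along which the resulting deviation is not absorbed by the admissible remainder.

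First, I would integrate the ODE explicitly. One has $x_1(t;u) = u_1(t)$ and $x_2(t;u) = u_2(t) + \xi_{W_1}(t,u)$ (where $\xi_{W_1}(t,u) = \tfrac{1}{2}\int_0^t u_1^2$); then, using $\dot{u}_2 = u_1$ and $\dot{\xi}_{W_1}(\cdot,u) = \tfrac{1}{2}u_1^2$, two integrations by parts in $\int_0^t u_1 x_2 \dd s$ give
\begin{equation*}
    x_3(t;u) \;=\; \xi_{P_{1,2}}(t,u) \;-\; \tfrac{1}{2} u_2(t)^2 \;-\; u_2(t)\,\xi_{W_1}(t,u) \;=\; \xi_{P_{1,2}}(t,u) \;+\; \tfrac{1}{2}\bigl(\xi_{W_1}(t,u)^2 - x_2(t;u)^2\bigr).
\end{equation*}

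Second, I would compute $\mathbb{P}_{e_3}\,\cZ{4}^{\mathrm{pure}}(t,f,u)(0)$. Since $f_1 \equiv e_1$ and $f_0$ is polynomial with $\partial_{x_1}^3 f_0 = 0$, a systematic pass through the list of elements of $\Bs_{\intset{1,4}}$ given by \cref{p:B*-12345} shows that the only $b$ with $f_b(0) \notin \vect\{e_1, e_2\}$ is $b = P_{1,2}$, for which in fact $f_{P_{1,2}} \equiv e_3$. All other candidates either involve some $M_j$ or $W_j$ with $j \geq 2$ (which vanish identically on this system), or reduce via $[e_3, f_0] = 0$, $[f_{M_1}, e_3] = 0$, $[e_2, (0,0,-x_1)^T] = 0$, etc., to zero. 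Hence
\begin{equation*}
    x_3(t;u) - \mathbb{P}_{e_3}\,\cZ{4}^{\mathrm{pure}}(t,f,u)(0) \;=\; \tfrac{1}{2}\bigl(\xi_{W_1}(t,u)^2 - x_2(t;u)^2\bigr).
\end{equation*}

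Third, I would build a violating sequence. For $t \in (0,1)$, set $\epsilon := t^{3/2}$ and $u_1^t(s) := \epsilon\,\phi_1(s/t) + \epsilon^2\,\phi_2(s/t) + \epsilon^3\,\phi_3(s/t)$ with $u^t := (u_1^t)'$, where $\phi_1, \phi_2, \phi_3 \in \CC^\infty([0,1];\R)$ vanish at $0$ and $1$, $\int_0^1 \phi_1 = 0$, $K := \int_0^1 \phi_1^2 > 0$, and $\phi_2, \phi_3$ are picked so as to cancel the $\epsilon^2$ and $\epsilon^3$ coefficients of $u_2^t(t) + \xi_{W_1}(t,u^t)$ (a straightforward linear system in $\int\phi_2$ and $\int\phi_3$). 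Then $u_1^t(t) = 0$, $\|u_1^t\|_{L^\infty} \sim \epsilon = t^{3/2} \to 0$, $\xi_{W_1}(t,u^t) \sim \tfrac{K}{2}\epsilon^2 t$, and $x_2(t;u^t) = O(\epsilon^4 t) = O(t^7)$ so that $\xi_{W_1}^2$ dominates $x_2^2$; the deviation is therefore of size $\sim t^8$. A separate routine computation yields $\xi_{P_{1,2}}(t,u^t) \sim \epsilon^3 t^2 = t^{13/2}$, so $|x(t;u^t)| \sim t^{13/2}$ and $|x(t;u^t)|^{5/4} \sim t^{65/8}$, while $\|u_1^t\|_{L^5}^5 \sim \epsilon^5 t = t^{17/2}$. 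Since $8 < 65/8 < 17/2$, the deviation strictly dominates both admissible error terms as $t \to 0^+$, contradicting \eqref{ZMpropre_estim_fausse}.

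The main obstacle is the calibration in the third step. A single-scale ansatz gives $|x_2|^{5/4}$ much larger than the deviation, and a two-scale ansatz enforcing $\int\phi_1 = 0$ still leaves $x_2 \sim \epsilon^3 t$, for which one checks that no scaling $\epsilon = t^a$ with $a > 0$ makes the deviation beat $|x_2|^{5/4}$. Three scales are genuinely needed so that $x_2^2 \ll \xi_{W_1}^2$, and the amplitude $\epsilon$ must then lie in the narrow window $t^2 \ll \epsilon \ll t$ to simultaneously defeat $\|u_1\|_{L^5}^5$ and $|x|^{5/4}$; the choice $\epsilon = t^{3/2}$ is a convenient midpoint. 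The rest of the argument reduces to bookkeeping of these leading-order asymptotics.
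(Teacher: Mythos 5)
Your first two steps match the paper's proof: the explicit integration yields the same residual $x_3(t;u) - \mathbb{P}_{e_3}\cZ{4}^{\text{pure}}(t,f,u)(0) = \tfrac12\bigl(\xi_{W_1}(t,u)^2 - x_2(t;u)^2\bigr)$ (equivalently $-\tfrac12 u_2(t)\bigl(u_2(t) + \int_0^t u_1^2\bigr)$), and the identification of $P_{1,2}$ as the unique bracket of $\Bs_{\intset{1,4}}$ feeding the $e_3$-component of $\cZ{4}^{\text{pure}}$ is correct. Your third step, however, takes a genuinely different route. The paper invokes the $L^\infty$-STLC of the system (via Hermes' condition) to obtain nonzero controls with $x(t;u)=0$; this annihilates the $|x|^{5/4}$ error outright, and the contradiction then follows from a single interpolation inequality $\int_0^t|u_1|^5 \le 3\|u\|_{L^\infty}\bigl(\int_0^t u_1^2\bigr)^2$ (valid when $u_1(t)=0$), obtained by one integration by parts. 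You instead construct an explicit three-scale oscillating family with amplitude $\epsilon = t^{3/2}$, chosen in the window $t^2 \ll \epsilon \ll t$, and verify directly that the deviation, of order $t^8$, strictly dominates $\|u_1\|_{L^5}^5 \sim t^{17/2}$ and $|x|^{5/4} \sim t^{65/8}$ as $t\to 0$. Your approach is more self-contained---it appeals neither to sufficient-condition theorems nor to the existence of nontrivial loops---at the cost of more delicate exponent bookkeeping. Two small clarifications are in order: the $\epsilon^3$-cancellation depends on $\int_0^1\phi_1\phi_2$ rather than merely on $\int_0^1\phi_2$, so $\phi_3$ must be chosen after $\phi_2$ is fixed, not from a genuinely linear system; and the true purpose of the third scale is to push $|x_2|\sim\epsilon^4 t$ below $|x_3|\sim\epsilon^3 t^2$ (so that $|x|$ is governed by $\xi_{P_{1,2}}$ rather than by $x_2$), since two scales already guarantee $x_2^2 \ll \xi_{W_1}^2$ but, as you note, leave $|x_2|^{5/4}$ too large for any admissible scaling of $\epsilon$.
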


\begin{proof}
    \step{Computation of $\cZ{4}^{\text{pure}}(t,f,u)(0)$}
    Let $f_0,f_1:\R^3 \to \R^3$ be defined by $f_0(x):=(x_1+ x_1^2/2 )e_2 - x_1 x_2 e_3$ and $f_1(x):=e_1$. 
    Elementary computations prove that, for every $j \in \N^*$ and $\nu \in \N$,
    \begin{equation}
        f_{M_j}(x)= 
        \begin{cases}
            e_1 & \text{ if } j=0, \\
            (1+x_1)e_2-x_2 e_3 & \text{ if } j=1, \\
            \frac 12 x_1^2 e_3 & \text{ if } j=2, \\
            0 & \text{ if } j \geq 3,
        \end{cases}
        \qquad
        f_{W_{j,\nu}}(x) = 
        \begin{cases}
            e_2 & \text{ if } (j,\nu)=(1,0), \\
            -x_1 e_3 & \text{ if } (j,\nu)=(1,1), \\
            0 & \text{ otherwise }
        \end{cases}
    \end{equation}
    $f_{P_{1,2}}(x)=e_3$ and $f_b(x)=0$ for any other $b \in \Bs_{\intset{3,4}}$. 
    Thus
    \begin{equation}
        f_{X_1}(0)=e_1, \qquad f_{M_1}(0)=f_{W_1}(0)=e_2, \qquad f_{P_{1,2}}(0)=e_3,
    \end{equation}
    and $f_b(0)=0$ for any other $b \in \Bs_{\intset{1,4}}$.
    Using \cref{Prop:Coord_Bstar}, we obtain
    \begin{equation}
        \begin{split}
            \cZ{4}^{\text{pure}}(t,f,u)(0) 
            & =
            \xi_{X_1}(t,u) e_1 + (\xi_{M_1}+\xi_{W_1})(t,u) e_2 + \xi_{P_{1,2}}(t,u) e_3
            \\ & =
            u_1(t) e_1 + \left( u_2(t) + \int_0^t \frac{u_1^2}{2} \right) e_2 + \int_0^t u_2 \frac{u_1^2}{2} e_3.
        \end{split}
    \end{equation}

    \step{Computation of $x(t;u)-\cZ{4}^{\text{pure}}(t,f,u)(0)$} \label{proof410:step2}
    By solving explicitly the system and using an integration by parts, one obtains
    \begin{equation}
        x(t;u)=u_1(t) e_1 + \left( u_2(t) + \int_0^t \frac{u_1^2}{2} \right) e_2 + \left(-\frac{1}{2}u_2(t)^2 - u_2(t) \int_0^t \frac{u_1^2}{2} + \int_0^t u_2 \frac{u_1^2}{2} \right)e_3
    \end{equation}
    thus
    \begin{equation}
        x(t;u) - \cZ{4}^{\text{pure}}(t,f,u)(0) = - \frac{1}{2} u_2(t) \left( u_2(t) + \int_0^t u_1^2  \right)e_3. 
    \end{equation}
    In particular, for any $u \in \lone$ such that $x_2(t;u)=0$, one has
    \begin{equation} \label{eq:x-ZMpure-contra_bis}
        x(t;u)-\cZ{4}^{\text{pure}}(t,f,u)(0) =  \frac{1}{8} \left( \int_0^t u_1^2 \right)^2 e_3.
    \end{equation}

    \eqstep{We prove that, for every $t>0$ and $u \in L^\infty(0,t)$ with $u_1(t)=0$,}
    \begin{equation} \label{L5^5<L2^4}
        \int_0^t |u_1|^5 \leq 3 \|u\|_{L^\infty} \left( \int_0^t u_1^2 \right)^2.
    \end{equation}
    Using an integration by parts, we obtain
    \begin{equation}
        \int_0^t |u_1|^5 
        = \int_0^t u_1^2 |u_1|^3  
        =  - 3 \int_0^t \left(\int_0^{\tau} u_1^2\right) u(\tau) u_1(\tau) |u_1(\tau)| \dd\tau 
        \leq 3 \|u\|_{L^\infty} \left( \int_0^t u_1^2 \right)^2.
    \end{equation}

    \step{Conclusion} 
    Working by contradiction we assume there exists $C,\rho>0$ such that, for every $t \in (0,\rho)$ and $u \in L^1(0,t)$ with $\|u_1\|_{L^\infty}<\rho$,
    \begin{equation}
        |x(t;u) - \cZ{4}^{\text{pure}}(t,f,u)(0)| \leq C  \left(\|u_1\|_{L^{5}}^{5} + |x(t;u)|^{\frac{6}{5}} \right).
    \end{equation}
    Then, by Step \ref{proof410:step2}, for every $t \in (0,\rho)$ and $u \in L^1(0,t)$ such that $\|u_1\|_{L^\infty}<\rho$ and $x_2(t;u)=0$, one has
    \begin{equation} \label{L2^4<C*L5^5}
    \frac{1}{8} \left( \int_0^t u_1^2 \right)^2 \leq C  \left(\|u_1\|_{L^{5}}^{5} + |x(t;u)|^{\frac{6}{5}} \right) 
    \end{equation}
    System \eqref{eq:syst-no-ZM-pure} is $L^\infty$-STLC (for example thanks to Hermes' sufficient condition of \cite[Theorem 3.2]{hermes1982control}). 
    In particular, for every $\varepsilon>0$ there exists $t \in (0,\varepsilon)$ and $u \in L^{\infty}(0,t) \setminus \{0\}$ with $\|u\|_{L^\infty}\leq \varepsilon$ such that $x(t;u) = 0$ and then \eqref{L2^4<C*L5^5} and \eqref{L5^5<L2^4} prove
    \begin{equation}
    \frac{1}{8} \left( \int_0^t u_1^2 \right)^2
    \leq C \int_0^t |u_1|^5 \leq 3 C \varepsilon \left( \int_0^t u_1^2 \right)^2
    \end{equation}
    which gives a contradiction when $\varepsilon$ is small enough, precisely $\varepsilon<\rho$, 
    $\varepsilon^2<\rho$ and $24 C \varepsilon <1$.
\end{proof}

\subsection{Interpolation inequalities}
\label{s:gagliardo-nirenberg}

We recall below the Gagliardo--Nirenberg interpolation inequalities (see \cite{gagliardo,nirenberg}) used in this article.

\begin{proposition} \label{thm:GN}
    Let $p, q, r, s \in [1,+\infty]$, $0 \leq j < l \in \N$ and $\alpha \in (0,1)$ such that
    \begin{equation} \label{eq:GNS-condition}
        \frac{j}{l} \leq \alpha
        \quad \text{and} \quad 
        \frac{1}{p}=j+\left( \frac{1}{r}-l\right)\alpha + \frac{1-\alpha}{q}.
    \end{equation}
    There exists $C > 0$ such that, for every $t > 0$ and $\phi \in \CC^\infty([0,t];\R)$,
    \begin{equation} \label{eq:GNS-estimate}
        \| D^j \phi \|_{L^p} \leq C \| D^l \phi \|_{L^r}^\alpha \|\phi\|_{L^q}^{1-\alpha}  + C t^{\frac{1}{p}-j-\frac{1}{s}} \|\phi\|_{L^s}.
    \end{equation}
\end{proposition}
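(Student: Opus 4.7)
My plan is to recognize the statement as the classical Gagliardo--Nirenberg interpolation inequality on a bounded interval, and to prove it by rescaling to the fixed interval $(0,1)$ where the standard form of the inequality can be invoked. The scaling condition \eqref{eq:GNS-condition} is exactly what makes this reduction work: it is the unique relation that forces the first term on the right-hand side of \eqref{eq:GNS-estimate} to be scale-invariant, so that all the $t$-dependence collapses into the constant $C$ after rescaling.

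The key step is the substitution $\psi(\tau) := \phi(t\tau)$ for $\tau \in [0,1]$, which satisfies $D^k\psi(\tau) = t^k(D^k\phi)(t\tau)$ and hence $\|D^k\psi\|_{L^p(0,1)} = t^{k-1/p}\|D^k\phi\|_{L^p(0,t)}$ for every $p \in [1,\infty]$ (with the convention $1/\infty := 0$). Applying the fixed-interval inequality
\begin{equation*}
    \|D^j\psi\|_{L^p(0,1)} \leq C\|D^l\psi\|_{L^r(0,1)}^\alpha \|\psi\|_{L^q(0,1)}^{1-\alpha} + C\|\psi\|_{L^s(0,1)}
\end{equation*}
to $\psi$, translating back, and dividing by $t^{j-1/p}$ produces \eqref{eq:GNS-estimate} with first-term $t$-exponent $(l-1/r)\alpha - (1-\alpha)/q - (j-1/p)$, which vanishes exactly when \eqref{eq:GNS-condition} holds, and second-term $t$-exponent $1/p - j - 1/s$, matching the claim.

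It therefore remains to justify the fixed-interval inequality, which is standard and due to Nirenberg. The usual argument extends $\psi$ to $\tilde\psi \in W^{l,r}(\R)$ via a bounded extension operator, applies the scale-invariant Gagliardo--Nirenberg inequality on $\R$, and then bounds all norms of $\tilde\psi$ on $\R$ by those of $\psi$ on $(0,1)$. The main subtlety---and the reason for the additive boundary term $C\|\psi\|_{L^s}$---is that on a bounded domain the $L^q$ norm of $\psi$ is not scale-equivalent to $\|\psi\|_{L^s}$, so the extension unavoidably introduces a lower-order loss which is precisely this additive term. I would either cite the classical reference for this fixed-interval form, or re-derive it via Nirenberg's induction on $l$ combined with a partition-of-unity reduction to half-space extensions; in either case the proof is routine, and the flexibility in the choice of $s$ is exactly what is exploited in the Sobolev-norm estimates of \cref{s:xi-B*-estimates}.
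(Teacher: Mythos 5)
Your proposal is correct and takes exactly the approach the paper has in mind: the paper's remark immediately following \cref{thm:GN} simply points to Nirenberg's fixed-interval inequality (\cite[item 5, p.\ 126]{nirenberg}) and says that the $t$-dependence of the constant is obtained ``by scaling arguments to work within a fixed domain, say $[0,1]$''. You have just written out the scaling computation in full detail, and the exponent bookkeeping (the first term's $t$-power vanishing under \eqref{eq:GNS-condition}, the second term's power being $\frac{1}{p}-j-\frac{1}{s}$) is correct.
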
 

\begin{remark}
    For functions on bounded intervals, adding the lower-order term in the right-hand side of~\eqref{eq:GNS-estimate} is mandatory (see \cite[item 5, p.\ 126]{nirenberg}).
    To obtain the dependency of the constant on $t > 0$, one uses scaling arguments to work within a fixed domain, say $[0,1]$.
\end{remark}

\subsection{A consequence of the Jacobi identity}
\label{s:jacobi}

The following straightforward consequences of the Jacobi identity will be useful to compute the expansion of brackets of two elements of $\Bs$ (see \cref{def:0nu} for the notation $0^{\nu}$).

\begin{lemma}
    Using the notation $0^\nu$ of \cref{def:0nu}, the following expansions hold.
    \begin{enumerate}
    \item For any $\nu \in \N$ and any $a,b \in \mathcal{L}(X)$,
        \begin{equation} \label{eq:jacobi.rtl}
            [a, b 0^\nu] = \sum_{\nu'=0}^{\nu} \binom{\nu}{\nu'} (-1)^{\nu'} [a 0^{\nu'}, b] 0^{\nu-\nu'}.
        \end{equation}
    \item For any $\nu \in \N^*$, there exist coefficients $\alpha^\nu_j \in \Z$ for $1 \leq 2j+1 \leq \nu$, such that, for any $b \in \mathcal{L}(X)$,
        \begin{equation} \label{eq:jacobi.balance}
            [b, b0^\nu] = \sum_{1 \leq 2j+1 \leq \nu} \alpha^\nu_j [b0^j, b0^{j+1}] 0^{\nu-2j-1}.
        \end{equation}
    \end{enumerate}
\end{lemma}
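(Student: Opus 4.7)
The plan is to deduce both identities from iterated application of the Jacobi identity, in the form $[a,[c,X_0]] = [[a,c],X_0] + [c,[a,X_0]]$, combined with elementary induction arguments, and using the antisymmetry of the bracket. Throughout, I will exploit that $[a, X_0] = a\,0^1$ by definition of the notation $0^\nu$.

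For part (i), I would proceed by induction on $\nu$, the case $\nu = 0$ being the tautology $[a,b]=[a,b]$. For the inductive step, writing $b\,0^{\nu+1} = [b\,0^\nu, X_0]$ and applying the Jacobi identity yields
\begin{equation*}
    [a, b\,0^{\nu+1}] = [a, b\,0^\nu]\,0^1 + [b\,0^\nu, [a, X_0]] = [a, b\,0^\nu]\,0^1 - [a\,0^1, b\,0^\nu].
\end{equation*}
Applying the induction hypothesis to each of the two summands and combining the coefficients of each $[a\,0^{\nu'}, b]\,0^{\nu+1-\nu'}$ through Pascal's identity $\binom{\nu+1}{\nu'} = \binom{\nu}{\nu'} + \binom{\nu}{\nu'-1}$ produces the claimed formula at index $\nu+1$.

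For part (ii), the right approach is to strengthen the statement before inducting: I would prove that, for every pair $i < j$ in $\mathbb{N}$, the bracket $[b\,0^i, b\,0^j]$ lies in the $\mathbb{Z}$-span of the \emph{adjacent} brackets $[b\,0^k, b\,0^{k+1}]\,0^{i+j-2k-1}$ with $0 \leq k$ and $2k+1 \leq i+j$. Specializing to $(i,j) = (0,\nu)$ then yields (ii). This enhanced claim is proved by strong induction on the gap $d := j - i \geq 1$. The case $d = 1$ is immediate. For $d = 2$, the Jacobi identity gives $[b\,0^i, b\,0^{i+2}] = [b\,0^i, b\,0^{i+1}]\,0^1 + [b\,0^{i+1}, b\,0^{i+1}]$, and the second summand vanishes by antisymmetry. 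For $d \geq 3$, Jacobi combined with antisymmetry produces
\begin{equation*}
    [b\,0^i, b\,0^j] = [b\,0^i, b\,0^{j-1}]\,0^1 - [b\,0^{i+1}, b\,0^{j-1}],
\end{equation*}
where both summands have gap strictly smaller than $d$, so the induction hypothesis applies.

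The main conceptual obstacle lies in part (ii), specifically in identifying the correct induction parameter: a naive induction on $\nu$ applied directly to the statement does not close, since expanding $[b, b\,0^\nu]$ via Jacobi immediately produces brackets $[b\,0^i, b\,0^j]$ with $i \geq 1$, which fall outside the scope of the original statement. Strengthening to arbitrary pairs $i < j$ and inducting on the gap $j-i$ neutralises this difficulty and reduces the argument to routine bookkeeping. The coefficients $\alpha^\nu_j$ then arise automatically as integers since every recursion step involves only additions, subtractions, and the application of antisymmetry.
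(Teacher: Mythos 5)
Your proof of part~(i) is correct and is the same as the paper's: induction on $\nu$, one application of the Jacobi identity to peel off the last $X_0$, and Pascal's rule to recombine. Your proof of part~(ii) is also correct, but takes a slightly different route. The paper inducts directly on $\nu$ and closes the induction by a strong induction of step two, exploiting the fact that the statement holds \emph{for all} $b$: at step $\nu$ one writes $[b,b0^\nu]=[b,b0^{\nu-1}]0^1-[b0^1,b0^{\nu-1}]$, applies the inductive hypothesis at level $\nu-1$ to the first term, and applies it at level $\nu-2$ \emph{with $b$ replaced by $b0^1$} to the second term, producing the recursion $\alpha_j^\nu=\alpha_j^{\nu-1}-\alpha_{j-1}^{\nu-2}$. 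You instead strengthen the claim to arbitrary pairs $[b0^i,b0^j]$ and induct on the gap $j-i$. Both work, and they encode the same underlying computation; your strengthened claim absorbs the ``replace $b$ by $b0^1$'' trick into the bookkeeping, at the cost of introducing an auxiliary statement, while the paper gets the explicit integer recursion for the $\alpha_j^\nu$ as a byproduct. One small correction: your diagnosis that ``a naive induction on $\nu$ applied directly to the statement does not close'' overstates the difficulty --- the naive induction on $\nu$ \emph{does} close once one uses the universality of the formula in $b$, which is exactly what the paper does; what you have found is a valid alternative to that move, not a workaround for an obstruction.
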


\begin{proof}
    The validity of \eqref{eq:jacobi.rtl} for any $a,b$ can be proved by induction on $\nu\in\N$, the heredity relies on the Jacobi identity and the binomial relation $\binom{\nu-1}{\nu'}+\binom{\nu-1}{\nu'-1}=\binom{\nu}{\nu'}$ for $\nu'=1,\dots,\nu-1$. 
    The validity of \eqref{eq:jacobi.balance} for any $b$ can be proved by induction on $\nu\in\N^*$; the Jacobi relation leads to $\alpha_j^\nu=\alpha_j^{\nu-1} - \alpha_{j-1}^{\nu-2}$.
\end{proof}

\section{Sussmann's and Stefani's obstructions}
\label{s:Suss-Stef}

The goal of this section is to give a new proof of \cref{Thm_Stefani}, within the framework of the unified approach proposed in this paper, as a consequence of the following more precise statement.

\begin{theorem} \label{thm:stefani-precise}
    Assume that \eqref{Stefani} does not hold. 
    Let $k \in \N^*$ such that
    \begin{equation} \label{eq:not-stefani}
        \ad_{f_1}^{2k}(f_0)(0) \notin S_{\intset{1, 2k-1}}(f)(0).
    \end{equation}
    Then system \eqref{syst} has a drift along $\ad_{f_1}^{2k}(f_0)(0)$, parallel to $S_{\intset{1, 2k-1}}(f)(0)$, as $(t,\|u\|_{W^{-1,\infty}}) \to 0$.
\end{theorem}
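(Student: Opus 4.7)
The plan is to instantiate the drift strategy of \cref{s:heuristic-conj-drift} with the bad bracket $\bb := \ad_{X_1}^{2k}(X_0)$ and the neutralizing set $\mathcal{N} := \Bs_{\intset{1, 2k-1}}$, so that $\mathcal{N}(f)(0) = S_{\intset{1, 2k-1}}(f)(0)$. An easy induction on $n$ using \cref{def:Hall} and the fact that $X_1 = M_0$ is minimal in $\Bs \setminus \{X_0\}$ shows that $\ad_{X_1}^n(X_0) \in \Bs$ for every $n \in \N^*$. Since $X_0$ is maximal in $\Bs$, \eqref{eq:def:coord2} applied with $b_1 = X_1$, $b_2 = X_0$, $m = 2k$ gives the positive-definite coordinate
\begin{equation*}
    \xi_\bb(t,u) = \frac{1}{(2k)!} \int_0^t u_1(s)^{2k} \dd s = \frac{1}{(2k)!} \|u_1\|_{L^{2k}(0,t)}^{2k} \geq 0.
\end{equation*}
Assumption \eqref{eq:not-stefani} then provides a linear form $\mathbb{P}$ which is a component along $f_\bb(0)$ parallel to $S_{\intset{1,2k-1}}(f)(0)$.

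Applying \cref{thm:Key_1} at order $M := 2k$ and composing with $\mathbb{P}$ (which annihilates $f_b(0)$ for $b \in \Bs_{\intset{1, 2k-1}}$) yields
\begin{equation*}
    \mathbb{P} x(t;u) = \sum_{b \in \Bs_{\{2k\}}} \eta_b(t,u)\,\mathbb{P} f_b(0) + O\bigl(\|u_1\|_{L^{2k+1}}^{2k+1} + |x(t;u)|^{1+1/(2k)}\bigr).
\end{equation*}
The Hölder bound $\|u_1\|_{L^{2k+1}}^{2k+1} \leq \|u_1\|_{L^\infty} \|u_1\|_{L^{2k}}^{2k} = (2k)!\,\|u\|_{W^{-1,\infty}}\,\xi_\bb(t,u)$ reduces the first remainder to an arbitrarily small multiple of $\xi_\bb$. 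The decisive algebraic step is that $\bb$ is the unique bracket of $\Bs$ with $n_1 = 2k$ and $n_0 = 1$: by the Hall condition and the maximality of $X_0$ in the order of \cref{def:order}, the lone $X_0$-leaf in such a bracket must sit at the innermost rightmost position of a left-combed $X_1$-tree, yielding $\bb$. Every other $b \in \Bs_{\{2k\}}$ therefore has $|b| \geq 2k + 2$, and \cref{p:xib-u1-Lk} combined with $\|u_1\|_{L^{2k}}^{2k} = (2k)!\,\xi_\bb$ produces $|\xi_b(t,u)| \lesssim t^{|b|-(2k+1)}\,\xi_\bb \leq t\,\xi_\bb$, summable thanks to \cref{thm:bracket.analytic} as in \cref{p:sum-xi-XI}.

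For the cross terms arising from \cref{Prop:ZM_Bstar}, any tuple $(b_1, \ldots, b_q)$ with $q \geq 2$ and BCH-support meeting $\Bs_{\{2k\}}$ satisfies $\sum_i h_i n_1(b_i) = 2k$, forcing $n_1(b_i) \leq 2k-1$ for every $i$. The Hölder interpolation $\|u_1\|_{L^p}^p \leq t^{1 - p/(2k)} \|u_1\|_{L^{2k}}^p$ (valid for $p \leq 2k$) combined with \cref{p:xib-u1-Lk} then bounds the product $\prod_i |\xi_{b_i}|^{h_i}$ by $C\,c^{\sum h_i |b_i|}\,t^{\sum h_i - 1}\,\xi_\bb$, again summable via \cref{thm:bracket.analytic} as in \cref{p:sum-cross-XI}. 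Altogether, $\mathbb{P} \cZ{2k}(t,f,u)(0) = \xi_\bb(t,u) + O\bigl((t + \|u\|_{W^{-1,\infty}})\,\xi_\bb(t,u)\bigr)$, and combining with the remainder yields, for every $\varepsilon > 0$ and $t, \|u\|_{W^{-1,\infty}}$ sufficiently small,
\begin{equation*}
    \mathbb{P} x(t;u) \geq (1 - \varepsilon)\,\xi_\bb(t,u) - C|x(t;u)|^{1 + 1/(2k)},
\end{equation*}
which is exactly the drift condition of \cref{Def:drift} with $\beta = 1 + 1/(2k) > 1$.

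The main obstacle is the algebraic identification of $\bb$ as the unique minimal-length element of $\Bs_{\{2k\}}$, a structural consequence of the Hall order on $\Bs$ and of the maximality of $X_0$; once this is in hand, both the main-term and cross-term analyses collapse to the single coercivity statement $\xi_\bb \sim \|u_1\|_{L^{2k}}^{2k}$ and a routine Hölder interpolation between $\|u_1\|_{L^{2k}}$ and $\|u_1\|_{L^\infty} = \|u\|_{W^{-1,\infty}}$, the uniform convergence of the $\Bs$-indexed series being guaranteed by the analytic bracket estimate \cref{thm:bracket.analytic}.
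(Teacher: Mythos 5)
There is a genuine gap in the handling of the cross terms. Your claim that $\prod_i |\xi_{b_i}|^{h_i}$ can be bounded by $C\,c^{\sum h_i |b_i|}\,t^{\sum h_i - 1}\,\xi_\bb$ via Hölder interpolation fails whenever some $b_i = X_1$: then $\xi_{b_i}(t,u) = u_1(t)$ is a \emph{pointwise} value, to which neither \cref{p:xib-u1-Lk} (which excludes $X_1$) nor the norm inequality $\|u_1\|_{L^p}^p \leq t^{1-p/(2k)}\|u_1\|_{L^{2k}}^p$ applies. This is not a technicality: take $b_1 = \ad_{X_1}^{2k-1}(X_0)$, $b_2 = X_1$, $q=2$, $h=(1,1)$. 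Since $[b_1,b_2] = -\bb$, this pair contributes $-\tfrac12\,\xi_{\ad_{X_1}^{2k-1}(X_0)}(t,u)\,u_1(t)$ to $\eta_\bb - \xi_\bb$. For a constant control $u\equiv A$ on $(0,t)$ one computes
\begin{equation*}
    \xi_\bb(t,u) = \frac{(At)^{2k}\,t}{(2k+1)!}, \qquad -\tfrac12\,\xi_{\ad_{X_1}^{2k-1}(X_0)}\,u_1(t) = -\frac{(At)^{2k}\,t}{2\,(2k)!} = -\frac{2k+1}{2}\,\xi_\bb(t,u),
\end{equation*}
so the cross term is a fixed (and larger than one) multiple of $\xi_\bb$, uniformly as $(t,\|u\|_{W^{-1,\infty}}) \to 0$. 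Your conclusion $\mathbb{P}\cZ{2k} = \xi_\bb + O((t+\|u\|_{W^{-1,\infty}})\xi_\bb)$ is therefore false, and without it the drift estimate does not follow.

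The paper closes exactly this gap with two steps you omitted. First the \emph{vectorial relation} (\cref{p:stefani-vectorial}): hypothesis \eqref{eq:not-stefani} forces $f_1(0)\neq 0$ (otherwise all brackets vanish at $0$ and there is nothing to obstruct). Second the resulting \emph{closed-loop estimate} (\cref{lem:stefani-loop}): since $f_1(0)\neq 0$, one has $|u_1(t)| = O(|x(t;u)| + \|u_1\|_{L^1})$, hence $|u_1(t)|^{2k} = O(|x(t;u)|^{2k} + t^{2k-1}\|u_1\|_{L^{2k}}^{2k})$ by Hölder. This is precisely what converts the problematic $u_1(t)$-factors into the $|x(t;u)|^\beta$ remainder allowed by \cref{Def:drift}. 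Your ``main obstacle'' is thus misidentified: the uniqueness of the minimal bracket in $\Bs_{\{2k\}}$ is indeed used, but it is uncontroversial; the genuinely nontrivial steps are the vectorial relation and the closed-loop estimate, which are the mechanism by which the non-negligible $X_1$-cross-terms are absorbed.
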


\subsection{Dominant part of the logarithm}

\begin{lemma}
    Let $k \in \N^*$ such that \eqref{eq:not-stefani} holds.
    Let $\mathbb{P}$ be a component along $\ad_{f_1}^{2k}(f_0)(0)$, parallel to $S_{\intset{1,2k-1}}(f)(0)$.
    Then
    \begin{equation} \label{eq:stefani-z2k}
        \mathbb{P} \cZ{2k}(t,f,u)(0) = \xi_{\ad^{2k}_{X_1}(X_0)}(t,u)
        + O\left(|u_1(t)|^{2k} + t^{\frac{1}{2k-1}} \|u_1\|_{L^{2k}}^{2k}\right).
    \end{equation}
\end{lemma}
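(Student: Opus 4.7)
The plan is to invoke the black-box estimate \cref{p:PZM-XI} with the parameters $M := 2k$, $\bb := \ad^{2k}_{X_1}(X_0) \in \Bs$, $\mathcal{N} := \Bs_{\intset{1, 2k-1}}$, and the auxiliary functional
\begin{equation*}
    \Xi(t,u) := |u_1(t)|^{2k} + t^{1/(2k-1)} \|u_1\|_{L^{2k}}^{2k},
\end{equation*}
which is trivially $O(1)$. With these choices, $\mathcal{N}(f)(0) = S_{\intset{1, 2k-1}}(f)(0)$, and a direct application of \eqref{eq:def:coord2} to the Hall decomposition of $\bb$ (in which $b_1 = X_1 < X_0 = b_2$ and $m = 2k$ is maximal since $X_0$ is a leaf) yields $\xi_\bb(t,u) = \tfrac{1}{(2k)!}\|u_1\|_{L^{2k}}^{2k}$. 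Hypothesis \eqref{eq:not-stefani} gives $\mathbb{P} f_\bb(0) = 1$, so the conclusion \eqref{eq:PZM-xibb-OXi} of \cref{p:PZM-XI} will translate exactly into the desired identity \eqref{eq:stefani-z2k}, provided one verifies its two hypotheses.

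The main-term hypothesis reduces to a length bound. I would first prove, by structural induction on $\Bs$, that the only germs with $n_0 = 1$ are the brackets $\ad^m_{X_1}(X_0)$ for $m \geq 1$: the definition of a germ forces both factors of a non-trivial germ to lie in $\Bs \setminus \{X_0\}$, and $X_1$ is the only element of $\Bs$ with $n_0 = 0$, so the induction propagates. Hence $\bb$ is the unique bracket in $\Bs_{\{2k\}}$ of length $2k + 1$, and every other bracket $b \in \Bs_{\{2k\}} \setminus \{\bb\}$ satisfies $|b| \geq 2k + 2$. Combining \cref{p:xib-u1-Lk} with the trivial lower bound $\Xi \geq t^{1/(2k-1)} \|u_1\|_{L^{2k}}^{2k}$ then gives \eqref{eq:xib-XI-L} with $\sigma := 2k + 2 \leq |b|$.

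For the cross-term hypothesis, any relevant tuple $b_1 \geq \dotsb \geq b_q \in \Bs \setminus \{X_0\}$ with $q \geq 2$ must satisfy $\sum k_i = 2k$, where $k_i := n_1(b_i)$; in particular each $k_i \leq 2k - 1$. I would set $\alpha_i := k_i/(2k)$, so that $\sum \alpha_i = 1$. For each $b_i \neq X_1$, the preceding minimality argument gives $|b_i| \geq k_i + 1$, and combining \cref{p:xib-u1-Lk} with the Hölder estimate $\|u_1\|_{L^{k_i}}^{k_i} \leq t^{1 - k_i/(2k)} \|u_1\|_{L^{2k}}^{k_i}$ and $\|u_1\|_{L^{2k}}^{k_i} \leq t^{-\alpha_i/(2k-1)} \Xi^{\alpha_i}$ yields \eqref{eq:xib-othercross-XI-L} with
\begin{equation*}
    \sigma_i := \tfrac{2k \, k_i}{2k-1} = k_i + \tfrac{k_i}{2k-1} \leq k_i + 1 \leq |b_i|.
\end{equation*}

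The main obstacle is the special case $b_i = X_1$: applied naively, \cref{p:xib-u1-Lk} would demand $\sigma_i = 2k/(2k-1) > 1 = |b_i|$ and violate the length constraint. This is precisely why the term $|u_1(t)|^{2k}$ is present in $\Xi$: the pointwise bound $|\xi_{X_1}(t,u)| = |u_1(t)| \leq \Xi(t,u)^{1/(2k)}$ directly gives \eqref{eq:xib-othercross-XI-L} with $\sigma_i = 1 = |b_i|$ and $\alpha_i = 1/(2k)$, preserving $\sum \alpha_i = 1$ and circumventing the issue.
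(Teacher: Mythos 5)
Your proof is correct and takes essentially the same approach as the paper's: both apply \cref{p:PZM-XI} with the same $M$, $\bb$, and $\mathcal{N}$, reduce the main-term hypothesis to the length bound $|b|\geq 2k+2$, and verify the cross-term hypothesis via \cref{p:xib-u1-Lk}, H\"older's inequality, and the exponents $\alpha_i = n_1(b_i)/(2k)$, with $b_i=X_1$ singled out as the only case requiring the $|u_1(t)|^{2k}$ part of $\Xi$. The only differences are cosmetic bookkeeping: you fix a single unified $\Xi$ upfront rather than naming it piecewise, you take the non-integer $\sigma_i = 2kk_i/(2k-1)$ where the paper uses $\sigma_i = 1+n_1(b_i)$ (both lie below $|b_i|$ and $L$), and you spell out the germ-structure argument for the uniqueness of $\bb$ among brackets with $n_1=2k$, $n_0=1$, which the paper takes for granted.
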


\begin{proof}
    We intend to apply \cref{p:PZM-XI} with $M \gets 2k$, $L \gets 2k+2$, $\bb \gets \ad^{2k}_{X_1}(X_0)$ and $\mathcal{N} \gets \Bs_{\intset{1,2k-1}}$, so that \eqref{eq:stefani-z2k} will follow from \eqref{eq:PZM-xibb-OXi}, for the appropriate choice of $\Xi(t,u)$.
    Let us check that the required estimates are satisfied.
    
    \step{Estimates of other coordinates of the second kind}
    Let $b \in \Bs_{\intset{1,2k}}$ such that $b \notin \mathcal{N} \cup \{ \bb \}$.

    Since $\mathcal{N} = \Bs_{\intset{1,2k-1}}$, one has $n_1(b) = 2k$ and $n_0(b) \geq 2$.
    Hence $|b| \geq 2k+2$.
    By \eqref{eq:xib-u1-Lk} of \cref{p:xib-u1-Lk}, estimate \eqref{eq:xib-XI-L} holds with $\sigma = 2k+2$ and $\Xi(t,u) = t \|u_1\|_{L^{2k}}^{2k}$.
    
    \step{Estimates of cross terms}
    Let $q \geq 2$, $b_1 \geq \dotsb \geq b_q \in \Bs \setminus \{ X_0 \}$ such that $n_1(b_1) + \dotsb + n_1(b_q) \leq 2k$ and $\supp \mathcal{F}(b_1, \dotsc, b_q) \not \subset \mathcal{N}$.

    For each $i \in \intset{1,q}$, 
    \begin{itemize}
        \item if $b_i = X_1$, then
        \begin{equation}
            |\xi_{b_i}(t,u)| = |u_1(t)|,
        \end{equation}
        so \eqref{eq:xib-othercross-XI-L} holds with $\sigma_i = 1$ and $\alpha_i = 1/(2k) = n_1(b_i) / 2k$ and $\Xi(t,u) = |u_1(t)|^{2k}$.
        
        \item otherwise, $|b_i| \geq 1 + n_1(b_i)$ and, by \eqref{eq:xib-u1-Lk} of \cref{p:xib-u1-Lk} and Hölder's inequality,
        \begin{equation} \label{eq:stefani-xi-bi-u1-Lk}
            | \xi_{b_i}(t,u) | 
            \leq \frac{(ct)^{|b_i|}}{|b_i|!}
            t^{-1-n_1(b_i)}
            \|u_1\|_{L^{n_1(b_i)}}^{n_1(b_i)}
            \leq 
            \frac{(ct)^{|b_i|}}{|b_i|!}
            t^{-\sigma_i}
            \left( t^{\frac{1}{\alpha_i}-1} \| u_1 \|_{L^{2k}}^{2k} \right)^{\alpha_i}
        \end{equation}
        with $\sigma_i = 1 + n_1(b_i)$ and $\alpha_i = n_1(b_i) / (2k)$.
        Since $q \geq 2$, $n_1(b_i) \leq 2k-1$.
        Thus $\frac{1}{\alpha_i}-1 \geq \frac{1}{2k-1}$ and, assuming $t \leq 1$,
        \begin{equation}
            t^{\frac{1}{\alpha_i}-1} \|u_1\|_{L^{2k}}^{2k} \leq t^{\frac{1}{2k-1}} \|u_1\|_{L^{2k}}^{2k},
        \end{equation}
        so \eqref{eq:xib-othercross-XI-L} holds with $\Xi(t,u) = t^{\frac1{2k-1}} \|u_1\|_{L^{2k}}^{2k}$.
    \end{itemize}
    Since $\mathcal{N} = \Bs_{\intset{1,2k-1}}$, one has $n_1(b_1) + \dotsb + n_1(b_q) = 2k$.
    Hence $\alpha = \alpha_1 + \dotsb + \alpha_q = 1$.
\end{proof}

\subsection{Vectorial relation}

\begin{lemma} \label{p:stefani-vectorial}
    Let $k\in\N^*$ such that \eqref{eq:not-stefani} holds.
    Then, $f_1(0) \neq 0$.
\end{lemma}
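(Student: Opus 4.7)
The plan is to prove the contrapositive: assume $f_1(0) = 0$ and deduce that $\ad_{f_1}^{2k}(f_0)(0) = 0$, which clearly lies in $S_{\intset{1,2k-1}}(f)(0)$, contradicting \eqref{eq:not-stefani}.

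The key observation is that we already assume $f_0(0) = 0$ throughout the paper. If moreover $f_1(0) = 0$, then both generating vector fields vanish at the origin. I would show by induction on the length $|b|$ that every iterated bracket $b \in \Br(X)$ satisfies $f_b(0) = 0$. The base case $b \in X$ is the hypothesis. For the inductive step, if $b = (b_1, b_2)$ with $f_{b_1}(0) = f_{b_2}(0) = 0$, the formula
\begin{equation}
    f_b(0) = [f_{b_1}, f_{b_2}](0) = (Df_{b_2})(0) f_{b_1}(0) - (Df_{b_1})(0) f_{b_2}(0) = 0
\end{equation}
gives the conclusion. In particular, $\ad_{f_1}^{2k}(f_0)(0) = 0$, which belongs trivially to the linear subspace $S_{\intset{1,2k-1}}(f)(0)$.

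There is essentially no technical obstacle here: the argument is a two-line induction that relies only on the definition of the Lie bracket of vector fields and the standing assumption $f_0(0) = 0$. The only thing to be careful about is to frame the statement as a contraposition so that the trivial inclusion $0 \in S_{\intset{1,2k-1}}(f)(0)$ directly contradicts \eqref{eq:not-stefani}.
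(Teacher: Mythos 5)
Your proof is correct and follows essentially the same approach as the paper, which likewise argues by contradiction from $f_1(0)=0$ and $f_0(0)=0$ to conclude that all iterated Lie brackets vanish at $0$. The only difference is that you spell out the short induction on bracket length, whereas the paper simply asserts the vanishing; this is a cosmetic elaboration of the same argument.
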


\begin{proof}
    By contradiction, if $f_1(0) = 0$, since $f_0(0) = 0$, all iterated Lie brackets of $f_0$ and $f_1$ vanish so $\ad^{2k}_{f_1}(f_0)(0) = 0 \in S_{\intset{1,2k-1}}(f)(0) = \{0\}$.
\end{proof}

\subsection{Closed-loop estimate}

\begin{lemma} \label{lem:stefani-loop}
    Assume that $f_1(0) \neq 0$.
    Then,
    \begin{equation} \label{eq:stefani-loop}
        |u_1(t)| = O\left(|x(t;u)| + \|u_1\|_{L^1}\right).
    \end{equation}
\end{lemma}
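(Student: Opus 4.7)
The plan is to apply the representation formula \eqref{eq:x=ZM+O} at order $M=1$, project onto a direction picking out $f_1(0)$, and show that every term other than $u_1(t)$ is absorbed by the right-hand side of \eqref{eq:stefani-loop}.

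Since $f_1(0)\neq 0$, I can pick a linear form $\mathbb{P}\colon\R^d\to\R$ with $\mathbb{P} f_1(0)=1$ (no parallel subspace is needed here, so this is not a ``component'' in the sense of \cref{Def:drift}, merely a continuous extractor of the $f_1(0)$ coordinate). By \eqref{Bstar_S1}, $\Bs_1 = \{M_\nu\}_{\nu\in\N}$, hence $\mathcal{Z}_{\intset{1,1}}(t,f,u)(0)=\sum_{\nu\geq 0}\eta_{M_\nu}(t,u)\,f_{M_\nu}(0)$. A first key simplification is algebraic: since $n_1(M_\nu)=1$ and every $b_i\in\Bs\setminus\{X_0\}$ appearing in \eqref{eq:etab-xib-exact} satisfies $n_1(b_i)\geq 1$, any cross term with $q\geq 2$ has $n_1\geq 2$ and therefore cannot contribute to a coordinate of an element of $\Bs_1$. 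Consequently $\eta_{M_\nu}=\xi_{M_\nu}=u_{\nu+1}(t)$ by \eqref{xi_S1}.

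Applying $\mathbb{P}$ and isolating the $\nu=0$ contribution then yields
\begin{equation*}
    u_1(t) = \mathbb{P} x(t;u) - \sum_{\nu\geq 1} u_{\nu+1}(t)\,\mathbb{P} f_{M_\nu}(0) + O\bigl(\|u_1\|_{L^2}^2 + |x(t;u)|^2\bigr).
\end{equation*}
For the tail, a straightforward iteration of $u_{\nu+1}(t)=\int_0^t u_\nu$ gives $|u_{\nu+1}(t)|\leq \tfrac{t^{\nu-1}}{(\nu-1)!}\|u_1\|_{L^1}$ for every $\nu\geq 1$; combined with the analytic estimate \eqref{eq:bracket.analytic} on $f_{M_\nu}$, this shows that the series converges (one may alternatively invoke \cref{p:sum-xi-XI} applied to $\mathcal{E}=\{M_\nu\}_{\nu\geq 1}$ with exponent $\sigma=1$ and $\Xi(t,u)=\|u_1\|_{L^1}$) and its sum is $O(\|u_1\|_{L^1})$ as $t\to 0$.

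It remains to handle the two remainder terms of \eqref{eq:x=ZM+O}. The first is absorbed via $\|u_1\|_{L^2}^2 \leq \|u_1\|_{L^\infty}\|u_1\|_{L^1} = O(\|u_1\|_{L^1})$, using that $\|u_1\|_{L^\infty}=\|u\|_{W^{-1,\infty}}\to 0$ in the implicit $O$-regime of \cref{s:O}; the second satisfies $|x(t;u)|^2=O(|x(t;u)|)$ for the same reason. Putting everything together gives $|u_1(t)|\leq|\mathbb{P} x(t;u)|+O(\|u_1\|_{L^1}+|x(t;u)|)$, which is exactly \eqref{eq:stefani-loop}. The only point deserving care is the uniform summability of the $\nu\geq 1$ tail, but this is a direct consequence of the analytic machinery of \cref{s:analytic}, so no real obstacle arises.
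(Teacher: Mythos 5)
Your proof is correct and follows essentially the same route as the paper's, which instead packages the tail and cross-term estimates via \cref{p:PZM-XI} (applied with $M\gets1$, $L\gets2$, $\bb\gets X_1$, $\mathcal{N}\gets\emptyset$) and explicitly cites \cref{p:small-state} to justify $|x(t;u)|^2=O(|x(t;u)|)$, a point you pass over rather quickly. One small slip in your aside: invoking \cref{p:sum-xi-XI} with $\Xi(t,u)=\|u_1\|_{L^1}$ requires $\sigma=2$, not $\sigma=1$, since matching $|u_{\nu+1}(t)|\leq\frac{t^{\nu-1}}{(\nu-1)!}\|u_1\|_{L^1}$ against the profile $\frac{(ct)^{|b|}}{|b|!}t^{-\sigma}\Xi(t,u)$ with $|b|=\nu+1$ forces $t^{\sigma-2}$ to stay bounded as $t\to0$.
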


\begin{proof}
    This estimate is proved in \cite[Proposition 162]{P1}.
    For the sake of self-containedness, and as an illustration of the approach used in the following sections, let us give another proof.
    
    Let $\mathbb{P}$ be a component along $f_1(0)$, parallel to the null vector space $\{ 0 \}$.
    By \cref{p:PZM-XI} with $M \gets 1$, $L \gets 2$, $\bb \gets X_1$ and $\mathcal{N} \gets \emptyset$, \eqref{eq:PZM-xibb-OXi} entails that
    \begin{equation} \label{eq:pz1}
        \mathbb{P} \mathcal{Z}_1(t,f,u)(0) = u_1(t) + O(\|u_1\|_{L^1}).
    \end{equation}
    Indeed, on the one hand, for every $b \in \Bs_1 \setminus \{ X_1 \}$, by \eqref{bound-xiMj/J0} with $(p,j_0) \gets (1,1)$, one has $|b| \geq 2$ and 
    \begin{equation}
        |\xi_b(t,u)| \leq \frac{(ct)^{|b|}}{|b|!} t^{-2} \|u_1\|_{L^1},
    \end{equation}
    so \eqref{eq:xib-XI-L} holds with $\sigma = 2$ and $\Xi(t,u) = \|u_1\|_{L^1}$.
    On the other hand, we don't need to estimate any cross terms because, when $q \geq 2$ and $b_1, \dotsc, b_q \in \Bs\setminus\{X_0\}$, $n_1(b_1) + \dotsb + n_1(b_q) > 1$.
    
    By \cref{thm:Key_1} with $M \gets 1$,
    \begin{equation} \label{eq:xz1}
        x(t;u) = \mathcal{Z}_1(t,f,u)(0) + O\left(\|u_1\|_{L^2}^2+|x(t;u)|^2\right).
    \end{equation}
    Then \eqref{eq:stefani-loop} follows from \eqref{eq:pz1}, \eqref{eq:xz1} and the small-state estimate of \cref{p:small-state}.
\end{proof}

\subsection{Interpolation inequality}

\begin{lemma}
    For $t > 0$ and $u \in \lone$,
    \begin{equation} \label{eq:stefani-interpol}
        \| u_1 \|_{L^{2k+1}}^{2k+1} 
        \leq \| u_1 \|_{L^\infty} \| u_1 \|_{L^{2k}}^{2k}.
    \end{equation}
\end{lemma}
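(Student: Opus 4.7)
The claimed inequality is a pointwise-to-integral bound, so no cleverness is required. My plan is simply to factor one power of $|u_1|$ out of the $L^{2k+1}$ integrand and replace it by its $L^\infty$ norm.

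More precisely, I would write, for each $s \in (0,t)$,
\begin{equation}
    |u_1(s)|^{2k+1} = |u_1(s)| \cdot |u_1(s)|^{2k} \leq \| u_1 \|_{L^\infty} \, |u_1(s)|^{2k},
\end{equation}
and then integrate both sides over $(0,t)$. The right-hand side becomes $\|u_1\|_{L^\infty} \|u_1\|_{L^{2k}}^{2k}$ and the left-hand side is exactly $\|u_1\|_{L^{2k+1}}^{2k+1}$, which gives \eqref{eq:stefani-interpol}.

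There is no obstacle: this is an elementary Hölder-type bound, independent of any of the previous machinery, and it handles the mixed $L^\infty$--$L^{2k}$ remainder appearing in \eqref{eq:stefani-z2k} and \eqref{eq:stefani-loop} so that it can be absorbed against $\xi_{\ad^{2k}_{X_1}(X_0)}(t,u)$ in the drift argument.
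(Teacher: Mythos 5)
Your proof is correct and is the standard one-line argument the authors surely have in mind (they state the lemma without proof): bound $|u_1|^{2k+1} = |u_1|\cdot|u_1|^{2k}$ pointwise by $\|u_1\|_{L^\infty}|u_1|^{2k}$ and integrate over $(0,t)$. Nothing further is needed.
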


\subsection{Proof of the presence of the drift}
\label{s:stefani-proof}

\begin{proof}[Proof of \cref{thm:stefani-precise}]
    Let $\mathbb{P}$ be a component along $\ad_{f_1}^{2k}(f_0)(0)$ parallel to $S_{\intset{1,2k-1}}(f)(0)$.
    By \cref{thm:Key_1},
    \begin{equation}
        x(t;u) = \cZ{2k}(t,f,u)(0) + O\left( \|u_1\|_{L^{2k+1}}^{2k+1} + |x(t;u)|^{1+\frac{1}{2k}} \right)
    \end{equation}
    and, by \eqref{eq:stefani-z2k} and \eqref{eq:def:coord2},
    \begin{equation}
        \mathbb{P} \cZ{2k}(t,f,u)(0)
        = \frac{1}{(2k)!} \int_0^t u_1^{2k}
        + O\left(|u_1(t)|^{2k} + t^{\frac{1}{2k-1}} \|u_1\|_{L^{2k}}^{2k}\right).
    \end{equation}
    Moreover, by the closed-loop estimate \eqref{eq:stefani-loop} and Hölder's inequality,
    \begin{equation}
        |u_1(t)|^{2k} = O\left(|x(t;u)|^{2k} + t^{2k-1} \|u_1\|_{L^{2k}}^{2k}\right).
    \end{equation}
    Gathering these equalities and \eqref{eq:stefani-interpol} yields
    \begin{equation}
        \mathbb{P} x(t;u) =
        \int_0^t \frac{u_1^{2k}}{(2k)!} + O \left( \left(t^{\frac{1}{2k-1}}+\|u_1\|_{L^\infty}\right) \int_0^t u_1^{2k} +|x(t;u)|^{1+\frac{1}{2k}} \right),
    \end{equation}
    which establishes the presence of drift, in the sense of \cref{Def:drift},  along $\ad_{f_1}^{2k}(f_0)(0)$, parallel to $S_{\intset{1, 2k-1}}(f)(0)$, as $(t,\|u\|_{W^{-1,\infty}}) \to 0$.
\end{proof}

\section{New loose quadratic obstructions, conjectured by Kawski}
\label{s:Quad}

We start by proving \cref{Thm:Kawski_Wm} when $m \in \N$, as a consequence of the following more precise statement (the proof of \cref{Thm:Kawski_Wm} for $m=-1$ will be done in \cref{s:m=-1}).
The case $k = 1$ is already covered by \cref{thm:stefani-precise} so we can assume without loss of generality in this section that $k \geq 2$ (hence $\pi(k,m) \geq 2$, by \eqref{eq:pikm}).

\begin{theorem} \label{Thm:Kawski_Wm_precis-pos}
    Let $m\in\N$ and $k \geq 2$. 
    We assume $k$ is the smallest integer for which
    \begin{equation} \label{eq:Wk-Spikm}
         f_{W_k}(0)\notin S_{\llbracket 1 , \pi(k,m) \rrbracket \setminus\{2\}}(f)(0),
    \end{equation}
    where $\pi(k,m)$ is defined in \eqref{eq:pikm}.
    Then system \eqref{syst} has a drift along $f_{W_k}(0)$, parallel to 
    $S_{\intset{1,\pi(k,m)}\setminus\{2\}}(f)(0)$, 
    as $(t,t^{-\alpha} \|u\|_{W^{m,\infty}}) \rightarrow 0$
    where $\alpha=\frac{\pi(k,0)-\pi(k,m)}{\pi(k,m)-1}$.
\end{theorem}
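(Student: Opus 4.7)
The approach is the one outlined in \cref{s:heuristic-conj-drift}. I set $\bb := W_k$, $M := \pi(k,m)$, and $\mathcal{N} := \Bs_{\intset{1,\pi(k,m)} \setminus \{2\}}$, so that $\Bs_{\intset{1,M}} \setminus (\mathcal{N} \cup \{W_k\})$ consists precisely of the quadratic brackets $W_{j,\nu}$ with $(j,\nu) \neq (k,0)$. By hypothesis \eqref{eq:Wk-Spikm}, fix a component $\mathbb{P}$ along $f_{W_k}(0)$ parallel to $\mathcal{N}(f)(0)$; since $\xi_{W_k}(t,u) = \tfrac{1}{2}\|u_k\|_{L^2}^2 \geq 0$, \cref{lem:drift-stlc} reduces the proof to establishing the drift estimate \eqref{eq:def-drift}. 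Applying \cref{thm:Key_1} and projecting gives
\begin{equation*}
    \mathbb{P} x(t;u) = \mathbb{P}\cZ{M}(t,f,u)(0) + O\bigl(\|u_1\|_{L^{M+1}}^{M+1} + |x(t;u)|^{1+1/M}\bigr),
\end{equation*}
and I then process $\mathbb{P}\cZ{M}(t,f,u)(0)$ through \cref{p:PZM-XI}.

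The $W_{j,\nu}$ split into three regimes. For $j \geq k$ with $(j,\nu) \neq (k,0)$, iterated Poincar\'e yields $|\xi_{W_{j,\nu}}| \leq (t^{\nu+2(j-k)}/\nu!)\,\xi_{W_k} = o(\xi_{W_k})$. For $j < k$ and $\nu = 0$, the minimality of $k$ gives $f_{W_j}(0) \in S_{\intset{1,\pi(j,m)}\setminus\{2\}}(f)(0) \subset \mathcal{N}(f)(0)$, so $\mathbb{P} f_{W_j}(0) = 0$. For $j < k$ and $\nu \geq 1$---the most delicate subcase---I bound $\xi_{W_{j,\nu}} \leq (t^\nu/2\nu!)\|u_j\|_{L^2}^2$, interpolate $u_j = u_k^{(k-j)}$ between $u_k \in L^2$ and $u^{(m)} \in L^\infty$ via \cref{thm:GN}, and apply Young's inequality to split the result as $\varepsilon \xi_{W_k}$ plus a pure power of $t$ times a power of $\|u\|_{W^{m,\infty}}$, absorbed by the scaling $t^{-\alpha}\|u\|_{W^{m,\infty}} \to 0$. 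The cross terms $(\eta_b - \xi_b)$ are controlled by \cref{p:sum-cross-XI} with input estimates from \cref{p:xi-bounds,p:xib-u1-Lk}.

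The Magnus remainder is absorbed using the crucial interpolation
\begin{equation*}
    \|u_1\|_{L^{M+1}}^{M+1} \lesssim \|u\|_{W^{m,\infty}}^{M-1}\|u_k\|_{L^2}^2 + (\text{lower-order time corrections}),
\end{equation*}
obtained by applying \cref{thm:GN} to $u_k$ with indices $(j,l,p,r,q) = (k-1,k+m,M+1,\infty,2)$; this Gagliardo--Nirenberg condition is satisfied precisely when $M \geq (2k+m-1)/(m+1)$, whose smallest integer solution is $M = \pi(k,m)$, which explains sharpness of the threshold \eqref{eq:pikm}. The exponent $\alpha = (\pi(k,0)-\pi(k,m))/(\pi(k,m)-1)$ in the statement is calibrated precisely so that the residual $t$-factors appearing both in this lower-order correction and in the $j<k,\nu\geq 1$ analysis collapse into $o(\xi_{W_k})$ under $(t, t^{-\alpha}\|u\|_{W^{m,\infty}}) \to 0$. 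Combining everything yields $\mathbb{P} x(t;u) \geq (1-\varepsilon)\xi_{W_k}(t,u) - C|x(t;u)|^{1+1/M}$, i.e.\ the required drift with $\beta = 1+1/M > 1$. The main obstacle is the subcase $W_{j,\nu}$ with $j<k$ and $\nu \geq 1$: minimality of $k$ only kills the $\nu = 0$ vector, and no closed-loop estimate in the spirit of \cref{lem:stefani-loop} controls the quadratic functional $\int_0^t u_j^2$. Smallness must instead be extracted from the interplay between the Gagliardo--Nirenberg interpolation exponents and the scaling assumption, which is exactly what pins down the value of $\alpha$.
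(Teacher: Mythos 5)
Your proposal contains a genuine gap, and it is exactly at the point you flagged as "the most delicate subcase" — the brackets $W_{j,\nu}$ with $j<k$ and $\nu\geq 1$. The paper handles these by a purely algebraic observation that you missed, and your proposed analytic substitute cannot work.

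\textbf{The correct mechanism.} You write that minimality of $k$ "only kills the $\nu=0$ vector." That is false. The space $S_{\intset{1,\pi(k,m)}\setminus\{2\}}(f)(0)$ is stable under left multiplication by $H_0 := f_0'(0)$, because $f_{(b,X_0)}(0) = H_0 f_b(0)$ for any bracket $b$, and the set $S_{\intset{1,\pi(k,m)}\setminus\{2\}}(X)$ is closed under right-bracketing with $X_0$. Hence once minimality gives $f_{W_{j,0}}(0) \in S_{\intset{1,\pi(k,m)}\setminus\{2\}}(f)(0)$ for $j<k$, one automatically has $f_{W_{j,\nu}}(0) = H_0^\nu f_{W_{j,0}}(0) \in S_{\intset{1,\pi(k,m)}\setminus\{2\}}(f)(0)$ for every $\nu\geq 0$. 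The paper therefore enlarges $\mathcal{N}$ to include $\{W_{j,\nu} ; j<k, \nu\geq 0\}$ before invoking \cref{p:PZM-XI}; these brackets are annihilated by $\mathbb{P}$ and require no estimate whatsoever.

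\textbf{Why your substitute fails.} Your plan is to bound $\xi_{W_{j,\nu}}\lesssim t^\nu\|u_j\|_{L^2}^2$, interpolate $\|u_j\|_{L^2}$ between $\|u_k\|_{L^2}$ and $\|u^{(m)}\|_{L^\infty}$, and then use Young to peel off an $\varepsilon\xi_{W_k}$ plus "a pure power of $t$ times a power of $\|u\|_{W^{m,\infty}}$." The obstruction is that \cref{Def:drift} only tolerates error terms of the form $\varepsilon\xi_\bb$ or $C|x(t;u)|^\beta$; a leftover additive term of the type $C_\varepsilon\, t^\nu\|u\|_{W^{m,\infty}}^2$ is neither, and the scaling $t^{-\alpha}\|u\|_{W^{m,\infty}}\to 0$ only makes it tend to zero — it does not compare it to $\xi_{W_k}(t,u)=\tfrac12\|u_k\|_{L^2}^2$, which may itself vanish arbitrarily fast. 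Since $j<k$, the norm $\|u_j\|_{L^2}$ is in fact \emph{stronger} than $\|u_k\|_{L^2}$, so there is no hope of obtaining an $\varepsilon\|u_k\|_{L^2}^2$ bound purely analytically; the algebraic cancellation is genuinely necessary.

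\textbf{A second omission.} After applying \cref{p:PZM-XI}, the cross-term contributions produce boundary values $|(u_1,\dots,u_k)(t)|^2$ (via $\xi_{M_j}(t,u)=u_{j+1}(t)$). These cannot be controlled by Lebesgue norms of the control; they require the closed-loop estimate of \cref{p:quad-loop}, which converts $|(u_1,\dots,u_k)(t)|$ into $|x(t;u)|$ plus absorbable quantities, and which in turn rests on the vectorial relations of \cref{p:quad-vectors} (linear independence of $f_{M_0}(0),\dots,f_{M_{k-1}}(0)$). Your proposal never mentions these boundary terms, and your closing remark that "no closed-loop estimate controls $\int u_j^2$" suggests you believed the closed-loop step irrelevant here — it is in fact essential, just aimed at boundary values rather than integrals. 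The interpolation step for the Magnus remainder $\|u_1\|_{L^{M+1}}^{M+1}$ is broadly along the right lines and close to the paper's \eqref{eq:quad-interpol}, though the Gagliardo--Nirenberg indices you quote do not match the paper's exact choice $p=(2m+2k)/(m+1)$.
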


\begin{remark} \label{rk:m>0-stlc}
    When $m>0$, the smallness assumption on the control in \cref{Thm:Kawski_Wm_precis-pos} depends on~$t$. 
    See \cref{rk:m=0} for a comment on the source of this dependency.
    For example, with $k = 2$ and $m = 1$, one gets a limit of the form $(t, t^{-1} \|u\|_{W^{1,\infty}}) \to 0$.
    One checks that \cref{lem:drift-stlc} still holds for such limits, so that \cref{Thm:Kawski_Wm_precis-pos} indeed denies $W^{m,\infty}$-STLC in the sense of \cref{Def:WmSTLC}.
    This time-dependency is not technical (see \cite[Section 2.4.4]{JDE} for a counter-example).
\end{remark}

\subsection{A previous result on a prototype example} \label{Subsec:Ex_Wk}

In \cite[System (32)]{zbMATH04154295}, Kawski considers the system
\begin{equation} \label{Ex:WkvsX1mX0}
    \begin{cases}
        \dot{x}_1=u \\
        \dot{x}_2=x_1 \\
        \dots \\
        \dot{x}_{k}=x_{k-1}\\
        \dot{x}_{k+1}=x_{k}^2-\lambda x_1^p,
    \end{cases}
\end{equation}
where $\lambda>0$. 
Written in the form \eqref{syst}, this system satisfies
\begin{equation}
    f_{M_{j-1}}(0)=e_j \text{ for } j \in \intset{1,k}, \quad f_{W_{k}}(0)=2 e_{k+1}, \quad 
    f_{\ad_{X_1}^p(X_0)}(0)=-\lambda p! e_{k+1}
\end{equation}
and $f_b(0)=0$ for any other $b \in \Bs$.
In \cite[Proposition 5.1]{zbMATH04154295}, Kawski proves that, if $p\geq 2^{k+1}$ then the system \eqref{Ex:WkvsX1mX0} is not $L^\infty$-STLC. 
This result can be recovered by applying \cref{Thm:Kawski_Wm_precis-pos} to system \eqref{Ex:WkvsX1mX0} with $m \gets 0$. 
Indeed, $p \geq 2^{k+1} > 2k-1=\pi(k,0)$.

With respect to this previous result, \cref{Thm:Kawski_Wm_precis-pos} can be viewed as an improvement in the following directions:
\begin{itemize}
    \item any perturbation in $\Bs_{\llbracket p, \infty \llbracket}$ is allowed (not only $\ad_{X_1}^p(X_0)$),
    \item as correctly conjectured in \cite[section 2.4, p.\ 63]{MR2635388}, the critical threshold for $L^\infty$-STLC is proved to be $2k-1$ (instead of $2^{k+1}-1$ obtained in \cite[Proposition 5.1]{zbMATH04154295}),
    \item other regularity scales $W^{m,\infty}$ for $m > 0$ are included.
\end{itemize}

\subsection{Dominant part of the logarithm}
\label{subsec:DPL}

\begin{lemma} \label{Lem:DPL}
    Let $m\in\N$ and $k \geq 2$. 
    Assume that $k$ is the minimal value for which \eqref{eq:Wk-Spikm} holds. 
    Let $\mathbb{P}$ be a component along $f_{W_k}(0)$, parallel to $S_{\intset{1,\pi(k,m)} \setminus \{2\}}(f)(0)$.
    Then
    \begin{equation} \label{eq:quad-zpi}
        \mathbb{P} \cZ{\pi(k,m)}(t,f,u)(0) = \xi_{W_k}(t,u)
        + O\left( |(u_1,\dotsc,u_k)(t)|^2 + t \|u_k\|_{L^2}^2 \right).
    \end{equation}
\end{lemma}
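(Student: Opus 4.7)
I will apply \cref{p:PZM-XI} with $M = \pi(k,m)$, $\bb = W_k$, $\mathcal{N} = \Bs_{\intset{1,\pi(k,m)}\setminus\{2\}}$, and target functional
\begin{equation*}
    \Xi(t,u) := |(u_1,\dotsc,u_k)(t)|^2 + t\|u_k\|_{L^2}^2.
\end{equation*}
Assumption \eqref{eq:Wk-Spikm} is precisely the vectorial non-inclusion $f_{\bb}(0) \notin \mathcal{N}(f)(0)$ needed, so \eqref{eq:quad-zpi} will follow immediately from \eqref{eq:PZM-xibb-OXi}, provided I can verify the hypotheses of \cref{p:sum-xi-XI,p:sum-cross-XI} on $\mathcal{E}_1 := \{W_{j,\nu} : (j,\nu)\neq(k,0)\}$ and $\mathcal{E}_2 := \mathcal{E}_1 \cup \{W_k\} \subset \Bs_2$ respectively, with some uniform $L = 2k+2$.

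For the main-term estimate on $\mathcal{E}_1$, the brackets are the $W_{j,\nu}$, and I split according to whether $j \geq k$ or $j < k$. When $j \geq k$, one has $|b| = 2j+\nu+1 \geq 2k+2$ and applying \eqref{bound-xiWjnu/j0} with $(j_0,p)=(k,1)$ produces a factor $\|u_k\|_{L^2}^2$ that fits the framework with $\sigma = 2k+2$ and $\Xi$-piece $t\|u_k\|_{L^2}^2$. When $j < k$, the analogous application with $(j_0,p)=(j,\infty)$ yields the factor $t\|u_j\|_{L^\infty}^2$; iterating the identity $u_j(s) = u_j(t) - \int_s^t u_{j-1}$ reduces $\|u_j\|_{L^\infty}$ to a combination of the pointwise terms $|u_i(t)|$ for $i \leq j \leq k-1$ (which are precisely what $\Xi$ contains), plus residuals absorbable by positive powers of $t$.

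For the cross-term estimate on $\mathcal{E}_2$, since every element of $\mathcal{E}_2$ has $n_1 = 2$, the identity $\sum_i n_1(b_i) = 2$ with each $n_1(b_i) \geq 1$ forces $q = 2$ and $b_1, b_2 \in \Bs_1 = \{M_\nu\}$; the only products to control are thus $|\xi_{M_a}(t,u)\,\xi_{M_{a'}}(t,u)| = |u_{a+1}(t)\,u_{a'+1}(t)|$. When $\max(a+1,a'+1) \leq k$, AM--GM directly bounds this by $\Xi$ with $\alpha_1 = \alpha_2 = 1/2$; when an index exceeds $k$, iterated Cauchy--Schwarz gives $|u_i(t)| \leq t^{i-k-1/2}\|u_k\|_{L^2}$ for $i > k$, and the surplus power of $t$ allows reabsorption into $t\|u_k\|_{L^2}^2 \leq \Xi$.

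The principal technical obstacle lies in the sub-case $j < k$ of the main-term estimate: unlike the $W_k$ slot, where minimality of $k$ supplies the vectorial non-inclusion, for $j < k$ minimality only constrains $f_{W_j}(0)$ and does \emph{not} yield any linear relation on the iterated brackets $f_{W_{j,\nu}}(0)$ for $\nu \geq 1$. Consequently $\xi_{W_{j,\nu}}$ must be controlled purely analytically by $\Xi$, which is precisely why $\Xi$ is forced to carry the full pointwise collection $|u_1(t)|^2,\dotsc,|u_{k-1}(t)|^2$ alongside the coercive contribution $t\|u_k\|_{L^2}^2$ coming from $W_k$ itself.
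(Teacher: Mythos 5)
Your cross-term analysis is sound, and your treatment of the $W_{j,\nu}$ with $j \geq k$ (or $j=k$, $\nu \geq 1$) via \eqref{bound-xiWjnu/j0} with $(p,j_0)\gets(1,k)$ matches the paper. The gap lies in the sub-case $j < k$ of the main-term estimate, which you correctly flag as the hard point but do not resolve: you keep $\mathcal{N} = \Bs_{\intset{1,\pi(k,m)}\setminus\{2\}}$ and therefore must bound $|\xi_{W_{j,\nu}}(t,u)|$ for $j < k$ purely analytically by $\Xi(t,u) = |(u_1,\dotsc,u_k)(t)|^2 + t\|u_k\|_{L^2}^2$. This is impossible. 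Your reduction $\|u_j\|_{L^\infty} \lesssim |u_j(t)| + t\|u_{j-1}\|_{L^\infty}$ iterates \emph{downward} in $j$ and inevitably terminates in $\|u_1\|_{L^\infty}$ or $\|u\|_{L^\infty}$, which are not expressible in terms of $\Xi$. More to the point, no estimate $|\xi_{W_{j,\nu}}| = O(\Xi)$ can hold for all $u \in L^1$ (as \cref{p:sum-xi-XI} demands): take $u_1(s)=\sin(\omega s)$ with $\omega t \in 2\pi\N$ and let $\omega\to\infty$; then $u_j(t)\to 0$ for all $j\geq 1$ and $\|u_k\|_{L^2}\to 0$, so $\Xi\to 0$, while $\xi_{W_{1,0}}(t,u)=\tfrac 12\int_0^t u_1^2 = t/4$ stays bounded below.

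Your stated belief that ``for $j<k$ minimality only constrains $f_{W_j}(0)$ and does not yield any linear relation on the iterated brackets $f_{W_{j,\nu}}(0)$'' is also incorrect, and this is exactly the algebraic fact the paper exploits. Since $f_0(0)=0$, one has $f_{(b,X_0)}(0)=f_0'(0)\,f_b(0)$, hence $S_A(f)(0)$ is stable under left multiplication by $f_0'(0)$, and $S_{\intset{1,\pi(k,m)}\setminus\{2\}}(X)$ is stable under right-bracketing with $X_0$. Thus $f_{W_j}(0) \in S_{\intset{1,\pi(k,m)}\setminus\{2\}}(f)(0)$ (from minimality of $k$) propagates to $f_{W_{j,\nu}}(0) \in S_{\intset{1,\pi(k,m)}\setminus\{2\}}(f)(0)$ for every $\nu$. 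The paper therefore enlarges the set to $\mathcal{N} := \Bs_{\intset{1,\pi(k,m)}\setminus\{2\}} \cup \{W_{j,\nu}\, ;\, j\in\intset{1,k-1},\,\nu\in\N\}$, which still satisfies $\mathcal{N}(f)(0) = S_{\intset{1,\pi(k,m)}\setminus\{2\}}(f)(0)$, so $\mathbb{P}$ annihilates it. In short, the coordinates $\xi_{W_{j,\nu}}$ for $j<k$ are \emph{not} small relative to $\xi_{W_k}$; they are irrelevant because the corresponding vectors lie in $\ker\mathbb{P}$ — a geometric fact, not an analytic one. Once $\mathcal{N}$ is enlarged this way, the only remaining main-term estimates concern $W_{j,\nu}$ with $j>k$ or $(j,\nu)=(k,\nu)$, $\nu\geq 1$, which you already did correctly, and your cross-term analysis carries over unchanged.
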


\begin{proof}
    By minimality of $k$, for every $j \in \intset{1,k-1}$,
    \begin{equation}
        f_{W_j}(0) \in S_{\intset{1,\pi(j,m)} \setminus \{2\}}(f)(0) \subset S_{\intset{1,\pi(k,m)}\setminus\{2\}}(f)(0),
    \end{equation}
    since $\pi(\cdot,m)$ is non-decreasing.
    Since $S_{\intset{1,\pi(k,m)}\setminus\{2\}}(X)$ is stable by right bracketing with $X_0$, one also has
    \begin{equation}
        f_{W_{j,\nu}}(0) \in S_{\intset{1,\pi(k,m)}\setminus\{2\}}(f)(0),
    \end{equation}
    for every $j \in \intset{1,k-1}$ and $\nu \geq 0$.
    Hence $S_{\intset{1,\pi(k,m)}\setminus\{2\}}(f)(0) = \mathcal{N}(f)(0)$ where
    \begin{equation} \label{eq:quad-N}
        \mathcal{N} := \Bs_{\intset{1,\pi(k,m)}\setminus\{2\}} \cup \left\{ W_{j,\nu} ; j \in \intset{1,k-1}, \nu \in \N \right\}. 
    \end{equation}
    We intend to apply \cref{p:PZM-XI} with $M \gets \pi(k,m)$, $L \gets 2k+2$, $\bb \gets W_k$ and $\mathcal{N}$ as in \eqref{eq:quad-N}, so that \eqref{eq:quad-zpi} will follow from \eqref{eq:PZM-xibb-OXi}, for the appropriate choice of $\Xi(t,u)$.
    Let us check that the required estimates are satisfied.
    
    \step{Estimates of other coordinates of the second kind} 
    Let $b \in \Bs_{\intset{1,\pi(k,m)}}$ such that $b \notin \mathcal{N} \cup \{ \bb \}$.
    
    By definition \eqref{eq:quad-N}, one necessarily has $n_1(b) = 2$ and $b = W_{j,\nu}$ with either $j > k$ or ($j = k$ and $\nu \geq 1$).
    By estimate \eqref{bound-xiWjnu/j0} with $(p,j_0) \gets (1,k)$, \eqref{eq:xib-XI-L} holds with $\sigma = 2k+2$ and
    \begin{equation}
        \Xi(t,u) := t \|u_k\|_{L^2}^2.
    \end{equation}

    \step{Estimates of other cross terms} \label{step:quad-dom-cross}
    Let $q \geq 2$, $b_1 \geq \dotsb \geq b_q \in \Bs$ such that $n_1(b_1) + \dotsb + n_1(b_q) \leq \pi(k,m)$ and $\supp \mathcal{F}(b_1, \dotsc, b_q) \not \subset \mathcal{N}$.
    
    \medskip
    \noindent We start with preliminary estimates.
    \begin{itemize}
        \item If $b_i = M_j$ for some $j \in \intset{0,k-1}$, by \eqref{xi_S1},
        \begin{equation}
            |\xi_{b_i}(t,u)| = |u_{j+1}(t)| = \frac{t^{|b_i|}}{|b_i|!} t^{-(j+1)} (j+1)! |u_{j+1}(t)|
        \end{equation}
        so \eqref{eq:xib-othercross-XI-L} holds with $\sigma_i = j+1$, $\alpha_i = 1/2$ and $\Xi(t,u) = |(u_1,\dotsc,u_k)(t)|^2$.
        
        \item If $b_i = M_j$ for $j \geq k$, by \eqref{bound-xiMj/J0} (with $(p,j_0) \gets (2,k)$), \eqref{eq:xib-othercross-XI-L} holds with $\sigma_i = k+1$, $\alpha_i = 1/2$ and $\Xi(t,u) = t\|u_k\|_{L^2}^2$.
    \end{itemize}
    Since $\supp \mathcal{F}(b_1, \dotsc, b_q) \not \subset \mathcal{N}$, one has $q = 2$ and $b_1, b_2 \in \Bs_1$.
    So the previous estimates apply and $\alpha_1 = \alpha_2 = 1/2$ so $\alpha_1+\alpha_2 = 1$.
\end{proof}

\subsection{Vectorial relations}
\label{subsec:VR}

\begin{lemma} \label{p:quad-vectors}
    Let $k\in\N^*$, 
    $\pi:\N^* \to \llbracket 1 , \infty \rrbracket$
    be a non-decreasing map and 
    $\vartheta:\N^* \to \llbracket 1 , \infty \rrbracket$ be defined by $\vartheta(k)=\max\{1;\lfloor\frac{\pi(k)}{2}\rfloor\}$, with the convention $\vartheta(k)=+\infty$ when $\pi(k)=+\infty$.
    Assume that $k$ is the minimal value for which $f_{W_k}(0) \notin S_{\intset{1 , \pi(k)} \setminus \{2\}}(f)(0)$.
    Then,
    \begin{enumerate}
        \item the vectors $f_{M_0}(0),\dots,f_{M_{k-1}}(0)$ are linearly independent,
        \item if $\vartheta(k) \geq 2$, then $\vect\{ f_{M_0}(0),\dots,f_{M_{k-1}}(0) \} \cap S_{\intset{2, \vartheta(k)}}(f)(0) = \{0\}$.
    \end{enumerate}
\end{lemma}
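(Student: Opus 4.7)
Both parts are proved by contradiction, assuming the stated relation fails and deriving $f_{W_k}(0) \in S_{\intset{1,\pi(k)}\setminus\{2\}}(f)(0)$, which contradicts the defining property of $k$. A common preliminary is the identity $f_{b0^\nu}(0) = A^\nu f_b(0)$ with $A := Df_0(0)$, which follows from $f_0(0) = 0$ via the formula $[g, f_0](0) = Ag(0)$. In particular $f_{M_j}(0) = A^j f_1(0)$, so the vectors in part (1) are $f_1(0), Af_1(0), \dots, A^{k-1} f_1(0)$. Assuming they are dependent, I first discard the case $f_1(0) = 0$: a straightforward structural induction on $\Br(X)$ using $f_0(0) = f_1(0) = 0$ then gives $f_b(0) = 0$ for every $b \in \Br(X)$, hence $f_{W_k}(0) = 0$. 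Otherwise there is a minimal $p \in \intset{1, k-1}$ and coefficients $\alpha_0, \dots, \alpha_{p-1}$ with $f_{M_p}(0) = \sum_{i<p} \alpha_i f_{M_i}(0)$, and iterating $A$ forces $f_{M_j}(0) \in V := \vect\{f_{M_i}(0); i < p\}$ for every $j \geq p$.

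To convert these pointwise relations into an identity for the Lie bracket at $0$, I would write the vector field $f_{M_{k-1}} = \sum_{i<p} \alpha_i f_{M_i} + h$ with $h(0) = 0$, so that
\[
    f_{W_k}(0) = \sum_{i<p} \alpha_i [f_{M_i}, f_{M_k}](0) + [h, f_{M_k}](0).
\]
For every $i < p$, the formal bracket $[M_i, M_k] = [M_i, M_i 0^{k-i}]$ expands via the balanced Jacobi identity \eqref{eq:jacobi.balance} as a $\mathbb{Z}$-linear combination of $W_{i+l+1,\,k-i-2l-1}$ with $2l+1 \leq k-i$, hence with first index at most $\lfloor (i+k+1)/2 \rfloor \leq k-1$. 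By the minimality of $k$ and the stability of $S_{\intset{1,\pi(k)}\setminus\{2\}}(f)(0)$ under right-bracketing with $X_0$ (the same stability argument used at the beginning of the proof of \cref{Lem:DPL}), each such contribution lies in $S_{\intset{1,\pi(k)}\setminus\{2\}}(f)(0)$. For the residual $[h, f_{M_k}](0) = -Dh(0) f_{M_k}(0)$ (using $h(0) = 0$), I would then substitute $f_{M_k}(0) = \sum_{i<p} \beta_i f_{M_i}(0) \in V$ to rewrite it as $\sum_i \beta_i [h, f_{M_i}](0)$; re-expanding $h$ gives a combination of $[f_{M_{k-1}}, f_{M_i}](0)$ and $[f_{M_{i'}}, f_{M_i}](0)$ for $i, i' < p$, each of which unfolds via \eqref{eq:jacobi.balance} into $W_{j,\nu}$'s with $j < k$, thus closing the argument.

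Part (2) proceeds in parallel with $V$ enlarged to $V + S_{\intset{2,\vartheta(k)}}(f)(0)$: a non-trivial intersection yields, after iterating $A$ and exploiting that $S_{\intset{2,\vartheta(k)}}(f)(0)$ is stable under right-bracketing with $X_0$, decompositions $f_{M_{k-1}}(0), f_{M_k}(0) \in V + S_{\intset{2,\vartheta(k)}}(f)(0)$. Running the same expansion, the new cross-brackets $[f_{M_i}, f_b](0)$ and $[f_b, f_{b'}](0)$ arising from the $S_{\intset{2,\vartheta(k)}}$-admixtures have $n_1$ in $\intset{3,\vartheta(k)+1}$ and $\intset{4, 2\vartheta(k)}$ respectively, and the doubling bound $2\vartheta(k) \leq \pi(k)$ built into the definition of $\vartheta$ is precisely what keeps these contributions inside $S_{\intset{1,\pi(k)}\setminus\{2\}}(f)(0)$. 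The main obstacle throughout is that Lie brackets at a point are not bilinear in the pointwise values, so one cannot naively substitute $f_{M_j}(0) \mapsto \sum \alpha_i f_{M_i}(0)$ inside $[\cdot,\cdot](0)$; the device $[h, g](0) = -Dh(0) g(0)$ for $h(0) = 0$ circumvents this but forces a double substitution on both factors before everything collapses into $W_{j,\nu}$-brackets of strictly smaller first index.
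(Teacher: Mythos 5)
Your proposal is correct, and it arrives at the right conclusion, but it takes a genuinely different route from the paper's. The paper packages the linear dependency into a single formal Lie element $B_1 \in S_1(X)$ (normalized, after a shift by $\ad_{X_0}$, to have leading term $M_{k-1}$), observes that $f_{B_1}(0)=0$ together with $f_0(0)=0$ forces $f_{\ad^2_{B_1}(X_0)}(0)=0$ in a single stroke, and then reads off $\ad^2_{B_1}(X_0)=W_k-B_3$ with $B_3\in S_{2,\intset{1,2k-2}}(X)$ directly at the formal level. You instead expand $[f_{M_{k-1}},f_{M_k}](0)$ at the vector-field level, peel off a residual field $h$ with $h(0)=0$, and iterate the identity $[h,g](0)=-Dh(0)\,g(0)$ to substitute the pointwise relation on both factors, followed by a recursive unfolding of the remaining $[f_{M_i},f_{M_j}](0)$ via the balanced Jacobi identity. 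The two approaches rely on the same two structural facts (stability of $S_A(f)(0)$ under $H_0=Df_0(0)$ and minimality of $k$), and your nested substitution is essentially the paper's $\ad^2$ computation carried out by hand; the paper's version buys compactness because the entire "quadratic residual" $[f_{B_1},[f_{B_1},f_0]](0)$ dies at once rather than being chased through two levels of re-expansion. Two minor points: you silently reuse the symbols $\alpha_i$ for what must be distinct coefficient families (the ones expressing $f_{M_p}(0)$ versus those expressing $f_{M_{k-1}}(0)$ in $V$), and part (2) as written is only a sketch — the triple decomposition $f_{M_{k-1}}=\sum\alpha_i f_{M_i}+f_{B'}+h$ with $h(0)=0$ and $B'\in S_{\intset{2,\vartheta(k)}}(X)$ would need to be spelled out, together with a careful accounting showing that the cross-brackets $[M_i,B'']$ and $[B',B'']$ land in $S_{\intset{3,\vartheta(k)+1}}(X)$ and $S_{\intset{4,2\vartheta(k)}}(X)$ respectively — but the key observation, that the doubling bound $2\vartheta(k)\le\pi(k)$ is exactly what is needed, matches the paper's.
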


\begin{proof}
    Let $H_0:=f_0'(0)$. 
    Since $f_0(0) = 0$, for any $b \in \Br(X)$, $f_{(b,X_0)}(0) = H_0 f_{b}(0)$. 
    Thus, for each $A \subset \N$, the space $S_A(f)(0)$ is stable by left multiplication by $H_0$. 
    In particular, by minimality of~$k$, for each $l \in \intset{1,k-1}$ and $\nu \in \N$,
    \begin{equation} 
        f_{W_{l,\nu}}(0)=H_0^\nu f_{W_{l,0}}(0) \in S_{\intset{1,\pi(l)} \setminus\{2\}}(f)(0) \subset S_{\intset{1,\pi(k)} \setminus \{2\}}(f)(0),
    \end{equation}
    where the last inclusion results from the monotony of $\pi$.
    Thus,
    \begin{equation} \label{S22k-2-pik}
        S_{2,\intset{1,2k-2}}(f)(0) \subset S_{\intset{1,\pi(k)} \setminus \{2\}}(f)(0).
    \end{equation}

    \step{Proof of statement 1} 
    By contradiction, assume that there exists $(\beta_0,\dots,\beta_{k-1}) \in \R^k \setminus\{0\}$ such that $\beta_0 f_{M_0}(0)+\dots+\beta_{k-1} f_{M_{k-1}}(0)=0$, i.e.\ $f_{B_1}(0)=0$ where $B_1:=\beta_{k-1} M_{k-1} + \dots +\beta_0 M_0$. One may assume that $\beta_{k-1}\neq 0$; otherwise replace $B_1$ by $\ad_{X_0}^{k-1-K}(B_1)$ where $K=\max\{j;\beta_j\neq 0\}$. 
    By linearity, one may assume $\beta_{k-1}=1$. Then $f_{B_2}(0)=0$ where 
    \begin{equation}
        B_2 :=\ad_{B_1}^2(X_0)=[M_{k-1}+\dots+\beta_0 M_0, M_k+\dots+\beta_0 M_1] = W_{k} - B_3,
    \end{equation}
    where $B_3 \in S_{2,\intset{1,2k-2}}(X)$. 
    Finally, by \eqref{S22k-2-pik}, $f_{W_k}(0)=f_{B_3}(0)\in S_{\intset{1,\pi(k)} \setminus \{2\}}(f)(0)$, which contradicts \eqref{eq:Wk-Spikm}.
 
    \step{Proof of statement 2}
    By contradiction, assume that $\vartheta(k)\geq 2$ and that there exists $B \in S_{\llbracket 2, \vartheta(k) \rrbracket }(X)$ and $(\gamma_0,\dots,\gamma_{k-1}) \in \R^k \setminus \{0\}$ such that $f_{B_4}(0)=0$ where $B_4:=\gamma_{k-1} M_{k-1}+\dots+\gamma_0 M_0 + B$. 
    One may assume $\gamma_{k-1}=1$; otherwise, replace $B_4$ by $\ad_{X_0}^{k-1-K}(B_4)$ where $K=\max\{j;\gamma_j\neq 0\}$ and renormalize. 
    Then $f_{B_5}(0)=0$ where
    \begin{equation}
        \begin{split}
            B_5
            :=\ad_{B_4}^2(X_0)
            & =[ M_{k-1}+\dots+\gamma_0 M_0 + B,M_{k}+\dots+\gamma_0 M_1 + [B,X_0] ] \\
            & \in W_k + S_{2,\intset{1,2k-2}}(X) + S_{\intset{3,2\vartheta(k)}}(X).
        \end{split}
    \end{equation}
    This fact and \eqref{S22k-2-pik} contradict 
    \eqref{eq:Wk-Spikm} because $2\vartheta(k)\leq \pi(k)$.
\end{proof}

\subsection{Closed-loop estimate}
\label{subsec:CLE}

\begin{lemma} \label{p:quad-loop}
    Let $k\in\N^*$, $\pi:\N^* \to \N^*$ be a non-decreasing map and $\vartheta:\N^* \to \N^*$ be defined by $\vartheta(k)=\max\{1;\lfloor\frac{\pi(k)}{2}\rfloor\}$.
    Assume that $k$ is the minimal value for which $f_{W_k}(0) \notin S_{\intset{1 , \pi(k)} \setminus \{2\}}(f)(0)$.
    Then,
    \begin{equation} \label{eq:quad-loop}
        |(u_1,\dotsc,u_k)(t)| = O \left( |x(t;u)| + \|u_1\|_{L^{\vartheta(k)+1}}^{\vartheta(k)+1} + t^{\frac 12} \|u_k\|_{L^2} \right).
    \end{equation}
\end{lemma}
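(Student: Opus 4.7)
The plan is to extract each scalar $u_{j+1}(t)$ for $j \in \intset{0,k-1}$ by applying \cref{p:PZM-XI} with $\bb = M_j$, combining the result with the representation formula \cref{thm:Key_1} at order $M = \vartheta(k)$, and finally taking the maximum over $j$. The vectorial relations of \cref{p:quad-vectors} will provide, for each $j$, a component $\mathbb{P}_j$ along $f_{M_j}(0)$ parallel to $\vect\{f_{M_i}(0) ; i \neq j\} + S_{\intset{2,\vartheta(k)}}(f)(0)$, so that $\mathbb{P}_j x(t;u)$ controls $u_{j+1}(t)$ up to manageable remainders.

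Fix $j \in \intset{0,k-1}$ and set
\begin{equation*}
    \mathcal{N}_j := \{ M_i ; i \in \intset{0,k-1}, i \neq j \} \cup \Bs_{\intset{2,\vartheta(k)}}.
\end{equation*}
When $\vartheta(k) \geq 2$, the two statements of \cref{p:quad-vectors} combine to give $f_{M_j}(0) \notin \mathcal{N}_j(f)(0)$: a decomposition $f_{M_j}(0) = \sum_{i \neq j} c_i f_{M_i}(0) + v$ with $v \in S_{\intset{2,\vartheta(k)}}(f)(0)$ would place the nonzero vector $f_{M_j}(0) - \sum_{i \neq j} c_i f_{M_i}(0)$ simultaneously in $\vect\{f_{M_0}(0),\dots,f_{M_{k-1}}(0)\}$ and in $S_{\intset{2,\vartheta(k)}}(f)(0)$, contradicting statement~2. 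When $\vartheta(k)=1$, $S_{\intset{2,1}}(f)(0)$ is trivial and statement~1 alone suffices. A component $\mathbb{P}_j$ along $f_{M_j}(0)$ parallel to $\mathcal{N}_j(f)(0)$ therefore exists in both cases.

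Next, I would apply \cref{p:PZM-XI} with $M \gets \vartheta(k)$, $L \gets k+1$, $\bb \gets M_j$, $\mathcal{N} \gets \mathcal{N}_j$ and $\Xi(t,u) := t^{1/2}\|u_k\|_{L^2}$. For the main-term hypothesis, the set $\Bs_{\intset{1,\vartheta(k)}} \setminus (\mathcal{N}_j \cup \{M_j\})$ reduces to $\{M_i ; i \geq k\}$, and estimate \eqref{bound-xiMj/J0} with $(p,j_0) \gets (2,k)$ provides directly the required bound with $\sigma = k+1$. The cross-term hypothesis is \emph{vacuously} satisfied: for any $q \geq 2$ and $b_1,\dots,b_q \in \Bs\setminus\{X_0\}$, every $\beta \in \mathcal{F}(b_1,\dots,b_q)$ has $n_1(\beta) = \sum_i n_1(b_i) \geq 2$, so $\supp_{\Bs}(\beta) \subset \Bs_{\llbracket 2, \infty \llbracket}$, whereas $\Bs_{\intset{1,\vartheta(k)}} \setminus \mathcal{N}_j \subset \Bs_1$. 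Consequently \eqref{eq:PZM-xibb-OXi} yields
\begin{equation*}
    \mathbb{P}_j \cZ{\vartheta(k)}(t,f,u)(0) = u_{j+1}(t) + O(t^{1/2}\|u_k\|_{L^2}).
\end{equation*}

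Finally, the representation formula \eqref{eq:x=ZM+O} at order $M = \vartheta(k)$, post-composed by $\mathbb{P}_j$, produces
\begin{equation*}
    |u_{j+1}(t)| = O\left(|x(t;u)| + t^{1/2}\|u_k\|_{L^2} + \|u_1\|_{L^{\vartheta(k)+1}}^{\vartheta(k)+1} + |x(t;u)|^{1+1/\vartheta(k)}\right),
\end{equation*}
and the superlinear term $|x(t;u)|^{1+1/\vartheta(k)}$ is absorbed into $O(|x(t;u)|)$ via $|x(t;u)|=O(1)$ from \cref{p:small-state}. Taking the maximum over $j \in \intset{0,k-1}$ then yields \eqref{eq:quad-loop}. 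The only real delicacy is the set-up of $\mathcal{N}_j$: it must be large enough that cross terms vanish automatically (via $\mathcal{N}_j \supset \Bs_{\intset{2,\vartheta(k)}}$), yet small enough that the vectorial lemma still forbids $f_{M_j}(0) \in \mathcal{N}_j(f)(0)$—precisely what \cref{p:quad-vectors} was designed to guarantee.
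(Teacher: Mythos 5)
Your proposal is correct and follows essentially the same route as the paper: same choice of $\mathcal{N}_j = (\{M_0,\dots,M_{k-1}\}\setminus\{M_j\}) \cup \Bs_{\intset{2,\vartheta(k)}}$, same application of \cref{p:PZM-XI} with $M\gets\vartheta(k)$, $L\gets k+1$, $\bb\gets M_j$ and $\Xi(t,u)=t^{1/2}\|u_k\|_{L^2}$, and same observation that the cross-term hypothesis is vacuous because $\mathcal{E}\subset\Bs_1$. The only difference is cosmetic — you spell out the combination of the two parts of \cref{p:quad-vectors} (note that your ``nonzero vector'' step also implicitly uses statement~1, not just statement~2) and the absorption of $|x|^{1+1/\vartheta(k)}$ into $O(|x|)$, both of which the paper leaves to the reader.
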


\begin{proof}
    By \cref{thm:Key_1} with $M \gets \vartheta(k)$,
    \begin{equation} \label{eq:quad-CL-1}
        x(t;u) = \cZ{\vartheta(k)}(t,f,u)(0) + O \left( \|u_1\|_{L^{\vartheta(k)+1}}^{\vartheta(k)+1} + |x(t;u)|^{1+\frac{1}{\vartheta(k)}} \right).
    \end{equation}
    Let $i \in \intset{0,k-1}$.
    By \cref{p:quad-vectors}, we can consider $\mathbb{P}$, a component along $f_{M_i}(0)$, parallel to $\mathcal{N}(f)(0)$ where $\mathcal{N} := (\{ M_0, \dotsc, M_{k-1} \} \setminus M_i) \cup \Bs_{\intset{2,\vartheta(k)}}$.
    We intend to apply \cref{p:PZM-XI} with $M \gets \vartheta(k)$, $L \gets k+1$, $\bb \gets M_i$ and $\mathcal{N}$ as above, so that \eqref{eq:PZM-xibb-OXi}, for the appropriate choice of $\Xi(t,u)$, will yield
    \begin{equation} \label{eq:quad-CL-2}
        \mathbb{P} \cZ{\vartheta(k)}(t,f,u)(0) = u_{i+1}(t) + O\left(t^{\frac 12}\|u_k\|_{L^2}\right).
    \end{equation}
    Then, combining \eqref{eq:quad-CL-1} and \eqref{eq:quad-CL-2} concludes the proof of \eqref{eq:quad-loop}.
    Let us check that the required estimates are satisfied.

    \step{Estimates of other coordinates of the second kind}
    Let $b \in \Bs_{\intset{1,\vartheta(k)}}$ such that $b \notin \mathcal{N} \cup \{ \bb \}$.
    
    By choice of $\mathcal{N}$, one has necessarily $n_1(b) = 1$. 
    Then $b = M_j$ for $j \geq k$.
    Thus, by \eqref{bound-xiMj/J0} (with $(p,j_0) \gets (2,k)$), $|b| \geq k+1$ and \eqref{eq:xib-XI-L} holds with $\sigma = k+1$ and $\Xi(t,u) := t^{\frac 12} \|u_k\|_{L^2}$.
    
    \step{Estimates of cross terms}
    Let $q \geq 2$, $b_1 \geq \dotsb \geq b_q \in \Bs \setminus \{ X_0 \}$ such that $n_1(b_1) + \dotsb + n_1(b_q) \leq \vartheta(k)$ and $\supp \mathcal{F}(b_1, \dotsc, b_q) \not \subset \mathcal{N}$.
    
    By construction of $\mathcal{N}$, there is no such cross term.
\end{proof}

\subsection{Interpolation inequality}
\label{subsec:II}

\begin{lemma}
    Let $m \in \N$, $k \geq 2$ and $\pi := \pi(k,m) \geq 2$ as in \eqref{eq:pikm}. 
	There exists $C > 0$ such that, for every $t > 0$ and $u \in \lone$,
	\begin{equation}
		\label{eq:quad-interpol}
		\|u_1\|_{L^{\pi+1}}^{\pi+1} \leq C \left(
		\|u_1\|_{L^\infty}^{\pi+1-p} \|u\|_{W^{m,\infty}}^{p}
		+ 
		t^{\pi+1-2k} \|u\|_{L^\infty}^{\pi-1}
		\right) \|u_k\|_{L^2}^2,
	\end{equation}
	where $p := (2m+2k)/(m+1)$ satisfies $p \leq \pi+1$.
\end{lemma}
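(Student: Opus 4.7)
\medskip

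The inequality is a weighted interpolation between the $L^{\pi+1}$ norm of $u_1$, the $W^{m,\infty}$ norm of $u$, and the $L^2$ norm of $u_k$, which I would prove by combining H\"older's inequality with the Gagliardo--Nirenberg interpolation from \cref{thm:GN}. The proof rests on two algebraic identities that follow immediately from $p = 2(k+m)/(m+1)$, namely $p - 2 = 2(k-1)/(m+1)$ and $(2k-p)/(p-2) = m$; the latter is what makes the threshold $\|u\|_{L^\infty} \sim t^m$ natural.

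First, the condition $p \leq \pi+1$ follows from $\pi \geq 1 + (2k-2)/(m+1) = p-1$, so H\"older gives
\begin{equation*}
\|u_1\|_{L^{\pi+1}}^{\pi+1} = \int_0^t |u_1|^{\pi+1-p}|u_1|^p \leq \|u_1\|_{L^\infty}^{\pi+1-p}\|u_1\|_{L^p}^p,
\end{equation*}
reducing the problem to an estimate on $\|u_1\|_{L^p}^p$.

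Second, I would apply \cref{thm:GN} to $\phi = u_k$ with the parameters $j = k-1$, $l = k+m$, $r = \infty$, $q = 2$, and the free parameter $s = 2$, using $D^{k-1}u_k = u_1$ and $D^{k+m}u_k = u^{(m)}$. A direct computation shows that the constraint $j/l \leq \alpha$ is saturated by $\alpha = (k-1)/(k+m)$ and that $\alpha p = p-2$ and $(1-\alpha)p = 2$. Raising the resulting inequality to the power $p$ via $(a+b)^p \leq C(a^p + b^p)$ yields
\begin{equation*}
\|u_1\|_{L^p}^p \leq C \|u^{(m)}\|_{L^\infty}^{p-2}\|u_k\|_{L^2}^2 + C\, t^{1 - p(2k-1)/2}\|u_k\|_{L^2}^p.
\end{equation*}

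Third, I would handle both pieces after multiplication by $\|u_1\|_{L^\infty}^{\pi+1-p}$. For the time piece, the pointwise bound $\|u_k\|_{L^\infty(0,t)} \leq (t^k/k!)\|u\|_{L^\infty}$ gives $\|u_k\|_{L^2}^{p-2} \leq C t^{(p-2)(k+1/2)}\|u\|_{L^\infty}^{p-2}$; combining with $\|u_1\|_{L^\infty} \leq t\|u\|_{L^\infty}$, a careful bookkeeping shows that the total $t$-exponent simplifies to exactly $\pi+1-2k$ and the $\|u\|_{L^\infty}$-exponent to $\pi-1$, producing precisely the second term of the claim. For the main piece, bounding $\|u^{(m)}\|_{L^\infty}^{p-2} \leq \|u\|_{W^{m,\infty}}^{p-2}$ and applying the elementary $a^{p-2} \leq a^p + 1$ splits it into a term matching the first term of the claim and a residual $\|u_1\|_{L^\infty}^{\pi+1-p}\|u_k\|_{L^2}^2$. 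Using $\|u_1\|_{L^\infty} \leq t\|u\|_{L^\infty}$, the residual is bounded by $t^{\pi+1-p}\|u\|_{L^\infty}^{\pi-1}\|u\|_{L^\infty}^{2-p}\|u_k\|_{L^2}^2$; in the regime $\|u\|_{L^\infty} \geq t^m$, the identity $m(p-2) = 2k-p$ turns $\|u\|_{L^\infty}^{2-p} \leq t^{m(2-p)} = t^{p-2k}$, so the residual collapses to $t^{\pi+1-2k}\|u\|_{L^\infty}^{\pi-1}\|u_k\|_{L^2}^2$, absorbed into the second term of the claim.

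The main obstacle I expect is completing the argument in the complementary small-control regime $\|u\|_{L^\infty} < t^m$, where the direct residual bound fails. There, I would bypass \cref{thm:GN} and use the pointwise estimates $\|u_1\|_{L^\infty} \leq t\|u\|_{L^\infty}$ and $\|u_k\|_{L^2}^2 \leq C t^{2k+1}\|u\|_{L^\infty}^2$ to obtain $\|u_1\|_{L^{\pi+1}}^{\pi+1} \leq t^{\pi+2}\|u\|_{L^\infty}^{\pi+1}$, and compare this to $t^{\pi+1-2k}\|u\|_{L^\infty}^{\pi-1}\|u_k\|_{L^2}^2$ in the same oscillation/non-oscillation subregimes; the precise matching of orders here is the most delicate computation, and is where a cleaner alternative, using a single interpolation in the whole Sobolev scale rather than a case split, would be desirable.
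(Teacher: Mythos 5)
Your first two steps reproduce the paper's argument exactly: the Hölder reduction $\|u_1\|_{L^{\pi+1}}^{\pi+1} \leq \|u_1\|_{L^\infty}^{\pi+1-p}\|u_1\|_{L^p}^p$ (valid since $p \leq \pi+1$), the application of \cref{thm:GN} to $\phi = u_k$ with the same indices $(j,l) = (k-1,m+k)$, $(q,r,s)=(2,\infty,2)$ and $\alpha = (p-2)/p = (k-1)/(k+m)$, and the Hölder bound $\|u_k\|_{L^2}^{p-2} \leq t^{(1/2+k)(p-2)}\|u\|_{L^\infty}^{p-2}$ combined with $\|u_1\|_{L^\infty} \leq t\|u\|_{L^\infty}$; your bookkeeping of the exponents $\pi+1-2k$ and $\pi-1$ in the time term is correct.

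The difficulty arises only in your third step, where you notice that the GN output carries $\|u^{(m)}\|_{L^\infty}^{p-2}$ rather than the $\|u\|_{W^{m,\infty}}^p$ appearing in the stated claim, and try to promote $p-2$ to $p$ via $a^{p-2} \leq a^p + 1$. This produces the residual $\|u_1\|_{L^\infty}^{\pi+1-p}\|u_k\|_{L^2}^2$, and you correctly sense that absorbing it fails in the regime $\|u\|_{L^\infty} < t^m$: there one would need $t^{\pi+1-p}\|u\|_{L^\infty}^{\pi+1-p} \leq C\,t^{\pi+1-2k}\|u\|_{L^\infty}^{\pi-1}$, i.e.\ $t^{2k-p}\|u\|_{L^\infty}^{2-p} \leq C$, which is precisely false when $\|u\|_{L^\infty} < t^m$ since $m(p-2)=2k-p$; and the fallback you propose requires a lower bound on $\|u_k\|_{L^2}$, which does not exist. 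So this step is a genuine gap in the proposal as written.

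The gap is, however, not in the mathematics you actually need but in the target you set yourself: the exponent $p$ on $\|u\|_{W^{m,\infty}}$ in the statement is a typo for $p-2$. The paper's own proof stops at $\|u_1\|_{L^\infty}^{\pi+1-p}\|u^{(m)}\|_{L^\infty}^{p-2}\|u_k\|_{L^2}^2$ and declares the proof finished, and the downstream use in the proof of \cref{Thm:Kawski_Wm_precis-pos} only requires
\begin{equation*}
\|u_1\|_{L^\infty}^{\pi+1-p}\|u\|_{W^{m,\infty}}^{p-2} \leq \left(t\|u\|_{W^{m,\infty}}\right)^{\pi+1-p}\|u\|_{W^{m,\infty}}^{p-2} \leq \|u\|_{W^{m,\infty}}^{\pi-1},
\end{equation*}
using $\pi+1-p \in [0,1)$ and $t\leq 1$; the extra two powers of $\|u\|_{W^{m,\infty}}$ are never used. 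Dropping the $a^{p-2}\leq a^p+1$ trick and keeping $p-2$ makes your argument coincide with the paper's and removes the residual and its problematic regime split entirely.
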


\begin{proof}
	By \cref{thm:GN} with $\phi \gets u_k$, $(p,q,r,s) \gets (p,2,\infty,2)$, $(j,l) \gets (k-1,m+k)$, $\alpha \gets (p-2)/p$, we obtain
	\begin{equation}
	    \|u_1\|_{L^p}^p \leq C \|u^{(m)}\|_{L^\infty}^{p-2} \|u_k\|_{L^2}^2 + C t^{1-pk+\frac{p}{2}} \|u_k\|_{L^2}^p.
	\end{equation}
	By Hölder's inequality,
	\begin{equation}
	    \|u_k\|_{L^2}^{p-2} \leq t^{\left(\frac{1}{2}+k\right)\left(p-2\right)} \|u\|_{L^\infty}^{p-2}.
	\end{equation}
    Moreover, by \eqref{eq:pikm},
    \begin{equation}
        \pi(k,m)+1 \geq \frac{2k+m-1}{m+1} + 1
        = \frac{2k+2m}{m+1} = p,
    \end{equation}
    and this concludes the proof of \eqref{eq:quad-interpol}, writing 
    \begin{equation}
        \|u_1\|_{L^{\pi+1}}^{\pi+1} \leq \|u_1\|_\infty^{\pi+1-p} \|u_1\|_{L^p}^p
    \end{equation}
    and $\|u_1\|_{L^\infty} \leq t \|u\|_{L^\infty}$.
\end{proof}

\subsection{Proof of the presence of the drift for \texorpdfstring{$m\geq0$}{m >= 0}}
\label{subsec:D}

\begin{proof}[Proof of \cref{Thm:Kawski_Wm_precis-pos}]
    Let $\mathbb{P}$ be a component along $f_{W_k}(0)$ parallel to $S_{\intset{1,\pi(k,m)}\setminus\{2\}}(f)(0)$.
    Let $M := \pi(k,m)$.
    Let $\vartheta:=\max\{1;\lfloor\frac{\pi(k,m)}{2}\rfloor\}$.
    By \cref{thm:Key_1},
    \begin{equation}
        x(t;u) = \cZ{M}(t,f,u)(0) + O\left( \|u_1\|_{L^{M+1}}^{M+1} + |x(t;u)|^{1+\frac{1}{M}} \right),
    \end{equation}
    where, by \eqref{eq:quad-zpi} and \eqref{xi_Wjnu},
    \begin{equation}
        \mathbb{P} \cZ{M}(t,f,u)(0)
        = \frac{1}{2} \int_0^t u_k^2 + O\left( |(u_1,\dotsc,u_k)(t)|^2 + t \|u_k\|_{L^2}^2 \right).
    \end{equation}
    Moreover, by the closed-loop estimate \eqref{eq:quad-loop},
    \begin{equation}
    	|(u_1,\dotsc,u_k)(t)|^2 = O\left(
    		|x(t;u)|^2
 			+ \|u_1\|_{L^{\vartheta+1}}^{2\vartheta+2}
		    + t \|u_k\|_{L^2}^2	
    	 \right).
    \end{equation}
    By definition of $\vartheta$, one has $2(\vartheta+1) \geq \pi(k,m)+1$.
    Hence, in particular, 
    \begin{equation}
        \|u_1\|_{L^{\vartheta+1}}^{2\vartheta+2} = O\left( \|u_1\|_{L^{M+1}}^{M+1} \right).
    \end{equation}
    Gathering these equalities and the interpolation estimate \eqref{eq:quad-interpol} yields
    \begin{equation}
        \mathbb{P} x(t;u) = \frac 12 \int_0^t u_k^2
        + O \left( \left(t + (1+ t^{\pi(k,m)+1-2k}) \|u\|_{W^{m,\infty}}^{\pi(k,m)-1} \right) \|u_k\|_{L^2}^2 +|x(t;u)|^{1+\frac{1}{M}} \right).
    \end{equation}
    This implies, in the sense of \cref{Def:drift}, a drift along $f_{W_k}(0)$, parallel to $S_{\intset{1,\pi(k,m)}\setminus\{2\}}(f)(0)$, 
    as $(t,t^{-\alpha} \|u\|_{W^{m,\infty}}) \rightarrow 0$
    where $\alpha=\frac{2k-1-\pi(k,m)}{\pi(k,m)-1} =
    \frac{\pi(k,0)-\pi(k,m)}{\pi(k,m)-1}$.
\end{proof}

\begin{remark} \label{rk:m=0}
When $m=0$, the smallness assumption on the control does not depend on the final time $t$ (because $\alpha=0$). When $m > 0$, the dependence on time of the smallness assumption on the control  stems from the second term in the right-hand side of the Gagliardo--Nirenberg inequality of \cref{thm:GN}. For appropriate classes of functions, for instance $\phi \in W^{m,\infty}_0$, the Gagliardo--Nirenberg inequality holds without this second term. Thus, for controls $u \in W^{m,\infty}_0$, the argument above proves a drift as $(t,\|u\|_{W^{m,\infty}_0}) \rightarrow 0$.
\end{remark}

\subsection{Optimality of the functional framework} \label{Subsec_Optim_Wk}

We illustrate the optimality of the functional framework given in \cref{Thm:Kawski_Wm_precis-pos} by an example in the case $m = 0$ and $k = 2$.
In this case, the condition $f_{W_2}(0) \in S_{\{1,3\}}(f)(0)$ is necessary for $L^\infty$-STLC, but not for a different small-time local controllability notion involving large enough controls in $L^\infty$ (instead of arbitrarily small controls in $L^\infty$), called $\rho$-bounded-SLTC in \cref{s:intro-def}. 
In this sense, our result is optimal.

To prove this claim, let us consider the following system (introduced in \cite[Example 5.2]{stefani1985polynomial}): 
\begin{equation} \label{eq:ex:W2vsQ111}
    \begin{cases}
        \dot{x}_1 = u \\
        \dot{x}_2 = x_1 \\
        \dot{x}_3 = x_2^2 - x_1^4.
    \end{cases}
\end{equation}
Written in the form \eqref{syst}, this system satisfies
\begin{equation}
    f_{M_0}=e_1, \quad 
    f_{M_1}(0)=e_2, \quad 
    f_{W_2}(0)=2 e_3, \quad 
    f_{Q_{1,1,1}}(0)= - 24 e_3    
\end{equation}
and $f_b(0)=0$ for any $b \in \Bs \setminus\{ M_0, M_1, W_2, Q_{1,1,1}\}$.
Thus $f_{W_1}(0) \in \mathcal{S}_1(f)(0)$ and
$f_{W_2}(0) \notin S_{\{1,3\}}(f)(0)$. 
By \cref{Thm:Kawski_Wm}, this system is not $L^\infty$-STLC, i.e.\ locally controllable in small time with $L\infty$-small controls. 
By \cref{Thm:Kawski_Wm_precis-pos}, solutions associated to controls small in $L^\infty$ cannot reach in small time targets of the form $-\beta e_3$ with $\beta>0$.

\bigskip

In \cite[Example 5.1 and p.\ 452]{zbMATH04154295}, Kawski claims that this system is STLC with controls large enough in $L^\infty$. 
This can also be deduced from the arguments given in \cite[Example 5.2]{stefani1985polynomial} by a scaling argument.
Let us indeed construct explicit controls (large in $L^\infty$) achieving a motion along $-e_3$.
Let $\varphi \in \CC^\infty_c((0,1);\R) \setminus \{0\}$ and $A>0$ large enough such that
\begin{equation}
    C:= -\int_0^1 \varphi^2 + A^2 \int_0^1 (\varphi')^4 > 0.
\end{equation}
Let $t>0$ and $u \in \lone$ be defined by $u(s) := A \varphi''\left(s/t\right)$.
Then $u_1(s)=A t \varphi' (s/t)$ and $u_2(s)=A t^2 \varphi (s/t)$.
Thus
\begin{equation}
    x_3(t) = \int_0^t u_2^2 - u_1^4 
    = \int_0^t \left( \left(A t^2 \varphi'\left(\frac{s}{t}\right) \right)^2 - 
    \left( At \varphi'\left(\frac{s}{t}\right) \right)^4 \right) \dd s
    = -t^5 A^2 C.
\end{equation}
Therefore $x(t;u)=-t^5 A^2 C e_3$, so we have indeed achieved a motion along $-e_3$.
Standard arguments using either tangent vectors or power series expansions (see e.g.\ \cite[Appendix]{zbMATH04154295} or \cite[Section~8.1]{zbMATH05150528}) then allow to prove that there exists $\rho > 0$ large enough such that \eqref{eq:ex:W2vsQ111} is indeed $\rho$-bounded-STLC.

\subsection{A coarse screening of cubic brackets}
\label{s:S2-S3}

\cref{thm:S2-S3-intro} for $m\in\N$ is a corollary of the following more precise statement.

\begin{theorem}
	\label{Thm:Kawski_Wm_precis-pos_crible}
    Let $m\in\N$ and $k\in\N^*$ with $\pi(k,m) \geq 3$.
    We assume $k$ is the smallest integer for which
    \begin{equation} \label{eq:S2-S3-comp_NEG}
        \quad f_{W_k}(0) \notin
        \left(\Bs_1 \cup \mathcal{P}_k \cup \Bs_{\intset{4,\pi(k,m)}}\right)(f)(0),
    \end{equation}
    where $\mathcal{P}_k$ is defined in \eqref{eq:P_k}.
    Then system \eqref{syst} has a drift along $f_{W_k}(0)$, parallel to the subspace $( \Bs_1 \cup \mathcal{P}_k \cup \Bs_{\intset{4,\pi(k,m)}} )(f)(0)$,  as $(t,t^{-\alpha} \|u\|_{W^{m,\infty}}) \rightarrow 0$
    where $\alpha=\frac{\pi(k,0)-\pi(k,m)}{\pi(k,m)-1}$.
\end{theorem}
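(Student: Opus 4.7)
The plan is to follow verbatim the five-step strategy of \cref{Thm:Kawski_Wm_precis-pos}---refined dominant part of the logarithm, vectorial relations, closed-loop estimate, interpolation inequality, assembly---adjusting only the first step. I would let $\mathbb{P}$ be a component along $f_{W_k}(0)$ parallel to $\mathcal{N}_0(f)(0)$ where $\mathcal{N}_0 := \Bs_1 \cup \mathcal{P}_k \cup \Bs_{\intset{4,\pi(k,m)}}$. Because $\mathcal{N}_0$ is stable under right-bracketing with $X_0$, the minimality of $k$ combined with $\mathcal{P}_j \subset \mathcal{P}_k$ and the monotonicity of $\pi(\cdot,m)$ gives $f_{W_{j,\nu}}(0) \in \ker\mathbb{P}$ for every $j<k$ and $\nu \in \N$; equivalently $S_{2,\intset{1,2k-2}}(f)(0) \subset \mathcal{N}_0(f)(0)$. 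The vectorial relations of \cref{p:quad-vectors} then carry over verbatim (their proof uses only this inclusion), and so does the closed-loop estimate of \cref{p:quad-loop}.

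The refined dominant-logarithm step would invoke \cref{p:PZM-XI} with $M = \pi(k,m)$, $\bb = W_k$ and $\mathcal{N} := \mathcal{N}_0 \cup \{W_{j,\nu}; j<k, \nu\in\N\}$. Compared to \cref{Lem:DPL}, two new families of contributions must be controlled. First, the main coordinates $\xi_{P_{j,l,\nu}}$ with $k \leq j \leq l$: item~(3) of \cref{p:xi-bounds} applied with $j_0=k_0=k$, $p_1=1$, $p_2=\infty$ yields $|\xi_{P_{j,l,\nu}}(t,u)| \lesssim \frac{(ct)^{|b|}}{|b|!}\,t^{-(2k+1)}\,\|u\|_{W^{m,\infty}}\,\|u_k\|_{L^2}^2$, which fits \eqref{eq:xib-XI-L} once $\Xi$ is enlarged by $\|u\|_{W^{m,\infty}}\|u_k\|_{L^2}^2$. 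Second, cross terms $\xi_{W_{j,\mu}}\xi_{M_c}$ whose bracket $[W_{j,\mu}, M_c]$ may project on some $P_{j',l',\nu'} \in \Bs_3 \setminus \mathcal{P}_k$: for $j \geq k$, \eqref{bound-xiWjnu/j0} controls $\xi_{W_{j,\mu}}$ by $t\|u_k\|_{L^2}^2$ and $\xi_{M_c} = u_{c+1}(t)$ fits \eqref{eq:xib-othercross-XI-L} via $|u_{c+1}(t)| \leq |(u_1,\dotsc,u_k)(t)|$ for $c<k$ or $|u_{c+1}(t)| \lesssim t^{c+1-k}\|u_k\|_{L^\infty}$ for $c \geq k$; for $j<k$ the crude bounds $\xi_{W_{j,\mu}} \lesssim t^{2j+\mu+1}\|u\|_{W^{m,\infty}}^2$ and $|u_{c+1}(t)| \lesssim t^{c+1}\|u\|_{W^{m,\infty}}$ give a product of pure order $\|u\|_{W^{m,\infty}}^3$ times a positive power of $t$, absorbed into the $O(|x(t;u)|^{1+1/M})$ remainder in \eqref{eq:x=ZM+O} via the small-state estimate of \cref{p:small-state}. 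The triple cross terms $\xi_{M_a}\xi_{M_b}\xi_{M_c}$ are handled analogously.

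With all contributions bounded, \cref{p:PZM-XI} would yield $\mathbb{P}\cZ{M}(t,f,u)(0) = \xi_{W_k}(t,u) + O(\Xi(t,u))$ for the enlarged $\Xi$, and the interpolation inequality \eqref{eq:quad-interpol} would then absorb the main $\|u_1\|_{L^{M+1}}^{M+1}$ remainder into $\varepsilon\xi_{W_k}(t,u) + C|x(t;u)|^{1+1/M}$ exactly as in \cref{subsec:D}, producing the drift with the same exponent $\alpha$. The main obstacle is the second family of cross terms above: one might initially hope for a clean algebraic cancellation---namely, that $[W_{j,\mu}, M_c]$ expand in $\Bs$ only through $P_{j',\cdot,\cdot}$ with $j' \leq j$, which for $j<k$ would trap them entirely inside $\mathcal{N}$---but this hope fails (for instance $[W_{1,0}, M_3]$ does produce a $P_{2,2,0}$ summand, with $j' = 2 > 1$), so one is forced to rely on the coarser analytic bounds above and to check carefully that their order in $(t, \|u\|_{W^{m,\infty}})$ is high enough to fit the error budget.
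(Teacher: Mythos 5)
Your high-level plan is sound and closely mirrors the paper's own route, but there is a concrete error at the crux of your argument: the algebraic cancellation you dismiss is in fact exactly what the paper proves and uses (\cref{p:1+2}), and your claimed counterexample is false.

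You assert that $[W_{1,0}, M_3]$ produces a $P_{2,2,0}$ summand. It does not: by the definitions in \eqref{def:Mj}--\eqref{def:Pljnu}, $(M_3, W_{1,0}) = (M_{4-1}, W_{1,0}) = P_{1,4,0} \in \Bs$, so $[W_{1,0}, M_3] = -P_{1,4}$ is itself a Hall element, and its expansion on $\Bs$ is trivial. No $P_{2,2,0}$ appears. More generally, \cref{p:1+2/3} does hold: for $j \leq k'' := k+\nu'$ the bracket $[M_{k''-1}, W_j]$ is the Hall element $P_{j,k''}$, and for $j > k''$ a simple $n_0$-count (compare $n_0(P_{j',k',\nu'}) = 2j'+k'-2+\nu' \geq 3j'-2$ with $n_0([M_{k''-1},W_j]) = k''+2j-2 \leq 3j-3$) forces $j' < j$; combined with \eqref{eq:jacobi.rtl} this gives $j' \leq j$ in every term. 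The paper relies on this lemma twice — once to push the problematic $q=2$ cross terms $\xi_{W_{j,\mu}}\xi_{M_c}$ with $j<k$ inside $\mathcal{N}$ (so they vanish), and again in the adapted vectorial relations, where the new $\mathcal{W}_k$-component of $B$ yields a $\mathcal{P}_k$-component of $B_5$ — so discarding it is not an available option.

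Your fallback is also not rigorous as stated. Bounding the $j<k$ cross terms by a pure power of $\|u\|_{W^{m,\infty}}$ times a positive power of $t$ and then "absorbing into $O(|x(t;u)|^{1+1/M})$ via the small-state estimate" misuses \cref{p:small-state}: that lemma gives $|x| = O(\|u_1\|_{L^\infty})$, i.e.\ it bounds the state by the control, not the other way around. There is no implication from $\|u\|_{W^{m,\infty}}$ small to $|x|$ comparable, so a term of size $\|u\|_{W^{m,\infty}}^3 t^{\gamma}$ cannot in general be dominated by $\varepsilon\,\xi_{W_k}(t,u) + C|x(t;u)|^{1+1/M}$ as \cref{Def:drift} requires. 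Had the algebraic cancellation really failed, your plan would be stuck here. Since it does not fail, the cleanest repair is simply to reinstate \cref{p:1+2} and use it as the paper does, both in the dominant-logarithm lemma and in the vectorial relations; the rest of your outline (choice of $M$, $L$, closed-loop estimate, interpolation \eqref{eq:quad-interpol}, assembly) then goes through.
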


\cref{Thm:Kawski_Wm_precis-pos_crible} follows from the same strategy as \cref{Thm:Kawski_Wm_precis-pos}, presented in \cref{subsec:DPL,subsec:VR,subsec:CLE,subsec:II,subsec:D}.
We explain below how to adapt \cref{subsec:DPL,subsec:VR,subsec:CLE} in order to conclude with the same \cref{subsec:II,subsec:D}. 
We will make repeated use of the following algebraic result.

\begin{lemma}[Algebraic preliminaries] \label{p:1+2}
    Let $j,k \in \N^*$.
    \begin{enumerate}
        \item \label{p:1+2/1}
        If $j \leq k$, then $(M_{k-1},W_{j}) = P_{j,k} \in \Bs$.
        
        \item \label{p:1+2/2} 
        If $j > k$, then $\supp [M_{k-1}, W_{j}] \subset \{ P_{j',k',\nu'}; j' < j \}$.

        \item \label{p:1+2/3}
        If $\nu \in \N$, then $\supp [M_{k-1}, W_{j,\nu}] \subset \{ P_{j',k',\nu'}; j' \leq j \}$. 
    \end{enumerate}
\end{lemma}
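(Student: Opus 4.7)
The three items will be established by purely algebraic expansions, all relying only on the two Jacobi-type identities of \cref{s:jacobi}.

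Item \eqref{p:1+2/1} is immediate from definitions: unfolding \eqref{def:Pljnu} and the convention stated just after \eqref{def:R}, the bracket $(M_{k-1}, W_{j,0})$ is literally $P_{j,k,0} = P_{j,k}$, and membership in $\Bs$ when $j \leq k$ was already verified in the \emph{Germs of $\Bs_3$} paragraph of the proof of \cref{p:B*-12345} (where one checks $M_{k-1} < W_j$ and $\lambda(W_j) = M_{j-1} \leq M_{k-1}$).

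For item \eqref{p:1+2/2}, my plan is to apply the Jacobi identity to $[M_{k-1}, [M_{j-1}, M_j]]$ to rewrite $[M_{k-1}, W_j] = [[M_{k-1}, M_{j-1}], M_j] + [M_{j-1}, [M_{k-1}, M_j]]$. Since $j > k$, both inner brackets are of the form $[M_{k-1}, M_{k-1} 0^s]$ with $s \in \{j-k,\, j-k+1\}$, and \eqref{eq:jacobi.balance} rewrites them as linear combinations of $W_{k+i,\mu}$; a short inspection of the summation range shows that the first index satisfies $k \leq k+i < j$ (the strict inequality comes from $2i+1 \leq s \leq j-k+1$). This reduces the task to handling terms of the form $[M_a, W_{k',\mu}]$ with $a \in \{j-1,j\}$ and $k' < j$. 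For each of these, \eqref{eq:jacobi.rtl} distributes the $0^\mu$ on the right onto the outer $M_a$, producing a linear combination of $[M_{a+\nu''}, W_{k'}]\, 0^{\mu-\nu''}$; since $k' < j \leq a+\nu''+1$, each such bracket is identified by item \eqref{p:1+2/1} with $P_{k',\, a+\nu''+1,\, 0}$, whose first index $k'$ is $< j$, as required.

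Item \eqref{p:1+2/3} will follow from the previous two: applying \eqref{eq:jacobi.rtl} with outer factor $M_{k-1}$ and inner factor $W_j$ expands $[M_{k-1}, W_{j,\nu}]$ as a linear combination of $[M_{k-1+\nu'}, W_j]\, 0^{\nu-\nu'}$ for $0 \leq \nu' \leq \nu$. A case distinction on each $\nu'$ finishes the argument: if $j \leq k+\nu'$, item \eqref{p:1+2/1} supplies $[M_{k-1+\nu'}, W_j] = P_{j,\,k+\nu',\,0}$ (first index $=j$); if $j > k+\nu'$, item \eqref{p:1+2/2} supplies an expansion in $\{P_{j',*,*};\, j' < j\}$. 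Either way the first $P$-index is bounded by $j$. I expect no deep obstacle; the only care required is to track, through the iterated applications of \eqref{eq:jacobi.rtl} and \eqref{eq:jacobi.balance}, the range of indices $k+i$ produced by the latter, and the fact that in every intermediate bracket $[M_a, W_{k'}]$ encountered, the Hall condition $k' \leq a+1$ of item \eqref{p:1+2/1} is automatically satisfied thanks to the chain $k' < j \leq a+1$.
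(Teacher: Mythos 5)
Your argument is correct, and items (1) and (3) follow the same route as the paper. For item (2), however, you take a genuinely different path: you unpack $[M_{k-1}, W_j]$ by Jacobi, rewrite the inner brackets $[M_{k-1}, M_{j-1}]$ and $[M_{k-1}, M_j]$ via \eqref{eq:jacobi.balance} into combinations of $W_{k+i,\mu}$ with $k+i < j$, and then re-normalize with \eqref{eq:jacobi.rtl} to land on $P$-generators with first index $< j$. The paper instead gives a two-line \emph{degree-counting} argument: any $b = P_{j',k',\nu'}$ in the support satisfies $n_0(b) = 2j'+k'-2+\nu' \geq 3j'-2$ (using $j' \leq k'$), while the bracket $[M_{k-1},W_j]$ lives in the homogeneous layer $S_{3,\,k+2j-2}(X)$ with $k+2j-2 \leq 3j-3$ (using $k \leq j-1$); comparing forces $j' < j$. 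The paper's argument is more compact and requires no bookkeeping of index ranges; yours is more constructive and actually exhibits the decomposition (which could be useful if one wanted explicit coefficients), at the cost of tracking the ranges of $i$ in \eqref{eq:jacobi.balance} and of $\nu''$ in \eqref{eq:jacobi.rtl}. Both proofs of item (2) are correct.
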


\begin{proof}
    For \cref{p:1+2/2}, let $b \in \supp [M_{k-1}, W_{j}]$.
    Since $\Bs_3$ spans $S_3(X)$, $b = P_{j',k',\nu'}$ with $j' \leq k' \in \N^*$ and $\nu' \in \N$.
    On the one hand $n_0(b) = 2j'+k'-2+\nu' \geq 3j'-2$.
    On the other hand $[M_{k-1}, W_{j}] \in S_{3,k+2j-2}(X)$, where $k+2j-2 \leq 3j-3$.
    Thus $3j'-2 \leq 3j-3$ so $j' < j$.
    
    Thanks to \eqref{eq:jacobi.rtl}, \cref{p:1+2/3} follows from \cref{p:1+2/1,p:1+2/2}.
\end{proof}

\begin{lemma}[Dominant part of the logarithm]
    \label{lem:dominant-S2-S3}
	Under the assumptions of \cref{Thm:Kawski_Wm_precis-pos_crible}, let $\mathbb{P}$ be a component along $f_{W_k}(0)$, parallel to the subspace $(\Bs_1 \cup \mathcal{P}_k \cup \Bs_{\intset{4,\pi(k,m)}} )(f)(0)$.
	Then \eqref{eq:quad-zpi} holds.
\end{lemma}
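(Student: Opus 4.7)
The plan is to mirror the proof of \cref{Lem:DPL} via an application of \cref{p:PZM-XI}, with an enlarged $\mathcal{N}$ and extra care for the cubic brackets $P_{j,l,\nu}$ (with $j \geq k$) that now fall outside the parallel subspace $(\Bs_1 \cup \mathcal{P}_k \cup \Bs_{\intset{4,\pi(k,m)}})(f)(0)$ and for the enlarged family of cross terms this entails.

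First I would identify $\mathcal{N}$. Set $\mathcal{N}_\star := \Bs_1 \cup \mathcal{P}_k \cup \Bs_{\intset{4,\pi(k,m)}}$. By minimality of $k$ in the sense of \cref{Thm:Kawski_Wm_precis-pos_crible}, each $f_{W_j}(0)$ with $j < k$ lies in $(\Bs_1 \cup \mathcal{P}_j \cup \Bs_{\intset{4,\pi(j,m)}})(f)(0) \subset \mathcal{N}_\star(f)(0)$, using $\mathcal{P}_j \subset \mathcal{P}_k$ and the monotonicity of $\pi(\cdot,m)$. Each of $\Bs_1$, $\mathcal{P}_k$, $\Bs_{\intset{4,\pi(k,m)}}$ is stable under right-bracketing by $X_0$, so $\mathcal{N}_\star(f)(0)$ is stable under $H_0 := f_0'(0)$ and thus contains $f_{W_{j,\nu}}(0) = H_0^\nu f_{W_j}(0)$ for every $j < k$, $\nu \geq 0$. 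Then $\mathcal{N} := \mathcal{N}_\star \cup \{W_{j,\nu} ; j \leq k-1, \nu \geq 0\}$ satisfies $\mathcal{N}(f)(0) = \mathcal{N}_\star(f)(0)$, and I apply \cref{p:PZM-XI} with $\bb \gets W_k$, this $\mathcal{N}$, $M \gets \pi(k,m)$, $L$ sufficiently large (e.g.\ $L = 3k+1$), and $\Xi(t,u) := |(u_1,\dotsc,u_k)(t)|^2 + t\|u_k\|_{L^2}^2$.

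Next I would verify the coordinate and cross-term estimates. The set $\Bs_{\intset{1,\pi(k,m)}} \setminus (\mathcal{N} \cup \{W_k\})$ splits into the quadratic family $\{W_{j,\nu} ; j \geq k, (j,\nu) \neq (k,0)\}$---handled verbatim as in \cref{Lem:DPL} via \eqref{bound-xiWjnu/j0}---and the new cubic family $\{P_{j,l,\nu} ; j \geq k\}$. For the latter, \eqref{bound-xiPjknu/j0k0} with $(p_1,p_2,j_0,k_0) = (1,\infty,k,1)$ yields $|\xi_{P_{j,l,\nu}}| \leq C \frac{(ct)^{|b|}}{|b|!} t^{-(2k+2)} \|u_k\|_{L^2}^2 \|u_1\|_{L^\infty}$; since $\|u_1\|_{L^\infty} = O(1)$ and $t\|u_k\|_{L^2}^2 \leq \Xi$, this fits \eqref{eq:xib-XI-L} with $\sigma = 2k+3 \leq |b|$ (using $|P_{j,l,\nu}| \geq 3k+1$). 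For cross terms, the total weight $n_1(b_1)+\dotsb+n_1(b_q)$ must be $2$ or $3$: the case ($q=2$, both $b_i \in \Bs_1$) is as in \cref{Lem:DPL}; the new case ($q=2$, $b_1 = W_{j,\nu}$, $b_2 = M_a$) is restricted to $j \geq k$ by item 3 of \cref{p:1+2}, then $|\xi_{W_{j,\nu}}|$ is controlled via \eqref{bound-xiWjnu/j0} with $\alpha_1 = 1$ and $|\xi_{M_a}| = |u_{a+1}(t)| \leq C\Xi^{1/2}$ with $\alpha_2 = 1/2$ (directly if $a \leq k-1$, via Cauchy--Schwarz on $u_{a+1}(s) = \int_0^s \frac{(s-\tau)^{a-k}}{(a-k)!} u_k(\tau) \dd\tau$ if $a \geq k$); the case ($q=3$, $b_i = M_{a_i}$) uses the same dichotomy to bound each factor by $C\Xi^{1/2}$, giving $\alpha_1+\alpha_2+\alpha_3 = 3/2$.

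The main obstacle I anticipate is the new $P_{j,l,\nu}$ estimate, where the extra $\|u_1\|_{L^\infty}$ factor must be absorbed into the template of \eqref{eq:xib-XI-L} by shifting one power of $t$ between $t^{-\sigma}$ and $\Xi$, together with the $q=3$ cross-term analysis, where the absence of algebraic screening forces an explicit dichotomy argument to bound each factor by $C\Xi^{1/2}$. Once these ingredients are in place, \cref{p:PZM-XI} directly yields \eqref{eq:quad-zpi}.
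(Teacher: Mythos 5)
Your overall strategy mirrors the paper's proof: same choice of $\mathcal{N} = \Bs_1 \cup \{W_{j,\nu};j<k\} \cup \mathcal{P}_k \cup \Bs_{\intset{4,\pi(k,m)}}$, application of \cref{p:PZM-XI}, and the same splitting into coordinate estimates and cross terms with $\Xi(t,u) = |(u_1,\dots,u_k)(t)|^2 + t\|u_k\|_{L^2}^2$. Your coordinate estimate for $P_{j,l,\nu}$ with $j \geq k$ uses $(p_1,p_2,j_0,k_0)=(1,\infty,k,1)$ (the paper takes $(1,\infty,k,k)$), leading to $\sigma = 2k+3$ instead of $3k+1$; both are valid since $2k+3 \leq 3k+1$ for $k\geq 2$ and $\|u_1\|_{L^\infty}=O(1)$ absorbs the extra factor. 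The $q=3$ dichotomy is also handled correctly, matching the preliminary estimates already laid out in Step~2 of \cref{Lem:DPL}.

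However, there is a genuine gap in the $q=2$ cross-term case with $b_1 = W_{j,\nu}$, $b_2 = M_a$. You assign $\alpha_1 = 1$ to $b_1$; converting $\|u_k\|_{L^2}^2 = t^{-1}\Xi$ forces $\sigma_1 = 2k+2$ in the template \eqref{eq:xib-othercross-XI-L}. But the hypothesis of \cref{p:sum-cross-XI} requires $\sigma_1 \leq |b_1|$, and the constraint from \cref{p:1+2} is only $j \geq k$: in particular $b_1 = W_{k,0}=W_k$ is a legitimate factor (e.g.\ $\supp[M_a,W_k] \ni P_{k,a+1} \in \mathcal{E}$ when $a\geq k-1$), for which $|b_1| = 2k+1 < 2k+2 = \sigma_1$. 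The paper avoids this by taking $\alpha_1 = 1/2$ with $\sigma_1 = 2k+1$, via $\|u_k\|_{L^2}^2 \leq \sqrt{t}\,\|u_k\|_{L^2} = \bigl(t\|u_k\|_{L^2}^2\bigr)^{1/2} = \Xi^{1/2}$ (valid since $\|u_k\|_{L^2} \leq \sqrt{t}$ when $\|u_1\|_{L^\infty}\leq 1$, $t \leq 1$). Combined with $\alpha_2 = 1/2$ for $b_2 = M_a$, this yields $\alpha_1+\alpha_2 = 1$ and respects $\sigma_1 \leq |b_1|$ for all $j\geq k$. Once this weight is corrected, your argument goes through.
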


\begin{proof}
	We follow the proof of \cref{Lem:DPL}.
	By minimality of $k$, for every $j<k$,
    \begin{equation}
        f_{W_j}(0) \in 
        \left(\Bs_1 \cup \mathcal{P}_j \cup \Bs_{\intset{4,\pi(j,m)}}\right)(f)(0)
        \subset
        \left(\Bs_1 \cup \mathcal{P}_k \cup \Bs_{\intset{4,\pi(k,m)}}\right)(f)(0),
    \end{equation}
    since $j \mapsto \mathcal{P}_{j}$ and $j \mapsto \pi(j,m)$ are non-decreasing.
    Since $\Bs_1 \cup \mathcal{P}_k \cup \Bs_{\intset{4,\pi(k,m)}} $ is stable by right bracketing with $X_0$, one also has
    \begin{equation}
        f_{W_{j,\nu}}(0) \in \left(\Bs_1 \cup \mathcal{P}_k \cup \Bs_{\intset{4,\pi(k,m)}}\right)(f)(0),     
    \end{equation}
    for every $j <k$ and $\nu \geq 0$.
    Hence $(\Bs_1 \cup \mathcal{P}_k \cup \Bs_{\intset{4,\pi(k,m)}} )(f)(0)
    = \mathcal{N}(f)(0)$ where
	\begin{equation} \label{eq:quad-N_ref}
		\mathcal{N} := \Bs_1 \cup \mathcal{W}_k \cup \mathcal{P}_k \cup \Bs_{\intset{4,\pi(k,m)}}
      	\quad \text{ and } \quad 
      	\mathcal{W}_k := \left\{ W_{j,\nu} ; j <k \right\}.
	\end{equation}
	
	\step{Estimates of other coordinates of the second kind} 
	Let $b \in \Bs_{\intset{1,\pi(k,m)}}$ such that $b \notin \mathcal{N} \cup \{ \bb \}$. 
	The only case which is not already treated in the proof of \cref{Lem:DPL} is $b=P_{j,l,\nu}$ with $j\geq k$. 
	Then \eqref{bound-xiPjknu/j0k0} (with $(p_1, p_2, j_0, k_0) \leftarrow (1,\infty,k,k)$) proves \eqref{eq:xib-XI-L} with $\sigma=3k+1$ and $\Xi(t;u)=t \|u_k\|_{L^2}^2$. Indeed,
	$\|u_k\|_{L^2}^2 \|u_k\|_{L^{\infty}} \leq  t \|u_k\|_{L^2}^2$ when $\|u_1\|_{\infty} \leq 1$ and $k \geq 2$.
	
	\step{Estimates of other cross terms} Let $q \geq 2$, $b_1 \geq \dotsb \geq b_q \in \Bs$ such that $n_1(b_1) + \dotsb + n_1(b_q) \leq \pi(k,m)$ and $\supp \mathcal{F}(b_1, \dotsc, b_q) \not \subset \mathcal{N}$.
	The only cases which are not already treated in the proof of \cref{Lem:DPL} are
	\begin{itemize}
		\item $q=3$ and $b_1, b_2, b_3 \in \Bs_1$, then \eqref{eq:xib-othercross-XI-L} holds with $\alpha_1=\alpha_2=\alpha_3=1/2$ (see the preliminary estimates in the Step \ref{step:quad-dom-cross} of the proof of \cref{Lem:DPL}),
		
		\item $q=2$, $b_1=W_{j_1,\nu_1}$, $b_2=M_{k_1-1}$ and 
		$\supp [b_1,b_2] \cap \{P_{j_2,k_2,\nu};j_2 \geq k\} \neq \emptyset$.
	\end{itemize}
	In this last case, \eqref{eq:xib-othercross-XI-L} holds for $i=2$ with $\alpha_2=1/2$ (see the preliminary estimates in the Step \ref{step:quad-dom-cross} of the proof of \cref{Lem:DPL}). 
	By \cref{p:1+2}, $j_1 \geq k$, thus \eqref{bound-xiWjnu/j0} (with $(p,j_0) \leftarrow (1,k)$) proves that \eqref{eq:xib-othercross-XI-L} holds for $i=1$ with $\sigma_1=2k+1$, $\alpha_1=1/2$ and $\Xi(t,u)=t \|u_k\|_{L^2}^2$. 
	Indeed, $\|u_k\|_{L^2}^2 \leq \sqrt{t} \|u_k\|_{L^2}$ when $\|u_1\|_{L^\infty} \leq 1$ and $k \geq 2$.
\end{proof}

\begin{lemma}[Vectorial relations]
	Under the assumptions of \cref{Thm:Kawski_Wm_precis-pos_crible},
	\begin{enumerate}
		\item \label{it:quad3-vecrec-1} 
		the vectors $f_{M_0}(0),\dots,f_{M_{k-1}}(0)$ are linearly independent,
		\item \label{it:quad3-vecrec-2} 
		if $\vartheta(k) \geq 2$, then 
		$\vect\{ f_{M_0}(0),\dots,f_{M_{k-1}}(0) \} \cap 
		(\mathcal{W}_k + S_{\intset{3, \vartheta(k)}})(f)(0) = \{0\}$,
	\end{enumerate}
	where $\vartheta(k)$ is defined in \cref{p:quad-loop} and
	$\mathcal{W}_k=\{W_{j,\nu} ; j<k\}$ as in \eqref{eq:quad-N_ref}.
\end{lemma}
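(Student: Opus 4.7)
The plan is to mirror the two-step argument of \cref{p:quad-vectors}, merely replacing the absorbing subspace $S_{\intset{1,\pi(k,m)}\setminus\{2\}}(f)(0)$ by the finer $\mathcal{N}(f)(0) := (\Bs_1 \cup \mathcal{P}_k \cup \Bs_{\intset{4,\pi(k,m)}})(f)(0)$.

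As a preliminary, by minimality of $k$ and monotonicity of $j \mapsto \mathcal{P}_j$ and $j \mapsto \pi(j,m)$, the subspace $\mathcal{N}(f)(0)$ contains $f_{W_j}(0)$ for every $j<k$; and since $f_0(0) = 0$ yields $f_{b 0^\nu}(0) = H_0^\nu f_b(0)$ for $H_0 := f_0'(0)$, and each generating subset of $\mathcal{N}$ is stable under right-bracketing by $X_0$, the space $\mathcal{N}(f)(0)$ is $H_0$-stable, hence $\mathcal{W}_k(f)(0) \subset \mathcal{N}(f)(0)$. It therefore suffices, for both statements, to show $f_{W_k}(0) \in \mathcal{W}_k(f)(0) + \mathcal{N}(f)(0)$ to contradict \eqref{eq:S2-S3-comp_NEG}.

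Statement~\ref{it:quad3-vecrec-1} then follows by the same argument as in \cref{p:quad-vectors}: normalizing any hypothetical nontrivial relation to $\beta_{k-1} = 1$ and applying $\ad^2(X_0)$ to $B_1 := \sum \beta_j M_j$ produces $\ad_{B_1}^2(X_0) = W_k + B_3$ with $B_3 \in S_{2,\intset{1,2k-2}}(X)$; the latter is supported (in the basis $\Bs$) on $W_{j',\nu'}$ with $j' < k$, i.e.\ on $\mathcal{W}_k$, which contradicts our non-membership hypothesis.

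For Statement~\ref{it:quad3-vecrec-2}, under $\vartheta(k) \geq 2$ (equivalently $\pi(k,m) \geq 4$), we suppose $f_{B_4}(0) = 0$ for $B_4 := \sum \gamma_j M_j + W + S$ with $\gamma_{k-1} = 1$, $W$ supported on $\mathcal{W}_k$, and $S \in S_{\intset{3,\vartheta(k)}}(X)$. Then $f_{\ad_{B_4}^2(X_0)}(0) = 0$, and the key task is to expand $\ad_{B_4}^2(X_0) = [B_4, \sum \gamma_j M_{j+1} + W' + S']$ (with $W' := [W,X_0]$ still supported on $\mathcal{W}_k$ and $S' := [S,X_0] \in S_{\intset{3,\vartheta(k)}}(X)$) and check that its nine families of cross-brackets all land in $\vect \mathcal{W}_k + \vect \mathcal{P}_k + S_{\intset{4,\pi(k,m)}}(X)$. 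This is the main obstacle, but follows from three algebraic observations: the nonleading $[M_i,M_{j+1}]$ with $i+j+1\leq 2k-2$ lie in $S_{2,\intset{1,2k-2}}(X) \subset \vect \mathcal{W}_k$; \cref{p:1+2/3} places $[M_i,W_{j,\nu}]$ with $j<k$ in $\vect \mathcal{P}_k$; and every remaining bracket either pairs two $S_2$-factors (so $n_1 = 4 \leq \pi(k,m)$) or contains an $S_{\intset{3,\vartheta(k)}}$-factor (so $n_1 \in \intset{4,\pi(k,m)}$, using $\vartheta(k)+2 \leq \pi(k,m)$ and $2\vartheta(k) \leq \pi(k,m)$). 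Hence $f_{W_k}(0) \in \mathcal{N}(f)(0)$, the required contradiction.
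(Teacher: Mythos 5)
Your proposal is correct and follows the same approach as the paper: both reduce to the argument of Proposition on quadratic vectorial relations (\cref{p:quad-vectors}), replace the absorbing space by $\mathcal{N}(f)(0)$, verify that $S_{2,\intset{1,2k-2}}(f)(0) \subset \mathcal{N}(f)(0)$ via $\mathcal{W}_k$-support, and for statement 2 use \cref{p:1+2} together with the inequalities $\vartheta(k)+2 \leq \pi(k,m)$ and $2\vartheta(k) \leq \pi(k,m)$ to land the cross-brackets of $\ad_{B_4}^2(X_0)$ in $\vect\mathcal{W}_k + \vect\mathcal{P}_k + S_{\intset{4,\pi(k,m)}}(X)$. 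Your write-up simply expands what the paper states compactly as "we adapt the proof of \cref{p:quad-vectors}" into the full list of bracket families.
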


\begin{proof}
	We adapt the proof of \cref{p:quad-vectors}. 
	For \cref{it:quad3-vecrec-1}, one replaces \eqref{S22k-2-pik} by
	\begin{equation}
		S_{2,\intset{1,2k-2}}(f)(0) \subset 
		\left(\Bs_1 \cup \mathcal{P}_k \cup \Bs_{\intset{4,\pi(k,m)}}\right)(f)(0).
	\end{equation}
	For \cref{it:quad3-vecrec-2}, $B$ is assumed to belong to $\mathcal{W}_k + S_{\intset{3, \vartheta(k)}}(X)$ thus, by \cref{p:1+2},
	\begin{equation}
		\begin{split}
			B_5
			:=\ad_{B_4}^2(X_0)
			& =[ M_{k-1}+\dots+\gamma_0 M_0 + B,M_{k}+\dots+\gamma_0 M_1 + [B,X_0] ] \\
			& \in W_k + S_{2,\intset{1,2k-2}}(X) + \mathcal{P}_k + S_{\intset{4,2\vartheta(k)}}(X),
		\end{split}
	\end{equation}
	yielding the same contradiction as in the proof of \cref{p:quad-vectors}.
\end{proof}

\begin{lemma}[Closed-loop estimates]
	Under the assumptions of \cref{Thm:Kawski_Wm_precis-pos_crible}, one has \eqref{eq:quad-loop}.
\end{lemma}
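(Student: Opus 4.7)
The plan is to follow the proof of \cref{p:quad-loop} step by step, accommodating only the fact that the refined vectorial relation just proved is slightly weaker than the original one (it deals with perturbations in $\mathcal{W}_k + S_{\intset{3,\vartheta(k)}}$ rather than in all of $S_{\intset{2,\vartheta(k)}}$). First I would apply \cref{thm:Key_1} with $M \gets \vartheta(k)$ to reduce to estimating $\cZ{\vartheta(k)}(t,f,u)(0)$. For each $i \in \intset{0,k-1}$, combining items 1 and 2 of the refined vectorial relation, the family $\{f_{M_0}(0),\dots,f_{M_{k-1}}(0)\}$ is free and its span intersects $(\mathcal{W}_k + S_{\intset{3,\vartheta(k)}})(f)(0)$ trivially, so one may build a component $\mathbb{P}$ along $f_{M_i}(0)$ parallel to $\mathcal{N}(f)(0)$ where
\begin{equation*}
\mathcal{N} := (\{M_0,\dots,M_{k-1}\}\setminus\{M_i\}) \cup \mathcal{W}_k \cup \Bs_{\intset{3,\vartheta(k)}}.
\end{equation*}

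Then the plan is to apply \cref{p:PZM-XI} with $M \gets \vartheta(k)$, $L \gets k+1$, $\bb \gets M_i$ and $\mathcal{N}$ as above, targeting the estimate $\mathbb{P}\cZ{\vartheta(k)}(t,f,u)(0) = u_{i+1}(t) + O\bigl(t^{1/2}\|u_k\|_{L^2}+|(u_1,\dots,u_k)(t)|^2\bigr)$. For the \emph{other coordinates of the second kind} hypothesis, since $\Bs_{\intset{3,\vartheta(k)}}\subset\mathcal{N}$, only $b=M_j$ or $b=W_{j,\nu}$ with $j\geq k$ remain, and both are treated exactly as in the proof of \cref{p:quad-loop} via \eqref{bound-xiMj/J0} and \eqref{bound-xiWjnu/j0} (the $\|u_k\|_{L^2}^2$ appearing in the latter being absorbed into $t^{1/2}\|u_k\|_{L^2}$ using the smallness $\|u_k\|_{L^2}=O(t^{1/2})$ granted by $\|u\|_{W^{-1,\infty}}=O(1)$). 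For the \emph{cross terms} hypothesis, the decisive new subtlety is that $\mathcal{N}$ no longer contains all of $\Bs_2$, only $\mathcal{W}_k$, so brackets $[M_{j_1},M_{j_2}]$ may hit $\{W_{j,\nu}:j\geq k\}\not\subset\mathcal{N}$. For those $q=2$ cross terms with $b_1,b_2\in\Bs_1$, the preliminary bounds from step \ref{step:quad-dom-cross} of the proof of \cref{Lem:DPL} directly give $\alpha_1=\alpha_2=1/2$ with $\Xi(t,u):=t^{1/2}\|u_k\|_{L^2}+|(u_1,\dots,u_k)(t)|^2$. Cross terms with $q\geq 3$ or involving a factor in $\Bs_{\geq 2}$ land, thanks to the algebraic \cref{p:1+2}, in $\Bs_{\intset{3,\vartheta(k)}}\subset\mathcal{N}$ (or outside $\Bs_{\intset{1,\vartheta(k)}}$, in which case they are irrelevant for this truncation), so they require no new analytic estimate.

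Combining the resulting estimate with \cref{thm:Key_1} and the small-state bound \cref{p:small-state} gives, for each $i\in\intset{0,k-1}$,
\begin{equation*}
|u_{i+1}(t)| = O\Bigl(|x(t;u)|+\|u_1\|_{L^{\vartheta(k)+1}}^{\vartheta(k)+1}+t^{1/2}\|u_k\|_{L^2}+|(u_1,\dots,u_k)(t)|^2\Bigr).
\end{equation*}
Taking the maximum over $i$ and bootstrapping the quadratic self-term $|(u_1,\dots,u_k)(t)|^2$, which is legitimate because $|(u_1,\dots,u_k)(t)|=O(\|u_1\|_{L^\infty})\to 0$ in the regime $(t,\|u\|_{W^{-1,\infty}})\to 0$, will deliver \eqref{eq:quad-loop}. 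The main difficulty compared with \cref{p:quad-loop} is precisely this new quadratic self-term, which did not appear in the original proof (where every cross term automatically fell into $\mathcal{N}$); absorbing it cleanly requires both the algebraic screening via \cref{p:1+2} to identify which cross terms genuinely need estimating, and the final bootstrap to close the loop.
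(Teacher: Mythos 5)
Your plan follows the paper's structure — same $\mathcal{N}$, same application of \cref{p:PZM-XI} with $M \gets \vartheta(k)$, same treatment of the non-cross terms — and the proof is correct. The difference lies in how you handle the cross terms that escape $\mathcal{N}$. You estimate both $b_1 = M_{j_1}$ and $b_2 = M_{j_2}$ with $\alpha_1 = \alpha_2 = 1/2$ (reusing the preliminary bounds from the $W_k$-dominant-part proof), which forces the choice $\Xi(t,u) := t^{1/2}\|u_k\|_{L^2} + |(u_1,\dots,u_k)(t)|^2$ and hence injects the quadratic self-term $|(u_1,\dots,u_k)(t)|^2$ into the estimate; you then close with a bootstrap using $|(u_1,\dots,u_k)(t)| = O(\|u\|_{W^{-1,\infty}})$. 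The paper instead observes a sharper algebraic fact: for $\supp[M_{j_1},M_{j_2}]$ to meet $\{W_{j,\nu} : j \geq k\}$ one needs $j_1 + j_2 \geq 2k-1$, so the larger index $j_1$ satisfies $j_1 \geq k$; one can then take $\alpha_1 = 1$ (via \eqref{bound-xiMj/J0} with $j_0 = k$) and $\alpha_2 = 0$, giving $\Xi(t,u) = t^{1/2}\|u_k\|_{L^2}$ alone, with no self-term and no bootstrap. Your route buys nothing and costs an extra absorption step; the paper's route buys a cleaner, one-pass estimate at the price of the small observation about indices. Both are valid. (Minor remark: your appeal to \cref{p:1+2} to dispose of cross terms with $q\geq 3$ or with a factor in $\Bs_{\geq 2}$ is unnecessary — any such product has $\sum n_1(b_i) \geq 3$, so $\supp\mathcal{F}(b_1,\dots,b_q) \subset \Bs_{\intset{3,\vartheta(k)}} \subset \mathcal{N}$ by homogeneity alone.)
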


\begin{proof}
	We adapt the proof of \cref{p:quad-loop} with
	$\mathcal{N} := (\{ M_0, \dotsc, M_{k-1} \} \setminus M_i) \cup 
	\mathcal{W}_k \cup \Bs_{\intset{3,\vartheta(k)}}$.
	
	\step{Estimates of other coordinates of the second kind}
	Let $b \in \Bs_{\intset{1,\vartheta(k)}}$ such that $b \notin \mathcal{N} \cup \{ \bb \}$. 
	The only case which is not treated in the proof of \cref{p:quad-loop} is $b=W_{j,\nu}$ with $j\geq k$. 
	Then \eqref{bound-xiWjnu/j0} with $(p,j_0) \leftarrow (1,k)$ proves that \eqref{eq:xib-XI-L} holds with $\sigma=(2k+1)$ and $\Xi(t,u)=\|u_k\|_{L^2}^2 \leq \sqrt{t}\|u_k\|_{L^2}$.
	
	\step{Estimates of cross terms}
	Let $q \geq 2$, $b_1 \geq \dotsb \geq b_q \in \Bs \setminus \{ X_0 \}$ such that $n_1(b_1) + \dotsb + n_1(b_q) \leq \vartheta(k)$ and $\supp \mathcal{F}(b_1, \dotsc, b_q) \not \subset \mathcal{N}$. 
	Then $q=2$, $b_1, b_2 \in \Bs_1$ and $\supp [b_1,b_2] \cap \{ W_{j,\nu};j\geq k\} \neq \emptyset$. 
	One may assume $b_1=M_{\ell}$ with $\ell \geq k$ and then \eqref{eq:xib-othercross-XI-L} holds with $\sigma_i = k+1$, $\alpha_i = 1$ and $\Xi(t,u) = \sqrt{t}\|u_k\|_{L^2}$.
\end{proof}

\section{Kawski's refined \texorpdfstring{$W_2$}{W2} obstruction} \label{s:W2refined}

The goal of this section is to prove the case $k=2$ in \cref{Thm:CN_W123}, as a consequence of the following more precise statement.

\begin{theorem} \label{thm:W2precise}
    Assume that $f_{W_1}(0) \in \mathcal{N}_1(f)(0)$ and $f_{W_2}(0) \notin \mathcal{N}_2(f)(0)$.  Then, system \eqref{syst} has a drift along $f_{W_2}(0)$, parallel to $\mathcal{N}_2(f)(0)$, as $(t,\|u\|_{L^\infty}) \to 0$.
\end{theorem}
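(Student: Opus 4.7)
The plan is to follow the strategy of \cref{s:heuristic-conj-drift,s:Quad}: letting $\mathbb{P}$ be a component along $f_{W_2}(0)$ parallel to $\mathcal{N}_2(f)(0)$, I will show
\[
    \mathbb{P} x(t;u) = \xi_{W_2}(t,u) + o(\xi_{W_2}(t,u)) + O(|x(t;u)|^{4/3}) \quad \text{as } (t,\|u\|_{L^\infty}) \to 0,
\]
which, combined with $\xi_{W_2} \geq 0$, establishes the drift via \cref{Def:drift}. The starting point is \cref{thm:Key_1} applied with $M = 3$; the remainder $\|u_1\|_{L^4}^4$ it produces is absorbed using a Gagliardo--Nirenberg interpolation of the form \eqref{eq:quad-interpol} with $k = 2$, $m = 0$.

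First, a reduction via the coarse screening: if $f_{W_2}(0) \notin S_1(f)(0) + \mathcal{P}_2(f)(0)$, then \cref{Thm:Kawski_Wm_precis-pos_crible} applied with $k = 2$ and $m = 0$ (noting $\pi(2,0) = 3$, so $\Bs_{\intset{4,\pi(2,0)}} = \emptyset$) gives a drift parallel to the larger subspace $S_1(f)(0) + \mathcal{P}_2(f)(0) \supset \mathcal{N}_2(f)(0)$, which implies the one sought. So I may assume $f_{W_2}(0) \in S_1(f)(0) + \mathcal{P}_2(f)(0)$. In that regime, a computation in the spirit of \cref{p:quad-vectors} shows that $f_{M_0}(0)$ and $f_{M_1}(0)$ are linearly independent: indeed, any dependence $f_{M_1}(0) = \alpha f_{M_0}(0)$ would yield $f_{[B_1,[B_1,X_0]]}(0) = 0$ for $B_1 := M_1 - \alpha M_0$, and expanding the bracket via $[M_0,M_2] = W_{1,1}$ together with $f_{W_1}(0), f_{W_{1,\nu}}(0) \in S_1(f)(0)$ would force $f_{W_2}(0) \in S_1(f)(0) \subset \mathcal{N}_2(f)(0)$, a contradiction. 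Thus the closed-loop estimate of \cref{p:quad-loop} applies: $|(u_1,u_2)(t)| = O(|x(t;u)| + \|u_1\|_{L^2}^2 + \sqrt{t}\|u_2\|_{L^2})$.

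The core difficulty is the estimation of the $P_{1,l,\nu}$ contributions for $l \geq 2$: a direct bound via \eqref{bound-xiPjknu/j0k0} only gives $|\xi_{P_{1,l,\nu}}| = O(t\xi_{W_2})$, which is borderline. The key manipulation is the integration by parts based on $u_1^2 = \frac{d}{ds}(u_2 u_1) - u_2 u$; for $l = 2$, $\nu = 0$ it gives
\[
    \int_0^t u_2 u_1^2 \, ds = \tfrac{1}{2}\bigl(u_2(t)^2 u_1(t) - \textstyle\int_0^t u_2^2 u \, ds\bigr).
\]
The bulk term is $O(\|u\|_{L^\infty}\xi_{W_2})$; the boundary term is controlled via the closed-loop squared estimate $|u_2(t)|^2 \lesssim |x(t;u)|^2 + \|u_1\|_{L^2}^4 + t\xi_{W_2}$, with $|x|^2 = O(|x|^{4/3})$ for small $|x|$. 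For $l \geq 3$ and general $\nu \geq 0$, iterated applications of the same identity (together with $u_1 = u_2'$) reduce matters to analogous boundary and bulk estimates, each step gaining a factor of $t\|u\|_{L^\infty}$. Other surviving brackets ($W_{j,\nu}$ with $j \geq 3$ or $W_{2,\nu}$ with $\nu \geq 1$) contribute at most $O(t \xi_{W_2})$ thanks to the estimates of \cref{p:xi-bounds}, while the $W_{1,\nu}$ contributions are killed by $\mathbb{P}$ since $f_{W_{1,\nu}}(0) = H_0^\nu f_{W_1}(0) \in S_1(f)(0)$.

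The remaining ingredient is the control of the cross terms $\eta_b - \xi_b$ via \cref{p:sum-cross-XI}: one must verify that every product $\xi_{b_1} \dotsb \xi_{b_q}$ appearing in the Baker--Campbell--Hausdorff expansion of $\eta_b - \xi_b$ for $b$ in $\supp \mathcal{F}$ involving a $P_{1,l,\nu}$ is either supported on $\mathcal{N}_2$ (hence killed by $\mathbb{P}$) or admits an estimate of the form \eqref{eq:xib-othercross-XI-L} with $\Xi = o(\xi_{W_2})$. This is the purely algebraic part of the argument and is expected to be the most delicate bookkeeping step, paralleling the combinatorics of \cref{s:S2-S3}; the main obstacle will be ensuring that Jacobi-type rewriting of the relevant double brackets falls consistently within $\mathcal{N}_2 + $ higher-order safe layers, without introducing new $P_{1,l,\nu}$ with $l \geq 2$ coefficients that escape the integration-by-parts machinery above.
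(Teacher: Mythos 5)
Your overall scaffolding matches the paper's: apply \cref{thm:Key_1} with $M=3$, estimate the dominant part $\mathbb{P}\cZ{3}(t,f,u)(0)$ via \cref{p:PZM-XI}, obtain the closed-loop estimate on $(u_1,u_2)(t)$ from the independence of $f_{M_0}(0), f_{M_1}(0)$, and absorb $\|u_1\|_{L^4}^4$ by Gagliardo--Nirenberg. Two points of divergence deserve comment. First, your reduction via the coarse screening (\cref{Thm:Kawski_Wm_precis-pos_crible}) is logically valid but not needed: the paper treats all surviving brackets head-on with one choice of $\mathcal{N}$, and the case split does not actually simplify the subsequent estimate of $\xi_{P_{1,l,\nu}}$, which you still must carry out. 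Second and more importantly, your claim that ``a direct bound via \eqref{bound-xiPjknu/j0k0} only gives $|\xi_{P_{1,l,\nu}}|=O(t\xi_{W_2})$, which is borderline'' is a misconception on two counts. Even a bound of the form $O(t\xi_{W_2})$ would already be $o(\xi_{W_2})$ as $t\to0$, hence absorbable by the $\varepsilon\xi_{W_2}$ allowance in \cref{Def:drift}; it is not borderline. But in fact the bound \eqref{bound-xiPjknu/j0k0} with $(p_1,p_2,j_0,k_0)\gets(2,2,1,2)$ gives a different and perfectly adequate estimate, $|\xi_{P_{j,k,\nu}}|\lesssim \frac{(ct)^{|b|}}{|b|!}t^{-5}\|u_1\|_{L^4}^2\|u_2\|_{L^2}$ uniformly over all $P_{j,k,\nu}$ with $k\geq 2$, and after the Gagliardo--Nirenberg inequality $\|u_1\|_{L^4}^2\lesssim\|u\|_{L^\infty}\|u_2\|_{L^2}$ this is $O(\|u\|_{L^\infty}\xi_{W_2})=o(\xi_{W_2})$. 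The black-box summation lemmas then handle the infinite sum automatically.

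As a consequence, your integration-by-parts machinery for the $P_{1,l,\nu}$ contributions, while not incorrect in spirit, is solving a non-problem and introduces a genuine gap: the assertion that ``iterated applications of the same identity \dots reduce matters to analogous boundary and bulk estimates, each step gaining a factor of $t\|u\|_{L^\infty}$'' is not verified and is not obvious — the structure of the integrand changes after each integration by parts and the claimed gain needs to be checked, particularly for general $\nu\geq 0$. Similarly, your treatment of the boundary term only bounds $|u_2(t)|^2$ while silently handling the extra factor $|u_1(t)|$, and the absorption of $\|u_1\|_{L^2}^4$ is left implicit. Finally, the cross-term analysis is lighter than you fear for $W_2$: since $M=3$ and $q\geq2$ forces $n_1(b_1)+\dotsb+n_1(b_q)\leq3$, all the $b_i$ lie in $\Bs_{\intset{1,2}}$, and the paper handles this in a handful of preliminary estimates with $\alpha_i=n_1(b_i)/6$ or $1/2$; there is no delicate Jacobi-type rewriting needed at this order (that complexity appears for $W_3$ and the sextic case, not here). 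In short, the proposal is on the right track but its central technical innovation is both unnecessary and incompletely justified; the paper's direct $L^p$ bound is simpler and self-contained.
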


\subsection{Limiting examples} \label{Subsec:Ex_W2}

Let us illustrate that the set $\mathcal{N}_2$ defined in \eqref{def:mathcalE2} of brackets which can compensate $W_2$ must include $P_{1,1,\nu}$ for every $\nu \in \N$.
As an illustration, we prove the following controllability results using classical sufficient conditions due to Sussmann or Bianchini and Stefani because they are simpler to apply.
Nevertheless, the same results would follow from Agrachev and Gamkrelidze conditions as in \cref{Subsec:Ex_W3}.

\medskip

\emph{Limiting example for $P_{1,1,0}$.}
Consider the system
\begin{equation} \label{syst:Jakubczyk}
    \begin{cases}
        \dot{x}_1 = u \\
        \dot{x}_2 = x_1 \\
        \dot{x}_3 = x_2^2 + x_1^3.
    \end{cases}
\end{equation}
Written in the form \eqref{syst}, this system satisfies
\begin{equation}
    f_{M_0}(0)=e_1, \quad 
    f_{M_1}(0)=e_2, \quad 
    f_{P_{1,1,0}}(0)=6 e_3, \quad 
    f_{W_2}(0)= 2 e_3
\end{equation} 
and $f_b(0)=0$ for any $b \in \Bs \setminus\{ M_0, M_1, W_2, P_{1,1,0} \}$.

This system was proposed by Jakubczyk and is known to be $L^\infty$-STLC since\footnote{Sussmann's initial proof involves controls with $\|u\|_{L^\infty} \leq 1$, but extends easily to any bound on $u$.} \cite[p.\ 711-712]{MR710995}. 
It also satisfies Sussmann's $\mathcal{S}(\theta)$ condition (see \cite[Theorem~7.3]{MR872457} or \cite[Theorem 3.29]{zbMATH05150528}) for any $\theta > 1/2$ (see also \cite[Section 2.4.1]{JDE} for a short direct proof).

\medskip

\emph{Limiting example for $P_{1,1,\nu}$.} 
Let $\nu \in \N^*$. 
We consider the system
\begin{equation}
    \begin{cases}
        \dot{x}_1 = u \\
        \dot{x}_2 = x_1 \\
        \dot{x}_3 = x_1^3 \\
        \dot{x}_{3+i}=x_{3+i-1} \quad \text{ for } i=1,\dots,\nu-1\\
        \dot{x}_{3+\nu} = x_2^2 + x_{3+\nu-1}.
    \end{cases}
\end{equation}
Written in the form \eqref{syst}, this system satisfies
\begin{equation}
    f_{M_0}(0)=e_1, \quad 
    f_{M_1}(0)=e_2, \quad 
    f_{P_{1,1,\mu}}(0)=3! e_{3+\mu} \text{ for } \mu=0,\dots,\nu, \quad 
    f_{W_2}(0)= 2 e_{3+\nu}
\end{equation} 
and $f_b(0)=0$ for any $b \in \Bs \setminus\{ M_0, M_1, W_2, P_{1,1,\mu};\mu\in\llbracket 0 , \nu \rrbracket \}$.

For $\nu = 3$, this system corresponds to \cite[Example 2.4]{HK}.
To prove that it is $L^\infty$-STLC, the key point is to prove that $\pm e_3 = \pm 6 f_{P_{1,1,0}}(0)$ are tangent vectors.
Then, the $L^\infty$-STLC follows from the elementary remark that, if, for some $b \in \Bs$, $\pm f_b(0)$ are tangent vectors, then so are $\pm f_{(b,X_0)}(0)$ (see \cite[Theorem 6]{HK} or \cite[claim P2]{bianchini1986sufficient}). 
As in the case $\nu = 0$, the fact that $\pm e_3$ are tangent vectors can be proved using oscillating controls or Sussmann's $\mathcal{S}(\theta)$ condition with $\theta > 1/2$ as reformulated in \cite[Theorem 2]{bianchini1986sufficient} by Bianchini and Stefani.


\medskip

\emph{A non-controllable example involving $Q_{1,1,1}$.}
In \cref{Subsec_Optim_Wk}, we recalled that system \eqref{eq:ex:W2vsQ111} is small-time locally controllable with large enough controls in $L^\infty$, but not $L^\infty$-STLC in the sense of \cref{Def:WmSTLC}.
For this system, one has $6 f_{W_2}(0) = - f_{Q_{1,1,1}}(0)$ (and $Q_{1,1,1}$ is the only bracket ``compensating'' $W_2$).
But $Q_{1,1,1}$ does not belong to the set $\mathcal{N}_2$ defined in \eqref{def:mathcalE2} of brackets which can compensate $W_2$ for $L^\infty$-STLC.
Hence, the fact that \eqref{eq:ex:W2vsQ111} is not $L^\infty$-STLC can be seen as an application of the case $j=2$ of \cref{Thm:CN_W123}.

\subsection{Dominant part of the logarithm}

\begin{lemma}
    Assume that $f_{W_1}(0) \in \mathcal{N}_1(f)(0)$ and $f_{W_2}(0) \notin \mathcal{N}_2(f)(0)$. 
    Let $\mathbb{P}$ be a component along $f_{W_2}(0)$, parallel to $\mathcal{N}_2(f)(0)$.
    Then
    \begin{equation} \label{eq:W2-z3}
        \mathbb{P} \cZ{3}(t,f,u)(0) = \xi_{W_2}(t,u)
        + O\Big( |(u_1,u_2)(t)|^2 + t \|u_2\|_{L^2}^2 + \|u_1\|_{L^4}^2 \|u_2\|_{L^2}  + \|u_1\|_{L^4}^4  \Big).
    \end{equation}
\end{lemma}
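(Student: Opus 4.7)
The plan is to apply \cref{p:PZM-XI} with $M = 3$, $\bb = W_2$, and a set $\mathcal{N}$ of brackets whose evaluations at $0$ span $\mathcal{N}_2(f)(0)$, then deduce \eqref{eq:W2-z3} directly from \eqref{eq:PZM-xibb-OXi}. The preliminary algebraic observation is that, since $f_0(0) = 0$, the subspace $\mathcal{N}_1(f)(0)$ is stable under left multiplication by $H_0 := f_0'(0)$, so the assumption $f_{W_1}(0) \in \mathcal{N}_1(f)(0)$ upgrades to $f_{W_{1,\nu}}(0) \in \mathcal{N}_1(f)(0)$ for every $\nu \in \N$. Hence $\mathcal{N}_2(f)(0) = \mathcal{N}(f)(0)$ with
\begin{equation*}
    \mathcal{N} := \Bs_1 \cup \{W_{1,\nu} ; \nu \in \N\} \cup \{P_{1,1,\nu} ; \nu \in \N\},
\end{equation*}
and I aim for the cumulative error to be dominated by $\Xi(t,u) := |(u_1,u_2)(t)|^2 + t\|u_2\|_{L^2}^2 + \|u_1\|_{L^4}^2\|u_2\|_{L^2} + \|u_1\|_{L^4}^4$, which matches the four terms in \eqref{eq:W2-z3}.

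For the coordinate estimates hypothesis (brackets of $\Bs_{\intset{1,3}} \setminus (\mathcal{N} \cup \{W_2\})$), the residual elements split into three families: the higher quadratics $W_{2,\nu}$ with $\nu \geq 1$ and $W_{j,\nu}$ with $j \geq 3$, which \eqref{bound-xiWjnu/j0} with $j_0 = 2$ places inside $t\|u_2\|_{L^2}^2$; the mixed cubics $P_{1,k,\nu}$ with $k \geq 2$, captured by \eqref{bound-xiPjknu/j0k0} with $(p_1,p_2,j_0,k_0) = (2,2,1,2)$ to produce the $\|u_1\|_{L^4}^2\|u_2\|_{L^2}$ piece; and the heavy cubics $P_{j,k,\nu}$ with $j \geq 2$, bounded through \eqref{bound-xiPjknu/j0k0} with $j_0 = k_0 = 2$ and Hölder's inequality, landing in the $t\|u_2\|_{L^2}^2$ budget.

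For the cross-term hypothesis, the $q=2$ pairs drawn from $\Bs_1 \times \Bs_1$ are handled just as in the proof of \cref{Lem:DPL} with $\alpha_1 = \alpha_2 = 1/2$, distinguishing whether each $\xi_{M_j}$ equals $u_1(t)$, $u_2(t)$, or is governed by $t^{1/2}\|u_2\|_{L^2}$ via \eqref{bound-xiMj/J0} with $j_0 = 2$ (absorbed in the first two pieces of $\Xi$). The genuinely new case is the pair $(W_{j_1,\nu_1}, M_{k_1-1}) \in \Bs_2 \times \Bs_1$: when $j_1 \geq 2$, \eqref{bound-xiWjnu/j0} with $j_0 = 2$ already confines $\xi_{W_{j_1,\nu_1}}$ inside $t\|u_2\|_{L^2}^2$ with full weight $\alpha_1 = 1$; when $j_1 = 1$, the algebraic input of \cref{p:1+2} restricts $\supp[W_{1,\nu_1}, M_{k_1-1}]$ to $\{P_{1,k',\nu'}\}$, so the pair is only relevant when some $k' \geq 2$ appears outside $\mathcal{N}$, and in that regime I couple \eqref{bound-xiWjnu/j0} applied with $(p,j_0) = (2,1)$ on $W_{1,\nu_1}$ (giving $t^{1/2}\|u_1\|_{L^4}^2$) with the appropriate bound on $\xi_{M_{k_1-1}}$, yielding the $\|u_1\|_{L^4}^2\|u_2\|_{L^2}$ and $\|u_1\|_{L^4}^4$ pieces of $\Xi$ depending on how the exponents $\alpha_1, \alpha_2$ are split. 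Triple products ($q=3$ with all factors in $\Bs_1$) and the sparse higher-$h$ cross terms such as $(q,h)=(2,(2,1))$ are then absorbed in $|(u_1,u_2)(t)|^2$ with $\alpha_i = 1/3$ or $1/2$.

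The main obstacle is this last combinatorial dissection: distributing the exponents $\alpha_i$ so that each factor's available bound matches one of the four pieces of $\Xi$ while enforcing $\sum \alpha_i \geq 1$ and $\sigma_i \leq L$. The algebraic lemma \cref{p:1+2} is what prevents the cross product $\xi_{W_{1,\nu_1}} \xi_{M_{k_1-1}}$ from spawning any $P_{j,k,\nu}$ with $j \geq 2$ outside $\mathcal{N}$, which would otherwise force a strictly stronger $\Xi$; thanks to it, the analysis stays on the $L^4$--$L^2$ interpolation scale naturally attached to $W_2$, and the application of \cref{p:PZM-XI} closes.
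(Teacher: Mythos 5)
Your plan matches the paper's: same $\mathcal{N}$, same application of \cref{p:PZM-XI} with $M = 3$ and $\bb = W_2$, and the same Sobolev estimates for the coordinate step. The final step closes the same way. But your handling of the cross terms, and in particular your reliance on \cref{p:1+2}, diverges from the paper in a way that is not merely cosmetic.

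You argue that \cref{p:1+2} is what prevents the $(W_{1,\nu_1}, M_{k_1-1})$ cross product from ``spawning any $P_{j,k,\nu}$ with $j \geq 2$ outside $\mathcal{N}$, which would otherwise force a strictly stronger $\Xi$.'' This is not so, and the paper does not invoke \cref{p:1+2} at all in the $W_2$ case. Instead, it bounds every $\Bs_2$ factor that can appear in a cross term by the universal estimate \eqref{eq:xib-u1-Lk}, giving
\begin{equation*}
    |\xi_{b_i}(t,u)| \leq \frac{(ct)^{|b_i|}}{|b_i|!}\, t^{-3}\, \bigl(t\,\|u_1\|_{L^4}^4\bigr)^{1/2},
\end{equation*}
hence $\sigma_i = 3$, $\alpha_i = 1/2$, $\Xi(t,u) = t\,\|u_1\|_{L^4}^4$, uniformly in $j$ and $\nu$. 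Combined with $\alpha_i = 1/2$ for each $\Bs_1$ factor (one of $|(u_1,u_2)(t)|^2$ or $t\|u_2\|_{L^2}^2$ from \eqref{bound-xiMj/J0}), and since $n_1(b_1)+\dotsb+n_1(b_q) \leq 3$ with $q \geq 2$ forces all $b_i \in \Bs_{\intset{1,2}}$, one always has $\sum_i \alpha_i = q/2 \geq 1$. Whether or not the support of the corresponding iterated bracket hits a $P_{j,k,\nu}$ with $j \geq 2$ is immaterial: the estimate is strong enough regardless of which element of $\mathcal{E}$ the cross term contributes to. The lemma \cref{p:1+2} first becomes load-bearing in the cubic refinement (\cref{Thm:Kawski_Wm_precis-pos_crible}) and in the $W_3$ proof (\cref{p:W3-Z5}), where the acceptable $\Xi$ is genuinely tighter.

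Likewise, your separate case $P_{j,k,\nu}$ with $j \geq 2$ via $(j_0,k_0) = (2,2)$ (which pushes $\sigma$ to $7$ and hence needs $L \geq 7$) is correct but redundant: since $j \leq k$ in $\Bs_3$, $P_{j,k,\nu} \notin \mathcal{N}$ is equivalent to $k \geq 2$, and the single choice $(p_1,p_2,j_0,k_0) = (2,2,1,2)$ with $\sigma = 5$ already covers all these brackets, with $L = 6$ sufficient throughout. So the proof goes through, but the extra case-splitting and the appeal to \cref{p:1+2} are not doing any real work here and could mislead about which tools are needed at which level of refinement.
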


\begin{proof}
    By assumption, $f_{W_1}(0) \in \mathcal{N}_1(f)(0)$.
    Since $\mathcal{N}_1$ is stable by right bracketing with $X_0$, $f_{W_{1,\nu}}(0) \in \mathcal{N}_1(f)(0)$ for every $\nu \geq 0$.
    Thus, since $\mathcal{N}_1 \subset \mathcal{N}_2$, $\mathcal{N}_2(f)(0) = \mathcal{N}(f)(0)$, where $\mathcal{N}$ is defined as
    \begin{equation} \label{eq:W2-N}
        \mathcal{N} := \mathcal{N}_2 \cup \{ W_{1,\nu} ; \nu \in \N \},
    \end{equation}
    where $\mathcal{N}_2$ is defined in \eqref{def:mathcalE2}.
    By assumption, $f_{W_2}(0) \notin \mathcal{N}_2(f)(0) = \mathcal{N}(f)(0)$.
    
    We intend to apply \cref{p:PZM-XI} with $M \gets 3$, $L \gets 6$, $\bb \gets W_2$ and $\mathcal{N}$ as in \eqref{eq:W2-N}, so that \eqref{eq:W2-z3} will follow from \eqref{eq:PZM-xibb-OXi}, for the appropriate choice of $\Xi(t,u)$.
    Let us check that the required estimates are satisfied.
    
    \step{Estimates of other coordinates of the second kind}
    Let $b \in \Bs_{\intset{1,3}}$ such that $b \notin \mathcal{N} \cup \{ \bb \}$.
    
    We investigate the different possibilities depending on $n_1(b)$.
    \begin{itemize}
        \item One cannot have $n_1(b) = 1$ since $\Bs_1 \subset \mathcal{N}_2$.
        
        \item If $n_1(b) = 2$, by \eqref{Bstar_S2} and \eqref{eq:W2-N}, one has $b = W_{j,\nu}$ with either ($j \geq 3$) or ($j = 2$ and $\nu \geq 1$).
        Thus $|b| \geq 6$.
        By estimate \eqref{bound-xiWjnu/j0} with $(p,j_0) \gets (1,2)$, \eqref{eq:xib-XI-L} holds with $\sigma = 6$ and
        \begin{equation}
            \Xi(t,u) := t \|u_2\|_{L^2}^2.
        \end{equation}
        
        \item If $n_1(b) = 3$, by \eqref{Bstar_S3} and \eqref{def:mathcalE2}, $b = P_{j,k,\nu}$ with $k \geq 2$.
        Thus $|b| \geq 5$.
        By estimate \eqref{bound-xiPjknu/j0k0} with $(p_1,p_2,j_0, k_0) \gets (2,2,1,2)$, \eqref{eq:xib-XI-L} holds with $\sigma=5$ and
        \begin{equation}
            \Xi(t,u) := \|u_1\|_{L^4}^2 \|u_2\|_{L^2}.
        \end{equation}
    \end{itemize}
    
    \step{Estimates of cross terms}
    Let $q \geq 2$, $b_1 \geq \dotsb \geq b_q \in \Bs \setminus \{ X_0 \}$ such that $n_1(b_1) + \dotsb + n_1(b_q) \leq 3$ and $\supp \mathcal{F}(b_1, \dotsc, b_q) \not \subset \mathcal{N}$.
    
    \medskip 
    \noindent We start with preliminary estimates.
    \begin{itemize}
        \item If $b_i = M_j$ for some $j \in \intset{0,1}$, by \eqref{xi_S1},
        \begin{equation}
            |\xi_{b_i}(t,u)| = |u_{j+1}(t)| = \frac{t^{|b_i|}}{|b_i|!} t^{-(j+1)} (j+1)! |u_{j+1}(t)|
        \end{equation}
        so \eqref{eq:xib-othercross-XI-L} holds with $\sigma_i = j+1$, $\alpha_i = 1/2$ and $\Xi(t,u) = |(u_1,u_2)(t)|^2$.
        
        \item If $b_i = M_j$ for $j \geq 2$, by \eqref{bound-xiMj/J0} (with $(p,j_0) \gets (2,2)$), \eqref{eq:xib-othercross-XI-L} holds with $\sigma_i = 3$, $\alpha_i = 1/2$ and $\Xi(t,u) = t\|u_2\|_{L^2}^2$.
        
        \item By \eqref{eq:xib-u1-Lk}, for each $b_i \in \Bs_2$, \eqref{eq:xib-othercross-XI-L} holds with $\sigma_i = 3$, $\alpha_i = 1/2$ and $\Xi(t,u) = t \|u_1\|_{L^4}^4$.
    \end{itemize}
    Since $n_1(b_1) + \dotsb n_1(b_q) \leq 3$ and $q \geq 2$, all the $b_i$ belong to $\Bs_{\intset{1,2}}$.
    Thanks to the preliminary estimates, $\alpha = q/2 \geq 1$.
\end{proof}

\subsection{Vectorial relation}

\begin{lemma} \label{p:W2-vectors}
	Assume that $f_{W_1}(0) \in \mathcal{N}_1(f)(0)$ and $f_{W_2}(0) \notin \mathcal{N}_2(f)(0)$. 
	Then, the vectors $f_{M_0}(0)$ and $f_{M_1}(0)$ are linearly independent.
\end{lemma}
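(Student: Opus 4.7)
The plan is to argue by contradiction: assume $f_{M_0}(0)$ and $f_{M_1}(0)$ are linearly dependent and derive $f_{W_2}(0) \in \mathcal{N}_2(f)(0)$, contradicting the hypothesis. The structure mirrors statement~1 of \cref{p:quad-vectors} applied with $k=2$. The key preliminary observation, used throughout, is that since $f_0(0) = 0$, one has $f_{(b,X_0)}(0) = H_0 f_b(0)$ for every $b \in \Br(X)$, where $H_0 := (Df_0)(0)$; in particular $\mathcal{N}_1(f)(0) = \vect\{f_{M_\nu}(0);\nu\in\N\}$ is $H_0$-stable, so the hypothesis $f_{W_1}(0) \in \mathcal{N}_1(f)(0)$ yields $f_{W_{1,\nu}}(0) = H_0^\nu f_{W_1}(0) \in \mathcal{N}_1(f)(0) \subset \mathcal{N}_2(f)(0)$ for every $\nu \in \N$. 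This is the fact that will let me absorb the $W_{1,1}$ pollution at the end.

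The dependence hypothesis lets me pick $\beta_0 \in \R$ such that $B_1 := M_1 + \beta_0 M_0$ satisfies $f_{B_1}(0) = 0$. In the degenerate case $f_{M_0}(0) = 0$, the identity above also forces $f_{M_1}(0) = H_0 f_{M_0}(0) = 0$, and one may take $\beta_0 = 0$. From $f_{B_1}(0) = 0$ one also deduces $f_{[B_1, X_0]}(0) = H_0 f_{B_1}(0) = 0$. Setting $B_2 := \ad_{B_1}^2(X_0) = [B_1, [B_1, X_0]]$ and expanding $[f_{B_1}, f_{[B_1,X_0]}](0)$ via the definition of the Lie bracket of vector fields, both contributions vanish, hence $f_{B_2}(0) = 0$.

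The last step is a short algebraic expansion of $B_2$ in $\Bs$. By bilinearity and $[M_1, M_1] = 0$,
\begin{equation*}
B_2 = [M_1 + \beta_0 M_0,\ M_2 + \beta_0 M_1] = W_2 + \beta_0 [M_0, M_2] + \beta_0^2 W_1,
\end{equation*}
and a single application of the Jacobi identity (equivalently of \eqref{eq:jacobi.rtl} with $a \gets M_0$, $b \gets M_1$, $\nu \gets 1$) gives $[M_0, M_2] = [W_1, X_0] = W_{1,1}$. Evaluating at $0$,
\begin{equation*}
0 = f_{W_2}(0) + \beta_0 f_{W_{1,1}}(0) + \beta_0^2 f_{W_1}(0),
\end{equation*}
hence $f_{W_2}(0) \in \mathcal{N}_1(f)(0) \subset \mathcal{N}_2(f)(0)$, the required contradiction. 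I expect no serious obstacle: the argument is essentially linear-algebraic. The only points demanding care are handling the degenerate case $f_{M_0}(0) = 0$ and the precise identification $[M_0, M_2] = W_{1,1}$, which is what guarantees that no bracket outside $\mathcal{N}_1$ pollutes the expansion of $B_2$.
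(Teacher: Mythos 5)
Your proof is correct and takes essentially the same approach as the paper, which simply invokes the general Proposition~\ref{p:quad-vectors} with $k=2$ and $\pi(k)=2$; you have reproduced the argument of that proposition's Step~1 in the specialized case, explicitly computing the decomposition $\ad_{B_1}^2(X_0) = W_2 + \beta_0 W_{1,1} + \beta_0^2 W_1$ rather than leaving the correction term in the abstract space $S_{2,\intset{1,2}}(X)$. Your handling of the degenerate case $f_{M_0}(0)=0$ is a valid (and slightly more direct) substitute for the paper's replacement $B_1 \gets \ad_{X_0}^{k-1-K}(B_1)$.
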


\begin{proof}
    This statement is implied by the case $k = 2$ and $\pi(k)=2$ in \cref{p:quad-vectors}.
\end{proof}

\subsection{Closed-loop estimate}

\begin{lemma} \label{p:W2-loop}
    Assume that $f_{M_0}(0)$ and $f_{M_1}(0)$ are linearly independent.
    Then,
    \begin{equation} \label{eq:W2-loop}
        |(u_1,u_2)(t)|=O \left( |x(t;u)|+\|u_1\|_{L^2}^2 + t^{\frac 12} \|u_2\|_{L^2} \right).
    \end{equation}
\end{lemma}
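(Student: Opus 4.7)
The plan is to follow the template of \cref{lem:stefani-loop} and \cref{p:quad-loop}: apply \cref{thm:Key_1} at the lowest useful truncation order, test against suitably chosen components, and absorb the higher-order pieces with the help of the small-state estimate of \cref{p:small-state}. Since we only need to recover the two scalars $u_1(t)=\xi_{X_1}(t,u)$ and $u_2(t)=\xi_{M_1}(t,u)$, and these are both coordinates of elements of $\Bs_1$, truncating the Magnus expansion at $M=1$ should suffice. \cref{thm:Key_1} then yields
\begin{equation}
    x(t;u) = \cZ{1}(t,f,u)(0) + O\!\left(\|u_1\|_{L^{2}}^{2} + |x(t;u)|^{2}\right).
\end{equation}

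For each $i\in\{0,1\}$, the hypothesis that $f_{M_0}(0)$ and $f_{M_1}(0)$ are linearly independent lets us pick a linear form $\mathbb{P}_i:\R^d\to\R$ which is a component along $f_{M_i}(0)$ parallel to $\R f_{M_{1-i}}(0)$. Then I apply \cref{p:PZM-XI} with $M\gets 1$, $L\gets 3$, $\bb\gets M_i$ and $\mathcal{N}\gets\{M_{1-i}\}$. The verification of the hypotheses is short: the only coordinates to bound are $\xi_{M_\nu}$ for $\nu\geq 2$, for which \eqref{bound-xiMj/J0} applied with $(p,j_0)\gets(2,2)$ gives $|b|\geq 3$ and
\begin{equation}
    |\xi_{M_\nu}(t,u)|\leq \frac{(ct)^{|b|}}{|b|!}\,t^{-3}\,t^{1/2}\,\|u_2\|_{L^2},
\end{equation}
so \eqref{eq:xib-XI-L} holds with $\sigma=3$ and $\Xi(t,u)=t^{1/2}\|u_2\|_{L^2}$. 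There are no cross terms to estimate, because any $q\geq 2$ with $b_1,\dots,b_q\in\Bs\setminus\{X_0\}$ forces $n_1(b_1)+\dotsb+n_1(b_q)\geq 2>M=1$. Consequently \eqref{eq:PZM-xibb-OXi} reads
\begin{equation}
    \mathbb{P}_i\,\cZ{1}(t,f,u)(0) = \xi_{M_i}(t,u) + O\!\left(t^{1/2}\|u_2\|_{L^2}\right) = u_{i+1}(t) + O\!\left(t^{1/2}\|u_2\|_{L^2}\right).
\end{equation}

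Combining the two displays gives, for $i=0,1$,
\begin{equation}
    |u_{i+1}(t)| = O\!\left(|x(t;u)| + \|u_1\|_{L^2}^2 + t^{1/2}\|u_2\|_{L^2} + |x(t;u)|^2\right).
\end{equation}
The final cleanup is the only slightly delicate point: the $|x(t;u)|^2$ remainder coming from the Magnus formula must be re-absorbed into the leading $|x(t;u)|$ term. This is achieved using \cref{p:small-state}, which asserts $|x(t;u)|=O(\|u_1\|_{L^\infty})$, so that $|x(t;u)|^2 = O(\|u_1\|_{L^\infty}\,|x(t;u)|) = O(|x(t;u)|)$ in the implicit limit $(t,\|u\|_{W^{-1,\infty}})\to 0$. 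Summing the bounds for $i=0$ and $i=1$ then yields \eqref{eq:W2-loop}. The main (and only non-routine) obstacle in this scheme is keeping the hypotheses of \cref{p:PZM-XI} uniform across the infinite family $\{M_\nu\}_{\nu\geq 2}$, but the estimate \eqref{bound-xiMj/J0} was precisely designed to handle this uniformity.
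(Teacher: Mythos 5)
Your proof is correct and follows essentially the same route as the paper: the paper disposes of this lemma by invoking \cref{p:quad-loop} with $k=2$ and $\pi(k)=2$ (so $\vartheta(k)=1$), and what you have written is exactly the unfolded $k=2$ instance of that proof — truncate at $M=1$, test against a component along $f_{M_i}(0)$ parallel to $\R f_{M_{1-i}}(0)$, apply \cref{p:PZM-XI} with the $\{M_\nu\}_{\nu\geq 2}$ family controlled via \eqref{bound-xiMj/J0}, and note that no cross terms arise at $M=1$. Your explicit remark that $|x|^{2}=O(|x|)$ via \cref{p:small-state} is a small detail the paper leaves implicit when it says the two displays are combined, but it is correct and does no harm to spell it out.
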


\begin{proof}
    This statement is implied by the case $k=2$ and $\pi(k)=2$ in \cref{p:quad-loop}.
\end{proof}

\subsection{Interpolation inequality}

\begin{lemma}
	There exists $C > 0$ such that, for every $t > 0$ and $u \in \lone$,
	\begin{equation}
		\label{eq:w2-u1}
		\|u_1\|_{L^4}^4 \leq C \|u\|_{L^\infty}^2 \|u_2\|_{L^2}^2.
	\end{equation}
\end{lemma}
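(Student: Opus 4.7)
The plan is to prove the scale-invariant inequality \eqref{eq:w2-u1} by integration by parts followed by Hölder's and Young's inequalities, fully exploiting the vanishing boundary conditions $u_1(0) = u_2(0) = 0$. The starting point is to write $u_1 = u_2'$ and integrate by parts in $\int_0^t u_1^3 \cdot u_2'\,\dd s$ to obtain the fundamental identity
\begin{equation*}
    \int_0^t u_1^4 \dd s = u_1(t)^3 u_2(t) - 3\int_0^t u\, u_1^2\, u_2 \dd s.
\end{equation*}
The bulk term on the right is easy: Cauchy--Schwarz gives $|3 \int u u_1^2 u_2| \leq 3\|u\|_{L^\infty}\|u_1\|_{L^4}^2\|u_2\|_{L^2}$, and Young's inequality $3ab \leq \tfrac12 a^2 + \tfrac92 b^2$ with $a = \|u_1\|_{L^4}^2$ absorbs this into $\tfrac12\|u_1\|_{L^4}^4 + \tfrac92 \|u\|_{L^\infty}^2 \|u_2\|_{L^2}^2$.

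The hard part is handling the boundary contribution $u_1(t)^3 u_2(t)$, which does not vanish since we only have vanishing boundary conditions at $s=0$. The key tools are the two exact identities that come from integrating $(u_1^2)' = 2 u u_1$ and $(u_2^2)' = 2 u_1 u_2$ over $[0,t]$ and using $u_1(0)=u_2(0)=0$, namely
\begin{equation*}
    u_1(t)^2 = 2\int_0^t u u_1 \dd s \leq 2\sqrt{t}\,\|u\|_{L^\infty}\|u_1\|_{L^2},
    \qquad
    u_2(t)^2 = 2\int_0^t u_1 u_2 \dd s \leq 2\|u_1\|_{L^2}\|u_2\|_{L^2}.
\end{equation*}
Combining these (after cubing the first and taking square roots), together with the Cauchy--Schwarz interpolation $\|u_1\|_{L^2}^2 \leq \sqrt{t}\,\|u_1\|_{L^4}^2$, gives a pointwise bound of the form $|u_1(t)^3 u_2(t)| \lesssim t^{5/4}\|u\|_{L^\infty}^{3/2}\|u_1\|_{L^4}^2\|u_2\|_{L^2}^{1/2}$, which a weighted Young's inequality should convert into $\varepsilon \|u_1\|_{L^4}^4 + C_\varepsilon \|u\|_{L^\infty}^2\|u_2\|_{L^2}^2$ for $\varepsilon < \tfrac14$.

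Choosing $\varepsilon$ small enough, the $\|u_1\|_{L^4}^4$ contributions from both the bulk and the boundary are absorbed to the left-hand side, yielding \eqref{eq:w2-u1}. The most delicate step will be verifying that the final Young's inequality can be calibrated without introducing uncontrolled remainders of the form $t^\alpha \|u\|_{L^\infty}^\beta$ with $\beta > 2$; that such a balance exists is guaranteed by the scale-invariance of \eqref{eq:w2-u1} under the two-parameter family of rescalings $u(s)\mapsto \lambda u(\mu s)$, which forces every purely-$t$ exponent appearing in the proof to cancel.
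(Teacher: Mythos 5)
Your integration-by-parts identity $\int_0^t u_1^4 = u_1(t)^3 u_2(t) - 3\int_0^t u\,u_1^2\,u_2\,\dd s$ and the treatment of the bulk term via Cauchy--Schwarz and Young are both correct. The gap is entirely in the boundary term, and there the argument does not close.

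Your intermediate bound $|u_1(t)^3 u_2(t)|\lesssim t^{5/4}\|u\|_{L^\infty}^{3/2}\|u_1\|_{L^4}^2\|u_2\|_{L^2}^{1/2}$ is correct, but no calibration of Young's inequality converts it into $\varepsilon\|u_1\|_{L^4}^4 + C_\varepsilon\|u\|_{L^\infty}^2\|u_2\|_{L^2}^2$. Splitting off $\|u_1\|_{L^4}^2 = (\|u_1\|_{L^4}^4)^{1/2}$ leaves the square of the remainder, namely $t^{5/2}\|u\|_{L^\infty}^3\|u_2\|_{L^2}$; bounding this by $\|u\|_{L^\infty}^2\|u_2\|_{L^2}^2$ would require $t^{5/2}\|u\|_{L^\infty}\lesssim\|u_2\|_{L^2}$, which is the \emph{reverse} of the only available comparison $\|u_2\|_{L^2}\le t^{5/2}\|u\|_{L^\infty}$ and fails badly, for instance when $u$ is supported on a short subinterval $[t-\delta,t]$ with $\delta\ll t$. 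The obstruction is structural: in your bound the exponents of $\|u\|_{L^\infty}$ and $\|u_2\|_{L^2}$, namely $3/2$ and $1/2$, are not in the ratio $1{:}1$ of the target $\|u\|_{L^\infty}^2\|u_2\|_{L^2}^2$, so any Young split must dump the excess $\|u\|_{L^\infty}$-power into a scale-invariant residual of the type $t^5\|u\|_{L^\infty}^4$. Your invocation of scale invariance is therefore a red herring: it guarantees all intermediate quantities have homogeneity $\lambda^4\mu^{-5}$, but a quantity like $t^5\|u\|_{L^\infty}^4$ has the right homogeneity yet is \emph{not} controlled by the two right-hand-side quantities (for the bump supported on $[t-\delta,t]$ of height $\lambda$, both $\|u_1\|_{L^4}^4$ and $\|u\|_{L^\infty}^2\|u_2\|_{L^2}^2$ are of order $\lambda^4\delta^5\ll t^5\lambda^4$).

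The paper's proof simply invokes Gagliardo--Nirenberg (\cref{thm:GN}) with $\phi\gets u_2$, $(p,q,r,s)\gets(4,2,\infty,2)$, $(j,l)\gets(1,2)$, $\alpha\gets1/2$. The decisive advantage is that the bounded-interval correction in \eqref{eq:GNS-estimate} is $t^{1/p-j-1/s}\|\phi\|_{L^s}=t^{-5/4}\|u_2\|_{L^2}$, whose fourth power $t^{-5}\|u_2\|_{L^2}^4$ \emph{is} absorbable into $\|u\|_{L^\infty}^2\|u_2\|_{L^2}^2$ using $\|u_2\|_{L^2}\le t^{5/2}\|u\|_{L^\infty}$ --- the comparison runs in the right direction precisely because the lower-order term carries an excess power of $\|u_2\|_{L^2}$, not of $\|u\|_{L^\infty}$. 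To repair your argument you would need a boundary estimate producing a $\|u_2\|_{L^2}$-excess rather than a $\|u\|_{L^\infty}$-excess, which the pointwise bounds on $|u_1(t)|$ and $|u_2(t)|$ do not provide.
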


\begin{proof}
	Inequality \eqref{eq:w2-u1} follows from \cref{thm:GN} with $\phi \gets u_2$, $(p,q,r,s) \gets (4,2,\infty,2)$, $(j,l) \gets (1,2)$, $\alpha \gets 1/2$.
    The lower-order term in \eqref{eq:GNS-estimate} is absorbed using the estimate $\|u_2\|_{L^2} \leq t^{\frac 52} \|u\|_{L^\infty}$, which stems from Hölder's inequality and the equality $u_2(t) = \int_0^t (t-s) u(s) \dd s$.
\end{proof}

\subsection{Proof of the presence of the drift}

\begin{proof}[Proof of \cref{thm:W2precise}]
    Let $\mathbb{P}$ be a component along $f_{W_2}(0)$ parallel to $\mathcal{N}_2(f)(0)$.
    By \cref{thm:Key_1} with $M \gets 3$,
    \begin{equation}
        x(t;u) = \cZ{3}(t,f,u)(0) + O\left( \|u_1\|_{L^4}^4 + |x(t;u)|^{1+\frac{1}{3}} \right),
    \end{equation}
    where, by \eqref{eq:W2-z3} and \eqref{xi_Wjnu},
    \begin{equation}
        \begin{split}
        	\mathbb{P} \cZ{3}(t,f,u)(0) = \frac 12 \int_0^t u_2^2 + 
        	O\Big( |(u_1,u_2)(t)|^2 + t \|u_2\|_{L^2}^2 + \|u_1\|_{L^4}^2 \|u_2\|_{L^2}  + \|u_1\|_{L^4}^4  \Big).
        \end{split}
    \end{equation}
    Moreover, by the closed-loop estimate \eqref{eq:W2-loop},
    \begin{equation}
    	|(u_1,u_2)(t)|^2 = O\left(
    		|x(t;u)|^2
 			+ \|u_1\|_{L^4}^4
		    + t \|u_2\|_{L^2}^2	
    	 \right).
    \end{equation}
    Gathering these equalities and the interpolation estimate \eqref{eq:w2-u1} yields
    \begin{equation}
        \mathbb{P} x(t;u) = \frac 12 \int_0^t u_2^2
        + O \left( \left(t+\|u\|_{L^\infty}\right) \int_0^t u_2^2 +|x(t;u)|^{1+\frac{1}{3}} \right).
    \end{equation}
    This implies a drift along $f_{W_2}(0)$, parallel to $\mathcal{N}_2(f)(0)$, as $(t,\|u\|_{L^\infty}) \to 0$, in the sense of \cref{Def:drift}.
\end{proof}

\section{New refined \texorpdfstring{$W_3$}{W3} obstruction} \label{s:W3refined}

The goal of this section is to prove the case $k=3$ of \cref{Thm:CN_W123}, as a consequence of the following more precise statement.

\begin{theorem} \label{thm:W3precise}
    Assume that $f_{W_1}(0) \in \mathcal{N}_1(f)(0)$, $f_{W_2}(0) \in \mathcal{N}_2(f)(0)$ and $f_{W_3}(0) \notin \mathcal{N}_3(f)(0)$. 
    Then there exist a linear form $\mathbb{P}_{W_3}:\R^d\rightarrow\R$  giving a component along $f_{W_3}(0)$, another linear form $\mathbb{P}:\R^d \rightarrow \R$, $C>0$, $\beta>1$ such that, 
    for every $\varepsilon>0$, there exists 
    $\rho=\rho(\varepsilon)>0$
   such that for every 
   $t \in (0,\rho)$ and
   $u \in L^\infty((0,t),\R)$ with $\|u\|_{L^\infty}<\rho$,
\begin{equation} \label{drift_Pt}
   \left(\mathbb{P}_{W_3}+t\mathbb{P}\right)x(t;u) 
   \geq (1-\varepsilon) \xi_{W_3}(t,u)-C|x(t;u)|^{\beta}.
\end{equation}
\end{theorem}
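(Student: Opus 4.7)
The plan is to implement the unified strategy of \cref{s:heuristic-conj-drift} with bad bracket $\bb = W_3$, neutralizing set $\mathcal{N}_3$, regularity index $m = 0$ and truncation order $M = \pi(3,0) = 5$. From \cref{thm:Key_1} one has
\begin{equation*}
    x(t;u) = \cZ{5}(t,f,u)(0) + O\bigl(\|u_1\|_{L^6}^6 + |x(t;u)|^{6/5}\bigr).
\end{equation*}
Since $\mathcal{N}_2$ is stable under right-bracketing $b \mapsto (b,X_0)$, the hypotheses $f_{W_1}(0), f_{W_2}(0) \in \mathcal{N}_2(f)(0)$ combined with $f_{(b,X_0)}(0) = f_0'(0) f_b(0)$ yield $f_{W_{j,\nu}}(0) \in \mathcal{N}_2(f)(0) \subset \mathcal{N}_3(f)(0)$ for $j \in \{1,2\}$ and $\nu \in \N$. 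One may therefore enlarge $\mathcal{N}_3$ into $\mathcal{N} := \mathcal{N}_3 \cup \{W_{1,\nu}, W_{2,\nu}; \nu \in \N\}$ without changing the subspace spanned in $\R^d$.

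The dominant part of the logarithm is then computed via \cref{p:PZM-XI}: for a component $\mathbb{P}_{W_3}$ along $f_{W_3}(0)$ parallel to $\mathcal{N}(f)(0)$, the target is
\begin{equation*}
    \mathbb{P}_{W_3} \cZ{5}(t,f,u)(0) = \xi_{W_3}(t,u) + O(\Xi(t,u)),
\end{equation*}
where $\Xi$ is built by bounding, family by family, the coordinates $\xi_b$ of the elements $b \in \Bs_{\intset{1,5}} \setminus \mathcal{N}$ via the uniform estimates of \cref{p:xi-bounds}. Most families --- the $P_{j,k,\nu}$ with $j \geq 2$, the $Q$'s and $R$'s falling outside $\mathcal{N}_3$, and every $Q^\sharp$ and $R^\sharp$ outside $\mathcal{N}_3$ --- should contribute $t^\alpha$ ($\alpha > 0$) multiplying $\xi_{W_3}(t,u) = \tfrac12 \|u_3\|_{L^2}^2$, plus pointwise $|u_j(t)|^2$ terms to be absorbed via closed-loop arguments.

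The origin of the time-weighted correction $t\,\mathbb{P}$ is that at least one family is expected to escape the above absorption purely in $(t,\|u\|_{L^\infty})$ but to become controllable once multiplied by $t$. Concretely, one identifies a bracket $b^* \in \Bs_{\intset{1,5}} \setminus \mathcal{N}$ whose coordinate satisfies $t\,\xi_{b^*}(t,u) = o(\xi_{W_3}(t,u))$ while $\xi_{b^*}(t,u)$ alone is not; since $f_{b^*}(0) \notin \mathcal{N}(f)(0) + \R f_{W_3}(0)$ (otherwise $b^*$ could have been folded into $\mathbb{P}_{W_3}$), one picks $\mathbb{P}$ as a linear form vanishing on $\mathcal{N}(f)(0) + \R f_{W_3}(0)$ and tuned on $f_{b^*}(0)$ so that the leading $\xi_{b^*}$-contribution to $(\mathbb{P}_{W_3} + t\,\mathbb{P})\cZ{5}(t,f,u)(0)$ is cancelled to leading order. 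This enrichment of the component into a one-parameter family is precisely what allows the refined list $\mathcal{N}_3$ from \cref{Thm:CN_W123} to be strictly smaller than the loose list $S_{\{1,3,4,5\}}$ predicted by \cref{Thm:Kawski_Wm}.

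The remaining ingredients stay inside the paper's toolbox: vectorial relations reduce to linear independence of $f_{M_0}(0), f_{M_1}(0), f_{M_2}(0)$, proved as in \cref{p:quad-vectors} for $k = 3$, $\pi = 5$; the closed-loop argument of \cref{p:quad-loop} yields $|(u_1,u_2,u_3)(t)| = O(|x(t;u)| + \sqrt{t}\,\|u_3\|_{L^2} + \|u_1\|_{L^3}^3)$; and the Magnus remainder is dominated by $\|u_1\|_{L^6}^6 \leq C\,\|u\|_{L^\infty}^4\|u_3\|_{L^2}^2$ via \cref{thm:GN} applied to $u_3$. The hardest step, and the bulk of the section's work, will be the algebraic cross-term bookkeeping in \cref{p:sum-cross-XI}: for each product $\xi_{b_1}\dotsb \xi_{b_q}$ of total $X_1$-weight at most $5$ whose Lie-bracket support meets $\Bs_{\intset{1,5}} \setminus \mathcal{N}$, one must exhibit a factorization --- via the Jacobi identities of \cref{s:jacobi} and the germ structure of $\Bs$ --- that is $o(\xi_{W_3})$ as $(t,\|u\|_{L^\infty}) \to 0$, with the bracket $b^*$ requiring in addition the extra factor of $t$ supplied by $\mathbb{P}$.
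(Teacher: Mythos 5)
Your framework setup is right---$M=5$, \cref{thm:Key_1}, enlarging $\mathcal{N}_3$ with $\{W_{1,\nu},W_{2,\nu}\}$, the vectorial relations, and a closed-loop bound for $(u_1,u_2,u_3)(t)$. But your account of where the time-weighted correction $t\,\mathbb{P}$ comes from is wrong, and the gap is precisely the new idea this theorem requires.

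You posit a bracket $b^*\in\Bs_{\intset{1,5}}\setminus\mathcal{N}$ with $t\,\xi_{b^*}=o(\xi_{W_3})$ but $\xi_{b^*}\neq o(\xi_{W_3})$, and propose to cancel its contribution by tuning $\mathbb{P}$ on $f_{b^*}(0)$. Neither half survives scrutiny. The troublesome coordinate is $\xi_{Q_{1,1,1}}(t,u)=\tfrac{1}{24}\int_0^t u_1^4$, which enters the error of the dominant-part estimate as $a'\,t\,\xi_{Q_{1,1,1}}$ through the cross terms and the brackets $Q_{1,1,1,\nu}$, $\nu\ge1$. One cannot absorb it: Gagliardo--Nirenberg cannot bound $\|u_1\|_{L^4}$ by $\|u\|_{L^\infty}^\alpha\|u_3\|_{L^2}^{1-\alpha}$, since the condition $j/l\le\alpha$ of \cref{thm:GN} would require $2/3\le 9/14$, which is false; and a suitably scaled highly oscillatory control makes $t\,\xi_{Q_{1,1,1}}/\xi_{W_3}$ blow up as $(t,\|u\|_{L^\infty})\to 0$. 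Moreover, cancelling $\mathbb{P}_{W_3}f_{b^*}(0)\,\xi_{b^*}$ against $t\,\mathbb{P}f_{b^*}(0)\,\xi_{b^*}$ would force $\mathbb{P}$ to scale like $t^{-1}$, inconsistent with a fixed linear form.

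The ingredient you are missing is a closed-loop estimate for a coordinate of the second kind. Beyond the closed-loop bounds on $u_1(t),u_2(t),u_3(t)$, the proof also establishes (\cref{p:W3-loop_bis}) that $\mathbb{P}_{Q_1}x(t;u)=\xi_{Q_{1,1,1}}(t,u)+O\bigl(t\,\xi_{Q_{1,1,1}}+\xi_{W_3}+\|u_2\|_{L^3}^3+|x(t;u)|^{5/4}\bigr)$, where $\mathbb{P}_{Q_1}$ is a component along $f_{Q_{1,1,1}}(0)$. Substituting this identity to eliminate the unabsorbable $a'\,t\,\xi_{Q_{1,1,1}}$ produced by \cref{p:W3-Z5} is exactly what moves $2Ca'\,t\,\mathbb{P}_{Q_1}x$ to the left side of the drift inequality and yields $\mathbb{P}=2Ca'\,\mathbb{P}_{Q_1}$. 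Bootstrapping $\xi_{Q_{1,1,1}}$ through its own representation formula, rather than absorbing it by interpolation, is the genuine novelty of \cref{s:W3refined}. A parallel but simpler closed-loop bound for $\xi_{W_1}$ (\cref{p:W3-loop}, estimate \eqref{eq:W3-loop_forW1}) is also needed to handle the $Q^\flat$ cross-terms when $f_{W_1}(0)\neq0$---another case your sketch does not address.
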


The conclusion \eqref{drift_Pt} is not exactly a drift\footnote{One can see \eqref{drift_Pt} as a particular case of a ``weak drift'' as $(t,\|u\|_{L^\infty})\to 0$ in the sense of \cref{Def:Weak-drift} of \cref{s:m=-1}.}\footnote{This is not a technical limitation. See \cref{s:W3-comment}.} 
along $f_{W_3}(0)$, 
parallel to $\mathcal{N}_3(f)(0)$, 
as $(t,\|u\|_{L^\infty}) \rightarrow 0$, 
in the sense of \cref{Def:drift}, 
because the left-hand side of the inequality involves a linear form $\mathbb{P}_t := \mathbb{P}_{W_3} + t \mathbb{P}$ that may not give a component along $f_{W_3}(0)$. 
Nevertheless this result is still an obstruction to $L^\infty$-STLC. 
Indeed, one may assume $\mathbb{P}_{W_3}$ and $\mathbb{P}$ are linearly independent, then by considering $e \in \R^d$ such that $\mathbb{P}_{W_3}e=1$ and $\mathbb{P}e=0$, estimate \eqref{drift_Pt} prevents $x(t;u)$ from reaching targets  of the form $x^{\star}=-ae$ with $0<a \ll 1$, because this would entail $-a=(\mathbb{P}_{W_3}+t\mathbb{P})x(t;u)\geq -C|x^{\star}|^{\beta}=-Ca^{\beta}$, which fails for $a$ small enough, because $\beta>1$.

The proof of \cref{thm:W3precise} is a slight variation of the unified approach as presented in \cref{s:method-args}, in which closed-loop estimates are used not only for cross terms of coordinates of the second kind, but also for some coordinates of the second kind.

\subsection{Limiting examples} \label{Subsec:Ex_W3}

We illustrate that the set $\mathcal{N}_3$ defined in \eqref{def:mathcalE3} of brackets which can compensate $W_3$ is ``minimal'' in the following sense: for each bracket $b$ of $\mathcal{N}_3$, we construct an example of an $L^\infty$-STLC system for which there is a competition between $W_3$ and $b$.
One has
\begin{equation} \label{eq:N3-bad-good}
    \mathcal{N}_3 = \{ M_\nu, P_{1,l,\nu}, Q_{1,1,2,\nu}, R_{1,1,1,1,\nu}, R^\sharp_{1,1,1,\mu,\nu} ; l\in \N^*, \mu, \nu \in \N \} \cup \{ Q_{1,1,1}, Q^\flat_{1,0}, Q^\flat_{1,1}, Q^\flat_{1,2} \}.
\end{equation}
The brackets of the first list can be considered as ``good'', and those of the second list as ``bad'' in senses detailed below.
We treat both lists separately.

\subsubsection{Good-bad competitions}

We consider the first list of \eqref{eq:N3-bad-good}.
We skip the case of the $M_\nu$ since it is clear by the linear test that any system with $f_{W_3}(0) \in S_1(f)(0)$ and $S_1(f)(0) = \R^d$ is $L^\infty$-STLC.
For all the other brackets, we will prove the $L^\infty$-STLC property thanks to Agrachev and Gamkrelidze's sufficient condition \cite[Theorem 4]{AG93}, of which we now recall a version well-suited to our setting.

\begin{theorem} \label{thm:AG}
    Let $\sigma \in [0,1]$, $r \geq 0$ and $\Pi^1 \subset \Br(X)$ such that $\Pi^1$ generates a Lie algebra $\operatorname{Lie}(\Pi^1) \subset \mathcal{L}(X)$ with the following properties:
    \begin{itemize}
        \item $\Pi^1$ is a set of free generators of $\operatorname{Lie}(\Pi^1)$,
        \item for each $b \in \Br(X)$ with $n_1(b)$ even and $n_0(b)$ odd, $\eval(b) \in \operatorname{Lie}(\Pi^1)$.
    \end{itemize}
    For $k \in \N^*$ let $\Pi^{k+1} := [\Pi^1, \Pi^k]$ and $\Pi^\infty := \cup_{k\in \N^*} \Pi^k$.
    For $k \in \N^*$  and $\pi \in \Pi^k$, let $\omega(\pi) := |\pi|-\sigma k$.
    
    Suppose that, for all $k \in \N$, and every $\pi \in \Pi^{2k+1}$ with $n_1(\pi)$ even and $n_0(\pi)$ odd, and $\omega(\pi) \leq r$,
    \begin{equation}
        f_\pi(0) \in \vect \{ f_{\pi'}(0) ; \pi' \in \Pi^\infty, \omega(\pi') < \omega(\pi) \}.
    \end{equation}
    Assume moreover that
    \begin{equation}
        \R^d = S_1(f)(0) + \vect \{ f_\pi(0) ; \pi \in \Pi^\infty, \omega(\pi) \leq r \}.
    \end{equation}
    Then, the system is $L^\infty$-STLC.
\end{theorem}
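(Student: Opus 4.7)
The plan is to present this as a direct specialization of Agrachev and Gamkrelidze's original sufficient condition \cite[Theorem~4]{AG93} to the scalar-input control-affine systems \eqref{syst}, with their abstract weight function replaced by the explicit $\omega(\pi)=|\pi|-\sigma k$ on $\Pi^k$. Rather than reprove the AG theorem from scratch, the task is to check that the hypotheses stated above match theirs exactly once one adopts the free Lie algebra viewpoint of \cref{s:notations-algebra}, after which the conclusion transfers verbatim.

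The identifications to verify are the following. First, the weighted dilation $(u(\cdot),t)\mapsto(\lambda^\sigma u(\lambda\,\cdot),\lambda t)$, $\lambda>0$, scales the iterated-integral coefficient attached to any $\pi\in\Pi^k$ by $\lambda^{|\pi|-\sigma k}=\lambda^{\omega(\pi)}$, so $\omega$ is the natural weighted-homogeneity degree and truncating $\Pi^\infty$ at $\omega\leq r$ produces the AG nilpotent approximation. Second, the freeness of $\Pi^1$ as a generating set of $\operatorname{Lie}(\Pi^1)$ ensures that the Magnus components indexed by $\Pi^\infty$ in the truncation are algebraically independent, so the end-point map of the approximation separates the directions $f_\pi(0)$. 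Third, the assumption $\eval(b)\in\operatorname{Lie}(\Pi^1)$ for every $b\in\Br(X)$ with $n_1(b)$ even and $n_0(b)$ odd captures Sussmann's good/bad dichotomy: all potentially obstructing brackets lie inside $\operatorname{Lie}(\Pi^1)$ and hence admit a $\Pi^\infty$-decomposition that may be tested against the neutralization hypothesis. Finally, the neutralization hypothesis $f_\pi(0)\in\vect\{f_{\pi'}(0);\omega(\pi')<\omega(\pi)\}$ says exactly that each bad direction is strictly dominated under the dilation by lighter good directions, so the associated drift is absorbed as $\lambda\to 0$.

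With these identifications in hand, the plan is to conclude by invoking \cite[Theorem~4]{AG93}: the nilpotent approximation is STLC by the standard convexity/open-mapping argument on its end-point map (compare the discussion in \cref{rk:beta>1} and \cite[Lemma~2.3]{zbMATH04154295}), using the full-rank hypothesis $\R^d=S_1(f)(0)+\vect\{f_\pi(0);\omega(\pi)\leq r\}$ as the weighted LARC at order $r$; brackets of weight strictly above $r$ together with the Magnus remainder produce perturbations of order $o(\lambda^r)$ relative to the leading terms, and a Brouwer-type fixed point argument then transfers $L^\infty$-STLC from the nilpotent approximation back to \eqref{syst}.

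The genuinely delicate step is analytic rather than algebraic: one must show that the convexity of the reachable set of the nilpotent approximation survives the perturbation uniformly in $\lambda$, which is precisely the technical core of \cite{AG93}. I would quote this input verbatim, so that the only real work in the present proof is the bijective translation between Agrachev--Gamkrelidze's abstract weighted-homogeneity setup and the Hall-basis/$\omega$ formalism adopted throughout this paper; nothing in the statement depends on our new Hall set $\Bs$, so no further compatibility check with \cref{s:B*} is required.
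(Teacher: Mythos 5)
The paper does not prove this statement at all: Theorem~\ref{thm:AG} is explicitly introduced as a recalled ``version well-suited to our setting'' of \cite[Theorem~4]{AG93}, with no proof or verification of the translation given afterwards. Your plan to invoke \cite[Theorem~4]{AG93} and check that the hypotheses match once one adopts the free-Lie-algebra/weight formalism is therefore exactly the paper's own stance, so the approaches coincide.
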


To apply \cref{thm:AG}, the key point is thus to find a set $\Pi^1$ and a parameter $\sigma \in [0,1]$ such that the ``good'' brackets that one intends to use have a smaller weight $\omega$ than the ``bad'' ones.
All the following examples will be handled with $\sigma = 1$ and the following choice of $\Pi^1$:
\begin{equation} \label{DefPi1_W3}
    \Pi^{1} := \{\ad_{M_2}^{i_2} \ad_{M_1}^{i_1} \ad_{X_1}^{i_0}(X_0); i_0, i_1, i_2 \in \N, (i_0,i_1) \notin \{1\} \times \N^*, (i_0,i_1,i_2) \notin \{0\} \times \{1\} \times \N^* \}.
\end{equation}
By the elimination theorem \cite[Proposition 1.1]{MR516004}, $\Pi^1$ is a set of free generators of $\operatorname{Lie} (\Pi^1)$ and  $\mathcal{L}(X)=\R X_1 \oplus \R M_1 \oplus \R M_2 \oplus \operatorname{Lie} (\Pi^1)$.
In particular, $\operatorname{Lie} (\Pi^1)$ contains $\eval(b)$ for every $b \in \Br(X)$ of type (even, odd).
We compute the associated weights for all brackets of interest with $\sigma = 1$.
\begin{itemize}
    \item Since $W_3 = \ad_{M_2}^2(X_0) \in \Pi^1$, $\omega(W_3) = |W_3| - 1 = 6$.
    \item For $l \in \{ 1, 2, 3 \}$ and $\nu \in \N$, $P_{1,l,\nu} = \ad_{M_{l-1}} \ad_{M_0}^2(X_0) 0^\nu \in \Pi^{1+\nu}$, so $\omega(P_{1,l,\nu}) = 2 + l \leq 5$.
    \item For $l \geq 4$ and $\nu \in \N$, $P_{1,l,\nu} = (\ad_{M_2}(X_0) 0^{l-3}, \ad_{M_0}^2(X_0)) 0^\nu \in \Pi^{l-1+\nu}$, so $\omega(P_{1,l,\nu})  = 4$.
    \item For $\nu \in \N$, $Q_{1,1,2,\nu} = \ad_{M_1}\ad_{M_0}^3(X_0) 0^\nu \in \Pi^{1+\nu}$, so $\omega(Q_{1,1,2,\nu}) = 5$.
    \item For $\nu \in \N$, $R_{1,1,1,1,\nu} = \ad_{M_0}^5 (X_0) 0^\nu \in \Pi^{1+\nu}$, so $\omega(R_{1,1,1,1,\nu}) = 5$.
    \item For $\mu, \nu \in \N$, $R^\sharp_{1,1,1,\mu,\nu} = (\ad_{M_0}^2(X_0) 0^\mu, \ad_{M_0}^3(X_0)) 0^\nu \in \Pi^{2+\mu+\nu}$, so $\omega(R^\sharp_{1,1,1,\mu,\nu}) = 5$.
\end{itemize}
Hence, all these brackets have a smaller weight than $W_3$. 
Moreover, \cref{thm:AG} does not require for them to be compensated.
Indeed, the $P$s and $R$s have an odd $n_1$.
Moreover, $n_0(Q_{1,1,2,\nu}) = 2+\nu$ and $Q_{1,1,2,\nu} \in \Pi^{1+\nu}$ so $Q_{1,1,2,\nu}$ is never simultaneously of type (even, odd) and inside $\Pi^{2k+1}$.

We now provide limiting examples of systems, whose $L^\infty$-STLC can be established by the above argument.
These examples prove that the first list of \eqref{eq:N3-bad-good} is minimal.

\paragraph{Limiting example for $P_{1,l,\nu}$ with $l \in \{1,2,3\}$.} 
Let $\nu \in \N$. For $\nu=0$, we consider the system
\begin{equation} \label{Ex:W3vsP1l}
    \begin{cases}
        \dot{x}_1=u \\
        \dot{x}_2=x_1 \\
        \dot{x}_3=x_2 \\
        \dot{x}_4=x_3^2 + x_1^2 x_l
    \end{cases}
\end{equation}
while for $\nu > 0$ we consider the system
\begin{equation} \label{Ex:W3vsP1lnu}
    \begin{cases}
        \dot{x}_1=u \\
        \dot{x}_2=x_1 \\
        \dot{x}_3=x_2 \\
        \dot{x}_4= x_1^2 x_l \\
        \dot{x}_{4+\mu}=x_{4+\mu-1} & \text{ for } \mu=1,\dots,\nu-1 \\
        \dot{x}_{4+\nu}=x_3^2 + x_{4+\nu-1}.
    \end{cases}
\end{equation}
\medskip
Written in the form \eqref{syst}, these systems satisfy
\begin{equation}
    f_{M_{i-1}}(0)=e_i \text{ for } i\in \llbracket 1,3\rrbracket, \quad 
    f_{P_{1,l,\mu}}(0)=c e_{4+\mu} \text{ for } \mu\in\llbracket 0,\nu \rrbracket , \quad 
    f_{W_3}(0)=2 e_{4+\nu}, 
\end{equation}
where $c=6$ if $l=1$ and $c=2$ otherwise, and $f_b(0)=0$ for any other $b \in \Bs$. 

\paragraph{Limiting example for $P_{1,l,\nu}$ for $l \geq 4$.} Let $\nu \in\N$. For $\nu=0$, consider the system
\begin{equation} \label{Ex:W3vsP1l>4}
    \begin{cases}
        \dot{x}_1=u \\
        \dot{x}_{i}=x_{i-1} & \text{ for } i=2,\dots,l \\
        \dot{x}_{l+1}= x_3^2 + x_1^2 x_l
    \end{cases}
\end{equation}
while for $\nu > 0$ we consider the system
\begin{equation} \label{Ex:W3vsP1lnu>4}
    \begin{cases}
        \dot{x}_1=u \\
        \dot{x}_{i}=x_{i-1} & \text{ for } i=2,\dots,l \\
        \dot{x}_{l+1}= x_1^2 x_l \\
        \dot{x}_{l+1+\mu}=x_{l+\mu} & \text{ for } \mu=1,\dots,\nu-1 \\
        \dot{x}_{l+1+\nu}=x_3^2+x_{l+\nu}.
    \end{cases}
\end{equation}
Written in the form \eqref{syst}, these systems satisfy
\begin{equation}
    f_{M_{i-1}}(0)=e_i \text{ for } i\in \llbracket 1,l\rrbracket, \quad 
    f_{P_{1,l,\mu}}(0)=c e_{l+1+\mu} \text{ for } \mu\in\llbracket 0,\nu \rrbracket , \quad 
    f_{W_3}(0)=2 e_{l+1+\nu}, 
\end{equation}
where $c=6$ if $l=1$ and $c=2$ otherwise, and $f_b(0)=0$ for any other $b \in \Bs$. 

\paragraph{Limiting example for $Q_{1,1,2,\nu}$.} 
Let $\nu \in \N$. 
For $\nu  = 0$, we consider the system
\begin{equation}
    \begin{cases}
        \dot{x}_1 = u \\
        \dot{x}_2 = x_1 \\
        \dot{x}_3 = x_2 \\
        \dot{x}_4 = x_3^2 + x_1^3 x_2
    \end{cases}
\end{equation}
while for $\nu > 0$ we consider the system
\begin{equation}
    \begin{cases}
        \dot{x}_1 = u \\
        \dot{x}_2 = x_1 \\
        \dot{x}_3 = x_2 \\
        \dot{x}_4 = x_1^3 x_2 \\
        \dot{x}_{4+\mu}=x_{4+\mu-1} & \text{ for } \mu=1,\dots,\nu-1 \\
        \dot{x}_{4+\nu}=x_3^2+x_{4+\nu-1}.
    \end{cases}
\end{equation}
Written in the form \eqref{syst}, these systems satisfy
\begin{equation}
    f_{M_{i-1}}(0)=e_i \text{ for } i=1,2,3 
    \quad 
    f_{Q_{1,1,2,\mu}}=2 e_{4+\mu} \text{ for } \mu=0,\dots,\nu \quad
    f_{W_3}(0)=2 e_{4+\nu},
\end{equation}
and $f_b(0)=0$ for any other $b \in \Bs$. 

\paragraph{Limiting example for $R_{1,1,1,1,\nu}$.} 
Let $\nu \in \N$.
For $\nu = 0$, we consider the system
\begin{equation}
    \begin{cases}
        \dot{x}_1 = u \\
        \dot{x}_2 = x_1 \\
        \dot{x}_3 = x_2 \\
        \dot{x}_{4}= x_3^2 + x_1^5
    \end{cases}
\end{equation}
while for $\nu > 0$ we consider the system
\begin{equation}
    \begin{cases}
        \dot{x}_1 = u \\
        \dot{x}_2 = x_1 \\
        \dot{x}_3 = x_2 \\
        \dot{x}_{4}= x_1^5 \\
        \dot{x}_{4+\mu}=x_{4+\mu-1} & \text{ for } \mu=1,\dots,\nu-1 \\
        \dot{x}_{4+\nu}=x_3^2 + x_{4+\nu-1}.
    \end{cases}
\end{equation}
Written in the form \eqref{syst}, these systems satisfiy
\begin{equation}
    f_{M_{i-1}}(0)=e_i \text{ for } i = 1,2,3, 
    \quad 
    f_{R_{1,1,1,1,\mu}}(0)=5! e_{4+\mu}  \text{ for } \mu \in \llbracket 0, \nu \rrbracket,
    \quad
    f_{W_3}(0)=2 e_{4+\nu} 
\end{equation}
and $f_b(0)=0$ for any other $b \in \Bs$.

\paragraph{Limiting example for $R^{\sharp}_{1,1,1,\mu,\nu}$.}
Let $\mu, \nu \in \N$.
For $\nu = 0$, we consider the system
\begin{equation} \label{Ex:W3vsRdiese}
    \begin{cases}
        \dot{x}_1=u \\
        \dot{x}_2=x_1 \\
        \dot{x}_3=x_2 \\
        \dot{x}_4=x_1^3 \\
        \dot{x}_{4+\mu'} = x_{4+\mu'-1} & \text{ for } \mu' = 1, \dotsc, \mu \\
        \dot{x}_{5+\mu} = x_3^2 + x_1^2 x_{4+\mu}
    \end{cases}
\end{equation}
while for $\nu > 0$ we consider the system
\begin{equation}
    \begin{cases}
        \dot{x}_1=u \\
        \dot{x}_2=x_1 \\
        \dot{x}_3=x_2 \\
        \dot{x}_4=x_1^3 \\
        \dot{x}_{4+\mu'} = x_{4+\mu'-1} & \text{ for } \mu' = 1, \dotsc, \mu \\
        \dot{x}_{5+\mu} = x_1^2 x_{4+\mu} \\
        \dot{x}_{5+\mu+\nu'} = x_{5+\mu+\nu'-1} & \text{ for } \nu' = 1, \dotsc, \nu-1 \\
        \dot{x}_{5+\mu+\nu} = x_3^2 + x_{5+\mu+\nu-1}.
    \end{cases}
\end{equation}
Written in the form \eqref{syst}, this system satisfies
\begin{equation}
    \begin{split}
        & f_{M_{i-1}}(0)=e_i \text{ for } i = 1,2,3, \quad 
        f_{P_{1,1,\mu'}}(0) = 6 e_{4+\mu'} \text{ for } \mu' \in \intset{0,\mu}, \quad \\
        & f_{R^\sharp_{1,1,1,\mu,\nu'}}(0)=-12 (-1)^\mu e_{5+\mu+\nu'} \text{ for } \nu' \in \intset{0,\nu}, \quad
        f_{W_3}(0)=2 e_{5+\mu+\nu}
    \end{split}
\end{equation}
and $f_b(0)=0$ for any other $b \in \Bs$.

\subsubsection{Bad-bad competitions}
\label{s:W3-bad-bad}

The second list of \eqref{eq:N3-bad-good} consists of brackets which are associated with sign-definite coordinates of the second-kind.
Hence, they restore controllability in competition with $W_3$ only in situations where both sign-definite terms push the state in opposite directions.
Such ``bad-bad'' competitions (see e.g.\ \cite[Section 5]{zbMATH04154295} for an introduction) are not handled by classicial sufficient conditions such as \cite[Theorem~4]{AG93}.
We present the straightforward case of $Q_{1,1,1}$ here, and postpone the examples involving $Q^\flat_{1,0}, Q^\flat_{1,1}$ and $Q^\flat_{1,2}$, for which the proofs are more intricate, to \cref{s:Qflat}.

\paragraph{Limiting example for $Q_{1,1,1}$.} We consider the system
\begin{equation} \label{Ex:W3vsQ111}
    \begin{cases}
        \dot{x}_1=u \\
        \dot{x}_2=x_1 \\
        \dot{x}_3=x_2 \\
        \dot{x}_4=x_3^2 - x_1^4.
    \end{cases}
\end{equation}
Written in the form \eqref{syst}, this system satisfies
\begin{equation}
    f_{M_0}(0)=e_1, \quad 
    f_{M_1}(0)=e_2, \quad 
    f_{M_2}(0)=e_3, \quad 
    f_{W_3}(0)=2 e_4, \quad
    f_{Q_{1,1,1}}(0)=- 24 e_4
\end{equation}
and $f_b(0)=0$ for any $b \in \Bs \setminus\{ M_0, M_1, M_2, W_3, Q_{1,1,1}\}$. 
In \cite[Example 5.2]{zbMATH04154295},
Kawski proves that system \eqref{Ex:W3vsQ111} is $L^\infty$-STLC.
This can also be proved using oscillating controls as in \cref{Subsec_Optim_Wk}.

\subsection{Dominant part of the logarithm}

The following lemma is a little intricate due to the fact that the list \eqref{def:mathcalE3} is minimal.
If one only wishes to prove an easier version with $\mathcal{N}_3 \gets \mathcal{N}_3 \cup \{ Q_{1,1,1,\nu}, Q^\flat_{1,\mu,\nu} \}$, the proof could be shorter.

\begin{lemma} \label{p:W3-Z5}
    Let $\mathcal{N}_3':=\mathcal{N}_3 \setminus \{ Q^{\flat}_{1,0}, Q^{\flat}_{1,1}, Q^{\flat}_{1,2} \}$ i.e.,
    \begin{equation}
        \mathcal{N}_3' =\{ 
        M_{\nu},
        P_{1,l,\nu},
        Q_{1,1,1},
        Q_{1,1,2,\nu},
        R_{1,1,1,1,\nu},
        R^\sharp_{1,1,1,\mu,\nu}; l \in \N^*, \mu,\nu \in \N \},
    \end{equation}
    $\mathcal{N}_3'':=\mathcal{N}_3' \setminus \{Q_{1,1,1}\}$
    and 
    $\mathbb{P}_{W_3}:\R^d\rightarrow\R$ 
    be a component along $f_{W_3}(0)$ parallel to 
    $\mathcal{N}_3'(f)(0)$.
    Under the assumptions of \cref{thm:W3precise}, 
    \begin{equation} \label{eq:W3-z5}
        \begin{split}
            \mathbb{P}_{W_3} \cZ{5}(t,f,u)(0) = \xi_{W_3}(t,u)
            + O\Big( & 
            t \xi_{W_3}(t,u) 
            + \|u_2\|_{L^3}^3
            + \|u_1\|_{L^6}^3 \|u_3\|_{L^2} 
            + \|u_1\|_{L^6}^6  \\ &
            + |(u_1,u_2,u_3)(t)|^2   
            + a \xi_{W_{1}}^2(t,u) 
            + a' t \xi_{Q_{1,1,1}}(t,u)
            \Big).
        \end{split}
    \end{equation}
    where 
    \begin{itemize}
    \item $a=1$ if $f_{W_1}(0)\neq 0$ and $a=0$ otherwise,
    \item $a'=1$ if $f_{Q_{1,1,1}}(0) \notin \mathcal{N}_3''(f)(0)$
     and $a'=0$ otherwise.
    \end{itemize}
\end{lemma}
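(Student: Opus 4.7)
The plan is to apply \cref{p:PZM-XI} with $M = 5$, $\bb = W_3$, and the set
\begin{equation}
    \mathcal{N} := \{ b \in \Bs_{\intset{1,5}} ;  f_b(0) \in \mathcal{N}_3'(f)(0) \}.
\end{equation}
Setting $H_0 := f_0'(0)$, so that $f_{b 0^\nu}(0) = H_0^\nu f_b(0)$ as in the proof of \cref{p:quad-vectors}, the hypotheses $f_{W_1}(0) \in \mathcal{N}_1(f)(0)$ and $f_{W_2}(0) \in \mathcal{N}_2(f)(0)$ propagate through the decomposition $f_{W_{j,\nu}}(0) = H_0^\nu f_{W_j}(0)$ to give $\{ W_{1,\nu}, W_{2,\nu} ; \nu \in \N \} \subset \mathcal{N}$; and when $a' = 0$, the stability of $\mathcal{N}_3''$ under right-bracketing by $X_0$ yields $\{ Q_{1,1,1,\nu} ;  \nu \in \N \} \subset \mathcal{N}$ as well. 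After projection by $\mathbb{P}_{W_3}$, the task reduces to bounding the surviving main-term sum $\sum_{b \notin \mathcal{N} \cup \{W_3\}} \xi_b(t,u) \mathbb{P}_{W_3} f_b(0)$ and the cross-term sum $\sum_{b \notin \mathcal{N}} (\eta_b - \xi_b)(t,u) \mathbb{P}_{W_3} f_b(0)$ by the error terms appearing on the right-hand side of \eqref{eq:W3-z5}.

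For the main terms I would proceed layer by layer, using \cref{p:xi-bounds} with appropriately tuned exponents. In $\Bs_2$, the brackets $W_{3,\nu}$ ($\nu \geq 1$) and $W_{j,\nu}$ ($j \geq 4$) are bounded by $t \xi_{W_3}(t,u)$ via \eqref{bound-xiWjnu/j0} with $j_0 = 3$. In $\Bs_3$, the $P_{j,k,\nu}$ with $j \geq 2$ split, according to $(j,k)$, between $\|u_2\|_{L^3}^3$ (for $j = k = 2$, via \eqref{xi_S3}), $\|u_1\|_{L^6}^3 \|u_3\|_{L^2}$, and $t \xi_{W_3}(t,u)$. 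In $\Bs_4$, the residual $Q_{1,1,1,\nu}$ ($\nu \geq 1$) produce the $a' t \xi_{Q_{1,1,1}}(t,u)$ contribution directly from the estimate $\xi_{Q_{1,1,1,\nu}}(t,u) \leq t^\nu \xi_{Q_{1,1,1}}(t,u)/\nu!$; the remaining $Q$, $Q^\sharp$ brackets, together with the $Q^\flat_{1,\mu,\nu}$ (which sit in $\mathcal{N}_3 \setminus \mathcal{N}_3'$ and are therefore \emph{not} killed by $\mathbb{P}_{W_3}$), are estimated via \eqref{bound-xiQjklnu/j0k0l0}--\eqref{bound-xiQsharpjmuknu/j0k0} in terms of $\|u_1\|_{L^6}^6$ (upgrading the natural $\|u_1\|_{L^6}^4$ bound for $Q^\flat$ by the fact that $\|u_1\|_{L^6}^2 = O(1)$) and the mixed $\|u_1\|_{L^6}^3 \|u_3\|_{L^2}$. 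In $\Bs_5$, the $R$ and $R^\sharp$ brackets outside $\mathcal{N}_3'$ fall into the same sextic or mixed slots.

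For the cross terms I would apply \cref{p:sum-cross-XI} after classifying the products $\xi_{b_1}^{h_1} \cdots \xi_{b_q}^{h_q}$ in \eqref{eq:etab-xib-exact}. Pure products of the $\xi_{M_j}$'s supply the $|(u_1,u_2,u_3)(t)|^2$ contribution; mixed products involving at least one $\xi_{W_{l,\nu}}$ (with $l \geq 3$) or one $\xi_{P_{j,k,\nu}}$ pick up extra factors of $t$ or small control norms and fit into one of the sextic or mixed error slots. The one product that genuinely cannot be absorbed is $\xi_{W_1}^2 \xi_{b_2}$ (with $b_2 \in \Bs_1$), whose associated CBH Lie polynomial $F_{q,h}$ can contain $W_3$ in its $\Bs$-expansion (e.g.\ via a bracket of the shape $[W_1,[W_1,X_0]]$); this generates the $a \xi_{W_1}^2$ term, the indicator $a$ recording the fact that when $f_{W_1}(0) = 0$ the projection $\mathbb{P}_{W_3}$ kills all such contributions. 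The main obstacle of the proof is the exhaustive bookkeeping forced by the minimality of $\mathcal{N}_3'$ (itself justified by the limiting examples of \cref{Subsec:Ex_W3}): every bracket outside $\mathcal{N} \cup \{ W_3 \}$ and every cross term must be matched, by a tailored Hölder or Young inequality, to one of the irreducible error slots in \eqref{eq:W3-z5}, and one must verify algebraically that $a \xi_{W_1}^2$ and $a' t \xi_{Q_{1,1,1}}$ are the only contributions that resist further absorption.
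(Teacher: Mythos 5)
Your overall strategy is the same as the paper's: apply \cref{p:PZM-XI} with $M = 5$, $\bb = W_3$, split the analysis into coordinate-of-the-second-kind bounds and cross-term bounds, and do the bookkeeping layer by layer with \cref{p:xi-bounds}. Your identification of the $a' t\,\xi_{Q_{1,1,1}}$ term as coming from the main terms $\xi_{Q_{1,1,1,\nu}}$ ($\nu \geq 1$) when $a'=1$ is also correct and matches the paper.

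There is however a genuine error in your treatment of the $Q^\flat$ brackets and, relatedly, in your attribution of the $a\,\xi_{W_1}^2$ term. You claim to bound the main terms $\xi_{Q^\flat_{1,\mu,\nu}}$ by $\|u_1\|_{L^6}^6$, ``upgrading the natural $\|u_1\|_{L^6}^4$ bound by the fact that $\|u_1\|_{L^6}^2 = O(1)$''. This absorption goes the wrong way: when $\|u_1\|_{L^6} \leq 1$ one has $\|u_1\|_{L^6}^4 \geq \|u_1\|_{L^6}^6$, so a quartic bound cannot be downgraded to a sextic one, and in fact $\xi_{Q^\flat}$ is genuinely quartic in $u_1$ (roughly $t\|u_1\|_{L^2}^4$). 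The paper instead bounds $\xi_{Q^\flat_{j,\mu,\nu}}$ (when $a=1$) by $a t\|u_1\|_{L^2}^4 = 4 a t\,\xi_{W_1}^2(t,u)$: this is precisely where the $a\,\xi_{W_1}^2$ error slot comes from on the main-term side. On the cross-term side, the matching contribution comes from $q=2$, $b_1 = W_{1,\nu}$, $b_2 = W_{1,\nu'}$ (whose bracket, by \eqref{eq:jacobi.balance}, has support in the $Q^\flat$ brackets — which are not killed by $\mathbb{P}_{W_3}$ when $a=1$), again bounded by $4a\,\xi_{W_1}^2$. Your claimed source — a product $\xi_{W_1}^2\xi_{b_2}$ with $b_2 \in \Bs_1$ whose CBH polynomial ``can contain $W_3$'' — is impossible by $n_1$-homogeneity ($n_1(W_3)=2$, whereas such a product has total $n_1 = 5$), and in any case $[W_1,[W_1,X_0]] = Q^\flat_{1,0}$, not $W_3$; moreover if the coefficient genuinely landed on $W_3$ then $\mathbb{P}_{W_3}$ would not kill it regardless of $a$, so your own explanation of the indicator $a$ is inconsistent with that attribution. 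The fix is to recognize that the indicator $a$ records whether $f_{Q^\flat}(0)$ vanishes (which it does when $f_{W_1}(0)=0$), and that the $a\,\xi_{W_1}^2$ error absorbs both the $Q^\flat$ main terms and the $\xi_{W_{1,\nu}}\xi_{W_{1,\nu'}}$ cross terms.

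A minor stylistic point: your definition $\mathcal{N} := \{b \in \Bs_{\intset{1,5}} : f_b(0) \in \mathcal{N}_3'(f)(0)\}$ is system-dependent and thus hard to enumerate, whereas the paper's explicit four-case definition of $\mathcal{N}$ (depending on $(a,a')$) makes the required bounds on $\xi_b$ for $b$ outside $\mathcal{N}$ directly checkable. Your set is at least as large as the paper's, so this would not break the argument, but it does obscure the bookkeeping that the proof depends on.
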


\begin{proof}
For $i \in \{1,2\}$,  $f_{W_i}(0) \in \mathcal{N}_i(f)(0)$ and $\mathcal{N}_i$ is stable by right bracketing with $X_0$ thus $f_{W_{i,\nu}}(0) \in \mathcal{N}_i(f)(0)$ for every $\nu \geq 0$. Since $\mathcal{N}_1 \subset \mathcal{N}_2 \subset \mathcal{N}_3'$ then $f_{{W}_{i,\nu}}(0) \in \mathcal{N}_3'(f)(0)$ for every $i \in \{1,2\}$ and $\nu \in \N$.  

When $a=0$, i.e.\ $f_{W_1}(0)=0$, then $f_{Q^{\flat}_{1,\mu,\nu}}(0)=0$ for every $\mu,\nu\in\N$ because it is an iterated bracket of the vector fields $f_{W_1}$ and $f_0$ that vanish at $0$, see \eqref{def:Q}. 

When $a'=0$, i.e.\ $f_{Q_{1,1,1}}(0) \in \mathcal{N}_3''(f)(0)$ then
for every $\nu\in\N$, $f_{Q_{1,1,1,\nu}}(0) \in \mathcal{N}_3''(f)(0)$ because $\mathcal{N}_3''$ is stable by right bracketing with $X_0$, thus $f_{Q_{1,1,1,\nu}}(0) \in \mathcal{N}_3'(f)(0)$.

These remarks prove that $\mathcal{N}_3'(f)(0)=\mathcal{N}(f)(0)$ where
\begin{equation} \label{eq:W3-N}
\mathcal{N} = \left\lbrace \begin{array}{l} 
\mathcal{N}_3' \cup \{ W_{1,\nu}, W_{2,\nu}, Q_{1,1,1,\nu}, Q^{\flat}_{1,\mu,\nu} ; \mu,\nu\in\N \} \text{ when } (a,a')=(0,0) , \\
\mathcal{N}_3' \cup \{ W_{1,\nu}, W_{2,\nu},  Q^{\flat}_{1,\mu,\nu} ; \mu,\nu\in\N \} \text{ when } (a,a')=(0,1) , \\
\mathcal{N}_3' \cup \{ W_{1,\nu}, W_{2,\nu}, Q_{1,1,1,\nu},  ; \mu,\nu\in\N \} \text{ when } (a,a')=(1,0) , \\
\mathcal{N}_3' \cup \{ W_{1,\nu}, W_{2,\nu};\mu,\nu\in\N \} \text{ when } (a,a')=(1,1) .
\end{array}\right.
\end{equation}
 By assumption, $f_{W_3}(0) \notin \mathcal{N}_3(f)(0)$, so $f_{W_3}(0) \notin \mathcal{N}(f)(0)$.
    
    We intend to apply \cref{p:PZM-XI} with $M \gets 5$, $L \gets 11$, $\bb \gets W_3$ and $\mathcal{N}$ as in 
    \eqref{eq:W3-N}, so that \eqref{eq:W3-z5} will follow from \eqref{eq:PZM-xibb-OXi}, for the appropriate choice of $\Xi(t,u)$ (corresponding to the quantities within the $O(\cdot)$ in \eqref{eq:W3-z5}).
    Let us check that the required estimates are satisfied.

    \step{Estimates of other coordinates of the second kind}
    Let $b \in \Bs_{\intset{1,5}}$ such that $b \notin \mathcal{N} \cup \{ \bb \}$.
    
    We investigate the different possibilities depending on $n_1(b)$.
    \begin{itemize}
        \item One cannot have $n_1(b) = 1$ since $\Bs_1 \subset \mathcal{N}_3' \subset \mathcal{N}$.
        
        \item If $n_1(b) = 2$, by \eqref{Bstar_S2} and \eqref{eq:W3-N}, one has $b = W_{j,\nu}$ with either ($j \geq 4$) or ($j = 3$ and $\nu \geq 1$).
        Thus $|b| \geq 8$.
        By estimate \eqref{bound-xiWjnu/j0} with $(p,j_0) \gets (1,3)$, \eqref{eq:xib-XI-L} holds with $\sigma = 8$ and
        \begin{equation}
            \Xi(t,u) := t \|u_3\|_{L^2}^2.
        \end{equation}
        
        \item If $n_1(b) = 3$, by \eqref{Bstar_S3} and \eqref{eq:W3-N}, $b = P_{j,l,\nu}$ with $2 \leq j \leq l$.
        Thus $|b| \geq 7$.
        By estimate \eqref{bound-xiPjknu/j0k0} with $(p_1,p_2,j_0, k_0) \gets (3/2,3,2,2)$, \eqref{eq:xib-XI-L} holds with $\sigma=7$ and
        \begin{equation}
            \Xi(t,u) := \|u_2\|_{L^3}^3.
        \end{equation}
        
        \item If $n_1(b) = 4$, by \eqref{Bstar_S4} and \eqref{eq:W3-N}, we are in one of the following cases.
        \begin{itemize}
            \item $b=Q_{1,1,1,\nu}$ with $\nu \geq 1$ and $a'=1$
            thus $|b| \geq 6$ and by estimate \eqref{bound-xiQjklnu/j0k0l0} with 
            
            $(p_1,p_2,p_3,j_0,k_0,l_0) \gets (2,4,4,1,1,1)$, \eqref{eq:xib-XI-L} holds with $\sigma=6$ and
            \begin{equation}
                \Xi(t,u) := a' t \|u_1\|_{L^4}^4 = a' 4! t\xi_{Q_{1,1,1}}(t,u).
            \end{equation}
            
            \item $b = Q_{1,1,l,\nu}$ with $l \geq 3$, thus $|b| \geq 7$ and, by estimate \eqref{bound-xiQjklnu/j0k0l0} with 
            
            $(p_1,p_2,p_3,j_0,k_0,l_0) \gets (3,6,2,1,1,3)$, \eqref{eq:xib-XI-L} holds with $\sigma=7$ and
            \begin{equation}
                \Xi(t,u) := \|u_1\|_{L^6}^3 \|u_3\|_{L^2}.
            \end{equation}
            
            \item $b = Q_{j,k,l,\nu}$ with $2 \leq  k $, thus $|b| \geq 7$ and, by estimate \eqref{bound-xiQjklnu/j0k0l0} with 
            
            $(p_1,p_2,p_3,j_0,k_0,l_0)$ $\gets (3,3,3,1,2,2)$, \eqref{eq:xib-XI-L} holds with $\sigma=7$ and
            \begin{equation}
                \Xi(t,u) := \|u_1\|_{L^6}^2 \|u_2\|_{L^3}^2.
            \end{equation}
            
            \item $b = Q^\sharp_{j,\mu,k,\nu}$, thus $|b| \geq 8$ and, by estimate \eqref{bound-xiQsharpjmuknu/j0k0} with $(p_1,p_2,j_0,k_0) \gets (3,3/2,1,2)$, \eqref{eq:xib-XI-L} holds with $\sigma=8$ and
            \begin{equation}
                \Xi(t,u) := t \|u_1\|_{L^6}^2 \|u_2\|_{L^3}^2.
            \end{equation}
            
            \item $b = Q^\flat_{j,\mu,\nu}$ and $a=1$ thus $|b| \geq 8$ and, by estimate 
            \eqref{bound-xiQflatjmunu/j0} 
            with $(p,j_0) \gets (1,1)$, \eqref{eq:xib-XI-L} holds with $\sigma=7$ and
            \begin{equation}
                \Xi(t,u) :=  a t \|u_1\|_{L^2}^4 = 4  a t \xi^2_{W_1}(t,u).
            \end{equation}
        \end{itemize}
        
        \item If $n_1(b) = 5$, by \eqref{Bstar_S5} and \eqref{def:mathcalE3},  we are in one of the following cases.        
        \begin{itemize}
            \item $b = R_{j,k,l,m,\nu}$ with $m \geq 2$, thus $|b| \geq 7$ and, by estimate \eqref{bound-xiRjklmnu/j0k0l0m0} with
            \\ $(p_1,p_2,p_3,p_4,j_0,k_0,l_0,m_0) \gets (3,6,6,3,1,1,1,2)$, \eqref{eq:xib-XI-L} holds with $\sigma=7$ and
            \begin{equation}
                \Xi(t,u) := \|u_1\|_{L^6}^4 \|u_2\|_{L^3}.
            \end{equation}
            \item $b = R^\sharp_{j,k,l,\mu,\nu}$ with $l \geq 2$, thus $|b| \geq 9$ and, by estimate \eqref{bound-xiRsharpjklmunu/j0k0l0} with $(p,p_1,p_2,j_0,k_0,l_0) \gets (3/2,3,6,1,1,2)$, \eqref{eq:xib-XI-L} holds with $\sigma=9$ and
            \begin{equation}
                \Xi(t,u) := t^{\frac 56} \|u_1\|_{L^6}^3 \|u_2\|_{L^3}^2.
            \end{equation}
        \end{itemize}
    \end{itemize}
    
    \step{Estimates of cross terms}
    Let $q \geq 2$, $b_1 \geq \dotsb \geq b_q \in \Bs \setminus \{ X_0 \}$ such that $n_1(b_1) + \dotsb + n_1(b_q) \leq 5$ and $\supp \mathcal{F}(b_1, \dotsc, b_q) \not \subset \mathcal{N}$.
    
    \medskip \noindent
    We start with preliminary estimates.
    \begin{itemize}
        \item By \eqref{eq:xib-u1-Lk}, for each $b_i \in \Bs$ with $n_1(b_i) \leq 5$, \eqref{eq:xib-othercross-XI-L} holds with $\sigma_i = n_1(b_i) + 1$, $\alpha_i = n_1(b_i) / 6$ and $\Xi(t,u) = t^{6/n_1(b_i)-1} \|u_1\|_{L^6}^6$.
        
        \item If $b_i = M_j$ for some $j \in \intset{0,2}$, by \eqref{xi_S1},
        \begin{equation}
            |\xi_{b_i}(t,u)| = |u_{j+1}(t)| = \frac{t^{|b_i|}}{|b_i|!} t^{-(j+1)} (j+1)! |u_{j+1}(t)|
        \end{equation}
        so \eqref{eq:xib-othercross-XI-L} holds with $\sigma_i = j+1$, $\alpha_i = 1/2$ and $\Xi(t,u) = |(u_1,u_2,u_3)(t)|^2$.
        
        \item If $b_i = M_j$ for $j \geq 3$, by \eqref{bound-xiMj/J0} with $(p,j_0) \gets (2,3)$, \eqref{eq:xib-othercross-XI-L} holds with $\sigma_i = 4$, $\alpha_i = 1/2$ and $\Xi(t,u) = t\|u_3\|_{L^2}^2$.
    \end{itemize}
    We now consider the different possibilities, based on the condition $n_1(b_1)+\dotsb+n_1(b_q) \leq 5$.
    \begin{itemize}
        \item \emph{Case: at least two $b_i \in \Bs_1$}.
        Then, by the preliminary steps, $\alpha \geq 1/2+1/2 = 1$.
        
        \item \emph{Case: $q = 3$, $b_1, b_2 \in \Bs_2$, $b_3 \in \Bs_1$}.
        Then, by the preliminary steps, $\alpha = 1/3+1/3+1/2 > 1$.
        
        \item \emph{Case: $q = 2$, $b_1 \in \Bs_{\intset{3,4}}$, $b_2 \in \Bs_1$}.
        Then, by the preliminary steps, $\alpha = n_1(b_1)/6+1/2 \geq 1$.
        
        \item \emph{Case: $q=2$, $b_1 \in \Bs_2$, $b_2 \in \Bs_1$}.
        Say $b_1 = W_{j,\nu}$ and $b_2 = M_{k-1}$.
        One cannot have $j=1$ because, by \eqref{eq:jacobi.rtl}, $\supp [b_1,b_2]$ would be contained in $ \{ P_{1,k',\nu'}, k' \geq 1, \nu' \geq 0 \} \subset \mathcal{N}$.
        So $j \geq 2$.
        Then, by \eqref{bound-xiWjnu/j0} with $(p,j_0) \gets (3/2,2)$, \eqref{eq:xib-othercross-XI-L} holds for $b_1$ with $\sigma_1 = 5$, $\alpha_1 = 2/3$ and $\Xi(t,u) = t^{1/2}\|u_2\|_{L^3}^3$.
        By the preliminary steps, $\alpha_1+\alpha_2 = 2/3+1/2>1$.
        
        \item \emph{Case: $q=2$, $b_1,b_2 \in \Bs_2$}.
        Say $b_1 = W_{j,\nu}$ and $b_2 = W_{j',\nu'}$.
        \begin{itemize}
        \item If $j = j' = 1$, one cannot have $a=0$ because, by \eqref{eq:jacobi.balance}, $\supp [b_1,b_2]$ would be contained in $\{ Q^{\flat}_{1,\mu',\nu'};\mu',\nu'\in\N\} \subset\mathcal{N}$, see \eqref{eq:W3-N}. So $a=1$. By \eqref{bound-xiWjnu/j0} with $(p,j_0) \leftarrow (1,1)$, \eqref{eq:xib-othercross-XI-L} holds for $\sigma_1=\sigma_2=3$ and $\Xi(t,u) = a \|u_1\|_{L^2}^2= 4 a \xi^2_{W_1}(t,u)$. 
        
        \item If $j \geq 2$, then, by \eqref{bound-xiWjnu/j0} with $(p,j_0) \gets (3/2,2)$, \eqref{eq:xib-othercross-XI-L} holds for $b_1$ with $\sigma_1 = 5$, $\alpha_1 = 2/3$ and $\Xi(t,u) = t^{1/2}\|u_2\|_{L^3}^3$.
        By the preliminary steps, $\alpha_1+\alpha_2 = 2/3+1/3=1$.
        \end{itemize}
        
        \item \emph{Case: $q=2$, $b_1 \in \Bs_3$, $b_2 \in \Bs_2$}.
        Say $b_1 = P_{j,k,\nu}$ and $b_2 = W_{l,\mu}$.
        One cannot have $j=k=l=1$ because, by \eqref{eq:jacobi.balance}, $\supp [b_1,b_2]$ would be contained in $\{ R^\sharp_{1,1,1,\mu',\nu'};\mu',\nu'\in\N \} \subset \mathcal{N}$. Thus $l\geq 2$ or $k\geq 2$.
        \begin{itemize}
        \item If $l\geq 2$, using \eqref{bound-xiWjnu/j0} with $(p,j_0) \leftarrow (3/2,2)$, \eqref{eq:xib-othercross-XI-L} holds for $b_2$ with $\sigma_2 = 5$, $\Xi(t,u)= t^{1/2} \|u_2\|_{L^3}^3$ and $\alpha_2=2/3$. By the preliminary step $\alpha_1+\alpha_2=1/2+2/3>1$.        
        \item If $k \geq 2$, then using \eqref{bound-xiPjknu/j0k0} with $(p_1,p_2,j_0,k_0) \gets (3,3,1,2)$, \eqref{eq:xib-othercross-XI-L} holds for $b_1$ with $\sigma_1 = 5$, $\alpha_1 = 2/3$ and $\Xi(t,u) = t^{1/2} \|u_1\|_{L^6}^3 \|u_2\|_{L^3}^{3/2}$. By the preliminary step $\alpha_1+\alpha_2=2/3+1/2>1$.  \qedhere        
        \end{itemize}
    \end{itemize}
\end{proof}

\subsection{Vectorial relations}

\begin{lemma} \label{p:W3-vectors}
	Under the assumptions of \cref{thm:W3precise},
	\begin{enumerate}
    	\item \label{it:W3-vectors-1} the vectors $f_{M_0}(0), f_{M_1}(0)$, $f_{M_2}(0)$ are linearly independent,
        \item \label{it:W3-vectors-2} if $f_{W_1}(0) \neq 0$ then $f_{W_1}(0) \notin \vect \{f_{M_0}(0), f_{M_1}(0)$, $f_{M_2}(0)\}$. 
	\end{enumerate}
\end{lemma}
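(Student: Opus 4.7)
The plan is to argue both statements by contradiction, producing in each case an element $B \in \mathcal{L}(X)$ with $f_B(0) = 0$ whose square $\ad_B^2(X_0)$ expands in $\Bs$ as a nonzero multiple of $W_3$ plus terms whose evaluations at $0$ already lie in $\mathcal{N}_3(f)(0)$. Since $f_0(0) = 0$ forces $f_{[B,X_0]}(0) = 0$ and then $f_{\ad_B^2(X_0)}(0) = 0$, the resulting identity will yield $f_{W_3}(0) \in \mathcal{N}_3(f)(0)$, contradicting the standing hypothesis $f_{W_3}(0) \notin \mathcal{N}_3(f)(0)$. Throughout, I will use that both $\mathcal{N}_1(f)(0)$ and $\mathcal{N}_2(f)(0)$ are stable under the action of $H_0 := f_0'(0)$ (which corresponds to right-bracketing with $X_0$), so that $f_{W_{1,\nu}}(0)$ and $f_{W_{2,\nu}}(0)$ lie in $\mathcal{N}_3(f)(0)$ for every $\nu \in \N$.

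For statement 1, I will reproduce in the case $k=3$ the pattern of statement 1 of \cref{p:quad-vectors}: from a nontrivial linear relation among $f_{M_0}(0), f_{M_1}(0), f_{M_2}(0)$ I will renormalize to $B_1 := M_2 + \beta_1 M_1 + \beta_0 M_0$ with $f_{B_1}(0) = 0$, and expand $\ad_{B_1}^2(X_0)$ using $[M_{i-1}, M_i] = W_i$ together with the Jacobi identities of \cref{s:jacobi}. The expansion decomposes as $W_3$ plus a linear combination of $W_{l,\nu}$ with $l \leq 2$, and every such $W_{l,\nu}$ evaluates into $\mathcal{N}_3(f)(0)$ by the $H_0$-stability recalled above.

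Statement 2 is the delicate part. Assuming $f_{W_1}(0) = \sum_{j=0}^{2} \alpha_j f_{M_j}(0) \neq 0$, let $K := \max\{j : \alpha_j \neq 0\}$ and $p := 2 - K \in \{0,1,2\}$, and set
\begin{equation*}
    B_p := W_{1,p} - \textstyle\sum_{j=0}^{K} \alpha_j M_{p+j},
    \qquad
    f_{B_p}(0) = H_0^p\bigl(f_{W_1}(0) - \textstyle\sum_j \alpha_j f_{M_j}(0)\bigr) = 0.
\end{equation*}
Expanding $\ad_{B_p}^2(X_0) = [B_p,\, W_{1,p+1} - \sum_j \alpha_j M_{p+j+1}]$ produces three families of terms in $\Bs$: the $n_1=4$ contribution $[W_{1,p}, W_{1,p+1}] = Q^\flat_{1,p,0}$; the $n_1=3$ cross terms $[M_i, W_{1,q}]$, which via \eqref{eq:jacobi.rtl} decompose into $P_{1,l,\nu}$; and the $n_1=2$ cross terms $[M_{p+j}, M_{p+k+1}]$, which expand into $W_{l,\nu}$ with $2l + \nu = 2p + j + k + 2$.

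The crucial observation will be an $n_0$-gradation argument on this last family: since $j, k \leq K = 2-p$, the value $l = 3$ is attained only when $j = k = K$, producing the single contribution $\alpha_K^2 [M_2, M_3] = \alpha_K^2 W_3$; every other pair $(j,k)$ produces only $W_{l,\nu}$ with $l \leq 2$. All terms of $\ad_{B_p}^2(X_0)$ other than $\alpha_K^2 W_3$ then evaluate into $\mathcal{N}_3(f)(0)$: the $P_{1,l,\nu}$ are in $\mathcal{N}_3$ by definition, the $W_{l,\nu}$ with $l \leq 2$ land there by $H_0$-stability as in statement 1, and $Q^\flat_{1,p,0}$ lies in $\mathcal{N}_3$ for $p \in \{0,1,2\}$ — matching exactly the three values of $p$ that can arise, which is the structural reason why $\mathcal{N}_3$ must include $Q^\flat_{1,0}, Q^\flat_{1,1}, Q^\flat_{1,2}$. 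The main obstacle I anticipate is precisely this $n_1=2$ bookkeeping; once it is done, $\alpha_K^2 f_{W_3}(0) \in \mathcal{N}_3(f)(0)$ together with $\alpha_K \neq 0$ closes the contradiction.
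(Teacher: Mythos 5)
Your proof is correct and takes essentially the same route as the paper: your $B_p = W_{1,2-K} - \sum_j \alpha_j M_{2-K+j}$ is (up to sign conventions) precisely the paper's $B_4 0^{2-\kappa}$, and your expansion of $\ad_{B_p}^2(X_0)$ into $\alpha_K^2 W_3 + Q^\flat_{1,p,0}$ plus $P_{1,l,\nu}$ terms and low-order $W_{l,\nu}$ terms matches the paper's $\gamma_\kappa^2 W_3 + Q^\flat_{1,2-\kappa} + B_6 + B_7$. The only substantive addition is your explicit $n_0$-gradation check that the $W_3$ germ arises from exactly the pair $(j,k)=(K,K)$, which the paper leaves implicit in the unexplained assertion $B_7 \in S_{2,\intset{1,4}}(X)$.
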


\begin{proof}
We have $S_{2,\intset{1,4}}(f)(0) \subset \mathcal{N}_2(f)(0)$ because $S_{2,\intset{1, 4}}(X) = \vect \{W_{j,\nu}; 2j+\nu-1\leq 4 \}$, $f_{W_{1}}(0), f_{W_{2}}(0) \in \mathcal{N}_2(f)(0)$ and $\mathcal{N}_2$ is stable by right bracketing with $X_0$. Thus, since $\mathcal{N}_2 \subset \mathcal{N}_3$,
	\begin{equation} \label{S_2,1-4}
		S_{2,\intset{1,4}}(f)(0)  \subset \mathcal{N}_3(f)(0).
	\end{equation}

	\step{Proof of \cref{it:W3-vectors-1}} We assume there exists $(\beta_0,\beta_1,\beta_2)\in\R^3\setminus\{0\}$ such that $f_{B_1}(0)=0$ where $B_1=\beta_2 M_2+\beta_1 M_1+\beta_0 M_0$. One may assume that $\beta_2 = 1$; otherwise consider $[B_1,X_0]$ or $[[B_1,X_0],X_0]$ and renormalize. Then $f_{B_2}(0)=0$ where
	\begin{equation}
		B_2=\ad_{B_1}^2(X_0)=[M_2+\beta_1 M_1+\beta_0 M_0,M_3+\beta_1 M_2+\beta_0 M_1] \in  W_3 + S_{2,\intset{1,4}}(X)
	\end{equation}
	and \eqref{S_2,1-4} leads to a contradiction with the assumption $f_{W_3}(0) \notin \mathcal{N}_3(f)(0)$.

	\step{Proof of \cref{it:W3-vectors-2}} Proceeding by contradiction, we assume that there exists $(\gamma_0,\gamma_1,\gamma_2)\in\R^3\setminus\{0\}$ such that $f_{B_4}(0)=0$ where $B_4=\gamma_2 M_2+\gamma_1 M_1 + \gamma_0 M_0 + W_1$. Let $\kappa=\max\{j \in \{0,1,2\};\gamma_j \neq 0\} $. Then $f_{B_5}(0)=0$ where $B_5=[B_4 0^{2-\kappa},B_4 0^{3-\kappa}]$ i.e.\ 
    \begin{equation}
        B_5= [\gamma_{\kappa} M_2 + \dots + \gamma_0 M_{2-\kappa}  + W_{1,2-\kappa},\gamma_{\kappa} M_3 + \dots + \gamma_0 M_{3-\kappa}  + W_{1,3-\kappa}]= \gamma_{\kappa}^2 W_3 + Q^{\flat}_{1,2-\kappa} + B_6 + B_7
    \end{equation}
    where
	$B_6 \in \vect\{[M_{l},W_{1,\nu}];l \in\N,\nu\in\N \}$ 
    and 
    $B_7\in S_{2,\intset{1,4}}(X)$. 
	By \eqref{eq:jacobi.rtl}, $\supp B_6 \subset \{ P_{1,l,\nu} ; l\in\N^*,\nu\in\N\} \subset \mathcal{N}_3$.
	Together with \eqref{S_2,1-4}, this leads to a contradiction.
\end{proof}

\subsection{Closed-loop estimates}

\begin{lemma} \label{p:W3-loop}
Under the assumptions of \cref{thm:W3precise},
\begin{equation}
		\label{eq:W3-loop}
		|(u_1,u_2,u_3)(t)| =O\left(|x(t;u)|+ \|u_1\|_{L^3}^3 + \|u_2\|_{L^2}^2 + t^{\frac 12} \|u_3\|_{L^2}\right)
	\end{equation}
and if $f_{W_1}(0) \neq 0$ then
 \begin{equation}
		\label{eq:W3-loop_forW1}
		\xi_{W_1}(t,u) =O\left(|x(t;u)|+ \|u_1\|_{L^3}^3 + \|u_2\|_{L^2}^2 + t^{\frac 12} \|u_3\|_{L^2}\right).
	\end{equation}
 \end{lemma}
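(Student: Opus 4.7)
The idea is to apply the representation formula of \cref{thm:Key_1} with $M \gets 2$, which gives $x(t;u) = \cZ{2}(t,f,u)(0) + O(\|u_1\|_{L^3}^3 + |x(t;u)|^{3/2})$. By \cref{p:small-state}, $|x|^{3/2} = O(|x|)$ in the working regime, so the remainder already fits the target right-hand side $\mathrm{RHS} := |x(t;u)| + \|u_1\|_{L^3}^3 + \|u_2\|_{L^2}^2 + t^{1/2}\|u_3\|_{L^2}$. It then remains to project $\cZ{2}(t,f,u)(0)$ onto four well-chosen directions in $\R^d$, namely $f_{M_i}(0)$ for $i \in \{0,1,2\}$ and (if nonzero) $f_{W_1}(0)$, using the linear-independence facts of \cref{p:W3-vectors}.

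To produce \eqref{eq:W3-loop_forW1} when $f_{W_1}(0) \neq 0$, I would use \cref{p:W3-vectors} item~2 to pick $\mathbb{P}$ a component along $f_{W_1}(0)$ parallel to $\vect\{f_{M_0}(0), f_{M_1}(0), f_{M_2}(0)\}$, and apply \cref{p:PZM-XI} with $\bb \gets W_1$, $\mathcal{N} \gets \{M_0, M_1, M_2\}$. The checks are: $\xi_{M_j}$ for $j \geq 3$ via \eqref{bound-xiMj/J0} with $(p,j_0) \gets (2,3)$, contributing $O(t^{1/2}\|u_3\|_{L^2})$; $\xi_{W_{j,\nu}}$ for $j \geq 2$ via \eqref{bound-xiWjnu/j0} with $(p,j_0) \gets (1,2)$, contributing $O(\|u_2\|_{L^2}^2)$; $\xi_{W_{1,\nu}}$ for $\nu \geq 1$ via \eqref{bound-xiWjnu/j0} with $(p,j_0) \gets (1,1)$, whose sum yields $O(t\,\xi_{W_1}(t,u))$ thanks to the explicit $t^\nu$ prefactor; and the only cross terms ($q = 2$, $b_1, b_2 \in \Bs_1$), which split according to whether indices fall in $\{0,1,2\}$ or exceed $2$, and by AM--GM give $O(|(u_1,u_2,u_3)(t)|^2 + t\|u_3\|_{L^2}^2)$. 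Absorbing $t\,\xi_{W_1}$ on the left for small $t$ yields $\xi_{W_1}(t,u) = O(\mathrm{RHS} + |(u_1,u_2,u_3)(t)|^2)$. Symmetrically, for each $i \in \{0,1,2\}$, \cref{p:W3-vectors} item~1 gives $\mathbb{P}_i$ a component along $f_{M_i}(0)$ parallel to $\vect\{f_{M_j}(0); j \in \{0,1,2\}, j \neq i\}$, and the same bookkeeping with $\mathcal{N} \gets \{M_j; j \in \{0,1,2\}, j \neq i\}$ yields $u_{i+1}(t) = O(\mathrm{RHS} + \xi_{W_1}(t,u) + |(u_1,u_2,u_3)(t)|^2)$. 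The new feature here is that $W_1 \notin \mathcal{N}$, so $\xi_{W_1}$ enters unconditionally, now contributing $O(\xi_{W_1})$ rather than $O(t\,\xi_{W_1})$.

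The main obstacle is precisely this coupling: $\xi_{W_1} = \tfrac{1}{2}\|u_1\|_{L^2}^2$ is not dominated by $\mathrm{RHS}$ via any a priori interpolation that is efficient for small controls, so the two statements \eqref{eq:W3-loop} and \eqref{eq:W3-loop_forW1} must be proved jointly. Summing the two resulting estimates gives $|(u_1,u_2,u_3)(t)| + \xi_{W_1}(t,u) \leq C\,\mathrm{RHS} + C'|(u_1,u_2,u_3)(t)|^2$, and since $|u_i(t)| \leq t^{i-1}\|u_1\|_{L^\infty} \to 0$ in the working asymptotic regime, the quadratic term $|(u_1,u_2,u_3)(t)|^2$ absorbs on the left, yielding both \eqref{eq:W3-loop} and \eqref{eq:W3-loop_forW1}.
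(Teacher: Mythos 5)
Your plan works when $f_{W_1}(0)\neq 0$, and in that regime it is a valid variant of the paper's argument. The paper also derives \eqref{eq:W3:PZ2=W1+O} from $\cZ{2}$ with $\bb\gets W_1$, $\mathcal{N}\gets\{M_0,M_1,M_2\}$; the difference is that the paper then switches to the $M=1$ truncation for the $M_i$-projections, whose only brackets are $M_\nu$, so the coupling with $\xi_{W_1}$ re-enters only through the remainder $\|u_1\|_{L^2}^2=2\xi_{W_1}(t,u)$ of $\cZ{1}$. You instead keep $M=2$ uniformly, which makes $\xi_{W_{1,\nu}}$ appear in the dominant-part sum (plus the $q=2$, $b_1,b_2\in\Bs_1$ cross terms that $\cZ{1}$ avoids). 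Either route produces the same pair of coupled estimates, and your joint substitution/absorption at the end, using that $|(u_1,u_2,u_3)(t)|\to 0$ swallows the quadratic term, correctly yields both \eqref{eq:W3-loop} and \eqref{eq:W3-loop_forW1}. This is a reasonable streamlining.

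There is, however, a genuine gap: \eqref{eq:W3-loop} is asserted unconditionally, but your argument proves it only when $f_{W_1}(0)\neq 0$. If $f_{W_1}(0)=0$, there is no projection along $f_{W_1}(0)$ available, yet your $M_i$-estimates still contain the term $\xi_{W_1}(t,u)=\tfrac12\|u_1\|_{L^2}^2$, which is not a priori dominated by the right-hand side of \eqref{eq:W3-loop} for small controls. The missing observation is that $f_{W_1}(0)=0$ forces $f_{W_{1,\nu}}(0)=0$ for every $\nu$ (since $W_{1,\nu}$ is an iterated bracket of $W_1$ with $X_0$, and both $f_{W_1}$ and $f_0$ vanish at $0$); thus one should enlarge $\mathcal{N}$ to $\{M_j;j\neq i\}\cup\{W_{1,\nu};\nu\in\N\}$ when applying \cref{p:PZM-XI} to $\bb\gets M_i$, so that the $W_{1,\nu}$ drop out of the estimate entirely and \eqref{eq:W3-loop} follows with no coupling. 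This is precisely the first case in the paper's proof, which your write-up skips.
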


\begin{proof}
    By \cref{thm:Key_1} with $M \gets 2$,
    \begin{equation} \label{eq:W3-CL-1}
        x(t;u) = \cZ{2}(t,f,u)(0) + O \left( \|u_1\|_{L^3}^3 + |x(t;u)|^{1+\frac 12} \right).
    \end{equation}
    
\noindent \emph{\textbf{First case: $f_{W_1}(0)=0$.}} 
Then $f_{W_{1,\nu}}(0)=0$ for every $\nu\in\N$ because it is an iterated bracket of the vector fields $f_{W_1}$ and $f_0$ that vanish at $0$.
 Let $i \in \intset{0,2}$ and $\mathbb{P}$ be a component 
along $f_{M_i}(0)$ parallel to $\overline{\mathcal{N}}(f)(0)$
where $\overline{\mathcal{N}} = \{ M_0, M_1, M_2 \} \setminus \{M_i\}$. We have $\overline{\mathcal{N}}(f)(0)=\mathcal{N}(f)(0)$ for 
\begin{equation} \label{W3:N_u123}
\mathcal{N}=\{ M_0, M_1, M_2, W_{1,\nu} ; \nu\in\N \} \setminus \{M_i\}
\end{equation}
because $f_{W_{1,\nu}}(0)=0$ for every $\nu\in\N$.
We intend to apply \cref{p:PZM-XI} with $M \gets 2$, $L \gets 5$, $\bb \gets M_i$ and $\mathcal{N}$ as above, so that \eqref{eq:PZM-xibb-OXi}, for the appropriate choice of $\Xi(t,u)$, will yield
    \begin{equation} \label{eq:W3-CL-2}
        \mathbb{P} \cZ{2}(t,f,u)(0) = u_{i+1}(t) + O\left(|(u_1,u_2,u_3)(t)|^2 + t^{\frac 12}\|u_3\|_{L^2} + \|u_2\|_{L^2}^2 \right).
    \end{equation}
    Then, combining \eqref{eq:W3-CL-1} and \eqref{eq:W3-CL-2} concludes the proof of \eqref{eq:W3-loop}.
    Let us check that the required estimates are satisfied.

    \step{Estimates of other coordinates of the second kind}
    Let $b \in \Bs_{\intset{1,2}}$ such that $b \notin \mathcal{N} \cup \{ \bb \}$.
    \begin{itemize}
        \item If $n_1(b) = 1$, then by \eqref{Bstar_S1} and \eqref{W3:N_u123}, $b = M_j$ for $j \geq 3$.
        Thus $|b| \geq 4$. By \eqref{bound-xiMj/J0} with $(p,j_0) \gets (2,3)$, \eqref{eq:xib-XI-L} holds with $\sigma = 4$ and $\Xi(t,u) := t^{\frac 12} \|u_3\|_{L^2}$.
        
        \item If $n_1(b) = 2$, by \eqref{Bstar_S2} and \eqref{W3:N_u123}, $b = W_{j,\nu}$ with $j \geq 2$.
        Thus $|b| \geq 5$.
        By \eqref{bound-xiWjnu/j0} with $(p,j_0) \gets (1,2)$, \eqref{eq:xib-XI-L} holds with $\sigma = 5$ and $\Xi(t,u) := \|u_2\|_{L^2}^2$.
    \end{itemize}
    
    \step{Estimates of cross terms}
    Let $q \geq 2$, $b_1 \geq \dotsb \geq b_q \in \Bs \setminus \{ X_0 \}$ such that $n_1(b_1) + \dotsb + n_1(b_q) \leq 2$ and $\supp \mathcal{F}(b_1, \dotsc, b_q) \not \subset \mathcal{N}$.
    
    Thus $q = 2$ and $b_1 = M_{j_1}$, $b_2 = M_{j_2}$ for some $j_1, j_2 \in \N$.
    By the preliminary estimates of Step~2 of the proof of \cref{p:W3-Z5}, $b_1$ and $b_2$ satisfy \eqref{eq:xib-othercross-XI-L} with $\Xi(t,u) = |(u_1,u_2,u_3)(t)|^2 + t\|u_3\|_{L^2}^2$ and $\alpha_1 = \alpha_2 = 1/2$.

\medskip

\noindent \emph{\textbf{Second case: $f_{W_1}(0) \neq 0$.}}
First, we apply \cref{p:PZM-XI} with $M \gets 2$, $L \gets 5$, $\bb \gets W_1$ and $\mathcal{N}=\{M_0,M_1,M_2\}$ so that \eqref{eq:PZM-xibb-OXi}, for the appropriate choice of $\Xi$, will yield    
\begin{equation} \label{eq:W3:PZ2=W1+O}
        \mathbb{P} \cZ{2}(t,f,u)(0) = \xi_{W_1}(t,u) + O\left( 
        t \xi_{W_1}(t,u)
        + t^{\frac 12}\|u_3\|_{L^2} 
        + \|u_2\|_{L^2}^2
        + |(u_1,u_2,u_3)(t)|^2 \right)
    \end{equation}    
where $\mathbb{P}$ is a component along $f_{W_1}(0)$ parallel to $\mathcal{N}(f)(0)$. The only difference in the estimates, with respect to the first case above, concerns the estimate of coordinates of the second kind associated with 
$b \in \Bs_{2}$ such that $b \notin \mathcal{N} \cup \{ \bb \}$:
then $b = W_{j,\nu}$ with $(j,\nu)\neq(1,0)$ thus $|b| \geq 4$.
By estimate \eqref{bound-xiWjnu/j0} with $(p,j_0) \gets (1,1)$, \eqref{eq:xib-XI-L} holds with $\sigma = 4$ and $\Xi(t,u) := t \|u_1\|_{L^2}^2 = 2 t \xi_{W_1}(t,u)$.
       
\medskip

 We deduce from \eqref{eq:W3-CL-1} and \eqref{eq:W3:PZ2=W1+O} that
   \begin{equation} \label{eq:W3:W1=O}
       \xi_{W_1}(t,u) = O\left( 
        |x(t;u)| 
        + \|u_1\|_{L^3}^3        
        + t^{\frac 12}\|u_3\|_{L^2} 
        + \|u_2\|_{L^2}^2
        + |(u_1,u_2,u_3)(t)|^2 \right).
    \end{equation}

 By \cref{thm:Key_1} with $M \gets 1$,
    \begin{equation} \label{eq:W3-CL-10}
        x(t;u) = \mathcal{Z}_1(t,f,u)(0) + O \left( \|u_1\|_{L^2}^2 + |x(t;u)|^{2} \right).
    \end{equation}
Let $i \in \intset{0,2}$. By applying \cref{p:PZM-XI} with $M \gets 1$, $L \gets 4$, $\bb \gets M_i$ and $\mathcal{N}=\{M_0,M_1,M_2\} \setminus \{M_i\}$, and using \eqref{bound-xiMj/J0} with $(p,j_0) \gets (2,3)$ to prove the only required estimate, we obtain
\begin{equation} \label{PZ1=ui+O}
\mathbb{P} \mathcal{Z}_1(t,f,u)(0) = u_{i+1}(t) + O \left( t^{\frac 12} \|u_3\|_{L^2} \right).
\end{equation}
We deduce from \eqref{eq:W3-CL-10} and \eqref{PZ1=ui+O} that 
\begin{equation} \label{u123=R1}
|(u_1,u_2,u_3)(t)|=O\left( |x(t;u)|+ t^{\frac 12}\|u_3\|_{L^2}+\|u_1\|_{L^2}^2 \right).
\end{equation}
By incorporating \eqref{eq:W3:W1=O} into \eqref{u123=R1} thanks to $\xi_{W_1}(t,u)=2\|u_1\|_{L^2}^2$ one proves \eqref{eq:W3-loop}.
And by incorporating \eqref{eq:W3-loop} into \eqref{eq:W3:W1=O} one proves \eqref{eq:W3-CL-1}.
\end{proof}

\begin{lemma} \label{p:W3-loop_bis}
	Under the assumptions of \cref{thm:W3precise}, if 
 $f_{Q_{1,1,1}}(0) \notin \mathcal{N}_3''(f)(0)$  
 \begin{equation}
		\label{eq:W3-loop_bis}
  \mathbb{P}_{Q_{1}} x(t;u)= \xi_{Q_{1,1,1}}(t,u)+
  O \left(
  t \xi_{Q_{1,1,1}}(t,u)+
  \xi_{W_3}(t,u)+
  \| u_2 \|_{L^3}^3  +
  |x(t;u)|^{\frac{5}{4} } \right)
	\end{equation} 
where $\mathbb{P}_{Q_1}$ is a component along $f_{Q_{1,1,1}}(0)$ parallel to $(\mathcal{N}_3'' \cap \Bs_{\intset{1,4}})(f)(0)$.
\end{lemma}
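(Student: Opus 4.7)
The plan is to mimic the strategy of \cref{p:W3-loop} while extracting the coordinate $\xi_{Q_{1,1,1}}$ instead of a primitive $u_{i+1}(t)$. First, apply the representation formula \cref{thm:Key_1} at order $M=4$ to obtain
\begin{equation*}
x(t;u) = \cZ{4}(t,f,u)(0) + O\bigl(\|u_1\|_{L^5}^5 + |x(t;u)|^{5/4}\bigr).
\end{equation*}
The $L^5$ error is absorbed into the targeted remainder via $\|u_1\|_{L^5}^5 \leq \|u_1\|_{L^\infty}\|u_1\|_{L^4}^4 = O(t\,\xi_{Q_{1,1,1}}(t,u))$, using $\|u_1\|_{L^4}^4 = 24\,\xi_{Q_{1,1,1}}$ and the smallness of $\|u_1\|_{L^\infty}$ in the working regime.

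Then, apply $\mathbb{P}_{Q_{1}}$ and invoke \cref{p:PZM-XI} with $M \gets 4$, $\bb \gets Q_{1,1,1}$ and the enlarged neutralizing set
\begin{equation*}
\mathcal{N} := \bigl(\mathcal{N}_3'' \cap \Bs_{\intset{1,4}}\bigr) \cup \{W_{1,\nu}, W_{2,\nu} : \nu \in \N\},
\end{equation*}
to which one also appends $\{Q^\flat_{1,\mu,\nu}\}$ in the sub-case $f_{W_1}(0)=0$ (where these evaluations vanish). The hypotheses $f_{W_j}(0)\in \mathcal{N}_j(f)(0)\subset \mathcal{N}_3''(f)(0)$ for $j=1,2$, together with the stability of $\mathcal{N}_3''$ under right bracketing by $X_0$, imply $\mathcal{N}(f)(0) \subset (\mathcal{N}_3''\cap\Bs_{\intset{1,4}})(f)(0)$, so $\mathbb{P}_{Q_{1}}$ annihilates every $f_b(0)$ with $b\in\mathcal{N}$.

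For each remaining $b\in \Bs_{\intset{1,4}}\setminus(\mathcal{N}\cup\{Q_{1,1,1}\})$, the estimates of \cref{p:xi-bounds} yield bounds of the required form: $W_{j,\nu}$ with $j\geq 3$ gives $O(\xi_{W_3})$ via \eqref{bound-xiWjnu/j0} with $(p,j_0)=(1,3)$; $P_{j,l,\nu}$ with $j\geq 2$ gives $O(\|u_2\|_{L^3}^3)$ via \eqref{bound-xiPjknu/j0k0} with $(p_1,p_2,j_0,k_0)=(3/2,3,2,2)$; $Q_{1,1,1,\nu}$ with $\nu\geq 1$ gives $O(t\,\xi_{Q_{1,1,1}})$ via \eqref{bound-xiQjklnu/j0k0l0} with parameters $(2,4,4,1,1,1)$; and the remaining subfamilies $Q_{j,k,l,\nu}$ with $\max(j,k)\geq 2$, $Q^\sharp_{j,\mu,k,\nu}$ and $Q^\flat_{j,\mu,\nu}$ are treated similarly, after combining \cref{p:xi-bounds} with elementary Young or Cauchy--Schwarz inequalities such as $\|u_1\|_{L^6}^2\|u_2\|_{L^3}^2 \leq C t^2\xi_{Q_{1,1,1}} + C\|u_2\|_{L^3}^3$ and $\|u_1\|_{L^2}^4 \leq t\,\|u_1\|_{L^4}^4 = O(t\,\xi_{Q_{1,1,1}})$.

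Cross terms are handled as in the proof of \cref{p:W3-Z5}: the algebraic identities stemming from \cref{p:1+2} and from the Jacobi relations of \cref{s:jacobi} force the supports of the problematic Lie brackets $F_{q,h}(b_1,\dotsc,b_q)$ appearing in \eqref{eq:etab-xib-exact} to lie inside $\mathcal{N}$, so that the only surviving products $\xi_{b_1}(t,u)\dotsb\xi_{b_q}(t,u)$ are covered by the same Hölder-type bounds as above. The main obstacle is the combinatorial case analysis in $\Bs_4$, where the numerous subfamilies $Q_{j,k,l,\nu}$, $Q^\sharp_{j,\mu,k,\nu}$, $Q^\flat_{j,\mu,\nu}$ each require a tailored choice of exponents in \cref{p:xi-bounds}, and the algebraic matching of cross-term supports into $\mathcal{N}$ has to be verified exhaustively.
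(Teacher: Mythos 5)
You follow the paper's architecture (Key\_1 at $M=4$, then \cref{p:PZM-XI} with $\bb\gets Q_{1,1,1}$ and the enlarged neutralizing set $\mathcal{N}=(\mathcal{N}_3''\cap\Bs_{\intset{1,4}})\cup\{W_{1,\nu},W_{2,\nu}\}$), and the coordinate-wise bounds you cite are the right ones. But there is a genuine gap in the handling of cross terms, and it is not merely a matter of ``exhaustive verification'' as you suggest at the end.

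The claim that the algebraic identities ``force the supports of the problematic Lie brackets $F_{q,h}(b_1,\dotsc,b_q)$ to lie inside $\mathcal{N}$'' is false for a whole family of cross terms: whenever one of the $b_i$ equals $M_j$ with $j\in\{0,1,2\}$, the preliminary estimate is $|\xi_{M_j}(t,u)| = |u_{j+1}(t)|$, a \emph{boundary value} of a primitive of $u$, which is not an $L^p$ norm and cannot be bounded by any of the Hölder-type quantities $\|u_1\|_{L^6}^2\|u_2\|_{L^3}^2$, $\|u_1\|_{L^2}^4$, etc. Reusing the estimates of the proof of \cref{p:W3-Z5} (as the paper does) produces, after \cref{p:PZM-XI}, an additional remainder term $|(u_1,u_2,u_3)(t)|^2$ that your proposed remainder $O\bigl(t\xi_{Q_{1,1,1}}+\xi_{W_3}+\|u_2\|_{L^3}^3+|x|^{5/4}\bigr)$ does not cover a priori. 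The paper's proof removes this extra term by invoking the closed-loop estimate \eqref{eq:W3-loop} from \cref{p:W3-loop}, which converts $|(u_1,u_2,u_3)(t)|$ into quantities of the form $|x(t;u)|+\|u_1\|_{L^3}^3+\|u_2\|_{L^2}^2+t^{1/2}\|u_3\|_{L^2}$; squaring and absorbing then yields the stated remainder. Your proposal never invokes \eqref{eq:W3-loop}, so this boundary term is left unaccounted for, which breaks the final estimate. Adding a step that records $|(u_1,u_2,u_3)(t)|^2$ in the remainder and then applies \eqref{eq:W3-loop} closes the gap.
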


\begin{proof}
    By \cref{thm:Key_1} with $M \gets 4$,
    \begin{equation} \label{eq:W3-CL-1_bis}
        x(t;u) = \cZ4(t,f,u)(0) + O \left( \|u_1\|_{L^5}^5 + |x(t;u)|^{1+\frac 15} \right).
    \end{equation}
For $i \in \{1,2\}$, $f_{W_i}(0) \in \mathcal{N}_i(f)(0)$ and $\mathcal{N}_i$ is stable by right bracketing with $X_0$ thus, 
for every $\nu\in\N$, $f_{W_{i,\nu}}(0) \in \mathcal{N}_i(f)(0) \subset (\mathcal{N}_3'' \cap \Bs_{\intset{1,4}})(f)(0)$. 
Thus $(\mathcal{N}_3'' \cap \Bs_{\intset{1,4}}f)(0)=\mathcal{N}(f)(0)$ where  
$\mathcal{N}=(\mathcal{N}_3'' \cap \Bs_{\intset{1,4}}) \cup \{ W_{1,\nu},W_{2,\nu};\nu\in\N\}$ i.e.
\begin{equation} \label{N=N3''&W12nu}
\mathcal{N}=\{ 
        M_{\nu},
        W_{1,\nu} 
        W_{2,\nu},
        P_{1,l,\nu},
        Q_{1,1,2,\nu},
        ; l \in \N^*, \mu,\nu \in \N \}.
 \end{equation}
We intend to apply \cref{p:PZM-XI}  with $M \gets 4$, $L \gets 11$, $\bb \gets Q_{1,1,1}$ and $\mathcal{N}$ as in \eqref{N=N3''&W12nu}, so that \eqref{eq:PZM-xibb-OXi}, for the appropriate choice of $\Xi(t,u)$, will yield
    \begin{equation} \label{eq:W3-CL-2_bis}
    \begin{split}
        \mathbb{P}_{Q_1} \cZ{4}(t,f,u)(0) = 
        \xi_{Q_{1,1,1}}(t,u)       
        + O \big( &
        t \xi_{Q_{1,1,1}}(t,u)
        + \xi_{W_3}(t,u)
            + \|u_2\|_{L^3}^3 \\ &
            + \|u_1\|_{L^6}^3 \|u_3\|_{L^2} 
            + \|u_1\|_{L^6}^6  
            + |(u_1,u_2,u_3)(t)|^2   
          \big).
    \end{split}
    \end{equation}
All the needed estimates are contained in the proof of \cref{p:W3-Z5}, except for
\begin{itemize}
\item $b \in \Bs_2$, i.e. $b=W_{j,\nu}$ with $j \geq 3$, for which $|b|\geq 7$ and \eqref{bound-xiWjnu/j0} with $(p,j_0) \leftarrow (1,3)$ proves that \eqref{eq:xib-XI-L} holds with $\sigma=7$ and $\Xi(t,u) := \|u_3\|_{L^2}^2 = 2 \xi_{W_3}(t,u)$,
\item $b=Q^{\flat}_{j,\mu,\nu}$ for which $|b| \geq 7$ and \eqref{bound-xiQflatjmunu/j0} with $(p,j_0)\leftarrow (2,1)$ proves that \eqref{eq:xib-XI-L} holds with $\sigma=7$ and
$\Xi(t,u) :=  t^2 \|u_1\|_{L^4}^4 = 4! t^2 \xi_{Q_{1,1,1}}(t,u)$,       
\item $q=2$, $b_1,b_2 \in \Bs_2$ for which $|b_i|\geq 3$ and \eqref{bound-xiWjnu/j0} with $(p,j_0) \leftarrow (2,1)$ proves that \eqref{eq:xib-othercross-XI-L} holds for $\sigma_i=3$, $\alpha_i=1/2$ and $\Xi(t,u) =  t \|u_1\|_{L^4}^4 = 4 t \xi_{Q_{1,1,1}}(t,u)$. 
\end{itemize}
Then, using $\|u_1\|_{L^6}^6=O(t^2 \|u_1\|_{L^4}^4)$, we deduce from \eqref{eq:W3-CL-2_bis} that
\begin{equation} \label{eq:W3-CL-2_bis_2}
    \begin{split}
        \mathbb{P}_{Q_1} \cZ{4}(t,f,u)(0) = 
        \xi_{Q_{1,1,1}}(t,u)       
        + O \big( &
        t \xi_{Q_{1,1,1}}(t,u)
        + \xi_{W_3}(t,u)
            + \|u_2\|_{L^3}^3 
            + |(u_1,u_2,u_3)(t)|^2   
          \big).
    \end{split}
    \end{equation}
    Then, by combining \eqref{eq:W3-CL-1_bis}, \eqref{eq:W3-CL-2_bis_2} 
    and $\|u_1\|_{L^5}^5=O(t\|u_1\|_{L^4}^4)$
    we obtain 
    \begin{equation}
        \mathbb{P}_{Q_{1}} x(t;u)= \xi_{Q_{1,1,1}}(t,u)+
  O \left(
  t \xi_{Q_{1,1,1}}(t,u)+
  \xi_{W_3}(t,u)+
  \| u_2 \|_{L^3}^3  +
  |(u_1,u_2,u_3)(t)|^2 +  
  |x(t;u)|^{\frac{5}{4} } \right).
    \end{equation}
  Finally by incorporating \eqref{eq:W3-loop} in the previous estimate we obtain
\eqref{eq:W3-loop_bis}.
\end{proof}

\subsection{Interpolation inequalities}

\begin{lemma}
	There exists $C > 0$ such that, for every $t > 0$ and $u \in \lone$,
	\begin{align}
		\label{eq:w3-u2}
		\|u_2\|_{L^3}^3 & \leq C \|u\|_{L^\infty} \|u_3\|_{L^2}^2, \\
		\label{eq:w3-u1}
		\|u_1\|_{L^6}^6 & \leq C \|u\|_{L^\infty}^4 \|u_3\|_{L^2}^2.
	\end{align}
\end{lemma}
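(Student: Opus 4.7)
Both inequalities have the scaling structure of Gagliardo--Nirenberg estimates applied to $\phi := u_3$. Indeed, under the dilation $u(\cdot) \mapsto u(\lambda \cdot)$, a primitive $u_k$ scales like $\lambda^{-k}$ and $\|u_k\|_{L^p}^p$ scales like $\lambda^{-kp-1}$; one checks that each side of \eqref{eq:w3-u2} and \eqref{eq:w3-u1} scales like $\lambda^{-7}$. This suggests applying \cref{thm:GN} with $\phi \gets u_3$, $r \gets \infty$, $q \gets 2$, and $D^l \phi = u$, so $l = 3$, and to choose $(j,p)$ accordingly.

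For \eqref{eq:w3-u2}, I take $j = 1$ (so $D^j u_3 = u_2$) and $p = 3$. The condition \eqref{eq:GNS-condition} forces $\alpha = 1/3$, and \eqref{eq:GNS-estimate} with $s = 2$ yields
\begin{equation}
\|u_2\|_{L^3} \leq C \|u\|_{L^\infty}^{1/3} \|u_3\|_{L^2}^{2/3} + C t^{-\frac{7}{6}} \|u_3\|_{L^2}.
\end{equation}
Raising to the power $3$ and using the crude pointwise bound $\|u_3\|_{L^\infty} \leq \tfrac{t^3}{6}\|u\|_{L^\infty}$, which gives $\|u_3\|_{L^2} \leq C t^{7/2} \|u\|_{L^\infty}$, the lower-order term $C t^{-7/2} \|u_3\|_{L^2}^3$ is bounded by $C \|u\|_{L^\infty} \|u_3\|_{L^2}^2$, proving \eqref{eq:w3-u2}.

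For \eqref{eq:w3-u1}, I instead take $j = 2$ (so $D^j u_3 = u_1$) and $p = 6$. The condition \eqref{eq:GNS-condition} forces $\alpha = 2/3$, and \eqref{eq:GNS-estimate} with $s = 2$ yields
\begin{equation}
\|u_1\|_{L^6} \leq C \|u\|_{L^\infty}^{2/3} \|u_3\|_{L^2}^{1/3} + C t^{-\frac{14}{6}} \|u_3\|_{L^2}.
\end{equation}
Raising to the power $6$, the lower-order term becomes $C t^{-14} \|u_3\|_{L^2}^6$, and the same crude bound $\|u_3\|_{L^2} \leq C t^{7/2} \|u\|_{L^\infty}$ used twice to downgrade the four extra factors $\|u_3\|_{L^2}^4$ absorbs this into $C \|u\|_{L^\infty}^4 \|u_3\|_{L^2}^2$, proving \eqref{eq:w3-u1}.

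There is no substantial obstacle here: both estimates are essentially textbook consequences of \cref{thm:GN}. The only minor point requiring care is the disposal of the boundary lower-order term in \eqref{eq:GNS-estimate}, handled uniformly by the trivial estimate $\|u_k\|_{L^s} \leq C_{k,s}\, t^{1/s + k}\|u\|_{L^\infty}$ (itself immediate from the explicit representation $u_k(t) = \int_0^t \frac{(t-s)^{k-1}}{(k-1)!} u(s)\,ds$).
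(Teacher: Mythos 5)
Your proof is correct and follows exactly the same route as the paper: Gagliardo--Nirenberg (\cref{thm:GN}) with $\phi \gets u_3$, $(p,q,r,s,j,l,\alpha)=(3,2,\infty,2,1,3,1/3)$ for \eqref{eq:w3-u2} and $(6,2,\infty,2,2,3,2/3)$ for \eqref{eq:w3-u1}, with the lower-order boundary term absorbed via $\|u_3\|_{L^2}\leq Ct^{7/2}\|u\|_{L^\infty}$. The paper's proof is a more terse statement of precisely the same argument.
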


\begin{proof}
	Inequality \eqref{eq:w3-u2} follows from \cref{thm:GN} with $\phi \gets u_3$, $(p,q,r,s) \gets (3,2,\infty,2)$, $(j,l) \gets (1,3)$, $\alpha \gets 1/3$.
	Similarly, \eqref{eq:w3-u1} follows from \cref{thm:GN} with $\phi \gets u_3$, $(p,q,r,s) \gets (6,2,\infty,2)$, $(j,l) \gets (2,3)$, $\alpha \gets 2/3$.
    In both cases, the lower-order term in \eqref{eq:GNS-estimate} is absorbed using the estimate $\|u_3\|_{L^2} \leq t^{\frac 72} \|u\|_{L^\infty}$, which stems from Hölder's inequality and the equality $u_3(t) = \frac 1 2 \int_0^t (t-s)^2 u(s) \dd s$.
\end{proof}

\subsection{Proof of the presence of the drift}

\begin{proof}[Proof of \cref{thm:W3precise}]
    In this proof, to lighten the notations, we write $x$ and $\xi_b$ instead of $x(t;u)$ and $\xi_b(t,u)$.
    By \cref{thm:Key_1} with $M \gets 5$,
    \begin{equation} \label{x=Z5+_W3}
        x = \cZ{5}(t,f,u)(0) + O\left( \|u_1\|_{L^6}^6 + |x|^{1+\frac{1}{5}} \right).
    \end{equation}
    We deduce from \eqref{x=Z5+_W3}, \eqref{eq:W3-z5}, \eqref{eq:W3-loop} and \eqref{eq:W3-loop_forW1} that
    \begin{equation}
        \mathbb{P}_{W_3} x = 
        \xi_{W_3}           
        + O\Big( 
        t \xi_{W_3}
        + \|u_2\|_{L^3}^3
        + \|u_1\|_{L^6}^3 \|u_3\|_{L^2} 
        + \|u_1\|_{L^6}^6  
        + a' t \xi_{Q_{1,1,1}}
        + |x|^{\frac 65}
        \Big).
    \end{equation}
    We deduce from the above estimate (see \cref{s:O}) the existence of $C,\rho>0$ such that, for every $t \in (0,\rho)$, $u \in \lone$ with $\|u\|_{W^{-1,\infty}}\leq \rho$,
    \begin{equation}
        \mathbb{P}_{W_3}x
        + C a' t \xi_{Q_{1,1,1}}
        \geq 
        (1-Ct) \xi_{W_3}       
        - C \Big( 
        \|u_2\|_{L^3}^3
        + \|u_1\|_{L^6}^3 \|u_3\|_{L^2} 
        \\  
        + \|u_1\|_{L^6}^6  
        + |x|^{\frac 65}
        \Big)
    \end{equation}    
    where, when $a'\neq 0$, by  \eqref{eq:W3-loop_bis},
    \begin{equation}
        \xi_{Q_{1,1,1}} \leq 2 \mathbb{P}_{Q_1} x + C \left( \xi_{W_3} + \|u_2\|_{L^3}^3 + |x|^{{\frac 54}} \right).
    \end{equation}
    Thus
    \begin{equation}
        \mathbb{P}_{W_3} x
        + 2 C a' t \mathbb{P}_{Q_1}x
        \geq 
        (1-2Ct) \xi_{W_3}            
        - 2C \Big( 
        \|u_2\|_{L^3}^3
        + \|u_1\|_{L^6}^3 \|u_3\|_{L^2} 
        + \|u_1\|_{L^6}^6  
        + |x|^{\frac 65}
        \Big).
    \end{equation}    
    Gathering this inequality and the interpolation estimates \eqref{eq:w3-u2} and \eqref{eq:w3-u1} yields
    \begin{equation}
        (\mathbb{P}_{W_3} + 2 C a' t \mathbb{P}_{Q_1}) x
        \geq {\frac 12} \int_0^t u_3^2            
        - C' \Big( 
        \left(t+\|u\|_{L^\infty}\right) \int_0^t u_3^2
        + |x|^{\frac 65}
         \Big)
    \end{equation}
    for some constant $C'$ independent of $t$ and $u$.
\end{proof}

\subsection{A comment on the time-dependent drift direction}
\label{s:W3-comment}

As already stated, the conclusion \eqref{drift_Pt} is not exactly a drift in the sense of \cref{Def:drift}, since the left-hand side of the inequality involves a time-dependent component $\mathbb{P}_t$ that may not give a component exactly along $f_{W_3}(0)$. 
This is not a technical limitation of our approach, since this phenomenon does occur.
Consider the system
\begin{equation} \label{ex:W3-time}
    \begin{cases}
        \dot{x}_1 = u \\
        \dot{x}_2 = x_1 \\
        \dot{x}_3 = x_2 \\
        \dot{x}_4 = x_1^4 + x_3^3 \\
        \dot{x}_5 = x_3^2 - x_4.
    \end{cases}
\end{equation}
One can check that this system satisfies the assumptions of \cref{thm:W3precise}.
Moreover, by Hermes' condition \cite[Theorem 3.2]{hermes1982control}, the subsystem $(x_1,x_2,x_3,x_4)$ is $L^\infty$-STLC.
However, using oscillating controls as in \cref{Subsec_Optim_Wk}, one can prove that there is no drift along $f_{W_3}(0) = 2 e_5$, parallel to $\mathcal{N}_3(f)(0) = \R e_1 + \dotsb + \R e_4$ as $(t,\|u\|_{L^\infty})\to 0$.
Heuristically, the proof of \cref{thm:W3precise} considers the quantity $y(t;u) := x_5(t;u) + t x_4(t;u)$ to obtain a drift in a direction which tends towards~$f_{W_3}(0)$.

Another way to look at this system is to consider $z(t;u) := x_5(t;u) + x_4(t;u)$.
One computes
\begin{equation}
    z(t;u) = \int_0^t (1 - (t-s)) u_1^4(s) \dd s
    + \int_0^t (1 + u_3(s) (1-(t-s)) u_3^2(s) \dd s.
\end{equation}
Hence, as $(t,\|u\|_{L^\infty}) \to 0$, 
\begin{equation}
    z(t;u) \geq (1-\varepsilon) \left( \int_0^t u_1^4 + \int_0^t u_3^2 \right),
\end{equation}
so there somehow is a strong composite drift in the (fixed) direction $e_4 + e_5$.

\section{New obstruction of the sixth order}
\label{s:sextic}

The goal of this section is to prove \cref{thm:sextic}, as a consequence of the following more precise statement.
In this section, we use the short-hand notation $D$ for the following bracket of $\Bs_6$:
\begin{equation}
    D := \ad_{P_{1,1}}^2 (X_0)
\end{equation}
and we introduce
\begin{equation}
    \mathcal{N}_D := \Bs_{\intset{1,7}} \setminus \{ D \}.
\end{equation}

\begin{theorem} \label{thm:sextic-drift}
    Assume that $f_{D}(0) \notin \mathcal{N}_D(f)(0)$.
    Then system \eqref{syst} has a drift along $f_{D}(0)$, parallel to $\mathcal{N}_D(f)(0)$, as $(t,\|u\|_{L^\infty}) \to 0$.
\end{theorem}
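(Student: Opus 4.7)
Following the general approach of Section 1.7.3, the plan is to choose the truncation level $M = 7$ in Theorem 4.2 and exploit the fact that the ``neutralizing'' set $\mathcal{N}_D$ is essentially maximal among brackets of $n_1$-order at most $7$. Concretely, let $\mathbb{P}$ give a component along $f_D(0)$ parallel to $\mathcal{N}_D(f)(0)$. By definition of $\mathcal{N}_D$, one has $\mathbb{P} f_b(0) = 0$ for every $b \in \Bs_{\llbracket 1, 7 \rrbracket}\setminus\{D\}$, so the ``main-term'' summation of Corollary 4.5 is empty and
\begin{equation*}
    \mathbb{P}\,\cZ{7}(t,f,u)(0) = \eta_D(t,u).
\end{equation*}
The problem therefore reduces, by Proposition 4.1 applied at $b = D$, to establishing a uniform bound
\begin{equation*}
    |\xi_{b_1}(t,u) \cdots \xi_{b_q}(t,u)| \leq \Xi(t,u)
\end{equation*}
for every decomposition $q \geq 2$, $b_1 \geq \dotsb \geq b_q \in \Bs \setminus \{X_0\}$ with $D \in \supp \mathcal{F}(b_1,\ldots,b_q)$, and a suitable functional $\Xi$.

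Since $|D| = 9$, $n_1(D) = 6$ and $n_0(D) = 3$, the list of admissible decompositions is finite. The enumeration would start from the canonical Hall factorization $D = (P_{1,1}, P_{1,1,1})$, then sweep through its Jacobi rearrangements and decompositions into three or more factors. For each, the coordinates $\xi_{b_i}$ have explicit integral expressions (Proposition 3.4) which Proposition 3.6 translates into norms of iterated primitives of $u$. Cross products containing point evaluations $\xi_{X_1}(t,u) = u_1(t)$ or $\xi_{M_k}(t,u) = u_{k+1}(t)$ are then handled by a closed-loop step: the hypothesis $f_D(0) \notin \mathcal{N}_D(f)(0)$ implies a cascade of vectorial relations forcing the vectors $f_{M_0}(0), f_{M_1}(0), \dots, f_{P_{1,1}}(0), \dots$ to be linearly independent and transverse to $f_D(0)$, which in the spirit of Lemmas 5.3, 6.4 and 7.4 converts point evaluations into $O(|x(t;u)| + \text{integral terms})$. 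The remaining purely integral cross products, the most delicate being $\xi_{P_{1,1}}(t,u)^2 \propto \bigl(\int_0^t u_1^3\bigr)^2$, fall outside the scope of any classical Gagliardo--Nirenberg inequality and are precisely what the new sextic interpolation inequalities of Section 4.6 are designed to bound.

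The same sextic inequalities are also used to dominate the residual $\|u_1\|_{L^8}^8$ from Theorem 4.2 by an expression of the form $\|u\|_{L^\infty}^\sigma \xi_D(t,u)$ for some $\sigma > 0$. The main obstacle is therefore the bookkeeping required to produce a single $\Xi(t,u)$ that simultaneously dominates all the enumerated cross products and the residual, while itself being dominated by $\varepsilon\,\xi_D(t,u) + C |x(t;u)|^{\beta}$ for some $\beta > 1$ after the closed-loop estimates are inserted. Once $\Xi$ is constructed, assembling the drift is routine: the representation formula, combined with $\mathbb{P}\,\cZ{7}(t,f,u)(0) = \xi_D(t,u) + O(\Xi(t,u))$ and the closed-loop and interpolation bounds, yields $\mathbb{P} x(t;u) \geq (1-\varepsilon)\,\xi_D(t,u) - C |x(t;u)|^\beta$, which is the required drift along $f_D(0)$ parallel to $\mathcal{N}_D(f)(0)$ in the sense of Definition 1.3, since $\xi_D \geq 0$.
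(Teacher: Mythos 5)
Your proposal reproduces the paper's overall architecture faithfully: truncation at $M=7$, the observation that $\mathbb{P}\,\cZ{7}(t,f,u)(0) = \eta_D(t,u)$ because $\mathcal{N}_D$ exhausts $\Bs_{\intset{1,7}}\setminus\{D\}$, an application of \cref{p:etab-xib-XI} at $b = D$, enumeration of the finitely many decompositions $b_1 \geq \dotsb \geq b_q$ with $D \in \supp \mathcal{F}(b_1,\dots,b_q)$, closed-loop estimates derived from vectorial relations, and the new sextic interpolation inequalities. This is exactly the paper's strategy, and your identification of the canonical factorization $D = (P_{1,1}, P_{1,1,1})$ and of the residual $\|u_1\|_{L^8}^8$ as the target of interpolation are both correct.

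One point is misattributed, and it is not merely cosmetic. You classify $\xi_{P_{1,1}}(t,u)^2 \propto (\int_0^t u_1^3)^2$ as a ``purely integral'' cross product to be handled by sextic interpolation, after having already (correctly) placed $f_{P_{1,1}}(0)$ in the list of vectors whose non-degeneracy is forced by $f_D(0) \notin \mathcal{N}_D(f)(0)$. These two sentences pull in opposite directions. The quantity $\xi_{P_{1,1}}(t,u)$ is a boundary value at the final time $t$, whereas $\xi_D(t,u) = \tfrac{1}{2}\int_0^t \xi_{P_{1,1}}(s,u)^2\,\dd s$ is the time integral of its square; there is no interpolation inequality bounding a final value $\phi(t)^2$ by $\int_0^t \phi^2$, so this term genuinely cannot be absorbed by the coercivity of $\xi_D$. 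The paper instead proves the second vectorial relation $f_{P_{1,1}}(0) \notin \vect\{ f_b(0) ; b \in \Bs_{\intset{1,3}} \setminus \{P_{1,1}\}\}$ and uses it to derive the closed-loop estimate $|\xi_{P_{1,1}}(t,u)| = O(|x(t;u)| + \|u_1\|_{L^4}^4)$, after which only then does interpolation of $\|u_1\|_{L^4}^4$ through $\|u_1\|_{L^8}^8$ enter. The sextic interpolation inequalities of the paper in fact concern $\|u_1\|_{L^8}^8$, $\xi_{P_{1,1}0}^2$ (a simple Cauchy--Schwarz), and $\xi_{R^\sharp_{1,1,1,1}}$, but not $\xi_{P_{1,1}}^2$. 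You should also give more weight to the algebraic preliminaries: the paper devotes several lemmas to showing that for most of the a priori admissible decompositions $b_1, \dots, b_q$, the coefficient $\langle \mathcal{F}(b_1,\dots,b_q), D\rangle$ actually vanishes, which drastically shrinks the list that needs analytic control; without these vanishings the bookkeeping you describe would not close.
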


\subsection{Limiting examples}
\label{s:D-examples}

Let us give an example motivating the threshold 7 for this necessary condition.
In \cite[Example 6.1]{zbMATH04154295}, Kawski considers the systems
\begin{equation}
    \begin{cases}
        \dot{x}_1 = u \\
        \dot{x}_2 = x_1 \\
        \dot{x}_3 = x_1^3 \\
        \dot{x}_4 = x_3^2 - x_2^p
    \end{cases}
\end{equation}
for $p \in \{ 7, 8 \}$.
Written in the form \eqref{syst}, these systems satisfy
\begin{equation}
    f_{M_0}(0) = e_1, \quad 
    f_{M_1}(0) = e_2, \quad
    f_{P_{1,1}}(0) = 6 e_3, \quad
    f_{D}(0) = 72 e_4, \quad
    f_{\ad^{p}_{M_1}(X_0)}(0) = - p! e_4
\end{equation}
and $f_b(0) = 0$ for all $b \in \Bs \setminus \{ M_0, M_1, P_{1,1}, D, \ad^p_{M_1}(X_0) \}$.
Thus, they feature a competition between $D$ and $\ad^p_{M_1}(X_0)$.

Kawski proves that this system is $L^\infty$-STLC for $p = 7$ (see \cite[Claim 6.1]{zbMATH04154295}) but not $L^\infty$-STLC for $p = 8$ (see \cite[Claim 6.3]{zbMATH04154295}).
This both motivates and is consistent with \cref{thm:sextic}, which can be seen as a generalization of Kawski's negative claim.

\begin{remark}
    As \cref{Thm:Kawski_Wm}, \cref{thm:sextic} is a \emph{loose} condition, in the sense that we have not attempted to separate, within $\Bs_6$ and $\Bs_7$, which brackets can or cannot compensate for the drift.
    It is possible that our method could also be used to perform such a distinction.
    
    An interesting example is studied by Kawski in \cite[Example 5.3]{zbMATH04154295};
    \begin{equation}
        \begin{cases}
            \dot{x}_1 = u \\
            \dot{x}_2 = x_1 \\
            \dot{x}_3 = x_1^3 \\
            \dot{x}_4 = x_3^2 - x_2^2 x_1^4,
        \end{cases}
    \end{equation}
    which exhibits in $\Bs$ a competition between $D$ and $\ad^2_{M_1} \ad^4_{X_1} (X_0)$.
    Kawski proves that this systems is $L^\infty$-STLC.
    
    Conversely, the system
    \begin{equation}
        \begin{cases}
            \dot{x}_1 = u \\
            \dot{x}_2 = x_1 \\
            \dot{x}_3 = x_1^3 \\
            \dot{x}_4 = x_3^2 + x_3 x_1^4
        \end{cases}
    \end{equation}
    exhibits in $\Bs$ a competition between $D$ and $\ad_{P_{1,1}} \ad_{X_1}^4 (X_0)$ because $f_{\ad_{P_{1,1}} \ad_{X_1}^4 (X_0)}(0) = 144 e_4$.
    Using the estimates of the next paragraphs, one can prove that this system is not $L^\infty$-STLC.
    This hints towards the fact that it is not necessary to include the bracket $\ad_{P_{1,1}} \ad_{X_1}^4 (X_0)$ (of $\Bs_7$) in the list of brackets which can compensate $D$.
\end{remark}

\subsection{Algebraic preliminaries}
\label{s:sextic-algebra}

To lighten the proof of the following paragraph, we start with algebraic preliminaries concerning the expansions on $\Bs$ of some brackets of order 6, linked with cross terms along $D$. 
We use the trailing zero notation of \cref{def:0nu} and compute the expansions of the considered brackets on $\Bs$ using Jacobi's identity as many times as necessary (see \cite[Section 2.1]{A1} for an exposition and a more theoretical point of view on the classical recursive decomposition algorithm on Hall bases).

For $B \in \mathcal{L}(X)$, $\langle B, D \rangle$ denotes the coefficient of $D$ in the expansion of $B$ on $\Bs$.

\subsubsection{Brackets of two elements}
    
\begin{lemma} \label{p:D-dec-3+3}
    Let $a < b \in \Bs_3$ such that $\langle [a,b], D \rangle \neq 0$.
    Then $a = P_{1,1}$ and $b = P_{1,1} 0$.
\end{lemma}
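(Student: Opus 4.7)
The plan is to exploit multi-degree conservation to shrink the list of candidate pairs to a handful, and then to check each by hand that $[a,b]$ either \emph{is} the Hall element $D$, or is a different Hall element (so its coefficient on $D$ vanishes), without ever having to run a Jacobi expansion.

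First I would observe that the Lie bracket preserves both $n_0$ and $n_1$, so any $b' \in \Bs$ appearing in the expansion of $[a,b]$ satisfies $n_i(b') = n_i(a)+n_i(b)$. Since $a,b \in \Bs_3$, one has $n_1([a,b]) = 6 = n_1(D)$ automatically, while $\langle [a,b], D \rangle \neq 0$ forces $n_0(a)+n_0(b) = n_0(D) = 3$. From the explicit form $P_{j,k,\nu} = (M_{k-1}, W_{j,0})0^\nu$ one reads off $n_0(P_{j,k,\nu}) = 2j+k-2+\nu \geq 1$, with $n_0 = 1$ only for $P_{1,1}$ and $n_0 = 2$ only for $P_{1,2}$ and $P_{1,1,1} = P_{1,1}0$. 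Hence $\{n_0(a), n_0(b)\} = \{1,2\}$, and I would then use the order on $\Bs$ to eliminate the case $b = P_{1,1}$: unwinding \bref{B1}--\bref{B2} on $\Bs_3$ (where $\lambda(P_{j,k,0}) = M_{k-1}$ is $\Bs_1$-ordered by $k$, $\mu(P_{j,k,0}) = W_{j}$ is $\Bs_2$-ordered by $j$, and trailing zeros raise the order), $P_{1,1}$ is strictly smaller than both $P_{1,2}$ and $P_{1,1}0$. The inequality $a < b$ therefore forces $a = P_{1,1}$ and $b \in \{P_{1,2}, P_{1,1}0\}$.

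It remains to show that in both candidate cases the pair $(a,b)$ already belongs to $\Bs$, so that $[a,b]$ needs no Jacobi reduction and its expansion is the single basis element $\eval((a,b))$. For $b = P_{1,2}$ one has $\lambda(b) = M_1$ and $n_1(M_1) = 1 < 3 = n_1(a)$, so $\lambda(b) < a$ by \bref{B2}; for $b = P_{1,1}0 = (P_{1,1}, X_0)$ one has $\lambda(b) = P_{1,1} = a$. In both cases the Hall condition $\lambda(b) \leq a$ is satisfied, so $(a,b) \in \Bs$. Consequently $\langle [a,b], D\rangle$ equals $1$ or $0$ according as $(a,b)$ coincides with $D = (P_{1,1}, P_{1,1}0)$ or not, and is nonzero exactly when $b = P_{1,1}0$, which is the claim.

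The only real obstacle is bookkeeping: correctly translating the abstract order of \cref{def:order} into the concrete fact that $\Bs_3$ is lexicographically ordered by $(k,j,\nu)$. Once that is in place the argument collapses to listing the (very short) solutions of $n_0(a)+n_0(b)=3$ with $a < b$ in $\Bs_3$ and inspecting each, and no algebraic computation beyond reading off $\lambda(b)$ is needed.
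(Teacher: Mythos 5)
Your proof is correct and follows essentially the same route as the paper: conservation of $n_0$ (giving $n_0(a)+n_0(b)=3$) narrows the candidates to $a=P_{1,1}$ and $b\in\{P_{1,2},P_{1,1}0\}$, and then one checks that both ordered pairs are already Hall elements, so the coefficient on $D$ is $1$ exactly when $(a,b)=D=(P_{1,1},P_{1,1}0)$. You spell out the order computations and the Hall-set verification that the paper compresses into the single line ``Since $(P_{1,1},P_{1,2}) \in \Bs \setminus \{ D \}$, the conclusion follows.''
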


\begin{proof}
    First $n_0(a) + n_0(b) = n_0(D) = 3$.
    Thus $a = P_{1,1}$ and $b \in \{ P_{1,1} 0, P_{1,2} \}$.
    Since $(P_{1,1},P_{1,2}) \in \Bs \setminus \{ D \}$, the conclusion follows.
\end{proof}

\begin{lemma} \label{p:D-dec-2+4}
    Let $a \in \Bs_2$ and $b \in \Bs_4$.
    Then $\langle [a,b], D \rangle = 0$.
\end{lemma}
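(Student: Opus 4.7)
My plan is to follow the same strategy as \cref{p:D-dec-3+3}: use the bi-degree constraint to restrict $(a, b)$ to a short list of pairs, and then check directly that in each case the Hall expansion of $[a, b]$ contains no copy of $D$. First, since $n_0(a) + n_0(b) = n_0(D) = 3$ is forced (the $n_1$-counts automatically match) and every $b \in \Bs_4$ has $n_0(b) \geq 1$ (the minimum being $Q_{1,1,1}$), the admissible $a$'s must satisfy $n_0(a) \leq 2$. Since $n_0(W_{j,\nu}) = 2j - 1 + \nu$, this leaves only $a \in \{W_1, W_{1,1}\}$. A similarly short enumeration via \eqref{def:Q} of the $b \in \Bs_4$ with the required $n_0$-value yields only three pairs to examine: $(W_1, Q_{1,1,1,1})$, $(W_1, Q_{1,1,2,0})$, and $(W_{1,1}, Q_{1,1,1})$.

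For each pair I would write $b = b^* 0^{\nu^*}$ with $b^*$ the germ of $b$ and apply the peeling identity \eqref{eq:jacobi.rtl} to rewrite $[a, b]$ as a signed sum of brackets of the form $[a 0^{\nu'}, b^*] 0^{\nu^* - \nu'}$ with $a 0^{\nu'} \in \Bs_2$. The key structural observation is that in all three cases the germ $b^*$ is necessarily of the form $Q_{j,k,l,0}$: the competing germs $Q^\sharp_{j,\mu,k,0}$ and $Q^\flat_{j,\mu,0}$ would have $n_0 \geq 4$ and $n_0 \geq 3$ respectively, incompatible with the budget of at most $2$ for $n_0(b)$. Consequently $\lambda(b^*) = M_{l-1} \in \Bs_1$, which is automatically dominated by any element of $\Bs_2$, so the outer bracket $(a 0^{\nu'}, b^*) \in \Bs_6$ is directly in Hall form and $[a 0^{\nu'}, b^*]$ collapses to this single Hall basis element.

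The outermost pair of factors of $(a 0^{\nu'}, b^*)$ has $n_1$-values $(2, 4)$; after applying the subsequent $0^{\nu^* - \nu'}$, the outermost $n_1$-pair becomes either $(2, 4)$ (if $\nu^* = \nu'$) or $(6, 0)$ (otherwise). Neither matches the outermost $n_1$-pair $(3, 3)$ of $D = (P_{1,1}, P_{1,1} 0)$, so none of the Hall basis elements appearing in the expansion of $[a, b]$ equals $D$, giving $\langle [a, b], D \rangle = 0$ as claimed. There is essentially no technical obstacle: the sharpness of the $n_0$-budget combined with the lower bound $n_0(b) \geq 1$ for $b \in \Bs_4$ ensures that only $Q$-type germs appear and that the Hall condition is always trivially met, making this a much cleaner situation than the $(3,3)$-case of \cref{p:D-dec-3+3}.
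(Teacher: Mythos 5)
Your proof is correct and follows essentially the same strategy as the paper's: restrict the admissible pairs $(a,b)$ via the $n_0$-budget to $\{W_1,W_{1,1}\}\times\{Q_{1,1,1},Q_{1,1,1,1},Q_{1,1,2}\}$, apply Jacobi/peeling to write $[a,b]$ as a signed sum of Hall basis elements, and conclude none equals $D$. The one small refinement you add — a uniform reason (the outermost $n_1$-pair is $(2,4)$ or $(6,0)$, never $(3,3)$) rather than identifying each Hall element by name and asserting it is not $D$ — is a nice consolidation but does not change the substance of the argument.
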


\begin{proof}
    First $n_0(a) + n_0(b) = n_0(D) = 3$.
    Since $n_0(b) \geq 1$, $n_0(a) \in \intset{0,2}$ so $a \in \{ W_1, W_1 0 \}$.
    \begin{itemize}
        \item Case $a = W_1 0$.
        Then $b = \ad_{X_1}^4(X_0)$ and $[a,b] = [W_10, \ad_{X_1}^4(X_0)]$, which is in $\Bs \setminus \{ D \}$.
        
        \item Case $a = W_1$.
        Then either,
        \begin{itemize}
            \item $b = \ad_{X_1}^4(X_0) 0$ and 
            \begin{equation}
                [a,b] = [W_1, \ad_{X_1}^4(X_0)] 0 - [W_1 0, \ad_{X_1}^4(X_0)],
            \end{equation}
            both terms being in $\Bs \setminus \{ D \}$.
            \item $b = (M_1, \ad_{X_1}^3(X_0))$ and $[a,b] = [W_1, [M_1, \ad_{X_1}^3(X_0)]]$, which is in $\Bs \setminus \{ D \}$.
        \end{itemize}
    \end{itemize}
    Hence, in all cases $\langle [a,b], D \rangle = 0$.
\end{proof}

\begin{lemma} \label{p:D-dec-1+5}
    Let $a \in \Bs_1$ and $b \in \Bs_5$, such that $\langle [a,b], D \rangle \neq 0$.
    Then $a = X_1$ and $b = R^\sharp_{1,1,1,1}$.
\end{lemma}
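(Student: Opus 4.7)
The plan proceeds by enumeration-and-expansion, using \cref{p:D-dec-3+3} (already proved) as the uniqueness filter for $D$-contributions at the level of brackets of two degree-3 basis elements.

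First, since $D \in \Bs_6$ satisfies $n_0(D) = 3$, additivity of $n_0,n_1$ under bracketing forces $n_1(a) + n_1(b) = 6$ and $n_0(a) + n_0(b) = 3$, so $a = M_{\nu_a}$ with $\nu_a \in \intset{0,3}$ and $b \in \Bs_5$ with $n_0(b) = 3 - \nu_a$. The closed formulas $n_0(R_{j,k,l,m,\nu}) = 2j+k+l+m-4+\nu$ and $n_0(R^\sharp_{j,k,l,\mu,\nu}) = 2j+k+2l-3+\mu+\nu$, derived from \eqref{def:R}, together with the ordering constraints in \eqref{Bstar_S5}, cut the list of candidates to exactly $11$ pairs.

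Next, for each pair $(a,b)$ I check whether $(a,b) \in \Bs$ via the Hall condition $\lambda(b) \leq a$ from \cref{def:Hall}. When this holds, $(a,b)$ is itself a basis element distinct from $D = (P_{1,1,0}, P_{1,1,1})$ (whose left factor $P_{1,1,0}$ has $X_1$-degree $3$, while $a \in \Bs_1$), so $\langle [a,b], D \rangle = 0$. This eliminates $(M_2, R_{1,1,1,1,0})$ and $(M_1, R_{1,1,1,2,0})$. The $9$ remaining pairs all fail the Hall test because either $\nu_b \geq 1$, so $\lambda(b) \in \Bs_5$, or $b$ is an $R^\sharp$-germ, so $\lambda(b) = W_{l,\mu} \in \Bs_2$, and in both cases $\lambda(b) > a$.

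For each non-Hall pair I expand $[a,b]$ into $\Bs$ by first pushing trailing $X_0$'s out through \eqref{eq:jacobi.rtl}, then applying Jacobi to the germ $b = (b_1, b_2)$, and finally resolving each inner bracket using the elementary identities $[X_1, W_1] = P_{1,1,0}$, $[M_1, W_1] = P_{1,2,0}$, $[X_1, P_{1,1,0}] = Q_{1,1,1,0}$, $[X_1, M_1] = W_1$, $[X_1, M_2] = W_{1,1}$, etc., all immediate from \Cref{p:B*-12345}. By \cref{p:D-dec-3+3,p:D-dec-2+4}, during such an expansion the only $(\Bs_i, \Bs_j)$-bracket of two basis elements (with $i+j = 6$) that can contribute $D$ is precisely $(P_{1,1,0}, P_{1,1,1})$, so it suffices to scan each expansion for this pair. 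Inspection shows it is produced only for the candidate $(a,b) = (X_1, R^\sharp_{1,1,1,1,0}) = (X_1, (W_{1,1}, P_{1,1,0}))$: Jacobi gives
\[ [X_1, (W_{1,1}, P_{1,1,0})] = [[X_1, W_{1,1}], P_{1,1,0}] + [W_{1,1}, Q_{1,1,1,0}], \]
and \eqref{eq:jacobi.rtl} yields $[X_1, W_{1,1}] = [X_1, W_1]\,0 - [M_1, W_1] = P_{1,1,1} - P_{1,2,0}$, whence
\[ [X_1, R^\sharp_{1,1,1,1,0}] = -D + (P_{1,1,0}, P_{1,2,0}) + (W_{1,1}, Q_{1,1,1,0}), \]
the two trailing brackets being Hall elements distinct from $D$, so $\langle [X_1, R^\sharp_{1,1,1,1,0}], D \rangle = -1 \neq 0$.

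The main obstacle is the bookkeeping required for the $8$ other non-Hall pairs: several of them produce intermediate brackets that must themselves be expanded recursively (for instance $(X_1, R^\sharp_{1,1,1,0,1})$ requires re-expanding $[M_1, R^\sharp_{1,1,1,0,0}]$, and the $R$-type candidates with trailing $0^\nu$ cascade through several Jacobi applications). In every such case one must verify that the only $(\Bs_3, \Bs_3)$-subbrackets produced are of the form $(P_{1,k,\mu'}, P_{1,k',\nu''})$ with $(k, \mu') \neq (1,0)$ or $(k', \nu'') \neq (1,1)$ (up to swap), which by \cref{p:D-dec-3+3} contribute no $D$. This verification is routine but tedious and constitutes the bulk of the proof.
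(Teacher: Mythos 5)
You take essentially the same route as the paper — enumerate the $11$ candidate pairs ($7$ for $a=X_1$, $3$ for $a=M_1$, $1$ for $a=M_2$), discard the two that are already Hall elements, and Jacobi-expand the remaining $9$ — and your computation for the critical pair $(X_1, R^\sharp_{1,1,1,1,0})$ matches the paper's exactly.

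However, the claimed shortcut is not logically closed. You assert that, by \cref{p:D-dec-3+3,p:D-dec-2+4}, the only bracket of two basis elements with total $X_1$-degree $6$ that can contribute $D$ is $(P_{1,1,0},P_{1,1,1})$, so one merely has to scan for it. Those two lemmas cover the $(3,3)$ and $(2,4)$ splittings, but the Jacobi expansions of the $R$-germ candidates — whose left factor is $M_{m-1}\in\Bs_1$ — necessarily produce $(1,5)$-splittings such as $[M_1,\ad_{X_1}^5(X_0)]$, $[M_2,\ad_{X_1}^5(X_0)]$, and, in the recursion forced by $b=R_{1,1,2,2,0}$, the non-Hall $[M_1, R^\sharp_{1,1,1,0,0}]$. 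These are precisely the brackets the present lemma classifies, so invoking a $(1,5)$-classification would be circular. The paper closes the argument by checking, term by term, that each Hall element appearing in the expansion lies in $\Bs\setminus\{D\}$; for $(1,5)$-type Hall elements this is immediate because their left factor is some $M_i\in\Bs_1$, whereas $\lambda(D)=P_{1,1}\in\Bs_3$, but your argument never states this observation, and without it the reduction to ``scan for $(P_{1,1,0},P_{1,1,1})$'' does not follow. A secondary, harmless imprecision: $(X_1, R_{1,1,1,3,0})$ and $(X_1, R_{1,1,2,2,0})$ are $R$-germs with $\nu_b=0$ and fall outside your stated dichotomy of reasons for failing the Hall test, even though they do fail it since $\lambda(b)=M_{m-1}\in\Bs_1$ exceeds $X_1$.
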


\begin{proof}
    First $n_0(a) + n_0(b) = n_0(D) = 3$.
    Since $n_0(b) \geq 1$, $n_0(a) \in \intset{0,2}$ so $a \in \{ X_1, M_1, M_2 \}$.
    \begin{itemize}
        \item Case $a = M_2$. 
        Then $b = \ad_{X_1}^5(X_0)$ and $[a,b] = [M_2, \ad_{X_1}^5(X_0)]$, which is in $\Bs \setminus \{ D \}$.
        
        \item Case $a = M_1$.
        Then either,
        \begin{itemize}
            \item $b = \ad_{X_1}^5(X_0) 0$ and
            \begin{equation}
                [a, b] = [M_1, \ad_{X_1}^5(X_0)] 0 - [M_2, \ad_{X_1}^5(X_0)],
            \end{equation}
            both terms being in $\Bs \setminus \{D\}$.
            \item $b = (M_1, \ad_{X_1}^4(X_0))$ and $[a, b] = \ad_{M_1}^2 \ad_{X_1}^4 (X_0)$, which is in $\Bs \setminus \{ D \}$.
            \item $b = (W_1, \ad_{X_1}^3(X_0))$ and 
            \begin{equation}
                [a, b] = [W_1, [M_1, P_{1,1}]] - [P_{1,1}, P_{1,2}],
            \end{equation}
            both terms being in $\Bs \setminus \{D\}$.
        \end{itemize}
        
        \item Case $a = X_1$. Then either,
        \begin{itemize}
            \item $b = \ad_{X_1}^5(X_0) 0^2$ and
            \begin{equation}
                [a,b] = \ad_{X_1}^6 (X_0) 0^2 - 2 [M_1, \ad_{X_1}^5 (X_0)] + [M_2, \ad_{X_1}^5(X_0)],
            \end{equation}
            all terms being in $\Bs \setminus \{ D \}$.
            
            \item $b = (M_1, \ad_{X_1}^4(X_0)) 0$ and
            \begin{equation}
                [a,b] = [W_1, \ad_{X_1}^4(X_0)] 0 + [M_1, \ad_{X_1}^5(X_0)] 0 - \ad_{M_1}^2 \ad_{X_1}^4(X_0),
            \end{equation}
            all terms being in $\Bs \setminus \{ D \}$.
            
            \item $b = \ad_{M_1}^2 \ad_{X_1}^3(X_0)$ and
            \begin{equation}
                [a, b] = 2 [W_1, [M_1, P_{1,1}]] + \ad_{M_1}^2 \ad_{X_1}^4(X_0) - [P_{1,1}, P_{1,2}],
            \end{equation}
            all terms being in $\Bs \setminus \{D\}$.
            
            \item $b = (M_2, \ad_{X_1}^4(X_0))$ and
            \begin{equation}
                [a,b] = [W_10, \ad_{X_1}^4(X_0)] + [M_2, \ad_{X_1}^5(X_0)],
            \end{equation}
            both terms being in $\Bs \setminus \{ D \}$.
            
            \item $b = (W_1, \ad_{X_1}^3(X_0)) 0$ and
            \begin{equation}
                [a,b] = [W_1, \ad_{X_1}^4(X_0)] 0 - [W_1, [M_1, P_{1,1}]] + [P_{1,1}, P_{1,2}]
            \end{equation}
            all terms being in $\Bs \setminus \{ D \}$.
            
            \item $b = (W_1 0, \ad_{X_1}^3(X_0))$ and
            \begin{equation}
                [a,b] = - D + [P_{1,1}, P_{1,2}] + [W_1 0, \ad_{X_1}^4(X_0)],
            \end{equation}
            so $\langle [a,b], D \rangle =  -1$.
            
            \item $b = (W_1, (M_1, W_1))$ and
            \begin{equation}
                [a, b] = [P_{1,1}, P_{1,2}] + [W_1,[M_1,\ad_{X_1}^3(X_0)]
            \end{equation}
            both terms being in $\Bs \setminus \{D\}$.
        \end{itemize}
    \end{itemize}
    Hence, the only case where $\langle [a,b], D \rangle =  -1 \neq 0$ is $a = X_1$ and $b = (W_1 0, \ad_{X_1}^3(X_0)) = R^\sharp_{1,1,1,1}$.
\end{proof}

\subsubsection{Brackets of three elements}

\begin{lemma} \label{p:D-dec-2+2+2}
    For every $a, b, c \in \Bs_2$, $\langle [a,[b,c]], D \rangle = 0$.
\end{lemma}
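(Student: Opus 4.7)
The plan is to observe that this lemma, unlike the three preceding ones in this subsection, requires no Jacobi-based case analysis and follows from a short bidegree count followed by the antisymmetry of the bracket.

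Specifically, I would first recall that any element of $\Bs_2$ has the form $W_{j,\nu} = (M_{j-1},M_j)0^\nu$ with $j \in \N^*$ and $\nu \in \N$, so $n_0(W_{j,\nu}) = (j-1)+j+\nu = 2j-1+\nu \geq 1$, while $n_1(W_{j,\nu}) = 2$. Writing $a = W_{j_1,\nu_1}$, $b = W_{j_2,\nu_2}$, $c = W_{j_3,\nu_3}$, one automatically has $n_1([a,[b,c]]) = 6$, and
\[
    n_0([a,[b,c]]) = 2(j_1+j_2+j_3) - 3 + \nu_1+\nu_2+\nu_3.
\]
Since $D = \ad_{P_{1,1}}^2(X_0) \in S_{\{6\},\{3\}}(X)$ and the Hall basis $\Bs$ is compatible with the bigrading, a necessary condition for $\langle [a,[b,c]], D\rangle$ to be nonzero is that $[a,[b,c]]$ lives in the same bidegree as $D$, namely $n_0 = 3$, i.e.\
\[
    2(j_1+j_2+j_3) + \nu_1+\nu_2+\nu_3 = 6.
\]

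Combined with the constraints $j_i \geq 1$ and $\nu_i \geq 0$, this forces $j_1 = j_2 = j_3 = 1$ and $\nu_1 = \nu_2 = \nu_3 = 0$. The conclusion is then immediate: in this sole remaining case $a = b = c = W_1$, and antisymmetry of the Lie bracket gives $[b,c] = [W_1,W_1] = 0$, hence $[a,[b,c]] = 0$ and $\langle [a,[b,c]], D\rangle = 0$. I do not foresee any real obstacle; the key observation is simply that the forced $(X_1,X_0)$-bidegree leaves no room for three distinct nontrivial factors in $\Bs_2$, collapsing the identity to a self-bracket.
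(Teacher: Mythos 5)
Your argument is correct and follows the same logic as the paper's proof: a count of $n_0$ forces $n_0(a)+n_0(b)+n_0(c)=3$, hence $a=b=c=W_1$, and antisymmetry kills the bracket. You merely spell out the bidegree arithmetic in more detail than the paper, which states the $n_0$ constraint and the forced identification in one step.
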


\begin{proof}
    By contradiction, assume that $\langle [a,[b,c]], D \rangle \neq 0$. 
    Then $n_0(a)+n_0(b)+n_0(c) = 3$.
    Thus $a = b = c = W_1$, so $[a,[b,c]] = 0$.
\end{proof}

\begin{lemma} \label{p:D-dec-1+2+3}
    Let $a \in \Bs_1$, $b \in \Bs_2$, $c \in \Bs_3$ such that $\langle [a, [b, c]], D \rangle \neq 0$ or $\langle [[a,b],c], D \rangle \neq 0$.
    Then $a = X_1$ and, either ($b = W_1 0$ and $c = P_{1,1}$) or ($b = W_1$ and $c = P_{1,1} 0$).
\end{lemma}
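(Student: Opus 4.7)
The plan is to combine the bidegree constraint on $a,b,c$ with the explicit description of $\Bs_1$, $\Bs_2$, $\Bs_3$ given in \cref{p:B*-12345}, and then run a short case analysis. Since any bracket of $a,b,c$ preserves the total $(n_1,n_0)$, we need $n_0(a)+n_0(b)+n_0(c) = n_0(D) = 3$. Writing $a = M_{\nu_a}$, $b = W_{j_b,\nu_b}$, $c = P_{j_c,k_c,\nu_c}$, we have $n_0(a) = \nu_a$, $n_0(b) = 2j_b - 1 + \nu_b \geq 1$ and $n_0(c) = 2j_c + k_c - 2 + \nu_c \geq 1$, the lower bounds being attained only at $b = W_1$ and $c = P_{1,1}$. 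The total is therefore at least $2$, and exactly $3$ leaves a single extra unit to allocate to a single slot. This yields precisely four candidates:
\begin{enumerate}
\item[(a)] $a = M_1$, $b = W_1$, $c = P_{1,1}$;
\item[(b)] $a = X_1$, $b = W_1 0$, $c = P_{1,1}$;
\item[(c)] $a = X_1$, $b = W_1$, $c = P_{1,1} 0$;
\item[(d)] $a = X_1$, $b = W_1$, $c = P_{1,2}$.
\end{enumerate}

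For each of these four candidates one then computes $[a,[b,c]]$ and $[[a,b],c]$ and expands the result on $\Bs$ using the Jacobi identities from \cref{s:jacobi}. The small set of elementary identities required is $[X_1,W_1] = P_{1,1}$, $[M_1,W_1] = P_{1,2}$, $[X_1,P_{1,1}] = Q_{1,1,1}$, $[M_1,P_{1,1}] = Q_{1,1,2}$, together with the derived relation $[X_1, W_1 0] = P_{1,1} 0 - P_{1,2}$ (obtained by a single Jacobi expansion from the preceding ones). These identities allow one to reduce every intermediate bracket to an element of $\Bs$ and to verify at each step that the output is either a basis element of $\Bs_6$ distinct from $D$, or a sum of such.

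The main obstacle is to keep track of two different Hall basis elements of $\Bs_6$ that both have bidegree $(6,3)$ and both arise naturally in the computation: $D = (P_{1,1}, P_{1,1} 0)$ and $(P_{1,1}, P_{1,2})$. Both satisfy the Hall conditions of \cref{def:Hall}, hence are genuine distinct basis elements, and any contribution along $(P_{1,1}, P_{1,2})$ must be carefully separated from $\langle \cdot, D \rangle$. With this in mind, cases (a) and (d) expand into combinations of $(P_{1,1},P_{1,2})$, $(W_1, Q_{1,1,2})$ and similar non-$D$ basis elements, so their coefficients of $D$ vanish. Case (c) is immediate from $[[X_1, W_1], P_{1,1} 0] = [P_{1,1}, P_{1,1} 0] = D$. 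Case (b) produces the $D$ contribution via the key substitution $[X_1, W_1 0] = P_{1,1} 0 - P_{1,2}$, which makes the term $[P_{1,1} 0, P_{1,1}] = -D$ appear in $[[X_1, W_1 0], P_{1,1}]$; a parallel Jacobi expansion handles $[a,[b,c]]$ in this case. Assembling the four cases gives exactly the asserted classification.
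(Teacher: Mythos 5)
Your proposal is correct and takes essentially the same route as the paper: constrain the triple by the bidegree identity $n_0(a)+n_0(b)+n_0(c)=n_0(D)=3$, isolate the same four candidate triples, and then dispose of each by iterated Jacobi expansion on $\Bs$. The only cosmetic difference is organizational (you enumerate the four candidates upfront, whereas the paper runs a nested case split on $a$, then $b$, then $c$), and your explicit warning about the competing Hall element $(P_{1,1},P_{1,2})$ of the same bidegree $(6,3)$, together with the pivotal identity $[X_1,W_10]=P_{1,1}0-P_{1,2}$, accurately identifies what drives the nonvanishing of $\langle\cdot,D\rangle$ precisely in cases (b) and (c).
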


\begin{proof}
    First $n_0(a)+n_0(b)+n_0(c) = 3$.
    
    \step{First form: $[a,[b,c]]$}
    \begin{itemize}
        \item Case $a = M_1$. 
        Then $b = W_1$, $c = P_{1,1}$ and
        \begin{equation}
            [a,[b,c]] = -[P_{1,1}, P_{1,2}] + [W_1, [M_1, P_{1,1}]],
        \end{equation}
        both terms being in $\Bs \setminus \{ D \}$.
        
        \item Case $a = X_1$.
        
        \begin{itemize}
            \item Case $b = W_1 0$. Then $c = P_{1,1}$ and $\langle [a, [b,c]], D \rangle = -1$.
            
            \item Case $b = W_1$. Then either, 
            \begin{itemize}
                \item $c = P_{1,1} 0$ and $\langle [a, [b,c]], D \rangle = +1$.
                \item $c = P_{1,2}$ and $[a,[b,c]] = [W_1, [M_1, P_{1,1}]] + [P_{1,1}, P_{1,2}]$
                both terms being in $\Bs \setminus \{ D \}$.
            \end{itemize}
        \end{itemize}
    \end{itemize}
    
    \step{Second form: $[[a,b],c]$}
    \begin{itemize}
        \item Case $a = M_1$.
        Then $b = W_1$ and $c = P_{1,1}$ and $[[a,b],c]=-[P_{1,1},P_{1,2}]$ which is in $\Bs \setminus \{ D \}$.
        
        \item Case $a = X_1$.
        \begin{itemize}
            \item Case $b = W_1 0$.
            Then $c = P_{1,1}$ and $\langle [[a,b],c], D \rangle = -1$.
            
            \item Case $b = W_1$. Then either
            \begin{itemize}
                \item $c = P_{1,1} 0$ and $\langle [[a,b],c], D \rangle = 1$.
                \item $c = P_{1,2}$ and $[[a,b],c] = [P_{1,1}, P_{1,2}]$, which is in $\Bs \setminus \{ D \}$.
            \end{itemize}
        \end{itemize}
    \end{itemize}
    This concludes the case disjunction.
\end{proof}

\begin{lemma} \label{p:D-dec-1+1+4}
    Let $a, b \in \Bs_1$ and $c \in \Bs_4$ such that $\langle [a, [b, c]], D \rangle \neq 0$, or $\langle [[a,b],c] \rangle \neq 0$.
    Then $a = b = X_1$.
\end{lemma}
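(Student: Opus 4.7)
My plan is to mimic the case-analysis approach used in \cref{p:D-dec-3+3,p:D-dec-2+4,p:D-dec-1+5,p:D-dec-2+2+2,p:D-dec-1+2+3}. Since $n_0$ and $n_1$ are preserved under bracketing and the expansion on $\Bs$, writing $a = M_{\nu_a}$ and $b = M_{\nu_b}$, the nontrivial constraint is $\nu_a + \nu_b + n_0(c) = n_0(D) = 3$ (the identity $n_1(a)+n_1(b)+n_1(c) = 6$ being automatic). The first step is to observe that every $c \in \Bs_4$ satisfies $n_0(c) \geq 1$: the germs $Q_{j,k,l,0}$ have $n_0 = 2j+k+l-3 \geq 1$, the $Q^\sharp_{j,\mu,k,0}$ have $n_0 = 2j+2k-2+\mu \geq 4$, and the $Q^\flat_{j,\mu,0}$ have $n_0 = 4j-1+2\mu \geq 3$. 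Hence $\nu_a + \nu_b \leq 2$.

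The target conclusion $a = b = X_1$ corresponds to $\nu_a = \nu_b = 0$, so the proof reduces to showing that in each remaining situation the coefficient of $D$ vanishes in both $[a,[b,c]]$ and $[[a,b],c]$. Explicitly, the remaining cases split as follows. If $\nu_a + \nu_b = 2$ then $n_0(c) = 1$ forces $c = Q_{1,1,1} = \ad_{X_1}^4(X_0)$, giving the three subcases $(\nu_a,\nu_b) \in \{(0,2),(2,0),(1,1)\}$. If $\nu_a + \nu_b = 1$ then $n_0(c) = 2$ forces $c$ to be one of $Q_{1,1,1,1}$, $Q_{1,1,2,0} = (M_1,P_{1,1})$, or $Q^\flat_{1,0,0} = (W_1,W_1 0)$, giving two subcases each. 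This is a finite (and short) enumeration.

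For each such triple, I would use the identity \eqref{eq:jacobi.rtl} of \cref{s:jacobi} to push the trailing $X_0$s of $a = X_1 0^{\nu_a}$ (and of $b$, and of $c$ when it ends with a trailing $X_0$) outside the bracket. After this reduction every case becomes a bracket of the form $[X_1, [X_1, c']]$ or $[[X_1, X_1], c'] = 0$ (with possibly an extra $0^{\nu}$ applied from outside), where $c' \in \mathcal{L}(X)$ is of total degree $6$; I then rewrite each resulting expression on $\Bs$ using the recursive Jacobi reductions already exhibited in \cref{p:D-dec-1+5} (which enumerated all $[X_1, b]$ for $b \in \Bs_5$) and check against the same table that no $D$ appears. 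Crucially, since every auxiliary $X_0$ here comes from the outer $0^{\nu}$ factors, it can be absorbed via the general principle that $\supp[\cdot,\cdot]0^\nu$ is obtained by applying $0^\nu$ componentwise after Hall rewriting, and $D$ has no trailing $X_0$, so only terms already carrying $D$ \emph{without} a trailing $X_0$ could contribute — a possibility that is ruled out explicitly by the inventory in \cref{p:D-dec-1+5}.

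The main obstacle will be the case $c = Q^\flat_{1,0,0} = (W_1, W_1 0)$, because it already contains two copies of $W_1$ and its expansion of $[X_1, [M_1, c]]$ (or of $[X_1, [X_1, c 0]]$, which reduces to it by Jacobi) produces many intermediate brackets of the shape $[W_1, P_{1,1}]$, $[P_{1,1}, P_{1,2}]$, $[W_1, [M_1, P_{1,1}]]$, and $[W_1 0, P_{1,1}]$ that need careful re-expansion to verify that the sole bracket collinear to $D$, namely $(W_1 0, \ad_{X_1}^3(X_0))  = R^\sharp_{1,1,1,1}$, cancels out in every combination. All the bracket identities needed for these verifications have already been computed during the proof of \cref{p:D-dec-1+5}, so the argument amounts to bookkeeping; the content of the lemma is the combinatorial observation that no cancellation yields a surviving $D$-coefficient once one restricts to $a$ or $b$ having at least one trailing $X_0$.
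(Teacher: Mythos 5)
Your case enumeration contains a material arithmetic error. You correctly record that the germs $Q^\flat_{j,\mu,0}$ have $n_0 = 4j-1+2\mu \geq 3$, yet then list $Q^\flat_{1,0,0} = (W_1, W_1 0)$ --- which has $n_0 = 1+2 = 3$ --- among the $c \in \Bs_4$ with $n_0(c) = 2$. The only elements of $\Bs_4$ with $n_0 = 2$ are $Q_{1,1,1,1}$ and $Q_{1,1,2,0} = (M_1,P_{1,1})$. Your concluding paragraph, devoted to $c = Q^\flat_{1,0,0}$ as ``the main obstacle,'' therefore addresses a case that does not occur: with $a, b \in \Bs_1$, $\nu_a + \nu_b \geq 1$, and $c = Q^\flat_{1,0,0}$, the triple bracket has $n_0 \geq 4 > n_0(D) = 3$, so its coefficient along $D$ vanishes on homogeneity grounds alone.

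The proposed reduction mechanism is also not sound as stated. Pushing trailing $X_0$'s out by \eqref{eq:jacobi.rtl} does \emph{not} collapse every term to $[X_1,[X_1,c']]\,0^\nu$ or $[[X_1,X_1],c']=0$: when $\nu_a\geq 1$, distributing the outer $X_1 0^{\nu_a}$ against $[X_1, c\,0^{\nu'}]\,0^{\nu_b-\nu'}$ by \eqref{eq:jacobi.rtl} leaves non-trailing-$0$ terms whose outer factor is $X_1 0^j$ with $j \geq \nu_a \geq 1$. What rules $D$ out of those terms is not your germ observation but \cref{p:D-dec-1+5} itself, which requires the outer $\Bs_1$-factor to be $X_1$; and when $\nu_a = 0$, $\nu_b\geq 1$, the surviving term $[X_1,[X_1, c\,0^{\nu_b}]]$ still requires checking whether $R^\sharp_{1,1,1,1}\in\supp[X_1, c\,0^{\nu_b}]$, a computation your sketch omits. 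A cleaner route dispenses with the $(\nu_a,\nu_b)$ casework entirely: expand $[b,c]=\sum_i\alpha_i\,\eval(b_i)$ with $b_i\in\Bs_5$, so $\langle[a,[b,c]],D\rangle=\sum_i\alpha_i\langle[a,b_i],D\rangle$ and \cref{p:D-dec-1+5} forces $a=X_1$; then Jacobi gives $[a,[b,c]]=[b,[a,c]]+[[a,b],c]$, where expanding $[a,c]$ on $\Bs_5$ forces $b=X_1$ by the same lemma, and expanding $[a,b]$ on $\Bs_2$ kills $[[a,b],c]$ by \cref{p:D-dec-2+4} (which also disposes of the second form). The paper, by contrast, rewrites each triple directly on $\Bs$.
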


\begin{proof}
    First $n_0(a)+n_0(b)+n_0(c) = 3$.
    
    \step{First form: $[a,[b,c]]$ with $a \leq b$}
    \begin{itemize}
        \item Case $a = b = M_1$. Then $c = \ad_{X_1}^4(X_0)$ and $[a,[b,c]] = \ad_{M_1}^2 \ad_{X_1}^4(X_0)$, which is in $\Bs \setminus \{ D \}$.
        
        \item Case $a = X_1$, $b = M_2$. Then $c = \ad_{X_1}^4(X_0)$ and 
        \begin{equation}
            [a,[b,c]] = [W_1 0, \ad_{X_1}^4(X_0)] + [M_2, \ad_{X_1}^5(X_0)],
        \end{equation}
        both terms being in $\Bs \setminus \{ D \}$.
        
        \item Case $a = X_1$, $b = M_1$.
        Then either,
        \begin{itemize}
            \item $c = \ad_{X_1}^4(X_0) 0$ and
            \begin{equation}
                \begin{split}
                    [a,[b,c]] = [W_1, \ad_{X_1}^4(X_0)]0 & - [W_1 0, \ad_{X_1}^4(X_0)] - [M_2, \ad_{X_1}^5(X_0)] \\
                    & - \ad_{M_1}^2 \ad_{X_1}^4(X_0) + [M_1, \ad_{X_1}^5(X_0)]0 ,
                \end{split}
            \end{equation}
            all terms being in $\Bs \setminus \{ D \}$.
            
            \item $c = (M_1, \ad_{X_1}^3(X_0))$ and 
            \begin{equation}
                [a,[b,c]] = - [P_{1,1}, P_{1,2}] + 2 [W_1, [M_1, \ad_{X_1}^3(X_0)]] + \ad_{M_1}^2 \ad_{X_1}^4 (X_0),
            \end{equation}
            all terms being in $\Bs \setminus \{ D \}$.
        \end{itemize}
        
        \item Case $a = b = X_1$.
        One may have $\langle [a,[b,c]], D \rangle \neq 0$.
        Since the conclusion of the lemma does not concern $c$, we do not need to study all possible cases.
    \end{itemize}
    Thus, the only case leading to a (possibly) nonzero value of $\langle [a,[b,c]], D \rangle$ is $a = b = X_1$.
    
    \step{Second form: $[[a,b],c]$ with $a < b$}
    Since $n_0(a) + n_0(b) \leq 2$, $a = X_1$ and $b = M_1$.
    Thus $[a,b] = W_1$.
    By \cref{p:D-dec-2+4}, $\langle [W_1,c], D \rangle = 0$.
    
    \step{Third form: $[a,[b,c]]$ with $a > b$}
    Then $[a,[b,c]] = [[a,b],c] + [b,[a,c]]$ so the conclusions of the previous forms apply.
\end{proof}

\subsection{Dominant part of the logarithm}

\begin{lemma}
    Assume that $f_{D}(0) \notin \mathcal{N}_D(f)(0)$.
    Let $\mathbb{P}$ be a component along $f_D(0)$ parallel to $\mathcal{N}_D(f)(0)$.
    Then
    \begin{equation} \label{eq:D-Z7}
        \begin{split}
            \mathbb{P} \cZ{7}(t,f,u)(0) = \xi_{D}(t,u) + O\Big(
            |u_1(t)|^4 & +
            |\xi_{P_{1,1}}(t,u)|^2 +
            |\xi_{P_{1,1} 0}(t,u)|^2 \\
            & + |u_1(t) \xi_{R^\sharp_{1,1,1,1}}(t,u)| +
            \|u_1\|_{L^8}^8
            \Big).
        \end{split}
    \end{equation}
\end{lemma}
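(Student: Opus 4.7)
The plan is to apply \cref{p:PZM-XI} with $M \gets 7$, $L$ taken sufficiently large, $\bb \gets D$, and $\mathcal{N} \gets \mathcal{N}_D$, choosing for $\Xi(t,u)$ the sum of the five error terms on the right-hand side of \eqref{eq:D-Z7}. Since $\mathcal{N}_D \cup \{D\} = \Bs_{\intset{1,7}}$ covers every bracket of $X_1$-degree at most $7$, the set of ``other coordinates of the second kind'' to estimate in \cref{p:sum-xi-XI} is empty and that hypothesis is vacuously satisfied. The whole proof therefore reduces to verifying the cross-term hypothesis of \cref{p:sum-cross-XI}, namely exhibiting, for every cross-term tuple $(b_1,\dotsc,b_q)$ whose CBHD expansion has $D$ in its support, exponents $(\alpha_1,\dotsc,\alpha_q)\in[0,1]^q$ with $\sum \alpha_i \geq 1$ such that the uniform bound \eqref{eq:xib-othercross-XI-L} holds for the chosen $\Xi$.

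The contributing cross terms are finitely many, since the constraints $\sum h_i n_1(b_i) = n_1(D) = 6$ and $\sum h_i n_0(b_i) = n_0(D) = 3$ already cap the sizes of the $b_i$. For multiplicities $h = (1,\dotsc,1)$, the algebraic preliminaries of \cref{s:sextic-algebra} exhaust all possibilities: in the binary case $q=2$, \cref{p:D-dec-3+3}, \cref{p:D-dec-2+4}, and \cref{p:D-dec-1+5} leave only $(P_{1,1}0, P_{1,1})$ and $(R^\sharp_{1,1,1,1}, X_1)$; in the ternary case $q=3$, \cref{p:D-dec-2+2+2}, \cref{p:D-dec-1+2+3}, and \cref{p:D-dec-1+1+4} leave only $(P_{1,1}, W_1 0, X_1)$, $(P_{1,1}0, W_1, X_1)$, and $(c, X_1, X_1)$ for $c$ in the short finite list of elements of $\Bs_4$ with $|c|=7$. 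Higher-multiplicity tuples reduce to analogous brackets (with some $b_i$'s coinciding) and can be handled by the same bounds below, since the two constraints force those tuples to involve only $X_1$ together with a few small elements of $\Bs$.

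For each surviving case I then exhibit $\Xi$ and the exponents $(\alpha_i)$ matching one of the error terms of \eqref{eq:D-Z7}. For $(P_{1,1}0, P_{1,1})$, take $\Xi = |\xi_{P_{1,1}}|^2 + |\xi_{P_{1,1}0}|^2$ with $\alpha_1 = \alpha_2 = 1/2$. For $(R^\sharp_{1,1,1,1}, X_1)$, take $\Xi = |u_1(t)\,\xi_{R^\sharp_{1,1,1,1}}|$ with $\alpha_1 = \alpha_2 = 1/2$. For the triples $(P_{1,1}, W_1 0, X_1)$ and $(P_{1,1}0, W_1, X_1)$, use \eqref{xi_Wjnu} to bound $|\xi_{W_1}|$ and $|\xi_{W_10}|$ by a constant times $\|u_1\|_{L^2}^2$, then apply Young's inequality twice followed by the Hölder estimate $\|u_1\|_{L^2}^8 \leq t^3 \|u_1\|_{L^8}^8$ to absorb the resulting product $|u_1(t)|\,\|u_1\|_{L^2}^2\,|\xi_{P_{1,1}}|$ (resp.\ with $|\xi_{P_{1,1}0}|$) into $|u_1(t)|^4 + |\xi_{P_{1,1}}|^2 + |\xi_{P_{1,1}0}|^2 + \|u_1\|_{L^8}^8$. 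For $(c, X_1, X_1)$ with $|c|=7$, \cref{p:xib-u1-Lk} yields $|\xi_c| \leq C \|u_1\|_{L^4}^4$, and Young followed by Hölder gives $|u_1(t)|^2 |\xi_c| \lesssim |u_1(t)|^4 + \|u_1\|_{L^4}^8 \lesssim |u_1(t)|^4 + t\,\|u_1\|_{L^8}^8$.

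The main obstacle is the combinatorial classification itself: ruling out every cross term except the two ``sharp'' ones $|\xi_{P_{1,1}}\xi_{P_{1,1}0}|$ and $|u_1(t)\xi_{R^\sharp_{1,1,1,1}}|$ is essential, since these have coercivity of the same order as $\xi_D$ itself and must therefore appear explicitly in the error of \eqref{eq:D-Z7} rather than being absorbed. This elimination is precisely what the Jacobi-driven case analysis of \cref{p:D-dec-3+3}--\cref{p:D-dec-1+1+4} accomplishes, exploiting the specific structure of $\Bs$; without that refinement one would only obtain the much cruder remainder $\|u_1\|_{L^k}^k$ for some large $k$ and the sharp form of the obstruction would be lost.
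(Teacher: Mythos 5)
Your plan hinges on invoking \cref{p:PZM-XI}, which in turn calls \cref{p:sum-cross-XI}, and this is where the argument breaks. The hypothesis of \cref{p:sum-cross-XI} (estimate \eqref{eq:xib-othercross-XI-L}) asks you to bound \emph{each factor individually}: $|\xi_{b_i}| \leq \frac{(ct)^{|b_i|}}{|b_i|!}\,t^{-\sigma_i}\,\Xi^{\alpha_i}$, with $\sum_i \alpha_i \geq 1$. For the cross term $(b_1,b_2)=(R^\sharp_{1,1,1,1},X_1)$, your assignment $\alpha_1=\alpha_2=\tfrac12$ with $\Xi$ containing $|u_1(t)\,\xi_{R^\sharp_{1,1,1,1}}|$ does not satisfy this: you would need $|\xi_{R^\sharp_{1,1,1,1}}| \lesssim \Xi^{1/2}$, which fails whenever $u_1(t)=0$ but $\xi_{R^\sharp_{1,1,1,1}}\neq 0$. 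With the full $\Xi$ you proposed, the best individual bounds are $|u_1(t)| \lesssim \Xi^{1/4}$ (from $|u_1(t)|^4$) and $|\xi_{R^\sharp_{1,1,1,1}}| \lesssim t^{3/8}\Xi^{5/8}$ (via \cref{p:xib-u1-Lk} and $\|u_1\|_{L^5}^5 \lesssim t^{3/8}\|u_1\|_{L^8}^5$), so $\alpha_1+\alpha_2 \leq \tfrac58+\tfrac14 = \tfrac78 < 1$. The ``mixed'' error term $|u_1(t)\,\xi_{R^\sharp_{1,1,1,1}}|$ controls the \emph{product} $|\xi_{b_1}\xi_{b_2}|$ but does not admit a factorization into separate controls on each factor.

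The paper sidesteps this entirely. Since $\mathcal{N}_D\cup\{D\}=\Bs_{\intset{1,7}}$, the definition of $\mathbb{P}$ gives the \emph{exact identity} $\mathbb{P}\,\cZ{7}(t,f,u)(0)=\eta_D(t,u)$ — there is no infinite sum of other brackets to control, so \cref{p:sum-xi-XI} and \cref{p:sum-cross-XI} (the black-box tools designed for infinite families) are not needed. One then applies \cref{p:etab-xib-XI} directly to $\eta_D-\xi_D$; its hypothesis only requires the \emph{product} bound $|\xi_{b_1}\cdots\xi_{b_q}|\leq\Xi$ (estimate \eqref{eq:prod-xibi-XI}), which your $\Xi$ satisfies trivially for $(R^\sharp_{1,1,1,1},X_1)$. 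So: replace the invocation of \cref{p:PZM-XI} by the exact equality plus \cref{p:etab-xib-XI}, and the rest of your classification (correct in spirit, though you have glossed over the $q=4,5,6$ tuples which the paper enumerates explicitly) goes through.
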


\begin{proof}
    We start with a preliminary estimate.
    By \eqref{eq:xib-u1-Lk} and Hölder's inequality, there exists $c > 0$ such that, for every $t \leq 1$, $u \in \lone$ and $b \in \Bs_{\intset{1,6}} \setminus \{ X_1 \}$,
    \begin{equation} \label{eq:D-xib-univ}
        |\xi_b(t,u)| \leq c \|u_1\|_{L^8}^{n_1(b)}.
    \end{equation}
    By \eqref{eq:ZM-etab} and definition of $\mathbb{P}$, 
    \begin{equation}
        \mathbb{P} \cZ{7}(t,f,u)(0) = \eta_D(t,u).
    \end{equation}
    To apply \cref{p:etab-xib-XI}, let us prove that, for every $q \geq 2$, $b_1 \geq \dotsb \geq b_q \in \Bs$ such that $D \in \supp \mathcal{F}(b_1,\dotsc,b_q)$, for every $t > 0$ and $u \in \lone$, the estimate \eqref{eq:prod-xibi-XI} holds, for an appropriate choice of $\Xi$.
    We split cases depending on $q$.
    
    \step{Case $q = 2$}
    \begin{itemize}
        \item Case $n_1(b_1) = 5$ and $n_1(b_2) = 1$.
        By \cref{p:D-dec-1+5}, $b_1 = R^\sharp_{1,1,1,1}$ and $b_2 = X_1$ so \eqref{eq:prod-xibi-XI} holds with $\Xi(t,u) := |u_1(t) \xi_{R^\sharp_{1,1,1,1}}(t,u)|$.
        
        \item Case $n_1(b_1) = 4$ and $n_1(b_2) = 2$.
        By \cref{p:D-dec-2+4}, $D \notin \supp \mathcal{F}(b_1,b_2)$ in this case.
        
        \item Case $n_1(b_1) = 3$ and $n_1(b_2) = 3$.
        By \cref{p:D-dec-3+3}, $b_1 = P_{1,1} 0$ and $b_2 = P_{1,1}$ so \eqref{eq:prod-xibi-XI} holds with $\Xi(t,u) := |\xi_{P_{1,1}}(t,u) \xi_{P_{1,1}0}(t,u)|$.
        
    \end{itemize}
    
    \step{Case $q = 3$}
    \begin{itemize}
        \item Case $n_1(b_1) = 4$, $n_1(b_2) = 1$, $n_1(b_3) = 1$.
        By \cref{p:D-dec-1+1+4}, $b_2 = b_3 = X_1$.
        Hence, using \eqref{eq:D-xib-univ},
        \eqref{eq:prod-xibi-XI} holds with $\Xi(t,u) := c |u_1(t)|^2 \|u_1\|_{L^8}^4$.
        
        \item Case $n_1(b_1) = 3$, $n_1(b_2) = 2$, $n_1(b_3) = 1$.
        By \cref{p:D-dec-1+2+3}, $b_3 = X_1$ and $b_1 \in \{ P_{1,1}, P_{1,1} 0 \}$.
        Hence, using \eqref{eq:D-xib-univ}, \eqref{eq:prod-xibi-XI} holds with $\Xi(t,u) := c |u_1(t)| \|u_1\|_{L^8}^2 (|\xi_{P_{1,1}}(t,u)| + |\xi_{P_{1,1}0}(t,u)|)$.
        
        \item Case $n_1(b_1) = 2$, $n_1(b_2) = 2$, $n_1(b_3) = 2$.
        By \cref{p:D-dec-2+2+2}, $D \notin \supp \mathcal{F}(b_1,b_2,b_3)$ in this case.
    \end{itemize}
    
    \step{Case $q = 4$}
    \begin{itemize}
        \item Case $n_1(b_1) = 3$, $n_1(b_2) = 1$, $n_1(b_3) = 1$, $n_1(b_4) = 1$.
        Counting the occurrences of $X_0$ and using  \eqref{eq:D-xib-univ} implies that either,
        \begin{itemize}
            \item $b_3 = b_4 = X_1$, and \eqref{eq:prod-xibi-XI} holds with $\Xi(t,u) :=  c \|u_1\|_{L^8}^4 |u_1(t)|^2$.
            \item $b_1 = P_{1,1}$, $b_2 = b_3 = M_1$ and $b_4 = X_1$, and thus \eqref{eq:prod-xibi-XI} holds with $\Xi(t,u) := |\xi_{P_{1,1}}(t,u)| \|u_1\|_{L^8}^2 |u_1(t)|$.
        \end{itemize}
        
        \item Case $n_1(b_1) = 2$, $n_1(b_2) = 2$, $n_1(b_3) = 1$, $n_1(b_4) = 1$.
        Counting the occurrences of $X_0$ and using  \eqref{eq:D-xib-univ} implies that either,
        \begin{itemize}
            \item $b_1 = b_2 = W_1$, $b_3 = M_1$ and $b_4 = X_1$ and $D \notin \supp \mathcal{F}(b_1,b_2,b_3,b_4)$. 
            Indeed, a non-zero bracket of $W_1$, $W_1$, $M_1$ and $X_1$ is either a bracket over ($M_1$, $W_1$ and $(X_1, W_1)$) or over ($X_1$, $W_1$ and $(M_1,W_1)$).
            But such brackets have a vanishing coefficient along $D$ by \cref{p:D-dec-1+2+3}.
            
            \item $b_1 = W_1 0$, $b_2 = W_1$, $b_3 = b_4 = X_1$ and \eqref{eq:prod-xibi-XI} holds with $\Xi(t,u) := \|u_1\|_{L^8}^4 |u_1(t)|^2$.
        \end{itemize}
    \end{itemize}
    
    \step{Case $q \in \{ 5, 6 \}$}
    Counting the occurrences of $X_0$ implies that $b_{q-1} = b_q = X_1$.
    Using \eqref{eq:D-xib-univ} implies that \eqref{eq:prod-xibi-XI} holds with $\Xi(t,u) := (1+c)^4 |u_1(t)|^k \|u_1\|_{L^8}^{6-k}$ for some $k \in \intset{2,5}$.
    
    \step{Conclusion}
    Gathering the previous estimates and using Young's inequality proves \eqref{eq:D-Z7}.
\end{proof}

\subsection{Vectorial relations}

\begin{lemma} \label{p:D-vectors}
    Assume that $f_{D}(0) \notin \mathcal{N}_D(f)(0)$.
    Then
    \begin{enumerate}
        \item $f_{X_1}(0) \notin \vect \{ f_{b}(0) ; b \in \Bs_1 \setminus \{ X_1 \} \}$,
        \item $f_{P_{1,1}}(0) \notin \vect \{ f_{b}(0) ; b \in \Bs_{\intset{1,3}} \setminus \{ P_{1,1} \} \}$.
    \end{enumerate}
\end{lemma}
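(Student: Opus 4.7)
The plan is to prove both statements by contradiction, following the same pattern as the vectorial relations arguments of \cref{p:stefani-vectorial,p:quad-vectors,p:W2-vectors,p:W3-vectors}: from an assumed linear dependence I lift to an element $A \in \mathcal{L}(X)$ with $f_A(0) = 0$, and use $A$ to build a Lie polynomial whose evaluation at $0$ is simultaneously zero (since its only ``atoms'' $A$ and $X_0$ both vanish at $0$, and the Lie bracket of two vector fields vanishing at $0$ still vanishes at $0$) and equals $f_D(0)$ modulo $\mathcal{N}_D(f)(0)$.

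For statement $1$, suppose $f_{X_1}(0) = \sum_{\nu \geq 1} \beta_\nu f_{M_\nu}(0)$ with only finitely many $\beta_\nu$ nonzero, and set $B := X_1 - \sum_\nu \beta_\nu M_\nu \in \mathcal{L}(X)$, so $f_B(0) = 0$. I consider $\tilde P := \ad_B^3(X_0) \in \mathcal{L}(X)$ and $\tilde D := \ad_{\tilde P}^2(X_0)$; iterating the bracket-at-zero argument above gives $f_{\tilde D}(0) = 0$. Expanding trilinearly, $\tilde P = P_{1,1} + R$, where every bracket appearing in $R$ contains at least one $M_\nu$ with $\nu \geq 1$, so $R \in S_{3, \llbracket 2, \infty \llbracket}(X)$ (with the notation \eqref{Bstar_A1_A0}). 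Plugging back,
\begin{equation*}
\tilde D = D + [P_{1,1},[R,X_0]] + [R,[P_{1,1},X_0]] + [R,[R,X_0]],
\end{equation*}
and each of the three cross terms lies in $S_{6,\llbracket 4,\infty\llbracket}(X)$ because $n_0$ is additive under bracketing. Since $n_0(D) = 3$, the decomposition of each cross term on $\Bs$ only involves elements of $\Bs_6 \setminus \{D\} \subset \mathcal{N}_D$. Thus $f_D(0) = -f_{\tilde D - D}(0) \in \mathcal{N}_D(f)(0)$, contradicting the hypothesis.

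For statement $2$, assume $f_{P_{1,1}}(0) = \sum_{b \in \Bs_{\intset{1,3}} \setminus \{P_{1,1}\}} \gamma_b f_b(0)$ with finitely many $\gamma_b$ nonzero, and set $Q := \sum_b \gamma_b b$, $A := P_{1,1} - Q$, so $f_A(0) = f_{[A,[A,X_0]]}(0) = 0$. A direct bilinear expansion gives
\begin{equation*}
D = [A,[A,X_0]] + [P_{1,1},[Q,X_0]] + [Q,P_{1,1}0] - [Q,[Q,X_0]].
\end{equation*}
Since $X_0$ is the maximal element of $\Bs$, for every $b \in \Bs_{\intset{1,3}} \setminus \{P_{1,1}\}$ the bracket $[b,X_0] = b0$ lies again in $\Bs_{n_1(b)}$; hence the last three summands are finite combinations of brackets $[b_1,b_2]$ with $b_1,b_2 \in \Bs_{\intset{1,3}}$ whose unordered pair $\{b_1,b_2\}$ is never $\{P_{1,1},P_{1,1}0\}$. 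Indeed $Q$ is supported away from $P_{1,1}$, and no $b \in \Bs_3$ satisfies $b0 = P_{1,1}$ since $n_0(b0) \geq 2$ while $n_0(P_{1,1}) = 1$. Each such bracket has $n_1 \leq 6$, so its decomposition on $\Bs$ lies in $\Bs_{\intset{1,6}} \subset \Bs_{\intset{1,7}}$; by \cref{p:D-dec-3+3}, only a pair equal to $\{P_{1,1},P_{1,1}0\}$ could produce a nonzero $D$-coefficient, which is now excluded. Hence all cross terms lie in $\mathcal{N}_D(f)(0)$, forcing $f_D(0) \in \mathcal{N}_D(f)(0)$, a contradiction.

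The main obstacle is the algebraic bookkeeping in statement $2$, namely certifying that none of the $\Bs$-reductions of the brackets $[P_{1,1},b0]$, $[b,P_{1,1}0]$ and $[b,b'0]$ (with $b,b' \in \Bs_{\intset{1,3}} \setminus \{P_{1,1}\}$) leaks into $D$. This is delivered precisely by \cref{p:D-dec-3+3} together with the $n_0$-grading, which forbids any $b \in \Bs_3$ from satisfying $b0 = P_{1,1}$. Statement $1$ is comparatively cleaner because the $n_0$-grading alone separates $D$ from every cross term built from $B$.
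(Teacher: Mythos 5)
Your proof is correct and follows essentially the same strategy as the paper's: lift the assumed linear dependence to an element $A \in \mathcal{L}(X)$ with $f_A(0)=0$, iterate the bracket against $X_0$ to produce $D$ plus cross terms, and check that the cross terms cannot contribute to the $D$-coordinate in the $\Bs$-decomposition. The only small difference is in statement~2: the paper discards the cross terms purely via the $(n_1,n_0)$-bigrading (every cross term has $n_1 \leq 5$ or $n_0 \geq 4$, while $D$ sits at $(n_1,n_0)=(6,3)$), whereas you route through \cref{p:D-dec-3+3} and a pair-by-pair exclusion; both yield the same conclusion, but your closing remark that the $n_0$-grading alone is insufficient for statement~2 is not quite accurate, since every $b \in \Bs_3 \setminus \{P_{1,1}\}$ satisfies $n_0(b) \geq 2$, so any cross term that reaches $n_1 = 6$ already has $n_0 \geq 4 > n_0(D)$, exactly as in statement~1.
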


\begin{proof}
    We proceed by contradiction.
    
    \step{First statement}
    Assume that $f_{X_1}(0) = \sum_{j \geq 1} \alpha_j f_{M_j}(0)$ where $\alpha_j \in \R$ and the sum is finite.
    Hence $f_{B_1}(0) = 0$ where $B_1 := X_1 - \sum_{j \geq 1} \alpha_j M_j \in S_1(X)$.
    Let $B_2 := \ad^2_{\ad^3_{B_1}(X_0)}(X_0)$.
    Then $f_{B_2}(0) = 0$.
    Moreover, by definition of $B_1$ and $B_2$, one checks that $B_2 = D + B_3$ where $B_3 \in \vect \{ b \in \Bs_6 ; n_0(b) \geq 4 \}$.
    The equality $f_D(0) = - f_{B_3}(0)$ contradicts the assumption on $f_D(0)$.
    
    \step{Second statement}
    Assume that there exists $B_0 \in \vect \{ b \in \Bs_{\intset{1,3}} ; n_1(b) < 3 \text{ or } n_0(b) > 1 \}$ such that $f_{P_{1,1}}(0) = f_{B_0}(0)$.
    Let $B_1 := P_{1,1} - B_0$ so that $f_{B_1}(0) = 0$.
    Then $f_{B_2}(0) = 0$ where $B_2 := \ad_{B_1}^2(X_0)$. 
    Thus $f_D(0) = f_{B_3}(0)$ where $B_3 \in \vect \{ b \in \Bs_{\intset{1,6}} ; n_1(b) \leq 5 \text{ or } n_0(b) \geq 4 \}$, which contradicts the assumption on $f_D(0)$.
\end{proof}

\subsection{Closed-loop estimates}

\begin{lemma}
    Assume that $f_D(0) \notin \mathcal{N}_D(f)(0)$.
    Then
    \begin{align}
        \label{eq:D-CL-u1}
        |u_1(t)| & = O\left( |x(t;u)| + \|u_1\|_{L^2}^2 \right), \\
        \label{eq:D-CL-u1^3}
        \left| \xi_{P_{1,1}}(t,u) \right| & = O\left(|x(t;u)| + \|u_1\|_{L^4}^4 \right).
    \end{align}
\end{lemma}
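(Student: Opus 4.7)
The plan is to derive both bounds by the same template as \cref{lem:stefani-loop} and \cref{p:W2-loop}: combine the representation formula \cref{thm:Key_1} with the vectorial relations of \cref{p:D-vectors} through the black-box estimate \cref{p:PZM-XI}. For \eqref{eq:D-CL-u1}, the first assertion of \cref{p:D-vectors} lets me pick a linear form $\mathbb{P}$ giving a component along $f_{X_1}(0)$ parallel to $(\Bs_1 \setminus \{X_1\})(f)(0)$. Applying \cref{p:PZM-XI} with $M \gets 1$, $\bb \gets X_1$ and $\mathcal{N} \gets \Bs_1 \setminus \{X_1\}$ is then almost trivial: $\Bs_{\intset{1,1}} \setminus (\mathcal{N}\cup\{\bb\})$ is empty, so no ``other'' coordinate needs bounding, and any cross term would require $q \geq 2$ with $n_1(b_1)+\dotsb+n_1(b_q)\geq q > 1$, so no cross terms arise either. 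This yields $\mathbb{P}\cZ{1}(t,f,u)(0)=u_1(t)$; combined with \cref{thm:Key_1} for $M = 1$ and the small-state bound $|x(t;u)|^2 = O(|x(t;u)|)$ (from \cref{p:small-state}), this proves \eqref{eq:D-CL-u1}.

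For \eqref{eq:D-CL-u1^3}, I would repeat the same pattern with $M \gets 3$, $\bb \gets P_{1,1}$ and $\mathcal{N} \gets \Bs_{\intset{1,3}} \setminus \{P_{1,1}\}$; the second assertion of \cref{p:D-vectors} provides a component $\mathbb{P}'$ along $f_{P_{1,1}}(0)$ parallel to $\mathcal{N}(f)(0)$, and again $\Bs_{\intset{1,3}} \setminus (\mathcal{N}\cup\{\bb\})$ is empty. The real work is to enumerate the cross term configurations $b_1 \geq \dotsb \geq b_q \in \Bs \setminus \{X_0\}$ such that $P_{1,1} \in \supp \mathcal{F}(b_1,\dotsc,b_q)$. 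Matching both $n_1 = 3$ and $n_0 = 1$ leaves only two cases: $(q; b_1, b_2) = (2; W_1, X_1)$, since $[W_1, X_1] = -P_{1,1}$, producing the cross term $\propto |u_1(t)|\,\|u_1\|_{L^2}^2$; and $(q; b_1, b_2, b_3) = (3; M_1, X_1, X_1)$, since the only element of $\Bs_3$ with $n_0 = 1$ is $P_{1,1}$ itself, producing the cross term $\propto |u_1(t)|^2 |u_2(t)|$.

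The main obstacle is then to absorb these two cross terms into $|x(t;u)| + \|u_1\|_{L^4}^4$. Using \eqref{eq:D-CL-u1} together with the elementary bounds $\|u_1\|_{L^2}^2 = O(1)$ and $\|u_1\|_{L^2}^4 \leq t \|u_1\|_{L^4}^4$ (Cauchy--Schwarz), the first cross term obeys $|u_1(t)|\,\|u_1\|_{L^2}^2 = O(|x(t;u)| + \|u_1\|_{L^4}^4)$. For the second, $|u_2(t)| \leq t\|u_1\|_{L^\infty} = O(t)$ and $|u_1(t)|^2 = O(|x(t;u)|^2 + \|u_1\|_{L^2}^4) = O(|x(t;u)| + \|u_1\|_{L^4}^4)$, so $|u_1(t)|^2 |u_2(t)| = O(|x(t;u)| + \|u_1\|_{L^4}^4)$ as well. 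Feeding these estimates back into \cref{thm:Key_1} with $M = 3$, whose remainder $\|u_1\|_{L^4}^4 + |x(t;u)|^{4/3}$ is already of the required form (since $|x(t;u)|^{4/3} = O(|x(t;u)|)$), closes the argument.
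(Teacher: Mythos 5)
Your proposal is correct and follows essentially the same route as the paper: for \eqref{eq:D-CL-u1}, \cref{thm:Key_1} with $M=1$ and the first vectorial relation in \cref{p:D-vectors} give $\mathbb{P}x = u_1(t) + O(\|u_1\|_{L^2}^2 + |x|^2)$; for \eqref{eq:D-CL-u1^3}, \cref{thm:Key_1} with $M=3$, the second vectorial relation, and the cross-term enumeration $(W_1,X_1)$ and $(M_1,X_1,X_1)$ give $\mathbb{P}'x = \xi_{P_{1,1}} + O(|u_1(t)|\|u_1\|_{L^2}^2 + |u_1(t)|^2|u_2(t)| + \|u_1\|_{L^4}^4 + |x|^{4/3})$, which is then re-absorbed into the desired right-hand side via \eqref{eq:D-CL-u1} and Cauchy--Schwarz, exactly as in the paper. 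One citation nit: since the set $\Bs_{\intset{1,3}}\setminus\mathcal{N}$ is the singleton $\{P_{1,1}\}$ and there is no infinite family to sum, the proposition actually at work is \cref{p:etab-xib-XI} (which only needs flat bounds on each product $\xi_{b_1}\dotsb\xi_{b_q}$), rather than \cref{p:PZM-XI}, whose underlying hypothesis \eqref{eq:xib-othercross-XI-L} asks for factored bounds of the form $|\xi_{b_i}|\leq \frac{(ct)^{|b_i|}}{|b_i|!}t^{-\sigma_i}\Xi^{\alpha_i}$ which you never verify; swapping the reference repairs this without changing anything of substance, and the paper does exactly that.
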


\begin{proof}
    We rely on \cref{p:D-vectors}.
    
    \step{First estimate}
    By \cref{thm:Key_1} with $M \gets 1$,
    \begin{equation} \label{eq:D-CL-u1-1}
        x(t;u) = \mathcal{Z}_1(t,f,u)(0) + O \left( \|u_1\|_{L^2}^2 + |x(t;u)|^{1+1} \right).
    \end{equation}
    By \cref{p:D-vectors}, we can consider $\mathbb{P}$, a component along $f_1(0)$, parallel to $\mathcal{N}(f)(0)$ where $\mathcal{N} := \Bs_1 \setminus \{ X_1 \}$.
    Hence $\mathbb{P} \mathcal{Z}_1(t,f,u)(0) = u_1(t)$.
    Thus \eqref{eq:D-CL-u1-1} yields \eqref{eq:D-CL-u1}.
    
    \step{Second estimate}
    By \cref{thm:Key_1} with $M \gets 3$,
    \begin{equation} \label{eq:D-CL-u1^3-1}
        x(t;u) = \cZ{3}(t,f,u)(0) + O \left( \|u_1\|_{L^4}^4 + |x(t;u)|^{1+\frac 1 3} \right).
    \end{equation}
    By \cref{p:D-vectors}, we can consider $\mathbb{P}$, a component along $f_{P_{1,1}}(0)$, parallel to $\mathcal{N}(f)(0)$ where $\mathcal{N} := \Bs_{\intset{1,3}} \setminus \{ P_{1,1} \}$.
    By \eqref{eq:ZM-etab},
    \begin{equation} \label{eq:D-CL-u1^3-2}
        \mathbb{P} \cZ{3}(t,f,u)(0) = \eta_{P_{1,1}}(t,u).
    \end{equation}
    We apply \cref{p:etab-xib-XI} (see below) to obtain
    \begin{equation} \label{eq:D-CL-u1^3-3}
        \eta_{P_{1,1}}(t,u) = \xi_{P_{1,1}}(t,u) + O\left( |u_1(t)| \|u_1\|_{L^2}^2 + |u_1(t)|^2 \|u_1\|_{L^1} \right).
    \end{equation}
    Then \eqref{eq:D-CL-u1^3-1}, \eqref{eq:D-CL-u1^3-2} and \eqref{eq:D-CL-u1^3-3}, combined with the previous estimate \eqref{eq:D-CL-u1}, yield \eqref{eq:D-CL-u1^3}.
    
    Let us check the required conditions to obtain \eqref{eq:D-CL-u1^3-3}.
    Let $q \geq 2$, $b_1 \geq \dotsb \geq b_q \in \Bs$ such that $P_{1,1} \in \supp \mathcal{F}(b_1, \dotsc, b_q)$.
    Since $n_1(P_{1,1}) = 3$ and $n_0(P_{1,1}) = 1$, the only possibilities are 
    \begin{itemize}
        \item $q = 2$, $b_1 = W_1$, $b_2 = X_1$, in which case
        \begin{equation}
            |\xi_{b_1}(t,u)\xi_{b_2}(t,u)| = |u_1(t)| \int_0^t \frac{u_1^2}{2} \leq |u_1(t)| \|u_1\|_{L^2}^2.    
        \end{equation}
        \item $q = 3$, $b_1 = M_1$, $b_2 = b_3 = X_1$, in which case
        \begin{equation}
            |\xi_{b_1}(t,u)\xi_{b_2}(t,u)\xi_{b_3}(t,u)| = |u_1(t)|^2 |u_2(t)| \leq |u_1(t)|^2 \|u_1\|_{L^1}.
        \end{equation}
    \end{itemize}
    This concludes the proof of \eqref{eq:D-CL-u1^3-3} by \cref{p:etab-xib-XI}.
\end{proof}

\subsection{Interpolation inequalities}
\label{s:sextic-interpolation}

\begin{lemma}
    There exits $C > 0$ such that, for every $t > 0$ and $u \in \lone$,
    \begin{align}
        \label{eq:D-interpol-u1-L8}
        \|u_1\|_{L^8}^8 & \leq C t |u_1(t)|^8 + C \| u \|_{L^\infty}^2 \xi_D(t,u), \\
        \label{eq:D-interpol-P111}
        |\xi_{P_{1,1}0}(t,u)|^2 & \leq 2 t \xi_D(t,u), \\
        \label{eq:D-interpol-R}
        |\xi_{R^\sharp_{1,1,1,1}}(t,u)| & \leq C t \|u_1\|_{L^2}^2 |\xi_{P_{1,1}}(t,u)| + C t^{\frac 12} \|u_1\|_{L^2}^2 \xi_D(t,u)^{\frac 12}.
    \end{align}
\end{lemma}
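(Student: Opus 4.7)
The plan is to treat the three estimates separately, starting from the easiest and ending with the most delicate. For \eqref{eq:D-interpol-P111}, since $P_{1,1}0 = (P_{1,1}, X_0)$, \cref{Lem:adnuX0} gives $\xi_{P_{1,1}0}(t,u) = \int_0^t \xi_{P_{1,1}}(s,u)\dd s$. Cauchy--Schwarz and the representation $\xi_D(t,u) = \tfrac{1}{2}\int_0^t \xi_{P_{1,1}}^2 \dd s$, read off from \cref{Def:Coord2} applied to $D = \ad^2_{P_{1,1}}(X_0)$, then directly yield $|\xi_{P_{1,1}0}|^2 \leq t\int_0^t \xi_{P_{1,1}}^2 = 2t\xi_D$.

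For \eqref{eq:D-interpol-R}, I would first compute $\xi_{R^\sharp_{1,1,1,1}}$ from its bracket structure: since $R^\sharp_{1,1,1,1} = (W_{1,1}, P_{1,1})$ with $W_{1,1} < P_{1,1}$, \cref{Def:Coord2} gives $\xi_{R^\sharp_{1,1,1,1}} = \int_0^t \xi_{W_{1,1}}\dot\xi_{P_{1,1}}\dd s$. An integration by parts, together with $\dot\xi_{W_{1,1}} = \xi_{W_1}$ (from \cref{Lem:adnuX0}), yields $\xi_{R^\sharp_{1,1,1,1}} = \xi_{W_{1,1}}(t)\xi_{P_{1,1}}(t) - \int_0^t \xi_{W_1}\xi_{P_{1,1}}\dd s$. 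The elementary bounds $|\xi_{W_{1,1}}(t)| \leq \tfrac{t}{2}\|u_1\|_{L^2}^2$ and $\|\xi_{W_1}\|_{L^\infty(0,t)} \leq \tfrac{1}{2}\|u_1\|_{L^2}^2$, combined with Cauchy--Schwarz in the form $\int_0^t |\xi_{P_{1,1}}| \leq \sqrt{t}(2\xi_D)^{1/2}$, then produce the announced bound.

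The hardest estimate is \eqref{eq:D-interpol-u1-L8}. Using $\dot\xi_{P_{1,1}} = u_1^3/6$, I would start from $\int_0^t u_1^8 = 6\int_0^t u_1^5 \dot\xi_{P_{1,1}}\dd s$ and integrate by parts to obtain $\int_0^t u_1^8 = 6 u_1^5(t)\xi_{P_{1,1}}(t) - 30\int_0^t u_1^4 u \xi_{P_{1,1}}\dd s$. The interior integral is tamed by Cauchy--Schwarz into $30\|u\|_{L^\infty}\|u_1\|_{L^8}^4(2\xi_D)^{1/2}$, and a Young inequality absorbs $\tfrac{1}{2}\|u_1\|_{L^8}^8$ into the left-hand side at the cost of a $900\|u\|_{L^\infty}^2\xi_D$ term.

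The main obstacle will be reshaping the boundary term $6|u_1(t)|^5|\xi_{P_{1,1}}(t)|$ into the form allowed by the right-hand side. The key should be a Poincaré-type estimate: averaging the identity $\xi_{P_{1,1}}(t) = \xi_{P_{1,1}}(s) + \int_s^t \dot\xi_{P_{1,1}}$ over $s \in (0,t)$ and applying Fubini gives $\xi_{P_{1,1}}(t) = \tfrac{1}{t}\int_0^t \xi_{P_{1,1}}\dd s + \tfrac{1}{6t}\int_0^t s\,u_1^3 \dd s$, whence Cauchy--Schwarz yields $|\xi_{P_{1,1}}(t)| \leq \sqrt{2\xi_D/t} + C\sqrt{t}\,\|u_1\|_{L^6}^3$. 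Combined with the Hölder interpolation $\|u_1\|_{L^6}^3 \leq t^{1/8}\|u_1\|_{L^8}^3$, this splits the boundary term into a piece proportional to $|u_1(t)|^5(\xi_D/t)^{1/2}$, which AM--GM controls by $\tfrac{1}{2}(t|u_1(t)|^8 + \|u\|_{L^\infty}^2\xi_D)$ after using $|u_1(t)|\leq t\|u\|_{L^\infty}$ to handle the missing $\|u\|_{L^\infty}^{-2}$ factor, and a piece proportional to $t^{5/8}|u_1(t)|^5\|u_1\|_{L^8}^3$, which Young's inequality with conjugate exponents $(8/5, 8/3)$ bounds by $\tfrac{5}{8}t|u_1(t)|^8 + \tfrac{3}{8}\|u_1\|_{L^8}^8$. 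The prefactor $1/\sqrt{3}$ from the Hölder step keeps the $\|u_1\|_{L^8}^8$ coefficient on the right strictly less than $1$ when combined with the $\tfrac{1}{2}$ from the cross-term treatment, enabling the final absorption and concluding the proof with concrete constants.
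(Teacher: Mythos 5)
Your proofs of \eqref{eq:D-interpol-P111} and \eqref{eq:D-interpol-R} are exactly the paper's: Cauchy--Schwarz on $\xi_{P_{1,1}0} = \int_0^t \xi_{P_{1,1}}$ together with $\xi_D = \tfrac12\int_0^t\xi_{P_{1,1}}^2$ for the former, and the same integration by parts of $\xi_{R^\sharp_{1,1,1,1}} = \int_0^t \xi_{W_{1,1}}\dot\xi_{P_{1,1}}$ followed by the elementary $L^\infty$ bounds on $\xi_{W_1}$, $\xi_{W_{1,1}}$ and Cauchy--Schwarz on $\int_0^t|\xi_{P_{1,1}}|$ for the latter.

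For \eqref{eq:D-interpol-u1-L8}, your starting integration by parts and your treatment of the interior term coincide with the paper's. Where you diverge is the boundary term $6|u_1(t)|^5\,|\xi_{P_{1,1}}(t)|$, which you regard as ``the main obstacle'' and attack through a Poincar\'e-type averaging of $\xi_{P_{1,1}}(t)$ over $(0,t)$, splitting it into an average part controlled by $\sqrt{\xi_D/t}$ and a fluctuation part controlled by $\sqrt{t}\|u_1\|_{L^6}^3$. That decomposition is logically sound (each piece is absorbed as you indicate, and $\tfrac12+\tfrac38<1$ makes the final absorption go through), but it is an unnecessary detour: the paper simply applies H\"older directly, $|\int_0^t u_1^3|\leq t^{5/8}\|u_1\|_{L^8}^3$, so the boundary term is $\leq t^{5/8}|u_1(t)|^5\|u_1\|_{L^8}^3$, and one Young inequality with exponents $(8/5,8/3)$ (with a free scaling parameter to shrink the $\|u_1\|_{L^8}^8$ coefficient at will) finishes the job. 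Your averaging trick recovers the same bound by a longer route and, along the way, spawns an extra $\|u\|_{L^\infty}^2\xi_D$ contribution from the average piece that the paper never needs. Also, your remark that the prefactor $1/\sqrt3$ is what ``keeps the $\|u_1\|_{L^8}^8$ coefficient strictly less than $1$'' is not quite the reason the absorption works: $\tfrac12+\tfrac38=\tfrac78<1$ already without that extra factor, and the paper's proof uses a scaled Young inequality precisely so as not to rely on such numerical coincidences.
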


\begin{proof}
    \step{First estimate}
    By integration by parts,
    \begin{equation}
        \int_0^t u_1^8 = u_1^5(t) \int_0^t u_1^3 - 5 \int_0^t u(s) u_1^4(s) \left(\int_0^s u_1^3\right) \dd s.
    \end{equation}
    By Cauchy--Schwarz and Hölder inequalities and \eqref{eq:def:coord2}, we obtain
    \begin{equation}
        \| u_1 \|_{L^8}^8 \leq t^{\frac 58} |u_1(t)|^5 \|u_1\|_{L^8}^3 + 30 \sqrt{2} \|u\|_{L^\infty} \|u_1\|_{L^8}^4 \xi_D(t,u)^{\frac 12},    
    \end{equation}
    which proves \eqref{eq:D-interpol-u1-L8} using Young's inequality.
    
    \step{Second estimate}
    By \eqref{eq:def:coord2}, $\xi_{P_{1,1} 0} = \int \xi_{P_{1,1}}$ and $\xi_D = \frac 12 \int \xi_{P_{1,1}}^2$ so \eqref{eq:D-interpol-P111} follows directly from the Cauchy--Schwarz inequality.
    
    \step{Third estimate}
    By \eqref{eq:def:coord2} and since $R^\sharp_{1,1,1,1} = (W_1 0, P_{1,1})$, integration by parts yields
    \begin{equation}
        \xi_{R^\sharp_{1,1,1,1}}(t,u) 
        = \int_0^t \xi_{W_1 0} \dot{\xi}_{P_{1,1}}
        = \xi_{W_1 0}(t) \xi_{P_{1,1}}(t) - \int_0^t \xi_{W_1} \xi_{P_{1,1}}.
    \end{equation}
    Then \eqref{eq:D-interpol-R} follows by the Cauchy--Schwarz inequality and the estimates $\xi_{W_1}(0)(t) \leq t \|u_1\|_{L^2}^2$ and $\xi_{W_1}(t) \leq \|u_1\|_{L^2}^2$.
\end{proof}

\begin{remark}
    Estimate \eqref{eq:D-interpol-u1-L8} is not exactly of the form \eqref{eq:coerc-interp} since it involves a ``boundary term'' $t |u_1(t)|^8$.
    In our context, this boundary term is harmless since it will immediately be absorbed thanks to the closed-loop estimate \eqref{eq:D-CL-u1}.
    Moreover, it is likely that \eqref{eq:D-interpol-u1-L8} also holds without this additional term, up to a slightly more complex proof.
\end{remark}

\subsection{Proof of the presence of the drift}

\begin{proof}[Proof of \cref{thm:sextic-drift}]
    Let $\mathbb{P}$ be a component along $f_D(0)$ parallel to $\mathcal{N}_D(f)(0)$.
    By \cref{thm:Key_1} with $M \gets 7$,
    \begin{equation}
        x(t;u) = \cZ{7}(t,f,u)(0) + O\left( \|u_1\|_{L^8}^8 + |x(t;u)|^{1+\frac{1}{7}} \right),
    \end{equation}
    where $\mathbb{P} \cZ{7}(t,f,u)(0)$ satisfies \eqref{eq:D-Z7}.
    Combining the closed-loop estimate \eqref{eq:D-CL-u1} and the interpolation estimate \eqref{eq:D-interpol-u1-L8}, one obtains
    \begin{equation}
        \|u_1\|_{L^8}^8 = O\left(|x(t;u)|^8+\|u\|_{L^\infty}^2 \xi_D(t,u) \right).
    \end{equation}
    Substituting in the closed-loop estimate \eqref{eq:D-CL-u1} yields
    \begin{equation}
        |u_1(t)|^4 = O\left(|x(t;u)|^4+\|u\|_{L^\infty}^2 \xi_D(t,u) \right)
    \end{equation}
    and in the closed-loop estimate \eqref{eq:D-CL-u1^3} yields
    \begin{equation}
        |\xi_{P_{1,1}}(t,u)|^2 = O\left(|x(t;u)|^2+\|u\|_{L^\infty}^2 \xi_D(t,u) \right).
    \end{equation}
    Eventually, using \eqref{eq:D-interpol-R} and Young's inequality,
    \begin{equation}
        \begin{split}
            |u_1(t) \xi_{R^\sharp_{1,1,1,1}}(t,u)|
            & = O \left( |\xi_{P_{1,1}}(t,u)|^2 + \|u_1\|_{L^2}^8 + |u_1(t)|^4 + t \xi_D(t,u) \right) \\
            & = O \left( |x(t;u)|^2 + (t+\|u\|_{L^\infty}^2) \xi_D(t,u) \right).
        \end{split}
    \end{equation}
    Gathering all these equalities in \eqref{eq:D-Z7} and the interpolation estimate \eqref{eq:D-interpol-P111} yields
    \begin{equation}
        \mathbb{P} x(t;u) = \xi_D(t,u)
        + O \left( \left(t+\|u\|_{L^\infty}^2\right) \xi_D(t,u) +|x(t;u)|^{1+\frac{1}{7}} \right).
    \end{equation}
    This implies a drift along $f_{D}(0)$, parallel to $\mathcal{N}_D(f)(0)$, as $(t,\|u\|_{L^\infty}) \to 0$, in the sense of \cref{Def:drift}.
\end{proof}

\section{Embedded semi-nilpotent systems and the \texorpdfstring{$m=-1$}{m=-1} case}
\label{s:m=-1}

As announced in \cref{s:method-1}, for $m=-1$ and particular choices of $\bb$, estimate \eqref{eq:coerc-interp} may fail (even for arbitrarily large $M$) and then the remainder $\|u\|_{W^{-1,M+1}}^{M+1}$ in the representation formula~\eqref{repform1} cannot be absorbed by interpolation. 
This is for example the case for \cref{Thm:Kawski_Wm} with $m=-1$ and $k \geq 2$.
In this section, we describe extensions of our unified approach which can be used to deal with such cases, and in particular we use them to prove \cref{Thm:Kawski_Wm} and \cref{thm:S2-S3-intro} for $m = -1$ and $k \geq 2$.

First, we present a general methodology relying on the notion of \vocab{weak drifts} (\cref{s:weak-drift}), which are a slightly weaker version of \cref{Def:drift}.
We show in \cref{s:weak-semi-quad} that our general strategy entails weak drifts for semi-nilpotent vector fields (see \cref{s:semi-nilpotent}), because they satisfy an approximate representation formula without the remainder term $\|u\|_{W^{-1,M+1}}^{M+1}$, see \eqref{eq:x=ZM+O+Nilp}.
Then, using an argument of embedded semi-nilpotent system
(see \cref{subsec:embedded_seminilp}), we extend the weak drift conclusion to systems that do not satisfy the semi-nilpotency assumption (see \cref{subsc:Drift=seminilp->all}).

Eventually, in \cref{s:weak-to-strong}, we prove that one can also work even more precisely and prove drifts in the strong sense of \cref{Def:drift} even when $m=-1$.

\subsection{Weak drifts}
\label{s:weak-drift}

We start with the following weaker version of \cref{Def:drift}.

\begin{definition}[Weak drift] \label{Def:Weak-drift}
    Let $\bb \in \Bs$ and $\mathcal{N} \subset \Br(X)$.
    We say that system \eqref{syst} has a \emph{weak drift along $f_{\bb}(0)$, parallel to $\mathcal{N}(f)(0)$, as $(t,\|u\|_{W^{-1,\infty}}) \to 0$} when, for every $\varepsilon>0$, there exists $\rho=\rho(\varepsilon) > 0$ such that, for every $t \in (0,\rho)$ and every $u \in L^1((0,t);\R)$ with $\|u\|_{W^{-1,\infty}}\leq\rho$,
    \begin{equation} \label{eq:def-weak-drift}
        \mathbb{P} x(t;u) \geq (1-\varepsilon) \xi_{\bb}(t,u) - \varepsilon |x(t;u)|
    \end{equation}
    where $\mathbb{P}$ gives a component along $f_{\bb}(0)$ parallel to $\mathcal{N}(f)(0)$ and $(\xi_{b})_{b\in\Bs}$ are the coordinates of the second kind associated with $\Bs$ (see \cref{Def:Coord2} and \cref{Prop:Coord_Bstar}).
\end{definition}

For instance, the conclusion of \cref{thm:W3precise} can be interpreted as a weak drift along $f_{W_3}(0)$, parallel to $\mathcal{N}_3(f)(0)$, as $(t,\|u\|_{L^\infty}) \rightarrow 0$.

A drift, in the strong sense of \cref{Def:drift}, implies a weak drift in the sense of \cref{Def:Weak-drift}. 
The reciprocal may not be true. 
Nevertheless, a weak drift is sufficient to prevent STLC, and one can prove the following lemma as \cref{lem:drift-stlc}. 

\begin{lemma} \label{lem:weak-drift-stlc}
    Let $\bb \in \Bs$ and $\mathcal{N} \subset \Br(X)$.
    Assume that $\xi_\bb(t,u) \geq 0$ for all $u \in L^1((0,t);\R)$ and that system \eqref{syst} has a weak drift along $f_{\bb}(0)$, parallel to $\mathcal{N}(f)(0)$, as $(t,\|u\|_{W^{-1,\infty}}) \to 0$.
    Then system \eqref{syst} is not $W^{-1,\infty}$-STLC.
\end{lemma}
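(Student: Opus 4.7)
The plan is to mimic the proof of \cref{lem:drift-stlc}, reasoning by contradiction and targeting points of the form $x^\star = -a f_\bb(0)$ with $0 < a \ll 1$. The only real issue is that the remainder term in \eqref{eq:def-weak-drift} is now $\varepsilon |x(t;u)|$ instead of $C|x(t;u)|^\beta$ with $\beta > 1$, so one cannot absorb it simply by taking $a$ small. Instead, one must exploit the freedom to take $\varepsilon$ arbitrarily small before fixing anything else.

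First, I would observe that the statement implicitly requires $f_\bb(0) \neq 0$, since otherwise no linear form $\mathbb{P}$ with $\mathbb{P} f_\bb(0) = 1$ exists. Fix $\varepsilon > 0$ such that $\varepsilon |f_\bb(0)| < 1$, say $\varepsilon = 1/(2|f_\bb(0)|)$, and let $\rho = \rho(\varepsilon) > 0$ be given by the weak drift assumption. Fix any $t \in (0,\rho)$.

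Next, I would assume for contradiction that system~\eqref{syst} is $W^{-1,\infty}$-STLC. Applied with the pair $(t,\rho)$, \cref{Def:WmSTLC} yields $\delta > 0$ such that every $x^\star \in B(0,\delta)$ is reachable by some $u \in W^{-1,\infty}((0,t);\R) \cap \lone$ with $\|u\|_{W^{-1,\infty}} \leq \rho$. Choose $a \in (0, \delta/|f_\bb(0)|)$, and set $x^\star := -a f_\bb(0)$, so $x^\star \in B(0,\delta)$. Let $u$ be a corresponding control, so that $x(t;u) = x^\star$.

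The weak-drift inequality~\eqref{eq:def-weak-drift}, together with the hypothesis $\xi_\bb(t,u) \geq 0$, then gives
\begin{equation}
    -a = \mathbb{P} x^\star = \mathbb{P} x(t;u) \geq (1-\varepsilon)\xi_\bb(t,u) - \varepsilon |x^\star| \geq -\varepsilon a |f_\bb(0)|,
\end{equation}
hence $1 \leq \varepsilon |f_\bb(0)|$, contradicting the choice of $\varepsilon$. I do not anticipate any real obstacle here; the only subtlety is ordering the quantifiers correctly so that $\varepsilon$ is fixed \emph{before} $\delta$ and $a$, which is what guarantees that the $\varepsilon|x|$ remainder, despite being only linear in $|x|$, can still be beaten by the sign-definite left-hand side.
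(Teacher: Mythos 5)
Your argument follows the exact strategy the paper intends: mimic \cref{lem:drift-stlc} by contradiction, target $x^\star = -a f_\bb(0)$, and since the remainder is only $\varepsilon|x|$ rather than $C|x|^\beta$ with $\beta>1$, fix $\varepsilon$ small \emph{before} invoking \cref{Def:WmSTLC} to get $\delta$ and then $a$. The quantifier ordering is the whole content of the adaptation and you have it right.

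One small slip: in the chain
\begin{equation*}
    -a \;\geq\; (1-\varepsilon)\,\xi_\bb(t,u) - \varepsilon a|f_\bb(0)| \;\geq\; -\varepsilon a|f_\bb(0)|,
\end{equation*}
discarding $(1-\varepsilon)\xi_\bb(t,u)$ via $\xi_\bb \geq 0$ requires $1-\varepsilon \geq 0$, but your choice $\varepsilon = 1/(2|f_\bb(0)|)$ exceeds $1$ whenever $|f_\bb(0)| < 1/2$. Replace it by, say, $\varepsilon := \min\bigl\{ \tfrac12, \, 1/(2|f_\bb(0)|) \bigr\}$, which guarantees both $\varepsilon < 1$ and $\varepsilon|f_\bb(0)| < 1$; the rest of the argument then goes through unchanged.
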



Estimate \eqref{eq:def-weak-drift} proves that, in the limit $(t,\|u\|_{W^{-1,\infty}}) \to 0$, the ``ultimately unreachable'' set contains a half space.
But for $t$ and $\|u\|_{W^{-1,\infty}}$ fixed (even small), estimate \eqref{eq:def-weak-drift} only guarantees that the unreachable set contains a convex cone, which is slightly weaker than \cref{Def:drift} as commented in \cref{rk:beta>1}.

\subsection{Semi-nilpotent systems and their representation formula}
\label{s:semi-nilpotent}

As sketched in \cref{s:method-1}, our methodology for $m=-1$ relies on the notion of ``semi-nilpotent'' systems, which enjoy the representation formula \eqref{eq:x=ZM+O+Nilp} below.

\begin{definition}[Semi-nilpotent family of vector fields] 
    \label{Def:semi-nilpotent}
    Let $\Omega$ be an open subset of $\R^d$, $f_0, f_1 \in \CC^\infty(\Omega;\R^d)$ and $M \in \N^*$. 
    We say that the vector field $f_1$ is \emph{semi-nilpotent (resp. semi-nilpotent at zero) of index $M$ with respect to $f_0$} when
    \begin{equation} \label{eq:nilpotent}
        \forall b \in \Br(X), n_1(b) \geq M \Rightarrow f_b = 0 \text{ on } \Omega
    \end{equation}
     \begin{equation} \label{eq:nilpotent-0}
       \left( \text{resp. } \forall b \in \Br(X), n_1(b) \geq M \Rightarrow f_b(0) = 0  \right)
    \end{equation}    
    and $M$ is the smallest positive integer for which this property holds\footnote{Condition \eqref{eq:nilpotent} is equivalent to its variant with only $n_1(b) = M$, as can be checked by writing any bracket with $n_1(b) > M$ as a left-nested one by Jacobi's identity.
This does not hold for condition \eqref{eq:nilpotent-0}.}.
\end{definition}

Clearly, the semi-nilpotency implies the semi-nilpotency at zero. 
The converse is true for analytic vector fields under the Lie algebra rank condition at $0$, as stated in \cref{LemB2} below.

\begin{lemma} \label{LemB2-ideal}
    Let $\Omega \subset \R^d$ be a connected open neighborhood of $0$, $f_0, f_1 \in \CC^\omega(\Omega;\R^d)$.
    Let $\mathcal{I}$ be an ideal of $\mathcal{L}(X)$.
    Assume that $\mathcal{L}(f)(0)=\R^d$ and that, for every $B \in \mathcal{I}$, $f_B(0) = 0$.
    
    Then, for every $B \in \mathcal{I}$, $f_B \equiv 0$ on $\Omega$.
\end{lemma}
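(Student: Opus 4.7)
The plan is to use analyticity to reduce the statement $f_B \equiv 0$ on $\Omega$ to the vanishing of all derivatives $D^k f_B(0) = 0$ for every $k \geq 0$. Since $\Omega$ is a connected open neighborhood of $0$ and $f_B$ is analytic there, establishing $D^k f_B(0) = 0$ for all $k$ will force the Taylor series of $f_B$ at $0$ to vanish and thus $f_B$ to be identically zero on $\Omega$. I will prove this vanishing property by induction on $k$, uniformly over all $B \in \mathcal{I}$; the base case $k = 0$ is exactly the hypothesis.

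For the inductive step, fix $C_1, \dotsc, C_k \in \mathcal{L}(X)$. Since $\mathcal{I}$ is an ideal, the iterated left bracket $L := \ad_{C_1} \ad_{C_2} \dotsb \ad_{C_k}(B)$ belongs to $\mathcal{I}$, so $f_L(0) = 0$. The crucial claim is that, under the inductive hypothesis $D^j f_B(0) = 0$ for $0 \leq j \leq k-1$, one has the collapse identity
\begin{equation} \label{eq:collapse-plan}
    f_L(0) = D^k f_B(0) \cdot \bigl( f_{C_k}(0), \dotsc, f_{C_1}(0) \bigr).
\end{equation}
Granting \eqref{eq:collapse-plan}, the $k$-multilinear form $D^k f_B(0)$ vanishes on $V^k$, where $V := \{ f_C(0) ; C \in \mathcal{L}(X) \}$. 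The Lie algebra rank condition $\mathcal{L}(f)(0) = \R^d$ forces $V$ to span $\R^d$, and multilinearity then yields $D^k f_B(0) = 0$, closing the induction.

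The main technical obstacle is verifying \eqref{eq:collapse-plan}. Using the definition $[f,g] = (Dg) f - (Df) g$ and the Leibniz rule, one expands $f_L$ recursively: each outer $\ad_{C_i}$ either differentiates the inner expression (through its $(Dg) f_{C_i}$ part) or contracts it against $f_{C_i}$ (through its $(Df_{C_i}) g$ part). A bookkeeping by a secondary induction on $k$ shows that every summand in the expansion carries exactly one factor of the form $D^j f_B$ evaluated against products built from $f_{C_i}(0)$ and derivatives of the $f_{C_i}$; the terms with $j < k$ vanish at $0$ by the inductive hypothesis, leaving only the term in which every outer bracket differentiates the innermost $f_B$-factor. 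Direct computation confirms this in low rank: for $k = 1$, $[f_{C_1}, f_B](0) = Df_B(0) \cdot f_{C_1}(0)$ thanks to $f_B(0) = 0$; for $k = 2$, once also $Df_B(0) = 0$, one finds $[f_{C_1},[f_{C_2},f_B]](0) = D^2 f_B(0)(f_{C_2}(0), f_{C_1}(0))$. Once $D^k f_B(0) = 0$ for every $k$, analyticity and connectedness of $\Omega$ conclude the proof.
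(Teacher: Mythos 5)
Your proof is correct and follows essentially the same route as the paper: the paper likewise reduces to showing $D^k f_B(0)=0$ for all $k$ by analyticity and connectedness, proves the collapse identity $[g_{i_k},\dotsc,[g_{i_1},f]\dotsb](0)=D^k f(0)\cdot(g_{i_k}(0),\dotsc,g_{i_1}(0))$ by induction (its Step~1), and then feeds in the ideal property and the Lie algebra rank condition (its Step~2). The only cosmetic difference is that the paper first fixes a finite spanning family $f_{b_1}(0),\dotsc,f_{b_d}(0)$ and states Step~1 abstractly, whereas you bracket against arbitrary $C_i\in\mathcal{L}(X)$; the substance is identical.
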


\begin{proof}
    \step{We prove that, if 
    $f \in \CC^\omega(\Omega;\R^d)$ and 
    $g_1,\dotsc,g_d \in \CC^\infty(\Omega;\R^d)$ satisfy 
    $\vect \{ g_i(0) ; i \in \intset{1,d} \} = \R^d$ and 
    	\begin{equation}
    		\forall n \in \N,
    		\forall i_1, \dotsc, i_n \in \intset{1,d}^n,
    		\quad 
    		[g_{i_n}, [g_{i_{n-1}}, \dotsc, [g_{i_1}, f]\dotsb]](0) = 0,
    	\end{equation}
    	then $f \equiv 0$ on $\Omega$}
    Since $\Omega$ is connected and $f$ is analytic, it is sufficient to prove that, for every $n \in \N$, its $n$-th differential vanishes at zero: $D^n f(0)=0$. We work by induction on $n \in \N$. The initialization for $n=0$ holds. Let $n \in \N^*$ and assume $D^k f(0)=0$ for $k=0,\dots,n-1$. Then, for every $i_1, \dotsc, i_n \in \intset{1,d}^n$, by expanding the iterated Lie brackets
    \begin{equation}
        D^n f(0) \cdot (g_{i_n}(0),\dotsc,g_{i_1}(0)) 
        = [g_{i_n}, [g_{i_{n-1}}, \dotsc, [g_{i_1}, f]\dotsb]](0) 
        = 0.
    \end{equation}
    This implies $D^n f (0)=0$ because $\vect \{ g_i(0) ; i \in \intset{1,d} \} = \R^d$. 

    \step{Proof of \cref{LemB2}}
    By the Lie algebra rank condition there exist $b_1,\dots,b_d \in \Br(X)$ such that $\vect \{ f_{b_i}(0) ; i \in \intset{1,d} \} = \R^d$. 
    Let $B \in \mathcal{I}$. 
    We prove that $f_B \equiv 0$ on $\Omega$ by applying Step~1. 
    Indeed, for $n \in \N$ and $i_1, \dotsc, i_n \in \intset{1,d}^n$, we have $[f_{b_{i_n}}, \dotsc , [f_{b_{i_1}}, f_B]\dotsb](0) = f_{\underline{B}}(0)=0$, where $\underline{B}:=[b_{i_n},\dotsc [b_{i_1},B] \dotsb ] \in \mathcal{I}$ since $\mathcal{I}$ is an ideal of $\mathcal{L}(X)$.
\end{proof}

\begin{corollary}
    \label{LemB2}
    Let $\Omega \subset \R^d$ be a connected open neighborhood of $0$, $f_0, f_1 \in \CC^\omega(\Omega;\R^d)$ and $M \in \N^*$.
    If $\mathcal{L}(f)(0)=\R^d$ and $f_1$ is semi-nilpotent at zero of index $M$ with respect to $f_0$, then $f_1$ is semi-nilpotent of index $M$ with respect to $f_0$.
\end{corollary}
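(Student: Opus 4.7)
The plan is to apply \cref{LemB2-ideal} to a well-chosen ideal of $\mathcal{L}(X)$ and then recover the corollary essentially for free. The natural candidate is
\begin{equation}
    \mathcal{I} := \vect \{ \eval(b) ; b \in \Br(X), n_1(b) \geq M \} \subset \mathcal{L}(X).
\end{equation}
First I would verify that $\mathcal{I}$ is a Lie ideal. This follows immediately from the additivity $n_1((a,b)) = n_1(a) + n_1(b)$ for any $a,b \in \Br(X)$: bracketing a spanning element $\eval(b)$ of $\mathcal{I}$ with any $\eval(c)$, $c \in \Br(X)$, yields $\eval((b,c))$ with $n_1((b,c)) \geq M$, so it stays in $\mathcal{I}$, and the general case $[\mathcal{I}, \mathcal{L}(X)] \subset \mathcal{I}$ follows by bilinearity.

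Next I would check the hypotheses of \cref{LemB2-ideal}: analyticity on the connected open set $\Omega$ and the Lie algebra rank condition $\mathcal{L}(f)(0) = \R^d$ are directly inherited from the assumptions of \cref{LemB2}; and the vanishing condition ``$f_B(0) = 0$ for every $B \in \mathcal{I}$'' follows from the semi-nilpotency at zero \eqref{eq:nilpotent-0} and linearity of the map $B \mapsto f_B(0)$ on $\mathcal{L}(X)$. Invoking \cref{LemB2-ideal} then gives $f_B \equiv 0$ on $\Omega$ for every $B \in \mathcal{I}$; in particular, $f_b \equiv 0$ on $\Omega$ for every $b \in \Br(X)$ with $n_1(b) \geq M$, which is precisely \eqref{eq:nilpotent}.

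Finally I would dispatch the minimality clause: if some $M' < M$ also satisfied \eqref{eq:nilpotent}, then \emph{a fortiori} $f_b(0) = 0$ for every $b$ with $n_1(b) \geq M'$, contradicting the minimality of $M$ in the semi-nilpotent-at-zero sense. Hence $M$ is also the minimal index for the on-$\Omega$ condition, concluding the proof. No step presents any real obstacle: the corollary is essentially a direct specialization of \cref{LemB2-ideal}, the only honest bookkeeping being the one-line check that the span of $\{\eval(b) ; n_1(b) \geq M\}$ is a Lie ideal, which rests entirely on the additivity of $n_1$ under bracketing.
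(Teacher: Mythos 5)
Your proposal is correct and takes exactly the paper's route: the ideal you write out explicitly is precisely $S_{\llbracket M,\infty \llbracket}(X)$ in the paper's notation, and the paper's one-line proof is the same invocation of \cref{LemB2-ideal} with that ideal. You merely supply the (easy) verifications that the paper leaves implicit, namely that the span is an ideal by additivity of $n_1$ under bracketing and that the minimality of the index carries over.
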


\begin{proof}
    This follows from \cref{LemB2-ideal} applied with the ideal $\mathcal{I} = S_{\llbracket M,\infty \llbracket}(X)$.
\end{proof}

\begin{proposition} \label{thm:Key_2}
    Let $f_0$, $f_1$ be analytic vector fields on a neighborhood of $0$ with $f_0(0) = 0$ and $M \in\N^*$.
    Assume that $\mathcal{L}(f)(0)=\R^d$ and
    $f_1$ is semi-nilpotent at zero of index $M+1$ with respect to~$f_0$.
    Then
    \begin{equation} \label{eq:x=ZM+O+Nilp}
        x(t;u) = \cZ{M}(t,f,u)(0) + O \left(\|u_1\|_{L^\infty} |x(t;u)| \right).
    \end{equation}
\end{proposition}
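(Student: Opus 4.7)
The plan is to exploit semi-nilpotency to upgrade the asymptotic formula of \cref{thm:Key_1} into an exact flow representation, and then estimate the resulting geometric error by a Grönwall-type argument.

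First, I would upgrade the pointwise semi-nilpotency at $0$ to identical semi-nilpotency on $\Omega$ by applying \cref{LemB2-ideal} with the ideal $\mathcal{I} := S_{\llbracket M+1, \infty\llbracket}(X)$: the Lie algebra rank condition at $0$ combined with $f_b(0) = 0$ for every $b$ with $n_1(b) \geq M+1$ forces $f_b \equiv 0$ on $\Omega$ for every such $b$. Consequently, the absolutely convergent Magnus series $\ZInf(t,f,u) = \sum_{b \in \Bs} \eta_b(t,u) f_b$, seen as an analytic vector field on a neighborhood of $0$, reduces exactly to $\cZ{M}(t,f,u)$. Lifting the formal identity \eqref{eq:11formal} to vector fields acting on functions by pullback (following \cite[Section~4]{P1}) then yields the exact nonlinear representation
\begin{equation*}
    x(t;u) = \Phi_1^{\cZ{M}(t,f,u)}\bigl(\Phi_t^{f_0}(0)\bigr) = \Phi_1^{\cZ{M}(t,f,u)}(0),
\end{equation*}
where $\Phi_\tau^Y$ denotes the time-$\tau$ flow of the vector field $Y$ and the second equality uses $f_0(0) = 0$; crucially, the entire $f_0$-contribution in the Magnus decomposition is applied to $0$ (which it fixes) rather than to the already-evolved state, so no $O(t|x|)$ correction arises.

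Writing $Y := \cZ{M}(t,f,u)$ for brevity, the integral form of the flow gives
\begin{equation*}
    x(t;u) - Y(0) = \int_0^1 \bigl[Y(\Phi_s^Y(0)) - Y(0)\bigr]\dd s,
\end{equation*}
so that $|x(t;u) - Y(0)| \leq \sup_{s\in[0,1]}|\Phi_s^Y(0)| \cdot \sup_z \|DY(z)\|$ on a neighborhood of the trajectory. To bound $\sup_z \|DY(z)\|$ by $O(\|u_1\|_{L^\infty})$, I would use the absolute convergence of \eqref{eq:ZM-etab} in $\CC^{\omega,r'}$ together with the analytic bracket estimates of \cref{thm:bracket.analytic}: the leading term $\eta_{X_1}(t,u) f_1 = u_1(t) f_1$ contributes $O(\|u_1\|_{L^\infty})$; for $b = M_j$ with $j \geq 1$, the absence of cross terms (since $n_1(b) = 1$) gives $\eta_{M_j}(t,u) = u_{j+1}(t) = O(t^j \|u_1\|_{L^\infty}/j!)$; and for $b \in \Bs_{\intset{1,M}}$ with $n_1(b) \geq 2$, \cref{p:xib-u1-Lk} combined with the cross-term decomposition \eqref{eq:etab-xib-exact} yields $|\eta_b(t,u)| = O(\|u_1\|_{L^\infty}^{n_1(b)}) = O(\|u_1\|_{L^\infty}^2)$. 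Summing these bounds against the factorial weights in \cref{thm:bracket.analytic} produces a geometric series convergent for small $t$.

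For the trajectory factor, Grönwall's inequality applied to $v(s) := |\Phi_s^Y(0)|$ with $\dot v(s) \leq |Y(0)| + \sup\|DY\| \cdot v(s)$ gives $\sup_{s\in[0,1]} v(s) \leq C|Y(0)|$ for small control. Combining, $|x(t;u) - Y(0)| \leq C\|u_1\|_{L^\infty}|Y(0)|$, and the triangle inequality $|Y(0)| \leq |x(t;u)| + |x(t;u) - Y(0)|$ (absorbed when $\|u_1\|_{L^\infty}$ is small enough) delivers the announced $O(\|u_1\|_{L^\infty}|x(t;u)|)$. I expect the main technical hurdle to be the rigorous lifting of the formal identity \eqref{eq:11formal} to a genuine pointwise equality on $\R^d$: this is precisely where \cref{LemB2-ideal} is essential, since converting pointwise vanishing at $0$ into the identical vanishing of vector fields is what makes every term past the $M$-th $X_1$-layer disappear from the convergent Magnus series, rather than merely being small.
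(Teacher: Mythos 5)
Your proposal takes a genuinely different --- and far more ambitious --- route than the paper, whose proof is a two-line citation: first apply \cref{LemB2} (which packages \cref{LemB2-ideal} with $\mathcal{I} = S_{\llbracket M+1,\infty\llbracket}(X)$) to upgrade semi-nilpotency \emph{at zero} into semi-nilpotency on a neighborhood, then invoke the third item of \cite[Corollary~163]{P1}. Your first step coincides with the paper's. Your second step, however, attempts to \emph{reprove} the cited external result rather than cite it: you derive an exact flow representation $x(t;u) = \Phi_1^{\cZ{M}(t,f,u)}(\Phi_t^{f_0}(0)) = \Phi_1^{\cZ{M}(t,f,u)}(0)$ and then bound the flow-versus-evaluation discrepancy by a mean-value/Grönwall argument. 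The outline of that second step is sound: the composition order you assert (the $f_0$-flow innermost, so that $f_0(0)=0$ kills it) is indeed the one produced by the pullback/Lie-series convention $(e^A e^B \varphi)(p) = \varphi(\Phi_1^B(\Phi_1^A(p)))$, the estimate $\sup\|D\cZ{M}(t,f,u)\| = O(\|u_1\|_{L^\infty})$ follows from \cref{p:xib-u1-Lk}, \eqref{eq:etab-xib-exact} and the analytic convergence of \cref{thm:Key_1}, and the final absorption $|Y(0)| \leq 2|x(t;u)|$ is correct.

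The genuine gap is the pivotal step you hand over to a citation: passing from the \emph{formal} identity \eqref{eq:11formal} in $\widehat{\mathcal{A}}(X)$ to the \emph{pointwise} flow identity $x(t;u) = \Phi_1^{\cZ{M}(t,f,u)}(0)$ on $\R^d$. This is not a consequence of \cref{LemB2-ideal}, contrary to what your closing sentence suggests. \cref{LemB2-ideal} only converts ``all brackets with $n_1 \geq M+1$ vanish at $0$'' into ``they vanish identically''; that does make the vector-field series $\cZ{\infty}(t,f,u)$ coincide with $\cZ{M}(t,f,u)$, but it does not by itself justify that the formal exponential factorization survives the substitution $X_i \mapsto f_i$ as an exact equality between the solution $x(t;u)$ of \eqref{syst} and the time-$1$ flow of the (still infinite) convergent vector-field series $\cZ{M}(t,f,u)$. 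That convergence-and-lifting argument is precisely the content of \cite[Corollary~163]{P1}, and you implicitly import it under the vague reference to \cite[Section~4]{P1}. So the route is internally consistent but moves, rather than removes, the reliance on \cite{P1}: to be self-contained you would need to reproduce the proof of \cite[Corollary~163]{P1}, which is exactly what the paper's proof avoids by citing it.
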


\begin{proof}
    By \cref{LemB2}, $f_1$ is semi-nilpotent of index $M+1$ with respect to $f_1$ on a neighborhood of $0$. Then, the third item of \cite[Corollary 163]{P1} gives the estimate.
\end{proof}

\subsection{Weak quadratic drift for semi-nilpotent systems}
\label{s:weak-semi-quad}

We now prove that, thanks to the modified representation formula \eqref{eq:x=ZM+O+Nilp}, one can prove \cref{Thm:Kawski_Wm,thm:S2-S3-intro} for $m = -1$ and $k \geq 2$ for such semi-nilpotent systems.

\begin{theorem} \label{Thm:Kawski_Wm_precis--1_nilp}
    Let $k \geq 2$. 
    Assume\footnote{This assumption is not restrictive as one can always work within the integral manifold generated by $f_0$ and $f_1$.\label{fn:LARC}} that $\mathcal{L}(f)(0)=\R^d$, $f_1$ is semi-nilpotent at zero with respect to $f_0$ and $k$ is the minimal value for which $f_{W_k}(0) \notin S_{\N^* \setminus\{2\}}(f)(0)$. 
    Then system \eqref{syst} has a weak drift along $f_{W_k}(0)$, parallel to $S_{\N^* \setminus\{2\}}(f)(0)$, as $(t,\|u\|_{W^{-1,\infty}}) \rightarrow 0$.
\end{theorem}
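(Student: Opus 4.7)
My plan is to follow the strategy of \cref{Thm:Kawski_Wm_precis-pos} (case $m\geq 0$), with two simplifications afforded by the semi-nilpotency assumption. First I would invoke \cref{LemB2} to promote semi-nilpotency at zero (combined with $\mathcal{L}(f)(0)=\R^d$) to genuine semi-nilpotency of some index $M+1\in\N^*$ on a neighborhood of $0$. \cref{thm:Key_2} would then yield the representation formula \eqref{eq:x=ZM+O+Nilp}. The crucial observation is that $\|u_1\|_{L^\infty}\leq \|u\|_{W^{-1,\infty}}$, so the remainder $O(\|u_1\|_{L^\infty}|x(t;u)|)$ is already of the form $\varepsilon |x(t;u)|$ required on the right-hand side of the weak drift estimate \eqref{eq:def-weak-drift}. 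In particular, no interpolation inequality of type \eqref{eq:quad-interpol} is needed---this is precisely what prevented the argument of \cref{s:Quad} from working when $m=-1$.

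Next I would mimic the dominant-part-of-the-logarithm argument of \cref{Lem:DPL}. Let $\mathbb{P}$ be a component along $f_{W_k}(0)$ parallel to $S_{\N^*\setminus\{2\}}(f)(0)$. Minimality of $k$ combined with stability of $S_{\N^*\setminus\{2\}}(X)$ under right bracketing with $X_0$ implies $f_{W_{j,\nu}}(0)\in S_{\N^*\setminus\{2\}}(f)(0)$ for all $j<k$, $\nu\in\N$. I would apply \cref{p:PZM-XI} with $\bb\gets W_k$ and $\mathcal{N}:=\Bs_{\N^*\setminus\{2\}}\cup\{W_{j,\nu};j<k,\nu\in\N\}$; the remaining coordinates of the second kind are $\xi_{W_{j,\nu}}$ with $j\geq k$ and $(j,\nu)\neq(k,0)$, bounded by $O(t\,\|u_k\|_{L^2}^2)=O(t\,\xi_{W_k}(t,u))$ via \eqref{bound-xiWjnu/j0} (with $j_0=k$, $p=1$), while the cross terms reduce to pairs $(M_{j_1},M_{j_2})\in\Bs_1\times\Bs_1$ handled exactly as in \cref{Lem:DPL}. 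This should produce
\[
\mathbb{P}\cZ{M}(t,f,u)(0) = \xi_{W_k}(t,u) + O\bigl(|(u_1,\dots,u_k)(t)|^2 + t\,\|u_k\|_{L^2}^2\bigr).
\]

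To close the loop, I would appeal to the vectorial relations of \cref{p:quad-vectors} (applied with $\pi(k)=\vartheta(k)=+\infty$), and substitute \eqref{eq:x=ZM+O+Nilp} for \cref{thm:Key_1} in the proof of \cref{p:quad-loop} to obtain the cleaner closed-loop estimate $|(u_1,\dots,u_k)(t)| = O(|x(t;u)| + t^{1/2}\|u_k\|_{L^2})$, again free of any $\|u_1\|_{L^{\vartheta+1}}^{\vartheta+1}$ term. Squaring this and using \cref{p:small-state} (so $|x(t;u)|^2 = O(\|u_1\|_{L^\infty}|x(t;u)|)$) yields $|(u_1,\dots,u_k)(t)|^2 = O(\|u_1\|_{L^\infty}|x(t;u)| + t\,\xi_{W_k}(t,u))$. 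Plugging back into the dominant expansion gives
\[
\mathbb{P} x(t;u) = \xi_{W_k}(t,u) + O\bigl((t+\|u_1\|_{L^\infty})\,\xi_{W_k}(t,u) + \|u_1\|_{L^\infty}|x(t;u)|\bigr),
\]
which is precisely \eqref{eq:def-weak-drift} since both $t$ and $\|u_1\|_{L^\infty}$ vanish as $(t,\|u\|_{W^{-1,\infty}})\to 0$. The only subtle point I anticipate is the algebraic bookkeeping in the application of \cref{p:PZM-XI}: one must check that every cross term contributing to $W_k$ in the CBHD expansion has $n_1(b_1)+\dots+n_1(b_q)=2$, forcing $q=2$ and $b_1,b_2\in\Bs_1$; but this is immediate from the $X_1$-degree count, so no genuinely new difficulty should arise.
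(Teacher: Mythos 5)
Your proposal is correct and follows essentially the same route as the paper: use \cref{LemB2} and \cref{thm:Key_2} to obtain the semi-nilpotent representation formula \eqref{eq:x=ZM+O+Nilp}, observe that the remainder $O(\|u_1\|_{L^\infty}|x(t;u)|)$ is already of the form required by the weak-drift definition (so no interpolation inequality is needed), reuse the dominant-part-of-the-logarithm argument of \cref{Lem:DPL} and the vectorial relations of \cref{p:quad-vectors} with $\pi(k)=\vartheta(k)=\infty$, derive the closed-loop estimate $|(u_1,\dotsc,u_k)(t)|=O(|x(t;u)|+t^{1/2}\|u_k\|_{L^2})$, square and absorb $|x|^2$ via \cref{p:small-state}, and conclude. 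The paper presents this proof more tersely (``the proof is the same as \ldots, merely replacing \eqref{eq:x=ZM+O} by \eqref{eq:x=ZM+O+Nilp} everywhere''), but your spelled-out version, including the identification of the crucial reason the $m=-1$ case works here (no $\|u_1\|_{L^{M+1}}^{M+1}$ term to absorb), matches its content.
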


The proof is the same as the one of \cref{Thm:Kawski_Wm_precis-pos}, merely replacing \eqref{eq:x=ZM+O} by \eqref{eq:x=ZM+O+Nilp} everywhere.
In particular, this yields the following closed-loop estimate.



\begin{lemma} \label{p:quad-loop-1}
    Under the assumptions of \cref{Thm:Kawski_Wm_precis--1_nilp},
    \begin{equation} \label{eq:quad-loop-1}
        |(u_1,\dotsc,u_k)(t)| = O \left( |x(t;u)| + t^{\frac 12} \|u_k\|_{L^2} \right).
    \end{equation}
\end{lemma}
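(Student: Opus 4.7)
The architecture mirrors that of \cref{p:quad-loop}, with a single but decisive substitution: we invoke the semi-nilpotent representation formula \eqref{eq:x=ZM+O+Nilp} of \cref{thm:Key_2} in lieu of the generic one \eqref{eq:x=ZM+O}. Precisely, let $N \in \N^*$ denote the semi-nilpotency index of $f_1$ at zero with respect to $f_0$, so that, by \cref{thm:Key_2},
\begin{equation} \label{eq:plan-repform-loop}
    x(t;u) = \cZ{N-1}(t,f,u)(0) + O\left(\|u_1\|_{L^\infty} |x(t;u)|\right).
\end{equation}
The benefit over \eqref{eq:x=ZM+O} is the disappearance of the $\|u_1\|_{L^{M+1}}^{M+1}$ remainder, which is precisely the term that could not be absorbed by interpolation when $m = -1$, and which motivated the restriction to semi-nilpotent systems in \cref{Thm:Kawski_Wm_precis--1_nilp}.

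Next, one applies \cref{p:quad-vectors} with $\pi(k) = +\infty$, hence $\vartheta(k) = +\infty$: both statements fit our hypothesis since $f_{W_k}(0) \notin S_{\N^* \setminus \{2\}}(f)(0)$, and they provide simultaneously the linear independence of $f_{M_0}(0), \dotsc, f_{M_{k-1}}(0)$ and the triviality of their span's intersection with $S_{\intset{2, N-1}}(f)(0) \subset S_{\intset{2, \infty}}(f)(0)$. Consequently, for each $i \in \intset{0, k-1}$ one can fix a component $\mathbb{P}_i$ along $f_{M_i}(0)$, parallel to $\mathcal{N}(f)(0)$, where
\begin{equation}
    \mathcal{N} := \left( \{ M_0, \dotsc, M_{k-1} \} \setminus \{ M_i \} \right) \cup \Bs_{\intset{2, N-1}}.
\end{equation}

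The plan is then to apply \cref{p:PZM-XI} with $M \gets N - 1$, $L \gets k + 1$, $\bb \gets M_i$ and $\mathcal{N}$ as above. The only coordinates of the second kind left to estimate concern $b = M_j$ with $j \geq k$, for which \eqref{bound-xiMj/J0} with $(p, j_0) \gets (2, k)$ yields \eqref{eq:xib-XI-L} with $\sigma = k+1$ and $\Xi(t,u) := t^{1/2} \|u_k\|_{L^2}$; and there are no cross terms to handle, since any non-trivial bracket of $q \geq 2$ elements of $\Bs \setminus \{X_0\}$ satisfies $n_1 \geq 2$, so its support misses $\Bs_{\intset{1, N-1}} \setminus \mathcal{N}$, which sits entirely in $\Bs_1$. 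Therefore \eqref{eq:PZM-xibb-OXi} becomes
\begin{equation}
    \mathbb{P}_i \cZ{N-1}(t,f,u)(0) = u_{i+1}(t) + O\left( t^{1/2} \|u_k\|_{L^2} \right),
\end{equation}
and combining this with \eqref{eq:plan-repform-loop}, together with $\|u_1\|_{L^\infty} = \|u\|_{W^{-1,\infty}} = O(1)$, yields $|u_{i+1}(t)| = O\left( |x(t;u)| + t^{1/2} \|u_k\|_{L^2} \right)$. Summing over $i \in \intset{0, k-1}$ then gives \eqref{eq:quad-loop-1}. No step presents a genuine obstacle: the difficulty peculiar to $m = -1$ has been displaced entirely to \cref{thm:Key_2}, which is where the semi-nilpotency hypothesis is consumed.
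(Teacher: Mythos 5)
Your proposal is correct and follows essentially the same approach as the paper's own (very terse) proof: replace the generic representation formula \eqref{eq:x=ZM+O} by the semi-nilpotent one \eqref{eq:x=ZM+O+Nilp}, set $M$ to one less than the semi-nilpotency index, and rerun the argument of \cref{p:quad-loop} with $\pi(k) = \vartheta(k) = \infty$ so that the vectorial relations of \cref{p:quad-vectors} cover the whole of $\Bs_{\intset{2,N-1}}$. Your unpacking of the black-box application of \cref{p:PZM-XI} (in particular why no cross terms arise) is a faithful expansion of what the paper leaves implicit.
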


\begin{proof}
    The proof is performed along the same lines as in \cref{p:quad-loop}.
    Instead of $M = \vartheta(k)$, one uses $M$ such that $f_1$ is semi-nilpotent of index $(M+1)$ with respect to $f_0$.
    One replaces \eqref{eq:quad-CL-1} by \eqref{eq:x=ZM+O+Nilp} and concludes as previously. Note that the vectorial relations of \cref{p:quad-vectors} hold with $\pi(k)=\mathcal{V}(k)=\infty$.
\end{proof}

\begin{proof}[Proof of \cref{Thm:Kawski_Wm_precis--1_nilp}]
    Let $\mathbb{P}$ be a component along $f_{W_k}(0)$ parallel to 
    $S_{\N^* \setminus\{2\}}(f)(0)$.
    Let $M\in\N^*$ be such that $f_1$ is semi-nilpotent of index $(M+1)$ with respect to $f_0$ (see \cref{Def:semi-nilpotent}).
    By \cref{thm:Key_2},
    \begin{equation} \label{x=Z+O_nilpotent}
        x(t;u) = \cZ{M}(t,f,u)(0) + O\left( \|u_1\|_{L^\infty} |x(t;u)| \right),
    \end{equation}
    where, by \eqref{eq:quad-zpi} and \eqref{xi_Wjnu},
    \begin{equation} \label{PZM_m=-1}
        \mathbb{P} \cZ{M}(t,f,u)(0)
        = \frac{1}{2} \int_0^t u_k^2 + O\left( |(u_1,\dotsc,u_k)(t)|^2 + t \|u_k\|_{L^2}^2 \right).
    \end{equation}
    Gathering \eqref{x=Z+O_nilpotent} and \eqref{eq:quad-loop-1} yields
    \begin{equation} \label{eq:Px-Wk-W-1}
        \mathbb{P} x(t;u) = \frac 12 \int_0^t u_k^2
        + O \left( t \|u_k\|_{L^2}^2 + \|u_1\|_{L^\infty} |x(t;u)| \right),
    \end{equation}
    proving the presence of the weak drift.
\end{proof}

\begin{theorem} \label{Thm:Kawski_Wm_precis--1_nilp_crible}
    Let $k \geq 2$. 
    Assume\cref{fn:LARC} that $\mathcal{L}(f)(0)=\R^d$, $f_1$ is semi-nilpotent at zero with respect to $f_0$ and $k$ is the minimal value for which 
    $f_{W_k}(0) \notin (\Bs_1 \cup \mathcal{P}_k \cup \Bs_{\llbracket 4 , \infty \llbracket} )(f)(0)$. 
    Then system \eqref{syst} has a weak drift along $f_{W_k}(0)$, parallel to $(\Bs_1 \cup \mathcal{P}_k \cup \Bs_{\llbracket 4 , \infty \llbracket} )(f)(0)$, as $(t,\|u\|_{W^{-1,\infty}}) \rightarrow 0$.
\end{theorem}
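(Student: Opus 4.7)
The strategy is to combine the refinement argument of \cref{Thm:Kawski_Wm_precis-pos_crible} (which shrinks the list of cubic brackets that may compensate $W_k$ down to $\mathcal{P}_k$) with the semi-nilpotent machinery used to prove \cref{Thm:Kawski_Wm_precis--1_nilp} (which replaces the remainder $\|u\|_{W^{-1,M+1}}^{M+1}$ in \eqref{eq:x=ZM+O} by the much tamer $\|u_1\|_{L^\infty}|x(t;u)|$ in \eqref{eq:x=ZM+O+Nilp}). Concretely, I will pick $M \in \N^*$ so that $f_1$ is semi-nilpotent at zero of index $M+1$ with respect to $f_0$, apply \cref{thm:Key_2} to get
\begin{equation}
    x(t;u) = \cZ{M}(t,f,u)(0) + O(\|u_1\|_{L^\infty}|x(t;u)|),
\end{equation}
and then run the refined quadratic analysis up to order $M$, using the minimality of $k$ and the stability of $\Bs_1 \cup \mathcal{P}_k \cup \Bs_{\llbracket 4, \infty \llbracket}$ under right bracketing with $X_0$ to conclude that the parallel subspace equals $\mathcal{N}(f)(0)$ with $\mathcal{N} := \Bs_1 \cup \mathcal{W}_k \cup \mathcal{P}_k \cup \Bs_{\intset{4,M}}$ and $\mathcal{W}_k := \{W_{j,\nu}; j < k, \nu \in \N\}$.

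The dominant-part estimate follows by applying \cref{p:PZM-XI} with $\bb \gets W_k$ and the above $\mathcal{N}$, exactly as in \cref{lem:dominant-S2-S3} but with $\pi(k,m)$ replaced by $M$; this yields
\begin{equation}
    \mathbb{P} \cZ{M}(t,f,u)(0) = \xi_{W_k}(t,u) + O\!\left(|(u_1,\dotsc,u_k)(t)|^2 + t\|u_k\|_{L^2}^2\right).
\end{equation}
The vectorial relations of \cref{p:quad-vectors} carry over verbatim with $\pi(k) = +\infty$ (and $\vartheta(k) = +\infty$), as in the proof of \cref{Thm:Kawski_Wm_precis--1_nilp}, ensuring $f_{M_0}(0),\dots,f_{M_{k-1}}(0)$ are linearly independent and their span intersects $(\mathcal{W}_k + S_{\intset{3,\infty}})(f)(0)$ only at $\{0\}$. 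The closed-loop estimate then becomes
\begin{equation}
    |(u_1,\dotsc,u_k)(t)| = O\!\left(|x(t;u)| + t^{1/2}\|u_k\|_{L^2}\right),
\end{equation}
obtained by replacing \eqref{eq:quad-CL-1} with \eqref{eq:x=ZM+O+Nilp} in the argument of \cref{p:quad-loop}. Plugging this into the dominant-part estimate and combining with \eqref{eq:x=ZM+O+Nilp} gives
\begin{equation}
    \mathbb{P} x(t;u) = \tfrac12 \int_0^t u_k^2 + O\!\left(t\|u_k\|_{L^2}^2 + \|u_1\|_{L^\infty}|x(t;u)|\right),
\end{equation}
which is a weak drift in the sense of \cref{Def:Weak-drift}.

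The main obstacle I anticipate is verifying that the refined cubic-screening arguments of \cref{lem:dominant-S2-S3} — particularly the algebraic lemma \cref{p:1+2} which ensures that cross terms $[M_{k-1}, W_{j,\nu}]$ and $[M_{k-1}, W_j]$ decompose inside $\mathcal{P}_k$ rather than outside it — still suffice when we need uniform estimates over all of $\Bs_{\intset{1,M}}$ for an arbitrarily large $M$ dictated by the semi-nilpotency index. The key point is that \cref{p:1+2} is purely algebraic and independent of $M$, and that the analytic bounds in \cref{p:xi-bounds} together with the asymmetric growth controlled by \cref{thm:bracket.analytic} make \cref{p:PZM-XI} applicable uniformly: the infinite sums in \cref{p:sum-xi-XI} and \cref{p:sum-cross-XI} converge regardless of $M$ provided the per-bracket inequalities are of the proper form, which is exactly what the case-by-case analysis in \cref{lem:dominant-S2-S3} produces with the constants depending only on $k$ and $\opnorm{f}_r$.
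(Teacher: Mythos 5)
Your proposal is correct and follows essentially the same route as the paper: replace the truncated representation formula \eqref{eq:x=ZM+O} with the semi-nilpotent one \eqref{eq:x=ZM+O+Nilp} at index $M$, then rerun the dominant-part, vectorial-relation, and closed-loop arguments of \cref{Thm:Kawski_Wm_precis-pos_crible} with $\pi(k)=\vartheta(k)=\infty$, exactly as the paper does (the paper simply states this combination without writing out the details you supply). One cosmetic note: the vectorial relation you invoke (span of the $f_{M_j}(0)$ meeting $(\mathcal{W}_k+S_{\llbracket 3,\infty\llbracket})(f)(0)$ trivially) is the refined lemma of \cref{s:S2-S3}, not \cref{p:quad-vectors} itself, but the argument is unaffected.
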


\begin{proof}
    The proof follows the same steps as the proof of \cref{Thm:Kawski_Wm_precis-pos_crible}, the truncated formula \eqref{eq:x=ZM+O} being replaced by \eqref{eq:x=ZM+O+Nilp}.
    Let $\mathbb{P}$ be a component along $f_{W_k}(0)$ parallel to $(\Bs_1 \cup \mathcal{P}_k \cup \Bs_{\llbracket 4 , \infty \llbracket} )(f)(0)$.
    Let $M\in\N^*$ be such that $f_1$ is semi-nilpotent of index $(M+1)$ with respect to $f_0$ (see \cref{Def:semi-nilpotent}).
    The dominant part of the logarithm satisfies \eqref{PZM_m=-1}.
    The proof of the vectorial relations in \cref{s:S2-S3} holds with $\pi(k)=\mathcal{V}(k)=\infty$, which proves the closed-loop estimate \eqref{eq:quad-loop-1}. 
    Thus, the proof ends as above.
\end{proof}

\subsection{Embedded semi-nilpotent systems}
\label{subsec:embedded_seminilp}

In this section, we explain how to extract from a possibly rich large system, smaller parts of which the controllability may be easier to analyze.
Our motivation is to extract from a large system a semi-nilpotent one to which we can apply the results of the previous subsection.
We apply this idea in the next subsection.

\begin{definition} \label{def:embedded}
    Let $f_0, f_1$ be analytic vector fields in a neighborhood of $0 \in \R^d$.
    Let $r \in \intset{0,d}$ and $g_0, g_1$ be analytic vector fields in a neighborhood of $0 \in \R^r$.
    We say that the smaller system $\dot{y} = g_0(y) + u g_1(y)$ is \emph{embedded} in the larger system $\dot{x} = f_0(x) + u f_1(x)$ when there exist an open neighborhood $\Omega_x$ (resp. $\Omega_y$) of $0$ in $\R^d$ (resp. $\R^r$) and an analytic map $\lambda:\Omega_x \rightarrow \Omega_y$ with $\lambda(0)=0$ such that
    \begin{equation} \label{lambda-related}
        \forall j \in \{0,1\}, \forall x \in \Omega_x, \qquad D\lambda(x) f_j(x)=g_j(\lambda(x)).
    \end{equation}
    In this case, their evaluated Lie brackets satisfy
    \begin{equation} \label{gb=dlfb}
        \forall b \in \Br(X), \qquad g_b(0)=D\lambda(0) f_b(0)
    \end{equation}
    and for every $T>0$ and $u \in L^1(0,T)$ such that $x([0,T];u) \subset \Omega_x$ then $y(t;u)=\lambda(x(t;u))$ on $[0,T]$.
\end{definition}

Equality \eqref{lambda-related} corresponds to the notion of $\lambda$-related fields (see \cite[Page 182]{lee2012smooth}) and implies that, for every $b\in \Br(X)$, $g_b$ and $f_b$ are also $\lambda$-related (see \cite[Proposition 8.30]{lee2012smooth}), which entails~\eqref{gb=dlfb}. 
The equality $y = \lambda(x)$ along trajectories follows from the chain rule and \eqref{lambda-related}.

Moreover, by Krener's result \cite[Theorem 1]{zbMATH03385496}, given $g_0, g_1$, the existence of $(\Omega_x,\Omega_y,\lambda)$ is equivalent to the existence of a linear map $L : \R^d \to \R^r$ such that, for all $b \in \Br(X)$, $g_b(0) = L f_b(0)$ (in which case, one has $L = D\lambda(0)$).

We now proceed in the converse direction, and derive a sufficient condition on such a linear map $L$ to guarantee the existence of $g_0, g_1$ satisfying $g_b(0) = L f_b(0)$ for all $b \in \Br(X)$.

\begin{proposition} \label{PropB5}
    Let $f_0, f_1$ be analytic vector fields in a neighborhood of $0 \in \R^d$.
    Let $\mathcal{I}$ be an ideal of $\mathcal{L}(X)$ and $r := \codim ( \mathcal{I}(f)(0) )$.
    Then there exists an embedded system $(g_0,g_1,\lambda)$ set on~$\R^r$ such that $\ker D\lambda(0) = \mathcal{I}(f)(0)$.
\end{proposition}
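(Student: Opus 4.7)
The plan is to apply the Krener-type correspondence stated just before the proposition: given $g_0, g_1$, the existence of an embedded triple $(\Omega_x, \Omega_y, \lambda)$ with $D\lambda(0) = L$ is equivalent to the existence of a linear map $L : \R^d \to \R^r$ such that $g_b(0) = L f_b(0)$ for every $b \in \Br(X)$. Taking $L := \pi$ to be the canonical projection $\R^d \to \R^d / \mathcal{I}(f)(0) \simeq \R^r$, whose kernel is exactly $\mathcal{I}(f)(0)$, it therefore suffices to construct analytic vector fields $g_0, g_1$ on a neighborhood of $0 \in \R^r$ realizing the linear functional $\psi : \mathcal{L}(X) \to \R^r$ defined by $\psi(B) := \pi f_B(0)$. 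By construction $\psi$ vanishes on $\mathcal{I}$, factors through $\mathcal{L}(X)/\mathcal{I}$, and inherits the analytic growth bound $|\psi(\eval(b))| \lesssim C^{|b|} |b|!$ from \cref{thm:bracket.analytic}.

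I will construct $g_0, g_1$ by determining their Taylor coefficients $a_j^\alpha \in \R^r$, for $g_j(y) = \sum_{\alpha \in \N^r} a_j^\alpha y^\alpha/\alpha!$, inductively on $|\alpha|$. The base case sets $a_j^0 := \psi(X_j)$. At order $n \geq 2$, the relations $g_b(0) = \psi(\eval(b))$ for all $b \in \Br(X)$ with $|b| = n$ translate into a finite linear system in the new coefficients $\{a_j^\alpha : |\alpha| = n-1\}$, with affine right-hand side given by polynomial expressions in the previously determined lower-order coefficients. This system is consistent because $\psi$ is well-defined on $\mathcal{L}(X)$, so all relations among iterated brackets (antisymmetry and Jacobi identity) are automatically respected; solvability at each order is thus a formal realization statement, asserting that any linear functional on $\mathcal{L}(X)$ can be matched by Lie-bracket evaluations at $0$ of a pair of formal vector fields. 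Once $g_0, g_1$ are in hand, the Krener correspondence delivers the intertwining $\lambda$ with $D\lambda(0) = \pi$, yielding $\ker D\lambda(0) = \mathcal{I}(f)(0)$.

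The main technical obstacle is the convergence of this formal construction. The inductive algorithm produces formal power series immediately, but verifying analyticity requires a Cauchy-type estimate $|a_j^\alpha| \lesssim {C'}^{|\alpha|} |\alpha|!$ on the resulting coefficients. The plan is to obtain this by selecting, at each order, solutions of the linear system of minimal norm, and combining the input bound on $\psi$ with the asymmetric geometric growth of structure constants in Hall bases \cite[Theorem 1.9]{A1} (playing here the same role as in the absolute convergence argument behind \cref{thm:Key_1}). This propagates factorial control of the Taylor coefficients from one order to the next, ensuring that $g_0, g_1$ are analytic on some neighborhood $\Omega_y$ of $0 \in \R^r$, which completes the construction of the embedded system.
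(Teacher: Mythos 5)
Your approach diverges fundamentally from the paper's, and it has a genuine gap at its core step. The paper does not construct $g_0, g_1$ abstractly and then invoke Krener; it builds the map $\lambda$ directly from the flows of $f_0, f_1$, via $F(s) := e^{s_d f_{b_d}} \dotsb e^{s_1 f_{b_1}}(0)$ with a basis $(b_i)$ adapted to $\mathcal{I}(f)(0)$ and $\lambda := P F^{-1}$, and then \emph{reads off} $g_j(\lambda(x)) := D\lambda(x) f_j(x)$, verifying (using \cref{Lem90.3} and the ideal property of $\mathcal{I}$) that this expression depends only on $\lambda(x)$. The vector fields $g_0, g_1$ are thus obtained as push-forwards of existing vector fields, and their analyticity and compatibility with $V_f$ are automatic; no realization problem is ever posed.

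The gap in your proposal is in the order-by-order solvability claim. You assert the linear system at each order is consistent ``because $\psi$ is well-defined on $\mathcal{L}(X)$, so all relations among iterated brackets (antisymmetry and Jacobi identity) are automatically respected,'' and you state this as ``a formal realization statement, asserting that any linear functional on $\mathcal{L}(X)$ can be matched by Lie-bracket evaluations at $0$ of a pair of formal vector fields.'' But that general statement is false: the kernel of the map $b \mapsto g_b(0)$ is not an arbitrary subspace (it is not even an ideal in general), and a linear functional $\psi : \mathcal{L}(X) \to \R^r$ need not be realizable even formally. Compatibility with antisymmetry and Jacobi is a necessary condition, not a sufficient one; the equations $g_b(0) = \psi(\eval(b))$ impose polynomial constraints on the Taylor coefficients of $g_0, g_1$ that may well be unsolvable. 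The paper's own remark after \cref{PropB5} and its pointer to realization theory (Fliess, Jakubczyk, Reutenauer) indicate that this is a nontrivial matter. To make your plan work, you would need to exploit both the ideal property of $\mathcal{I}$ and the fact that $\psi = \pi \circ V_f$ already comes from a realization on $\R^d$ — which is exactly what the paper's flow-based construction does, but which your abstract Taylor-coefficient induction neither uses nor derives. The analyticity estimate you flag is a secondary concern; even formal solvability is not established by your argument.
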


\begin{remark} \label{embedded_nilpotent_syst}
    If $V_g$ and $V_f$ denote the following linear maps
    \begin{equation}
        V_g:b \in \mathcal{L}(X) \mapsto g_b(0) \in \R^r,
        \qquad
        V_f:b\in\mathcal{L}(X) \mapsto f_b(0) \in \R^d
    \end{equation}
    then $\ker(V_g)=\ker(V_f)+\mathcal{I}$.
    In particular, if $\mathcal{I}=S_{\llbracket n , \infty \llbracket}(X)$ for some $n \in \N^*$ then $g_1$ is semi-nilpotent at zero with respect to $g_0$.
\end{remark}

\begin{remark}
    \cref{PropB5} only provides a sufficient condition on $\ker D\lambda(0)$ for the existence of an embedded system, which is however not necessary as illustrated by the following example.
    
    Consider on $\R^3$ the system $\dot{x} = (u, x_1, x_1^2)$ and on $\R^2$ the system $\dot{y} = (u, y_1^2)$.
    One has $y(t,u) = \lambda(x(t;u))$ with $\lambda(x) = (x_1, x_3)$.
    Hence $\ker D\lambda(0) = \R e_2$.
    By contradiction, consider $\mathcal{I}$  an ideal of $\mathcal{L}(X)$ such that $\ker D\lambda(0) = \mathcal{I}(f)(0)$.
    Take $b \in \mathcal{I}$ such that $f_b(0) = e_2$.
    Expanding $b$ on $\Bs$, $b = \alpha_0 X_0 + \alpha_1 X_1 + \beta [X_1, X_0] + \gamma [X_1, [X_1, X_0]] + \delta [[X_1, X_0],X_0] + B$ where $B$ is a sum of brackets of length at least 4.
    Since $f_b(0) = e_2$, $\alpha_1 = \gamma = 0$ and $\beta = 1$.
    Since $\mathcal{I}$ is an ideal, $[X_1, b] \in \mathcal{I}$ so $f_{[X_1,b]}(0)  \in \ker D\lambda(0)$.
    But $f_{[X_1,b]}(0) = \alpha_0 e_2 + e_3$ so $D \lambda (0) f_{[X_1,b]}(0) = e_3 \neq 0$.

    It could be interesting to derive a necessary and sufficient condition for the existence of an embedded system.
    Such a result might be linked with the theory of realization of control systems (see e.g.\ \cite{fliess1983realisation,Jakubczyk2000,reutenauer1986local}).
\end{remark}

Our proof of \cref{PropB5} is inspired by the one of \cite[Theorem 1]{zbMATH03385496}. 
It relies on the following classical expansion, proved for instance in \cite[Lemma 90, item 3]{P1}.

\begin{lemma} \label{Lem90.3}
Let $\delta>0$,
$f_0, f_1 \in C^{\omega}(B_{2\delta},\R^d)$ and
$\Phi_0(t,p) := e^{tf_0}(p)$ the flow associated with $f_0$.
If $t$ is small enough then, for each $p \in B_\delta$,
    \begin{equation}\label{tool_series}
        {\left(\partial_p \Phi_0 (t,p)\right)}^{-1} f_1\left( \Phi_0(t,p) \right) 
        = \sum_{k=0}^{+\infty} \frac{t^k}{k!} \ad_{f_0}^k(f_1)(p).
    \end{equation}
\end{lemma}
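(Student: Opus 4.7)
The plan is to introduce the ``pullback'' vector field
\begin{equation*}
\psi(t,p) := \bigl(\partial_p\Phi_0(t,p)\bigr)^{-1} f_1\bigl(\Phi_0(t,p)\bigr)
\end{equation*}
and to show that, for $t$ small enough, $\psi(\cdot,p)$ is real-analytic in $t$ with Taylor coefficients exactly $\frac{1}{k!}\ad_{f_0}^k(f_1)(p)$. The identity \eqref{tool_series} then follows from the equality of an analytic function with its Taylor series on the disk of convergence.

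First, I would compute $\partial_t\psi$. Using $\partial_t\Phi_0(t,p)=f_0(\Phi_0(t,p))$ and the variational equation $\partial_t\partial_p\Phi_0(t,p) = Df_0(\Phi_0(t,p))\,\partial_p\Phi_0(t,p)$, one obtains $\partial_t (\partial_p\Phi_0)^{-1} = -(\partial_p\Phi_0)^{-1} Df_0(\Phi_0)$. A short chain-rule computation then yields
\begin{equation*}
\partial_t\psi(t,p) = \bigl(\partial_p\Phi_0(t,p)\bigr)^{-1}[f_0,f_1]\bigl(\Phi_0(t,p)\bigr),
\end{equation*}
where the sign works out thanks to the convention $[f,g]=(Dg)f-(Df)g$. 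The same identity applied with $f_1$ replaced by $\ad_{f_0}^k(f_1)$ gives, by induction on $k$,
\begin{equation*}
\partial_t^k\psi(t,p)=\bigl(\partial_p\Phi_0(t,p)\bigr)^{-1}\ad_{f_0}^k(f_1)\bigl(\Phi_0(t,p)\bigr),
\end{equation*}
so that, since $\Phi_0(0,p)=p$ and $\partial_p\Phi_0(0,p)=\mathrm{Id}$, one has $\partial_t^k\psi(0,p)=\ad_{f_0}^k(f_1)(p)$.

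Second, I would control the size of these coefficients uniformly in $p\in B_\delta$. Applying \cref{thm:bracket.analytic} to the bracket $b=\ad_{X_0}^k(X_1)$ of length $k+1$ gives, for some $r'\in(0,r)$ and $C,\rho>0$ independent of $k$,
\begin{equation*}
\sup_{p\in B_\delta}\bigl|\ad_{f_0}^k(f_1)(p)\bigr|\leq C\, k!\,\rho^{-k}.
\end{equation*}
Hence the series $\sum_k \frac{t^k}{k!}\ad_{f_0}^k(f_1)(p)$ converges absolutely and uniformly on $B_\delta$ for $|t|<\rho$, and defines a function which is jointly analytic in $(t,p)$.

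Third, I would conclude by matching Taylor series. Since $f_0$ and $f_1$ are analytic, the flow $\Phi_0$ and its Jacobian $\partial_p\Phi_0$ are jointly analytic on a neighborhood of $\{0\}\times B_\delta$ (classical Cauchy--Kovalevskaya or majorant arguments), and so is $\psi$; shrinking $t$ if necessary, $\psi(\cdot,p)$ is given by its convergent Taylor series, whose coefficients we have identified in the first step. This yields \eqref{tool_series}. The main obstacle I anticipate is the uniform analyticity bound on $\psi$: one must ensure a common radius of convergence in $t$ valid for all $p\in B_\delta$, which forces a small reduction of the analytic radii ($r\to r'$) and a small positive $t$; this is precisely the kind of bookkeeping enabled by the analytic norms $\opnorm{\cdot}_r$ and is the reason the statement only claims validity for ``$t$ small enough''.
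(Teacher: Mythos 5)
Your argument is correct. Note that the paper does not prove this lemma in-text at all: it simply cites \cite[Lemma 90, item 3]{P1}, so there is no internal proof to compare against. Your route is the standard one and all the steps check out: the chain-rule/variational-equation computation does give $\partial_t\psi=(\partial_p\Phi_0)^{-1}[f_0,f_1](\Phi_0)$ with the paper's convention $[f,g]=(Dg)f-(Df)g$, and applying the same identity with $f_1$ replaced by $\ad_{f_0}^k(f_1)$ gives the induction, hence $\partial_t^k\psi(0,p)=\ad_{f_0}^k(f_1)(p)$. The growth estimate from \cref{thm:bracket.analytic} applied to $b=\ad_{X_0}^k(X_1)$ (length $k+1$) indeed yields a bound of the form $Ck!\rho^{-k}$ on $\sup_{B_\delta}|\ad_{f_0}^k(f_1)|$, so the right-hand side of \eqref{tool_series} converges uniformly for $|t|<\rho$; and joint analyticity of $(t,p)\mapsto\Phi_0(t,p)$ and of $(\partial_p\Phi_0)^{-1}$ (invertible for small $t$) gives a radius of analyticity in $t$ uniform over the compact set $\overline{B_\delta}$, so $\psi(\cdot,p)$ equals its Taylor series there. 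The only bookkeeping you leave implicit is that $\Phi_0(t,p)$ must remain in $B_{2\delta}$ (the domain of $f_1$ and of the analytic estimates) for $p\in B_\delta$ and $t$ small — this is exactly why the hypotheses carry the two radii $\delta$ and $2\delta$ — but your "for $t$ small enough" covers it.
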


\begin{proof}[Proof of \cref{PropB5}]
    By the assumptions on $f_0, f_1$ and $\mathcal{I}$ there exist $b_1, \dotsc b_d \in \mathcal{L}(X)$ such that the vectors $f_{b_i}(0)$ form a basis of $\R^d$ for $1 \leq i \leq d$ and a basis of $\mathcal{I}(f)(0)$ with $b_i \in \mathcal{I}$ for $r < i \leq d$. 
    (One can assume that $1 \leq r < d$, since $\lambda = 0$ works when $r = 0$ and $\lambda = \operatorname{Id}$ works when $r = d$.)

    For $s=(s_1,\dots,s_d) \in \R^d$ small enough, let 
	\begin{equation}
		F(s) := e^{s_d f_{b_d}} \dotsb e^{s_1 f_{b_1}}(0).
	\end{equation}
    Since $f_0, f_1$ are analytic near $0$, $F$ is an analytic map on a neighborhood of $0$. Moreover, since the $f_{b_i}(0)$ are a basis of $\R^d$, $F$ is a local diffeomorphism of $\R^d$ around $0$. 
    Let $P : s=(s_1,\dots,s_d) \in \R^d \mapsto (s_1,\dots,s_r,0,\dots,0)\in\R^d$.
    Then $\lambda(x) := P F^{-1}(x)$ defines an analytic map $\lambda$ on a neighborhood of $0$ in $\R^d$, taking values in $\R^r \times \{0\} \subset \R^d$, such that $\lambda(0)=0$.

    In the rest of this proof, we will implicitly consider $s, x\in\R^d$ small enough for the formulas/statements to hold.
    We write $s' := (s_1, \dotsc, s_r)$.
    
    \step{We introduce vectors $h_i(s) \in \R^d$ and a linear map $Q(s') :\R^d \rightarrow \R^d$ such that:}
    \begin{equation} \label{Q(s')hi_utile}
    \forall 1 \leq i \leq d, \quad Q(s') h_i(s)=e_i \delta_{i \leq r}
    \qquad 
    \text{ and }
    \qquad
    \ker(Q(s'))=\mathcal{I}(f)(0).
    \end{equation}
    For $1 \leq i \leq d$, we introduce 
    the flow $\Phi_i$ associated with $f_{b_i}$, i.e. $\Phi_i(t,p)=e^{t f_{b_i}}(p)$,
    the linear map $L_i(s):\R^d \rightarrow \R^d$  and the vector $h_i(s) \in \R^d$
    defined by
    \begin{equation}
    L_i(s) := \partial_p \Phi_i (s_i, e^{s_{i-1} f_{b_{i-1}}} \dots e^{s_1 f_{b_1}} (0) )\,,
    \end{equation}
    \begin{equation}
    		h_i(s) := L_1(s)^{-1} \dotsb L_i(s)^{-1} f_{b_i} \left( e^{s_i f_{b_i}} \dotsb e^{s_1 f_{b_1}}(0) \right).
    	\end{equation}
    These vectors are analytic functions of $s$ such that 
    $h_i(0) = f_{b_i}(0)$. 
    For $1 \leq i \leq r$, then only depend on $s'=(s_1,\dots,s_r)$, thus may be denoted $h_i(s')$. 
    
    The family $(h_i(s'))_{1\leq i \leq r}$ is linearly independent and 
    in direct sum with the vector space $\mathcal{I}(f)(0)=\vect \{ h_i(0) ; r < i \leq d \}$ thus one may consider the linear map
    $Q(s'):\R^d \rightarrow \R^d$ such that 
    \begin{equation} \label{Def:Q(s')}
    Q(s') h_i(s')=e_i \text{  for  } 1 \leq i \leq r \quad \text{and} \quad \ker(Q(s'))=\mathcal{I}(f)(0).
    \end{equation}
    To end Step 1, it remains to prove that, for every $r < i \leq d$, $h_i(s) \in \ker(Q(s'))$. By \cref{Lem90.3}, 
    \begin{equation}
        h_i(s)= \sum_{k_1, \dotsc, k_i = 0}^{+\infty} \frac{s_1^{k_1} \dotsb s_i^{k_i}}{k_1! \dotsb k_i!} \ad^{k_1}_{f_{b_1}} \dotsb \ad^{k_i}_{f_{b_i}} (f_{b_i}) (0).
    \end{equation}
    For $r<i\leq d$, $b_i \in \mathcal{I}$ and $\mathcal{I}$ is an ideal of $\mathcal{L}(X)$ thus, for every $k \in \N^i$,
    $b:=\ad^{k_1}_{b_1} \dotsb \ad^{k_i}_{b_i} (b_i) \in \mathcal{I}$ and $f_{b}(0) \in \mathcal{I}(f)(0)=\ker(Q(s'))$.
    
    \step{We prove that
    $D\lambda(x)=Q(s') L_1(s)^{-1} \dots L_d(s)^{-1}$.}
    Since both sides are linear maps on $\R^d$, it suffices to check that they coincide on a basis of $\R^d$.
    Since the vectors $\frac{\partial F}{\partial s_i}(0)=f_{b_i}(0)$ for $1 \leq i \leq d$ form a basis of $\R^d$, then so do the vectors $\frac{\partial F}{\partial s_i}(s)$ for $s$ small enough.
    Using successively 
    the definitions of $F$ and $h_i$, 
    Step 1, 
    the definition of $P$ 
    and the chain rule in $\lambda$, one obtains
    \begin{equation}
        Q(s') L_1(s)^{-1} \dots L_d(s)^{-1} \frac{\partial F}{\partial s_i}(s)  = Q(s') h_i(s) = e_i \delta_{i \leq r} = \frac{\partial P}{\partial s_i}(s) = D \lambda (F(s)) \frac{\partial F}{\partial s_i}(s).
    \end{equation}
    This ends Step 2, which, together with Step 1 proves $D\lambda(0)=Q(0)$ and $\ker(D\lambda(0))=\mathcal{I}(f)(0)$.

    \step{We prove that, for $j \in \{0,1\}$, $D\lambda(x)f_j(x)$ depends only on $\lambda(x)$ or equivalently on $s'=(s_1,\dots,s_r)$}
    Using Step 2 and \cref{Lem90.3}
    \begin{equation}
    \begin{split}
    D\lambda(x) f_j(x) 
    & 
    = Q(s') L_1(s)^{-1} \dots L_d(s)^{-1}f_j(e^{s_d} f_{b_d} \dots e^{s_1 f_{b_1}}(0))
    \\ & 
    = Q(s') \sum_{k_1, \dotsc, k_d = 0}^{+\infty} \frac{s_1^{k_1} \dotsb s_d^{k_d}}{k_1! \dotsb k_d!} \ad^{k_1}_{f_{b_1}} \dotsb \ad^{k_d}_{f_{b_d}} (f_j) (0)
    \\ & 
    = \sum_{k_1, \dotsc, k_r = 0}^{+\infty} \frac{s_1^{k_1} \dotsb s_r^{k_r}}{k_1! \dotsb k_r!} 
    Q(s') f_{\ad^{k_1}_{b_1} \dotsb \ad^{k_r}_{b_r} (X_j)} (0) 
    =:g_j(\lambda(x)).
    \end{split}
    \end{equation}
    Indeed any term involving $k_i>0$ for $r<i\leq d$ vanishes because $b_i \in \mathcal{I}$ thus 
    $\ad^{k_1}_{b_1} \dotsb \ad^{k_d}_{b_d} (X_j) \in \mathcal{I}$ by the ideal property, and $f_{\ad^{k_1}_{b_1} \dotsb \ad^{k_d}_{b_d} (X_j)}(0) \in \mathcal{I}(f)(0)=\ker (Q(s'))$.
\end{proof}

\subsection{Drift: from semi-nilpotent systems to all systems}
\label{subsc:Drift=seminilp->all}

We now explain how embedded semi-nilpotent systems can be used to prove drift results.

\begin{lemma} \label{lem:embedded-weak-drift}
    Let $\bb \in \Bs$ and $\mathcal{N} \subset \Br(X)$.
    With the notations of \cref{def:embedded}, if $\dot{y} = g_0 + u g_1$ has a weak drift along $g_{\bb}(0)$, parallel to $\mathcal{N}(g)(0)$, as $(t,\|u\|_{W^{-1,\infty}}) \rightarrow 0$, then $\dot{x} = f_0 + u f_1$ has a weak drift along $f_{\bb}(0)$ parallel to $\mathcal{N}(f)(0)$ as $(t,\|u\|_{W^{-1,\infty}}) \rightarrow 0$.
\end{lemma}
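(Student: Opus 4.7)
The natural candidate for the component associated with the larger system is $\mathbb{P}_f := \mathbb{P}_g \circ D\lambda(0)$, where $\mathbb{P}_g$ is the linear form on $\R^r$ supplied by the weak drift hypothesis applied to $\dot{y} = g_0(y) + u g_1(y)$ along $g_{\bb}(0)$ parallel to $\mathcal{N}(g)(0)$. The first step is to check that $\mathbb{P}_f$ is a component along $f_{\bb}(0)$ parallel to $\mathcal{N}(f)(0)$. Using the intertwining relation \eqref{gb=dlfb}, one has $\mathbb{P}_f f_{\bb}(0) = \mathbb{P}_g(D\lambda(0) f_{\bb}(0)) = \mathbb{P}_g g_{\bb}(0) = 1$, and similarly $\mathbb{P}_f f_b(0) = \mathbb{P}_g g_b(0) = 0$ for every $b \in \mathcal{N}$, so that $\mathcal{N}(f)(0) \subset \ker \mathbb{P}_f$.

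The second step is to compare $\mathbb{P}_f x(t;u)$ with $\mathbb{P}_g y(t;u)$. Because $\lambda$ is analytic near $0$ with $\lambda(0) = 0$, Taylor's formula provides constants $C_1, C_2 > 0$ such that, for $|x|$ small enough, $|\lambda(x) - D\lambda(0) x| \leq C_1 |x|^2$ and $|\lambda(x)| \leq C_2 |x|$. Combining this with the identity $y(t;u) = \lambda(x(t;u))$ from \cref{def:embedded} (which is valid as soon as the trajectory $x([0,t];u)$ remains inside $\Omega_x$, a condition ensured by \cref{p:small-state} for $\|u\|_{W^{-1,\infty}}$ and $t$ small enough) yields both $|y(t;u)| \leq C_2 |x(t;u)|$ and
\begin{equation*}
    \mathbb{P}_f x(t;u) = \mathbb{P}_g y(t;u) + \mathcal{R}(t,u), \qquad |\mathcal{R}(t,u)| \leq \|\mathbb{P}_g\| C_1 |x(t;u)|^2.
\end{equation*}

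The final step is the bookkeeping. Fix $\varepsilon > 0$ and apply the weak drift hypothesis for the smaller system with parameter $\varepsilon_y := \varepsilon / (2(C_2 + 1))$; this provides $\rho_y > 0$ such that $\mathbb{P}_g y(t;u) \geq (1 - \varepsilon_y) \xi_{\bb}(t,u) - \varepsilon_y |y(t;u)|$ for all admissible $(t,u)$. Chaining the estimates above gives
\begin{equation*}
    \mathbb{P}_f x(t;u) \geq (1-\varepsilon_y) \xi_{\bb}(t,u) - C_2 \varepsilon_y |x(t;u)| - \|\mathbb{P}_g\| C_1 |x(t;u)|^2.
\end{equation*}
By \cref{p:small-state}, $|x(t;u)| = O(\|u\|_{W^{-1,\infty}})$, so one can shrink $\rho \leq \rho_y$ to ensure that $\|\mathbb{P}_g\| C_1 |x(t;u)| \leq \varepsilon/2$ on the range of interest, absorbing the quadratic term into $(\varepsilon/2)|x(t;u)|$. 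Since also $C_2 \varepsilon_y \leq \varepsilon/2$ by construction, this yields $\mathbb{P}_f x(t;u) \geq (1-\varepsilon_y) \xi_{\bb}(t,u) - \varepsilon |x(t;u)|$, which matches the weak drift definition for the larger system (using $(1-\varepsilon_y) \geq (1-\varepsilon)$ to match the stated form when $\xi_{\bb} \geq 0$, which is the only regime of interest given the role of weak drifts in denying STLC via \cref{lem:weak-drift-stlc}).

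The only delicate part is the choice of constants; all the underlying ingredients (intertwining, Taylor expansion of $\lambda$, small-state estimate) are already provided by the toolbox. No genuine obstacle is expected.
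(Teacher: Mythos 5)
Your proof is correct and follows essentially the same route as the paper's: same candidate component $\mathbb{P}\circ D\lambda(0)$, same verification via \eqref{gb=dlfb}, same use of the quadratic Taylor remainder of $\lambda$ together with the small-state estimate of \cref{p:small-state}, and the same bookkeeping of $\varepsilon$'s (the paper takes $\varepsilon/(2C)$ with $C$ the local Lipschitz constant of $\lambda$, you take $\varepsilon/(2(C_2+1))$; both work). You are also right that the final coefficient adjustment $(1-\varepsilon_y)\xi_\bb\ge(1-\varepsilon)\xi_\bb$ tacitly uses $\xi_\bb\ge 0$ --- the paper's chain of inequalities relies on the same sign condition without flagging it, so making it explicit is a harmless improvement.
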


\begin{proof}
    By \cref{p:small-state}, there exist $r,\rho_1>0$ such that, for every $t \in (0,\rho_1)$ and $u \in L^1(0,t)$ with $\|u\|_{W^{-1,\infty}}\leq\rho_1$ then $x(t;u) \in \overline{B}_{\R^d}(0,r) \subset \Omega_x$. 
    The map $\lambda$ is $C^1$ thus locally Lipschitz: there exists $C \geq 1$ such that, for every $x \in \overline{B}_{\R^d}(0,r)$, $|\lambda(x)| \leq C |x|$.

    Let $\varepsilon>0$. The weak drift assumption on $\dot{y} = g_0 + u g_1$ gives $\rho_2 \in (0,\rho_1)$ such that, for every $t \in (0,\rho_2)$ and $u \in L^1(0,t)$ with $\|u\|_{W^{-1,\infty}}\leq\rho_2$ then
     \begin{equation}
            \mathbb{P} y(t;u) \geq \left( 1-\frac{\varepsilon}{2C} \right) \xi_{\bb}(t,u) - \frac{\varepsilon}{2C} |y(t;u)|.
    \end{equation}
    Since $\lambda$ is of class $C^2$,  by   \cref{p:small-state} 
    \begin{equation}
            y(t;u)
            = \lambda(x(t;u))
             = D\lambda(0) x(t;u) + O\left( |x(t;u)|^2 \right).
    \end{equation}
    Thus there exists $\rho_3 \in (0,\rho_2)$ such that, for every $t \in (0,\rho_3)$ and $u \in L^1(0,t)$ with $\|u\|_{W^{-1,\infty}}\leq\rho_3$ then
    \begin{align*}
    \mathbb{P} D\lambda(0) x(t;u)  
    &  \geq 
    \mathbb{P}  y(t;u) - \frac{\varepsilon}{2} |x(t;u)|
      \geq 
    \left( 1-\frac{\varepsilon}{2C} \right) \xi_{\bb}(t,u) - \frac{\varepsilon}{2C} |\lambda(x(t;u))|- \frac{\varepsilon}{2} |x(t;u)|
    \\ & \geq 
    ( 1- \varepsilon ) \xi_{\bb}(t,u) - \varepsilon |x(t;u)|.
    \end{align*}
    Finally, $\mathbb{P} D\lambda(0)$ is a projection on $f_{\bb}(0)$ parallel to $\mathcal{N}(f)(0)$ because $\mathbb{P} D\lambda(0) f_{\bb}(0)= \mathbb{P} g_{\bb}(0)=1$ and for all $b \in \mathcal{N}(X)$, $\mathbb{P} D\lambda(0) f_{b}(0)=\mathbb{P} g_b(0)=0$.
\end{proof}

\begin{definition} \label{def:X1-truncable}
    Let $\mathcal{H}$ be a boolean property on subsets of $\mathcal{L}(X)$.
    We say that $\mathcal{H}$ is \emph{$X_1$-truncable} when there exists $n \in \N^*$ such that, 
    for every vectorial subspace $F$ of $\mathcal{L}(X)$ such that $F$ satisfies~$\mathcal{H}$ then $F+S_{\llbracket n , \infty \llbracket}(X)$ satisfies~$\mathcal{H}$.
\end{definition}

\begin{lemma} \label{lem:X1-truncable}
    For subsets $F$ of $\mathcal{L}(X)$, any finite boolean combination of conditions of the form $(E + S_{\llbracket q, \infty \llbracket}) \cap F \neq \emptyset$ with $E \subset \mathcal{L}(X)$ and $q \in \N^*$ is $X_1$-truncable.
\end{lemma}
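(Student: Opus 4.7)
The plan is to prove a strengthened claim by induction on the structure of the boolean combination, namely: for any such $\mathcal{H}$, there exists $n_0 \in \N^*$ such that every integer $n \geq n_0$ qualifies as a witness in \cref{def:X1-truncable}. This monotonicity in $n$ is what will enable us to combine witnesses coming from different subformulas. As a preliminary reduction, any finite boolean combination can be rewritten in negation normal form via De Morgan's laws, so the induction only needs to handle four cases: atomic conditions, their negations, conjunctions, and disjunctions.

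For the atomic case $\mathcal{H}(F)$ given by $(E + S_{\llbracket q,\infty\llbracket}) \cap F \neq \emptyset$, any element witnessing the intersection also lies in $F + S_{\llbracket n,\infty\llbracket} \supset F$ for every $n \in \N^*$, so $n_0 = 1$ works. For the negated atomic case $\mathcal{H}(F)$ given by $(E + S_{\llbracket q,\infty\llbracket}) \cap F = \emptyset$, I would show that $n_0 = q$ works: if $n \geq q$ and some $g$ belonged to $(E + S_{\llbracket q,\infty\llbracket}) \cap (F + S_{\llbracket n,\infty\llbracket})$, writing $g = e + s_q = f + s_n$ with $e \in E$, $f \in F$, $s_q \in S_{\llbracket q,\infty\llbracket}$ and $s_n \in S_{\llbracket n,\infty\llbracket}$, the inclusion $S_{\llbracket n,\infty\llbracket} \subset S_{\llbracket q,\infty\llbracket}$ would force $f = e + (s_q - s_n) \in E + S_{\llbracket q,\infty\llbracket}$, contradicting the assumption on $F$.

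For the inductive step, if $\mathcal{H} = \mathcal{H}_1 \wedge \mathcal{H}_2$ or $\mathcal{H} = \mathcal{H}_1 \vee \mathcal{H}_2$ and if $n_1, n_2$ are witnesses for $\mathcal{H}_1, \mathcal{H}_2$ in the strengthened sense, then $n_0 := \max(n_1, n_2)$ is a witness for $\mathcal{H}$, since every $n \geq n_0$ is simultaneously a witness for both subformulas by the induction hypothesis, and both $\wedge$ and $\vee$ are preserved when passing from $F$ to $F + S_{\llbracket n,\infty\llbracket}(X)$ under that hypothesis.

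The main subtlety, and the reason the stronger induction hypothesis is needed, lies in the very statement of \cref{def:X1-truncable}: it asserts only the existence of one $n$, with no built-in monotonicity. Without the strengthening, taking a $\max$ of witnesses in the inductive step would not be justified, since a witness $n_i$ for $\mathcal{H}_i$ does not a priori guarantee that larger integers also work. Building the monotonicity into the induction from the outset resolves this cleanly, and the verification is essentially free in the two atomic cases treated above.
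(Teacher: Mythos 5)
Your proof is correct and takes essentially the same route as the paper: the key computation in your negated-atomic case is precisely the $\Leftarrow$ direction of the biconditional
\begin{equation*}
    (E + S_{\llbracket q, \infty\llbracket}) \cap F \neq \emptyset
    \quad\Leftrightarrow\quad
    (E + S_{\llbracket q, \infty\llbracket}) \cap (F + S_{\llbracket n, \infty\llbracket}) \neq \emptyset
    \qquad (n \geq q)
\end{equation*}
that the paper establishes for a single atomic condition, and both arguments close by taking $n$ to be the maximum of the thresholds $q$. The paper simply packages both implications as a single equivalence per atom, which makes the propagation to arbitrary boolean combinations immediate (since truth values of atoms determine the truth value of the whole formula), whereas your reduction to negation normal form and structural induction with a monotone witness threshold is a more explicit, but equivalent, spelling-out of the same mechanism.
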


\begin{proof}
Let $F$ be a vector subspace of $\mathcal{L}(X)$. 
First, we consider the case of a single such condition. We claim that, for every $n \geq q$,
\begin{equation}
    (E + S_{\llbracket q, \infty \llbracket}) \cap F \neq \emptyset
    \quad \Leftrightarrow \quad 
    (E + S_{\llbracket q, \infty \llbracket}(X)) \cap (F+S_{\llbracket n, \infty \llbracket}(X)) \neq \emptyset.
\end{equation}
Indeed, $\Rightarrow$ holds because $0 \in S_{\llbracket n, \infty \llbracket}(X)$. For $\Leftarrow$, if $e \in E$, $B \in S_{\llbracket q, \infty \llbracket}(X)$, $f \in F$, $B' \in S_{\llbracket n, \infty \llbracket}(X)$ and $e+B=f+B'$ then
$f=e+B-B' \in (E + S_{\llbracket q, \infty \llbracket}(X)) \cap F$ because $n \geq q$.

The same equivalence follows easily for a boolean combination of such elementary conditions, by taking for $n$ the maximum value of the $q$.
\end{proof}

\begin{proposition} \label{Prop:Drift=seminilp->all}
    Let $\mathcal{H}$ be an $X_1$-truncable property.
    We assume that for every $d\in\N^*$, $f_0, f_1$ analytic vector fields on a neighborhood of $0$ in $\R^d$ such that $f_1$ is semi-nilpotent at zero with respect to $f_0$ and $\ker (b \in \mathcal{L}(X) \mapsto f_b(0)\in\R^d )$ satisfies $\mathcal{H}$, system~\eqref{syst} has a weak drift along $f_{\bb}(0)$, parallel to $\mathcal{N}(f)(0)$, as $(t, \|u\|_{W^{-1,\infty}} ) \rightarrow 0$. 
    
    Then, the same conclusion holds without the semi-nilpotency assumption.
\end{proposition}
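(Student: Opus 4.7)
The plan is to use the machinery of embedded semi-nilpotent systems developed in \cref{subsec:embedded_seminilp}, combined with the transfer lemma \cref{lem:embedded-weak-drift}, to reduce the general case to the semi-nilpotent case, which is exactly what the hypothesis of the proposition gives us.

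More concretely, let $(f_0, f_1)$ be analytic vector fields on a neighborhood of $0 \in \R^d$ such that $\ker V_f := \ker(b \in \mathcal{L}(X) \mapsto f_b(0) \in \R^d)$ satisfies $\mathcal{H}$. Since $\mathcal{H}$ is $X_1$-truncable, pick an integer $n \in \N^*$ as in \cref{def:X1-truncable}. I would then set $\mathcal{I} := S_{\llbracket n, \infty \llbracket}(X)$, which is an ideal of $\mathcal{L}(X)$, and apply \cref{PropB5} to produce an embedded system $(g_0, g_1)$ on $\R^r$, with $r = \codim(\mathcal{I}(f)(0))$, together with the analytic map $\lambda$ relating it to $(f_0, f_1)$.

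Next, I would verify that $(g_0, g_1)$ satisfies the semi-nilpotent hypothesis used in the statement. By \cref{embedded_nilpotent_syst}, the choice $\mathcal{I} = S_{\llbracket n, \infty \llbracket}(X)$ forces $g_1$ to be semi-nilpotent at zero with respect to $g_0$, and moreover yields the key kernel identity
\begin{equation}
    \ker V_g = \ker V_f + \mathcal{I}.
\end{equation}
Since $\ker V_f$ satisfies $\mathcal{H}$, the $X_1$-truncability of $\mathcal{H}$ (applied to $F = \ker V_f$) guarantees that $\ker V_g$ also satisfies $\mathcal{H}$. Hence the embedded system falls within the scope of the proposition's hypothesis, and consequently has a weak drift along $g_{\bb}(0)$, parallel to $\mathcal{N}(g)(0)$, as $(t, \|u\|_{W^{-1,\infty}}) \to 0$.

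The final step is to pull the weak drift back through $\lambda$ using \cref{lem:embedded-weak-drift}, which immediately gives the desired weak drift for $(f_0, f_1)$ along $f_{\bb}(0)$, parallel to $\mathcal{N}(f)(0)$, as $(t, \|u\|_{W^{-1,\infty}}) \to 0$. The only nontrivial check is really that the bookkeeping is consistent: the integer $n$ must be chosen once and for all from the $X_1$-truncability of $\mathcal{H}$ (independently of $(f_0, f_1)$), and the kernel identity of \cref{embedded_nilpotent_syst} must match exactly the form $F + S_{\llbracket n, \infty \llbracket}(X)$ appearing in \cref{def:X1-truncable}. I expect no serious obstacle beyond verifying that these pieces fit together, since all analytic content has been packaged into \cref{PropB5,lem:embedded-weak-drift}, and the only remaining work is this algebraic matching.
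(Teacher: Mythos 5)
Your proof is correct and takes essentially the same route as the paper: choose $n$ from the $X_1$-truncability of $\mathcal{H}$, apply \cref{PropB5} with $\mathcal{I} = S_{\llbracket n, \infty \llbracket}(X)$ to get the embedded system, check via \cref{embedded_nilpotent_syst} that $g_1$ is semi-nilpotent at zero and $\ker V_g = \ker V_f + \mathcal{I}$ still satisfies $\mathcal{H}$, then transfer the resulting weak drift back through \cref{lem:embedded-weak-drift}. No gaps.
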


\begin{proof}
    Let $d\in\N^*$, $f_0, f_1$ analytic vector fields on a neighborhood of $0$ in $\R^d$ such that $\ker(V_f)$ satisfies $\mathcal{H}$, where $V_f : b \in \mathcal{L}(X) \mapsto f_b(0)\in\R^d$.
    Let $n \in \N^*$ be given by \cref{def:X1-truncable}. 
    By \cref{PropB5} applied with $\mathcal{I} = S_{\llbracket n,\infty \llbracket}(X) $, we obtain an embedded system $(g_0, g_1,\lambda)$ for which $g_1$ is semi-nilpotent with respect to $g_0$ (see \cref{embedded_nilpotent_syst}).
    Moreover, the kernel of the linear map $V_g: b \in \mathcal{L}(X) \mapsto g_b(0)$ is $\ker(V_g)=
    \ker(V_f)+S_{\llbracket n,\infty \llbracket}(X)$ thus it satisfies the property $\mathcal{H}$.
    Hence the result follows from \cref{lem:embedded-weak-drift}.
\end{proof}

All the necessary conditions for $W^{-1,\infty}$-STLC proved in this article take the form of conditions of \cref{lem:X1-truncable} with $F=\ker(V_f)$:
\begin{itemize}
    \item $f_{W_k}(0) \notin S_{\N^* \setminus\{2\}}(f)(0)$ corresponds to $E=W_k+S_1(X)$ and $q=3$,
    \item $f_{W_k}(0) \notin (\Bs_1 \cup \mathcal{P}_k \cup \Bs_{\llbracket 4 , \infty \llbracket} )(f)(0)$ corresponds to $E=W_k+S_1(X)+\vect(\mathcal{P}_k)$ and $q=4$.
\end{itemize}

In particular, this entails that the results of \cref{s:weak-semi-quad} prove \cref{Thm:Kawski_Wm,thm:S2-S3-intro} for $m = -1$ and $k \geq 2$ without the semi-nilpotency assumption.



\subsection{Drift or weak drift?}
\label{s:weak-to-strong}

The notions of weak drift, semi-nilpotent vector fields, and $X_1$-truncable properties are very convenient because they provide a systematic way to generalize our unified approach to the case $m=-1$.
In this section, we show that, for \cref{Thm:Kawski_Wm,thm:S2-S3-intro} for $m=-1$ and $k \geq 2$, one can actually cleverly manipulate the representation formula to obtain (strong) drifts.
Of course, as done in \cref{lem:embedded-weak-drift} for weak drifts, one can transfer a drift from an embedded system to the large system.

\begin{lemma} \label{lem:embedded-drift}
    Let $\bb \in \Bs$ and $\mathcal{N} \subset \Br(X)$.
    With the notations of \cref{def:embedded}, if $\dot{y} = g_0 + u g_1$ has a drift along $g_{\bb}(0)$, parallel to $\mathcal{N}(g)(0)$, as $(t,\|u\|_{W^{-1,\infty}}) \rightarrow 0$, then $\dot{x} = f_0 + u f_1$ has a  drift along $f_{\bb}(0)$ parallel to $\mathcal{N}(f)(0)$ as $(t,\|u\|_{W^{-1,\infty}}) \rightarrow 0$.
\end{lemma}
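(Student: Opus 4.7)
The plan is to mirror the proof of \cref{lem:embedded-weak-drift}, adjusting only what is needed to preserve the quantitative structure of \cref{Def:drift} (a polynomial remainder in $|x(t;u)|$ with exponent $\beta>1$) rather than the weaker $\varepsilon|x(t;u)|$ appearing in \cref{Def:Weak-drift}. As before, \cref{p:small-state} furnishes $r,\rho_1>0$ such that $x(t;u)\in\overline{B}_{\R^d}(0,r)\subset\Omega_x$ whenever $t\in(0,\rho_1)$ and $\|u\|_{W^{-1,\infty}}\leq\rho_1$. On this ball, since $\lambda$ is analytic with $\lambda(0)=0$, it is Lipschitz, so $|y(t;u)|=|\lambda(x(t;u))|\leq C_0|x(t;u)|$ for some $C_0\geq 1$.

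Next, I would invoke the drift assumption on $\dot y=g_0+ug_1$: there exist constants $C_1>0$ and $\beta>1$ and a linear form $\mathbb{Q}$ giving a component along $g_{\bb}(0)$ parallel to $\mathcal{N}(g)(0)$ such that, for every $\varepsilon>0$, one has
\begin{equation}
    \mathbb{Q} y(t;u)\geq (1-\varepsilon)\xi_{\bb}(t,u)-C_1|y(t;u)|^{\beta}
\end{equation}
for $t$ and $\|u\|_{W^{-1,\infty}}$ small enough. The remaining work is to replace $y(t;u)$ by $x(t;u)$ on the left-hand side up to a controlled error. Since $\lambda$ is analytic near $0$, a second-order Taylor expansion yields $y(t;u)=D\lambda(0)x(t;u)+\rho(x(t;u))$ with $|\rho(x)|\leq C_2|x|^{2}$ on $\overline{B}_{\R^d}(0,r)$. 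Combining this with the Lipschitz estimate on $\lambda$ gives
\begin{equation}
    \mathbb{Q} D\lambda(0)\,x(t;u)\geq(1-\varepsilon)\xi_{\bb}(t,u)-C_1 C_0^{\beta}|x(t;u)|^{\beta}-|\mathbb{Q}|\,C_2|x(t;u)|^{2}.
\end{equation}
Setting $\beta':=\min(\beta,2)>1$ and $C:=C_1 C_0^{\beta}+|\mathbb{Q}|C_2$, one bounds both remainders by $C|x(t;u)|^{\beta'}$ for $x(t;u)$ small enough (which is ensured by further shrinking $\rho$ via \cref{p:small-state}).

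It remains to identify $\mathbb{P}:=\mathbb{Q}\circ D\lambda(0)$ as a component along $f_{\bb}(0)$ parallel to $\mathcal{N}(f)(0)$. This is the same algebraic verification as at the end of the proof of \cref{lem:embedded-weak-drift}: by \eqref{gb=dlfb}, $\mathbb{P} f_{\bb}(0)=\mathbb{Q} g_{\bb}(0)=1$ and $\mathbb{P} f_{b}(0)=\mathbb{Q} g_{b}(0)=0$ for every $b\in\mathcal{N}$. No further obstacle arises; the main (very mild) subtlety is to make sure that the quadratic Taylor remainder in $\lambda$ is absorbed into a $|x|^{\beta'}$ term with $\beta'>1$, which is automatic because $\beta>1$ in \cref{Def:drift} and because $\lambda$ is $C^{2}$.
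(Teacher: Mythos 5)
Your proof is correct and is exactly the adaptation the paper leaves implicit (Lemma~\ref{lem:embedded-drift} is stated without proof, accompanied only by the remark that one transfers the drift "as done in Lemma~\ref{lem:embedded-weak-drift}"); you carry out that adaptation faithfully, with the right bookkeeping — Lipschitz bound $|y|\le C_0|x|$, Taylor remainder $|y-D\lambda(0)x|\le C_2|x|^2$, exponent $\beta'=\min(\beta,2)>1$, and the algebraic identification of $\mathbb{P}=\mathbb{Q}\circ D\lambda(0)$ via \eqref{gb=dlfb}.
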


\begin{theorem}
    Let $k \geq 2$. 
    Assume\cref{fn:LARC} that $\mathcal{L}(f)(0) = \R^d$ and that $k$ is the minimal value for which 
    $f_{W_k}(0) \notin (\Bs_1 \cup \mathcal{P}_k \cup \Bs_{\llbracket 4 , \infty \llbracket} )(f)(0)$. 
    Then system \eqref{syst} has a drift along $f_{W_k}(0)$, parallel to $(\Bs_1 \cup \mathcal{P}_k \cup \Bs_{\llbracket 4 , \infty \llbracket} )(f)(0)$, as $(t,\|u\|_{W^{-1,\infty}}) \to 0$.
\end{theorem}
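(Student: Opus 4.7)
The strategy mirrors the proof of the weak-drift version \cref{Thm:Kawski_Wm_precis--1_nilp_crible}, sharpened at the one place where it loses the $\beta > 1$ exponent: we use the generic analytic representation \eqref{eq:x=ZM+O} (with its $|x|^{1+1/M}$ remainder) instead of the semi-nilpotent one \eqref{eq:x=ZM+O+Nilp} (whose weaker $\|u_1\|_{L^\infty}|x|$ remainder only yields a weak drift).

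\emph{Step 1 (reduction to the semi-nilpotent case).} The obstruction and minimality hypotheses on $k$ form a finite boolean combination of conditions of the form $(E + S_{\llbracket q, \infty\llbracket}(X)) \cap \ker V_f \neq \emptyset$ with $q = 4$ and $V_f : b \mapsto f_b(0)$, hence an $X_1$-truncable property by \cref{lem:X1-truncable}. Rerunning the proof of \cref{Prop:Drift=seminilp->all} with \cref{lem:embedded-drift} in place of \cref{lem:embedded-weak-drift} yields the strong-drift analogue of that proposition, reducing the problem to systems where $f_1$ is also semi-nilpotent at zero with respect to $f_0$. Under LARC, \cref{LemB2} then upgrades this to true semi-nilpotency of some finite index $M_0 + 1$.

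\emph{Step 2 (stabilization and expansion).} Since $f_b(0) = 0$ for $n_1(b) \geq M_0 + 1$, the series stabilizes: $\cZ{M}(t,f,u)(0) = \cZ{M_0}(t,f,u)(0)$ for every $M \geq M_0$. Applying \eqref{eq:x=ZM+O} with $M$ large and projecting via the component $\mathbb{P}$ along $f_{W_k}(0)$ parallel to $(\Bs_1 \cup \mathcal{P}_k \cup \Bs_{\llbracket 4, \infty\llbracket})(f)(0)$, the vectorial-relation and dominant-part analyses of \cref{s:S2-S3} apply with the conventions $\pi(k) = \vartheta(k) = \infty$, and the closed-loop estimate \eqref{eq:quad-loop-1} gives $|(u_1,\dotsc,u_k)(t)|^2 = O(|x(t;u)|^2 + t\xi_{W_k}(t,u))$. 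Altogether
\begin{equation*}
    \mathbb{P} x(t;u) = \xi_{W_k}(t,u) + O\!\left(t\xi_{W_k}(t,u) + |x(t;u)|^2 + \|u_1\|_{L^{M+1}}^{M+1} + |x(t;u)|^{1+\frac{1}{M}}\right),
\end{equation*}
and the $|x(t;u)|^{1+1/M}$ term is already of the desired form $C|x|^\beta$ with $\beta = 1 + \frac{1}{M} > 1$.

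\emph{Step 3 (absorbing the new remainder and main obstacle).} The outstanding difficulty is the $\|u_1\|_{L^{M+1}}^{M+1}$ term, which did not appear in the weak-drift proof. By Hölder's inequality, $\|u_1\|_{L^{M+1}}^{M+1} \leq \|u_1\|_{L^\infty}^{M-1}\|u_1\|_{L^2}^2 = 2\|u_1\|_{L^\infty}^{M-1}\xi_{W_1}(t,u)$; the prefactor $\|u_1\|_{L^\infty}^{M-1} \leq \rho^{M-1}$ is arbitrarily small for $M$ chosen large. Because the truncation parameter can be taken $n = 4$, one has $\Bs_{\llbracket 4, \infty\llbracket}(f)(0) = 0$ in the embedded system, and the minimality of $k \geq 2$ forces $f_{W_1}(0) \in S_1(f)(0)$; the remaining step, and the main technical obstacle, is a closed-loop-type bound on $\xi_{W_1}(t,u)$ in the spirit of \cref{p:W3-loop}, which must isolate $\xi_{W_1}$ despite $f_{W_1}(0)$ possibly lying inside $S_1(f)(0)$. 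This is carried out by applying the representation formula to a carefully chosen auxiliary direction and using the finite semi-nilpotency index to keep all resulting error terms compatible with the target exponent $\beta > 1$. Combining these ingredients produces the required inequality $\mathbb{P} x(t;u) \geq (1-\varepsilon)\xi_{W_k}(t,u) - C|x(t;u)|^\beta$.
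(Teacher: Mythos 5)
Your Step~3 contains the critical gap and, as you yourself flag, it is not a small one. You propose to absorb the $\|u_1\|_{L^{M+1}}^{M+1}$ remainder of \eqref{eq:x=ZM+O} by writing it as $\|u_1\|_{L^\infty}^{M-1}\,2\xi_{W_1}(t,u)$ and then closing the loop on $\xi_{W_1}$. But no such closed-loop estimate is available here. The estimate in the spirit of \cref{p:W3-loop} controls $\xi_{W_1}$ from $|x|$ only when $f_{W_1}(0)$ sits \emph{outside} $S_1(f)(0)$ (cf.\ \cref{p:W3-vectors}, item 2), whereas minimality of $k\geq 2$ forces exactly the opposite, $f_{W_1}(0)\in S_1(f)(0)$. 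In the extreme case $f_{W_1}(0)=0$, the quantity $\xi_{W_1}(t,u)=\tfrac12\|u_1\|_{L^2}^2$ decouples completely from the state: take $u_1(t)=\varepsilon\sin(\omega t)$ with $\omega\to\infty$, then $\|u_1\|_{W^{-1,\infty}}$ stays $\varepsilon$, $\xi_{W_1}$ stays of order $\varepsilon^2 t$, while both $|x(t;u)|$ and $\xi_{W_k}(t,u)$ (for $k\geq 2$) shrink to zero. So $\|u_1\|_{L^{M+1}}^{M+1}$ simply cannot be dominated by $|x|^\beta+\varepsilon\xi_{W_k}$ in the $W^{-1,\infty}$ regime. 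This is precisely the pathology that breaks interpolation for $m=-1$, $k\geq 2$, and why \eqref{eq:x=ZM+O} is the wrong tool here.

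The paper sidesteps this remainder altogether. First, it embeds into the larger ideal $\mathcal{I}=\vect\mathcal{W}_k+\vect\mathcal{P}_k+S_{\llbracket 4,\infty\llbracket}(X)$, not merely $S_{\llbracket 4,\infty\llbracket}(X)$; this is not an optional refinement but is what later makes the error manageable. For the embedded $(g_0,g_1)$, semi-nilpotency yields the \emph{exact} representation $y(t;u)=(\exp\mathcal{Z}_{\intset{1,3}}(t,g,u))(0)$ from \cite[Corollary 122]{P1} — no $\|u_1\|^{M+1}$ truncation term appears at all. The only error is the exp-versus-linear one, of size $\|\mathcal{Z}_{\intset{1,3}}(t,g,u)\|_{\CC^1}\,|y|$. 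Because the ideal kills $g_{W_{j,\nu}}$ and $g_{P_{j,l,\nu}}$ for all $j<k$, the surviving coefficients of $\mathcal{Z}_{\intset{1,3}}$ are governed by $|(u_1,\dots,u_k)(t)|$ and $\|u_k\|_{L^2}$; combining with the closed-loop estimate \eqref{eq:quad-loop-1} gives $\|\mathcal{Z}\|_{\CC^1}|y|=O(|y|^2+\|u_k\|_{L^2}|y|)$, and Young's inequality turns $\|u_k\|_{L^2}|y|$ into $\|u_k\|_{L^2}^3+|y|^{3/2}$, yielding the strong drift with $\beta=3/2$. Your choice of ideal $S_{\llbracket 4,\infty\llbracket}(X)$ would leave $g_{W_{j,\nu}}(0)\neq 0$ for $j<k$ in the embedded system, so that $\|\mathcal{Z}_{\intset{1,3}}\|_{\CC^1}$ picks up uncontrolled $\|u_j\|_{L^2}^2$ contributions for $j<k$, and the $\CC^1$ bound needed above would fail. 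Both the enlarged ideal and the switch to the exact nilpotent representation are essential; neither is present in your outline, so the gap cannot be filled without replacing the argument of Steps~2--3.
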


\begin{proof}
    Recalling $\mathcal{W}_k = \{ W_{j,\nu} ; j < k \}$ of \eqref{eq:quad-N_ref} and $\mathcal{P}_k = \{ P_{j,l,\nu} ; j < k \}$ of \eqref{eq:P_k}, let
    \begin{equation}
        \mathcal{I} := \vect \mathcal{W}_k + \vect \mathcal{P}_k + S_{\llbracket 4, \infty \llbracket}(X).
    \end{equation}
    Using \cref{p:1+2}, one checks that $\mathcal{I}$ is an ideal of $\mathcal{L}(X)$.
    Let $(g_0,g_1,\lambda)$ be the embedded system associated with $\mathcal{I}$ given by \cref{PropB5}.
    In particular, since $\mathcal{L}(f)(0) = \R^d$, $\mathcal{L}(g)(0) = \R^r$ where $r := \codim \mathcal{I}(f)(0)$.
    By \cref{LemB2-ideal}, for every $B \in \mathcal{I}$, $g_B \equiv 0$ in a neighborhood of $0 \in \R^r$.
    Since $S_{\llbracket 4, \infty \llbracket}(X) \subset \mathcal{I}$, $g_1$ is semi-nilpotent of index at most 4 with respect to $g_0$ (in the sense of \eqref{eq:nilpotent}).
    By \cite[Corollary 122]{P1}, the solution to $\dot{y} = g_0(y) + u g_1(y)$ satisfies
    \begin{equation} \label{eq:y-Z3}
        y(t;u) = (\exp \mathcal{Z}_{\intset{1,3}}(t,g,u)) (0)
    \end{equation}
    where
    \begin{equation}
        \mathcal{Z}_{\intset{1,3}}(t,g,u)
        = \sum_{i=1}^\infty u_i(t) g_{M_i} + \sum_{k \leq j, \nu \in \N} \eta_{W_{j,\nu}}(t,u) g_{W_{j,\nu}} + \sum_{k \leq j \leq l, \nu \in \N} \eta_{P_{j,l,\nu}}(t,u) g_{P_{j,l,\nu}}.
    \end{equation}
    Using that $k \leq j$, we obtain\footnote{To avoid repeating once more similar arguments, we skip here the verification of the assumptions of these black-box estimates, which can be carried out as in \cref{s:Quad}. Moreover, \cref{p:sum-xi-XI,p:sum-cross-XI} conclude to analytic estimates, which of course entail $\CC^1$ estimates.} from \cref{p:sum-xi-XI,p:sum-cross-XI} that
    \begin{equation} \label{eq:Z3-C1}
        \| \mathcal{Z}_{\intset{1,3}}(t,g,u) \|_{\CC^1} = O\left( |(u_1,\dotsc,u_k(t))| + \| u_k \|_{L^2} \right).
    \end{equation}
    Since $k$ is assumed to be minimal, one proves as in \cref{lem:dominant-S2-S3} that $f_{W_k}(0) \notin \mathcal{N}(f)(0)$ where $\mathcal{N} := S_1(X) + \mathcal{I}$.
    Since $\ker (V_g) = \ker (V_f) + \mathcal{I}$ (see \cref{embedded_nilpotent_syst}), one checks that 
    \begin{equation}
        g_{W_k}(0) \notin (\Bs_1 \cup \mathcal{P}_k \cup \Bs_{\llbracket 4 , \infty \llbracket} )(g)(0)
    \end{equation}
    and that $k$ is the minimal such integer.
    In particular, by \cref{p:quad-loop-1},
    \begin{equation} \label{eq:quad-loop-y-Z3}
        |(u_1,\dotsc,u_k(t))| = O(|y(t;u)| + t^{\frac 12} \|u_k\|_{L^2}).
    \end{equation}
    From \eqref{eq:y-Z3}, we derive that (see \cite[Lemma 160]{P1})
    \begin{equation}
        y(t;u) = \mathcal{Z}_{\intset{1,3}}(t,g,u)(0) + O\left( \| \mathcal{Z}_{\intset{1,3}}(t,g,u) \|_{\CC^1} |y(t;u)| \right).
    \end{equation}
    Thus, by \eqref{eq:quad-loop-y-Z3} and \eqref{eq:Z3-C1},
    \begin{equation}
        y(t;u) = \mathcal{Z}_{\intset{1,3}}(t,g,u)(0) + O\left( (|y(t;u)|+\|u_k\|_{L^2}) |y(t;u)| \right).
    \end{equation}
    Let $\mathbb{P}$ denote a component along $g_{W_k}(0)$ parallel to $(\Bs_1 \cup \mathcal{P}_k \cup \Bs_{\llbracket 4 , \infty \llbracket} )(g)(0)$.
    The same arguments as in \cref{lem:dominant-S2-S3} prove that
    \begin{equation}
        \mathbb{P} \mathcal{Z}_{\intset{1,3}}(t,g,u)(0) = \xi_{W_k}(t,u) + O\left(|(u_1,\dotsc,u_k)(t)|^2+t\|u_k\|_{L^2}^2\right).
    \end{equation}
    Thus
    \begin{equation}
        \mathbb{P} y(t;u) = \frac 12 \int_0^t u_k^2 + O(t\|u_k\|_{L^2}^2 + |(u_1,\dotsc,u_k(t))|^2 + |y(t;u)|^2 + |y(t;u)| \|u_k\|_{L^2} ).
    \end{equation}
    Using \eqref{eq:quad-loop-y-Z3} once more and writing 
    \begin{equation}
        |y(t;u)| \|u_k\|_{L^2} 
        = O \left( \| u_k \|_{L^2}^3 + |y(t;u)|^{\frac 32} \right)
    \end{equation}
    proves the presence of a drift along $g_{W_k}(0)$, parallel to $(\Bs_1 \cup \mathcal{P}_k \cup \Bs_{\llbracket 4 , \infty \llbracket} )(g)(0)$ as $(t,\|u\|_{W^{-1,\infty}}) \to 0$, in the strong sense of \cref{Def:drift}.
    This concludes the proof by \cref{lem:embedded-drift}.
\end{proof}

\section{Obstructions without analyticity}
\label{s:C^k}

Except for this section, all our paper is written with an analyticity assumption on the vector fields $f_0$ and $f_1$.
This allows to work with convergent series.
However, as announced in the introduction, the obstruction mechanisms on which our necessary conditions for controllability rely are sufficiently robust to absorb an approximation scheme for non-analytic vector fields.

Let $\delta > 0$.
For smooth vector fields $f_0$ and $f_1$ in $\CC^\infty(B_\delta;\R^d)$, one can still define all Lie brackets $f_b \in \CC^\infty(B_\delta;\R^d)$ for $b \in \Br(X)$.
The arguments of the next paragraphs will prove that all\footnote{The only exception is the case $m = -1$ of \cref{Thm:Kawski_Wm} which is not included in \cref{cor:wk}.} the statements of \cref{s:main-results} remain true without any change under this (weaker) regularity setting.

Furthermore, even in a finite regularity setting, one can give a sense to some Lie brackets, once evaluated at zero.
This stems from the equilibrium assumption $f_0(0) = 0$.
More precisely, the value of $f_b(0)$ only depends on the coefficients of the Taylor expansion at $0$ of $f_0$ up to order $n_1(b)$ and of $f_1$ up to order $n_1(b)-1$ (see \cref{p:fb-hatfb} below).
This leads to the following definition.

\begin{definition}
    Let $M \in \N^*$, $\delta > 0$, $f_0 \in \CC^{M}(B_\delta;\R^d)$ with $f_0(0) = 0$ and $f_1 \in \CC^{M-1}(B_\delta;\R^d)$.
    Let $\hat{f}_0 := \mathrm{T}_M f_0$ (respectively $\hat{f}_1 := \mathrm{T}_{M-1} f_1$) be the truncated Taylor series at $0$ of $f_0$ (resp.\ $f_1$) of order $M$ (resp.\ $M-1$).
    For $b \in \Br(X)$ with $n_1(b) \in \intset{1,M}$, we define $f_b(0) := \hat{f}_b(0)$.
\end{definition}

With this notation, we will prove that the following corollaries of the main theorems of \cref{s:main-results} hold. 
As a rule of thumb, the theorems continue to hold as soon as the vector fields have enough regularity for the involved Lie brackets to be defined as above.
More rigorously, we assume one extra derivative to be able to estimate the truncation error properly (see \cref{p:error-xhat}).

We make the blanket hypothesis that $f_0(0) = 0$.

\begin{corollary}
    Let $M \in \N^*$.
    Assume that $f_0 \in \CC^{M+1}$, $f_1 \in \CC^M$.
    If system \eqref{syst} is $W^{-1,\infty}$-STLC, then, for every $k \in \N^*$ such that $2 k \leq M$,
    \begin{equation}
        ad_{f_1}^{2k}(f_0)(0) \in S_{\intset{1,2k-1}}(f)(0).
    \end{equation}
\end{corollary}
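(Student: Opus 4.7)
The plan is to reduce to the analytic version, Theorem~\ref{thm:stefani-precise}, via Taylor truncation of the vector fields. I proceed by contraposition: fix $k \in \N^*$ with $2k \leq M$ and assume $\ad_{f_1}^{2k}(f_0)(0) \notin S_{\intset{1,2k-1}}(f)(0)$ (the finite sum of brackets with $n_1(b) \leq 2k \leq M$ being well defined under our regularity hypothesis).

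First I would introduce the polynomial vector fields $\hat f_0 := \mathrm{T}_{M+1} f_0$ and $\hat f_1 := \mathrm{T}_{M} f_1$; both are analytic on $\R^d$ with $\hat f_0(0)=0$. Using the fact that $f_b(0)$ depends only on the Taylor coefficients of $f_0$ up to order $n_1(b)$ and of $f_1$ up to order $n_1(b)-1$ (a lemma that one proves by induction on $|b|$, repeatedly applying the chain rule and $f_0(0)=0$ to cancel the lowest-order terms), one sees that $\hat f_b(0)=f_b(0)$ for every $b \in \Br(X)$ with $n_1(b) \leq M$. In particular, the failure of the bracket condition transfers to the analytic system: $\ad_{\hat f_1}^{2k}(\hat f_0)(0) \notin S_{\intset{1,2k-1}}(\hat f)(0)$. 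Applying Theorem~\ref{thm:stefani-precise} to $\dot{\hat x} = \hat f_0(\hat x) + u\hat f_1(\hat x)$ yields a linear form $\mathbb{P}$, constants $C>0$, $\beta>1$, and for every $\varepsilon>0$ a radius $\rho>0$ such that
\begin{equation} \label{eq:ck-stefani-hat}
    \mathbb{P}\hat x(t;u) \geq (1-\varepsilon)\,\xi_{\ad^{2k}_{X_1}(X_0)}(t,u) - C|\hat x(t;u)|^\beta
\end{equation}
for all $t \in (0,\rho)$ and $u$ with $\|u\|_{W^{-1,\infty}} \leq \rho$.

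Next I would transfer \eqref{eq:ck-stefani-hat} from $\hat x$ to the original trajectory $x(t;u)$ by an ODE comparison. Taylor's formula gives $|f_0(x)-\hat f_0(x)| \leq C|x|^{M+1}$ and $|f_1(x)-\hat f_1(x)| \leq C|x|^M$ on a neighborhood of $0$; combined with the small-state bound $|x(s;u)|+|\hat x(s;u)| = O(\|u\|_{W^{-1,\infty}})$ (Proposition~\ref{p:small-state}, whose proof uses only $\CC^1$ regularity), a Gronwall argument applied to $e(t) := x(t;u)-\hat x(t;u)$ produces
\begin{equation} \label{eq:ck-gronwall}
    |e(t)| \leq C'\,\|u\|_{W^{-1,\infty}}^{M+1} = O\!\left(\|u_1\|_{L^\infty}^{M+1}\right).
\end{equation}
Since $M+1 \geq 2k+1$, this error is of strictly higher order in $\|u_1\|_{L^\infty}$ than all the terms already present in the drift proof of \cref{s:stefani-proof}: using the closed-loop estimate $|u_1(t)|=O(|x|+\|u_1\|_{L^1})$ (whose derivation in \cref{lem:stefani-loop} relies only on $\mathcal Z_1$, hence only on $f_b(0)$ with $n_1(b)=1$), one may absorb $|e(t)|$ into $C|x|^{1+\frac{1}{2k}} + t^{\frac{1}{2k-1}}\|u_1\|_{L^{2k}}^{2k}$, exactly as the remainder $\|u_1\|_{L^{2k+1}}^{2k+1}$ is absorbed in the analytic proof.

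Combining \eqref{eq:ck-stefani-hat} with the bound $|\mathbb{P}x - \mathbb{P}\hat x| \leq \|\mathbb{P}\|\,|e(t)|$ and the interpolation inequality \eqref{eq:stefani-interpol} then yields a drift for $x(t;u)$ along $\ad_{f_1}^{2k}(f_0)(0)$, parallel to $S_{\intset{1,2k-1}}(f)(0)$, as $(t,\|u\|_{W^{-1,\infty}})\to 0$, in the sense of \cref{Def:drift}. Lemma~\ref{lem:drift-stlc} then contradicts $W^{-1,\infty}$-STLC. The main technical obstacle is the Gronwall estimate \eqref{eq:ck-gronwall}: it is essential that the \emph{extra} derivative assumed on $f_0$ (i.e.\ $C^{M+1}$ rather than $C^M$) gives an approximation error of order $|x|^{M+1}$ rather than $o(|x|^M)$, because only the former is dominated by the existing remainder $|x|^{1+\frac{1}{2k}}$ in the analytic drift estimate for the relevant range of parameters.
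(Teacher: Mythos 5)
The core reduction strategy (truncate $f_0, f_1$ to polynomial vector fields, invoke the analytic drift result, transfer back via a trajectory-comparison estimate) matches the paper's own proof in \cref{s:C^k}. However, your Gronwall estimate \eqref{eq:ck-gronwall} is not correct as argued, and this is a genuine gap. Writing $e := x - \hat x$ and differentiating gives $\dot e = (\hat f_0 + u\hat f_1)(x) - (\hat f_0 + u\hat f_1)(\hat x) + (f_0 - \hat f_0)(x) + u\,(f_1 - \hat f_1)(x)$. The Lipschitz coefficient of the first difference is $O(1+|u(s)|)$, so Gronwall produces a factor $\exp\bigl(O(t+\|u\|_{L^1})\bigr)$, and the source term contributes $\int_0^t |u(s)|\,|x(s)|^M\,\mathrm ds = O(\|u\|_{L^1}\,\|u_1\|_{L^\infty}^M)$. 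In the $W^{-1,\infty}$-STLC setting one only controls $\|u_1\|_{L^\infty}$; the quantity $\|u\|_{L^1}$ is unbounded (rapidly oscillating controls make $\|u\|_{L^1}$ arbitrarily large while $\|u_1\|_{L^\infty}$ stays small), so neither the exponential factor nor the mixed source term is controlled. Your bound $|e(t)|\leq C'\|u_1\|_{L^\infty}^{M+1}$ therefore does not follow from the naive comparison.

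The paper resolves exactly this difficulty in the proof of \cref{p:error-xhat} by passing to the auxiliary state $y(t;u) := \Phi_1(-u_1(t),x(t;u))$, where $\Phi_1$ is the flow of $f_1$. The evolution of $y$ is driven by $(\Phi_1(-u_1(t))_* f_0)(y)$: the singular $u\,f_1$ term is factored out, the Lipschitz coefficient of the dynamics is $O(1)$, and the source term involves $u_1$ only. This yields the sharper estimate $x - \hat x = O(\|u_1\|_{L^{M+1}}^{M+1} + |u_1(t)|^{M+1})$, which is crucial for a second reason you overlook: even if an $O(\|u_1\|_{L^\infty}^{M+1})$ error bound held, it could not be absorbed into the drift. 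Indeed $\xi_{\ad^{2k}_{X_1}(X_0)}(t,u)$ behaves like $\int_0^t u_1^{2k}$, and no estimate of the form $\|u_1\|_{L^\infty}^{M+1}\leq \varepsilon\int_0^t u_1^{2k}+C|x|^\beta$ is available (take $u_1$ a narrow bump: the left side is fixed, the right side vanishes). By contrast, the paper's $\|u_1\|_{L^{M+1}}^{M+1} + |u_1(t)|^{M+1}$ is absorbable via $\|u_1\|_{L^{M+1}}^{M+1}\leq \rho^{M+1-2k}\|u_1\|_{L^{2k}}^{2k}$ and the closed-loop estimate on $|u_1(t)|$. In short: you need both the auxiliary-system Gronwall and the $L^{M+1}$ (not $L^\infty$) error norm; the former is what makes the latter possible.
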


\begin{corollary} \label{cor:wk}
    Let $M \in \N^*$.
    Assume that $f_0 \in \CC^{M+1}$, $f_1 \in \CC^M$.
    Let $m \in \N$.
    If system \eqref{syst} is $W^{m,\infty}$-STLC, then, for every $k \in \N^*$ such that $\pi(k,m) \leq M$,
    \begin{equation}
        f_{W_k}(0) \in S_{\intset{1,\pi(k,m)} \setminus \{2\}}(f)(0),
    \end{equation}
    where $\pi(k,m)$ is defined in \eqref{eq:pikm}, or, more precisely when $\pi(k,m) \geq 3$, \eqref{eq:S2-S3-comp} holds.
\end{corollary}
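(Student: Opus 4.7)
The plan is to reduce the finite-regularity corollary to the analytic drift statements already proved, namely \cref{Thm:Kawski_Wm_precis-pos} and \cref{Thm:Kawski_Wm_precis-pos_crible}, by Taylor-truncating the vector fields. Arguing by contraposition, I assume that $f_{W_k}(0) \notin \mathcal{N}(f)(0)$, where $\mathcal{N}$ denotes either $\Bs_{\intset{1,\pi(k,m)} \setminus \{2\}}$ or (in the refined case) the set $\Bs_1 \cup \mathcal{P}_k \cup \Bs_{\intset{4,\pi(k,m)}}$, and I aim to establish a drift in the sense of \cref{Def:drift} that contradicts $W^{m,\infty}$-STLC via \cref{lem:drift-stlc}. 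Introduce the polynomial approximations $\hat{f}_0 := \mathrm{T}_{M+1} f_0$ and $\hat{f}_1 := \mathrm{T}_M f_1$, which are analytic on a neighborhood of $0$ with $\hat{f}_0(0) = 0$. The paper's observation that $f_b(0)$ only depends on the Taylor coefficients of $f_0$ up to order $n_1(b)$ and of $f_1$ up to order $n_1(b)-1$ (formalized in the upcoming \cref{p:fb-hatfb}) yields $\hat{f}_b(0) = f_b(0)$ for every $b \in \Br(X)$ with $n_1(b) \leq M$. Since every bracket appearing in $\mathcal{N} \cup \{W_k\}$ has $n_1 \leq \pi(k,m) \leq M$, the hypothesis translates verbatim to $\hat{f}_{W_k}(0) \notin \mathcal{N}(\hat{f})(0)$.

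Applying the analytic theorem to $(\hat{f}_0,\hat{f}_1)$ would then furnish a linear form $\mathbb{P}$ along $f_{W_k}(0) = \hat{f}_{W_k}(0)$, parallel to $\mathcal{N}(f)(0) = \mathcal{N}(\hat f)(0)$, and constants $C,\beta>1,\alpha$ such that, for every $\varepsilon>0$ and small enough $t$ and $t^{-\alpha}\|u\|_{W^{m,\infty}}$,
\begin{equation}
    \mathbb{P} \hat{x}(t;u) \geq (1-\varepsilon) \xi_{W_k}(t,u) - C |\hat{x}(t;u)|^{\beta},
\end{equation}
where $\hat{x}(\cdot;u)$ is the trajectory of the polynomial system. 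The next step is to transfer this estimate to the actual trajectory $x(\cdot;u)$ of \eqref{syst}. By Taylor's formula applied at order $M+1$ and $M$ respectively, there exist $C',\delta>0$ such that $|f_0(y) - \hat f_0(y)| \leq C' |y|^{M+2}$ and $|f_1(y) - \hat f_1(y)| \leq C' |y|^{M+1}$ for $|y|\leq \delta$. Combining this with the small-state estimate of \cref{p:small-state} (whose proof only requires $\CC^1$ regularity and which still applies to both systems) and a standard Grönwall argument on $x - \hat x$, I expect to obtain
\begin{equation}
    |x(t;u) - \hat{x}(t;u)| = O\left( |x(t;u)|^{M+1} \right)
\end{equation}
as $(t,\|u\|_{W^{-1,\infty}}) \to 0$. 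Since $\mathbb{P}$ is a fixed linear form and $|\hat{x}|^\beta \leq 2^\beta (|x|^\beta + |x-\hat{x}|^\beta)$, substituting this error bound yields the same drift inequality for $x(t;u)$ with a slightly larger constant and exponent $\min(\beta,M+1)>1$, contradicting $W^{m,\infty}$-STLC.

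The hard part will be the Grönwall step: one must ensure the trajectory-error bound is genuinely a power of $|x|$ larger than $1$, and that this holds uniformly for controls $u$ that can be large in $L^\infty$ while the state remains small (since the natural scale for $x$ is $\|u\|_{W^{-1,\infty}}$, not $\|u\|_{L^\infty}$). The key to getting around this is that the integrand $|f_j - \hat{f}_j|(x(s))$ in the variation-of-constants formula is controlled by a high power of $\|x\|_{L^\infty(0,t)}$ alone, which by \cref{p:small-state} is itself bounded by $\|u\|_{W^{-1,\infty}}$, so the control norm enters only through the smallness of the state. Once this uniform error estimate is secured, the remainder of the argument is routine bookkeeping, and the refined conclusion \eqref{eq:S2-S3-comp} follows by the same reduction applied to \cref{Thm:Kawski_Wm_precis-pos_crible} instead.
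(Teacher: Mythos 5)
Your high-level reduction is the right one, and matches the paper's: Taylor-truncate $f_0, f_1$ to analytic polynomials $\hat f_0, \hat f_1$, observe that the evaluated brackets at $0$ agree up to the required order (this is \cref{p:fb-hatfb}), apply the analytic drift theorems to the truncated system, and transfer the drift back to $x(t;u)$ via an error estimate. The gap is in the error estimate, and it is not a bookkeeping issue but the whole crux of \cref{s:C^k}.

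You claim $|x(t;u) - \hat x(t;u)| = O(|x(t;u)|^{M+1})$, and then immediately correct yourself to say the Gr\"onwall argument really yields a bound in terms of $\|x\|_{L^\infty(0,t)}$, which by \cref{p:small-state} is $O(\|u_1\|_{L^\infty})$. Both versions fail. The first is simply false as stated: the accumulated truncation error along a trajectory that swings out and returns near $0$ is not controlled by any power of the \emph{final} state. The second, $|x(t;u) - \hat x(t;u)| = O(\|u_1\|_{L^\infty}^{M+1})$, is what a naive Gr\"onwall argument does give --- but it is not absorbable by the drift mechanism. Take $M = \pi(k,0) = 2k-1$ and oscillatory $u(s) = \cos(\omega s)$: then $\|u_1\|_{L^\infty}^{M+1} \sim \omega^{-2k}$ while $\xi_{W_k}(t,u) \sim t\,\omega^{-2k}$, so the error term is \emph{larger} than the drift by a factor $1/t$. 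No interpolation of the Gagliardo--Nirenberg type rescues a $\|u_1\|_{L^\infty}^{M+1}$ error, because unlike $\|u_1\|_{L^{M+1}}^{M+1}$ it carries no extra factor of $t$.

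The paper's \cref{p:error-xhat} instead proves the sharper bound
$x(t;u) - \hat x(t;u) = O\bigl(\|u_1\|_{L^{M+1}}^{M+1} + |u_1(t)|^{M+1}\bigr)$,
and this is exactly calibrated so that the first term is absorbed by the same interpolation inequality \eqref{eq:quad-interpol} used for the remainder in \cref{thm:Key_1}, while the second is absorbed by the closed-loop estimate (e.g.\ \eqref{eq:quad-loop}). Obtaining this requires the non-trivial ``auxiliary system'' trick: one passes to $y(t;u) := \Phi_1(-u_1(t), x(t;u))$, Taylor-expands the push-forward $\Phi_1(-v)_* f_0$ in $v$, and only then runs Gr\"onwall on $y - \hat y$. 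This is precisely what removes the offending $\|u_1\|_{L^\infty}$ factor and replaces it with the integrated quantity $\|u_1\|_{L^{M+1}}$ plus a boundary term $|u_1(t)|$. Without this step, your transfer from $\hat x$ to $x$ does not close. (A minor point: the paper truncates $f_0$ at order $M$ and $f_1$ at order $M-1$, not $M+1$ and $M$ as you wrote; this does not affect the logic but does affect the bookkeeping in \cref{p:error-xhat}.)
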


\begin{corollary}
    Assume that system \eqref{syst} is $L^\infty$-STLC.
    If $f_0 \in \CC^4$ and $f_1 \in \CC^3$, then $f_{W_2}(0) \in \mathcal{N}_2(f)(0)$ (see \eqref{def:mathcalE2}).
    If $f_0 \in \CC^6$ and $f_2 \in \CC^5$, then $f_{W_3}(0) \in \mathcal{N}_3(f)(0)$ (see \eqref{def:mathcalE3}).
\end{corollary}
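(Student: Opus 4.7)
The plan is to reduce each corollary to its analytic counterpart by replacing the non-analytic vector fields with their Taylor polynomials at $0$, which are automatically analytic, and then showing that the drift estimate derived from the analytic theorem survives when one reverts to the original system.

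More precisely, for each corollary, let $N$ be the largest value of $n_1(b)$ appearing in the relevant necessary condition (so $N = 2k$ for Sussmann--Stefani, $N = \pi(k,m)$ for Kawski, $N \in \{ 3, 5 \}$ for the refined $W_2$, $W_3$ conditions). The regularity hypotheses ($f_0 \in \CC^{N+1}$, $f_1 \in \CC^N$) are precisely what is needed so that the truncations
\begin{equation}
    \hat f_0 := \mathrm{T}_{N} f_0, \qquad \hat f_1 := \mathrm{T}_{N-1} f_1
\end{equation}
are well-defined polynomial (hence analytic) vector fields with $\hat f_0(0) = 0$, and so that \cref{p:fb-hatfb} can be invoked to guarantee $f_b(0) = \hat f_b(0)$ for every $b \in \Br(X)$ with $n_1(b) \leq N$. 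In particular, the algebraic hypotheses defining each necessary condition (failure of the inclusion $f_\bb(0) \in \mathcal{N}(f)(0)$) transfer verbatim between $(f_0,f_1)$ and $(\hat f_0, \hat f_1)$.

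First I would argue by contraposition in each corollary: assume the algebraic necessary condition fails for $(f_0,f_1)$. Then it also fails for $(\hat f_0, \hat f_1)$, so the corresponding analytic theorem from \cref{s:main-results} applies to $(\hat f_0, \hat f_1)$ and produces a (possibly weak) drift of $\hat x(\cdot;u)$ along $f_\bb(0)$ parallel to $\mathcal{N}(f)(0)$, in the appropriate norm scale $(t,\|u\|_{W^{m,\infty}}) \to 0$. That is, for a fixed component $\mathbb{P}$ along $f_\bb(0)$ parallel to $\mathcal{N}(f)(0)$,
\begin{equation}
    \mathbb{P} \hat x(t;u) \geq (1-\varepsilon) \xi_\bb(t,u) - C |\hat x(t;u)|^\beta
\end{equation}
for $(t,\|u\|_{W^{m,\infty}})$ small enough, with $\beta > 1$.

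Next I would transfer this estimate to the original trajectory. Writing $x = \hat x + (x - \hat x)$ and invoking \cref{p:error-xhat} to control the truncation error by $|x-\hat x| = O(\|u_1\|_{L^{N+1}}^{N+1} + |x(t;u)|^{1+1/N})$ (or a variant of the same scaling as the remainder in \cref{thm:Key_1}, which is exactly the higher-order term the analytic drift already tolerates), I obtain
\begin{equation}
    \mathbb{P} x(t;u) \geq (1-\varepsilon) \xi_\bb(t,u) - C' \bigl( |x(t;u)|^\beta + \|u_1\|_{L^{N+1}}^{N+1} \bigr).
\end{equation}
The key observation is that $\|u_1\|_{L^{N+1}}^{N+1}$ was already absorbed by the interpolation inequality used in the analytic proof (this is precisely \cref{it:strat-3} of our general strategy), so the same interpolation yields an estimate of the form $\mathbb{P} x(t;u) \geq (1-2\varepsilon)\xi_\bb(t,u) - C'' |x(t;u)|^\beta$, i.e.\ a drift of the original system in the sense of \cref{Def:drift}. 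By \cref{lem:drift-stlc} (or \cref{lem:weak-drift-stlc} in the low-regularity case), $(f_0,f_1)$ fails to be $W^{m,\infty}$-STLC, contradicting the hypothesis and proving the corollary.

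The main obstacle is the transfer step: one must verify that the error $|x-\hat x|$ satisfies exactly the kind of bound that the analytic drift proof is already designed to absorb, which requires the Taylor truncation orders for $f_0$ and $f_1$ to be chosen consistently with the interpolation exponent $N+1$ appearing in the remainder of the representation formula \eqref{eq:x=ZM+O}. This explains the asymmetry "$f_0 \in \CC^{N+1}$, $f_1 \in \CC^N$" in the hypotheses: one extra derivative on each is needed beyond what is required to define the brackets, precisely to give the Taylor remainder a quantitative estimate that beats the bracket-order scaling. All the other pieces (component $\mathbb{P}$, set $\mathcal{N}$, interpolation inequalities, closed-loop estimates) are purely algebraic or functional-analytic and are insensitive to whether the underlying vector fields are analytic or merely $\CC^k$.
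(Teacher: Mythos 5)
Your proposal is correct and follows the same strategy as the paper's (very terse) proof sketch: truncate $f_0,f_1$ to their Taylor polynomials $\hat f_0 = \mathrm{T}_N f_0$ and $\hat f_1 = \mathrm{T}_{N-1} f_1$, invoke the analytic drift theorem for the truncated (hence analytic) system, then transfer the drift estimate from $\hat x$ back to $x$ using the truncation error estimate, whose scaling matches the terms already absorbed in the analytic argument.

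One small imprecision: you attribute to \cref{p:error-xhat} an error of the form $O(\|u_1\|_{L^{N+1}}^{N+1} + |x(t;u)|^{1+1/N})$. The actual conclusion of \cref{p:error-xhat} is $x - \hat{x} = O(\|u_1\|_{L^{N+1}}^{N+1} + |u_1(t)|^{N+1})$; the $|x|^{1+1/N}$ term you wrote comes from the representation formula \eqref{eq:x=ZM+O}, not from the Taylor-truncation lemma. This does not break the argument --- the boundary term $|u_1(t)|^{N+1}$ is likewise absorbed via the closed-loop estimate (e.g.\ as in \eqref{eq:stefani-loop}) exactly as your hedge anticipates --- but it is worth stating the correct form so the reader can verify absorption. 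Likewise, in the $\CC^k$ setting the brackets $f_b(0)$ are given a \emph{meaning} by the definition preceding the corollaries (evaluation of $\hat f_b$ at $0$); \cref{p:fb-hatfb} is stated for $\CC^\infty$ vector fields and serves as the consistency check that justifies this definition, rather than being directly invoked for $\CC^k$ data.
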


\begin{corollary}
    Assume that $f_0 \in \CC^8$ and $f_1 \in \CC^7$.
    Then \cref{thm:sextic} holds.
\end{corollary}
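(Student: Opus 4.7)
The strategy is a Taylor-polynomial truncation reduction to the analytic setting, in the same spirit implicitly used for the previous corollaries. Arguing by contraposition, assume $f_D(0) \notin \mathcal{N}_D(f)(0)$ where $D = \ad^2_{P_{1,1}}(X_0)$, and define $\hat f_0 := \mathrm{T}_7 f_0$ and $\hat f_1 := \mathrm{T}_6 f_1$, the Taylor polynomials of $f_0$ and $f_1$ at $0$ of respective orders $7$ and $6$. These are polynomial (hence analytic) vector fields with $\hat f_0(0) = 0$. Since any $b \in \Bs_{\intset{1,7}}$ has $n_1(b) \leq 7$, the value $f_b(0)$ depends only on derivatives of $f_0$ up to order $n_1(b) \leq 7$ and of $f_1$ up to order $n_1(b) - 1 \leq 6$ (this is the content of the referenced \cref{p:fb-hatfb}). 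Hence $\hat f_b(0) = f_b(0)$ for every $b \in \Bs_{\intset{1,7}}$, so in particular $\hat f_D(0) = f_D(0)$ and $\mathcal{N}_D(\hat f)(0) = \mathcal{N}_D(f)(0)$, and the non-inclusion assumption transfers to $\hat f$.

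By \cref{thm:sextic-drift} applied to the analytic system $\dot{\hat x} = \hat f_0(\hat x) + u \hat f_1(\hat x)$, there exist $C > 0$ and $\beta > 1$ such that, letting $\mathbb{P}$ be a component along $f_D(0)$ parallel to $\mathcal{N}_D(f)(0)$, for every $\varepsilon > 0$ one has for $(t, \|u\|_{L^\infty})$ small enough:
\begin{equation}
\mathbb{P} \hat x(t;u) \geq (1-\varepsilon) \xi_D(t,u) - C |\hat x(t;u)|^\beta.
\end{equation}
To transfer this estimate to $x$, set $e := x - \hat x$. Since $f_0 \in \CC^8$ and $f_1 \in \CC^7$, Taylor's theorem yields $|f_0(y) - \hat f_0(y)| \leq C_0 |y|^8$ and $|f_1(y) - \hat f_1(y)| \leq C_1 |y|^7$ on a neighborhood of $0$; splitting $f_j(x) - \hat f_j(\hat x) = (f_j(x) - f_j(\hat x)) + (f_j(\hat x) - \hat f_j(\hat x))$ and using the local Lipschitz regularity of $f_0, f_1$ yields $|\dot e(s)| \leq C|e(s)|(1 + |u(s)|) + C(|\hat x(s)|^8 + |u(s)||\hat x(s)|^7)$. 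Gronwall's lemma, combined with the small-state bound $|\hat x(s)| = O(\|u_1\|_{L^\infty})$ from \cref{p:small-state} applied to the analytic truncated system, gives
\begin{equation}
|e(t)| = O\bigl(t \|u_1\|_{L^\infty}^8 + \|u_1\|_{L^\infty}^7 \|u\|_{L^1}\bigr).
\end{equation}

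It remains to show that this error is dominated by $\varepsilon \xi_D(t,u) + C'|x(t;u)|^\beta$ as $(t, \|u\|_{L^\infty}) \to 0$, after which the drift for $\hat x$ transfers to $x$ and the conclusion follows from \cref{lem:drift-stlc}. This is the main obstacle. The closed-loop estimate \eqref{eq:D-CL-u1} (which transfers to the $\CC^k$ setting by running the same truncation argument at the lower order $M = 1$ first, so that it is available once this base case is established) converts powers of $|u_1(t)|$ into $O(|x|)$ plus subdrift quantities; the interpolation \eqref{eq:D-interpol-u1-L8} then bounds $\|u_1\|_{L^\infty}^8$-type quantities by $\varepsilon \xi_D + C|x|^{8/7}$ for $\beta = 8/7$. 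All other ingredients of the proof of \cref{thm:sextic-drift} (the dominant part of the logarithm, the vectorial relations of \cref{p:D-vectors}, the algebraic computations of \cref{s:sextic-algebra}, and the interpolation inequalities of \cref{s:sextic-interpolation}) are statements purely at the level of the values $f_b(0) \in \R^d$ or of primitives of $u$, and so transfer verbatim from $\hat f$ to $f$.
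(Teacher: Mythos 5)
Your overall strategy — replace $f_0, f_1$ by their Taylor polynomials $\hat f_0 = \mathrm{T}_7 f_0$, $\hat f_1 = \mathrm{T}_6 f_1$, observe that $\hat f_b(0) = f_b(0)$ for $b \in \Bs_{\intset{1,7}}$, apply \cref{thm:sextic-drift} to the analytic truncated system, and then absorb the truncation error into the drift — is exactly the one the paper takes. The gap is in your estimate of the truncation error $e := x - \hat x$.

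You run a naive Gr\"onwall estimate and obtain $|e(t)| = O\bigl(t \|u_1\|_{L^\infty}^8 + \|u_1\|_{L^\infty}^7 \|u\|_{L^1}\bigr)$. This is strictly weaker than what the paper proves in \cref{p:error-xhat}, namely $|e(t)| = O(\|u_1\|_{L^8}^8 + |u_1(t)|^8)$, and the difference is not cosmetic: the paper's estimate is an $L^8$ integral of $u_1$ plus an \emph{endpoint} value, both of which are absorbed by the interpolation inequality \eqref{eq:D-interpol-u1-L8} and the closed-loop estimate \eqref{eq:D-CL-u1} (which controls $u_1(t)$, the terminal value, not $\|u_1\|_{L^\infty}$). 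Your estimate instead involves $\|u_1\|_{L^\infty}$, a supremum over the whole interval, for which there is no analogous interpolation against $\xi_D$, and the closed-loop estimate does not apply since it only constrains $u_1$ at the endpoint. To see that your bound cannot be absorbed in general, note that for any control $\|u_1\|_{L^\infty}^7 \|u\|_{L^1} \leq t^8 \|u\|_{L^\infty}^8$, while for a ``typical'' control one has $\xi_D \sim t^9 \|u\|_{L^\infty}^6$, so the ratio is $\sim \|u\|_{L^\infty}^2 / t$, which does not go to zero in the joint limit $(t, \|u\|_{L^\infty}) \to 0$ (e.g.\ taking $\|u\|_{L^\infty} \sim t^{1/4}$). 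The term $t \|u_1\|_{L^\infty}^8 \geq \|u_1\|_{L^8}^8$ has the same deficiency.

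The paper addresses precisely this issue and is explicit about it (see the opening lines of the proof of \cref{p:error-xhat}): the naive Gr\"onwall estimate only yields a bound in terms of $\|u\|_{L^1}$ (equivalently $\|u_1\|_{L^\infty}$), which is insufficient; one must first pass to the auxiliary unknown $y(t;u) = \Phi_1(-u_1(t), x(t;u))$ obtained by conjugating through the flow of $f_1$, run the Gr\"onwall argument in the $y$-variable (where the dependence on $u$ enters only through $u_1$), and then convert back. This is the conceptual heart of \cref{p:error-xhat}, and it is exactly the ingredient your proposal skips. The closing paragraph of your proposal, in which you gesture at ``running the truncation argument at $M=1$ first'' to obtain the closed-loop estimate in the $\CC^k$ setting, does not repair the gap: it does not change what can be extracted from $|u_1(t)|$ versus $\|u_1\|_{L^\infty}$.
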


All these corollaries follow form the main theorems and the approximation result \cref{p:error-xhat}.
One writes $x \approx \hat{x}$, where $\hat{x}$ is the solution to a system driven by the truncated Taylor expansions of $f_0$ and $f_1$.
For the $\hat{x}$ system, one can apply the drift results of the previous sections.
Since the truncation error is of the same size as (or smaller than) the error terms which were already absorbed by the drift, the drift conclusion remains true on the state $x$.

\subsection{Brackets at zero only depend on low-order Taylor coefficients}

\begin{lemma} \label{p:fb-hatfb}
    Let $M \in \N^*$, $\delta > 0$, $f_0 \in \CC^\infty(B_\delta;\R^d)$ with $f_0(0) = 0$ and $f_1 \in \CC^\infty(B_\delta;\R^d)$.
    Let $\hat{f}_0 := \mathrm{T}_M f_0$ (respectively $\hat{f}_1 := \mathrm{T}_{M-1} f_1$) be the truncated Taylor series at $0$ of $f_0$ (resp.\ $f_1$) of order~$M$ (resp.\ $M-1$).
    For all $b \in \Br(X)$ with $n_1(b) \leq M$, $f_b(0) = \hat{f}_b(0)$.
\end{lemma}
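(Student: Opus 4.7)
My plan is to prove the stronger inductive statement that for every $b \in \Br(X)$ and every $j \in \N$, the value $D^j f_b(0)$ is a polynomial expression in the Taylor coefficients $\{D^i f_0(0)\}_{i \leq j + n_1(b)}$ and $\{D^i f_1(0)\}_{i \leq j + n_1(b) - 1}$. The lemma is the special case $j = 0$: the hypothesis $n_1(b) \leq M$ then guarantees that $f_b(0)$ depends only on data preserved by the truncations $\hat f_0 = \mathrm{T}_M f_0$ and $\hat f_1 = \mathrm{T}_{M-1} f_1$, so $f_b(0) = \hat f_b(0)$.

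The induction is on $|b|$. The base case $|b|=1$ is trivial: $D^j f_{X_0}(0) = D^j f_0(0)$ and $D^j f_{X_1}(0) = D^j f_1(0)$. For $b = (b_1, b_2)$, expanding $f_b = Df_{b_2} \cdot f_{b_1} - Df_{b_1} \cdot f_{b_2}$ via the Leibniz rule yields
\begin{equation*}
D^j f_b(0) = \sum_{i+k=j} \binom{j}{i} \left[ D^{i+1} f_{b_2}(0) \cdot D^k f_{b_1}(0) - D^{i+1} f_{b_1}(0) \cdot D^k f_{b_2}(0) \right].
\end{equation*}
When $n_1(b_1) \geq 1$ and $n_1(b_2) \geq 1$, applying the inductive hypothesis to each factor and using $n_1(b_1) + n_1(b_2) = n_1(b)$ with $i + k = j$ immediately yields the claimed bound on the required Taylor orders of $f_0$ and $f_1$.

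The subtle point, which is the real content of the lemma, is the interplay with the assumption $f_0(0) = 0$ when one of the factors is $X_0$. Say $b_1 = X_0$ (so $n_1(b_1) = 0$, $n_1(b) = n_1(b_2)$): the term $D^{i+1} f_{b_2}(0) \cdot D^0 f_0(0)$ would require Taylor coefficients of $f_0$ up to order $j+1+n_1(b_2) = j + 1 + n_1(b)$, one more than allowed. But this very term vanishes thanks to $f_0(0) = 0$, so effectively $k \geq 1$ in the first sum, and then $i + 1 \leq j$ restores the bound $i+1+n_1(b_2) \leq j + n_1(b)$; the second sum is handled directly. The case $b_2 = X_0$ is symmetric. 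Finally, if $n_1(b_1) = n_1(b_2) = 0$, then $b$ is an iterated bracket involving only $X_0$, so either $b = X_0$ (handled by the base case) or $|b| \geq 2$ and $f_b \equiv 0$ by antisymmetry of the bracket, making the statement vacuous. The main obstacle is purely bookkeeping: keeping track of which Taylor orders enter each term and invoking the equilibrium condition $f_0(0) = 0$ at the precise place where the naive count loses one order.
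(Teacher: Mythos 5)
Your proof is correct, and it takes a genuinely different route from the paper's. The paper introduces a notation $g =_{[k]} h$ (Taylor expansions at $0$ agree to order $k-1$), establishes as a preliminary remark that bracketing loses a derivative in general but not when one factor vanishes at $0$, and then performs an induction on $n_1(b)$ (with an implicit inner recursion on the trailing $\ad_{X_0}$ factors, since $b = (b_1, b_2)$ with $b_2 = X_0$ does not reduce $n_1$). You instead track, for every derivative order $j$, exactly which Taylor coefficients of $f_0$ and $f_1$ determine $D^j f_b(0)$, and you induct on $|b|$; this is a single clean induction, since both $|b_1|$ and $|b_2|$ drop strictly. The crucial mechanism is identical in both: when a factor is $X_0$, the Leibniz term in which it appears undifferentiated produces a coefficient that would exceed the allowed order by one, and this term is killed by $f_0(0) = 0$ — exactly the ``no loss of derivative'' remark in the paper. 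Your formulation has the advantage of making the bookkeeping completely explicit and avoiding the two-level induction; the paper's $=_{[k]}$ formalism is slightly more abstract but packages the key step into a reusable lemma. One small point worth tightening if this were to appear in print: in the case $n_1(b_1) = n_1(b_2) = 0$ with $|b| \geq 2$, the vanishing of $f_b$ follows because $\eval(b_1)$ and $\eval(b_2)$ both lie in $\R X_0$ (each is either $0$ or $\pm X_0$), hence their bracket vanishes; ``antisymmetry'' is the mechanism only in the case $b_1 = b_2 = X_0$.
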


\begin{proof}
    \step{Notations and preliminary remarks}
    As in \cite[Section 3.1]{JDE}, for two vector fields $g, h \in \CC^\infty(B_\delta;\R^d)$ and $k \in \N^*$, we write $g =_{[k]} h$ when the Taylor expansions of $g$ and $h$ at $0$ are equal up to order $k-1$.
    When $k \geq 2$, $g =_{[k]} \hat{g}$ and $h =_{[k]} \hat{h}$, straightforward computations prove that $[g,h] =_{[k-1]} [\hat{g},\hat{h}]$.
    When $k \geq 1$, $g(0) = 0$, $g =_{[k+1]} \hat{g}$ and $h =_{[k]} \hat{h}$, straightforward computations prove that $[g,h] =_{[k]} [\hat{g},\hat{h}]$, so that there is ``no loss of derivative'' in this weak sense.
    
    \step{Computation of brackets}
    We now proceed by induction on $n_1(b) \in \intset{1,M}$, proving that, for every $b \in \Br(X)$ with $1 \leq n_1(b) \leq M$, $f_b =_{[M+1-n_1(b)]} \hat{f}_b$.
    
    When $n_1(b) = 1$, by symmetry, we can assume that $b = X_1 0^\nu$ for some $\nu \in \N$.
    Since $f_1 =_{[M]} \hat{f}_1$, iterating the previous remarks yields $f_{X_1 0^\nu} =_{[M]} \hat{f}_{X^1 0^\nu}$, which gives the initialization.
    
    Now let $b \in \Br(X)$.
    By symmetry, we can assume that $b = (b_1,b_2) 0^\nu$ for some $\nu \in \N$, with $b_1, b_2 \neq X_0$.
    By the induction hypothesis $f_{b_1} =_{[M+1-n_1(b_1)]} \hat{f}_{b_1}$ and $f_{b_2} =_{[M+1-n_1(b_2)]} \hat{f}_{b_2}$.
    Hence, by the preliminary remark, $f_{(b_1,b_2)} =_{[M+1-n]} \hat{f}_{(b_1,b_2)}$ with $n := 1 + \max n_1(b_1), n_1(b_2) \leq n_1(b)$.
    And by the preliminary remark, bracketing with $f_0$ preserves this approximation level, so we have proved that $f_b =_{[M+1-n_1(b)]} \hat{f}_b$.
    
    \step{Evaluation at zero}
    When $b = X_0$, $f_0(0) = \hat{f}_0(0)$.
    When $b \in \Br(X)$ with $1 \leq n_1(b) \leq M$, we have proved that $f_b =_{[M+1-n_1(b)]} \hat{f}_b$ so $f_b =_{[1]} \hat{f}_b$ and thus $f_b(0) = \hat{f}_b(0)$.
\end{proof}

\subsection{Estimate of the approximation error}

\begin{lemma} \label{p:error-xhat}
    Let $M \in \N^*$, $\delta > 0$, $f_0 \in \CC^{M+1}(B_\delta;\R^d)$ with $f_0(0) = 0$ and $f_1 \in \CC^{M}(B_\delta;\R^d)$.
    Let $\hat{f}_0 := \mathrm{T}_M f_0$ (respectively $\hat{f}_1 := \mathrm{T}_{M-1} f_1$) be the truncated Taylor series at $0$ of $f_0$ (resp.\ $f_1$) of order $M$ (resp.\ $M-1$).
    Then
    \begin{equation} \label{eq:x-hatx}
        x(t;u) - \hat{x}(t;u) = O \left(\|u_1\|_{L^{M+1}}^{M+1}+|u_1(t)|^{M+1}\right),
    \end{equation}
    where $\hat{x}(t;u)$ denotes the solution with initial data $0$ to
    \begin{equation}
        \dot{\hat{x}} = \hat{f}_0(\hat{x}) + u(t) \hat{f}_1(\hat{x}).
    \end{equation}
\end{lemma}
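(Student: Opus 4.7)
The plan is to estimate $e(t) := x(t;u) - \hat x(t;u)$ directly via its ODE. One has $e(0)=0$ and
\begin{equation*}
\dot e = \bigl[f_0(x)-f_0(\hat x)\bigr] + u\bigl[f_1(x)-f_1(\hat x)\bigr] + R(s),
\qquad R(s) := (f_0-\hat f_0)(\hat x(s)) + u(s)(f_1-\hat f_1)(\hat x(s)).
\end{equation*}
Since $f_0\in\CC^{M+1}$ with $f_0(0)=0$ and $f_1\in\CC^M$, Taylor's theorem at $0$ yields constants $C>0$ such that, near $0$,
\begin{equation*}
\bigl|(f_0-\hat f_0)(y)\bigr|\leq C|y|^{M+1},\qquad \bigl|(f_1-\hat f_1)(y)\bigr|\leq C|y|^{M}.
\end{equation*}
The first two terms in $\dot e$ are Lipschitz in $e$ (with bounded Lipschitz constants near $0$).

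Applying a Gronwall/Duhamel argument to the linearized equation $\dot z = (Df_0(\hat x)+uDf_1(\hat x))z$, one obtains $|e(t)|\leq K\int_0^t |R(s)|\,ds$, where $K$ is bounded uniformly as $(t,\|u\|_{W^{-1,\infty}})\to 0$. To control $K$ despite the possible non-smallness of $\|u\|_{L^1}$, one exploits the scalar-input structure: factoring out the flow of $u\,Df_1$ (analogous to the interaction-picture argument underlying \cref{thm:Key_1}) one reduces to a Gronwall with bounded drift. Equivalently, \cref{thm:Key_1} applied to both $x$ and $\hat x$ together with \cref{p:fb-hatfb} (which yields $\cZ{M}(t,f,u)(0)=\cZ{M}(t,\hat f,u)(0)$, since $\eta_b$ only depends on $t,u$) shows that the leading Magnus parts cancel, reducing to bounding the remainder contributions.

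It then remains to bound the two integrals. Using $\hat f_0(0)=0$ and integrating the ODE for $\hat x$, one obtains the sharpening of \cref{p:small-state} :
\begin{equation*}
\hat x(s) = u_1(s)\,f_1(0) + r(s),\qquad |r(s)|=O\!\bigl(\|u_1\|_{L^\infty(0,s)}^{2}\bigr),
\end{equation*}
so that $|\hat x(s)|^{M+1}\leq C\bigl(|u_1(s)|^{M+1} + \|u_1\|_\infty^{M+2}\bigr)$. Integrating over $(0,t)$ gives
$\int_0^t|\hat x|^{M+1}\,ds = O\bigl(\|u_1\|_{L^{M+1}}^{M+1}\bigr)$ in the regime under consideration. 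For the mixed term, the leading part is $\int_0^t u(s)\,u_1^{M}(s)\,\Psi(u_1(s))\,ds$ for a bounded tensorial $\Psi$ (coming from the Taylor remainder evaluated along the direction $f_1(0)$). The identity $\frac{d}{ds}\tfrac{|u_1|^{M+1}}{M+1}=u\,|u_1|^{M}\operatorname{sign}(u_1)$ combined with an integration by parts produces the boundary term $\tfrac{1}{M+1}|u_1(t)|^{M+1}\Psi(u_1(t))$, which contributes $O(|u_1(t)|^{M+1})$, all remaining contributions being absorbable by $\|u_1\|_{L^{M+1}}^{M+1}$.

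The main technical obstacle is the fourth step — justifying that the remainder produced by the integration by parts in $\int_0^t u|\hat x|^M\,ds$ is indeed controlled in our regime, despite $f_1$ being only $\CC^{M}$ (so that $(f_1-\hat f_1)$ has limited smoothness). This is handled by carrying out the integration by parts on the $\CC^1$ map $s\mapsto (f_1-\hat f_1)(\hat x(s))$ (which is smooth enough since $\hat x$ is smooth and $f_1\in\CC^M$ with $M\geq 1$), and expanding $\dot{\hat x}=\hat f_0(\hat x)+u\hat f_1(\hat x)$ to reduce to estimates of the same type at the lower order, iterating if needed. Together with the cancellation of the Magnus part ensured by \cref{p:fb-hatfb}, one concludes \eqref{eq:x-hatx}.
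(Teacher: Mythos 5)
Your proposal correctly identifies the main obstacle — a direct Grönwall estimate on $e = x - \hat x$ fails because the Grönwall constant would involve $\exp(\int_0^t |u|)$, which is not controlled by $\|u\|_{W^{-1,\infty}}$ — and you correctly point to the interaction-picture / auxiliary-system idea as the resolution. However, the parts of the argument you actually carry out contain genuine errors, and the part that would make it rigorous (the auxiliary system) is only gestured at.

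Two concrete problems. First, the alternative route you propose via \cref{thm:Key_1} is circular: \cref{thm:Key_1} is stated only for $f_0,f_1 \in \CC^{\omega,r}$, whereas the whole point of \cref{p:error-xhat} is to handle vector fields of \emph{finite} regularity. You can apply \cref{thm:Key_1} to $\hat x$ (since $\hat f_0,\hat f_1$ are polynomials), but not to $x$. Second, the claimed sharpening of \cref{p:small-state},
\begin{equation*}
\hat x(s) = u_1(s)\,f_1(0) + r(s), \qquad |r(s)| = O\bigl(\|u_1\|_{L^\infty}^2\bigr),
\end{equation*}
is false. Take $d=1$, $f_0(x)=x$, $f_1 = 1$; then $\hat x(t) = u_1(t) + \int_0^t e^{t-s} u_1(s)\,\mathrm{d}s$, so $r(t) = O(t\,\|u_1\|_{L^\infty})$ and not $O(\|u_1\|_{L^\infty}^2)$. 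With the correct estimate $|r(s)| = O(t\,\|u_1\|_{L^\infty})$, your bound on $\int_0^t |\hat x|^{M+1}$ no longer closes: one picks up a term of size $t^{M+2}\|u_1\|_{L^\infty}^{M+1}$, which is not $O(\|u_1\|_{L^{M+1}}^{M+1})$ in general (consider $u_1$ concentrated on a short subinterval). The downstream integration by parts then also breaks down, because $(f_1-\hat f_1)(\hat x(s))$ is not a function of $u_1(s)$ alone, so the chain-rule/primitive argument you invoke only applies to the leading term, and the correction involves $|r(s)|$ in an essential way.

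The paper's proof avoids all of this by working throughout with the conjugated unknown $y(t;u):=\Phi_1(-u_1(t),x(t;u))$. Its ODE $\dot y = (\Phi_1(-u_1(t))_*f_0)(y)$ has no $u(t)$ factor, so the Grönwall constant is genuinely bounded; and crucially $|y| = O(\|u_1\|_{L^1})$ (not merely $O(\|u_1\|_{L^\infty})$), so that $\int_0^t |y|^{M+1} = O(t^{M+1}\|u_1\|_{L^{M+1}}^{M+1})$ by Hölder, which closes the estimate. The boundary term $O(|u_1(t)|^{M+1})$ in \eqref{eq:x-hatx} then comes from undoing the conjugation $x = \Phi_1(u_1(t),y)$, not from an integration by parts. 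To repair your argument you would need to carry out the auxiliary-system construction explicitly rather than appealing to it.
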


\begin{proof}
    Such an estimate is straightforward to prove when the right-hand side of \eqref{eq:x-hatx} is replaced by $\|u\|_{L^1}^{M+1}$.
    To obtain an estimate involving only $u_1$, we need to consider an appropriate ``auxiliary system'' as in \cite[Section 7]{P1} or \cite[Section 6.3]{JDE}.
    
    \step{Computations on the auxiliary system}
    Let $\Phi_1$ denote the flow of $f_1$, which is well-defined locally.
    We then introduce
    \begin{equation}
        y(t;u) := \Phi_1(-u_1(t),x(t;u)).
    \end{equation}
    This new unknown satisfies $y(0;u) = 0$ and
    \begin{equation}
        \dot{y} = \left(\Phi_1(-u_1(t))_* f_0\right)(\hat{y}),
    \end{equation}
    where $\Phi_1(-u_1(t))_* f_0$ is the push-forward of the vector field $f_0$ by the diffeomorphism $\Phi_1(-u_1(t),\cdot)$.
    In particular, for $v \in \R$ and $p \in \R^d$ small enough, (see e.g.\ \cite[equation (169)]{P1}, albeit with swapped indexes),
    \begin{equation} \label{eq:Phi1f0p-sum}
        (\Phi_1(-v)_* f_0)(p)
        = \sum_{k=0}^{M-1} \frac{v^k}{k!} \ad^k_{f_1}(f_0)(p)
        + \int_0^v \frac{(v-v')^{M-1}}{(M-1)!} \left(\Phi_1(-v')_* \ad^{M}_{f_1}(f_0)\right)(p) \dd v'.
    \end{equation}
    By \cref{p:tool-1} (with $k \gets M$) and \cref{p:tool-2} (with $g \gets \ad^M_{f_1}(f_0)$ and $\nu \gets 0$),
    \begin{equation} \label{eq:adf1f0-O}
        \ad_{f_1}^M(f_0)(p) = \ad_{f_1}^M(f_0)(0) + \underset{|p|\to 0}{O}(|p|).
    \end{equation}
    Moreover, since $f_1 \in \CC^1$,
    \begin{equation}
        \Phi_1(v,p) = 0 + \underset{v \to 0, |p| \to 0}{O}(|v|+|p|)
    \end{equation}
    and
    \begin{equation}
        \left(\partial_p \Phi_1(v,p)\right)^{-1}
        = \operatorname{Id} + \underset{v \to 0, |p| \to 0}{O}(|v|+|p|).
    \end{equation}
    Thus, combining the last three estimates proves that, for $|v'| \leq |v|$
    \begin{equation}
        \left(\Phi_1(-v')_* \ad^{M}_{f_1}(f_0)\right)(p)
        = \ad_{f_1}^M(f_0)(0) + \underset{v \to 0, |p| \to 0}{O}(|v|+|p|).
    \end{equation}
    Substituting in \eqref{eq:Phi1f0p-sum} and using Young's inequality proves that
    \begin{equation} \label{eq:Phi1vf0p-Taylor}
        \left(\Phi_1(-v)_* f_0\right)(p)
        = \sum_{k=0}^{M-1} \frac{v^k}{k!} \ad^k_{f_1}(f_0)(p)
        + \frac{v^M}{M!} \ad^M_{f_1}(f_0)(0)
        + \underset{v \to 0, |p| \to 0}{O}(|v|^{M+1}+|p|^{M+1}).
    \end{equation}
    
    \step{Grönwall estimate for the auxiliary systems}
    We introduce similarly $\hat{y}(t;u)$ using $\hat{\Phi}_1$ (the flow of $\hat{f}_1$) and $\hat{f}_0$.
    Then the counterpart for \eqref{eq:Phi1vf0p-Taylor} holds, \emph{mutatis mutandis}, since $\hat{f}_0$ and $\hat{f}_1$ are smooth.
    Using these estimates, one obtains
    \begin{equation}
        \begin{split}
            \dot{y} - \dot{\hat{y}}
            = \sum_{k=0}^{M-1} \frac{u_1^k(t)}{k!} \left( \ad_{f_1}^k(f_0) (y) - \ad_{\hat{f}_1}^k(\hat{f}_0)(\hat{y}) \right) & + \frac{u_1^M(t)}{M!}  \left( \ad_{f_1}^M(f_0) (0) - \ad_{\hat{f}_1}^M(\hat{f}_0)(0) \right) \\
            & + O\left( |u_1(t)|^{M+1} + |y|^{M+1} + |\hat{y}|^{M+1} \right).
        \end{split}
    \end{equation}
    For $k = 0$, since $f_0 \in \CC^{M+1}$ and $\hat{f}_0 = \mathrm{T}_M f_0$,
    \begin{equation}
        f_0(y) - \hat{f}_0(\hat{y})
        = f_0(y) - \hat{f}_0(y) + \hat{f}_0(y) - \hat{f}_0(\hat{y})
        = O\left(|y|^{M+1} + |y-\hat{y}|\right).
    \end{equation}
    For $k \in \intset{1,M-1}$, one has
    \begin{equation}
        \begin{split}
        \ad_{f_1}^k(f_0) (y) - \ad_{\hat{f}_1}^k(\hat{f}_0)(\hat{y})
       &  = 
        \ad_{f_1}^k(f_0) (y) - \ad_{\hat{f}_1}^k(\hat{f}_0)(y)
        + \ad_{\hat{f}_1}^k(\hat{f}_0)(y) - \ad_{\hat{f}_1}^k(\hat{f}_0)(\hat{y}) \\
        & = O\left(|y|^{M+1-k} + |y-\hat{y}|\right),
        \end{split}
    \end{equation}
    where we used the estimate 
    \begin{equation}
        \ad^k_{f_1}(f_0)(p) = (\mathrm{T}_{M-k} \ad^k_{f_1}(f_0))(p) + O(|p|^{M+1-k}),
    \end{equation}
    which follows from \cref{p:tool-1} and \cref{p:tool-2} (with $g \gets \ad^k_{f_1}(f_0)$ and $\nu \gets M-k$).
    Eventually, we obtain
    \begin{equation}
        \dot{y} - \dot{\hat{y}} =  O\left( |y-\hat{y}| + |u_1(t)|^{M+1} + |y|^{M+1} + |\hat{y}|^{M+1} \right).
    \end{equation}
    Moreover, from classical estimates $|y| = O(\|u_1\|_{L^1})$ and $|\hat{y}| = O(\|u_1\|_{L^1})$ (see e.g. \cite[Lemma 9]{JDE}).
    Thanks to Grönwall's lemma, we conclude
    \begin{equation}
        y(t;u) - \hat{y}(t;u) = O\left(\|u_1\|_{L^{M+1}}^{M+1} \right).
    \end{equation}
    
    \step{Conclusion}
    First, using similar estimates as above, one proves that
    \begin{equation}
        \Phi_1(v,p) - \hat{\Phi}_1(v,p) =  \underset{v \to 0, |p| \to 0}{O} \left(|p|^{M+1} + |v|^{M+1}\right).
    \end{equation}
    (For example, one can bound the difference between the trajectories to $\dot{z} = f_1(z)$, $z(0) = p$ and $\dot{\hat{z}} = \hat{f}_1(\hat{z})$, $\hat{z}(0) = p$, at time $v$, using a Grönwall estimate, then apply Young's inequality).
    Therefore, we obtain
    \begin{equation}
        \begin{split}
        x - \hat{x} & = \Phi_1(u_1(t),y) - \Phi_1(u_1(t),\hat{y}) + \Phi_1(u_1(t),\hat{y}) - \hat{\Phi}_1(u_1(t),\hat{y}) \\
        & = O\left(|y - \hat{y}| + |\hat{y}|^{M+1} + |u_1(t)|^{M+1}\right),
        \end{split}
    \end{equation}
    where we can use again the estimate $\hat{y} = O(\|u_1\|_{L^1})$, which concludes the proof of \eqref{eq:x-hatx}.
\end{proof}

\begin{lemma}
    \label{p:tool-1}
    Let $M \in \N^*$ and $\delta > 0$.
    Let $f_0 \in \CC^{M+1}(B_\delta;\R^d)$ and $f_1 \in \CC^{M}(B_\delta;\R^d)$.
    For each $k \in \intset{1,M}$, there exists $h_k \in \CC^{M+1-k}(B_\delta;\R^d)$ such that 
    \begin{equation}
        \ad_{f_1}^k(f_0) = - D^k f_1 \cdot (f_0, f_1, \dotsc, f_1) + h_k.
    \end{equation}
\end{lemma}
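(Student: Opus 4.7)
The plan is to prove the lemma by induction on $k \in \intset{1,M}$, using the standard identity $\ad_{f_1}(g) = [f_1,g] = Dg \cdot f_1 - Df_1 \cdot g$ and carefully tracking which terms produce a top-order derivative $D^{k+1}f_1$ and which do not.

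First I would verify the base case $k=1$: a direct computation gives
\begin{equation*}
    \ad_{f_1}(f_0) = [f_1,f_0] = Df_0 \cdot f_1 - Df_1 \cdot f_0 = -Df_1 \cdot f_0 + h_1,
\end{equation*}
with $h_1 := Df_0 \cdot f_1 \in \CC^M(B_\delta;\R^d)$ since $f_0 \in \CC^{M+1}$ and $f_1 \in \CC^M$. This matches the required regularity $\CC^{M+1-1} = \CC^M$.

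For the inductive step, I would assume that $\ad_{f_1}^k(f_0) = -D^kf_1 \cdot (f_0,f_1,\dotsc,f_1) + h_k$ with $h_k \in \CC^{M+1-k}$, and compute
\begin{equation*}
    \ad_{f_1}^{k+1}(f_0) = [f_1,\ad_{f_1}^k(f_0)] = D(\ad_{f_1}^k(f_0)) \cdot f_1 - Df_1 \cdot \ad_{f_1}^k(f_0).
\end{equation*}
Applying the Leibniz rule to $D(-D^k f_1 \cdot (f_0,f_1,\dotsc,f_1))$ yields exactly one term in which the derivative falls on $D^k f_1$, producing
\begin{equation*}
    -D^{k+1}f_1 \cdot (f_0,f_1,\dotsc,f_1,f_1),
\end{equation*}
as well as $k+1$ other terms in which the derivative falls on one of the arguments $f_0$ or $f_1$, each of which involves at most $D^k f_1$ and at most $Df_0$, hence lies in $\CC^{M-k} = \CC^{M+1-(k+1)}$ (using $f_0 \in \CC^{M+1}$ and $f_1 \in \CC^M$). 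The remaining contributions $D h_k \cdot f_1$ and $-Df_1 \cdot \ad_{f_1}^k(f_0)$ involve only $Dh_k \in \CC^{M-k}$ and at most $D^kf_1 \in \CC^{M-k}$ respectively, so they also belong to $\CC^{M-k}$. Collecting everything except the leading $-D^{k+1}f_1$ term into $h_{k+1}$ gives the desired decomposition.

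The only subtle point, which I expect to be the main bookkeeping obstacle, is to confirm that no term involving $D^{k+1}f_1$ other than the stated leading one is produced: all such terms come from differentiating the factor $D^k f_1$, and Leibniz applied to the tensor contraction produces exactly one such term, whose evaluation against $f_1$ (the direction of differentiation) and the original arguments $(f_0,f_1,\dotsc,f_1)$ gives precisely $-D^{k+1}f_1 \cdot (f_0,f_1,\dotsc,f_1)$ (with $k+1$ copies of $f_1$). Every other contribution either has strictly fewer derivatives on $f_1$ or involves $Df_0$, and therefore lies in $\CC^{M+1-(k+1)}$. This completes the induction and proves the lemma.
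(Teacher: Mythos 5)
Your proof is correct and fleshes out exactly the induction that the paper's one-line proof sketches: the same base case $h_1 := Df_0 \cdot f_1$, followed by an induction on $k$ using the product rule on $[f_1, \ad_{f_1}^k(f_0)] = D(\ad_{f_1}^k f_0)\cdot f_1 - Df_1 \cdot \ad_{f_1}^k(f_0)$ and isolating the single term where the derivative hits $D^k f_1$. The regularity bookkeeping is accurate throughout.
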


\begin{proof}
    For $k = 1$, this holds with $h_1 := Df_0 \cdot f_1 \in \CC^M$.
    Then the general formula follows by induction on $k$.
\end{proof}

\begin{lemma}
    \label{p:tool-2}
    Let $\nu \in \N$ and $\delta > 0$.
    Assume that $g \in \CC^\nu(B_\delta;\R^d)$ is of the form $g = A f_0 + h$ where $A \in \CC^\nu(B_\delta;\mathcal{M}_d(\R))$ and $h \in \CC^{\nu+1}(B_\delta;\R^d)$.
    Then, if $f_0(0) = 0$ and $f_0 \in \CC^{\nu+1}(B_\delta;\R^d)$,
    \begin{equation}
        g(p) = (\mathrm{T}_\nu g)(p) + \underset{p \to 0}{O}(|p|^{\nu+1}),
    \end{equation}
    where $\mathrm{T}_\nu g$ denotes the truncated Taylor series at $0$ of $g$. 
\end{lemma}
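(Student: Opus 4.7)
The plan is to decompose $g = Af_0 + h$ and treat each summand. The second term is easy: since $h \in \CC^{\nu+1}(B_\delta;\R^d)$, the standard Taylor theorem gives directly
\[
h(p) = (\mathrm{T}_\nu h)(p) + \underset{p\to 0}{O}(|p|^{\nu+1}).
\]
The delicate point is the product $Af_0$. Since $A$ is only $\CC^\nu$, Taylor's theorem applied to $A$ alone yields only a little-$o$ remainder, $A = \mathrm{T}_\nu A + o(|p|^\nu)$, which is \emph{not} enough on its own to produce an $O(|p|^{\nu+1})$ remainder at order $\nu$. The compensating mechanism is the assumption $f_0(0)=0$, which provides an extra order of smallness from the other factor.

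I would write $A = P_A + R_A$ with $P_A := \mathrm{T}_\nu A$ (a polynomial of degree $\leq \nu$) and $R_A(p) = o(|p|^\nu)$, and $f_0 = P_0 + R_0$ with $P_0 := \mathrm{T}_\nu f_0$ and $R_0(p) = O(|p|^{\nu+1})$, the latter being permitted because $f_0 \in \CC^{\nu+1}$. Crucially, $f_0(0)=0$ forces $P_0(0)=0$, so that $P_0(p) = O(|p|)$ near the origin. Expanding
\[
A f_0 = P_A P_0 + P_A R_0 + R_A P_0 + R_A R_0,
\]
the three mixed products are, respectively, $O(1)\cdot O(|p|^{\nu+1})$, $o(|p|^\nu)\cdot O(|p|) = o(|p|^{\nu+1})$, and $o(|p|^\nu)\cdot O(|p|^{\nu+1})$, all of which are absorbed into $O(|p|^{\nu+1})$. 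The remaining polynomial $P_A P_0$ can be split as $Q + S$, where $Q$ collects monomials of degree $\leq \nu$ and $S$ collects monomials of degree $\nu+1$ through $2\nu$; on any bounded neighborhood of $0$, $S(p) = O(|p|^{\nu+1})$.

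The last step is to identify $Q$ with $\mathrm{T}_\nu(Af_0)$. Since $Af_0 \in \CC^\nu$, its order-$\nu$ Taylor polynomial is determined by the values of $\partial^\alpha(Af_0)(0)$ for $|\alpha|\leq \nu$, which by the Leibniz rule depend only on derivatives of $A$ and of $f_0$ at $0$ of order $\leq \nu$, i.e.\ on $P_A$ and $P_0$. Hence $\mathrm{T}_\nu(Af_0) = \mathrm{T}_\nu(P_A P_0) = Q$. Combining the two expansions yields $g(p) = (\mathrm{T}_\nu g)(p) + O(|p|^{\nu+1})$ by linearity of $\mathrm{T}_\nu$. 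The only genuinely nontrivial point of the argument is the estimate $R_A \cdot P_0 = o(|p|^{\nu+1})$, which is precisely where the hypothesis $f_0(0)=0$ is used to upgrade the little-$o$ remainder of $A$ into an acceptable error term.
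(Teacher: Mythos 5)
Your proof is correct, and it takes a genuinely different route from the paper's. The paper argues by induction on $\nu$: the base case $\nu=0$ uses boundedness of $A$ together with $f_0(p)=O(|p|)$; the inductive step writes $g(p)=g(0)+\int_0^1 Dg(sp)\,p\,\mathrm{d}s$, observes that $Dg=(DA)f_0+A\,Df_0$ has exactly the hypothesized structure one regularity notch lower (with $DA\in\CC^\nu$ playing the role of $A$ and $A\,Df_0\in\CC^{\nu+1}$ absorbed into $h$), applies the induction hypothesis to $Dg(sp)$, and integrates. Your argument is direct, with no induction: you split $A=P_A+R_A$ and $f_0=P_0+R_0$ into Taylor polynomial plus remainder, exploit $P_0(0)=0$ so that $P_0(p)=O(|p|)$ upgrades the Peano remainder $R_A=o(|p|^\nu)$ to $R_A P_0=o(|p|^{\nu+1})$, bound the other cross terms similarly, and identify the degree-$\le\nu$ truncation of the polynomial $P_AP_0$ with $\mathrm{T}_\nu(Af_0)$ via the Leibniz rule. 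Both proofs are sound. Your direct expansion is arguably more elementary and makes the mechanism (``one factor is $o(|p|^\nu)$, the other vanishes at $0$, the product is $o(|p|^{\nu+1})$'') transparent; the paper's inductive argument is shorter to state and reuses the derivative-shifting pattern that appears elsewhere in its \cref{s:C^k}.
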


\begin{proof}
    The claimed estimate is straightforward when $g \in \CC^{\nu+1}$.
    In particular, by linearity, one can assume that $h = 0$.
    When $\nu = 0$, $A \in \CC^0$ so is locally bounded, and, since $f_0 \in \CC^1$ with $f_0(0) = 0$, $f_0(p) = \underset{p \to 0}{O}(|p|)$ and $g(p) = A(p) f_0(p) = \underset{p \to 0}{O}(|p|)$.
    Then, one proceeds by induction.
    Assuming \cref{p:tool-2} holds for some $\nu \in \N$, let us prove it at step $\nu+1$.
    Using Taylor's formula
    \begin{equation}
        g(p) = g(0) + \int_0^1 (Dg(sp)) p \dd s.
    \end{equation}
    Moreover, $Dg = (DA) f_0 + A (Df_0)$, where $DA \in \CC^\nu$, $f_0 \in \CC^{\nu+2}$, $A Df_0 \in \CC^{\nu+1}$.
    In particular, the induction assumption applies and
    \begin{equation}
        Dg(sp) = (\mathrm{T}_\nu (Dg)) (sp) + \underset{p \to 0}{O}(|sp|^{\nu+1}).
    \end{equation}
    Combining both equalities yields
    \begin{equation}
        g(p) = g(0) + \int_0^1 (\mathrm{T}_\nu (Dg)) (sp) p \dd s
        + \underset{p \to 0}{O}(|p|^{\nu+2})
        = (\mathrm{T}_{\nu+1}g)(p) + \underset{p \to 0}{O}(|p|^{\nu+2}),
    \end{equation}
    which concludes the proof.
\end{proof}

\appendix

\section{Proofs of technical results and estimates}

\subsection{On the differences between STLC definitions}
\label{s:chain}

In this paragraph, we prove the claim made in \cref{s:intro-def} concerning the invalidity of the three reciprocal implications in~\eqref{Wm-STLC_implications}.
For $k \in \{ 3, 4, 5 \}$, consider the system
\begin{equation} \label{eq:x22-x1k}
    \begin{cases}
        \dot{x}_1 = u \\
        \dot{x}_2 = x_1 \\
        \dot{x}_3 = x_2^2 - x_1^k.
    \end{cases}
\end{equation}

\begin{proposition}
    The following results hold for system \eqref{eq:x22-x1k}:
    \begin{itemize}
        \item For $k = 3$, it is $L^\infty$-STLC but not $W^{1,\infty}$-STLC.
        \item For $k = 4$, it is not $L^\infty$-STLC, but there exists $\rho > 0$ such that it is $\rho$-bounded-STLC.
        \item For $k = 5$, it is $W^{-1,\infty}$-STLC, but not $\rho$-bounded-STLC for any $\rho > 0$.
    \end{itemize}
\end{proposition}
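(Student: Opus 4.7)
The plan is to handle each value of $k$ separately. With $f_0(x) = x_1 e_2 + (x_2^2 - x_1^k) e_3$ and $f_1(x) = e_1$, a direct induction gives $f_{X_1}(0) = e_1$, $f_{M_1}(0) = e_2$ and $f_{W_2}(0) = 2 e_3$ for every $k$, and $f_{\ad^{j}_{X_1}(X_0)}(x) = -\frac{k!}{(k-j)!} x_1^{k-j} e_3$ for $2 \leq j \leq k$ (vanishing identically for $j > k$). At the origin, the only non-zero bracket in $\Bs$ besides $X_1$, $M_1$ and $W_2$ is $\ad^{k}_{X_1}(X_0)$, which is $P_{1,1,0}$ for $k=3$, $Q_{1,1,1}$ for $k=4$, and $R_{1,1,1,1}$ for $k=5$, with value $-k! \, e_3$; all the remaining brackets, including the mixed ones such as $P_{1,2,0}$, $W_1 0$, etc., vanish at the origin by direct computation.

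For $k=3$: since $f_{P_{1,1,0}}(0) = -6 e_3 \in \mathcal{N}_2(f)(0)$, the necessary condition of \cref{Thm:CN_W123} is satisfied; up to the sign of $x_1^3$, our system coincides with the Jakubczyk system of \cref{Subsec:Ex_W2}, and we inherit its $L^\infty$-STLC (e.g.\ via Sussmann's $\mathcal{S}(\theta)$ condition for any $\theta \in (1/2, 1)$, since the only bad bracket with non-zero value at $0$ is $W_2$, compensated by $P_{1,1,0}$ of strictly smaller $\theta$-weight). Conversely, \cref{Thm:Kawski_Wm} with $k=2$ and $m=1$ gives $\pi(2,1) = 2$ and would require $f_{W_2}(0) \in S_1(f)(0) = \vect(e_1, e_2)$, which fails, so the system is not $W^{1,\infty}$-STLC.

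For $k=4$: the only brackets of $\mathcal{N}_2$ with non-zero value at $0$ are $X_1$ and $M_1$, spanning $\vect(e_1, e_2)$, so $f_{W_2}(0) \notin \mathcal{N}_2(f)(0)$ and \cref{Thm:CN_W123} denies $L^\infty$-STLC. The $\rho$-bounded-STLC (for some sufficiently large $\rho$) follows verbatim from the oscillating-control construction of \cref{Subsec_Optim_Wk} (which treats precisely this system): the pulse $u(s) = A \varphi''(s/t)$, for suitable $\varphi \in \CC^\infty_c((0,1))$ and large enough $A$, yields $x(t;u) = -t^5 A^2 C e_3$ with $C > 0$, realising motion along $-e_3$ at bounded $L^\infty$-amplitude; combining with the elementary motions in directions $\pm e_1, \pm e_2, +e_3$ and a standard tangent-vector / local open-mapping argument concludes.

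For $k=5$: the failure of $\rho$-bounded-STLC for every $\rho$ is a one-line interpolation. The Gagliardo--Nirenberg inequality \eqref{eq:w2-u1} combined with $\|u_1\|_{L^\infty} \leq t \|u\|_{L^\infty}$ gives $\|u_1\|_{L^5}^5 \leq \|u_1\|_{L^\infty} \|u_1\|_{L^4}^4 \leq C t \|u\|_{L^\infty}^3 \|u_2\|_{L^2}^2$, so that for $\|u\|_{L^\infty} \leq \rho$ and $t \leq (2C\rho^3)^{-1}$,
\[
x_3(t;u) = \int_0^t u_2^2 \, \mathrm{d}s - \int_0^t u_1^5 \, \mathrm{d}s \geq \tfrac{1}{2} \int_0^t u_2^2 \, \mathrm{d}s \geq 0,
\]
forbidding the target $-\tfrac{\delta}{2} e_3 \in B(0,\delta)$ for every $\delta > 0$. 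For the $W^{-1,\infty}$-STLC on a prescribed horizon $t_0$ and bound $\rho$ on $\|u_1\|_{L^\infty}$, we build tangent half-directions $\pm e_1, \pm e_2, \pm e_3$ realisable with $\|u_1\|_{L^\infty} \leq \rho$: the first five are produced by elementary smooth controls supported on a short sub-interval, while the critical $-e_3$ is produced by a localised oscillating pulse $u(s) = A \varphi'(s/\tau) \mathbf{1}_{(0,\tau)}(s)$ with $\varphi \in \CC^\infty_c((0,1))$ satisfying $\varphi(0) = \varphi(1) = 0$, $\int_0^1 \varphi = 0$ and $\int_0^1 \varphi^5 > 0$, and $\tau \ll t_0$. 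Denoting $\Phi$ a primitive of $\varphi$, this yields $u_1(t_0) = u_2(t_0) = 0$, $\|u_1\|_{L^\infty} = A \tau \|\varphi\|_{L^\infty}$ and
\[
x_3(t_0;u) = \tau^5 A^2 \int_0^1 \Phi^2 - \tau^6 A^5 \int_0^1 \varphi^5,
\]
so that setting $A = \rho / (\tau \|\varphi\|_{L^\infty})$ enforces $\|u_1\|_{L^\infty} = \rho$ while the quintic term dominates as $\tau \to 0$, producing arbitrarily small negative values of $x_3(t_0;u)$. A classical tangent-vector / local open-mapping argument (in the spirit of \cite[Theorem 6]{HK}) closes the proof. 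The main obstacle is this last step, where the oscillating pulse (moving $x_3$) must be patched with a regular control (adjusting $x_1$ and $x_2$) without violating the $\|u_1\|_{L^\infty}$ bound, and the six half-directions must be shown to constitute a genuine set of $W^{-1,\infty}$-tangent vectors whose convex hull covers a neighborhood of the origin.
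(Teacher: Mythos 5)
Your proof is correct and follows essentially the same route as the paper: \cref{Thm:CN_W123} (via Kawski's $W_2$ obstruction) denies the relevant controllability notion in each case, and the positive claims are handled by standard sufficient conditions or oscillating controls, exactly as in the paper's \cref{Subsec:Ex_W2} and \cref{Subsec_Optim_Wk}. The $k=3$ and $k=4$ cases are word-for-word what the paper does; minor writing slip aside (you should write ``$f_{W_2}(0) \in \mathcal{N}_2(f)(0)$'' rather than ``$f_{P_{1,1,0}}(0)\in\mathcal{N}_2(f)(0)$'', the latter being a tautology), the reasoning is sound.

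For $k=5$ you deviate slightly, in both directions worth noting. Your argument for the failure of $\rho$-bounded-STLC uses the Gagliardo--Nirenberg estimate \eqref{eq:w2-u1} plus $\|u_1\|_{L^\infty}\leq t\|u\|_{L^\infty}$ to bound $\|u_1\|_{L^5}^5 \lesssim t\,\rho^3\,\|u_2\|_{L^2}^2$ with no side constraint on $u$; the paper instead integrates $\int_0^T u_1^4$ by parts and applies Cauchy--Schwarz, which requires first restricting to controls with $u_1(T)=0$ (harmless, since the inaccessible target lies in $\{x_1=0\}$). Both yield the same bound $|\int u_1^5|\leq C\rho^3 T\int u_2^2$; your route is marginally cleaner because it drops the constraint, at the cost of invoking a GN lemma that the paper has already established for another purpose. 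Conversely, for the $W^{-1,\infty}$-STLC of $k=5$, the paper simply cites Sussmann's $\mathcal{S}(\theta)$ condition (footnoting the extension to $\theta>1$ in the $W^{-1,\infty}$ setting) while you sketch an explicit compactly-supported oscillating pulse $u(s)=A\varphi'(s/\tau)\mathbf{1}_{(0,\tau)}(s)$; your computation of $x_3(t_0;u)=\tau^5 A^2\int\Phi^2-\tau^6 A^5\int\varphi^5$ is correct, the scaling $A=\rho/(\tau\|\varphi\|_{L^\infty})$ keeps $\|u_1\|_{L^\infty}=\rho$ while the quintic term wins as $\tau\to 0$, and you correctly flag that the remaining patching/tangent-vector argument is the nontrivial part. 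Since the paper also leaves that step as a citation, the level of rigour is comparable, and your variant has the pedagogical merit of making the mechanism explicit.
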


\begin{proof}
    \step{Case $k=3$}
    With $k = 3$, the $L^\infty$-STLC of system \eqref{eq:x22-x1k} follows from the same arguments as for system \eqref{syst:Jakubczyk} (with the opposite sign), studied in \cref{Subsec:Ex_W2}.
    The fact that it is not $W^{1,\infty}$-STLC follows from \cref{Thm:Kawski_Wm} with $m = 1$ and $k = 2$ (or \cite[Theorem 3]{JDE}), which states that $f_{W_3}(0) \in S_1(f)(0)$ is a necessary condition of $W^{1,\infty}$-STLC (see also \cite[Section 2.3.3]{JDE}).
    
    \step{Case $k=4$} 
    Both claims are proved in \cref{Subsec_Optim_Wk} concerning system \eqref{eq:ex:W2vsQ111}.

    \step{Case $k=5$}
    The fact that system \eqref{eq:x22-x1k} is $W^{-1,\infty}$-STLC for $k=5$ follows from Sussmann's $\mathcal{S}(\theta)$ condition\footnote{Sussmann's initial proof requires $\theta \leq 1$, but, if one is only interested in $W^{-1,\infty}$-STLC, it is possible to work with any $\theta \in [0,+\infty)$.} (see \cite[Theorem~7.3]{MR872457} or \cite[Theorem 3.29]{zbMATH05150528}) with $\theta > 3/2$ (here $f_{W_2}(0)$ of type (2,3) is compensated by $f_{R_{1,1,1,1,0}}(0)$ of type (5,1)).
    It can also be proved explicitly using oscillating controls as in \cite[Section 2.4.1]{JDE} or \cref{Subsec_Optim_Wk}.
    
    Let $\rho > 0$. 
    Let us prove that \eqref{eq:x22-x1k} is not $\rho$-bounded-STLC.
    Let $u \in L^\infty((0,T);\R)$ with $\|u\|_{L^\infty} \leq \rho$ such that $x_1(T;u) = 0$,
    \begin{equation}
        \int_0^T u_1^4 = - 3 \int_0^T u u_1^2 u_2 
        \leq 3 \rho \left(\int_0^T u_1^4 \int_0^T u_2^2 \right)^{\frac 12}.
    \end{equation}
    Thus
    \begin{equation}
        \left| \int_0^T u_1^5 \right|
        \leq \|u_1\|_{L^\infty} \int_0^T u_1^4
        \leq \rho T \| u_1 \|_{L^4}^4
        \leq 9 \rho^3 T \int_0^T u_2^2.
    \end{equation}
    In particular, if $T \leq 1/(9\rho^3)$, one has $x_3(T;u) \geq 0$ and the system is not controllable.
\end{proof}

\subsection{On the structure of ``bad'' brackets}
\label{s:bad-structure}

In this paragraph, we prove the claim made in \cref{s:main-results} that, due to the structure of the brackets $\ad^{2k}_{X_1}(X_0)$, $W_k$ and $D := \ad^2_{P_{1,1}}(X_0)$, they are always required to be compensated by the Agrachev--Gamkrelidze sufficient condition of \cref{thm:AG}.
Since they are of type (even, odd), this follows from the following claims.

\begin{lemma} \label{lem:b-pi}
    Let $\Pi^1 \subset \Br(X)$ as in \cref{thm:AG}.
    Then $X_0 \in \Pi^1$.
\end{lemma}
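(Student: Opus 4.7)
The plan is to exploit both defining properties of $\Pi^1$ simultaneously: on one hand, $X_0$ itself has $n_1(X_0) = 0$ (even) and $n_0(X_0) = 1$ (odd), so the second assumption forces $X_0 = \eval(X_0) \in \operatorname{Lie}(\Pi^1)$; on the other hand, the first assumption (freeness) will pin down the only way $X_0$ can live inside $\operatorname{Lie}(\Pi^1)$.

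The key tool is the grading of $\mathcal{L}(X)$ by total length $|b| = n_0(b) + n_1(b)$. I will write $X_0$ as a Lie polynomial $X_0 = P(\eval(\pi_1), \dotsc, \eval(\pi_n))$ with $\pi_i \in \Pi^1$, which exists by the first step. Since each $\eval(\pi_i)$ is homogeneous of degree $|\pi_i|$ in $\mathcal{L}(X)$, and a Lie monomial built from $k$ generators $\eval(\pi_{i_1}), \dotsc, \eval(\pi_{i_k})$ sits in degree $|\pi_{i_1}| + \dotsb + |\pi_{i_k}|$, the only monomials contributing to the degree-$1$ component are the single generators of length~$1$. Comparing homogeneous components of degree~$1$ on both sides of the equality therefore yields $X_0 = \sum_{\pi_i \in \Pi^1,\, |\pi_i| = 1} c_i \eval(\pi_i)$.

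Since the only length-$1$ elements of $\Br(X)$ are $X_0$ and $X_1$, and since $\{X_0, X_1\}$ are linearly independent in $\mathcal{L}(X)$, this forces $X_0 \in \Pi^1$ with coefficient $1$ (the alternative $X_0 = c_1 X_1$, or $X_0 = 0$, being impossible). The main obstacle—which actually dissolves once one writes it down—is to justify that the projection onto degree~$1$ in $\mathcal{L}(X)$ is compatible with the decomposition provided by the freeness hypothesis; this is automatic since $\operatorname{Lie}(\Pi^1) \subset \mathcal{L}(X)$ is a graded sub-Lie-algebra with respect to the length grading, and the equality $X_0 = P(\eval(\pi_1), \dotsc, \eval(\pi_n))$ holds inside $\mathcal{L}(X)$, where the grading is well-defined.
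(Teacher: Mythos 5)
Your proof is correct. The paper states \cref{lem:b-pi} without a proof, so there is nothing to compare against, but your argument is the natural one: $X_0 \in \Br(X)$ has $n_1(X_0) = 0$ (even) and $n_0(X_0) = 1$ (odd), so the second hypothesis of \cref{thm:AG} gives $X_0 = \eval(X_0) \in \operatorname{Lie}(\Pi^1)$; since the elements $\eval(\pi)$, $\pi \in \Pi^1$, are homogeneous for the length grading of $\mathcal{L}(X)$ and any bracket of two or more of them has length at least $2$, the degree-one component of $\operatorname{Lie}(\Pi^1)$ is exactly $\vect\{\eval(\pi) ; \pi \in \Pi^1, |\pi|=1\} \subset \vect\{X_0,X_1\}$, and linear independence of $X_0, X_1$ forces $X_0 \in \Pi^1$.

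One small remark on phrasing: the existence of the decomposition $X_0 = P(\eval(\pi_1),\dotsc,\eval(\pi_n))$ comes just from $\Pi^1$ being a generating set of $\operatorname{Lie}(\Pi^1)$; the \emph{freeness} hypothesis is not actually needed anywhere in your argument (the gradedness of $\operatorname{Lie}(\Pi^1)$ follows from homogeneity of the generators, not from freeness). This does not affect correctness since freeness is part of the stated assumptions, but it is worth noting that the lemma holds under the weaker assumption as well.
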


\begin{lemma}
    Let $\Pi^1 \subset \Br(X)$ as in \cref{thm:AG}.
    Let $k \in \N^*$. Then
    \begin{equation} \label{eq:eval2k-evalPi}
        \eval(\ad^{2k}_{X_1}(X_0)) \notin \vect \{ \eval(\pi) ; \pi \in \Pi^{\mathrm{even}} \}
        \quad \text{in} \quad \mathcal{L}(X).
    \end{equation}
\end{lemma}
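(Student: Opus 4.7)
The plan is to perform a case distinction based on whether $X_1 \in \Pi^1$ or not, noting that $X_0 \in \Pi^1$ by \cref{lem:b-pi}.

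If $X_1 \notin \Pi^1$, the key observation is that every $\pi \in \Pi^1$ satisfies $n_0(\pi) \geq 1$: any formal bracket made only of $X_1$'s evaluates to $0$ in $\mathcal{L}(X)$ apart from $\pi = X_1$ itself, which is excluded, and an element of a set of free generators cannot evaluate to $0$. Because $n_0$ is additive under brackets, induction on $j$ then yields $n_0(\pi) \geq j$ for every $\pi \in \Pi^j$, so every $\pi \in \Pi^{\mathrm{even}}$ has $n_0(\pi) \geq 2$. Since $\eval(\ad^{2k}_{X_1}(X_0))$ is $n_0$-homogeneous of degree~$1$ and $\mathcal{L}(X)$ is $n_0$-graded, it cannot lie in $\vect\{\eval(\pi) : \pi \in \Pi^{\mathrm{even}}\}$, which establishes \eqref{eq:eval2k-evalPi} in this case.

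If instead $X_1 \in \Pi^1$, combining with $X_0 \in \Pi^1$ gives $\{X_0, X_1\} \subseteq \Pi^1$, so $\operatorname{Lie}(\Pi^1) \supseteq \operatorname{Lie}(\{X_0, X_1\}) = \mathcal{L}(X)$, which forces $\operatorname{Lie}(\Pi^1) = \mathcal{L}(X)$. The abelianization $\mathcal{L}(X) / [\mathcal{L}(X), \mathcal{L}(X)]$ is the two-dimensional space $\R X_0 \oplus \R X_1$, so the rank of any set of free generators of $\mathcal{L}(X)$ equals~$2$, and therefore $\Pi^1 = \{X_0, X_1\}$. Under this identification, the $\Pi^1$-depth grading on $\operatorname{Lie}(\Pi^1) = \mathcal{L}(X)$ coincides with the standard length grading, so $\vect\{\eval(\pi) : \pi \in \Pi^{\mathrm{even}}\}$ equals the direct sum of the even-length homogeneous components of $\mathcal{L}(X)$. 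Since $\eval(\ad^{2k}_{X_1}(X_0))$ is homogeneous of odd length $2k+1$, \eqref{eq:eval2k-evalPi} follows from the linear independence of the length-graded pieces.

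I do not anticipate a serious obstacle: each case reduces to a clean grading argument ($n_0$-grading in the first case, length-grading in the second). The only step requiring slight care is the rank-uniqueness argument in the second case, which is handled via the two-dimensional abelianization of $\mathcal{L}(X)$.
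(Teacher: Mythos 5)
Your proof is correct, but it is organized differently from the paper's. The paper argues by contradiction in a single pass: if some $\pi \in \Pi^{\mathrm{even}}$ has $\ad^{2k}_{X_1}(X_0)$ in its $\Bs$-support, then $(n_0,n_1)$-homogeneity forces $n_0(\pi)=1$, $n_1(\pi)=2k$, whence $\eval(\pi)=\pm\eval(\ad^{2k}_{X_1}(X_0))$; inspecting the factors of $\pi$ then shows that one of them must be $X_1$, forcing $X_1\in\Pi^1$, and then $\ad^{2k}_{X_1}(X_0)\in\Pi^{2k+1}\subset\Pi^{\mathrm{odd}}$ contradicts free generation. You instead split upfront on whether $X_1\in\Pi^1$: if not, you exploit the $n_0$-grading directly (every $\pi\in\Pi^{\mathrm{even}}$ has $n_0(\pi)\geq 2$ while the bad bracket has $n_0=1$), and if yes, you pin down $\Pi^1=\{X_0,X_1\}$ via the two-dimensional abelianization of $\mathcal{L}(X)$ and then invoke the length grading. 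The underlying invariants ($n_0$-grading, depth grading from free generation) are the same, but your case split makes each branch into a one-line grading argument, at the modest cost of the abelianization detour; the paper's version is more compressed but harder to parse. One small remark: in Case 2 you assert that $\vect\{\eval(\pi);\pi\in\Pi^{\mathrm{even}}\}$ \emph{equals} the even-length part of $\mathcal{L}(X)$; what you actually use, and what is immediate, is only the inclusion $\subseteq$, since $\ad^{2k}_{X_1}(X_0)$ has odd length $2k+1$. The equality is true (right-normed brackets span each graded piece) but unnecessary here.
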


\begin{proof}
    By contradiction, assume that there exist $r \in \N^*$, $\pi_1, \dotsc, \pi_r \in \Pi^\mathrm{even}$ and $\alpha_1, \dotsc, \alpha_r \in \R$ such that $\eval(\ad^{2k}_{X_1}(X_0)) = \alpha_1 \eval(\pi_1) + \dotsb + \alpha_r \eval(\pi_r)$ in $\mathcal{L}(X)$.
    Thus, there exists $\pi = \pi_j \in \Pi^\mathrm{even}$ such that $\ad^{2k}_{X_1}(X_0) \in \supp_{\Bs} \eval(\pi)$.
    Since $n_0(\pi) = 1$ and $n_0(\pi) = 2k$, $\eval(\pi) = \pm \eval(\ad^{2k}_{X_1}(X_0))$.
    Since $\eval(\pi) \neq 0$ and $n_0(\pi) = 1$, one has $\lambda(\pi) = X_1$ or $\mu(\pi) = X_1$.
    Thus $X_1 \in \Pi^1$, and $\ad^{2k}_{X_1}(X_0) \in \Pi^{1 + 2k}\subset \Pi^\mathrm{odd}$, which contradicts the initial assumption since $\Pi^1$ freely generates $\operatorname{Lie}(\Pi^1)$.
\end{proof}

\begin{lemma}
    Let $\Pi^1 \subset \Br(X)$ as in \cref{thm:AG}.
    Let $k \in \N^*$.
    Then
    \begin{equation} \label{eq:evalWk-evalPi}
        \eval(W_k) \notin \vect \{ \eval(\pi) ; \pi \in \Pi^{\mathrm{even}} \}
        \quad \text{in} \quad \mathcal{L}(X).
    \end{equation}
\end{lemma}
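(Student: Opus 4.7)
My plan is to argue by contradiction, paralleling the proof of the preceding lemma on $\ad_{X_1}^{2k}(X_0)$. Assuming $\eval(W_k) = \sum_j \alpha_j \eval(\pi_j)$ with $\pi_j \in \Pi^{\mathrm{even}}$, I would expand in the Hall basis $\Bs$: since $W_k \in \Bs$, some $\pi = \pi_j \in \Pi^{2m}$ must satisfy $W_k \in \supp_\Bs \eval(\pi)$, and the $(n_1,n_0)$-bigrading of $\mathcal{L}(X)$ forces $n_1(\pi)=2$ and $n_0(\pi)=2k-1$. Writing $\pi = [a_1,[a_2,\ldots,[a_{2m-1},a_{2m}]\ldots]]$ with $a_i \in \Pi^1$, and using that any $a_i \in \Pi^1$ with $n_1(a_i)=0$ and $\eval(a_i)\neq 0$ must equal $X_0 \in \Pi^1$ (by \cref{lem:b-pi}), the constraint $\sum_i n_1(a_i)=2$ splits into two structural cases: (A) a single $a_i$ has $n_1=2$ and the others are $X_0$; or (B) exactly two of the $a_i$'s have $n_1=1$.

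The key structural observation will be that, in $\Bs$, $\ad_{X_0}(W_{j,\nu}) = -W_{j,\nu+1}$, so left-bracketing with $X_0$ strictly increments the $\nu$-index of any $\Bs_2$-component; consequently, for $W_{k,0}$ to lie in $\supp_\Bs \eval(\pi)$ I must have $a_1 \neq X_0$, while $\eval(\pi) \neq 0$ forbids $a_{2m-1}=a_{2m}=X_0$. In Case~(A), these two constraints are incompatible for $m \geq 2$ (the single $n_1=2$ element cannot sit simultaneously at position $1$ and at $2m{-}1$ or $2m$), and the remaining subcase $m=1$ reduces to $\eval(\pi) = \pm\ad_{X_0}(\eval(a_1))$, whose $\Bs$-support lies in $\{W_{j,\nu} : \nu \geq 1\}$ and therefore avoids $W_{k,0}$.

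In Case~(B), letting $a_{i_2}$ denote the second non-$X_0$ element, the innermost constraint forces $i_2 \in \{2m-1,2m\}$, so all middle entries are $X_0$. Since $S_{1,n}(X) = \R M_n$, I would write $\eval(a_1) = c_1 M_{q_1}$ and $\eval(a_{i_2}) = c_2 M_{q_2}$ with $c_i \neq 0$; a direct computation using $\ad_{X_0}(M_n)=-M_{n+1}$ would yield $\eval(\pi) = \pm c_1 c_2 [M_{q_1},M_{q_2+2m-2}]$. By \eqref{eq:jacobi.balance}, the coefficient of $W_{k,0}$ in $[M_a,M_b]$ with $a+b=2k-1$ is nonzero, so necessarily $q_1 + q_2 = 2k - 2m + 1$, an \emph{odd} integer.

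The hard part — the crux of the argument — will be to rule out this parity, for which I plan to prove the following consequence of freeness: at most one element of $\Pi^1$ can have $n_1 = 1$. Indeed, if $a, a' \in \Pi^1$ both satisfied $n_1 = 1$ with $\eval(a) = c M_p$, $\eval(a') = c' M_{p'}$ and $p < p'$, then iterating $\ad_{X_0}(M_j)=-M_{j+1}$ would exhibit $M_{p'}$ — hence $\eval(a')$ — as a Lie combination of $\eval(X_0)$ and $\eval(a)$, contradicting the freeness of $\Pi^1$ (using $X_0 \in \Pi^1$ from \cref{lem:b-pi}). Consequently $a_1 = a_{i_2}$, which forces $q_1 = q_2$ and thus $q_1 + q_2 = 2q_1$ to be even, contradicting the odd value above and closing the argument.
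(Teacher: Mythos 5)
Your proof is correct and takes essentially the same route as the paper's: extract a $\pi\in\Pi^{\mathrm{even}}$ with $W_k$ in its $\Bs$-support, use the germ structure of $W_k$ to exclude $X_0$ as an outermost factor, invoke (as a consequence of freeness) that $\Pi^1$ contains at most one element with $n_1=1$, and conclude by a parity argument on $n_0$. Your version is somewhat more explicit — fully unpacking $\pi$ into its $\Pi^1$-factors $a_1,\dotsc,a_{2m}$, separating the single-$n_1{=}2$-generator case (which the paper dispatches implicitly when it asserts $n_1(\lambda(\pi))=n_1(\mu(\pi))=1$), and spelling out the uniqueness lemma — whereas the paper packages the same parity contradiction by exhibiting $W_k$ directly as an element of $\vect\,\eval(\Pi^{1+2(k-l)})\subset\vect\,\eval(\Pi^{\mathrm{odd}})$, but the underlying mechanism is identical.
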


\begin{proof}
    By contradiction, let $r \in \N^*$, $\pi_1, \dotsc, \pi_r \in \Pi^\mathrm{even}$ and $\alpha_1, \dotsc, \alpha_r \in \R$ such that $\eval(W_k) = \alpha_1 \eval(\pi_1) + \dotsb + \alpha_r \eval(\pi_r)$ in $\mathcal{L}(X)$.
    Thus, there exists $\pi = \pi_j \in \Pi^\mathrm{even}$ such that $W_k \in \supp_{\Bs} \eval(\pi)$.
    Since $W_k$ is a germ (see \cref{def:germ}), one cannot have $\lambda(\pi) = X_0$ or $\mu(\pi) = X_0$.
    Thus $n_1(\lambda(\pi)) = n_1(\mu(\pi)) = 1$.
    This implies the existence of a unique $l \in \intset{0,k-1}$ and $\pi^* \in \Pi^1$ such that $\eval(\pi^*) = \pm \eval (M_l)$.
    Thus $W_k \in \vect \{ \eval(\pi) ; \pi \in \Pi^{1 + 2(k-l)} \subset \Pi^\mathrm{odd} \}$, which contradicts the initial assumption since $\Pi^1$ freely generates $\operatorname{Lie}(\Pi^1)$.
\end{proof}

\begin{lemma}
    Let $\Pi^1 \subset \Br(X)$ as in \cref{thm:AG}.
    Then
    \begin{equation} \label{eq:evalD-evalPi}
        \eval(D) \notin \vect \{ \eval(\pi) ; \pi \in \Pi^{\mathrm{even}} \}
        \quad \text{in} \quad \mathcal{L}(X).
    \end{equation}
\end{lemma}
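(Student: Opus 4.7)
I plan to mimic the arguments used to prove the two preceding lemmas for $\ad^{2k}_{X_1}(X_0)$ and $W_k$. Arguing by contradiction, assume there exist $r \in \N^*$, $\pi_1, \dotsc, \pi_r \in \Pi^{\mathrm{even}}$ and $\alpha_1, \dotsc, \alpha_r \in \R$ with
\begin{equation*}
\eval(D) = \alpha_1 \eval(\pi_1) + \dotsb + \alpha_r \eval(\pi_r) \quad \text{in } \mathcal{L}(X).
\end{equation*}
Expanding each $\eval(\pi_j)$ on the Hall basis $\Bs$ and reading the coefficient of $D$, there must exist $\pi = \pi_j \in \Pi^{2k}$ (for some $k \geq 1$) with $D \in \supp_{\Bs}\eval(\pi)$.

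The first observation is that $D$ is a germ in $\Bs$: indeed $\lambda(D) = P_{1,1}$ and $\mu(D) = P_{1,1}\,0$, and both have $n_1 \geq 3 > 0$, so neither equals $X_0$. As in the preceding two lemmas, this rules out $\lambda(\pi) = X_0$ and $\mu(\pi) = X_0$, since otherwise $\eval(\pi)$ would be a linear combination of elements of $\Bs$ of the form $b\,0$ with $b \in \Bs$, none of which are germs.

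Next I would analyze the bracket tree of $\pi$, whose $2k$ leaves are elements of $\Pi^1$. The constraints $n_1(\pi) = n_1(D) = 6$ and $n_0(\pi) = n_0(D) = 3$, together with the fact that $X_0$ is the only element of $\Pi^1$ with $n_1 = 0$ (by \cref{lem:b-pi} and the free-generation property), restrict the possible leaf distributions to finitely many types. For each type, one then produces, in the spirit of the proofs of the preceding lemmas, a distinguished $\pi^* \in \Pi^1$ whose evaluation has a prescribed form (with $n_1$ in $\intset{1,6}$), and accounts for the remaining $X_0$-leaves needed to reconstruct $D = \ad^2_{P_{1,1}}(X_0)$; the total count shows that $\pi$ in fact lies in some $\Pi^{2k'+1} \subset \Pi^{\mathrm{odd}}$, contradicting $\pi \in \Pi^{2k}$ since $\Pi^1$ freely generates $\operatorname{Lie}(\Pi^1)$. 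Morally, the ``doubled'' $P_{1,1}$-structure of $D$ contributes two equal odd depths, and the extra $X_0$ at the bottom contributes one more, summing to odd.

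The main obstacle, which I expect to absorb most of the work, lies precisely in this last case analysis. Unlike the previous lemmas, where $n_0 = 1$ or $n_1 = 2$ pinned down the shape of $\pi$ almost uniquely, here the multidegree $(n_1, n_0) = (6,3)$ of $D$ is shared by many elements of $\Bs$, so the $\Bs$-expansion of $\eval(\pi)$ can be polluted by competing terms. Moreover, the intermediate sub-bracket $P_{1,1}$ has type (odd, odd), so by itself need not belong to $\operatorname{Lie}(\Pi^1)$, and one cannot directly transport the identity $\eval(D) = [\eval(P_{1,1}), [\eval(P_{1,1}), X_0]]$ into $\operatorname{Lie}(\Pi^1)$ to read off the parity of its depth. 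The bookkeeping therefore resembles the algebraic case disjunctions carried out in \cref{s:sextic-algebra} (and in particular uses, when needed, algebraic identities of the type exploited in \cref{p:D-dec-3+3,p:D-dec-2+4,p:D-dec-1+5,p:D-dec-2+2+2,p:D-dec-1+2+3,p:D-dec-1+1+4}) to ensure no even-depth construction of $\pi$ can realize $D$ in its $\Bs$-support.
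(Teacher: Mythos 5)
Your proposal has the right overall architecture and identifies the right tools, but the key step is not carried out, and the guiding intuition you offer would not by itself get you through.

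What you get right: the contradiction setup, the reduction to a single $\pi \in \Pi^{\mathrm{even}}$ with $D \in \supp_{\Bs}\eval(\pi)$, the observation that $D$ is a germ so neither factor of $\pi$ can be $X_0$, the pointer to the algebraic lemmas in \cref{s:sextic-algebra}, and the endgame via free generation of $\Pi^1$.

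Where the gap lies: you announce the case analysis and then replace it by a single heuristic (``the doubled $P_{1,1}$-structure contributes two equal odd depths, and the extra $X_0$ contributes one more, summing to odd''). That moral only corresponds to one of the three cases that actually arise. The paper does not walk the whole $\Pi^1$-tree down to the $2k$ leaves; it only looks at the top-level splitting $\pi = (a,b)$ in $\Br(X)$, and uses $n_1(a)+n_1(b)=6$, $n_0(a)+n_0(b)=3$, $a,b\neq X_0$ to restrict to $(n_1(a),n_1(b)) \in \{(1,5),(2,4),(3,3)\}$ up to symmetry. Then:
(i) The $(2,4)$ case is killed outright by \cref{p:D-dec-2+4}, with no $\Pi$-level argument at all; your moral does not predict this.
(ii) The $(1,5)$ case does not produce any $P_{1,1}$-doubling. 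Instead, \cref{p:D-dec-1+5} forces $a=X_1$, hence $X_1\in\Pi^1$; combined with $X_0\in\Pi^1$ from \cref{lem:b-pi} and free generation this pins $\Pi^1=\{X_0,X_1\}$, at which point $D$ sits entirely in $\vect\eval(\Pi^9)\subset\vect\eval(\Pi^{\mathrm{odd}})$. This is a global collapse of $\Pi^1$, not a leaf count.
(iii) Only $(3,3)$ recovers your intuition: \cref{p:D-dec-3+3} forces $\pi=(P_{1,1},(P_{1,1},X_0))$ (up to swap), then $P_{1,1}=\lambda(\pi)\in\Pi^1$, giving $\pi\in\Pi^3$.

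A related imprecision: you worry that $P_{1,1}$, being of type (odd, odd), need not lie in $\operatorname{Lie}(\Pi^1)$, and therefore you cannot ``transport'' $D=\ad^2_{P_{1,1}}(X_0)$ into $\operatorname{Lie}(\Pi^1)$. In the case where this matters, that membership is not a separate obstacle at all: once the support analysis forces $\pi=(P_{1,1},\cdot)$, the very fact that $\pi\in\Pi^{2k}$ tells you $\lambda(\pi)\in\Pi^1$, so $P_{1,1}\in\Pi^1$ for free. So there is nothing to transport; the structure of $\Pi^{2k}$ hands you the membership.

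In short, the proposal is a plausible plan, not a proof: the ``finitely many leaf-distribution types'' and the final count to an odd $\Pi$-level are asserted, not established, and the one case you gesture at explicitly is the only one where your mechanism applies. To close the gap you would need to actually restrict to the three top-level cases, invoke \cref{p:D-dec-1+5,p:D-dec-2+4,p:D-dec-3+3} as appropriate, and handle the $(1,5)$ case by the distinct argument that forces $\Pi^1=\{X_0,X_1\}$.
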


\begin{proof}
    By contradiction, let $r \in \N^*$, $\pi_1, \dotsc, \pi_r \in \Pi^\mathrm{even}$ and $\alpha_1, \dotsc, \alpha_r \in \R$ such that $\eval(D) = \alpha_1 \eval(\pi_1) + \dotsb + \alpha_r \eval(\pi_r)$ in $\mathcal{L}(X)$.
    Thus, there exists $\pi = \pi_j \in \Pi^\mathrm{even}$ such that $D \in \supp_{\Bs} \eval(\pi)$.
    Thus $n_1(\pi) = 6$ and $n_0(\pi) = 3$.
    We write $\pi = (a,b)$.
    Since $D$ is a germ (see \cref{def:germ}) and $\Bs$ is stable by bracketting with $X_0$, one cannot have $a = X_0$ or $b = X_0$.
    By symmetry, one can assume $1 \leq n_1(a) \leq n_1(b) \leq 5$.

    \step{Case $n_1(a) = 1$ and $n_1(b) = 5$}
    If $n_0(a) \geq 1$, up to decomposing $a$ and $b$ on $\Bs$, by \cref{p:D-dec-1+5}, $D \notin \supp_{\Bs} \eval((a,b))$, which contradicts $D \in \sup_{\Bs} \eval(\pi)$.
    Thus $a = X_1 \in \Pi^1$ and $\Pi^1 = \{ X_0, X_1 \}$.
    Hence $D \in \vect \{ \eval(\pi) ; \pi \in \Pi^{9} \subset \Pi^\mathrm{odd} \}$ which contradicts the initial assumption since $\Pi^1$ freely generates $\operatorname{Lie}(\Pi^1)$.
    
    \step{Case $n_1(a) = 2$ and $n_1(b) = 4$}
    Then, up to decomposing $a$ and $b$ on $\Bs$, by \cref{p:D-dec-2+4}, $D \notin \supp_{\Bs} \eval((a,b))$, which contradicts $D \in \sup_{\Bs} \eval(\pi)$.
    Hence this case doesn't happen.
    
    \step{Case $n_1(a) = n_1(b) = 3$}
    By symmetry, one can assume that $n_0(a) = 1$ and $n_0(b) = 2$.
    Thus $\eval(a) = \pm \eval(P_{1,1})$ and $\eval(b) = \pm \eval(P_{1,2})$ or $\eval(b) = \pm \eval((P_{1,1},X_0))$.
    Since $(P_{1,1},P_{1,2}) \in \Bs$ and $D \in \supp_{\Bs} \eval(\pi)$, we are in the second case, i.e.\ $\pi = (P_{1,1}, (P_{1,1}, X_0))$ (or a permutation thereof).
    In particular $\eval(\pi) = \pm \eval(D)$.
    Thus $P_{1,1} = \lambda(\pi)$ (or a permutation thereof) belongs to $\Pi^1$, and thus $\pi \in \Pi^3$, which contradicts the initial assumption.
\end{proof}

\subsection{Universal rough estimate for coordinates of the second kind}
\label{s:proof-bound-xi-univ}

\begin{proof}[Proof of \cref{p:xib-u1-Lk}]
    The proof is by induction on $k \in \N^*$. 
    
    \step{Case $k = 1$}
    Then $b=X_1 0^\nu$ for some $\nu \geq 1$ and $|b| = \nu + 1$.
    Thus, for every $t>0$ and $u \in \lone$,
    \begin{equation}
        |\xi_b(t,u)| = \left|\int_0^t \frac{(t-s)^{\nu-1}}{(\nu-1)!} u_1(s) \dd s \right| \leq \frac{t^{\nu-1}}{(\nu-1)!} \|u_1\|_{L^1}
        \leq 2 \frac{(2t)^{\nu+1}}{(\nu+1)!} t^{-2} \|u_1\|_{L^1},
    \end{equation}
    which gives the conclusion with $c(1):=4$. 
    
    \step{Case $k \geq 2$}
    To simplify notations, we write $c$ instead of $c(k-1)$ and, without loss of generality, we assume that $1 \leq c(1) \leq \dots \leq c(k-1)=c$. 
    Let $b \in \Bs \setminus\{X_1\}$ with $n_1(b)=k$. 
    Then $b=b^* 0^\nu$ for some $\nu \geq 0$ and there exists $j \in \N^*$, $m_1,\dots,m_j \in \N^*$, $m \in \N$ and $b_1>\dots>b_j>X_1 \in \Bs_{\intset{1,k-1}}$ such that $b^*=\ad_{b_1}^{m_1} \dots \ad_{b_j}^{m_j} \ad_{X_1}^m(X_0)$. 
    In particular, $k=n_1(b)=n_1(b^*)=m_1 n_1(b_1)+\dots+m_j n_1(b_j)+m$ and $|b| = m_1 |b_1| + \dots +m_j |b_j|+m+\nu+1$.

    First, for each $i \in \intset{1,j}$, using the induction assumption and Hölder's inequality,
    \begin{equation}
        |\xi_{b_i}(t,u)| \leq \frac{(ct)^{|b_i|}}{|b_i|!} t^{-n_1(b_i) \left( 1 + \frac{1}{k} \right)} \|u_1\|_{L^k}^{n_1(b_i)}.
    \end{equation}
    Thus, by \eqref{eq:def:coord2},
    \begin{equation}
        \begin{split}
            |\xi_{b^*}(t,u)| & = \left| \int_0^t \frac{\xi_{b_1}^{m_1}(s,u)}{m_1!} \dots  \frac{\xi_{b_j}^{m_j}(s,u)}{m_j!} \frac{u_1^m(s)}{m!} \dd s \right|
            \\ &  \leq \frac{(ct)^{1+m+\sum m_i |b_i|}}{m!|b_1|!^{m_1} \dotsb |b_j|!^{m_j}} t^{-(m+\sum m_i n_1(b_i)) \left(1+\frac 1 k\right)} \|u_1\|_{L^k}^{m+\sum m_i n_1(b_i)} \\
            & \leq \frac{\left(2^{k+1}ct\right)^{|b^*|}}{|b^*|!} t^{-(1+k)} \|u_1\|_{L^k}^k,
        \end{split}
    \end{equation}
    where we used $\|u_1\|_{L^m}^m \leq t^{1+m} t^{-m\left(1+\frac 1k\right)}\|u_1\|_{L^k}^m$ and
    \begin{equation}
        |b^*|! = \left( 1 + m + \sum_{i=1}^j m_i |b_i| \right)!
        \leq 2^{((\sum m_i + 2) - 1)|b^*|} 1! m! \prod_{i=1}^j |b_i|!^{m_i}
    \end{equation}
    which follows from \eqref{factorielle} and the estimate $m + m_1 + \dotsb + m_j \leq k$.
    
    Finally, if $\nu \geq 1$, using \cref{Lem:adnuX0} and \eqref{factorielle},
    \begin{equation}
        \begin{split}
            |\xi_b(t,u)| & = \left| \int_0^t \frac{(t-s)^{\nu-1}}{(\nu-1)!} \xi_{b^*}(s,u) \dd s  \right| 
            \\ & \leq 
            \frac{t^\nu}{\nu!} 
            \frac{(2^{k+1}ct)^{|b^*|}}{|b^*|!}
            t^{-(1+k)}
            \|u_1\|_{L^k}^k
            \\ & \leq 
            \frac{(2^{k+2}ct)^{|b|}}{|b|!} t^{-(1+k)} \|u_1\|_{L^k}^k,
        \end{split}
    \end{equation}
    which gives the conclusion with $c(k) := 2^{k+2} c$.
\end{proof}

\subsection{Precise estimates of coordinates up to the fifth order}
\label{s:proof-bound-xi-12345}

We start with an elementary estimate.

\begin{lemma}
    For every $p \in [1,\infty]$, $j_0 \leq j \in \N^*$, $t>0$ and $u \in \lone$,
    \begin{equation} \label{majo_uj_Lp/Lpj0}
        \|u_j\|_{L^p} \leq \frac{t^{j-j_0}}{(j-j_0)!} \|u_{j_0}\|_{L^p}.
    \end{equation}
\end{lemma}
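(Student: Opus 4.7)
The plan is to first obtain a closed-form expression for $u_j$ in terms of $u_{j_0}$ by iterating the defining relation $u_{k+1}(s) = \int_0^s u_k$. A straightforward induction on $j - j_0 \geq 1$ yields the integral representation
\begin{equation}
    u_j(s) = \int_0^s \frac{(s-\tau)^{j-j_0-1}}{(j-j_0-1)!} u_{j_0}(\tau) \dd\tau,
\end{equation}
valid for every $s \in [0,t]$ (the case $j = j_0$ being trivial, so one may assume $j > j_0$). The heredity step follows from Fubini's theorem: writing $u_{j+1}(s) = \int_0^s u_j(\sigma) \dd\sigma$ and inserting the inductive formula for $u_j(\sigma)$, one exchanges the order of integration and computes the inner integral in $\sigma$ explicitly as a power.

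Once this convolution representation is available, the announced inequality is nothing but Young's convolution inequality (equivalently, Minkowski's integral inequality) applied to the kernel $K(s) := \frac{s^{j-j_0-1}}{(j-j_0-1)!} \mathbbm{1}_{[0,t]}(s)$. Extending $u_{j_0}$ by zero outside $[0,t]$, one has $u_j = K \ast u_{j_0}$ on $[0,t]$, hence
\begin{equation}
    \|u_j\|_{L^p(0,t)} \leq \|K\|_{L^1(\R)} \, \|u_{j_0}\|_{L^p(0,t)} = \frac{t^{j-j_0}}{(j-j_0)!} \, \|u_{j_0}\|_{L^p(0,t)},
\end{equation}
which is exactly \eqref{majo_uj_Lp/Lpj0}. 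Since Young's inequality holds uniformly in $p \in [1,\infty]$, there is no case splitting to do. There is no genuine obstacle in this proof: the only subtle point is keeping track of the factorial, which is built into the iterated kernel via the classical identity $\int_0^t \frac{s^{j-j_0-1}}{(j-j_0-1)!} \dd s = \frac{t^{j-j_0}}{(j-j_0)!}$.
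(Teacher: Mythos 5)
Your proposal is correct and essentially identical to the paper's proof: both write $u_j$ as the convolution of $u_{j_0}$ (extended by zero) with the kernel $s \mapsto s^{j-j_0-1}/(j-j_0-1)!$ on $(0,t)$ and conclude by Young's convolution inequality after computing the $L^1$ norm of that kernel.
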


\begin{proof}
    One can assume $j > j_0$
    By definition, $u_j$ is the $(j-j_0)$-th primitive of $u_{j_0}$ vanishing iteratively at zero, i.e.\
    \begin{equation}
        u_j(s) = \int_0^s \frac{(s-s')^{j-j_0-1}}{(j-j_0-1)!} u_{j_0}(s') \dd s'.
    \end{equation}
    Thus $u_j = g_{j-j_0-1} * \bar{u}_{j_0}$, where $\bar{u}_{j_0}$ is the extension of $u_{j_0}$ from $(0,t)$ to $\R$ by zero and $g_{\nu}(s) := s^\nu / \nu!$ for $s \in (0,t)$ and $0$ elsewhere, so that $\|g_\nu\|_{L^1} = t^{\nu+1}/(\nu+1)!$.
    Hence, \eqref{majo_uj_Lp/Lpj0} follows from Young's convolution inequality.
\end{proof}

This leads to the following estimates.

\begin{proof}[Proof of \cref{p:xi-bounds}]
    We prove the bounds one by one.
    \begin{enumerate}
        \item By \eqref{xi_S1}, Hölder's inequality, \eqref{majo_uj_Lp/Lpj0} and \eqref{factorielle},
        \begin{equation}
            \begin{split}
                |\xi_{M_j}(t,u)| \leq \| u_j \|_{L^1} 
                & \leq t^{1-\frac 1 p}\| u_j \|_{L^p} \\
                & \leq \frac{t^{j-j_0}}{(j-j_0)!} t^{1 - \frac 1 p} \|u_{j_0}\|_{L^p} \\
                & \leq (j_0+1)! \frac{(2t)^{j+1}}{(j+1)!} t^{-(j_0+1)} t^{1 - \frac 1 p} \| u_{j_0} \|_{L^p},
            \end{split}
        \end{equation}
        which proves \eqref{bound-xiMj/J0} with $c := 2 (j_0+1)!$ since $|M_j| = j+1$.
        
        \item By \eqref{xi_Wjnu}, Hölder's inequality, \eqref{majo_uj_Lp/Lpj0} and \eqref{factorielle},
        \begin{equation}
            \begin{split}
                |\xi_{W_{j,\nu}}(t,u)| 
                \leq \frac{t^\nu}{\nu!} t^{1-\frac 1 p} \|u_j\|_{L^{2p}}^2
                & \leq \frac{t^\nu}{\nu!} \frac{t^{2(j-j_0)}}{(j-j_0)!^2} t^{1-\frac 1 p} \|u_{j_0}\|_{L^{2p}}^2 \\
                & \leq (2j_0+1)! \frac{(2^3 t)^{2j+\nu+1}}{(2j+\nu+1)!} t^{-(2j_0+1)} t^{1-\frac 1 p} \|u_{j_0}\|_{L^{2p}}^2,
            \end{split}
        \end{equation}
        which proves \eqref{bound-xiWjnu/j0} with $c := 2^2 (2j_0+1)!$ since $|W_{j,\nu}| = 2j+\nu+1$.
        
        \item For \eqref{bound-xiPjknu/j0k0}, we proceed as in the second item, starting from \eqref{xi_S3}.
        
        \item For \eqref{bound-xiQjklnu/j0k0l0}, we proceed as in the second item, starting from \eqref{xi_Qljknu}.
        
        \item By \eqref{xi_Qbemolknu} and \eqref{bound-xiWjnu/j0}, there exists $c_2 > 0$ such that
        \begin{equation}
            \begin{split}
                |\xi_{Q^\flat_{j,\mu,\nu}}(t,u)| 
                & =
                \frac 1 2 \left| \int_0^t \frac{(t-s)^\nu}{\nu!} \xi_{W_{j,\mu}}^2(s,u) \dd s \right|
                \\& \leq \frac{t^{\nu+1}}{(\nu+1)!} \left( \frac{(c_2 t)^{|W_{j,\mu}|}}{|W_{j,\mu}|!} t^{-(2j_0+1)} t^{1-\frac 1p} \|u_{j_0}\|_{L^{2p}}^2 \right)^2
                \\ & \leq \frac{(2^2 c_2 t)^{2|W_{j,\mu}|+\nu+1}}{(2|W_{j,\mu}|+\nu+1)!} t^{-(4j_0+3)} t^{3 - \frac 2 p} \|u_{j_0}\|_{L^{2p}}^4
            \end{split}
        \end{equation}
        using \eqref{factorielle}, which proves \eqref{bound-xiQflatjmunu/j0} with $c := 2^2 c_2$ since $|Q^\flat_{j,\mu,\nu}| = 2 |W_{j,\mu}|+\nu+1$.
        
        \item By \eqref{xi_Qdieseknujmu}, \eqref{majo_uj_Lp/Lpj0} and \eqref{bound-xiWjnu/j0}, there exists $c_2 > 0$ such that
        \begin{equation}
            \begin{split}
                |\xi_{Q^\sharp_{j,\mu,k,\nu}}(t,u)| 
                & =
                \frac 1 2 \left| \int_0^t \frac{(t-s)^\nu}{\nu!} \xi_{W_{j,\mu}}(s,u) u_k^2(s) \dd s \right|
                \\& \leq \frac{t^\nu}{\nu!} \left( \frac{(c_2 t)^{|W_{j,\mu}|}}{|W_{j,\mu}|!} t^{-(2j_0+1)} t^{1-\frac 1{p_1}} \|u_{j_0}\|_{L^{2p_1}}^2 \right) t^{1-\frac{1}{p_2}}
                \left(\frac{t^{k-k_0}}{(k-k_0)!}  \|u_{k_0}\|_{L^{2p_2}}\right)^2 \\
                & \leq 
                (2k_0+1)! \frac{(2^4 c_2 t)^{|W_{j,\mu}|+2k+\nu+1}}{(|W_{j,\mu}|+2k+\nu+1)!} \\
                & \qquad \qquad \qquad \times
                t^{-(2j_0+2k_0+2)} t^{2-\frac{1}{p_1}-\frac{1}{p_2}} \|u_{j_0}\|_{L^{2p_1}}^2 \|u_{k_0}\|_{L^{2p_2}}^2
            \end{split}
        \end{equation}
        using \eqref{factorielle}, which proves \eqref{bound-xiQflatjmunu/j0} with $c := 2^4 c_2 (2k_0+1)!$ since $|Q^\sharp_{j,\mu,k,\nu}| = 2 |W_{j,\mu}|+2k+\nu+1$.
        
        \item For \eqref{bound-xiRjklmnu/j0k0l0m0}, we proceed as in the second item, starting from \eqref{xi_Rl3l2l1knu}.
        
        \item By \eqref{xi_Rdiesel3l2l1knu}, Hölder's inequality, \eqref{majo_uj_Lp/Lpj0}, and \eqref{bound-xiWjnu/j0}, there exists $c_2 > 0$ such that
        \begin{equation}
            \begin{split}
            |\xi_{R^\sharp_{j,k,l,\mu,\nu}}(t,u)| 
            & = \alpha_{j,k} \left| \int_0^t \frac{(t-s)^\nu}{\nu!} \xi_{W_{l,\mu}}(s,u) u_k(s) u_j^2(s) \dd s \right| 
            \\
            & \leq \frac{t^\nu}{\nu!} 
            \left( \frac{(c_2t)^{|W_{l,\mu}|}}{|W_{l,\mu}|!} t^{-(2l_0+1)} t^{1-\frac 1 p}\|u_{l_0}\|_{L^{2p}}^2 \right) 
            t^{1-\frac{1}{p_1}-\frac{1}{p_2}} \\
            & \qquad \qquad\qquad\qquad
            \times \left(\frac{t^{j-j_0}}{(j-j_0)!} \|u_{j_0}\|_{L^{2p_1}}\right)^2
             \frac{t^{k-k_0}}{(k-k_0)!} \|u_{k_0}\|_{L^{p_2}}
            \\
            & \leq (2j_0+k_0+1)! \frac{(2^4 c_2)t^{|W_{l,\mu}|+2j+k+\nu+1}}{(|W_{l,\mu}|+2j+k+\nu+1)!} t^{-(2l_0+2j_0+k_0+2)} 
            \\ & \qquad \qquad\qquad\qquad
            \times t^{2-\frac 1 p -\frac{1}{p_1}-\frac{1}{p_2}} \|u_{l_0}\|_{L^{2p}}^2 \|u_{j_0}\|_{L^{2p_1}}^2 \|u_{k_0}\|_{L^{p_2}}
            \end{split}
        \end{equation}
        using \eqref{factorielle}, which proves \eqref{bound-xiRsharpjklmunu/j0k0l0} with $c := 2^4 c_2 (2j_0+k_0+1)!$ since $|R^\sharp_{j,k,l,\mu,\nu}| = |W_{l,\mu}|+2j+k\nu+1$.
        \qedhere
    \end{enumerate}
\end{proof}

\subsection{Black-box estimates for the dominant part of the logarithm}
\label{s:proof-black-box}

We prove \cref{p:sum-xi-XI,p:sum-cross-XI} of \cref{s:black-box}.

\begin{proof}[Proof of \cref{p:sum-xi-XI}]
    We have, using \eqref{eq:xib-XI-L}, \eqref{eq:bracket.analytic} and $|\{b \in \Bs ; |b| = \ell \}| \leq 2^\ell$,
    \begin{equation}
        \begin{split}
            \sum_{b \in \mathcal{E}} \opnorm{ \xi_b(t,u) f_b }_{r'} 
            & \leq 
            \sum_{\sigma = 1}^{L} \sum_{\ell = \sigma}^{+\infty} \sum_{\substack{b \in \mathcal{E}\\|b|=\ell,\sigma(b) = \sigma}} \frac{(ct)^{\ell}}{\ell!} t^{-\sigma} \Xi(t,u) \frac{r}{9} (\ell-1)! \left(\frac{9\opnorm{f}_r}{r}\right)^\ell \\
            & \leq \frac{r}{9} \Xi(t,u) \sum_{\sigma = 1}^{L} \sum_{\ell = \sigma}^{+\infty} \left(\frac{18 ct \opnorm{f}_r}{r}\right)^\ell t^{-\sigma}
        \end{split}
    \end{equation}
    which converges provided that $18 ct \opnorm{f}_r < r$, and is then bounded by $C \Xi(t,u)$ for an appropriate constant $C$ depending on $r,c$ and $\opnorm{f}_r$ (independent of $t$ for $t \ll 1$, e.g.\ when $18 ct \opnorm{f}_r < r/2$).
\end{proof}

The proof of \cref{p:sum-cross-XI} relies on the following algebraic lemmas.

\begin{lemma} \label{lem:A1-itere}
    Let $M \in \N^*$.
    There exists $C_1(M) > 0$ such that, for all $a \in \Br(\Bs)$ and $b \in \Bs_{\intset{1,M}}$,
    \begin{equation} \label{eq:est-IA-b}
        | \langle \textup{\textsc{i}}(a), b \rangle_{\Bs}| \leq C_1^{(|a|_{\Bs}-1)|b|},
    \end{equation}
    where $\textup{\textsc{i}}$ denotes the canonical morphism of magmas from $\Br(\Br(X))$ to $\Br(X)$.
    For example, when $a = (X_1,M_1) \in \Br(\Bs)$, $|a|_{\Bs} = 2$, and $\textup{\textsc{i}}(a) = (X_1, (X_1, X_0)) \in \Br(X)$ with $|\textup{\textsc{i}}(a)| = 3$.
\end{lemma}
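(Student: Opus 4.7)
The plan is to prove \eqref{eq:est-IA-b} by induction on $|a|_{\Bs} \in \N^*$. The base case $|a|_{\Bs}=1$ corresponds to $a \in \Bs$, where $\textup{\textsc{i}}(a) = a$ is already a basis element, so $|\langle \textup{\textsc{i}}(a), b \rangle_{\Bs}| = \delta_{a,b} \leq 1 = C_1^0$, regardless of the choice of $C_1 \geq 1$.

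For the inductive step, I would write $a = (a_1, a_2)$ with $a_1, a_2 \in \Br(\Bs)$, so that $\textup{\textsc{i}}(a) = (\textup{\textsc{i}}(a_1), \textup{\textsc{i}}(a_2))$ and $\eval(\textup{\textsc{i}}(a)) = [\eval(\textup{\textsc{i}}(a_1)), \eval(\textup{\textsc{i}}(a_2))]$. Expanding each factor in the Hall basis gives
\begin{equation}
    \langle \textup{\textsc{i}}(a), b \rangle_{\Bs}
    = \sum_{\substack{c_1, c_2 \in \Bs \\ |c_1|+|c_2|=|b|}}
    \langle \textup{\textsc{i}}(a_1), c_1 \rangle_{\Bs}
    \langle \textup{\textsc{i}}(a_2), c_2 \rangle_{\Bs}
    \langle [c_1, c_2], b \rangle_{\Bs},
\end{equation}
where the length constraint $|c_1|+|c_2|=|b|$ comes from homogeneity. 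Applying the induction hypothesis to each $\langle \textup{\textsc{i}}(a_i), c_i \rangle_{\Bs}$ and using $|a_1|_{\Bs} + |a_2|_{\Bs} = |a|_{\Bs}$ yields
\begin{equation}
    C_1^{(|a_1|_{\Bs}-1)|c_1| + (|a_2|_{\Bs}-1)|c_2|} \leq C_1^{(|a|_{\Bs}-2)|b|}
\end{equation}
since $(|a_1|_{\Bs}-1)|c_1| + (|a_2|_{\Bs}-1)|c_2| \leq (|a|_{\Bs}-2)(|c_1|+|c_2|) = (|a|_{\Bs}-2)|b|$. The remaining factor $C_1^{|b|}$ will absorb both the number of pairs $(c_1, c_2) \in \Bs^2$ of total length $|b|$ (bounded by $4^{|b|}$) and the Hall structure constants $|\langle [c_1, c_2], b \rangle_{\Bs}|$.

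The main technical ingredient is the bound on the Hall structure constants: one needs a constant $K = K(M)$ such that $|\langle [c_1, c_2], b \rangle_{\Bs}| \leq K^{|b|}$ for every $c_1, c_2 \in \Bs$ with $|c_1|+|c_2|=|b| \leq M$. This is precisely the asymmetric geometric growth of the Hall structure constants established in \cite[Theorem 1.9]{A1}, which was already invoked in the proof of \cref{thm:Key_1} to ensure convergence of the series \eqref{eq:ZM-etab}. With this in hand, one concludes the induction by choosing $C_1 = C_1(M) \geq 4K(M)$, which gives the required bound $C_1^{(|a|_{\Bs}-1)|b|}$.

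The main obstacle is not in the inductive bookkeeping (which is essentially algebraic) but in the reliance on the nontrivial growth estimate from \cite{A1}; the restriction $b \in \Bs_{\intset{1,M}}$ is what makes the constant $C_1$ depend on $M$, since the structure constant bound $K$ is uniform only for Hall brackets of bounded length.
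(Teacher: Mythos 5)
Your proposal follows essentially the same inductive strategy as the paper: decompose $a=(a_1,a_2)$, expand $\eval(\textsc{i}(a)) = [\eval(\textsc{i}(a_1)),\eval(\textsc{i}(a_2))]$ in the Hall basis, and close the induction via the asymmetric geometric growth of Hall structure constants from \cite[Theorem~1.9]{A1}. The paper packages the bookkeeping more compactly by working with the $\ell^1$ norm $\|B\| := \sum_{c\in\Bs}|\langle B,c\rangle_{\Bs}|$, which is submultiplicative under bracketing and so absorbs the sum over pairs $(c_1,c_2)$ automatically, whereas you track individual coefficients and separately bound the number of pairs by $4^{|b|}$; both versions work.

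However, there is a slip in how you state the required structure-constant bound, and it matters. You write that one needs $|\langle[c_1,c_2],b\rangle_{\Bs}|\leq K^{|b|}$ \emph{for every $c_1,c_2\in\Bs$ with $|c_1|+|c_2|=|b|\leq M$}, and later assert that ``the structure constant bound $K$ is uniform only for Hall brackets of bounded length.'' This is not the hypothesis of \cite[Theorem~1.9]{A1} as the paper uses it, and more importantly it is not enough for the lemma: the conclusion must hold for all $b\in\Bs_{\intset{1,M}}$, i.e.\ $n_1(b)\leq M$ with the total length $|b|$ (equivalently the $X_0$-degree) arbitrary. The ``asymmetric'' feature of the cited growth estimate is precisely that $C_1$ depends only on a bound on the $X_1$-degree $n_1(b_1)+n_1(b_2)\leq M$, while the bound $C_1^{|b_1|+|b_2|}$ holds uniformly in the lengths $|b_1|,|b_2|$. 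Your argument goes through once the hypothesis is corrected to $n_1(c_1)+n_1(c_2)=n_1(b)\leq M$ (which is indeed preserved along your induction since $n_1(\textsc{i}(a_i))\leq n_1(\textsc{i}(a))=n_1(b)$), but as written the claimed key ingredient does not cover the full range of $b$ required by the statement.
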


\begin{proof}
    For $B \in \mathcal{L}(X)$, let $\| B \| := \sum_{c \in \Bs} |\langle B, c \rangle_{\Bs}|$.
    By \cite[Theorem 1.9]{A1}, there exists $C_1(M) > 0$ such that, for all $b_1, b_2 \in \Bs$ with $n_1(b_1) + n_1(b_2) \leq M$,
    \begin{equation}
        \label{eq:A1-geom}
        \|[b_1,b_2]\| \leq C_1^{|b_1|+|b_2|}.
    \end{equation}
    Let us prove by induction on $|a|_{\Bs} \geq 1$ that, when $n_1(\textsc{i}(a)) \leq M$, 
    \begin{equation} \label{eq:est-IA}
        \| \textsc{i}(a) \| \leq C_1^{(|a|_{\Bs}-1)|\textsc{i}(a)|}.
    \end{equation}
    The case $|a|_{\Bs} = 1$ is trivial since then $\langle \textsc{i}(a), b \rangle_{\Bs} \in \{ 0, 1 \}$.
    The case $|a|_{\Bs} = 2$ is exactly \eqref{eq:A1-geom}.
    Now if $|a|_{\Bs} > 2$, $a = (a_1, a_2)$ with $a_1, a_2 \in \Br(\Bs)$, where, by the induction assumption
    \begin{equation}
        \eval (\textsc{i}(a_i)) = \sum_{c \in \Bs} \alpha_i^c \eval(c) 
        \quad \text{where} \quad \sum |\alpha_i^c| \leq C_1^{(|a_i|_{\Bs}-1)|\textsc{i}(a_i)|}.
    \end{equation}
    Then, using \eqref{eq:A1-geom} once more,
    \begin{equation}
        \| \textsc{i}(a) \| 
        \leq \sum_{c_1,c_2 \in \Bs} |\alpha_1^{c_1}| |\alpha_2^{c_2}| \| [c_1, c_2] \|
        \leq C_1^{(|a_1|_{\Bs}-1+|a_2|_{\Bs}-1+1)|\textsc{i}(a)|},
    \end{equation}
    proving \eqref{eq:est-IA}.
    Eventually, when $\langle \textsc{i}(a), b \rangle_{\Bs} \neq 0$, $n_1(\textsc{i}(a))=n_1(b) \leq M$ and $|\textsc{i}(a)| = |b|$ so that \eqref{eq:est-IA-b} follows from \eqref{eq:est-IA}.
\end{proof}

\begin{lemma}
    Let $M \in \N^*$.
    There exists a constant $C_M > 0$ such that, for every $q \geq 2$, $h \in (\N^*)^q$, $b_1 < \dotsb < b_q \in \Bs$ and $b \in \Bs_{\intset{1,M}}$,
    \begin{equation} \label{eq:Fqh-CM}
        | \langle F_{q,h}(b_1,\dotsc,b_q), b \rangle_{\Bs} | \leq C_M^{|b|}.
    \end{equation}
\end{lemma}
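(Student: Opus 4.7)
The strategy is to reduce \eqref{eq:Fqh-CM} to the geometric growth estimate \eqref{eq:est-IA-b} of \cref{lem:A1-itere} by expanding $F_{q,h}$, once and for all, on a basis of the free Lie algebra on $\{Y_1,\dots,Y_q\}$ and then specializing $Y_i \mapsto b_i$. The first observation is a degree constraint: for $\langle F_{q,h}(b_1,\dots,b_q), b\rangle_{\Bs}$ to be nonzero one needs $h_1|b_1|+\dots+h_q|b_q| = |b|$, which forces $q \leq h := h_1+\dots+h_q \leq |b| \leq M$. In particular, only finitely many pairs $(q,h)$ with $q \geq 2$ and $h_1+\dots+h_q \leq M$ can yield a nonzero contribution, so it suffices to produce, for each such pair, a bound of the form $D_{q,h} \cdot C_1(M)^{(h-1)|b|}$ with $D_{q,h}$ independent of the $b_i$.

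Second, fix such a pair $(q,h)$. By \cref{Prop:ZM_Bstar}, $F_{q,h}(Y_1,\dots,Y_q)$ is a fixed element of $\mathcal{L}(\{Y_1,\dots,Y_q\})$, homogeneous of multidegree $(h_1,\dots,h_q)$, and this component is finite-dimensional. Fixing an algebraic basis of this component (for instance the Hall component of a Hall basis of $\mathcal{L}(\{Y_1,\dots,Y_q\})$), one may write once and for all
\[
F_{q,h}(Y_1,\dots,Y_q) = \sum_{a \in \mathcal{A}_{q,h}} \alpha_{q,h,a}\, \eval(a),
\]
where $\mathcal{A}_{q,h}$ is a finite subset of $\Br(\{Y_1,\dots,Y_q\})$ consisting of brackets of multidegree exactly $(h_1,\dots,h_q)$, and the coefficients $\alpha_{q,h,a} \in \R$ depend only on $(q,h,a)$. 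In particular $A_M := \max \sum_{a \in \mathcal{A}_{q,h}} |\alpha_{q,h,a}|$, where the maximum is over the finitely many $(q,h)$ with $q \geq 2$ and $h \leq M$, is a finite constant depending only on $M$.

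Third, specializing each $Y_i$ to $b_i$ turns $a \in \mathcal{A}_{q,h}$ into an element $a(b_1,\dots,b_q) \in \Br(\Bs)$ of $\Bs$-length $|a(b_1,\dots,b_q)|_{\Bs} = h_1+\dots+h_q = h$, whose underlying iterated bracket $\textsc{i}(a(b_1,\dots,b_q)) \in \Br(X)$ satisfies $n_1(\textsc{i}(a(b_1,\dots,b_q))) = n_1(b) \leq M$ whenever the pairing with $b$ is nonzero. By the universal property of $\mathcal{L}(X)$, $\eval(a(b_1,\dots,b_q)) = \eval(\textsc{i}(a(b_1,\dots,b_q)))$ in $\mathcal{L}(X)$, so that
\[
\langle F_{q,h}(b_1,\dots,b_q), b\rangle_{\Bs} = \sum_{a \in \mathcal{A}_{q,h}} \alpha_{q,h,a} \langle \textsc{i}(a(b_1,\dots,b_q)), b\rangle_{\Bs}.
\]
Applying \cref{lem:A1-itere} to each summand yields $|\langle \textsc{i}(a(b_1,\dots,b_q)), b\rangle_{\Bs}| \leq C_1(M)^{(h-1)|b|} \leq C_1(M)^{(M-1)|b|}$. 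Summing gives $|\langle F_{q,h}(b_1,\dots,b_q),b\rangle_{\Bs}| \leq A_M \cdot C_1(M)^{(M-1)|b|}$, from which \eqref{eq:Fqh-CM} follows with e.g.\ $C_M := (1+A_M) \cdot C_1(M)^{M-1}$ (absorbing $A_M$ into the base of the exponential via $|b| \geq 1$).

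The only mildly subtle point is to ensure that the expansion of $F_{q,h}$ on $\mathcal{A}_{q,h}$ involves only brackets whose $\Bs$-length is \emph{exactly} $h$, so as to match the exponent $|a|_{\Bs}-1 = h-1$ required by \cref{lem:A1-itere}; this is however automatic because $F_{q,h}$ is multi-homogeneous of total degree $h$ in the generators $Y_1,\dots,Y_q$. No further delicate estimate is needed: all the analytic work is already packaged inside \cref{lem:A1-itere} (itself a restatement of \cite[Theorem 1.9]{A1}), and our remaining task is purely a finite combinatorial reduction.
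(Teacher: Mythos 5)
Your overall strategy — expand $F_{q,h}$ once and for all on a Hall basis of the free Lie algebra over $\{Y_1,\dots,Y_q\}$, specialize $Y_i\mapsto b_i$, invoke \cref{lem:A1-itere} on each resulting iterated bracket, and use finiteness of the relevant set of $(q,h)$ to obtain a uniform constant — is exactly the paper's approach, and most of the reasoning is sound. However, there is one genuine error in the step that is supposed to deliver finiteness.

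You write that the homogeneity constraint $h_1|b_1|+\dots+h_q|b_q|=|b|$ ``forces $q\le h:=h_1+\dots+h_q\le |b|\le M$.'' The inequality $|b|\le M$ is false: by definition $\Bs_{\intset{1,M}}=\{b\in\Bs\,;\,n_1(b)\in\intset{1,M}\}$, so it is $n_1(b)$, not the total length $|b|$, that is bounded by $M$. Indeed $|b|$ is unbounded on $\Bs_{\intset{1,M}}$ (for instance $b=M_\nu$ has $n_1(b)=1$ but $|b|=\nu+1$). With only the true inequality $h\le |b|$, your argument breaks in two places: the set of contributing pairs $(q,h)$ is no longer finite (so $A_M$ is undefined), and the bound $C_1^{(h-1)|b|}$ would become $C_1^{(|b|-1)|b|}\approx C_1^{|b|^2}$, which is not of the required form $C_M^{|b|}$. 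The correct argument — the one used in the paper — invokes the \emph{other} homogeneity relation, the one in $X_1$-degree: if $\langle F_{q,h}(b_1,\dots,b_q),b\rangle_{\Bs}\neq 0$ then $h_1 n_1(b_1)+\dots+h_q n_1(b_q)=n_1(b)\le M$, and since each $b_i\in\Bs\setminus\{X_0\}$ (as in the intended application coming from \cref{Prop:ZM_Bstar}) one has $n_1(b_i)\ge 1$, hence $h\le M$. With that substitution, the remainder of your proof — the choice of $\mathcal{A}_{q,h}$, the identification $|a(b_1,\dots,b_q)|_{\Bs}=h$, the application of \cref{lem:A1-itere}, and the final absorption of $A_M$ into the base via $|b|\ge 1$ — goes through as written and matches the paper's proof.
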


\begin{proof}
    By homogeneity, $\langle F_{q,h}(b_1,\dots,b_q), b \rangle_{\Bs} \neq 0$ entails that
    \begin{align}
        \label{eq:sum-hini}
        h_1 n_1(b_1) + \dotsb + h_q n_1(b_q) & = n_1(b) \leq M, \\
        \label{eq:sum-hielli}
        h_1 |b_1| + \dotsb + h_q |b_q| & = |b|.
    \end{align}
    For each $q \geq 2$ and $h \in (\N^*)^q$, there exists a finite subset $\mathcal{A} \subset \Br(\{ Y_1, \dotsc, Y_q \})$ and coefficients $(\alpha_a)_{a \in \mathcal{A}}$ such that $F_{q,h}(Y_1,\dotsc,Y_q) = \sum \alpha_a \eval(a)$.
    By \eqref{eq:sum-hini}, the set of considered $q$ and $h$ is finite, so there exists a constant $C_F(M) > 0$ (depending only on $M$) such that $\sum |\alpha_a| \leq C_F$.
    Let $a \in \mathcal{A}$ of homogeneity $h_1, \dotsc, h_q$ with respect to $Y_1, \dotsc, Y_q$. 
    In particular $|a| = \sum h_i \leq M$ by \eqref{eq:sum-hini}.
    By \cref{lem:A1-itere},
    \begin{equation}
        |\langle a(b_1, \dotsc, b_q), b \rangle_{\Bs}| \leq C_1^{(|a|-1)|b|} \leq C_1^{M |b|},
    \end{equation}
    where $a(b_1,\dotsc,b_q)$ denotes the image of $a$ through the homomorphism of Lie algebras sending $Y_i$ to $b_i \in \mathcal{L}(X)$.
    Eventually, we conclude that \eqref{eq:Fqh-CM} holds with $C_M := C_F C_1^M$.
\end{proof}

\begin{proof}[Proof of \cref{p:sum-cross-XI}]
    By \cref{Prop:ZM_Bstar}, for $b \in \Bs_{\intset{1,M}}$,
    \begin{equation}
        \eta_b(t,u) - \xi_b(t,u) 
        = 
        \sum_{\substack{q \geq2, h\in (\N^*)^q \\ b_1 > \dots > b_q \in \Bs \setminus\{ X_0\} }} \xi_{b_1}^{h_1}(t,u) \dots \xi_{b_q}^{h_q}(t,u) \langle F_{q,h}(b_1,\dots,b_q), b \rangle_{\Bs}.
    \end{equation}
    When $\langle F_{q,h}(b_1,\dots,b_q), b \rangle_{\Bs}\neq 0$, there are at most $M$ choices for $q$, $M^M$ choices for $h$ and $(2^{|b|})^M$ choices for the $b_i$ (since $|\{ c \in \Bs ; |c| \leq \ell \}| \leq 2^\ell$).
    Hence, using \eqref{eq:Fqh-CM}, \eqref{factorielle}, \eqref{eq:xib-othercross-XI-L} and $\Xi = O(1)$, we obtain 
    \begin{equation}
        \begin{split}
        |\eta_b(t,u)-\xi_b(t,u)|
        & \leq 
        M^M 2^{M |b|} C_M^{|b|} \sum_{q=1}^M  \sum_{\sigma = 1}^{\min \{ q L, |b| \}} 2^{(M-1)|b|} \frac{(ct)^{|b|}}{|b|!} t^{-\sigma} \Xi(t,u) \\
        & \leq \frac{(D_M t)^{|b|}}{|b|!} t^{-\sigma_b} \Xi(t,u),
        \end{split}
    \end{equation}
    where $\sigma_b := \min \{ M L, |b| \}$ and $D_M > 0$, and the summations over $b \in \mathcal{E}$ follows exactly as in the proof of \cref{p:sum-xi-XI} above.
\end{proof}

\subsection{Bad-bad limiting examples involving \texorpdfstring{$W_3$ versus $Q^\flat$}{W3 vs Qb}}
\label{s:Qflat}

As announced in \cref{s:W3-bad-bad}, in this paragraph, we give examples illustrating that one must include $Q^\flat_{1,0}$, $Q^\flat_{1,1}$ and $Q^\flat_{1,2}$ in the list $\mathcal{N}_3$ of \eqref{def:mathcalE3}.

\paragraph{Limiting example for $Q^\flat_{1,0}$.}
Consider the system
\begin{equation} \label{Ex:W3vsQb10}
    \begin{cases}
        \dot{x}_1=u \\
        \dot{x}_2=x_1 \\
        \dot{x}_3=x_2+x_1^2 \\
        \dot{x}_4=x_3 \\
        \dot{x}_5=x_3^2+2x_1^2 x_4.
    \end{cases}
\end{equation}
Written in the form \eqref{syst}, this system satisfies
\begin{equation}
    \begin{split}
        & f_{M_{i-1}}(0)=e_i \text{ for } i \in \intset{1,4}, \quad 
        f_{W_1}(0) = 2 e_3, \quad
        f_{W_{1,1}}(0) = 2 e_4, \\
        & f_{Q^\flat_{1,0}}(0) = - 8e_5, \quad
        f_{W_3}(0)=2 e_5
    \end{split}
\end{equation}
and $f_b(0) = 0$ for any other $b \in \Bs$.

\begin{proposition}
	System \eqref{Ex:W3vsQb10} is $L^\infty$-STLC (but not $W^{1,\infty}$-STLC).
\end{proposition}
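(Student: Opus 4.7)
The two claims are proved separately.

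The non-$W^{1,\infty}$-STLC part is a direct application of \cref{Thm:Kawski_Wm} with $m = 1$ and $k = 3$: one has $\pi(3,1) = 1 + \lceil 4/2 \rceil = 3$, and since $f_b(0) = 0$ for every $b \in \Bs_3$ (from the explicit enumeration of non-vanishing brackets given right above), $S_{\intset{1,3} \setminus \{2\}}(f)(0) = \vect\{e_1, e_2, e_3, e_4\}$ does not contain $f_{W_3}(0) = 2e_5$.

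For the $L^\infty$-STLC claim, my plan starts with an explicit integration. Setting $v(s) := \int_0^s u_1^2 = 2\xi_{W_1}(s,u)$ and performing two integrations by parts on the term $\int_0^t 2 x_1^2 x_4$ (using $v' = x_1^2$ and $x_4' = u_3 + v$), I obtain
\begin{equation} \label{eq:x5-Qflat-plan}
x_5(t;u) = \int_0^t u_3^2 - \int_0^t v^2 + 2 v(t)\, x_4(t;u).
\end{equation}
On any trajectory with $x_i(t;u) = 0$ for $i = 1,\ldots,4$, the cross term vanishes and \eqref{eq:x5-Qflat-plan} reduces to the pure bad--bad balance $2\xi_{W_3}(t,u) - 8\xi_{Q^\flat_{1,0}}(t,u)$. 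Next, I would exhibit control families realizing the tangent directions $\pm e_5$ with $\|u\|_{L^\infty} \to 0$. The direction $+e_5$ follows from any smooth low-frequency control, since then $\int u_3^2$ dominates $\int v^2$. The delicate direction $-e_5$ requires a multi-scale oscillating control of the form $u(s) = \epsilon \Phi(\omega s)$, where $\Phi \in \CC^\infty$ is a fixed $2\pi$-periodic shape of zero mean and with vanishing means of its first three iterated primitives (so that $u_j(t) = 0$ for $j = 1, \ldots, 4$ to leading order when $\omega t \in 2\pi \N^*$), with $\omega \sim t^{\delta - 1}$ and $\epsilon \sim t^{\delta/2}$ for a small $\delta \in (0,1)$. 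Direct scaling computations then give $\int_0^t u_3^2 \sim t^{7-\delta}$ and $\int_0^t v^2 \sim t^{7-2\delta}$, so that the $Q^\flat_{1,0}$-contribution dominates and $x_5(t) \sim -t^{7-2\delta}$ while $\|u\|_{L^\infty} = \epsilon \to 0$. A careful corrector should handle the residual $x_3(t), x_4(t) > 0$ (inherited from $v(t) > 0$), in the spirit of Kawski's proof that system \eqref{Ex:W3vsQ111} is $L^\infty$-STLC (see \cite[Example 5.2]{zbMATH04154295}). Combined with the linear controllability of the subsystem $(x_1, \ldots, x_4)$, a classical tangent-vector argument will then conclude.

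The hardest step will be the design of this corrector, since the residual $v(t) > 0$ cannot be absorbed by any $L^\infty$-small linear perturbation alone (enforcing $u_3^{\mathrm{corr}}(t) \sim v(t)$ would require $\|u^{\mathrm{corr}}\|_{L^\infty} \gtrsim v(t)/t^3$, which is not small); the resolution must exploit simultaneously the $W_1$-variation already present in the subsystem $(x_1, \ldots, x_4)$ (through the $x_1^2$ nonlinearity in $\dot x_3$) to absorb the $v(t)$ drift without contributing to the leading $-e_5$ order. This bad--bad competition escapes every classical sufficient condition, since \cref{thm:AG} and Sussmann's $\mathcal{S}(\theta)$ condition assign $W_3$ and $Q^\flat_{1,0}$ the same Agrachev--Gamkrelidze weight $6$; consequently the proof must work directly on the explicit identity \eqref{eq:x5-Qflat-plan}.
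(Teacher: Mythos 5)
Your derivation of the identity $x_5(T;u)=\int_0^T u_3^2-\int_0^T v^2+2v(T)x_4(T)$ (with $v=\int u_1^2$) is exactly the paper's Step~2, and the non-$W^{1,\infty}$-STLC part via \cref{Thm:Kawski_Wm} with $m=1$, $k=3$ is also what the paper does. The $+e_5$ direction is indeed routine. But your plan for the $-e_5$ direction has two serious gaps.

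First, the scaling is wrong. With $u=\epsilon\Phi(\omega s)$ and $\Phi,\Phi_1,\Phi_2,\Phi_3$ periodic, one has $u_j\sim\epsilon\omega^{-j}$, so $\int_0^t u_3^2\sim t\,\epsilon^2\omega^{-6}$. With your choice $\omega\sim t^{\delta-1}$, $\epsilon\sim t^{\delta/2}$ this gives $\int u_3^2\sim t^{7-5\delta}$, not $t^{7-\delta}$, so $\int u_3^2$ actually \emph{dominates} $\int v^2\sim t^{7-2\delta}$; moreover $\omega t\sim t^{\delta}\to0$, so the Riemann--Lebesgue averaging you invoke does not even take effect. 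A valid scaling must have $\omega t\to\infty$ and $t\epsilon\omega\to\infty$ (e.g.\ $\epsilon\sim t$, $\omega\sim t^{-3}$). Second, and more fundamentally, even after fixing the scaling the pure oscillating control does \emph{not} give $x_5<0$: the cross term $2v(T)x_4(T)$ is positive, of the \emph{same} order as $\int v^2$, and larger in magnitude (to leading order $2v(T)x_4(T)\approx 3\int v^2$, since $v(s)$ is asymptotically linear), so $x_5(T)\approx +\tfrac23 T^3(\epsilon^2/\omega^2)^2\langle\Phi_1^2\rangle^2>0$. Thus the corrector is not a secondary cleanup step---it is essential to flip the sign, and your plan leaves it entirely undesigned. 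Your claim that the residual cannot be absorbed by an $L^\infty$-small correction is itself an artifact of the erroneous scaling: with a valid scaling, the needed low-frequency modification of $u_3$ is of size $v(T)/T^3$, which \emph{is} small, and this is precisely what the paper exploits. The paper's proof uses the unified ansatz $u_3(t)=a^4\chi(t)\bigl(1+\phi((T-t)/a)\bigr)$ with $T$ fixed and $a\to0$, computes leading-order functionals $G_3,G_4,G_5$ of $\chi$ via a Riemann--Lebesgue lemma, and then uses the implicit function theorem to choose $\chi$ so that $G_3(\chi)=G_4(\chi)=0$ (killing $x_3(T)$ and $x_4(T)$, hence the cross term) while $G_5(\chi)<0$; no appeal to the ``$W_1$-variation'' is needed, only a suitably scaled low-frequency component of the control.
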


\begin{proof}
	By \cref{Thm:Kawski_Wm}, if a system is $W^{1,\infty}$-STLC, then $f_{W_3}(0) \in S_3(f)(0)$.
	Since this condition is not satisfied by system \eqref{Ex:W3vsQb10}, it is not $W^{1,\infty}$-STLC.
	We now prove that it is $L^\infty$-STLC.
	
	\step{Computation of the state}
    Let us fix $T > 0$.
	Explicit integrations lead to $x_1(T) = u_1(T)$, $x_2(T) = u_2(T)$ and
	\begin{equation} \label{eq:syst-W3-Qb100-x1x5}
		\begin{split}
			x_3(T) & = u_3(T) + \int_0^T u_1^2(t) \dd t, \\
			x_4(T) & = u_4(T) + \int_0^T (T-t) u_1^2(t) \dd t, \\
			x_5(T) & = \int_0^T u_3^2(t) \dd t - \int_0^T \left(\int_0^t u_1^2(s) \dd s\right)^2 \dd t + 2 x_4(T) \int_0^T u_1^2(t) \dd t.
		\end{split}
	\end{equation}
	
	\step{Motions in the linear directions}
    Let $i \in \intset{1,4}$.
	By the usual linear theory, there exists $\bar{u}^i \in L^\infty((0,T);\R)$ such that, for $a \in [-1,1]$, $x(T;a \bar{u}^i) = a e_i + O(a^2)$.
	
	\step{Motion in the easy quadratic direction $+e_5$}
	Take a non-zero function $\chi \in C^\infty_c((0,T);\R)$, normalized such that $\|\chi^{(1)}\|_{L^2} = 1$ and define, for $a \in [0,1]$,
	\begin{equation}
		\mathcal{U}_{e_5}(a) := a \chi^{(4)} - a^2 \left( \int_0^T (\chi^{(3)})^2(t) \dd t \right) \bar{u}^3 - a^2 \left( \int_0^T (T-t) (\chi^{(3)})^2(t) \dd t \right) \bar{u}^4.
	\end{equation}
	Using \eqref{eq:syst-W3-Qb100-x1x5}, one checks that $x(T,\mathcal{U}_{e_5}(a)) = a^2 e_5 + O(a^3)$.
	
	\step{Motion in the difficult quartic direction $-e_5$}
	In order to benefit from the quartic term, we use oscillating controls.
	Let $\phi \in C^\infty(\R;\R)$ be a fixed non-zero $T$-periodic function with $\phi(0)=\phi'(0)=\phi''(0) = 0$, $\langle \phi \rangle = 0$ and $\langle \phi^2 \rangle = \frac 14$.
	Let $\chi \in C^\infty_c((0,T];\R)$ such that $\chi'(T)=\chi''(T)=0$ to be chosen later.
	For $a \in [0,1]$ small enough, we use controls of the form
	\begin{equation} \label{eq:w3qb10-u3}
		u_3(t) = a^4 \chi(t) \left(1 + \phi((T-t)/a) \right).
	\end{equation}
	In particular, $u_i(0) = u_i(T) = 0$ for $i = 1,2$.
	Moreover, the map $(a,\chi) \to u$ is continuous from $[0,1] \times C^3([0,T];\R)$ to $L^\infty((0,T);\R)$ and 
	\begin{equation}
		\| u \|_{L^\infty} = O \left( a \| \chi \|_{C^3} \right).
	\end{equation}
	Differentiating \eqref{eq:w3qb10-u3} twice yields
	\begin{equation}
		u_1(t) = a^4 \chi''(t) \left(1 + \phi((T-t)/a) \right) 
		- 2 a^3 \chi'(t) \phi' ((T-t)/a)
		+ a^2 \chi(t) \phi''((T-t)/a).
	\end{equation}
	Hence, heuristically, $u_1 \approx a^2 \chi \phi''$.
	Substituting in \eqref{eq:syst-W3-Qb100-x1x5} and using a Riemann--Lebesgue-type argument (see \cref{p:RL} in \cref{s:RL} below) yields
	\begin{align}
		x_3(T) & = a^4 \left( G_3(\chi) + F_3(a,\chi) \right), && G_3(\chi) := \chi(T) + \langle (\phi'')^2 \rangle \int_0^T \chi^2 \\
		x_4(T) & = a^4 \left( G_4(\chi) + F_4(a,\chi) \right), && G_4(\chi) := \int_0^T \chi + \langle (\phi'')^2 \rangle \int_0^T (T-t) \chi^2(t) \dd t
	\end{align}
	where $F_3$ and $F_4$ are $C^1$ maps on $[0,1] \times C^5_c((0,T];\R)$ with $F_3(0,\cdot) = F_4(0,\cdot) = 0$.
	Similar arguments prove that
	\begin{equation}
		\begin{split}
		x_5(T) & = a^8 \left( G_5(\chi) + F_5(a,\chi) \right)
		+ 2 x_4(T) \int_0^T u_1^2, \\
		G_5(\chi) & := \langle (1+\phi)^2 \rangle \int_0^T \chi^2 -  \langle (\phi'')^2 \rangle^2 \int_0^T \left(\int_0^t \chi^2 \right)^2 \dd t
		\end{split}
	\end{equation}
	where $F_5$ is a $C^1$ map on $[0,1] \times C^5_c((0,T];\R)$ with $F_5(0,\cdot) = 0$.
	
	\medskip
	
	We now prove that there exists $\chi^*$ such that $G_3(\chi^*) = G_4(\chi^*) = 0$, $G_5(\chi^*) < 0$, and $\chi \mapsto (G_3(\chi),G_4(\chi))$ is locally onto.
	More precisely, given $0 < \tau \ll 1$, let $\bar\chi \in C^\infty_c((0,T);[0,1])$ with $\bar \chi \equiv 1$ on $[2\tau,T-2\tau]$, $\supp \chi \subset [\tau,T-\tau]$, and $\bar h \in C^\infty_c([0,1];[0,1])$ with $\bar h'$ compactly supported in $(0,1)$ and $\bar h (0) = 1$.
	We look for $\chi$ under the form
	\begin{equation}
		\chi(t) = c_0 \bar \chi(t) + c_1 \bar h((T-t)/\tau)
	\end{equation}
	Let $\Lambda := \langle (\phi'')^2 \rangle$.
	Using the assumptions on $\phi$,
	\begin{align}
		\notag
		G_3(\chi) & = c_1 + \Lambda c_0^2 T + O(\tau), \\
		G_4(\chi) & = c_0 T + \Lambda c_0^2 \frac{T^2}{2} + O(\tau), \\
		\notag
		G_5(\chi) & = \frac{5}{4} c_0^2 T - \Lambda^2 c_0^4 \frac{T^3}{3} + O(\tau).
	\end{align}
	At $\tau = 0$, one checks that, for $\bar{c}_0 = - 2/(\Lambda T)$ and $\bar{c}_1 = -\Lambda \bar{c}_0 T$, $G_3 = G_4 = 0$ and $G_5 < 0$.
	By the implicit function theorem, there exists $(c_0^*,c_1^*,\tau^*)$ with $\tau^* > 0$, $c_0^* \approx \bar{c}_0$ and $c_1^* \approx \bar{c}_1$ such that $G_3(\chi^*) = G_4(\chi^*) = 0$ and $G_5(\chi^*) < 0$.
	Moreover, since $\partial_{c_1} (G_3,G_4)_{\rvert \bar c_0,\bar c_1,\tau} = (1,0) + O(\tau)$ and $\partial_{c_0} (G_3,G_4)_{\rvert \bar c_0,\bar c_1,\tau} = (0,-T)$, the $C^1$ map $(c_0,c_1) \mapsto (G_3,G_4)(\chi_{c_0,c_1,\tau^*})$ vanishes at $(c_0^*,c_1^*)$ and its differential is onto.
 
	\medskip
	
	By the implicit function theorem, there thus exists a $C^1$ map $a \mapsto \chi_a$ from $[0,1]$ to $C^5_c((0,T];\R)$ such that $G_3(\chi_a) + F_3(a,\chi_a) = G_4(\chi_a) + F_4(a,\chi_a) = 0$, with $G_5(\chi_0) < 0$.
	Thus $x(T) = a^8 (G_5(\chi_a) + F_5(a,\chi_a)) e_5 = a^8 G_5(\chi_0) e_5 + O(a^9)$, so one can move in the direction $-e_5$.
	
	\step{Conclusion} 
    Standard arguments using either tangent vectors or power series expansions (see e.g.\ \cite[Appendix]{zbMATH04154295} or \cite[Section~8.1]{zbMATH05150528}) entail that \eqref{Ex:W3vsQb10} is $L^\infty$-STLC.
\end{proof}

\paragraph{Limiting example for $Q^\flat_{1,1}$.}
Consider the system
\begin{equation} \label{Ex:W3vsQb11}
    \begin{cases}
        \dot{x}_1=u \\
        \dot{x}_2=x_1 + x_1^2 \\
        \dot{x}_3=x_2 \\
        \dot{x}_4=x_3 \\
        \dot{x}_5=x_4 \\
        \dot{x}_6=x_3^2 - 2x_1^2 x_5.
    \end{cases}
\end{equation}
Written in the form \eqref{syst}, this system satisfies
\begin{equation}
    \begin{split}
        & f_{M_{i-1}}(0)=e_i \text{ for } i \in \intset{1,5}, \quad 
        f_{W_{1,\nu}}(0) = 2 e_{2+\nu} \text{ for } \nu \in \intset{0,3}, \\
        & f_{Q^\flat_{1,1}}(0) = - 8e_6, \quad
        f_{W_3}(0)=2 e_6
    \end{split}
\end{equation}
and $f_b(0) = 0$ for any other $b \in \Bs$.

\begin{lemma}
    System \eqref{Ex:W3vsQb11} is $L^\infty$-STLC.
\end{lemma}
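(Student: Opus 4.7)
The plan is to follow exactly the same blueprint as in the proof of the $L^\infty$-STLC of \eqref{Ex:W3vsQb10}: the only necessary condition that \eqref{Ex:W3vsQb11} violates is a $W^{1,\infty}$-one (by \cref{Thm:Kawski_Wm}), and one produces small-time motions in every half-line $\R_{\pm} e_i$ for $i \in \intset{1,6}$, then invokes standard tangent-vector arguments to conclude.

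First, I would integrate \eqref{Ex:W3vsQb11} explicitly to obtain
\begin{equation*}
    x_2(T)=u_2(T)+\int_0^T u_1^2, \quad x_{i+1}(T)=u_{i+1}(T)+\int_0^T \tfrac{(T-t)^{i-1}}{(i-1)!}u_1^2(t)\,dt\quad (i=2,3,4),
\end{equation*}
\begin{equation*}
    x_6(T)=\int_0^T x_3^2(t)\,dt - 2\int_0^T x_1^2(t)\, x_5(t)\,dt.
\end{equation*}
Motions along $\pm e_i$ for $i\in\intset{1,5}$ are produced by the usual linear test controls $\bar u^i$, and motion in $+e_6$ is produced by a high-frequency control on $u_3$, reproducing mutatis mutandis the construction of $\mathcal{U}_{e_5}(a)$ in the previous proof but driving $\int u_3^2$ while killing the lower components by linear feedback in $\bar u^3,\bar u^4,\bar u^5$.

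The core step is the motion in the direction $-e_6$. Here I would use, for $a\in(0,1]$ small and $\chi\in C^\infty_c((0,T];\R)$ with $\chi'(T)=\chi''(T)=\chi'''(T)=0$ to be chosen later, an oscillating control of the form
\begin{equation*}
    u_4(t)=a^4\chi(t)\bigl(1+\phi\bigl((T-t)/a\bigr)\bigr),
\end{equation*}
where $\phi\in C^\infty(\R;\R)$ is fixed, $T$-periodic, with $\phi(0)=\phi'(0)=\phi''(0)=\phi'''(0)=0$, $\langle\phi\rangle=0$ and $\langle\phi^2\rangle=\frac14$. Differentiating three times, the leading contribution to $u_1=\partial_t^3 u_4$ comes from hitting $\phi$ three times, giving $u_1(t)\approx a\,\chi(t)\,\phi'''((T-t)/a)$, so that by Riemann--Lebesgue type averaging (as in \cref{p:RL}) one has
\begin{equation*}
    \int_0^T u_1^2 = a^2\bigl(\langle(\phi''')^2\rangle\textstyle\int_0^T\chi^2+o(1)\bigr),\qquad \int_0^T u_4^2 = a^8\bigl(\langle(1+\phi)^2\rangle\textstyle\int_0^T\chi^2+o(1)\bigr).
\end{equation*}
Substituting into the formulas for $x_3(T),x_4(T),x_5(T)$ shows that all three are of order $a^2$ with explicit averages $G_i(\chi)$ depending on $\chi$ and moments of $\phi$; one then corrects by adding $O(a^2)$ multiples of $\bar u^3,\bar u^4,\bar u^5$ so that $x_3(T)=x_4(T)=x_5(T)=0$. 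In $x_6(T)$, the quartic part $-2\int x_1^2 x_5 \sim -\frac{1}{3}\bigl(\int u_1^2\bigr)^2\langle(\phi''')^2\rangle$ (from the $\int \frac{(T-t)^3}{6}u_1^2$ summand in $x_5$) is of order $a^4$; the competing $\int x_3^2$ term is also tuned to order $a^4$ by the choice of scaling. Hence both contributions are genuinely in competition at the same order $a^4$, and by the same kind of two-parameter ansatz $\chi=c_0\bar\chi+c_1\bar h((T-\cdot)/\tau)$ used before I expect to be able to choose $(c_0,c_1,\tau)$ so that the total $x_6(T)<0$, then apply the implicit function theorem to absorb the $o(1)$ corrections.

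The hard part is the exact bookkeeping: one must verify that the corrective $O(a^2)$ amounts of $\bar u^3,\bar u^4,\bar u^5$ added to zero out $x_3,x_4,x_5$ do \emph{not} contribute at the dominant order $a^4$ to $x_6$ (otherwise the sign in $e_6$ could be destroyed), and that the implicit function theorem produces a genuine smooth curve $a\mapsto \chi_a$ surjecting locally onto $(x_3,x_4,x_5)=0$ with $x_6(T)\sim a^4 G_6(\chi_0)<0$. This is essentially the same delicate averaging computation as for $Q^\flat_{1,0}$, extended by one integration layer; the asymmetry introduced by the weight $(T-t)^3/6$ in $x_5$ (as opposed to $(T-t)$ in the previous example) is the only genuinely new algebraic point to check.
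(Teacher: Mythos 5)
Your proposal takes a genuinely different route from the paper's. The paper's proof is a short reduction: perform the static-state feedback $v := u(1+2x_1)$, observe that the transformed system has $f_{W_3}(0)=2e_6$, $f_{P_{1,5}}(0)=-4e_6$, plus additional brackets $(M_4,\ad^k_{X_1}(X_0))$ with $\omega$-weight $3+k \geq 6 > 5 = \omega(P_{1,5})$, so it is exactly the good-bad competition of \eqref{Ex:W3vsP1l>4} with $l=5$, already dispatched by \cref{thm:AG} in \cref{Subsec:Ex_W3}. You instead replay the oscillating-control argument used for $Q^\flat_{1,0}$, which is substantially more laborious, and as written it has a concrete gap.

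The gap is not a bookkeeping afterthought; it is the crux. Here the perturbation $x_1^2$ sits in the $\dot x_2$ equation, so $x_2(T)=u_2(T)+\int_0^T u_1^2$ also picks up a positive $O(a^2)$ term which you never address. Worse, that term \emph{cannot} be cancelled by tuning $\chi$: with the scaling $u_4 = a^4\chi(1+\phi((T-t)/a))$ and $\phi^{(j)}(0)=0$ for $j\leq 3$, the boundary term $u_2(T)$ is at most $O(a^4)$, strictly smaller than $\int u_1^2 \sim a^2\langle(\phi''')^2\rangle\int\chi^2 > 0$. This is a structural difference from the $Q^\flat_{1,0}$ case, where $G_3$ and $G_4$ contained linear-in-$\chi$ boundary terms ($\chi(T)$ and $\int\chi$) that could be tuned to kill the positive quadratic part; here $G_2$ has no such linear term, so you are forced to add an $O(a^2)$ corrective control along $-e_2$ (and similarly along $-e_3,\dots,-e_5$). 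Any such $O(a^2)$ correction produces quadratic effects in $x_6$ of exactly the critical order $a^4$ — the same order as the competing contributions $\int x_3^2 \sim a^4$ and $-2\int x_1^2 x_5 \sim a^4$ — so the sign of $G_6$ after correction is a genuine open computation, not a verification. You flag this yourself but treat it as a detail; in fact it is the entire argument. Unless you carry out that full averaged computation (and it is not obvious the sign works out), the feedback reduction of the paper is the much safer path.
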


\begin{proof}
    Let us perform the (nonlinear) static-state-feedback transformation $v(t) := u(t)(1+2x_1(t))$.
    Then system \eqref{Ex:W3vsQb11} is mapped to a system which satisfies
    \begin{equation}
        \begin{split}
            & f_{M_{i-1}}(0)=e_i \text{ for } i \in \intset{1,5}, \quad 
            f_{W_3}(0)=2 e_6, \quad 
            f_{P_{1,5}}=-4e_6, \\
            & f_{(M_4,\ad^k_{X_1}(X_0))}(0) = c_k e_6 \text{ for } k \geq 3
        \end{split}
    \end{equation}
    for some constant $c_k \in \R$ and $f_b(0) = 0$ for every other $b \in \Bs$.
    In particular, as \eqref{Ex:W3vsP1l>4} with $l = 5$, it mainly features a competition between $W_3$ and $P_{1,5,0}$, which was proved to be $L^\infty$-STLC in \cref{Subsec:Ex_W3} using \cref{thm:AG}.
    Let us check that we can indeed ignore the added brackets.
    For $k \geq 3$, using the set $\Pi^1$ of \eqref{DefPi1_W3}, one has
    \begin{equation}
        (M_4,\ad^k_{X_1}(X_0)) = ((\ad_{M_2}(X_0), X_0), \ad_{X_1}^k(X_0)) \in \Pi^3.
    \end{equation}
    Thus $\omega((M_4,\ad^k_{X_1}(X_0))) = |(M_4,\ad^k_{X_1}(X_0))| - 3 = 3 + k \geq 6 > 5 = \omega(P_{1,5})$.
    Hence, the system after feedback is $L^\infty$-STLC, so \eqref{Ex:W3vsQb11} is $L^\infty$-STLC.
\end{proof}

\paragraph{Limiting example for $Q^\flat_{1,2}$.}
Consider the system
\begin{equation} \label{Ex:W3vsQb12}
    \begin{cases}
        \dot{x}_1=u + x_1^2 \\
        \dot{x}_2=x_1  \\
        \dot{x}_3=x_2 \\
        \dot{x}_4=x_3 \\
        \dot{x}_5=x_4 \\
        \dot{x}_6= x_5 \\
        \dot{x}_7= x_3^2 + 2x_1^2 x_6.
    \end{cases}
\end{equation}
Unlike the previous examples, this system is not nilpotent.
Nevertheless, written in the form \eqref{syst}, it satisfies
\begin{equation}
    \begin{split}
    &f_{W_3}(0)=2 e_7, \quad f_{Q^\flat_{1,2}}(0) = - 8e_7, \\
    &\forall b \in \mathcal{N}_3 \setminus \{ Q^\flat_{1,2} \},  \quad 
    f_b(0) \in \R e_1 + \dotsb + \R e_6.
    \end{split}
\end{equation}
Hence, along the line $e_7$, there is only a competition between $W_3$, $Q^\flat_{1,2}$, and brackets outside of $\mathcal{N}_3$, so that yield negligible contributions with respect to $\int u_3^2$.
It particular, it satisfies
\begin{equation}
    f_{W_3}(0) \notin (\mathcal{N}_3 \setminus \{ Q^\flat_{1,2} \})(f)(0).
\end{equation}

\begin{lemma}
    System \eqref{Ex:W3vsQb12} is $L^\infty$-STLC.
\end{lemma}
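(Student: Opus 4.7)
The plan is to mirror the strategy used for \eqref{Ex:W3vsQb11}: apply a nonlinear static-state feedback to eliminate the non-control-affine $x_1^2$ term in $\dot{x}_1$, reducing to a nilpotent system to which Agrachev--Gamkrelidze's sufficient condition \cref{thm:AG} can be applied. The natural choice is $v := u + x_1^2$ (so $u = v - x_1^2$), a feedback with $\alpha(x) := -x_1^2$ satisfying $\alpha(0)=0$ and $\beta = 1$; since $x_1 = \int_0^\cdot u$ stays uniformly small as $(t,\|u\|_{L^\infty}) \to 0$, this feedback is a bijection on small $L^\infty$-controls and preserves $L^\infty$-STLC. The feedback system becomes the integrator chain $\dot x_1 = v$, $\dot x_{i+1} = x_i$ for $i=1,\dots,5$, coupled with $\dot x_7 = x_3^2 + 2 x_1^2 x_6$; with control vector field $g_1 = e_1$ and polynomial drift $g_0$, it is nilpotent.

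Next, I would compute the relevant brackets of $g_0, g_1$. A direct induction yields $g_{M_k}(0) = e_{k+1}$ for $k \in \intset{0,5}$ and $g_{M_k}(0) = 0$ for $k \geq 6$. The quadratic part $x_3^2$ of the seventh coordinate of $g_0$ gives $g_{W_3}(x) \equiv 2 e_7$, matching the original bracket value, while the cubic part $2 x_1^2 x_6$ gives $g_{W_1}(x) = 4 x_6 e_7$ (vanishing at $0$) and, after bracketing with $g_{M_5}$ whose sixth coordinate equals $1$ at the origin, yields the crucial compensating identity
\begin{equation}
    g_{P_{1,6,0}}(0) = [g_{M_5}, g_{W_1}](0) = 4 e_7 = \tfrac{1}{2}\, g_{W_3}(0).
\end{equation}
I would then invoke \cref{thm:AG} with $\sigma=1$ and the set $\Pi^1$ of \eqref{DefPi1_W3}. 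From \cref{Subsec:Ex_W3}, $W_3 \in \Pi^1$ has $\omega(W_3)=6$, while the weight computations there for $P_{1,l,\nu}$-brackets adapt directly to show $P_{1,6,0} \in \Pi^\infty$ with $\omega(P_{1,6,0}) < 6$; the identity above then gives $g_{W_3}(0) \in \R g_{P_{1,6,0}}(0)$, so the hypothesis of \cref{thm:AG} is met for $W_3$, and the full-rank condition $\R^7 = S_1(g)(0) + \R g_{P_{1,6,0}}(0)$ is immediate from the previous step.

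The main obstacle is the residual verification that every \emph{other} bad bracket $\pi \in \Pi^{\textup{odd}}$ with $\omega(\pi) \leq 6$ satisfies $g_\pi(0) = 0$ in the feedback system. This is tractable because the seventh coordinate of $g_0$ is the polynomial $x_3^2 + 2 x_1^2 x_6$ of total degree $3$ in only three variables $x_1, x_3, x_6$: any bracket with a nonzero $e_7$-contribution at the origin must extract one of two specific derivative patterns, namely $\partial_{x_3}^2$ (giving $W_3$) or $\partial_{x_1}^2 \partial_{x_6}$ (giving $P_{1,6,0}$). A finite case enumeration then shows that, within the weight budget $\omega \leq 6$, all bad brackets other than $W_3$ either produce higher-order $x$-derivatives (and hence vanish at $0$) or accumulate enough right-bracketings with $X_0$ to push their weight strictly above $6$. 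Once this is verified, \cref{thm:AG} yields $L^\infty$-STLC of the feedback system, and hence of \eqref{Ex:W3vsQb12}.
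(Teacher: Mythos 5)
Your approach matches the paper's: apply the static-state feedback $v := u + x_1^2$ to eliminate the quadratic drift in the first equation, and then verify Agrachev--Gamkrelidze's sufficient condition \cref{thm:AG} for the resulting integrator chain. The paper, however, takes a shortcut you miss: after this feedback, the transformed system coincides, up to a harmless factor of $2$ in the last equation, with \eqref{Ex:W3vsP1l>4} for $l = 6$, whose $L^\infty$-STLC was already established in \cref{Subsec:Ex_W3}; citing that example eliminates the need for any fresh verification. Your self-contained re-derivation is structurally sound, but note a small arithmetic slip: you write $g_{P_{1,6,0}}(0) = 4 e_7 = \tfrac{1}{2}\, g_{W_3}(0)$, whereas $g_{W_3}(0) = 2 e_7$ gives $g_{P_{1,6,0}}(0) = 2\, g_{W_3}(0)$; this does not affect the argument since what matters is $g_{W_3}(0) \in \R\, g_{P_{1,6,0}}(0)$ with $\omega(P_{1,6,0}) = 4 < 6 = \omega(W_3)$. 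The step you flag as ``the main obstacle'' --- the finite enumeration showing that all other type-(even, odd) $\pi \in \Pi^{\mathrm{odd}}$ with $\omega(\pi) \leq 6$ have $g_\pi(0)=0$ --- is real content that your proposal only sketches; the paper's route sidesteps exactly this by identifying the feedback system with the $l=6$ instance of \eqref{Ex:W3vsP1l>4}, for which the vanishing of all other $f_b(0)$ was already asserted in \cref{Subsec:Ex_W3}.
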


\begin{proof}
    Let us perform the (nonlinear) static-state-feedback transformation $v(t) := u(t) + x_1^2(t)$.
    Then system \eqref{Ex:W3vsQb12} is mapped to example \eqref{Ex:W3vsP1l>4} with $l = 6$, featuring a competition between $W_3$ and $P_{1,6,0}$, which was proved to be $L^\infty$-STLC in \cref{Subsec:Ex_W3}.
    Hence \eqref{Ex:W3vsQb12} is $L^\infty$-STLC.
\end{proof}

\subsection{A Riemann--Lebesgue-type lemma}
\label{s:RL}

\begin{lemma} \label{p:RL}
	Let $T > 0$ and $\mathcal{V} := \{ h \in C^3([0,T];\R) ; h(0)=h'(0)=h''(0)=0 \}$ endowed with its usual topology.
	Let $\theta \in C^0(\R;\R)$ be a $T$-periodic function.
	The map
	\begin{equation} \label{eq:Feta-def}
		F : \begin{cases}
			(0,1] \times \mathcal{V} &\to \R, \\
			(\tau,h) &\mapsto \int_0^T h(t) \theta((T-t)/\tau) \dd t
		\end{cases}
	\end{equation}
	admits a $C^1$ extension to $[0,1] \times \mathcal{V}$ which satisfies $F(0,h) = (1/T) \int_0^T \theta \int_0^T h$ for every $h \in \mathcal{V}$.
\end{lemma}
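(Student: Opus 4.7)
The plan is to remove the mean of $\theta$ and then integrate by parts three times, using the vanishing conditions $h(0)=h'(0)=h''(0)=0$ to discard the boundary terms at $t=0$ and a periodicity argument to keep the bulk bounded.

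Write $\langle \theta \rangle := T^{-1}\int_0^T \theta$ and $\theta_0 := \theta - \langle \theta \rangle$, so that $\theta_0$ has zero mean over a period. Define inductively $\Theta_1(s) := \int_0^s \theta_0$ and, for $k=2,3$, $\alpha_k := T^{-1}\int_0^T \Theta_k$, $\Theta_{k+1}(s) := \int_0^s (\Theta_k - \alpha_k)$. Since each $\Theta_k - \alpha_k$ has zero mean, each $\Theta_k$ is $T$-periodic, continuous, and in particular bounded, with $\Theta_k(0)=0$. Splitting $\theta = \langle \theta \rangle + \theta_0$ in \eqref{eq:Feta-def} immediately gives the constant contribution $\langle \theta \rangle \int_0^T h = (1/T)\int_0^T \theta \cdot \int_0^T h$, which is linear (hence $C^1$) in $h$ and independent of $\tau$; this will be the candidate value of $F(0,h)$.

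Next, I would integrate by parts three times in the $\theta_0$-contribution, using that $\frac{d}{dt}[-\tau \Theta_k((T-t)/\tau)] = \Theta_{k-1}((T-t)/\tau)$ (with the convention $\Theta_0 = \theta_0$) and that $\Theta_k(0)=0$ kills the boundary term at $t=T$ for the antiderivative, while $h^{(j)}(0)=0$ for $j=0,1,2$ kills the boundary term at $t=0$. The net result is an identity of the form
\begin{equation*}
F(\tau,h) = \langle \theta \rangle \int_0^T h \;+\; \tau\, \alpha_1' h(T) \;+\; \tau^2\, \alpha_2' h'(T) \;+\; \tau^3 \int_0^T h'''(t)\, \Theta_3\!\left(\tfrac{T-t}{\tau}\right)\, dt,
\end{equation*}
for explicit constants $\alpha_1', \alpha_2'$ depending only on $\theta$.

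The main obstacle is verifying that this formula defines a $C^1$ map on $[0,1]\times\mathcal{V}$. Continuity in $h$ is automatic since $F(\tau,\cdot)$ is linear and bounded (boundedness uses $\|\Theta_3\|_\infty < \infty$). Continuity and $C^1$-regularity in $\tau$ at $\tau=0$ reduce to analyzing the last integral: its prefactor $\tau^3$ makes it vanish at $\tau=0$, and a direct differentiation yields $\partial_\tau$ a sum of a $3\tau^2$ term (bounded by $\|h'''\|_{L^1}\|\Theta_3\|_\infty$) and a term $-\tau \int_0^T h'''(t)(T-t)\,(\Theta_2-\alpha_2)((T-t)/\tau)\,dt$ (using $\Theta_3' = \Theta_2-\alpha_2$, bounded), both of which tend to $0$ as $\tau\to 0^+$, uniformly on bounded subsets of $\mathcal{V}$. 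This establishes the $C^1$ extension and the announced value at $\tau=0$.
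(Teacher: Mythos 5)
Your argument is essentially the same as the paper's: three integrations by parts against periodic, mean-corrected primitives of $\theta$, with $h(0)=h'(0)=h''(0)=0$ and the vanishing of those primitives at $0$ killing all boundary terms, leaving a $\tau^3$-weighted remainder whose $\tau$-derivative vanishes at $\tau=0$. (One small indexing slip: the induction as written defines $\alpha_k$ and $\Theta_{k+1}$ for $k=2,3$, so $\alpha_1$ and $\Theta_2$ are never introduced -- it should read $k=1,2$; also your claimed identity $\frac{d}{dt}[-\tau\Theta_k((T-t)/\tau)]=\Theta_{k-1}((T-t)/\tau)$ is missing the constant $-\alpha_{k-1}$ for $k\ge 2$, which is precisely what produces the boundary terms $\tau\alpha_1'h(T)$ and $\tau^2\alpha_2'h'(T)$ that you do correctly retain in the final formula.)
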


\begin{proof}
	Let $\theta_0 := \theta$.
	For $k \in \N$, we set $\theta_{k+1}(t) := (t/T) \int_0^T \theta_k - \int_0^t \theta_k$.
	In particular, for all $k \in \N$, $\theta_{k+1}(0) = 0$, $\theta_{k+1}$ is $T$-periodic and $\theta_{k+1}' = \langle \theta_k \rangle - \theta_k$.
	Integrating by parts in \eqref{eq:Feta-def} yields
	\begin{equation}
		\begin{split}
			F(\tau,h) 
			& = \langle \theta \rangle \int_0^T h + \tau [ h \theta_1((T-t)/\tau) ]_0^T- \tau \int_0^T h'(t) \theta_1((T-t)/\tau) \dd t \\
			& = \langle \theta \rangle \int_0^T h - \tau \int_0^T h'(t) \theta_1((T-t)/\tau) \dd t.
		\end{split}
	\end{equation}
	Hence $F$ admits a continuous extension setting $F(0,h) = \langle \theta \rangle \int_0^T h$.
	Repeating the process twice,
	\begin{equation}
		F(\tau,h) 
		= \langle \theta \rangle \int_0^T h 
		- \tau \langle \theta_1 \rangle h(T)
		+ \tau^2 \langle \theta_2 \rangle h'(T)
		- \tau^3 \int_0^T h'''(t) \theta_3((T-t)/\tau) \dd t.
	\end{equation}
	This development with respect to $\tau$ can be used to prove that $F$ enjoys $C^1$ regularity up to $\tau = 0$, since the $\tau^3$ factor allows to absorb the derivative with respect to $\tau$ of the last integral as $\tau \to 0$.
\end{proof}


\section*{Acknowledgments}

The authors wish to express their gratitude to Matthias Kawski for writing the enlightening survey~\cite{zbMATH04154295}, which inspired many of the results of this work, as recalled in the various examples given for each obstruction.
We also thank three referees who provided many suggestions to improve the overall presentation of the results and method.

The authors acknowledge support from grants ANR-20-CE40-0009 and ANR-11-LABX-0020, as well as from the Fondation Simone et Cino Del Duca -- Institut de France.

Data sharing not applicable to this article as no datasets were generated or analysed during the current study.

\bibliographystyle{plain}
\bibliography{biblio}

\end{document}